\renewcommand{\epsilon}{\varepsilon}
\definecolor{labelkey}{rgb}{0,0,1}
\newcommand{\pa}{\partial}
\newcommand{\g}{\gamma}
\newcommand{\al}{\alpha}
\newcommand*{\supp}{\ensuremath{\mathrm{supp\,}}}
\renewcommand*{\tilde}{\widetilde}
\renewcommand*{\hat}{\widehat}
\renewcommand*{\bar}{\overline}
\newcommand{\ep}{\epsilon}
\newtheorem{theorem}{Theorem}[section]
\newtheorem{lemma}[theorem]{Lemma}
\newtheorem{corollary}[theorem]{Corollary}
\theoremstyle{definition}
\newtheorem{rem}[theorem]{Remark}
\numberwithin{equation}{section}
\def\f1r{{\frac{1}{r}}  }
\def\f1r{{\frac{1}{r}}  }
\title{The regularity of the solutions to the Muskat equation: the degenerate regularity near the turnover points}
\author{Jia Shi}
\date{}
\begin{document}

\maketitle
\begin{abstract}
  In this paper, we prove that if a solution to the Muskat problem with different densities and the same viscosity is 
 sufficiently smooth, the solution is analytic in a region that degenerates at the turnover points, 
 provided some additional conditions are satisfied. This paper studies the analyticity of the solution near turnover points, complementing the result in \cite{muskatregulatiryJia}.
\end{abstract}
\tableofcontents
\section{Introduction}
The Muskat problem is a free boundary problem studying the interface between fluids in the porous media \cite{MuskatPhysics}. It can also describe the Hele-Shaw cell \cite{shaw1898motion}. The density function $\rho$ follows the active scalar equation:
\begin{equation}\label{physicalequation}
 \frac{d\rho}{dt}+(v\cdot \nabla)\rho=0,   
\end{equation}
with 
\begin{align*}
\rho(x,t)=\left\{\begin{array}{ccc}
\rho_1 && x\in D_1(t),\\
\rho_2 && x\in D_2(t). \end{array}\right.
\end{align*}
Here $D_1(t)$ and $D_2(t)$ are open domains with $D_1(t)\cup D_2(t) \cup \partial D_{1}(t)=\mathbb{R}^2$. The velocity field $v$ in \eqref{physicalequation} satisfies Darcy's law:
\begin{equation}\label{Darcylaw}
    \frac{\mu}{\kappa}v=-\nabla p -(0,g\rho),
\end{equation}
and the incompressibility condition: 
\[
\nabla \cdot v=0,
\]
where $p$ is the pressure and $\mu$ is the viscosity.  $\kappa$, $g$ are the permeability constant and the gravity force.

We focus on the problem where two fluids have different densities $\rho_1, \rho_2$ and the same viscosity $\mu$. 

After scaling and a suitable choice of parametrization, the equation for the boundary $\partial D_{1}(t)$  in the periodic setting reads:

\begin{equation}\label{muskat equation0}
\frac{\partial f_i}{\partial t}(\alpha,t)=\frac{\rho_2-\rho_1}{2}\int_{-\pi}^{\pi} \frac{\sin(f_1(\alpha)-f_1(\alpha-\beta))(\partial_{\alpha}f_i(\alpha)-\partial_{\alpha}f_i(\alpha-\beta))}{\cosh(f_2(\alpha)-f_2(\alpha-\beta))-\cos(f_{1}(\alpha)-f_{1}(\alpha-\beta))}d\beta,
\end{equation}
for $i=1,2$ (See \cite{Castro-Cordoba-Fefferman-Gancedo-LopezFernandez:rayleigh-taylor-breakdown}). Here $f(\alpha,t)=(f_1(\alpha,t),f_2(\alpha,t))$ is a parameterization of the boundary curve. $f(\alpha,t)-(\alpha,0)$ is periodic in $\alpha$. 

 Given an initial interface at time 0, \eqref{muskat equation0} is divided into three regimes. When the interface is a graph and the heavier fluid is on the bottom as in Figure \ref{fig_turning over1}a, it is in a stable regime. When heavier fluid is above the boundary as in Figure \ref{fig_turning over1}b, it is in a stable regime when time flows backward. Thus, given any initial data, \eqref{muskat equation0} can be solved for small negative time $t$. In both regimes, shown in Figures \ref{fig_turning over1}a, \ref{fig_turning over1}b, \eqref{muskat equation0} can not be solved in the wrong direction unless the initial interface is real analytic. %(\cite{CordobaGancedocontourdynamicsmuskat},\cite{Castro-Cordoba-Fefferman-Gancedo-LopezFernandez:rayleigh-taylor-breakdown} ). 
 The third regime, shown in Figure \ref{fig_turning over1}c, it is highly unstable because the heavier fluid lies on top near point $S_1$ while the lighter fluid lies on top near point $S_2$. Note two turnover points $T_1$ and $T_2$ where the interface has a vertical tangent. For generic initial data in the turnover regime, \eqref{muskat equation0} has no solutions either as time flows forward or backward. %is an unstable regime with turnover points in the boundary.
\begin{figure}[h!]
\begin{center}
\includegraphics[scale=0.8]{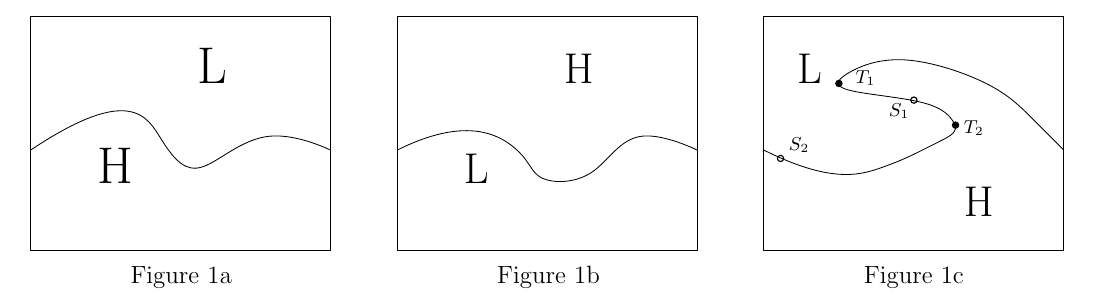}
\caption{\label{fig_turning over1}Three regimes of the Muskat equation.}
\end{center}
\end{figure}

In the third regime, there are several examples from the literature (eg. \cite{Castro-Cordoba-Fefferman-Gancedo-LopezFernandez:rayleigh-taylor-breakdown}, \cite{Castro-Cordoba-Fefferman-Gancede}, \cite{Cordoba-GomezSerrano-Zlatos:stability-shifting-muskat}, \cite{Cordoba-GomezSerrano-Zlatos:stability-shifting-muskat-II}), but they are all real analytic solutions.  Without the real analytic assumption, due to the spatially non-consistent parabolic behavior, the existence is usually false and the uniqueness is unknown. It is natural to ask: In which class of initial data in the turnover  
regime do Muskat solutions exist in an open time interval about $t=0$?  
In which class of initial data is the solution unique?

It was proven in \cite{Castro-Cordoba-Fefferman-Gancedo-LopezFernandez:rayleigh-taylor-breakdown} that solutions exist if the initial contour is  
real analytic. Conversely, our earlier paper \cite{muskatregulatiryJia} shows that a sufficiently smooth 
solution can exist only if the initial contour is real analytic except 
possibly at the turnover points. Thus it remains to understand how the  
initial contour behaves at the turnover points.

In this paper we begin to address this issue. We prove under a generic  
assumption that a sufficiently smooth solution exists only if the initial contour $f(\alpha,  
0)$ continues analytically to a region in the complex $\alpha$-plane that  
degenerates at a specific rate at values of $\alpha$ corresponding to the  
turnover points.

Moreover, for the analytic solutions, one can prove an energy estimate on an analyticity region that shrinks when time increases (\cite{Castro-Cordoba-Fefferman-Gancedo-LopezFernandez:rayleigh-taylor-breakdown}). That energy estimate implies uniqueness in the class of analytic solutions. Therefore,  the investigation towards analyticity can serve as a first step to deal with the uniqueness.

Let us explain this result in more details.
In \cite{muskatregulatiryJia}, we introduce a new way to prove that any sufficiently smooth solution is analytic except at the turnover points. We get
 \begin{theorem}\label{thm1}\cite{muskatregulatiryJia}
 Let $f(\alpha,t)=(f_1(\alpha,t),f_2(\alpha,t))\in C^{1}([-t_0,t_0],(H^{6}[-\pi,\pi])^2)$ be a solution of the Muskat equation \eqref{muskat equation0} satisfying the arc-chord condition. If $\partial_{\alpha}f_1(\alpha_0,t)\neq 0$, and $-t_0< t<t_0$, then $f(\cdot,t)$ is analytic at $\alpha_0$.
 \end{theorem}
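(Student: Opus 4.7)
The plan is to exploit the fact that at points where $\partial_\alpha f_1 \neq 0$ the interface is locally a graph over the $f_1$-axis, so that the Muskat equation is locally parabolic of order one. The target is an analytic (Gevrey-$1$) a priori estimate localized near $\alpha_0$. After reducing to the case $\partial_\alpha f_1(\alpha_0,t_*) > 0$ (the opposite sign is handled by reflecting $\alpha$) and passing to an interval $I = (\alpha_0 - \delta, \alpha_0 + \delta)$ on which $\partial_\alpha f_1(\alpha,s) \geq c_0 > 0$ for $s$ in a short time window around $t_*$, I would attempt to control the weighted norm
\[ N_\tau(s) := \sum_{k \geq 0} \frac{\tau^{2k}}{(k!)^2} \|\chi\, \partial_\alpha^k f(\cdot,s)\|_{L^2}^2, \]
where $\chi$ is a smooth cutoff supported in $I$ and equal to $1$ near $\alpha_0$, and $\tau = \tau(s) > 0$ is a radius-of-analyticity parameter allowed to depend on $s$.

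To estimate $N_\tau$, I would differentiate \eqref{muskat equation0} $k$ times in $\alpha$ and test against $\chi^2 \partial_\alpha^k f$. Expanding the kernel around $\beta = 0$, the top-order linear part of the Muskat operator is, modulo smoothing and lower-order remainders, of the form $-c(\alpha,s)\,|\partial_\alpha| f$ with coefficient $c$ proportional to $\partial_\alpha f_1 \geq c_0 > 0$ on $I$. This produces a parabolic dissipation of size $c_0\,\||\partial_\alpha|^{1/2}\chi\,\partial_\alpha^k f\|_{L^2}^2$. The remaining terms split into (i) commutators of $\chi\,\partial_\alpha^k$ with the singular integral kernel, (ii) far-field contributions from $|\beta| \gtrsim 1$ which are smoothing thanks to the denominator, and (iii) lower-order nonlinear corrections that arise when $\partial_\alpha^k$ falls on the coefficients. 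These I would absorb into the dissipation by interpolation, carefully tracking constants. Summing in $k$ against the weights $\tau^{2k}/(k!)^2$ should yield a differential inequality that, for a suitably decreasing $\tau(s)$, keeps $N_{\tau(s)}$ finite on a short interval containing $t_*$, which is exactly analyticity of $f(\cdot,t_*)$ at $\alpha_0$.

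The principal difficulty is the non-local character of the Muskat kernel: standard parabolic proofs of analyticity rely on locality, or at worst on smooth symbols, whereas here the kernel genuinely mixes the local and far-field behaviour of $f$, and the cutoff $\chi$ does not commute with the integral operator. Preventing the $k$-th order commutators from producing factorial losses that would destroy the Gevrey summation will require tracking, at each order $k$, the precise distribution of $\alpha$-derivatives between $\chi$ and $f$, and exploiting the cancellations coming from the odd-in-$\beta$ part of the kernel. This combinatorial bookkeeping, together with the interpolation of the far-field and commutator contributions against the principal dissipation, is where the bulk of the work will lie.
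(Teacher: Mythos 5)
Your proposal takes a genuinely different route from the one used for Theorem~\ref{thm1} in \cite{muskatregulatiryJia} (outlined in Section~1.2 of this paper, and implemented in detail for the turnover analogue in Sections~3--8). The paper does not estimate all $\alpha$-derivatives of $f$; instead it \emph{constructs} the analytic continuation directly, by solving a modified Muskat equation along a family of complex contours $\alpha\mapsto\alpha+ic(\alpha)\gamma t$, $\gamma\in[-1,1]$, and then verifying that the resulting $C^1$ family satisfies the Cauchy--Riemann equations. The factorial bookkeeping that dominates your scheme is replaced by the decomposition $\tilde f=\tilde f^{+}+\tilde f^{L}$, in which $\tilde f^{L}$ is, by design, a polynomial (hence trivially analytic) near the target point and $\tilde f^{+}$ vanishes there to high order; only a finite-order energy estimate is then required. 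The trade-off is the intricate set-up of the complexified kernels and the well-posedness of the modified equation on the contours. A Gevrey-norm argument, if it closed, would be more portable --- but as written it does not close.

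The gap is precisely where you say ``the bulk of the work will lie''. The dissipative operator $(-\Delta)^{\frac{1}{2}}$ is nonlocal and does not commute with $\chi$. With $u=\partial_\alpha^k f$, one finds
\[
[\chi,(-\Delta)^{\frac{1}{2}}]u(\alpha)
=\mathrm{p.v.}\!\int\frac{\chi(\beta)-\chi(\alpha)}{(\alpha-\beta)^{2}}\,u(\beta)\,d\beta ,
\]
which sees $u(\beta)$ on the \emph{entire} circle, not merely on $\supp\chi$. Since $f$ is assumed only $H^{6}$ globally --- and, away from $\alpha_0$, can genuinely fail to be any smoother (e.g.\ at turnover points) --- the quantity $\|\partial_\alpha^{k}f\|_{L^{2}[-\pi,\pi]}$ is not even finite for $k>6$, so this commutator contribution is neither controlled by $N_\tau$ nor absorbed by the dissipation. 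Integrating by parts in $\beta$ to move derivatives off $u(\beta)$ only shifts the problem to boundary terms supported in the transition region of $\chi$, which a single cutoff again cannot see. Indeed the problem already appears at the level of well-definedness: $N_\tau$ requires $\partial_\alpha^{k}f\in L^{2}_{\mathrm{loc}}$ near $\alpha_0$ for every $k$, and establishing that already demands a localized bootstrap from $H^{6}$ with the same commutator difficulty at each step. You would need a nested family of cutoffs $\chi_0\supset\chi_1\supset\cdots$ and a telescoping loss-of-radius argument, tuned so that the geometric losses at each stage do not destroy the Gevrey-$1$ summation; none of this is in the proposal. (A minor secondary point: for parabolic smoothing the radius $\tau(s)$ should \emph{increase} slowly from $0$, not decrease; the weight-differentiation term $\dot\tau\|(-\Delta)^{\frac{1}{4}}(\cdot)\|^{2}$ is absorbed by the dissipation precisely when $\dot\tau$ is small and positive.)
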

%\color{blue}
%add something here, discuss the muskat equation
%\color{black}

In this paper, we focus on the degenerate analyticity near the turnover points. The existence and uniqueness are crucially related to the way the real-analyticity degenerates at those points.
 We have the following theorem:
\begin{theorem}\label{muskatnear1}
 Let $f(\alpha,t)=(f_1(\alpha,t),f_2(\alpha,t))\in C^{1}([-t_0,t_0], (C^{100}[-\pi,\pi])^2)$ be a solution of the Muskat equation with two turnover points satisfying the arc-chord condition \eqref{arcchord00}. $Z_1(t)$, $Z_2(t)$ are values of $\alpha$ of these two turnover points. If we assume the solution satisfies the following three conditions:
\begin{align}\label{extracondition1}
\partial_{\alpha}^2f_1(Z_1(t),t)\neq 0,
\end{align}
\begin{align}\label{extracondition2}
\partial_{\alpha}f_1(\alpha,t) \neq 0 \text{  except at $Z_1(t)$, $Z_2(t)$}, 
\end{align}
and
\begin{align}\label{extracondition3}
(\frac{d Z_1}{dt}(t)+\frac{\rho_2-\rho_1}{2}p.v.\int_{-\pi}^{\pi}\frac{\sin(f_1(\alpha)-f_1(\alpha-\beta))}{\cosh(f_2(\alpha)-f_2(\alpha-\beta))-\cos(f_1(\alpha)-f_1(\alpha-\beta))}d\beta) \neq 0,
\end{align}
then when $-t_0<t<t_0$, $f(\cdot,t)$ can be analytically extended to region 
\begin{equation}\label{Omegatdefi}
\Omega(t)=\{x+iy|-\epsilon_1(t)+Z_{1}(t)\leq x\leq Z_{1}(t)+\epsilon_1(t),|y|\leq \epsilon_2(t)(x-Z_{1}(t))^2,
\end{equation}
with $\epsilon_1(t)>0$, $\epsilon_2(t)>0$.
\end{theorem}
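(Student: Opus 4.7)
The plan is to bootstrap the away-from-turnover analyticity provided by Theorem~\ref{thm1} into the degenerate cuspidal analyticity at $Z_1(t)$ via a complex-extension energy argument tailored to the parabolic-cusp geometry of $\Omega(t)$, with the three hypotheses \eqref{extracondition1}--\eqref{extracondition3} playing distinct roles.

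First I would fix $t_*\in(-t_0,t_0)$, translate so that $Z_1(t_*)=0$, and use condition \eqref{extracondition1} to write $f_1(\alpha,t_*)=f_1(0,t_*)+\tfrac{1}{2}\partial_\alpha^2 f_1(0,t_*)\alpha^2+O(\alpha^3)$ for real $\alpha$ near $0$. Extending to complex $\alpha=x+iy$ gives $\operatorname{Im}(f_1(\alpha,t_*)-f_1(0,t_*))=\partial_\alpha^2 f_1(0,t_*)\,xy+O(|\alpha|^3)$, so the denominator $\cosh(f_2-f_2')-\cos(f_1-f_1')$ in \eqref{muskat equation0} stays away from zero (modulo $2\pi$) precisely in the regime $|y|\lesssim x^2$; this is the structural reason for the parabolic-cusp shape in \eqref{Omegatdefi}. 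Condition \eqref{extracondition2} then guarantees that $Z_1$ is the only obstruction to analyticity on $[-\epsilon_1,\epsilon_1]$, so Theorem~\ref{thm1} already supplies analyticity in some tubular strip over $[-\epsilon_1,-\eta]\cup[\eta,\epsilon_1]$ for every $\eta>0$. The content of Theorem~\ref{muskatnear1} is to quantify how this strip pinches as $\eta\to 0$ and to meet at the cusp tip.

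Next I would introduce a one-parameter family of cuspidal domains $\Omega_\lambda(t)=\{|x-Z_1(t)|\le\epsilon_1,\ |y|\le\lambda(x-Z_1(t))^2\}$ and an analytic energy $E_\lambda(t)$, modeled on a weighted sum of Sobolev norms on $\partial\Omega_\lambda(t)$ whose weights reflect the quadratic vanishing of the Rayleigh-Taylor coefficient $\partial_\alpha f_1$ at the cusp tip; this is in the spirit of the complex-extension method used in \cite{muskatregulatiryJia}, but the contour now degenerates to a cusp rather than remaining a strip. I would then complexify the $\beta$-contour in \eqref{muskat equation0}, derive a differential inequality $\tfrac{d}{dt}E_\lambda(t)\le C\,E_\lambda(t)+B(t)$ in which $B(t)$ collects boundary contributions near $|x-Z_1(t)|=\epsilon_1$ (controlled by Theorem~\ref{thm1}), and use Gronwall together with an open/closed bootstrap in $\lambda$ to produce a maximal $\lambda_0(t)>0$ for which analyticity on $\Omega_{\lambda_0(t)}(t)$ is preserved; setting $\epsilon_2(t)=\lambda_0(t)$ then gives the statement.

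The main obstacle will be controlling the complex-extended singular integral when both $\alpha$ and $\alpha-\beta$ lie on $\partial\Omega_\lambda(t)$ near the cusp tip. There the denominator vanishes like $|\alpha|^2|\beta|^2$ while the numerator $\sin(f_1-f_1')\bigl(\partial_\alpha f_i-\partial_\alpha f_i'\bigr)$ gains at most one factor of $|\alpha||\beta|$; the kernel is therefore at the borderline of integrability and every estimate is marginal. This is where condition \eqref{extracondition3} enters decisively: the expression in \eqref{extracondition3} is precisely the horizontal speed of the material at the turnover point relative to the motion of $Z_1(t)$ itself, and its non-vanishing furnishes a net transport across the cusp tip in the comoving frame. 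Pulling this transport out of the complex-extended kernel lets one absorb the borderline term into a non-degenerate transport operator and close the energy estimate. Once this step is achieved, the remainder of the argument follows the complex-extension machinery developed for regular points in \cite{muskatregulatiryJia}.
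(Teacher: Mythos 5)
There is a genuine gap, and it is fundamental: your proposal treats the cuspidal domain $\Omega_\lambda(t)$ as a single region on which a Gronwall-type energy inequality can be run, but the Rayleigh--Taylor coefficient $\sigma\sim\partial_\alpha f_1$ changes sign across $Z_1(t)$. On one side of the turnover point the linearized operator is parabolic (smoothing), on the other side it is anti-parabolic (instantly destroying regularity); any energy functional supported on both sides of the cusp tip at once has one contribution with the ``wrong'' sign, so the differential inequality $\frac{d}{dt}E_\lambda(t)\le CE_\lambda(t)+B(t)$ will not close. The ``open/closed bootstrap in $\lambda$'' cannot rescue this: there is no $\lambda_0>0$ for which propagation over the full two-sided cusp holds in a single time direction. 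The paper instead proves a one-sided statement (Theorem~\ref{muskathalf}): for $t>t_*$ analyticity is propagated only on $\Omega\cap\{\alpha>Z_1(t)\}$, and for $t<t_*$ only on the opposite side, and these halves are then glued. Your sketch never makes this choice of side, and without it the estimate simply fails.

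A second gap concerns condition~\eqref{extracondition3}. You correctly identify it as the net horizontal transport at the turnover point, but the way you use it --- ``absorb the borderline term into a non-degenerate transport operator'' --- misses its actual role. After the one-sided cut, integration by parts produces a boundary term at $\alpha=Z_1(t)$, and whether that boundary term has the right sign depends on the \emph{sign} of $\kappa(t)$ defined in \eqref{kappadefi}, which is pinned down by \eqref{extracondition3}. The paper handles the two sign cases via entirely different mechanisms (direct good-sign boundary term when $\kappa>0$; a moving translation $\tau(t)$ that pushes the support off the boundary when $\kappa<0$, Sections~\ref{kappa1sectiongene}--\ref{kappa2sectionexistence}). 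Your transport-absorption picture, while evocative, does not produce either mechanism, and in particular cannot cope with the non-local operator after the cut: the restriction of the $(-\Delta)^{1/2}$-type kernel to a half-line is no longer bounded on $H^k$ for $k\ge 1$, which is precisely why the paper replaces the equation by a carefully modified one (Section~\ref{modifiedequationsection}) and proves a refined G\r{a}rding inequality (Lemma~\ref{Garding}) for the non-smooth, half-supported coefficient. None of this appears in your outline, and it is not a technical afterthought --- it is where the proof actually lives.

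What you get right: the heuristic that \eqref{extracondition1} forces the quadratic cusp $|y|\lesssim x^2$ (because the arc-chord denominator $\cosh(\cdot)-\cos(\cdot)$ stays bounded below exactly in that regime), that \eqref{extracondition2} isolates $Z_1$ as the only obstruction, and the general strategy of complexifying $\beta$ and running the machinery of~\cite{muskatregulatiryJia} on a degenerating family of contours. The paper does parametrize the cusp interior by curves $\alpha+ic(\alpha)\gamma t$ with $c(\alpha)\sim\alpha^2$, in the same spirit as your $\Omega_\lambda$ family. But between the correct heuristics and a valid proof lies precisely the part you elide: the one-sided cut dictated by the sign change of $\sigma$, the modified non-local operator needed to make that cut self-consistent, the $\tau(t)$ translation when $\kappa<0$, and the existence/uniqueness theory for the resulting generalized equation before any energy estimate can even be stated. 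Those are the load-bearing steps, and they do not ``follow the complex-extension machinery developed for regular points.''
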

\begin{rem}
 For a generic solution with turnover points, the quantity \eqref{extracondition3} is not 0. 
 \end{rem}
 \begin{rem}
The space $ C^{1}([-t_0,t_0], (C^{100}[-\pi,\pi])^2)$ is chosen sufficiently smooth for the sake of convenience and is very loose.
 \end{rem}
 \begin{figure}[h!]
\begin{center}
\includegraphics[scale=1.2]{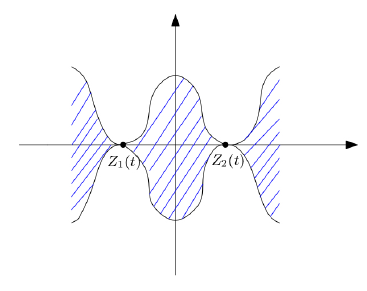}
\caption{\label{Degenerate}Degenerate analyticity region}
\end{center}
\end{figure} 
\subsection{Background}
In order to make the equation well-defined, the arc-chord condition is introduced, saying
\begin{equation}\label{arcchord00}
F(f)=|\frac{\beta^2}{\cosh(f_2(\alpha)-f_2(\alpha-\beta))-\cos(f_{1}(\alpha)-f_{1}(\alpha-\beta))}|
\end{equation}
is in $L^{\infty}$.

The Rayleigh-Taylor coefficient $\sigma$ is used to characterize the three regimes in Figure \ref{fig_turning over1} and is defined as 
\begin{equation}
\sigma=\frac{\rho_2-\rho_1}{2}\frac{\partial_{\alpha}f_{1}(\alpha,t)}{(\partial_{\alpha}f_1(\alpha,t))^2+(\partial_{\alpha}f_2(\alpha,t))^2}.
\end{equation}
$\sigma\geq 0$ is corresponding to the stable regime and $\sigma\leq 0$ the backward stable regime. When $\sigma$ changes sign, it is in the unstable regime.

In the stable regime (heavier liquid is below the lighter liquid), local well-posedness and the global well-posedness with constraints on the initial data have been widely studied, with the lowest Sobolev space $H^{\frac{3}{2}}$. 
(\cite{Yifahuaimuskatlocalexistence},\cite{YI2003442},\cite{Ambrose2004WellposednessOT},\cite{MichaelRusselHowisonmuskat},\cite{CordobaGancedocontourdynamicsmuskat},\cite{CordobaGancedomuskatmaximum},\cite{Constantin2013OnTG},\cite{CordobaCordobaGancedomuskatexistence},\cite{CHENG201632},\cite{constantincordobagancedostrainmuskatexistence},\cite{CONSTANTIN20171041},\cite{stephenmuskatexistence},\cite{Deng2016OnTT},\cite{DiegoOmarmuskatexistence},\cite{ Matioc:local-existence-muskat-hs},\cite{NguyenBenoitparadifferentialmuskat},\cite{alazardomarparalinearizationmuskat},\cite{abels_matioc_2022},\cite{muskatc12021chenquocxu},\cite{NGUYEN2022108122},\cite{alazardthomascritialspacemuskat},\cite{Alazard-Nguyennonlipshitzmuskat}, \cite{Alazard2020EndpointST}). Examples of self-similar solutions have been found in \cite{2021selfsimilar} and the dynamics of solutions having initial data with multiple small corners have been proved \cite{2023desingularizationGGHP}.   Interested readers can see \cite{2021selfsimilar} and \cite{muskatc12021chenquocxu} for detailed reviews. Due to the parabolic behavior, instant analyticity has been proved in the stable regime. Castro--Córdoba--Fefferman--Gancedo--López-Fernández \cite{Castro-Cordoba-Fefferman-Gancedo-LopezFernandez:rayleigh-taylor-breakdown} proved the $H^4$ solutions become instantly analytic if the solutions remain to be in the stable region for a short time. In \cite{Matioc:local-existence-muskat-hs}, also in the stable region,  Matioc improved the instant analyticity to $H^{s}$, where  $s\in (\frac{3}{2},3)$. In \cite{GANCEDO2019552}, 
Gancedo--García-Juárez--Patel--Strain showed that in the stable regime, a medium size initial data in $\dot{\mathcal{F}^{1,1}}\cap L^2$ with $\|f\|_{\mathcal{F}^{1,1}}=\int|\zeta||\hat{f}(\zeta)|d\zeta$ becomes instantly analytic. Their result also covers the different viscosities case and the 3D case. 

When the heavier liquid is above the lighter liquid,  the equation is ill-posed when time flows forward \cite{CordobaGancedocontourdynamicsmuskat}.

A solution that starts from a stable regime and develops turnover points was first discovered in \cite{Castro-Cordoba-Fefferman-Gancedo-LopezFernandez:rayleigh-taylor-breakdown}. That solution still exists for a short time after turnover due to the analyticity when the turnover happens. Moreover, breakdown of smoothness can happen \cite{Castro-Cordoba-Fefferman-Gancede}. There are also examples where the solutions transform from stable to unstable and go back to stable \cite{Cordoba-GomezSerrano-Zlatos:stability-shifting-muskat} and vice versa \cite{Cordoba-GomezSerrano-Zlatos:stability-shifting-muskat-II} .

Weak solutions and a special kind of weak solutions: mixing solutions of \eqref{physicalequation} have also been studied. They do not satisfy \eqref{muskat equation0} and can develop a mixing zone. Weak solutions do not have uniqueness \cite{cordoba2011lack}. In all three regimes, there are infinitely many mixing solutions (\cite{szekelyhidi2012relaxation},\cite{castro2016mixing},\cite{forster2018piecewise},\cite{castro2019degraded},\cite{noisette2021mixing},\cite{castro2022localized}).

\subsection{The outline of the proof of Theorem \ref{thm1}}
Without loss of generality, we assume 
\begin{equation}\label{assumption0}
\frac{\rho_2-\rho_1}{2}=1,
\end{equation}
and for all $t\in(-t_0,t_0)$,
\begin{align}\label{assumption}
&\eqref{extracondition2}, \eqref{extracondition3}, \text{ holds, and} \quad
\partial_{\alpha}^{2}f_1(Z_1(t),t)>0.
\end{align}
If $\partial_{\alpha}^{2}f_1(Z_1(t),t)<0$, we could change $t$ to $-t$ and the same proof holds.

It is enough to show the following theorem:
\begin{theorem}\label{muskathalf}
Under the assumptions in theorem \ref{muskatnear1}, and \eqref{extracondition2}, for $t_*\in(-t_0,t_0)$, there exists $\epsilon$ sufficiently small such that when $t_*<t\leq t_*+\epsilon$, the solution $f(\alpha,t)$ can be analytically extended to $\Omega\cap\{\alpha|\alpha>Z_1(t)\}$, with $\Omega$ from \eqref{Omegatdefi}. Additionally when $t_{*}-\epsilon\leq t<t_*,$  $f(\alpha,t)$ can analytically extended to $\Omega\cap\{\alpha|\alpha<Z_1(t)\}$. $\epsilon$ can be chosen uniformly within a compact set in $(-t_0,t_0)$.
\end{theorem}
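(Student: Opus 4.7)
}

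By the time-reversal symmetry noted after \eqref{assumption}, and because the roles of the two half-neighborhoods of $Z_1(t)$ are exchanged when the sign of $\partial_\alpha^2 f_1(Z_1(t),t)$ is flipped, it suffices to handle the first assertion: extend $f(\cdot,t)$ analytically to $\Omega(t)\cap\{\alpha>Z_1(t)\}$ for $t_*<t\le t_*+\epsilon$. Under \eqref{assumption}, on the half $\alpha>Z_1(t)$ we have $\partial_\alpha f_1(\alpha,t)>0$, so this is exactly the stable/parabolic side of the equation, where one expects smoothing to drive analytic continuation into the complex plane. The initial time $t_*$ is handled by Theorem \ref{thm1}: since $\partial_\alpha f_1$ is nonvanishing on $(Z_1(t_*),Z_2(t_*))$, $f(\cdot,t_*)$ is already analytic on a neighborhood of every $\alpha_0$ in the open interval $(Z_1(t_*),Z_2(t_*))$, hence on a one-sided complex neighborhood of the half-line emanating from $Z_1(t_*)$, uniform away from a small cusp about $Z_1(t_*)$.

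The main step is a Cauchy--Kowalewski / abstract Cauchy--Kowalewski style energy estimate in a one-parameter family of complex domains $\Omega(t)$ of the form $\eqref{Omegatdefi}\cap\{x>Z_1(t)\}$. I would first straighten the moving cusp by the change of variables $\tilde\alpha=\alpha-Z_1(t)$ together with a rescaling of the transverse direction that exposes the parabolic degeneracy; then freeze in the vertical coordinate the function $\tilde f(\tilde\alpha,t)=f(\tilde\alpha+Z_1(t),t)$. The equation for $\tilde f$ picks up an extra transport term with coefficient $-Z_1'(t)$, and its linearization at the turnover point is, schematically,
\begin{equation*}
\partial_t \tilde f_i + \bigl(\tfrac{dZ_1}{dt}+V(\alpha,t)\bigr)\partial_\alpha \tilde f_i \;+\; \partial_\alpha f_1(\alpha,t)\,\Lambda\tilde f_i \;=\; \text{(lower order)},
\end{equation*}
where $V(\alpha,t)$ is the principal-value integral appearing in \eqref{extracondition3} evaluated at $Z_1(t)$ plus corrections vanishing at $Z_1(t)$, and $\Lambda$ is the non-negative half-Laplacian that encodes the parabolic character of the stable Muskat equation. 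Hypothesis \eqref{extracondition3} guarantees that the transport coefficient in parentheses is nonzero at $\alpha=Z_1(t)$ (with a definite sign after choice of orientation), so the turnover point is swept across the flow; meanwhile \eqref{extracondition1} together with \eqref{extracondition2} gives $\partial_\alpha f_1(\alpha,t)\sim c(\alpha-Z_1(t))$ with $c>0$ near the turnover.

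Now I would set up an $L^2$- or weighted $\jap{D}^{s}$-energy of $\tilde f$ over the complex domain $\Omega(t)\cap\{x>Z_1(t)\}$, with a weight that vanishes on the parabolic boundary $|y|=\epsilon_2(x-Z_1(t))^2$. Computing $\frac{d}{dt}$ of this energy yields (i) a boundary contribution from the motion of $\partial \Omega(t)$, (ii) a negative semidefinite dissipation from $\partial_\alpha f_1\,\Lambda\tilde f_i$, and (iii) a transport contribution from the $V$-term. The key calibration is to choose $\epsilon_1(t),\epsilon_2(t)$ so that the speed of the parabolic boundary $|y|=\epsilon_2(t)(x-Z_1(t))^2$ in the normal direction is dominated by the parabolic dissipation rate $\partial_\alpha f_1\sim c(x-Z_1(t))$; the parabola $|y|\sim(x-Z_1)^2$ is precisely the self-similar shape for which these two quantities balance, which is what forces the quadratic degeneracy in \eqref{Omegatdefi}. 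Near the actual turnover point, the transport term (iii) governed by \eqref{extracondition3} moves the domain away from $Z_1(t)$ on the correct side, closing the estimate there where the dissipation alone is too weak to compete with boundary motion.

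I expect the main obstacle to be the balance at the cusp tip $x=Z_1(t)$, where the analyticity region has zero thickness and the parabolic coefficient $\partial_\alpha f_1$ vanishes: standard moving-domain Cauchy--Kowalewski arguments do not apply directly because the extension loses all uniform ellipticity there. The technical heart of the argument will be to exploit \eqref{extracondition3} to replace the missing parabolic smoothing by a first-order transport that sweeps mass out of the degenerate tip, and to quantify the kernel in \eqref{muskat equation0} carefully enough in the complex domain to justify the above linearized picture at the full nonlinear level. Once the differential inequality for the energy is closed on a time interval $[t_*,t_*+\epsilon]$ with $\epsilon$ controlled by the $C^{100}$-size of $f$ on a compact subset of $(-t_0,t_0)$, one obtains the desired analytic extension and, in particular, the uniform choice of $\epsilon$ on compacts.
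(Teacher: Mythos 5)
Your high-level picture agrees with the paper's strategy, and in places is quite accurate: fixing the turnover point by a change of variables depending on $Z_1(t)$, interpreting the cusp as the profile on which the linearly-degenerate Rayleigh--Taylor coefficient $\partial_\alpha f_1 \sim c\,(\alpha - Z_1)$ balances the outward speed of the boundary of the quadratically-degenerate domain, and using the nonvanishing transport speed \eqref{extracondition3} to rescue the estimate at the tip. The paper does not run a single energy estimate on the moving complex domain, but instead decomposes $\Omega(t)$ into a one-parameter family of curves $\alpha \mapsto \alpha + ic(\alpha)\gamma t$ with $c(\alpha)\sim \alpha^2$, solves a modified equation on each curve, shows the solutions fit together in $C^1(\gamma)$, and then verifies the Cauchy--Riemann equation; this is equivalent in spirit to what you call a Cauchy--Kowalewski-style estimate, and your balance condition is precisely what dictates the quadratic shape of $c(\alpha)$.

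There are however two genuine gaps that your plan would run into. First, you do not address the non-locality of \eqref{muskat equation0} under the restriction to the half-line $\alpha > Z_1(t)$: the Muskat singular integral integrates over the full interval, and after cutting, the associated Hilbert-transform-type operator $g\mapsto \mathrm{p.v.}\int_0^\infty g(\beta)/(\beta-\alpha)\,d\beta$ is \emph{not} bounded on $H^k(0,\infty)$ for $k\ge 1$. Hence the ``weighted $\jap{D}^s$-energy'' you propose cannot close as written; the paper gets around this by modifying the equation so that the analytically-extended solution appears only on the symmetric window $[0,2\alpha]$ while on its complement the integral is expressed through the already-known real-line data, and also proves a refined G\aa rding inequality (Lemma \ref{Garding}) because the localized coefficient $\lambda(\alpha)\mathbf{1}_{\alpha\ge 0}\alpha$ is only Lipschitz, so the standard commutator estimate fails. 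Second, the role of \eqref{extracondition3} is more delicate than ``sweeping mass out of the degenerate tip'': after integrating by parts on the half-line, the transport term produces a boundary term at the tip whose sign depends on the sign of $\kappa(t)$ defined in \eqref{kappadefi}. When the transport points \emph{into} the half-line this boundary term is the wrong sign, and the paper must introduce a time-dependent translation $\tau(t)$ (so that the support of the localized piece is pushed strictly away from the moving endpoint, see \eqref{defkappa} and Section \ref{kappa2sectiongene}) to neutralize it; your plan silently assumes the favorable sign. Without these two ingredients the linearized analysis you sketch is correct in spirit but the nonlinear, non-local estimate does not close.
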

Without loss of generality, we will choose $t_*=0$ and only show the proof for $t>0$. The other part can be done in the same way. 

     We use a similar structure as in \cite{muskatregulatiryJia}: 
      we make a $C^1$ continuation of the parametrized interface $\alpha\to(f_1(\alpha,t),f_2(\alpha,t))$ to complex $\alpha$ and then prove the $C^1$ continuation satisfies the Cauchy-Riemann equation. To do so, we break the complex region into curves $\alpha+ic(\alpha)\gamma t$ with $\gamma \in [-1,1]$. On each such curve, we solve an equation for $(f_1,f_2).$ We then show when $\gamma$ varies, that our solutions on the curve fit together into an $C^1$ function of $\alpha+i\beta$. Finally, we prove that $C^1$ function satisfies the Cauchy-Riemann equation, thus producing the desired analytic continuation.

       Because the Rayleigh-Taylor coefficient changes sign across turnover point $Z_1(t)$, the equation only behaves well on half of the real axis, when $t$ flows in one direction. In order to get the existence of the extended solution on curve $\alpha+ic(\alpha)\gamma t$, our localization must cut the equation into right/left parts. Since the equation is non-local, we need to find an appropriate way to cut the equation and deal with extra high-order terms and non-smoothness caused by the cut boundary.

         More precisely, if turnover happens at $\alpha=0$,  a linear model for the behavior near the turnover point is 
          \begin{align}\label{new linear model}
          \frac{dg(\alpha,t)}{dt}=-\lambda(\alpha)1_{\alpha\geq 0}\alpha (-\Delta)^{\frac{1}{2}}g(\alpha,t) + \eta \partial_{\alpha}g(\alpha,t),
          \end{align}
          with $\lambda $ is a smooth cut-off near 0. 
      Compared to a linear model  for the behavior far away from turnover points:
          \begin{align}\label{original linear model}
          \frac{dg}{dt}(\alpha,t)=-\lambda(\alpha-1)\alpha(-\Delta)^{\frac{1}{2}}g(\alpha,t)
          \end{align}
   the coefficient function $-\lambda(\alpha)1_{\alpha\geq 0}\alpha$ before $(-\Delta)^{\frac{1}{2}}$ is non-smooth and the effect of transport term is not negligible.

        In order to avoid the corner-singularity of the coefficient, we only study the solution when $\alpha>0$. This causes problems in the energy estimate. For the $L^2$ estimate, we can not use the G\r{a}rding's inequality that we cited:\cite[Section 2.4]{Castro-Cordoba-Fefferman-Gancede} because $\lambda(\alpha)1_{\alpha\geq 0}$ is not in $C^2$. This lemma is done in \ref{Garding}, showing the high-order term caused by the corner-singularity can be absorbed by the dominant term in the G\r{a}rding's inequality.
        
         The higher-order norm estimate does not go through because the non-local integration operator $(-\Delta)^{\frac{1}{2}}$. It can be written as a composition of Hilbert transform $H$ and derivative. Hilbert transform is a bounded operator on $H^{k}(-\infty,\infty)$ for any $k$, but after the cut $\tilde{H}(g)=p.v. \frac{1}{\pi}\int_{0}^{\infty}\frac{g(\beta)}{\beta-\alpha}d\beta$ is no longer a bounded operator on $H^{k}(0,\infty)$ when $k\geq 1$. In order to fix this, we modify the equation for solutions on curve:$\alpha+ic(\alpha)\gamma t$. We only keep the integration of the extended solution ($\gamma\neq 0$) on the symmetric integral area $[0,2\alpha]$. On the integration area $[0,2\alpha]^c$, we modify the integration to be dependent only on the non-extended solution ($\gamma= 0$).  We derive our modification equation in section \ref{modifiedequationsection} and the well-posedness is shown in section \ref{kappa1sectionexistence}, \ref{kappa2sectionexistence}.

            The transport term can also be added in the model \eqref{original linear model} but is harmless because of the integration by parts on $(-\infty,\infty)$.
      For the new model \eqref{new linear model}, since $0$ is the critical point separating the backward-parabolic region and the parabolic region, even small transportation is very crucial. Technically, this leads to the boundary term at $0$ after integration by parts.  Depending on the sign of $\eta$, we treat the two cases separately. The rough ideas are the following: for a transport equation on $H^{1}_{\alpha}[0,\infty)$, 
         \[
         \frac{dg}{dt}=\eta\partial_{\alpha}g(\alpha,t),
         \]
      when $\eta>0$, if we let time go forward and study the behavior in the region $\alpha>0$, the energy estimate is good:
       
       \[
       \Re<g,\frac{dg}{dt}>_{H^{1}_{\alpha}[0,\infty)}=-\eta g^2(0,t)\leq 0,
       \]
       because the transportation direction is moving out of the half region. The energy estimate for this case is done in section \ref{kappa1sectionexistence}. Additionally, when $\eta<0$, if an initial value is defined on $(-\infty,\infty)$, with support in $[0,\infty)$, then when $t>0$, the solution always vanishes near 0. Thus $\Re<g,\frac{dg}{dt}>_{H^{1}_{\alpha}[0,\infty)}=0$ for $t>0$. In order to apply this good boundary behavior, we introduce a translation $\tau(t)$ \eqref{defkappa} while we derive the modified equation in section \ref{modifiedequationsection}. The corresponding boundary behavior is shown in section \ref{kappa2sectiongene} and the existence in section \ref{kappa2sectionexistence}. This sign of $\eta$ in the linear model \eqref{new linear model} is connected to the sign of the function in condition \eqref{extracondition3}.

          Given the existence of the $C^1$ extension of the solution on the curve, we verify the extension satisfies the Cauchy- Riemann equation in section \ref{analyticity+section}, \ref{analyticity-section}. This verification heavily relies on the fact that the modified equation still keeps the analyticity structure, which leads to some commuting lemmas in the Appendix that allow us to do the energy estimate.
        \subsection{Organization of the paper}
      In Section \ref{modifiedequationsection}, we use a parameterization to fix the turnover point $Z_1(t)$ be $0$. Then we modify the equation and derive the equation for the solution that is analytically extended to $\Omega\cap\{\alpha|\alpha>0\}$. The modified equation is the same for both cases ($\eta>0$ or $\eta<0$ in the corresponding linear model \eqref{new linear model}) except for a translation from $\alpha$ to $\alpha+\tau(t).$
        
     In Section \ref{kappa1sectiongene}, section \ref{kappa2sectiongene}, we show that the modified equation satisfies two generalized equations depending on the sign of $\eta$.
         
In section \ref{kappa1sectionexistence},\ref{kappa2sectionexistence} we show the well-posedness and uniqueness of those generalized equations. 

In section \ref{analyticity+section}, \ref{analyticity-section}, we prove our extended solution satisfies the Cauchy-Riemann equation. 
\section{Notation}
\color{black}

We will use the following notations:

$Z_1(t)$, $Z_2(t)$: values of $\alpha$ of the turnover points.

$\delta$: a sufficiently small number.

$\delta_c$: a sufficiently small number depending on $\delta$.

$\lambda_0(\alpha)$: $\lambda(\alpha)\geq 0$ and in $C^{100}(-\infty,\infty)$, satisfying
\begin{align*}
    \lambda_0(\alpha)=\begin{cases}1&|\alpha|\leq 10\delta,\\0&|\alpha|\geq 20\delta.
\end{cases}
\end{align*}

$\lambda(\alpha)$:$\lambda(\alpha)=\lambda_0(\frac{\alpha}{10})$.

$c(\alpha)$:
\begin{align*}
\Bigg\{\begin{array}{cc}
          c(\alpha)=\delta_c \alpha^2,  \text{ when }0\leq \alpha \leq \frac{\delta}{32},\\
          \supp c(\alpha) \subset [0,\frac{\delta}{8}],\\
          c(\alpha)\geq 0,\  c(\alpha) \in C^{100}[0,\infty)\cap C^{1,1}(-\infty,\infty),\  \|c(\alpha)\|_{C^{100}[0,\infty)\cap C^{1,1}(-\infty,\infty)}\leq \delta.
    \end{array}
    \end{align*}

$x(\alpha,t)$: a new variable such that $-\frac{\pi}{2}$, $0$ be two fixed turnover points.

$f(\alpha,t)=(f_1(\alpha,t),f_2(\alpha,t))$: the original solution of the Muskat equation.

 $\tilde{f}(\alpha,t)$:

\[
\tilde{f}(\alpha,t)=f(x(\alpha,t),t).
\]

$\tilde{f}^{+}(\alpha,t)$:

\[
\tilde{f}^{+}(\alpha,t)=(\tilde{f}(\alpha,t)-\sum_{i=0}^{60}\frac{\tilde{f}^{<i>}(0)}{i!}\alpha^i)\lambda(\alpha)1_{\alpha\geq 0}.
\]

$\tilde{f}^{L}(\alpha,t)$:
\[
\tilde{f}^{L}(\alpha,t)=\tilde{f}(\alpha,t)-\tilde{f}^{+}(\alpha,t).
\]
 
 $K(\cdot)$:

\[
K(f(\alpha,t)-f(\beta,t))=\frac{\sin(f_1(\alpha,t)-f_1(\beta,t))}{\cosh(f_2(\alpha,t)-f_2(\beta,t))-\cos(f_1(\alpha,t)-f_1(\beta,t))}.
\]

$X_i(\alpha,t)$: $X_i(\alpha,t)=(\partial_{\alpha}^{l_i}\tilde{f}^{L}(\alpha,t))^{\phi_{i}}(\partial_{\alpha}^{l_i'}(\frac{1}{\frac{dx}{d\alpha}(\alpha,t)}))^{\phi_{i}'}(\partial_{\alpha}^{l_i''}\frac{d}{d\alpha}x(\alpha,t))^{\phi_{i''}},
$
with $l_i,l_i',l_1''\leq 61$, $\phi_i,\phi_{i}',\phi_{i''} \in \{0,1\}$.

$V_X(\alpha,t)$: $V_X(\alpha,t)$ is the vector with components of all $X_i(\alpha,t)$.

$ \tilde{X}_i(\alpha,t)$:
\begin{equation*}
  \tilde{X}_i(\alpha,t)=\partial_{\alpha}^{b_i}(\frac{\frac{dx(\alpha,t)}{dt}}{\frac{dx(\alpha,t)}{d\alpha}}), 
\end{equation*}
with $b_i\leq 60$.

$D_{A}$:
$D_{A}=\{\alpha+iy|0<\alpha<\frac{\delta}{4}, |y|<c(\alpha) t\}$.

$t_0$: the original solution exists when $t\in (-t_0,t_0)$.

$\kappa(t)$:
\[
\kappa(t)=(\frac{\partial_{t}x}{\partial_{\alpha}x}(0,t)+\frac{1}{(\partial_{\alpha}x)(0,t)}\int_{-\pi}^{\pi}K(\tilde{f}^{+}(0,t)-\tilde{f}^{+}(\beta,t)+\tilde{f}^{L}(0,t)-\tilde{f}^{L}(\beta,t))(\partial_{\beta}x)(\beta,t)d\beta).
\]

 $\tau(t)$:
\begin{align*}
\tau(t)=\begin{cases}
0, & \text{when }\ \kappa(0)>0,\\
-\int_{0}^{t}\kappa(\zeta)d\zeta, & \text{when }\ \kappa(0)<0.
\end{cases}
\end{align*}

$D^{-i}(h)$: For any $h\in L_{\al}^{2}[-\tau(t),\pi]$, we could define
\begin{align*}\label{D-formularnew}
&\qquad D^{-i}(\tilde{h})(\alpha,\gamma,t)\\\nonumber
&=\bigg\{\begin{array}{cc}
         \int_{-\tau(t)}^{\alpha}(1+ic'(\alpha_1+\tau(t))\gamma t) ...\int_{-\tau(t)}^{\alpha_{i-1}}(1+ic'(\alpha_i+\tau(t))\gamma t)h(\alpha_i)d\alpha_i d\alpha_{i-1}...d\alpha_1 &  -\tau(t)< \alpha \leq \pi\\\nonumber
          0 & -\pi\leq\alpha\leq -\tau(t).
    \end{array}
\end{align*}

$V^{k}$: we set
\begin{align*}
V^{k}(\alpha,\gamma,t) =(\tilde{f}^{+}_1(\alpha,t), \tilde{f}^{+}_2(\alpha,t), \partial_{\alpha}\tilde{f}^{+}_1(\alpha,t),\partial_{\alpha}\tilde{f}^{+}_2(\alpha,t), ...,\partial_{\alpha}^{k}\tilde{f}^{+}_1(\alpha,t),\partial_{\alpha}^{k}\tilde{f}^{+}_2(\alpha,t)).
\end{align*}

$V^{k}_h$: we set
\begin{align*}
V^{k}_{h}(\alpha,\gamma,t) =(D^{-60}h_1(\alpha,\gamma,t), D^{-60}h_2(\alpha,\gamma,t), ...,D^{-60+k}h_1(\alpha,\gamma,t),D^{-60+k}h_2(\alpha,\gamma,t)).
\end{align*}

$V^{k}_{\tilde{f}^{L}}$: we set
\begin{align*}
V^{k}_{\tilde{f}^{L}}(\alpha,\gamma,t) =(\tilde{f}^{L}_1(\alpha,t), \tilde{f}^{L}_2(\alpha,t), \partial_{\alpha}\tilde{f}^{L}_1(\alpha,t),\partial_{\alpha}\tilde{f}^{L}_2(\alpha,t), ...,\partial_{\alpha}^{k}\tilde{f}^{L}_1(\alpha,t),\partial_{\alpha}^{k}\tilde{f}^{L}_2(\alpha,t)).
\end{align*}

$A(h)$:
\begin{align*}
A(h)=\frac{ic(\alpha)t}{1+ic'(\alpha)\gamma t}\partial_{\alpha}h-\partial_{\gamma}h.
\end{align*}

\section{The modified equation}\label{modifiedequationsection}
Recall that under the assumption \eqref{assumption0}, we have the equation:
\begin{equation}\label{muskat equation}
\frac{\partial f_i}{\partial t}(\alpha,t)=\int_{-\pi}^{\pi} \frac{\sin(f_1(\alpha)-f_1(\alpha-\beta))(\partial_{\alpha}f_i(\alpha)-\partial_{\alpha}f_i(\alpha-\beta))}{\cosh(f_2(\alpha)-f_2(\alpha-\beta))-\cos(f_{1}(\alpha)-f_{1}(\alpha-\beta))}d\beta,
\end{equation}
for $i=1,2$. $Z_1(t)$, $Z_2(t)$ are values of $\alpha$ of these two turnover points.

Without loss of generality, we set $Z_1(0)=0$, $Z_2(0)=-\frac{\pi}{2}$. 
We now change the variable so that the turning points happen at $0$ and $-\frac{\pi}{2}$ for all the time. Let 
\begin{equation}\label{initialchangevariable}
    x(\alpha, t)=\alpha - \sin(\alpha)(Z_2(t)+\frac{\pi}{2}-Z_1(t))+Z_1(t).
\end{equation}
Due to the smallness of $Z_1(t)$ and $Z_2(t)-\frac{\pi}{2}$ when $t$ is sufficiently small, $x(\cdot,t)$ is a diffeomorphism from $\mathcal{R}\to \mathcal{R}$ and $\mathcal{T}\to\mathcal{T}$. Let 
\begin{equation}\label{tildefdefi}
\tilde{f}(\alpha,t)=f(x(\alpha,t),t).
\end{equation}
Then we have 
\begin{equation}
\frac{d\tilde{f}_{\mu}(\alpha,t)}{dt}=\frac{d\tilde{f}_{\mu}}{d\alpha}(\alpha,t)(\frac{\frac{dx(\alpha,t)}{dt}}{\frac{dx(\alpha,t)}{d\alpha}})+\int_{-\pi}^{\pi}K(\tilde{f}(\alpha,t)-\tilde{f}(\beta,t))(\frac{\partial_{\alpha}\tilde{f}_{\mu}(\alpha,t)}{\frac{dx(\alpha,t)}{d\alpha}}-\frac{\partial_{\alpha}\tilde{f}_{\mu}(\beta,t)}{\frac{dx(\beta,t)}{d\beta}})(\frac{dx(\beta,t)}{d\beta})d\beta.
\end{equation}
with 
\begin{equation}\label{kdefi}
K(x)=K((x_1,x_2))=\frac{\sin(x_1)}{\cosh(x_2)-\cos(x_1)}.
\end{equation}
\begin{equation}\label{turnoverconditionnew}
\partial_{\alpha}\tilde{f}_1(0,t)=0, \ \partial_{\alpha}\tilde{f}_1(-\frac{\pi}{2},t)=0.
\end{equation}
\subsection{Localization}\label{nearlocal}
Next we separate $\tilde{f}$ and let
\begin{equation}\label{cut function0}
    \tilde{f}=\tilde{f}^{+}+\tilde{f}^{L}
\end{equation}
Here 
\begin{equation}\label{fLequation}
\tilde{f}^{+}(\alpha,t)=(\tilde{f}(\alpha,t)-\sum_{i=0}^{99}\frac{\tilde{f}^{<i>}(0)}{i!}\alpha^i)\lambda_0(\alpha)1_{\alpha\geq 0},
\end{equation} where $\lambda_0(\alpha)\geq 0$ and in $C^{100}(-\infty,\infty)$, satisfying
\begin{align}\label{lambda0defi}
    \lambda_0(\alpha)=\begin{cases}1&|\alpha|\leq \delta,\\0&|\alpha|\geq 2\delta.
\end{cases}
\end{align}
From the separation, $\tilde{f}^{+}, \tilde{f}^{L}$ satisfy following properties :\begin{align}\label{fLspace}
&\tilde{f}^{+}, \tilde{f}^{L}(\alpha,t) \in C^{1}((-t_0,t_0), C^{99}(-\infty,\infty)),\\\nonumber
&\tilde{f}^{L}(\alpha,t) \in \{g| g \text{ can be analytically extended to the set} \quad \{\alpha+iy|0<\alpha<\frac{\delta}{2}\}\}\\\nonumber
&\tilde{f}^{+(j)}(0,t)=0 , \text{ for } j\leq 99, \supp \tilde{f}\subset[0,2\delta]
\end{align} 
\color{black}Then we only need to focus on $\tilde{f}^{+}$ and study the analyticity behavior when $\alpha>0$. \color{black} 
We have
\begin{align}\label{equation1.2}
    &\quad\frac{d\tilde{f}_{\mu}^{+}(\alpha,t)}{dt}\\\nonumber
    &=\underbrace{\frac{d\tilde{f}_{\mu}^{+}}{d\alpha}(\alpha,t)(\frac{\frac{dx(\alpha,t)}{dt}}{\frac{dx(\alpha,t)}{d\alpha}})}_{Term_1}\underbrace{-\frac{d\tilde{f}_{\mu}^{L}(\alpha,t)}{dt}+\frac{d\tilde{f}_{\mu}^{L}}{d\alpha}(\alpha,t)(\frac{\frac{dx(\alpha,t)}{dt}}{\frac{dx(\alpha,t)}{d\alpha}})}_{Term_2}\\\nonumber
    &\quad\underbrace{+\int_{-\pi}^{\pi}K(\tilde{f}^{+}(\alpha,t)-\tilde{f}^{+}(\beta,t)+\tilde{f}^{L}(\alpha,t)-\tilde{f}^{L}(\beta,t))(\frac{\partial_{\alpha}\tilde{f}_{\mu}^{+}(\alpha,t)}{\frac{dx(\alpha,t)}{d\alpha}}-\frac{\partial_{\beta}\tilde{f}_{\mu}^{+}(\beta,t)}{\frac{dx(\beta,t)}{d\beta}})(\frac{dx(\beta,t)}{d\beta})d\beta}_{Term_{3}}\\\nonumber
    &\quad\underbrace{+\int_{-\pi}^{\pi}K(\tilde{f}^{+}(\alpha,t)-\tilde{f}^{+}(\beta,t)+\tilde{f}^{L}(\alpha,t)-\tilde{f}^{L}(\beta,t))(\frac{\partial_{\alpha}\tilde{f}_{\mu}^{L}(\alpha,t)}{\frac{dx(\alpha,t)}{d\alpha}}-\frac{\partial_{\beta}\tilde{f}_{\mu}^{L}(\beta,t)}{\frac{dx(\beta,t)}{d\beta}})(\frac{dx(\beta,t)}{d\beta})d\beta}_{Term_4}\\\nonumber
    &=Term_{1}+Term_{2}+Term_{3}+\\\nonumber
    &\quad\underbrace{+\int_{-\pi}^{\pi}\int_{0}^{1}\nabla K(\zeta\tilde{f}^{+}(\alpha,t)-\zeta\tilde{f}^{+}(\beta,t)+\tilde{f}^{L}(\alpha,t)-\tilde{f}^{L}(\beta,t))\cdot (\tilde{f}^{+}(\alpha,t)-\tilde{f}^{+}(\beta,t))}_{Term_{4,1}}\\\nonumber
    &\qquad\underbrace{(\frac{\partial_{\alpha}\tilde{f}_{\mu}^{L}(\alpha,t)}{\frac{dx(\alpha,t)}{d\alpha}}-\frac{\partial_{\beta}\tilde{f}_{\mu}^{L}(\beta,t)}{\frac{dx(\beta,t)}{d\beta}})(\frac{dx(\beta,t)}{d\beta})d\zeta d\beta}_{Term_{4,1}}\\\nonumber
     &\quad+\underbrace{\int_{-\pi}^{\pi}K(\tilde{f}^{L}(\alpha,t)-\tilde{f}^{L}(\beta,t))(\frac{\partial_{\alpha}\tilde{f}_{\mu}^{L}(\alpha,t)}{\frac{dx(\alpha,t)}{d\alpha}}-\frac{\partial_{\beta}\tilde{f}_{\mu}^{L}(\beta,t)}{\frac{dx(\beta,t)}{d\beta}})(\frac{dx(\beta,t)}{d\beta})d\beta}_{Term_{4,2}}.
\end{align}
where we separate the last term into the part depending on $\tilde{f}^{+}$ and the part not depending on $\tilde{f}^{+}$.
\subsection{Take derivative}
In order to make the structure of the equation more linear, we take the 60th derivative and
study the equation of $\tilde{f}_{\mu}^{+(60)}(\alpha,t)$. We have
\begin{align}\label{tildefderiequation}
      &\quad\frac{d\tilde{f}_{\mu}^{+(60)}(\alpha,t)}{dt}\\\nonumber
      &=\tilde{f}_{\mu}^{+(61)}(\alpha,t)(\frac{\frac{dx(\alpha,t)}{dt}}{\frac{dx(\alpha,t)}{d\alpha}})\\\nonumber
    &\quad+\int_{-\pi}^{\pi}K(\tilde{f}^{+}(\alpha,t)-\tilde{f}^{+}(\beta,t)+\tilde{f}^{L}(\alpha,t)-\tilde{f}^{L}(\beta,t))(\frac{\tilde{f}_{\mu}^{+(61)}(\alpha,t)}{\frac{dx(\alpha,t)}{d\alpha}}-\frac{\tilde{f}_{\mu}^{+(61)}(\beta,t)}{\frac{dx(\beta,t)}{d\beta}})(\frac{dx(\beta,t)}{d\beta})d\beta\\\nonumber
    &\quad+\sum_{i}{O_0^{i}}(\alpha,t)+T_{fixed}(\alpha,t)\\\nonumber
    &=\tilde{f}_{\mu}^{+(61)}(\alpha,t)(\frac{\frac{dx(\alpha,t)}{dt}}{\frac{dx(\alpha,t)}{d\alpha}}+\frac{1}{\frac{dx(\alpha,t)}{d\alpha}}p.v.\int_{-\pi}^{\pi}K(\tilde{f}^{+}(\alpha,t)-\tilde{f}^{+}(\beta,t)+\tilde{f}^{L}(\alpha,t)-\tilde{f}^{L}(\beta,t))(\frac{dx(\beta,t)}{d\beta})d\beta)\\\nonumber
    &\quad-p.v.\int_{-\pi}^{\pi}K(\tilde{f}^{+}(\alpha,t)-\tilde{f}^{+}(\beta,t)+\tilde{f}^{L}(\alpha,t)-\tilde{f}^{L}(\beta,t))(\frac{d\tilde{f}_{\mu}^{+(60)}(\beta,t)}{d\beta})d\beta\\\nonumber
    &\quad+\sum_{i}{O_0^{i}}(\alpha,t)+T_{fixed}(\alpha,t).
\end{align}
Here $T_{fixed}(\alpha,t)$ are terms not depending on $\tilde{f}^{+}$, coming from $Term_2+Term_{4,2}$. We have
\begin{align}\label{Tfixed}
&\quad T_{fixed}(\alpha,t)\\\nonumber
&=\partial_{\alpha}^{60}(-\frac{d\tilde{f}_{\mu}^{L}(\alpha,t)}{dt}+\frac{d\tilde{f}_{\mu}^{L}}{d\alpha}(\alpha,t)(\frac{\frac{dx(\alpha,t)}{dt}}{\frac{dx(\alpha,t)}{d\alpha}})\\\nonumber&\quad+\int_{-\pi}^{\pi}K(\tilde{f}^{L}(\alpha,t)-\tilde{f}^{L}(\beta,t))(\frac{\partial_{\alpha}\tilde{f}_{\mu}^{L}(\alpha,t)}{\frac{dx(\alpha,t)}{d\alpha}}-\frac{\partial_{\beta}\tilde{f}_{\mu}^{L}(\beta,t)}{\frac{dx(\beta,t)}{d\beta}})(\frac{dx(\beta,t)}{d\beta})d\beta).
\end{align}
Because it does not depend on $\tilde{f}^{+}$. From the property \eqref{fLspace}, this term is analytic in $\{x+iy|0<x<\frac{\delta}{2}\}$. The $O_0^{i}$  are the terms with at most 60th derivative hit on $\tilde{f}^{+}$.

Before we write down the explicit form of $O^i_0$, we introduce some notations. 
Let 
\begin{align}\label{notation01}
V^{k}(\alpha,t) =(\tilde{f}^{+}_1(\alpha,t), \tilde{f}^{+}_2(\alpha,t), \partial_{\alpha}\tilde{f}^{+}_1(\alpha,t),\partial_{\alpha}\tilde{f}^{+}_2(\alpha,t), ...,\partial_{\alpha}^{k}\tilde{f}^{+}_1(\alpha,t),\partial_{\alpha}^{k}\tilde{f}^{+}_2(\alpha,t)).
\end{align}
\begin{align}\label{notation02}
V^{k}_{\tilde{f}^{L}}(\alpha,t) =(\tilde{f}^{L}_1(\alpha,t), \tilde{f}^{L}_2(\alpha,t), \partial_{\alpha}\tilde{f}^{L}_1(\alpha,t),\partial_{\alpha}\tilde{f}^{L}_2(\alpha,t), ...,\partial_{\alpha}^{k}\tilde{f}^{L}_1(\alpha,t),\partial_{\alpha}^{k}\tilde{f}^{L}_2(\alpha,t)).
\end{align}
\begin{align}\label{notation03}
V_{X}(\alpha,t) =\{X_i(\alpha,t)\} .
\end{align}
When we write $X_i(\alpha,t)$, we mean 
\begin{equation}\label{Xifunction}
X_i(\alpha,t)=(\partial_{\alpha}^{l_i}\tilde{f}^{L}(\alpha,t))^{\phi_{i}}(\partial_{\alpha}^{l_i'}(\frac{1}{\frac{dx}{d\alpha}(\alpha,t)}))^{\phi_{i}'}(\partial_{\alpha}^{l_i''}\frac{d}{d\alpha}x(\alpha,t))^{\phi_{i''}},
\end{equation}
with $l_i,l_i',l_1''\leq 61$, $\phi_i,\phi_{i}',\phi_{i''} \in \{0,1\}$, $x(\al,t)$ from \eqref{initialchangevariable}. $V_X(\alpha,t)$ is the vector with components of all $X_i(\alpha,t)$. 

When we write $\tilde{X}_i(\alpha,t)$, we mean 
\begin{equation}
   \tilde{X}_i(\alpha,t)=\partial_{\alpha}^{b_i}(\frac{\frac{dx(\alpha,t)}{dt}}{\frac{dx(\alpha,t)}{d\alpha}}), 
\end{equation}
with $b_i\leq 61$.

\color{black}
 A function $K_{-\sigma}^{j}(A,B,C)$ is of $-\sigma$ type if for $A$, $B$, $C$ in $R^n$, it has the form
\begin{align}\label{k-sigma}
K_{-\sigma}^{j}(A,B,C)=&c_j\frac{\sin(A_1+B_1)^{m_1}\cos(A_1+B_1)^{m_2}}{(\cosh(A_2+B_2)-\cos(A_1+B_1))^{m_0}}\\\nonumber
    &\times(\sinh(A_2+B_2))^{m_3}(\cosh(A_2+B_2))^{m_4}\Pi_{j=1}^{m_5}(A_{\lambda_{j}})\Pi_{j=1}^{m_6}(B_{\lambda_{j,2}})\Pi_{j=1}^{m_7}(C_{\lambda_{j,3}}),
\end{align}
with $m_1+m_3+m_5+m_6+m_7-2m_0\geq -\sigma$. $c_j$ is a constant.

 A function $K_{-\sigma,\zeta}^{j}(A,B,C)$ is of $-\sigma$ type if for $A$, $B$, $C$ in $R^n$, it has the form
\begin{align}\label{k-sigmazeta}
K_{-\sigma,\zeta}^{j}(A,B,C)=&c_j\frac{\sin(\zeta( A_1)+B_1)^{m_1}\cos(\zeta(A_1)+B_1)^{m_2}}{(\cosh(\zeta(A_2)+B_2)-\cos(\zeta(A_1)+B_1)^{m_0}}\\\nonumber
    &\times(\sinh(\zeta(A_2)+B_2))^{m_3}(\cosh(\zeta(A_2)+B_2))^{m_4}\Pi_{j=1}^{m_5}(A_{\lambda_{j}})\Pi_{j=1}^{m_6}(B_{\lambda_{j,2}})\Pi_{j=1}^{m_7}(C_{\lambda_{j,3}})\zeta^{m_8},
\end{align}
with $m_1+m_3+m_5+m_6+m_7-2m_0\geq -\sigma$, $m_8\geq 0$. $c_j$ is a constant.

By using the above notations, we have 
\begin{align}\label{Xispace}
&X_i(\alpha,t), \tilde{X}_i(\alpha,t),  \in C^{1}([0,t_0], C^{38}(-\infty,\infty)),\  \tilde{f}^{L}(\alpha,t) \in C^{1}([0,t_0], C^{99}(-\infty,\infty)),\\\nonumber
& X_i(\alpha,t), \tilde{X}_i(\alpha,t), \tilde{f}^{L}(\alpha,t) \in \{g| g \text{ can be analytically extended to the set}  \{\alpha+iy|0<\alpha<\frac{\delta}{2}\}\}.
\end{align}
\color{black} We claim that $O_{0}^{i}$ can be written as following five types, by separating the highest order term in the derivative. Since we take 60th derivatives, only one term can be hit by more than 31th derivatives. When we write $\sum_{i}O_0^{i}$, we always mean $\sum_{1\leq i'\leq 5}\sum_{i'}O_0^{1,i}$.

Here we have:
\begin{align*}
O_0^{1,i}(\alpha,t)=&\int_{-\pi}^{\pi}K_{-1}^{i}(V^{30}(\alpha,t)-V^{30}(\beta,t),V^{61}_{\tilde{f}^{L}}(\alpha,t)-V^{61}_{\tilde{f}^{L}}(\beta,t),V_{X}(\alpha,t)-V_{X}(\beta,t))\\
&\cdot X_{i'}(\beta,t)(\partial_{\alpha}^{b_{i}}\tilde{f}_{v}^{+}(\alpha,t)-\partial_{\alpha}^{b_{i}}\tilde{f}_{v}^{+}(\beta,t))d\beta.
\end{align*}
\begin{align*}
O_{0}^{2,i}(\alpha,t)=&\int_{-\pi}^{\pi}K_{-1}^{i}(V^{30}(\alpha,t)-V^{30}(\beta,t),V^{61}_{\tilde{f}^{L}}(\alpha,t)-V^{61}_{\tilde{f}^{L}}(\beta,t),V_{X}(\alpha,t)-V_{X}(\beta,t))X_{i'}(\beta,t){\partial_{\alpha}^{s_i}}\tilde{f}_{\mu}^{+}(\alpha,t)\\
&(\partial_{\alpha}^{b_{i}}\tilde{f}_{v}^{+}(\alpha,t)-\partial_{\alpha}^{b_{i}}\tilde{f}_{v}^{+}(\beta,t))d\beta.
\end{align*}
\begin{align*}
O_0^{3,i}(\alpha,t)=\int_{-\pi}^{\pi}K_{0}^{i}(V^{30}(\alpha,t)-V^{30}(\beta,t),V^{61}_{\tilde{f}^{L}}(\alpha,t)-V^{61}_{\tilde{f}^{L}}(\beta,t),V_{X}(\alpha,t)-V_{X}(\beta,t))X_{i'}(\beta,t){\partial_{\alpha}^{b_i}}\tilde{f}_{\mu}^{+}(\alpha,t)d\beta.
\end{align*}
\begin{align*}
O_0^{4,i}(\alpha,t)=&\int_{-\pi}^{\pi}\int_{0}^{1}K_{-1}^{i}(\zeta V^{30}(\alpha,t)-\zeta V^{30}(\beta,t),V^{61}_{\tilde{f}^{L}}(\alpha,t)-V^{61}_{\tilde{f}^{L}}(\beta,t),V_{X}(\alpha,t)-V_{X}(\beta,t))\\
&X_{i'}(\beta,t)\zeta^{q_i}(\partial_{\alpha}^{b_i}\tilde{f}_{v}^{+}(\alpha,t)-\partial_{\alpha}^{b_i}\tilde{f}_{v}^{+}(\beta,t))d\zeta d\beta\\
=&\int_{-\pi}^{\pi}\int_{0}^{1}K_{-1,\zeta}^{i}( V^{30}(\alpha,t)- V^{30}(\beta,t),V^{61}_{\tilde{f}^{L}}(\alpha,t)-V^{61}_{\tilde{f}^{L}}(\beta,t),V_{X}(\alpha,t)-V_{X}(\beta,t))\\
&X_{i'}(\beta,t)\zeta^{q_i}(\partial_{\alpha}^{b_i}\tilde{f}_{v}^{+}(\alpha,t)-\partial_{\alpha}^{b_i}\tilde{f}_{v}^{+}(\beta,t))d\zeta d\beta.
\end{align*}
\begin{align*}
O_0^{5,i}(\alpha,t)=\partial_{\alpha}^{b_i}\tilde{f}_{u}^{+}(\alpha,t)\tilde{X}_i(\alpha,t),
\end{align*}
with $0\leq b_i\leq 60$, $s_i\leq 30$, $q_i\geq 0$.

$O_{0}^{1,i}$, $O_0^{2,i}$ and $O_0^{3,i}$ come from $Term_{3}$. There are three terms because $Term_3$ can be written as 
\begin{align*}
    &\quad\int_{-\pi}^{\pi}K(\tilde{f}^{+}(\alpha,t)-\tilde{f}^{+}(\beta,t)+\tilde{f}^{L}(\alpha,t)-\tilde{f}^{L}(\beta,t))(\frac{\partial_{\alpha}\tilde{f}_{\mu}^{+}(\alpha,t)}{\frac{dx(\alpha,t)}{d\alpha}}-\frac{\partial_{\alpha}\tilde{f}_{\mu}^{+}(\beta,t)}{\frac{dx(\beta,t)}{d\beta}})(\frac{dx(\beta,t)}{d\beta})d\beta\\
    &=\int_{-\pi}^{\pi}K(\tilde{f}^{+}(\alpha,t)-\tilde{f}^{+}(\beta,t)+\tilde{f}^{L}(\alpha,t)-\tilde{f}^{L}(\beta,t))(\partial_{\alpha}\tilde{f}_{\mu}^{+}(\alpha,t)-\partial_{\alpha}\tilde{f}_{\mu}^{+}(\beta,t))d\beta\\
    &\quad+\int_{-\pi}^{\pi}K(\tilde{f}^{+}(\alpha,t)-\tilde{f}^{+}(\beta,t)+\tilde{f}^{L}(\alpha,t)-\tilde{f}^{L}(\beta,t))(\frac{\frac{dx(\beta,t)}{d\beta}}{\frac{dx(\alpha,t)}{d\alpha}}-1)\partial_{\alpha}\tilde{f}_{\mu}^{+}(\alpha,t)d\beta.
\end{align*}

$O_0^{4,i}$ comes from $Term_{4,1}$. 

$O_0^{5,i}$ comes from $Term_1$.

Notice that $O_0^{2,i}$ can be written as a product of $O_0^{1,i}$ type times $\partial_{\alpha}^{s_i}\tilde{f}_{\mu}^{+}$ with $s_i\leq 30$. And for each $\zeta$ in the inner integral, $O_0^{4,i}$ is similar as $O_{0}^{1,i}$ except that we change $K_{-1}^{i}$ to $K_{-1,\zeta}^{i}$ and times $\zeta^{q_i}$. These two terms behave similarly as $O_0^{1,i}$.
\subsection{Change the contour}
Let $c(\alpha)$ satisfy
\begin{align}\label{cdefi}
\begin{cases}
          c(\alpha)=\delta_c \alpha^2,  \text{ when }0\leq \alpha \leq \frac{\delta}{32}\\
          \supp c(\alpha) \subset [0,\frac{\delta}{8}]\\
          c(\alpha) \in C^{100}[0,\infty)\cap C^{1,1}(-\infty,\infty),\  \|c(\alpha)\|_{C^{100}[0,\infty)\cap C^{1,1}(-\infty,\infty)}\leq \delta.
\end{cases}
\end{align}

Then we assume that $\tilde{f}(\cdot,t)$ can be extended to an analytic function in the region $D_{A}=\{\alpha+iy|0<\alpha<\frac{\delta}{4}, |y|<c(\alpha)t\}$. We
will drop the assumption after we get the desired equation \eqref{extendedequation01}. We remark that $\tilde{f}^{L}(\alpha,t)$, $x(\alpha,t)$ and $T_{fixed}(\al,t)$ can be analytically extended  to the region $\tilde{D}_A$ without any assumption. 

With this assumption, we change the contour. From now on we will omit the dependency of t in $\tilde{f}$ for simplicity.
Let $\alpha_{\gamma,0}^{t}=\alpha+ic(\alpha)\gamma t$.
For $-1\leq \gamma \leq 1$, from \eqref{tildefderiequation}, we have
\begin{align*}
    &\quad\frac{d\tilde{f}_{\mu}^{+(60)}(\alpha_{\gamma,0}^{t})}{dt}\\
    &=\frac{d\tilde{f}_{\mu}^{+(60)}(\alpha_{\gamma,0}^{t})}{d\alpha}\frac{1}{1+ic'(\alpha)\gamma t}(ic(\alpha)\gamma+\frac{\partial_{t}x}{\partial_{\alpha}x}(\alpha_{\gamma,0}^{t})\\
    &\qquad+\frac{1}{(\partial_{\alpha}x)(\alpha_{\gamma,0}^t)}p.v.\int_{-\pi}^{\pi}K(\tilde{f}^{+}(\alpha_{\gamma,0}^{t})-\tilde{f}^{+}(\beta_{\gamma,0}^{t})+\tilde{f}^{L}(\alpha_{\gamma,0}^{t})-\tilde{f}^{L}(\beta_{\gamma,0}^{t}))(\partial_{\beta}x)(\beta_{\gamma,0}^{t})(1+ic'(\beta)\gamma t)d\beta)\\
    &\qquad-p.v.\int_{-\pi}^{\pi}K(\tilde{f}^{+}(\alpha_{\gamma,0}^{t})-\tilde{f}^{+}(\beta_{\gamma,0}^{t})+\tilde{f}^{L}(\alpha_{\gamma,0}^{t})-\tilde{f}^{L}(\beta_{\gamma,0}^{t}))\frac{d\tilde{f}_{\mu}^{+(60)}(\beta_{\gamma,0}^{t})}{d\beta}d\beta\\
    &\quad+\sum_{i}O_0^{i}(\alpha^{t}_{\gamma,0})+T_{fixed}(\alpha_{\gamma,0}^{t}).
\end{align*}
The "p.v." shows up because in the last step of \eqref{tildefderiequation} we write  $(\frac{\tilde{f}_{\mu}^{+(61)}(\alpha,t)}{\frac{dx(\alpha,t)}{d\alpha}}-\frac{\tilde{f}_{\mu}^{+(61)}(\beta,t)}{\frac{dx(\beta,t)}{d\beta}})$ into two terms. There is no singularity if we add those terms with $p.v$ up, and we could change the contour safely. 
Now $O_0^i$ becomes
\begin{align*}
O_0^{1,i}(\alpha_{\gamma,0}^{t})=&\int_{-\pi}^{\pi}K_{-1}^{i}(V^{30}(\alpha_{\gamma,0}^{t})-V^{30}(\beta_{\gamma,0}^{t}),V^{61}_{\tilde{f}^{L}}(\alpha_{\gamma,0}^{t})-V^{61}_{\tilde{f}^{L}}(\beta_{\gamma,0}^{t}),V_X(\alpha_{\gamma,0}^{t})-V_X(\beta_{\gamma,0}^{t}))X_{i'}(\beta_{\gamma,0}^{t})\\
&((\partial_{\alpha}^{b_{i}}\tilde{f}_{v}^{+})(\alpha_{\gamma,0}^{t})-(\partial_{\alpha}^{b_{i}}\tilde{f}_{v}^{+})(\beta_{\gamma,0}^{t}))(1+ic'(\beta)\gamma t)d\beta,
\end{align*}
\begin{align*}
&O_0^{2,i}(\alpha_{\gamma,0}^{t})=\int_{-\pi}^{\pi}K_{-1}^{i}(V^{30}(\alpha_{\gamma,0}^{t})-V^{30}(\beta_{\gamma,0}^{t}),V^{61}_{\tilde{f}^{L}}(\alpha_{\gamma,0}^{t})-V^{61}_{\tilde{f}^{L}}(\beta_{\gamma,0}^{t}),V_X(\alpha_{\gamma,0}^{t})-V_X(\beta_{\gamma,0}^{t}))X_{i'}(\beta_{\gamma,0}^{t})\\
&({\partial_{\alpha}^{s_i}}\tilde{f}_{\mu}^{+})(\alpha_{\gamma,0}^{t})((\partial_{\alpha}^{b_{i}}\tilde{f}_{v}^{+})(\alpha_{\gamma,0}^{t})-(\partial_{\alpha}^{b_{i}}\tilde{f}_{v}^{+})(\beta_{\gamma,0}^{t}))(1+ic'(\beta)\gamma t)d\beta,
\end{align*}
\begin{align*}
&O_0^{3,i}(\alpha_{\gamma,0}^{t})=\int_{-\pi}^{\pi}K_{0}^{i}(V^{30}(\alpha_{\gamma,0}^{t})-V^{30}(\beta_{\gamma,0}^{t}),V^{61}_{\tilde{f}^{L}}(\alpha_{\gamma,0}^{t})-V^{61}_{\tilde{f}^{L}}(\beta_{\gamma,0}^{t}),V_X(\alpha_{\gamma,0}^{t})-V_X(\beta_{\gamma,0}^{t}))X_{i'}(\beta_{\gamma,0}^{t})\\
&{(\partial_{\alpha}^{b_i}}\tilde{f}_{\mu}^{+})(\alpha_{\gamma,0}^{t})(1+ic'(\beta)\gamma t ) d\beta,
\end{align*}
\begin{align*}
&O_0^{4,i}(\alpha_{\gamma,0}^{t})=\int_{-\pi}^{\pi}\int_{0}^{1}K_{-1,\zeta}^{i}( V^{30}(\alpha_{\gamma,0}^{t})-V^{30}(\beta_{\gamma,0}^{t}),V^{61}_{\tilde{f}^{L}}(\alpha_{\gamma,0}^{t})-V^{61}_{\tilde{f}^{L}}(\beta_{\gamma,0}^{t}),V_X(\alpha_{\gamma,0}^{t})-V_X(\beta_{\gamma,0}^{t}))\\
&X_{i'}(\beta_{\gamma,0}^{t})\zeta^{q_i}((\partial_{\alpha}^{b_i}\tilde{f}_{v}^{+})(\alpha_{\gamma,0}^{t})-(\partial_{\alpha}^{b_i}\tilde{f}_{v}^{+})(\beta_{\gamma,0}^{t}))d\zeta(1+ic'(\beta)\gamma t ) d\beta,
\end{align*}
\begin{align*}
    O_0^{5,i}(\alpha_{\gamma,0}^{t})=\partial_{\alpha}^{b_i}\tilde{f}_{u}^{+}(\alpha_{\gamma,0}^{t})\tilde{X}_i(\alpha_{\gamma,0}^t).
\end{align*}
\subsection{The modified equation}\label{nearmodified}
\subsubsection{The sign of the coefficient before transport term}
The sign of the coefficient before $\frac{d}{d\alpha}\tilde{f}^{+(60)}(\alpha_{\gamma,0}^{t})$  when $\alpha =0$ is related with condition \ref{extracondition3} in the theorem, which corresponds to the positive/negative $\eta$ in \eqref{new linear model}. In fact, we have 
\begin{equation}\label{3coefficient}
\begin{split}
    &\qquad\frac{1}{1+ic'(\alpha)\gamma t}(ic(\alpha)\gamma+\frac{\partial_{t}x}{\partial_{\alpha}x}(\alpha_{\gamma,0}^{t})+\frac{1}{(\partial_{\alpha} x)(\alpha_{\gamma,0}^{t})}\\
    &\quad\quad\cdot p.v.\int_{-\pi}^{\pi}K(\tilde{f}^{+}(\alpha_{\gamma,0}^{t})-\tilde{f}^{+}(\beta_{\gamma,0}^{t})+\tilde{f}^{L}(\alpha_{\gamma,0}^{t})-\tilde{f}^{L}(\beta_{\gamma,0}^{t})(\partial_{\beta}x)(\beta_{\gamma,0}^{t})(1+ic'(\beta)\gamma t)d\beta)\\
    &=\frac{1}{1+ic'(\alpha)\gamma t}(ic(\alpha)\gamma+\frac{\partial_{t}x}{\partial_{\alpha}x}(\alpha_{\gamma,0}^{t})+\frac{1}{(\partial_{\alpha}x)(\alpha_{\gamma,0}^{t})}\\
    &\qquad\cdot p.v.\int_{-\pi}^{\pi} K(\tilde{f}^{+}(\alpha_{\gamma,0}^{t})-\tilde{f}^{+}(\beta_{\gamma,0}^{t})+\tilde{f}^{L}(\alpha_{\gamma,0}^{t})-\tilde{f}^{L}(\beta_{\gamma,0}^{t}))(\partial_{\beta}x)(\beta_{\gamma,0}^{t})(1+ic'(\beta)\gamma t)d\beta\\
    &\quad-\frac{\partial_{t}x}{\partial_{\alpha}x}(0)-\frac{1}{(\partial_{\alpha}x)(0)}p.v. \int_{-\pi}^{\pi}K(\tilde{f}^{+}(0)-\tilde{f}^{+}(\beta_{\gamma,0}^{t})+\tilde{f}^{L}(0)-\tilde{f}^{L}(\beta_{\gamma,0}^{t}))(\partial_{\beta}x)(\beta_{\gamma,0}^t)(1+ic'(\beta)\gamma t)d\beta\\
    &\quad+\frac{\partial_{t}x}{\partial_{\alpha}x}(0)+\frac{1}{(\partial_{\alpha}x)(0)}p.v.\int_{-\pi}^{\pi}K(\tilde{f}^{+}(0)-\tilde{f}^{+}(\beta)+\tilde{f}^{L}(0)-\tilde{f}^{L}(\beta))(\partial_{\beta}x)(\beta)d\beta).
    \end{split}
\end{equation}
Here we use the analyticity assumption of $\tilde{f}^{+}$ and the property of $c(\alpha)$. Since $c(\alpha)=\delta_c\alpha^2$, we could change the contour in the last step. 
Let 
\begin{equation}\label{kappadefi}
\kappa(t)=(\frac{\partial_{t}x}{\partial_{\alpha}x}(0)+\frac{1}{(\partial_{\alpha}x)(0)}\int_{-\pi}^{\pi}K(\tilde{f}^{+}(0)-\tilde{f}^{+}(\beta)+\tilde{f}^{L}(0)-\tilde{f}^{L}(\beta))(\partial_{\beta}x)(\beta)d\beta).
\end{equation}
\color{black} 

 Then from  \eqref{3coefficient}, we have
 \begin{equation}\label{3coefficient2}
\begin{split}
    &\qquad\frac{1}{1+ic'(\alpha)\gamma t}(ic(\alpha)\gamma+\frac{\partial_{t}x}{\partial_{\alpha}x}(\alpha_{\gamma,0}^{t})+\frac{1}{(\partial_{\alpha} x)(\alpha_{\gamma,0}^{t})}\\
    &\quad\quad\cdot p.v.\int_{-\pi}^{\pi}K(\tilde{f}^{+}(\alpha_{\gamma,0}^{t})-\tilde{f}^{+}(\beta_{\gamma,0}^{t})+\tilde{f}^{L}(\alpha_{\gamma,0}^{t})-\tilde{f}^{L}(\beta_{\gamma,0}^{t})(\partial_{\beta}x)(\beta_{\gamma,0}^{t})(1+ic'(\beta)\gamma t)d\beta)\\
    &=[\frac{1}{1+ic'(\alpha)\gamma t}(ic(\alpha)\gamma+\frac{\partial_{t}x}{\partial_{\alpha}x}(\alpha_{\gamma,0}^{t})\\
    &\quad+\frac{1}{(\partial_{\alpha}x)(\alpha_{\gamma,0}^{t})}
    \cdot p.v.\int_{-\pi}^{\pi}K(\tilde{f}^{+}(\alpha_{\gamma,0}^{t})-\tilde{f}^{+}(\beta_{\gamma,0}^{t})+\tilde{f}^{L}(\alpha_{\gamma,0}^{t})-\tilde{f}^{L}(\beta_{\gamma,0}^{t}))\frac{dx(\beta_{\gamma,0}^{t})}{d\beta}d\beta\\&\qquad
    -\frac{\partial_{t}x}{\partial_{\alpha}x}(0)-\frac{1}{(\partial_{\alpha}x)(0)}p.v.\int_{-\pi}^{\pi}K(\tilde{f}^{+}(0)-\tilde{f}^{+}(\beta_{\gamma,0}^{t})+\tilde{f}^{L}(0)-\tilde{f}^{L}(\beta_{\gamma,0}^{t}))\frac{dx(\beta_{\gamma,0}^{t})}{d\beta}d\beta)]\\
    &\quad+(\frac{1}{1+ic'(\alpha)\gamma t} -1)\kappa(t) +\kappa (t).
    \end{split}
\end{equation}
Moreover, we have the following lemma 
\begin{lemma}\label{conditionchange}
For sufficiently small t, the inequality below is true:
\begin{equation}\label{modifiedcondition}
\kappa(t)(\frac{d Z_1}{dt}(t)+p.v.\int_{-\pi}^{\pi}K(f(Z_1(t),t)-f(\beta,t))d\beta) >0
\end{equation}
\end{lemma}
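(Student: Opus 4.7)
The plan is to show by direct computation that $\kappa(t)$ equals a strictly positive multiple of the quantity appearing in \eqref{extracondition3}, so that the product in \eqref{modifiedcondition} equals a positive constant times the square of a nonzero real number, hence is strictly positive.

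First I would simplify the integrand in \eqref{kappadefi}. Since $\tilde{f}^+(0,t) = 0$ by \eqref{fLequation} (the Taylor polynomial subtraction kills the value at $0$), we have $\tilde{f}^L(0,t) = \tilde{f}(0,t) = f(x(0,t),t) = f(Z_1(t),t)$, so $\tilde f^+(0)-\tilde f^+(\beta)+\tilde f^L(0)-\tilde f^L(\beta)$ collapses to $f(Z_1(t),t)-f(x(\beta,t),t)$. The factor $(\partial_\beta x)(\beta,t)\,d\beta$ in \eqref{kappadefi} is exactly the differential for the change of variable $\gamma = x(\beta,t)$. As $\beta$ sweeps $[-\pi,\pi]$, $\gamma$ sweeps $[-\pi+Z_1(t),\pi+Z_1(t)]$ (since $\sin(\pm\pi)=0$ in \eqref{initialchangevariable}), and by the $2\pi$-periodicity of $\gamma\mapsto K(f(Z_1(t),t)-f(\gamma,t))$ (coming from periodicity of $f(\cdot)-(\cdot,0)$ together with the $2\pi$-periodicity of $\sin,\cos$) this collapses back to $\int_{-\pi}^{\pi}$. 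Combined with $\partial_t x(0,t) = \tfrac{dZ_1}{dt}(t)$ (again since $\sin(0)=0$), I obtain
\[
\kappa(t) = \frac{1}{(\partial_\alpha x)(0,t)}\left(\frac{dZ_1}{dt}(t) + p.v.\int_{-\pi}^{\pi} K(f(Z_1(t),t) - f(\gamma,t))\,d\gamma\right).
\]

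Next I would match the bracketed factor with \eqref{extracondition3}. Under the normalization \eqref{assumption0} one has $(\rho_2-\rho_1)/2 = 1$, and the substitution $\gamma = Z_1(t)-\beta$ (once more using $2\pi$-periodicity) identifies the parenthesized quantity above with the second factor of \eqref{modifiedcondition}. Moreover, $(\partial_\alpha x)(0,t) = 1 - (Z_2(t) + \pi/2 - Z_1(t))$ is strictly positive and bounded away from $0$ for $t$ sufficiently small, since $Z_1(t)\to 0$ and $Z_2(t)\to -\pi/2$ as $t\to 0$. Therefore the product in \eqref{modifiedcondition} equals $\frac{1}{(\partial_\alpha x)(0,t)}$ times the square of the (nonzero) quantity \eqref{extracondition3}, which is strictly positive.

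I do not expect any real obstacle: the definition \eqref{kappadefi} was clearly engineered so that the change of variables $\gamma = x(\beta,t)$ at $\alpha=0$ reproduces the quantity in \eqref{extracondition3}. The only mildly delicate points are the periodicity bookkeeping used to recover $[-\pi,\pi]$ after the diffeomorphism substitution, and the observation that the principal value is harmless at the turnover point because $\partial_\alpha f_1(Z_1(t),t)=0$ kills the $\beta^{-1}$ singularity of $K$ at $\beta=0$.
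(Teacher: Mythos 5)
Your proof is correct and follows essentially the same route as the paper's: unwind $\kappa(t)$ through the change of variable $\gamma = x(\beta,t)$ (together with $\tilde f^+(0,t)=0$, $\tilde f^L(0,t)=f(Z_1(t),t)$, $\partial_t x(0,t)=dZ_1/dt$, and the periodicity bookkeeping) to obtain $\kappa(t)=\frac{1}{(\partial_\alpha x)(0,t)}\bigl(\frac{dZ_1}{dt}(t)+p.v.\int_{-\pi}^{\pi}K(f(Z_1(t),t)-f(\beta,t))\,d\beta\bigr)$, then conclude by positivity of $(\partial_\alpha x)(0,t)$ for small $t$ and nonvanishing of the bracket from \eqref{extracondition3}. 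You actually supply slightly more detail than the paper (the periodicity argument recovering the interval $[-\pi,\pi]$, and the remark that $\partial_\alpha f_1(Z_1(t),t)=0$ removes the apparent $1/(\beta-Z_1)$ singularity, making the principal value harmless), so nothing is missing.
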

\begin{proof}
\eqref{initialchangevariable} gives: 
\[
\frac{dx(0,t)}{dt}=\frac{dZ_1(t)}{dt},
\]
and 
\[
\frac{dx(\alpha,t)}{d\alpha}=1-\cos(\alpha)(Z_2(t)+\frac{\pi}{2}-Z_1(t))> 0,
\]
when $t$ is sufficiently small.
Moreover, we can change the variable and get
\begin{align*}
&\quad p.v.\int_{-\pi}^{\pi}K(f(Z_1(t),t)-f(\beta,t))d\beta\\
&=p.v.\int_{-\pi}^{\pi}K(\tilde{f}^{+}(0,t)-\tilde{f}^{+}(\beta,t)+\tilde{f}^{L}(0,t)-\tilde{f}^{L}(\beta,t))\frac{dx(\beta,t)}{d\beta}d\beta.
\end{align*}
Therefore,
we have
\begin{align}\label{kappadefinition}
    \kappa(t)&= \frac{\partial_{t}x}{\partial_{\alpha}x}(0,t)+\frac{1}{(\partial_{\alpha}x)(0,t)}\int_{-\pi}^{\pi}K(\tilde{f}^{+}(0,t)-\tilde{f}^{+}(\beta)+\tilde{f}^{L}(0,t)-\tilde{f}^{L}(\beta,t))(\partial_{\beta}x)(\beta,t)d\beta)\\\nonumber
    &=\frac{1}{\frac{dx}{d\alpha}(\alpha,t)}(\frac{d Z_1}{dt}(t)+p.v.\int_{-\pi}^{\pi}K(f(Z_1(t),t)-f(\beta,t))d\beta)
\end{align} when $t$ is sufficiently small.
Then we have the result since from the condition \eqref{extracondition3},
\[
(\frac{d Z_1}{dt}(t)+p.v.\int_{-\pi}^{\pi}K(f(Z_1(t),t)-f(\beta,t))d\beta)\neq 0,
\]
and $\frac{dx}{d\alpha}(\alpha,t)>0$ when $t$ is sufficiently small. 
\end{proof}
Hence $\kappa(t)$ has the same sign as $(\frac{d Z_1}{dt}(t)+p.v.\int_{-\pi}^{\pi}K(f(Z_1(t),t)-f(\beta,t))d\beta)$.
Therefore, by replacing the coefficient in the corresponding term with $\kappa(t)$, we have 
\begin{align}\label{changecontourequation}
    &\frac{d\tilde{f}_{\mu}^{+(60)}(\alpha_{\gamma,0}^{t})}{dt}\\\nonumber
    =&\kappa(t)\frac{d\tilde{f}_{\mu}^{+(60)}(\alpha_{\gamma,0}^{t})}{d\alpha}\\\nonumber
    &+B_{\tilde{f}}(\alpha,\gamma,t)\frac{d\tilde{f}_{\mu}^{+(60)}(\alpha_{\gamma,0}^{t})}{d\alpha}\\\nonumber
    &-p.v.\int_{-\pi}^{\pi} K(\tilde{f}^{+}(\alpha_{\gamma,0}^{t})-\tilde{f}^{+}(\beta_{\gamma,0}^{t})+\tilde{f}^{L}(\alpha_{\gamma,0}^{t})-\tilde{f}^{L}(\beta_{\gamma,0}^{t}))\frac{d\tilde{f}_{\mu}^{+(60)}(\beta_{\gamma,0}^{t})}{d\beta}d\beta\\\nonumber
    &+\sum_{i}O^i_{0}(\alpha_{\gamma,0}^{t})+T_{fixed}(\alpha_{\gamma,0}^{t}).
\end{align}
where 
\begin{align*}
    &\quad B^{0}_{\tilde{f}}(\alpha,\gamma,t)\\
    &=[\frac{1}{1+ic'(\alpha)\gamma t}(ic(\alpha)\gamma+\frac{\partial_{t}x}{\partial_{\alpha}x}(\alpha_{\gamma,0}^{t})\\
    &\qquad+\frac{1}{(\partial_{\alpha}x)(\alpha_{\gamma,0}^{t})}
    \cdot p.v.\int_{-\pi}^{\pi}K(\tilde{f}^{+}(\alpha_{\gamma,0}^{t})-\tilde{f}^{+}(\beta_{\gamma,0}^{t})+\tilde{f}^{L}(\alpha_{\gamma,0}^{t})-\tilde{f}^{L}(\beta_{\gamma,0}^{t}))\frac{dx(\beta_{\gamma,0}^{t})}{d\beta}d\beta\\&
    \qquad-\frac{\partial_{t}x}{\partial_{\alpha}x}(0)-\frac{1}{(\partial_{\alpha}x)(0)}p.v.\int_{-\pi}^{\pi}K(\tilde{f}^{+}(0)-\tilde{f}^{+}(\beta_{\gamma,0}^{t})+\tilde{f}^{L}(0)-\tilde{f}^{L}(\beta_{\gamma,0}^{t}))\frac{dx(\beta_{\gamma,0}^{t})}{d\beta}d\beta)]\\
    &\quad+(\frac{1}{1+ic'(\alpha)\gamma t} -1)\kappa(t).
\end{align*}
Now we change the variable again.
Let
\begin{equation}\label{alphadefi}
\alpha_{\gamma}^{t}=\alpha+\tau(t)+ic(\alpha+\tau(t))\gamma t
\end{equation}
and take
\begin{align}\label{defkappa}
\tau(t)=\begin{cases}
0, & \text{when }\ \kappa(0)>0,\\
-\int_{0}^{t}\kappa(t)dt, & \text{when }\ \kappa(0)<0.
\end{cases}
\end{align} 
Notice that from $\kappa(t)$ definition \eqref{kappadefinition}, we have $\kappa(t)$ is continuous. From lemma \ref{conditionchange}, $\kappa(t)\neq 0$. Thus $\kappa(t)$ will not change sign. We also have \begin{equation}\label{con:tautneg}
\tau(t)>0, \quad \tau'(t)>0, \text{ when }\kappa(0)<0.
\end{equation}
From \eqref{changecontourequation}, we get
\begin{align}\label{aftertranslationequation}
     &\frac{d\tilde{f}_{\mu}^{+(60)}(\alpha_{\gamma}^{t})}{dt}\\\nonumber
    &=(\kappa(t)+\tau'(t))\frac{d\tilde{f}_{\mu}^{+(60)}(\alpha_{\gamma}^{t})}{d\alpha}+\frac{d\tilde{f}_{\mu}^{+(60)}(\alpha_{\gamma}^{t})}{d\alpha}B_{\tilde{f}}(\alpha,\gamma,t)\\\nonumber
    &-p.v.\int_{-\pi}^{\pi}K(\tilde{f}^{+}(\alpha_{\gamma}^{t})-\tilde{f}^{+}(\beta_{\gamma}^{t})+\tilde{f}^{L}(\alpha_{\gamma}^{t})-\tilde{f}^{L}(\beta_{\gamma}^{t}))\frac{d\tilde{f}_{\mu}^{+(60)}(\beta_{\gamma}^{t})}{d\beta}d\beta\\\nonumber
    &+\sum_i O_0^{i}(\alpha_{\gamma}^{t})+T_{fixed}(\alpha_{\gamma}^{t}),
\end{align}
where 
\begin{align*}
    &\quad B_{\tilde{f}}(\alpha,\gamma,t)\\
    &=[\frac{1}{1+ic'(\alpha+\tau(t))\gamma t}(ic(\alpha+\tau(t))\gamma+\frac{\partial_{t}x}{\partial_{\alpha}x}(\alpha_{\gamma}^{t})\\
    &\qquad+\frac{1}{(\partial_{\alpha}x)(\alpha_{\gamma}^{t})}
    \cdot p.v.\int_{-\pi}^{\pi}K(\tilde{f}^{+}(\alpha_{\gamma}^{t})-\tilde{f}^{+}(\beta_{\gamma}^{t})+\tilde{f}^{L}(\alpha_{\gamma}^{t})-\tilde{f}^{L}(\beta_{\gamma}^{t}))\frac{dx(\beta_{\gamma}^{t})}{d\beta}d\beta\\&\qquad
    -\frac{\partial_{t}x}{\partial_{\alpha}x}(0)-\frac{1}{(\partial_{\alpha}x)(0)}p.v.\int_{-\pi}^{\pi}K(\tilde{f}^{+}(0)-\tilde{f}^{+}(\beta_{\gamma}^{t})+\tilde{f}^{L}(0)-\tilde{f}^{L}(\beta_{\gamma}^{t}))\frac{dx(\beta_{\gamma}^{t})}{d\beta}d\beta)]\\
    &\quad+(\frac{1}{1+ic'(\alpha+\tau(t))\gamma t} -1)\kappa(t).
\end{align*}

And $O_0^{i}$ terms become
\begin{align}\label{O1inew}
O^{1,i}_0(\alpha_{\gamma}^{t})=&\int_{-\pi}^{\pi}K_{-1}^{i}(V^{30}(\alpha_{\gamma}^{t})-V^{30}(\beta_{\gamma}^{t}),V^{61}_{\tilde{f}^{L}}(\alpha_{\gamma}^{t})-V^{61}_{\tilde{f}^{L}}(\beta_{\gamma}^{t}),V_X(\alpha_{\gamma}^{t})-V_X(\beta_{\gamma}^{t}))X_{i'}(\beta_{\gamma}^{t})\\\nonumber
&((\partial_{\alpha}^{b_{i}}\tilde{f}_{v}^{+})(\alpha_{\gamma}^{t})-(\partial_{\alpha}^{b_{i}}\tilde{f}_{v}^{+})(\beta_{\gamma}^{t}))(1+ic'(\beta+\tau(t))\gamma t)d\beta.
\end{align}
\begin{align}\label{O2inew}
&O^{2,i}_0(\alpha_{\gamma}^{t})=\int_{-\pi}^{\pi}K_{-1}^{i}(V^{30}(\alpha_{\gamma}^{t})-V^{30}(\beta_{\gamma}^{t}),V^{61}_{\tilde{f}^{L}}(\alpha_{\gamma}^{t})-V^{61}_{\tilde{f}^{L}}(\beta_{\gamma}^{t}),V_X(\alpha_{\gamma}^{t})-V_X(\beta_{\gamma}^{t}))X_{i'}(\beta_{\gamma}^{t})\\\nonumber
&({\partial_{\alpha}^{s_i}}\tilde{f}_{\mu}^{+})(\alpha_{\gamma}^{t})((\partial_{\alpha}^{b_{i}}\tilde{f}_{v}^{+})(\alpha_{\gamma}^{t})-(\partial_{\alpha}^{b_{i}}\tilde{f}_{v}^{+})(\beta_{\gamma}^{t}))(1+ic'(\beta+\tau(t))\gamma t)d\beta.
\end{align}
\begin{align}\label{O3inew}
&O^{3,i}_0(\alpha_{\gamma}^{t})=\int_{-\pi}^{\pi}K_{0}^{i}(V^{30}(\alpha_{\gamma}^{t})-V^{30}(\beta_{\gamma}^{t}),V^{61}_{\tilde{f}^{L}}(\alpha_{\gamma}^{t})-V^{61}_{\tilde{f}^{L}}(\beta_{\gamma}^{t}),V_X(\alpha_{\gamma}^{t})-V_X(\beta_{\gamma}^{t}))X_{i'}(\beta_{\gamma}^{t})\\\nonumber
&{(\partial_{\alpha}^{b_i}}\tilde{f}_{\mu}^{+})(\alpha_{\gamma}^{t})(1+ic'(\beta+\tau(t))\gamma t ) d\beta.
\end{align}
\begin{align}\label{O4inew}
&O^{4,i}_0(\alpha_{\gamma}^{t})=\int_{-\pi}^{\pi}\int_{0}^{1}K_{-1,\zeta}^{i}( V^{30}(\alpha_{\gamma}^{t})- V^{30}(\beta_{\gamma}^{t}),V^{61}_{\tilde{f}^{L}}(\alpha_{\gamma}^{t})-V^{61}_{\tilde{f}^{L}}(\beta_{\gamma}^{t}),V_X(\alpha_{\gamma}^{t})-V_X(\beta_{\gamma}^{t}))\\\nonumber
&X_{i'}(\beta_{\gamma}^{t})\zeta^{q_i}((\partial_{\alpha}^{b_i}\tilde{f}_{v}^{+})(\alpha_{\gamma}^{t})-(\partial_{\alpha}^{b_i}\tilde{f}_{v}^{+})(\beta_{\gamma}^{t}))d\zeta(1+ic'(\beta+\tau(t))\gamma t ) d\beta.
\end{align}
\begin{align}\label{O5inew}
    O^{5,i}_0(\alpha_{\gamma}^{t})=\partial_{\alpha}^{b_i}\tilde{f}_{u}^{+}(\alpha_{\gamma}^{t})\tilde{X}_i(\alpha_{\gamma}^t),
\end{align}
with $s_i\leq 30$, $b_i\leq 60$, $q_i\geq 0$.
\subsubsection{Times a cut-off function}
Now we want to make sure the solution of our new equation is supported in a small region. Let $\lambda$ defined on the union of a real line and a disk $D_{\delta}=\{|\alpha+iy|\leq \delta\}$. On the real line, let 
\begin{equation}
\lambda(\alpha)=\lambda_0(\frac{\alpha}{10}).
\end{equation} 
Hence  we have
\begin{align}\label{lambdadefi}
    \lambda(\alpha)=\begin{cases}1&|\alpha|\leq 10\delta,\\0&|\alpha|\geq 20\delta.
\end{cases}
\end{align}
On the disk, let $\lambda=1$. Therefore we have $\lambda\lambda_0(\alpha)=\lambda_0(\alpha)=\lambda\lambda_0(\alpha_{\gamma}^{t})$, and
\[
\lambda\partial_{\alpha}^{i}\tilde{f}^{+}(\alpha_{\gamma}^{t},t)=\partial_{\alpha}^{i}\tilde{f}^{+}(\alpha_{\gamma}^{t},t),
\]
when $t$ is sufficiently small.
We also let
\begin{equation}\label{3oinew}
    O^{i}=\lambda(\alpha_{\gamma}^{t})O^{i}_0.
\end{equation}

Then from \eqref{aftertranslationequation} we get
\begin{align}\label{3mainequationchange}
 &\frac{d\tilde{f}_{\mu}^{+(60)}(\alpha_{\gamma}^{t})}{dt}\\\nonumber
     =&\frac{d\tilde{f}_{\mu}^{+(60)}(\alpha_{\gamma}^{t})\lambda(\alpha_{\gamma}^{t})}{dt}\\\nonumber
    =&\lambda^{'}(\alpha_{\gamma}^{t})(1+ic'(\alpha+\tau(t))\gamma t)\tilde{f}_{\mu}^{+(60)}(\alpha_{\gamma}^{t})+\lambda(\alpha_{\gamma}^{t})\frac{d}{dt}(\tilde{f}_{\mu}^{+(60)}(\alpha_{\gamma}^{t}))\\\nonumber
    =&\underbrace{\lambda(\alpha_{\gamma}^{t})(\kappa(t)+\tau'(t))\frac{d\tilde{f}_{\mu}^{+(60)}(\alpha_{\gamma}^{t})}{d\alpha}}_{M_{1,1}}+\underbrace{\lambda(\alpha_{\gamma}^{t})\frac{d\tilde{f}_{\mu}^{+(60)}(\alpha_{\gamma}^{t})}{d\alpha}B_{\tilde{f}}(\alpha,\gamma,t)}_{M_{1,2}}\\\nonumber
    &\underbrace{-\lambda(\alpha_{\gamma}^{t})p.v.\int_{-\pi}^{\pi}K(\tilde{f}^{+}(\alpha_{\gamma}^{t})-\tilde{f}^{+}(\beta_{\gamma}^{t})+\tilde{f}^{L}(\alpha_{\gamma}^{t})-\tilde{f}^{L}(\beta_{\gamma}^{t}))\frac{d\tilde{f}_{\mu}^{+(60)}(\beta_{\gamma}^{t})}{d\beta} d\beta}_{F_2}\\\nonumber
    &+\lambda(\alpha_{\gamma}^{t})\cdot \sum_{i}O^i_{0}(\alpha_{\gamma}^{t})+\lambda(\alpha_{\gamma}^{t})T_{fixed}(\alpha_{\gamma}^{t})\\\nonumber
    =&M_{1,1}(\tilde{f}^{+(60)})+M_{1,2}(\tilde{f}^{+(60)})+F_{2}(\tilde{f}^{+(60)})+\sum_{i} O^i(\tilde{f}^{+(60)})+\lambda(\alpha_{\gamma}^{t})T_{fixed}(\alpha_{\gamma}^{t}).
\end{align}
Here we use the fact that the intersection of the support of $\lambda^{'}$ and $\tilde{f}^{+}$ is empty (from \eqref{fLspace} and \eqref{lambdadefi}). We also abuse the notation here by writing $O^i(\alpha_{\gamma}^{t})$ as $O^i(\tilde{f}^{+(60)})$
.
\subsubsection{Modify the terms to preserve the behavior near $\alpha =-\tau(t)$}

In order to avoid the singularity caused by the boundary, we need to do further changes to $F_2$ and $O^i$. We claim the equation holds at $\gamma=0$ even without any analyticity assumption.

For $F_2$, we separate the integral into the "near $-\tau(t)$ part" and the "far away from $-\tau(t)$" part and do integration by parts. We have
\begin{align*}
    &\quad F_2(\tilde{f}^{+(60)})\\
    &=-\lambda(\alpha_{\gamma}^{t})p.v.\int_{-\pi}^{\pi}K(\tilde{f}^{+}(\alpha_{\gamma}^{t})-\tilde{f}^{+}(\beta_{\gamma}^{t})+\tilde{f}^{L}(\alpha_{\gamma}^{t})-\tilde{f}^{L}(\beta_{\gamma}^{t}))\frac{d\tilde{f}_{\mu}^{+(60)}(\beta_{\gamma}^{t})}{d\beta}d\beta\\
    &=-\lambda(\alpha_{\gamma}^{t})p.v.\int_{-\tau(t)}^{2\alpha+\tau(t)}K(\tilde{f}^{+}(\alpha_{\gamma}^{t})-\tilde{f}^{+}(\beta_{\gamma}^{t})+\tilde{f}^{L}(\alpha_{\gamma}^{t})-\tilde{f}^{L}(\beta_{\gamma}^{t}))d(\tilde{f}_{\mu}^{+(60)}(\beta_{\gamma}^{t})-\tilde{f}_{\mu}^{+(60)}(\alpha_{\gamma}^{t}))\\
    &\quad-\lambda(\alpha_{\gamma}^{t})\int_{2\alpha+\tau(t)}^{\pi}K(\tilde{f}^{+}(\alpha_{\gamma}^{t})-\tilde{f}^{+}(\beta_{\gamma}^{t})+\tilde{f}^{L}(\alpha_{\gamma}^{t})-\tilde{f}^{L}(\beta_{\gamma}^{t}))d(\tilde{f}_{\mu}^{+(60)}(\beta_{\gamma}^{t}))\\
    &=-\lambda(\alpha_{\gamma}^{t})K(\tilde{f}^{+}(\alpha_{\gamma}^{t})-\tilde{f}^{+}((2\alpha+\tau(t))_{\gamma}^{t})+\tilde{f}^{L}(\alpha_{\gamma}^{t})-\tilde{f}^{L}((2\alpha+\tau(t))_{\gamma}^{t}))(\tilde{f}_{\mu}^{+(60)}((2\alpha+\tau(t))_{\gamma}^{t})-\tilde{f}_{\mu}^{+(60)}(\alpha_{\gamma}^{t}))\\
    &\quad+\lambda(\alpha_{\gamma}^{t})K(\tilde{f}^{+}(\alpha_{\gamma}^{t})-\tilde{f}^{+}((-\tau(t))_{\gamma}^{t})+\tilde{f}^{L}(\alpha_{\gamma}^{t})-\tilde{f}^{L}((-\tau(t))_{\gamma}^{t}))(\tilde{f}_{\mu}^{+(60)}((-\tau(t))_{\gamma}^{t})-\tilde{f}_{\mu}^{+(60)}(\alpha_{\gamma}^{t}))\\
    &\quad+\lambda(\alpha_{\gamma}^{t})p.v.\int_{-\tau(t)}^{2\alpha+\tau(t)}\frac{d}{d\beta}(K(\tilde{f}^{+}(\alpha_{\gamma}^{t})-\tilde{f}^{+}(\beta_{\gamma}^{t})+\tilde{f}^{L}(\alpha_{\gamma}^{t})-\tilde{f}^{L}(\beta_{\gamma}^{t})))(\tilde{f}_{\mu}^{+(60)}(\beta_{\gamma}^{t})-\tilde{f}_{\mu}^{+(60)}(\alpha_{\gamma}^{t}))d\beta\\
    &\quad+\lambda(\alpha_{\gamma}^{t})K(\tilde{f}^{+}(\alpha_{\gamma}^{t})-\tilde{f}^{+}((2\alpha+\tau(t))_{\gamma}^{t})+\tilde{f}^{L}(\alpha_{\gamma}^{t})-\tilde{f}^{L}((2\alpha+\tau(t))_{\gamma}^{t}))\tilde{f}_{\mu}^{+(60)}((2\alpha+\tau(t))_{\gamma}^{t})\\
    %&+\lambda(\alpha_{\gamma}^{t})K(\tilde{h}(\alpha,\gamma)-\tilde{h}(\pi,\gamma)+\tilde{f}^{L}(\alpha_{\gamma}^{t})-\tilde{f}^{L}((\pi)_{\gamma}^{t}))h(\pi,\gamma)\\
    &\quad+\lambda(\alpha_{\gamma}^{t})\int_{2\alpha+\tau(t)}^{\pi}\frac{\frac{d}{d\beta}(K(\tilde{f}^{+}(\alpha_{\gamma}^{t})-\tilde{f}^{+}(\beta_{\gamma}^{t})+\tilde{f}^{L}(\alpha_{\gamma}^{t})-\tilde{f}^{L}(\beta_{\gamma}^{t})))}{1+ic'(\beta+\tau(t))\gamma t}\tilde{f}_{\mu}^{+(60)}(\beta_{\gamma}^{t})(1+ic'(\beta+\tau(t))\gamma t)d\beta\\
    &=Term_1 + Term_2 + Term_3+ Term_4 + Term_5.
\end{align*}
We further do the change to the $Term_5$. We change the contour from the curve to a vertical line and a horizontal line starting from $2\alpha+\tau(t)$. Here $1+ic'(2\alpha+2\tau(t))\eta t$ in the denominator of $Term_{5,2}$ coming from $1+ic'(\beta+\tau(t))\gamma t$ by changing $\beta$ to $2\alpha+\tau(t)$ and $\gamma$ to $\eta$.
\begin{align*}
    &\quad Term\  5\\
    &= -\lambda(\alpha_{\gamma}^{t})\int_{0}^{\gamma}\frac{\frac{d}{d\beta}(K(\tilde{f}^{+}(\alpha_{\gamma}^{t})-\tilde{f}^{+}(\beta_{\eta}^{t})+\tilde{f}^{L}(\alpha_{\gamma}^{t})-\tilde{f}^{L}(\beta_{\eta}^{t})))|_{\beta=2\alpha+\tau(t)}}{1+ic'(2\alpha+2\tau(t))\eta t}\tilde{f}_{\mu}^{+(60)}((2\alpha+\tau(t))_{\eta}^{t})ic(2\alpha+2\tau(t)) t d\eta\\
    &\quad+\lambda(\alpha_{\gamma}^{t})\int_{2\alpha+\tau(t)}^{\pi}\frac{d}{d\beta}(K(\tilde{f}^{+}(\alpha_{\gamma}^{t})-\tilde{f}^{+}(\beta_{0}^{t})+\tilde{f}^{L}(\alpha_{\gamma}^{t})-\tilde{f}^{L}(\beta_{0}^{t})))\tilde{f}_{\mu}^{+(60)}(\beta_0^{t})d\beta\\
    %&=\lambda(\alpha_{\gamma}^{t})\int_{0}^{\gamma}\frac{\frac{d}{d\beta}(K(\tilde{h}(\alpha,\gamma)-\tilde{h}(\beta,\eta)+\tilde{f}^{L}(\alpha_{\gamma}^{t})-\tilde{f}^{L}(\beta_{\eta}^{t})))|_{\beta=2\alpha}}{1+ic'(2\alpha)\eta t}h(2\alpha,\eta)ic(2\alpha) t d\eta\\
      %&- \lambda(\alpha_{\gamma}^{t})\int_{2\alpha}^{\pi}\frac{d}{d\beta}(K(-\tilde{f}^{+}(\beta_{0}^{t})+\tilde{f}^{L}(\alpha_{\gamma}^{t})-\tilde{f}^{L}(\beta_{0}^{t})))\tilde{f}^{+<9>}(\beta,t)d\beta\\
    %&- \lambda(\alpha_{\gamma}^{t})\int_{2\alpha}^{\pi}\int_{0}^{1}\frac{d}{d\beta}(K(\eta_2\tilde{h}(\alpha,\gamma)-\tilde{f}^{+}(\beta_{0}^{t})+\tilde{f}^{L}(\alpha_{\gamma}^{t})-\tilde{f}^{L}(\beta_{0}^{t}))\tilde{f}^{+<9>}(\beta,t)d\eta_2d\beta\tilde{h}(\alpha,\gamma)\\
    &=Term_{5,1} +Term_{5,2}.
\end{align*}
Here by changing the contour, we change the $\tilde{f}^+(\beta_{\gamma}^t)$ to $\tilde{f}^+(\beta_0^t)=\tilde{f}^+(\beta+\tau(t))$ when $\beta>2\alpha+\tau(t)$ in the integral. Notice that the $\tilde{f}^+(\beta)$ is known and well-defined even without the analyticity assumption. In the later section, we will change $\tilde{f}^+(\alpha_{\gamma}^{t})$ to an unknown solution $h(\alpha,\gamma,t)$, but all $\tilde{f}^+(\beta)$ will be kept same.
In conclusion, let
\begin{align}\label{M2c}
F_2(\tilde{f}^{+(60)})=Term_1 + Term_2 + Term_3 + Term_4 + Term_{5,1} +Term_{5,2}.
\end{align}
For $O^{1,i}$, we change it in the following way. First we separate the terms in the bracket $((\partial_{\alpha}^{b_{i}}\tilde{f}_{v}^{+})(\alpha_{\gamma}^{t})-(\partial_{\alpha}^{b_{i}}\tilde{f}_{v}^{+})(\beta_{\gamma}^{t}))$ and use the fact that $\tilde{f}^{+}=0$ when $\alpha<0$. We then have
\begin{align}\label{O1iterm1}
O^{1,i}(\tilde{f}^{+(60)})=&\lambda(\alpha_{\gamma}^{t})p.v.\int_{-\pi}^{\pi}K_{-1}^{i}(V^{30}(\alpha_{\gamma}^{t})-V^{30}(\beta_{\gamma}^{t}),V^{61}_{\tilde{f}^{L}}(\alpha_{\gamma}^{t})-V^{61}_{\tilde{f}^{L}}(\beta_{\gamma}^{t}),V_X(\alpha_{\gamma}^{t})-V_X(\beta_{\gamma}^{t}))X_{i'}(\beta_{\gamma}^{t})\\\nonumber
&(\partial_{\alpha}^{b_{i}}\tilde{f}_{v}^{+})(\alpha_{\gamma}^{t})(1+ic'(\beta+\tau(t))\gamma t)d\beta\\\nonumber
&-\lambda(\alpha_{\gamma}^{t})p.v.\int_{-\tau(t)}^{\pi}K_{-1}^{i}(V^{30}(\alpha_{\gamma}^{t})-V^{30}(\beta_{\gamma}^{t}),V^{61}_{\tilde{f}^{L}}(\alpha_{\gamma}^{t})-V^{61}_{\tilde{f}^{L}}(\beta_{\gamma}^{t}),V_X(\alpha_{\gamma}^{t})-V_X(\beta_{\gamma}^{t}))X_{i'}(\beta_{\gamma}^{t})\\\nonumber
&(\partial_{\alpha}^{b_{i}}\tilde{f}_{v}^{+})(\beta_{\gamma}^{t})(1+ic'(\beta+\tau(t))\gamma t)d\beta\\\nonumber
&=Term_1+Term_2.
\end{align}
We do not change the first term. For the $Term_2$, we separate the integral into the "near $-\tau(t)$ part" and the "far away from $-\tau(t)$" part like in $F_2$. But we do not do the integration by parts and just change the contour directly. Because the term $O^{1,i}$ has less singularity than $F_2$. Here we have
\begin{align*}
    &Term_2\\
    =&-\lambda(\alpha_{\gamma}^{t})p.v.\int_{-\tau(t)}^{\pi}K_{-1}^{i}(V^{30}(\alpha_{\gamma}^{t})-V^{30}(\beta_{\gamma}^{t}),V^{61}_{\tilde{f}^{L}}(\alpha_{\gamma}^{t})-V^{61}_{\tilde{f}^{L}}(\beta_{\gamma}^{t}),V_X(\alpha_{\gamma}^{t})-V_X(\beta_{\gamma}^{t}))X_{i'}(\beta_{\gamma}^{t})\\
&\qquad(\partial_{\alpha}^{b_{i}}\tilde{f}_{v}^{+})(\beta_{\gamma}^{t})(1+ic'(\beta+\tau(t))\gamma t)d\beta\\
&=-\lambda(\alpha_{\gamma}^{t})p.v.\int_{-\tau(t)}^{2\alpha+\tau(t)}K_{-1}^{i}(V^{30}(\alpha_{\gamma}^{t})-V^{30}(\beta_{\gamma}^{t}),V^{61}_{\tilde{f}^{L}}(\alpha_{\gamma}^{t})-V^{61}_{\tilde{f}^{L}}(\beta_{\gamma}^{t}),V_X(\alpha_{\gamma}^{t})-V_X(\beta_{\gamma}^{t}))X_{i'}(\beta_{\gamma}^{t})\\
&\qquad(\partial_{\alpha}^{b_{i}}\tilde{f}_{v}^{+})(\beta_{\gamma}^{t})(1+ic'(\beta+\tau(t))\gamma t)d\beta\\
&\quad+\lambda(\alpha_{\gamma}^{t})\int_{0}^{\gamma}K_{-1}^{i}(V^{30}(\alpha_{\gamma}^{t})-V^{30}((2\alpha+\tau(t))_{\eta}^{t}),V^{61}_{\tilde{f}^{L}}(\alpha_{\gamma}^{t})-V^{61}_{\tilde{f}^{L}}((2\alpha+\tau(t))_{\eta}^{t}),V_{X}(\alpha_{\gamma}^{t})-V_{X}((2\alpha+\tau(t))_{\eta}^{t})\\&\qquad X_{i'}((2\alpha+\tau(t))_{\eta}^{t})(\partial_{\alpha}^{b_{i}}\tilde{f}_{v}^{+})((2\alpha+\tau(t))_{\eta}^{t})(ic(2\alpha+2\tau(t))t)d\eta\\
&\quad-\lambda(\alpha_{\gamma}^{t})\int_{2\alpha+\tau(t)}^{\pi}K_{-1}^{i}(V^{30}(\alpha_{\gamma}^{t})-V^{30}(\beta_{0}^{t}),V^{61}_{\tilde{f}^{L}}(\alpha_{\gamma}^{t})-V^{61}_{\tilde{f}^{L}}(\beta_{0}^{t}),V_X(\alpha_{\gamma}^{t})-V_X(\beta_{0}^{t}))X_{i'}(\beta_{0}^{t})\\
&\qquad(\partial_{\alpha}^{b_{i}}\tilde{f}_{v}^{+})(\beta_{0}^{t})d\beta\\
&=Term_{2,1}+ Term_{2,2} + Term_{2,3}.
\end{align*}
Hence 
\begin{align}\label{o1ic}
O^{1,i}(\tilde{f}^{+(60)})=Term_{1} + Term_{2,1} + Term_{2,2} + Term_{2,3}.
\end{align}
For the $O^{2,i}$ terms, from \eqref{O1inew}, \eqref{O2inew} and \eqref{3oinew}, notice that it is a $O^{1,i}$ type terms times an extra factor $\partial_{\alpha}^{s_i}\tilde{f}_{\mu}^{+}$. we do the same change as $O^{1,i}$ and do not change the factor $\partial_{\alpha}^{s_i}\tilde{f}_{\mu}^{+}$. 
Then we have
\begin{align}\label{3O2ic}
O^{2,i}(\tilde{f}^{+(60)})=O^{1,\tilde{i}}(\tilde{f}^{+(60)})\partial_{\alpha}^{s_i}\tilde{f}_{\mu}^{+},
\end{align}
with $s_i\leq 30$.
For the type $O^{4,i}$, from \eqref{O1inew}, \eqref{O4inew}, \eqref{3oinew} we have
\begin{align}\label{3O4ic}
O^{4,i}(\tilde{f}^{+(60)})=\int_{0}^{1}O^{1,\tilde{i}}_{\zeta}(\tilde{f}^{+(60)})\zeta^{q_i}d\zeta.
\end{align}
Here $O^{1,\tilde{i}}$ means a $O^{1,i}$ type term and $O^{1,\tilde{i}}_{\zeta}$ means a $O^{1,i}$ type term except changing  kernel $K_{-1}^{i}$ to $K_{-1,\zeta}^{i}$ with $q_i\geq 0$.

We do not change $O^{3,i}$ and $O^{5,i}$, from \eqref{O3inew}, \eqref{O5inew}, \eqref{3oinew}, we get
\begin{equation}\label{3O3ic}
\begin{split}
O^{3,i}(\tilde{f}^{+(60)})=&\lambda(\alpha_{\gamma}^{t})\int_{-\pi}^{\pi}K_{0}^{i}(V^{30}(\alpha_{\gamma}^{t})-V^{30}(\beta_{\gamma}^{t}),V^{61}_{\tilde{f}^{L}}(\alpha_{\gamma}^{t})-V^{61}_{\tilde{f}^{L}}(\beta_{\gamma}^{t}),V_X(\alpha_{\gamma}^{t})-V_X(\beta_{\gamma}^{t}))X_{i'}(\beta_{\gamma}^{t})\\
&{(\partial_{\alpha}^{b_i}}\tilde{f}_{\mu}^{+})(\alpha_{\gamma}^{t})(1+ic'(\beta+\tau(t))\gamma t ) d\beta.
\end{split}
\end{equation}
\begin{align}\label{3O5ic}
O^{5,i}(\tilde{f}^{+(60)})=\lambda(\alpha_{\gamma}^{t})\partial_{\alpha}^{b_i}\tilde{f}_{u}^{+}(\alpha_{\gamma}^{t})\tilde{X}_i(\alpha_{\gamma}^t),
\end{align}
with $b_i\leq 60.$
%In conclusion,
%we have
%\begin{align*}
    %\frac{d\tilde{f}_{\mu}^{+(60)}(\alpha_{\gamma}^{t})}{dt}=M_{1,1}(\tilde{f}^{+(60)})-M_{1,2}(\tilde{f}^{+(60)})+F_2(\tilde{f}^{+(60)})+\sum_{i}\lambda(\alpha_{\gamma}^{t}) O^{i}(\tilde{f}^{+}^{(60)})+\lambda(\alpha_{\gamma}^{t})T_{fixed}(\alpha_{\gamma}^{t}).
%\end{align*}
%We claim when $\gamma=0$ the equation holds without the analyticity condition.
\subsection{Conclusion: the modified equation $T$ in the complex plane  }
We now let $\tilde{f}^{+(60)}(\alpha_{\gamma}^{t})=h(\alpha,\gamma,t)$ and write the equation in $h$. The first step is to recover the lower order derivative of $\tilde{f}^{+}$ from $h$.

For any $\tilde{h}\in L_{\al}^{2}[-\tau(t),\pi]$, we could define
\begin{align}\label{D-formularnew}
&\quad D^{-i}(\tilde{h})(\alpha,\gamma,t)\\\nonumber
&=\bigg\{\begin{array}{cc}
         \int_{-\tau(t)}^{\alpha}(1+ic'(\alpha_1+\tau(t))\gamma t) ...\int_{-\tau(t)}^{\alpha_{i-1}}(1+ic'(\alpha_i+\tau(t))\gamma t)h(\alpha_i)d\alpha_i d\alpha_{i-1}...d\alpha_1 &  -\tau(t)< \alpha \leq \pi\\\nonumber
          0 & -\pi\leq\alpha\leq -\tau(t).
    \end{array}
\end{align}
It is clear that
\begin{equation}\label{3d-ipro}
D^{-i}\tilde{h}\in H_{\alpha}^{i}[-\pi,\pi].
\end{equation}
Moreover, for $h(\alpha,\gamma,t) \in C_{\gamma}^{i}([-1,1],L_{\alpha}^{2}[-\tau(t),\pi])$,  we have 
\begin{equation}\label{3d-ipro}
D^{-i}h\in C_{\gamma}^{i}([-1,1], H_{\alpha}^{i}[-\pi,\pi]).
\end{equation}
 Due to the fact that up to 98 order of derivative of $\tilde{f}^{+}$ at $\alpha=-\tau(t)$ is 0,  we claim for all $1\leq i\leq 60$, 
\[
\tilde{f}^{+(60-i)}(\alpha_{\gamma}^{t})=D^{-i}(h)(\alpha,\gamma,t).
\]
Like in \eqref{notation01}, we denote:
\begin{align}\label{notation03h}
V^{k}_{h} =(D^{-60}h_1, D^{-60}h_2,D^{-59}h_1, D^{-59}h_2..., D^{-60+k}h_1, D^{-60+k}h_2).
\end{align}

Then we have the equation for $h$ by changing the corresponding $\tilde{f}^{+(i)}(\alpha_{\gamma}^{t})$ to $D^{-(60-i)}(h)(\alpha,\gamma,t)$ whenever $\gamma\neq 0$. We keep $\tilde{f}^{+}$ if $\gamma=0$ (on the real line). For $\alpha> -\tau(t)$, from \eqref{3mainequationchange}, we have
\begin{align}\label{extendedequation01}
     &\frac{dh(\alpha,\gamma,t)}{dt}=T(h)\\\nonumber
    %=&\lambda(\alpha_{\gamma}^{t})\frac{h_{\mu}(\alpha,\gamma,t)}{d\alpha}B_{D^{-60}(h)}(\alpha,\gamma,t)\\
    %&-\lambda(\alpha_{\gamma}^{t})\int_{-\pi}^{\pi}K(D^{-60}(h)(\alpha,\gamma, t)-D^{-60}(h)(\beta,\gamma, t)+\tilde{f}^{L}(\alpha_{\gamma}^{t})-\tilde{f}^{L}(\beta_{\gamma}^{t}))\frac{d h_{\mu}(\beta,\gamma, t))}{d\beta}d\beta\\
    %&+\lambda(\alpha_{\gamma}^{t})\sum_{i}O^{1,i,c}(h)+\lambda(\alpha_{\gamma}^{t})T_{fixed}(\alpha_{\gamma}^{t})\\
     &=M_{1,1}(h)+M_{1,2}(h)+F_2(h)+\sum_{i}O^{i}(h)+\lambda(\alpha_{\gamma}^{t})T_{fixed}(\alpha_{\gamma}^{t}).
\end{align}
with
\begin{align}\label{3M11chequ}
    (M_{1,1}(h))_{\mu}=\lambda(\alpha_{\gamma}^{t})(\kappa(t)+\tau'(t))\frac{dh_{\mu}(\alpha,\gamma,t)}{d\alpha},
\end{align}
\begin{align}\label{M1chequ}
    (M_{1,2}(h))_{\mu}=\lambda(\alpha_{\gamma}^{t})\frac{dh_{\mu}(\alpha,\gamma,t)}{d\alpha}B_{D^{-60}(h)}(\alpha,\gamma,t),
\end{align}
\begin{align*}
    &\quad B_{D^{-60}(h)}(\alpha,\gamma,t)\\\nonumber
    &=[\frac{1}{1+ic'(\alpha+\tau(t))\gamma t}(ic(\alpha+\tau(t))\gamma+\frac{\partial_{t}x}{\partial_{\alpha}x}(\alpha_{\gamma}^{t})\\&\qquad +\frac{1}{(\partial_{\alpha}x)(\alpha_{\gamma}^{t})}
    \cdot p.v.\int_{-\pi}^{\pi}K(D^{-60}(h)(\alpha,\gamma, t)-D^{-60}(h)(\beta,\gamma, t)+\tilde{f}^{L}(\alpha_{\gamma}^{t})-\tilde{f}^{L}(\beta_{\gamma}^{t}))\frac{dx(\beta_{\gamma}^{t})}{d\beta}d\beta
    -\frac{\partial_{t}x}{\partial_{\alpha}x}(0)\\&\qquad-\frac{1}{(\partial_{\alpha}x)(0)}p.v\int_{-\pi}^{\pi}K(D^{-60}(h)(-\tau(t),\gamma, t)-D^{-60}(h)(\beta,\gamma, t)+\tilde{f}^{L}(0)-\tilde{f}^{L}(\beta_{\gamma}^{t}))\frac{dx(\beta_{\gamma}^{t})}{d\beta}d\beta)]\\
    &\quad+(\frac{1}{1+ic'(\alpha+\tau(t))\gamma t} -1)\kappa(t).
\end{align*}
From \eqref{M2c}, the next term is
\begin{align}\label{M2c02}
&\quad(F_{2}(h))_{\mu}\\\nonumber
&=Term_1 + Term_2 + Term_3 + Term_4 + Term_{5,1} +Term_{5,2}\\\nonumber
&=\underbrace{-\lambda(\al_{\gamma}^{t})K(D^{-60}(h)(\alpha,\gamma, t)-D^{-60}(h)(2\alpha+\tau(t),\gamma, t)+\tilde{f}^{L}(\alpha_{\gamma}^{t})-\tilde{f}^{L}((2\alpha+\tau(t))_{\gamma}^{t}))}_{Term_{1}}\\\nonumber
&\qquad\cdot\underbrace{(h_{\mu}(2\alpha+\tau(t),\gamma,t)-h_{\mu}(\alpha,\gamma,t))}_{Term_{1}}\\\nonumber
&\quad+\underbrace{\lambda(\al_{\gamma}^{t})K(D^{-60}(h)(\alpha,\gamma, t)-D^{-60}(h)(-\tau(t),\gamma, t)+\tilde{f}^{L}(\alpha_{\gamma}^{t})-\tilde{f}^{L}(-\tau(t)))(h_{\mu}(-\tau(t),\gamma, t)-h_{\mu}(\alpha,\gamma, t))}_{Term_2}\\\nonumber
    &\quad\underbrace{+\lambda(\al_{\gamma}^{t})p.v.\int_{-\tau(t)}^{2\alpha+\tau(t)}\frac{d}{d\beta}(K(D^{-60}(h)(\alpha,\gamma, t)-D^{-60}(h)(\beta,\gamma, t)+\tilde{f}^{L}(\alpha_{\gamma}^{t})-\tilde{f}^{L}(\beta_{\gamma}^{t})))(h_{\mu}(\beta,\gamma,t)-h_{\mu}(\alpha,\gamma,t))d\beta}_{Term_3}\\\nonumber
    &\quad+\underbrace{\lambda(\al_{\gamma}^{t})K(D^{-60}(h)(\alpha,\gamma, t)-D^{-60}(h)(2\alpha+\tau(t),\gamma, t)+\tilde{f}^{L}(\alpha_{\gamma}^{t})-\tilde{f}^{L}((2\alpha+\tau(t))_{\gamma}^{t}))h_{\mu}(2\alpha+\tau(t),\gamma,t)}_{Term_4}\\\nonumber
    &\quad-\underbrace{\lambda(\al_{\gamma}^{t})\int_{0}^{\gamma}\frac{\frac{d}{d\beta}(K(D^{-60}(h)(\alpha,\gamma, t)-D^{-60}(h)(\beta,\eta, t)+\tilde{f}^{L}(\alpha_{\gamma}^{t})-\tilde{f}^{L}(\beta_{\eta}^{t})))|_{\beta=2\alpha+\tau(t)}}{1+ic'(2\alpha+2\tau(t))\eta t}h_{\mu}(2\alpha+\tau(t),\eta,t)}_{Term_{5,1}}\\\nonumber
    &\qquad\underbrace{\cdot ic(2\alpha+2\tau(t)) t d\eta}_{Term_{5,1}}\\\nonumber
    &\quad+\underbrace{\lambda(\al_{\gamma}^{t})\int_{2\alpha+\tau(t)}^{\pi}\frac{d}{d\beta}(K(D^{-60}(h)(\alpha,\gamma, t)-\tilde{f}^{+}(\beta_0^ t)+\tilde{f}^{L}(\alpha_{\gamma}^{t})-\tilde{f}^{L}(\beta_{0}^{t})))\tilde{f}_{\mu}^{+(60)}(\beta_0^ t)d\beta}_{Term_{5,2}}.
\end{align}

For the sake of later proof, we could rewrite $F_2(h)$.

We first separate the $Term_{3}$ into the singular part and the non-singular part. We also separate $Term_{5,2}$ into the part depending on h and the part not depending on h, which is 
\begin{equation}\label{3M21equationn}
    \begin{split}
    &\quad(Term_3)_{\mu}\\
    =&-\lambda(\al_{\gamma}^{t})\int_{-\tau(t)}^{2\alpha+\tau(t)}\frac{d}{d\beta}(K(D^{-60}(h)(\alpha,\gamma, t)-D^{-60}(h)(\beta,\gamma, t)+\tilde{f}^{L}(\alpha_{\gamma}^{t})-\tilde{f}^{L}(\beta_{\gamma}^{t})))(h_{\mu}(\alpha,\gamma,t)-h_{\mu}(\beta,\gamma,t))d\beta\\
    =&\underbrace{-\lambda(\al_{\gamma}^{t})\lim_{\beta\to\alpha}(\frac{d}{d\beta}(K(D^{-60}(h)(\alpha,\gamma, t)-D^{-60}(h)(\beta,\gamma, t)+\tilde{f}^{L}(\alpha_{\gamma}^{t})-\tilde{f}^{L}(\beta_{\gamma}^{t})))(\alpha-\beta)^2)}_{M_{2,1}}\\
    &\quad\underbrace{\cdot p.v.\int_{-\tau(t)}^{2\alpha+\tau(t)}\frac{h_{\mu}(\alpha,\gamma,t)-h_{\mu}(\beta,\gamma,t)}{(\alpha-\beta)^2}d\beta}_{M_{2,1}}\\
    &-\lambda(\al_{\gamma}^{t})\int_{-\tau(t)}^{2\alpha+\tau(t)}\frac{1}{\alpha-\beta}[\frac{d}{d\beta}(K(D^{-60}(h)(\alpha,\gamma, t)-D^{-60}(h)(\beta,\gamma, t)+\tilde{f}^{L}(\alpha_{\gamma}^{t})-\tilde{f}^{L}(\beta_{\gamma}^{t})))(\alpha-\beta)^2\\
    &\qquad-\lim_{\beta\to\alpha}(\frac{d}{d\beta}(K(D^{-60}(h)(\alpha,\gamma, t)-D^{-60}(h)(\beta,\gamma, t)+\tilde{f}^{L}(\alpha_{\gamma}^{t})-\tilde{f}^{L}(\beta_{\gamma}^{t})))(\alpha-\beta)^2)]\\&\quad\cdot\frac{h_{\mu}(\alpha,\gamma,t)-h_{\mu}(\beta,\gamma,t)}{(\alpha-\beta)}d\beta\\
    &=M_{2,1}+Term_{3,2}.
    \end{split}
\end{equation}
Then we further split $Term_{3,2}$, $Term_{5,2}$  and have
\begin{equation}
\begin{split}
&\quad Term_{3,2}\\
&=\lambda(\al_{\gamma}^{t})p.v.\int_{-\tau(t)}^{2\alpha+\tau(t)}\frac{1}{\alpha-\beta}[\frac{d}{d\beta}(K(D^{-60}(h)(\alpha,\gamma, t)-D^{-60}(h)(\beta,\gamma, t)+\tilde{f}^{L}(\alpha_{\gamma}^{t})-\tilde{f}^{L}(\beta_{\gamma}^{t})))(\alpha-\beta)^2\\
    &\qquad -\lim_{\beta\to\alpha}(\frac{d}{d\beta}(K(D^{-60}(h)(\alpha,\gamma, t)-D^{-60}(h)(\beta,\gamma, t)+\tilde{f}^{L}(\alpha_{\gamma}^{t})-\tilde{f}^{L}(\beta_{\gamma}^{t})))(\alpha-\beta)^2)]\cdot \frac{h_{\mu}(\alpha,\gamma,t)}{\alpha-\beta}d\beta\\
    &\quad-\lambda(\al_{\gamma}^{t})p.v.\int_{-\tau(t)}^{2\alpha+\tau(t)}\frac{1}{\alpha-\beta}[\frac{d}{d\beta}(K(D^{-60}(h)(\alpha,\gamma, t)-D^{-60}(h)(\beta,\gamma, t)+\tilde{f}^{L}(\alpha_{\gamma}^{t})-\tilde{f}^{L}(\beta_{\gamma}^{t})))(\alpha-\beta)^2\\
    &\qquad-\lim_{\beta\to\alpha}(\frac{d}{d\beta}(K(D^{-60}(h)(\alpha,\gamma, t)-D^{-60}(h)(\beta,\gamma, t)+\tilde{f}^{L}(\alpha_{\gamma}^{t})-\tilde{f}^{L}(\beta_{\gamma}^{t})))(\alpha-\beta)^2)]\frac{h_{\mu}(\beta,\gamma,t)}{(\alpha-\beta)}d\beta\\
    &=Term_{3,2,1}+Term_{3,2,2}.
    \end{split}
\end{equation}
\begin{align*}
&\quad Term_{5,2}\\&=
\lambda(\al_{\gamma}^{t})\int_{2\alpha+\tau(t)}^{\pi}\frac{d}{d\beta}(K(-\tilde{f}^{+}(\beta_0^ t)+\tilde{f}^{L}(\alpha_{\gamma}^{t})-\tilde{f}^{L}(\beta_{0}^{t})))\tilde{f}^{+(60)}_{\mu}(\beta_0^ t)d\beta\\
&\quad+\lambda(\al_{\gamma}^{t})\int_{0}^{1}d\zeta_1 \int_{2\alpha+\tau(t)}^{\pi}\frac{d}{d\zeta_1}\frac{d}{d\beta}(K(\zeta_1 D^{-60}(h)(\alpha,\gamma, t)-\tilde{f}^{+}(\beta_0^ t)+\tilde{f}^{L}(\alpha_{\gamma}^{t})-\tilde{f}^{L}(\beta_{0}^{t})))\tilde{f}_{\mu}^{+(60)}(\beta_0^ t)d\beta\\
&=\underbrace{Term_{5,2,1}}_{\text{does not depend on h}} +Term_{5,2,2}.
\end{align*}
We then call the sum of terms in $F_2(h)$ except the term $M_{2,1}$ "$O^{0}$" . Then we have
\begin{equation}\label{3f2new}
    \begin{split}
        F_2(h)&=M_{2,1}(h)+O^{0}\\
        &=M_{2,1}(h)+Term_1 + Term_2 +Term_{3,2,1}+Term_{3,2,2}+Term_{4}+Term_{5,1}+Term_{5,2,1}+Term_{5,2,2}.
    \end{split}
\end{equation}

 From \eqref{o1ic}, we then have
\begin{align}\label{O1icnewequation}
&\quad(O^{1,i}(h))_{\mu}\\\nonumber
&=Term_{1} + Term_{2,1} + Term_{2,2} + Term_{2,3}.\\\nonumber
&=\lambda(\al_{\gamma}^{t})p.v.\int_{-\pi}^{\pi}K_{-1}^{i}(V^{30}_{h}(\alpha,\gamma,t)-V^{30}_{h}(\beta,\gamma,t),V^{61}_{\tilde{f}^{L}}(\alpha_{\gamma}^{t})-V^{61}_{\tilde{f}^{L}}(\beta_{\gamma}^{t}),V_X(\alpha_{\gamma}^{t})-V_X(\beta_{\gamma}^{t}))X_{i'}(\beta_{\gamma}^{t})\\\nonumber
&\qquad D^{-60+b_i}(h_v)(\alpha,\gamma,t)(1+ic'(\beta+\tau(t))\gamma t)d\beta\\\nonumber
&\quad-\lambda(\al_{\gamma}^{t})p.v.\int_{-\tau(t)}^{2\alpha+\tau(t)}K_{-1}^{i}(V_{h}^{30}(\alpha,\gamma,t)-V_{h}^{30}(\beta,\gamma,t),V^{61}_{\tilde{f}^{L}}(\alpha_{\gamma}^{t})-V^{61}_{\tilde{f}^{L}}(\beta_{\gamma}^{t}),V_X(\alpha_{\gamma}^{t})-V_X(\beta_{\gamma}^{t}))X_{i'}(\beta_{\gamma}^{t})\\\nonumber
&\qquad(D^{-60+b_i}(h_v)(\beta,\gamma,t))(1+ic'(\beta+\tau(t))\gamma t)d\beta\\\nonumber
&\quad+\lambda(\al_{\gamma}^{t})\int_{0}^{\gamma}K_{-1}^{i}(V^{30}_h(\alpha,\gamma,t)-V^{30}_h(2\alpha+\tau(t),\eta,t),V^{61}_{\tilde{f}^{L}}(\alpha_{\gamma}^{t})-V^{61}_{\tilde{f}^{L}}((2\alpha+\tau(t))_{\eta}^{t}),V_{X}(\alpha_{\gamma}^{t})-V_{X}((2\alpha+\tau(t))_{\eta}^{t})\\\nonumber&\qquad X_{i'}((2\alpha+\tau(t))_{\eta}^{t})(D^{-60+b_i}(h_v)(2\alpha+\tau(t),\eta,t))(ic(2\alpha+2\tau(t))t)d\eta\\\nonumber
&\quad-\lambda(\al_{\gamma}^{t})\int_{2\alpha+\tau(t)}^{\pi}K_{-1}^{i}(V^{30}_h(\alpha,\gamma,t)-V^{30}(\beta_{0}^{t}),V^{61}_{\tilde{f}^{L}}(\alpha_{\gamma}^{t})-V^{61}_{\tilde{f}^{L}}(\beta_{0}^{t}),V_X(\alpha_{\gamma}^{t})-V_X(\beta_{0}^{t}))X_{i'}(\beta_{0}^{t})\\\nonumber
&\qquad(\partial_{\alpha}^{b_{i}}\tilde{f}_{v}^{+})(\beta_{0}^{t})d\beta.
\end{align}
We also separate the $Term_{2,3}$ as the $Term_{5,2}$ in $F_2$.
We have 
\begin{align*}
    &\quad Term_{2,3}\\
    &=-\lambda(\al_{\gamma}^{t})\int_{2\alpha+\tau(t)}^{\pi}K_{-1}^{i}(-V^{30}(\beta_{0}^{t}),V^{61}_{\tilde{f}^{L}}(\alpha_{\gamma}^{t})-V^{61}_{\tilde{f}^{L}}(\beta_{0}^{t}),V_X(\alpha_{\gamma}^{t})-V_X(\beta_{0}^{t}))X_{i'}(\beta_{0}^{t})(\partial_{\alpha}^{b_{i}}\tilde{f}_{v}^{+})(\beta_{0}^{t})d\beta\\
&\quad -\lambda(\al_{\gamma}^{t})\int_{0}^{1}d\zeta_0\int_{2\alpha+\tau(t)}^{\pi}\frac{d}{d\zeta_0}K_{-1}^{i}(\zeta_0V^{30}_h(\alpha,\gamma,t)-V^{30}(\beta_{0}^{t}),V^{61}_{\tilde{f}^{L}}(\alpha_{\gamma}^{t})-V^{61}_{\tilde{f}^{L}}(\beta_{0}^{t}),V_X(\alpha_{\gamma}^{t})-V_X(\beta_{0}^{t}))X_{i'}(\beta_{0}^{t})\\
&\quad(\partial_{\alpha}^{b_{i}}\tilde{f}_{v}^{+})(\beta_{0}^{t})d\beta\\
&=\underbrace{Term_{2,3,1}}_{\text{does not depend on h}} +Term_{2,3,2}.
\end{align*}
Hence we get
\begin{align}\label{O1icnew}
  (O^{1,i}(h))_{\mu}=Term_{1} + Term_{2,1} + Term_{2,2} + Term_{2,3,1}+Term_{2,3,2}.  
\end{align}
For $O^{2,i}$ type terms, from \eqref{3O2ic}, we have
\begin{align}\label{O2ic}
(O^{2,i}(h))_{\mu}=O^{1,\tilde{i}}(h)D^{-60+s_i}h_{\mu}(\alpha,\gamma,t),
\end{align}
with $s_i\leq 30$.
For $O^{4,i}$ type terms, from \eqref{3O4ic}, we have
\begin{align}\label{O4ic}
(O^{4,i}(h))_{\mu}=\int_{0}^{1}(O^{1,\tilde{i}}_{\zeta}(h))_{\mu}\zeta^{q_i}d\zeta.
\end{align}
Here $O^{1,\tilde{i}}$ means a $O^{1,i}$ type term and $O^{1,\tilde{i}}_{\zeta}(h)$ means a $O^{1,i}$ type term except changing kernel $K_{-1}^{i}$ to $K_{-1,\zeta}^{i}$, 
 and $q_i\geq 0$.

From \eqref{3O3ic}, \eqref{3O5ic}, we have
\begin{align}\label{O3ic}
&(O^{3,i}(h))_{\mu}=\lambda(\al_{\gamma}^{t})\int_{-\pi}^{\pi}K_{0}^{i}(V^{30}_{h}(\alpha, \gamma,t)-V^{30}_h(\beta,\gamma,t),V^{61}_{\tilde{f}^{L}}(\alpha_{\gamma}^{t})-V^{61}_{\tilde{f}^{L}}(\beta_{\gamma}^{t}),V_X(\alpha_{\gamma}^{t})-V_X(\beta_{\gamma}^{t}))X_{i'}(\beta_{\gamma}^{t})\\\nonumber
&(1+ic'(\beta+\tau(t))\gamma t ) d\beta D^{-60+b_i}(h_{\mu})(\alpha,\gamma,t),
\end{align}
\begin{align}\label{O5ic}
    &(O^{5,i}(h))_{\mu}=\lambda(\al_{\gamma}^{t})D^{-60+b_i}(h_{\mu})(\alpha,\gamma,t)\tilde{X}_i(\alpha_{\gamma}^t),
\end{align}
with $b_i\leq 60$.

In conclusion, we have for $-\tau(t)<\al<\pi$,
\begin{align}\label{modifiedequation}
    \frac{dh(\alpha,\gamma,t)}{dt}=T(h)=
       M_{1,1}(h)+M_{1,2}(h)+M_{2,1}(h)+O^{0}+\sum_{i}O^{i}(h)+\lambda(\alpha)T_{fixed}(\alpha_{\gamma}^{t}) 
\end{align}
with the initial value $h_{\mu}(\alpha,\gamma,0)=\tilde{f}_{\mu}^{+(60)}(\alpha,0)$.
 Notice that all the terms that do not depend on $h$ are well defined even without the analyticity assumption on f.
 Moreover, $\tilde{f}^{+(60)}(\alpha+\tau(t),t)$ satisfies the  $\eqref{modifiedequation}$ when $\gamma=0$.

 We also remark that \eqref{extendedequation01} and \eqref{modifiedequation} are the same equation but written in different ways. We can use either form given that $h$ satisfies one of them.
 
 Now we will separate the case into $\kappa(t)>0$
and $\kappa(t)<0$. %They are corresponding to different linear model in \cite{existence1theorem}.%
We first deal with $\kappa(t)>0$, in this case by our def \eqref{defkappa}, we have $\tau(t)=0$.
\section{Behavior of the modified equation for $\kappa(t)>0$}\label{kappa1sectiongene}
In this section we show $T$ in \eqref{modifiedequation} satisfying the following generalized equation \eqref{3GE01}
when $\kappa(t)>0$. 

\subsection{A generalized equation when $\kappa(t)>0$}
We introduce the following equation:
\begin{equation}\label{3GE01}
\begin{split}
&\frac{dh(\alpha,\g,t)}{dt}=T^{+}(h)\\
&=M_{1,1}(h)+M_{1,2}(h)+M_{2,1}(h)+ \sum_{i}B_i,
\end{split}
\end{equation}
with initial data satisfying
\begin{equation}\label{3g0}
h(\al,\g,0)=h(\al,\g',0).
\end{equation}
Here we abuse the notation and denote:
\begin{equation}\label{M11def}
\begin{split}
M_{1,1}(h)=\lambda(\alpha)\kappa(t)\partial_{\al} h(\al,\g,t),
\end{split}
\end{equation}
with $\kappa(t)>0$.
\begin{equation}\label{M12def}
\begin{split}
M_{1,2}(h)=\lambda(\alpha)L_1^{+}(h)(\alpha,\gamma,t)\partial_{\alpha}h(\alpha,\gamma,t),
\end{split}
\end{equation}
\begin{equation}
\begin{split}\label{M21def}
M_{2,1}(h)=\lambda(\alpha)L_2^{+}(h)(\alpha,\gamma,t)\int_{0}^{2\alpha}\frac{h(\alpha,\g,t)-h(\beta,\g,t)}{(\alpha-\beta)^2} d\beta,
\end{split}
\end{equation}
with $L_1^{+}(h)$, $L_2^{+}(h)$ ,$B_i(h)$ satisfying the following conditions \label{3conditionsA}:

Let 
\begin{equation}\label{3Xformular}
X^{k}=C_{\gamma}^{0}([-1,1],H_{\alpha}^{k}[0,\pi])\cap \{h|\supp h \subseteq  [0,\frac{\pi}{4}]\},
Y^{k+2}=C_{\gamma}^{0}([-1,1],C_{\alpha}^{k+2}[0,\frac{\pi}{4}])).
\end{equation}
%\begin{equation}\label{3Yformular}
%Y^{k}=C_{\gamma}^{0}([-1,1],C_{\alpha}^{k+2}[0,\pi])\cap\%{h|\supp h \subseteq  [0,\frac{\pi}{2}]\}.
%\end{equation}
For $\delta$ in \eqref{fLequation} sufficiently small, there exists $\delta_{s}>0$, $t_{s}>0$, sufficiently small such that for any $1\leq k \leq 12$, if $h,g,\tilde{g},t$ satisfy 
\[
h,g,\tilde{g} \in X^{k}
\]
\begin{equation}\label{3hsmall}
\|h(\alpha,\gamma,t)-h(\al,\g,0))\|_{X^{1}}\leq \delta_{s},
\end{equation}
\[0\leq t\leq t_{s},
\]
\[
h(\al,\g,0)=\tilde{f}^{+(60)}(\al,0),
\]
 then $L_{i}^{+}(h)$, $B_i(h)$ satisfying the following conditions: refined R-T conditions, vanishing conditions and smoothness conditions. 

\quad\textbf{Refined R-T conditions}:
\[
 \text{ when }\al\in \supp{\lambda}, \quad 18 |\Im L_{1}^{+}(h)(\al,\g,t)|+18 |\Im L_{2}^{+}(h)(\al,\g,t)|\leq -\Re L_2^{+}(h)(\al,\g,t). 
\]

\quad\textbf{Vanishing condition}:
\[
L_{i}^{+}(h)(0,\g,t)=0, 
\]
\quad\textbf{Smoothness conditions}:
\begin{enumerate}
    \item 
    \begin{equation}\label{boundestimateL}
    \|L_{i}^{+}(h)(\alpha,\g,t)\|_{Y^{k+2}}\lesssim 1,   
    \end{equation}
    %\item $\|\bar{\partial}_{\g}L_{i}^{+}(h)(\alpha,\g,t)\|_{Y^{k+2}}\lesssim 1$,  $\|\bar{\partial}_{\g}^4L_{i}^{+}(h)(\alpha,\g,t)\|_{Y^{k+2}}\lesssim 1$,\label{3Lgammaderi} 
    \item  $\|D_{h}L_{i}^{+}(h)(\alpha,\g,t)[g]\|_{Y^{k+2}}\lesssim \|g\|_{X^{k}}$, %$\|D_{h}^2L_{i}^{+}(h,\g,t)[g,\tilde{g}]\|_{Y^{k+2}}\lesssim \|g\|_{X^{k}}\|\tilde{g}\|_{X^{k}}$, \label{3Ldh2}
     \item $\|L_{i}^{+}(h)(\alpha,\gamma,t)-L_{i}^{+}(h)(\alpha,\gamma,t')\|_{Y^{k+2}}\lesssim \mathcal{O}(t-t')+\|h(\alpha,\gamma,t)-h(\alpha,\gamma,t')\|_{X^{k}},$
     
     with $\lim_{x\to 0}\mathcal{O}(x)=0$ independent of $h$.
    \item There exists operator $D_\gamma(L_i^{+}(h))$ such that for $w(\alpha,\gamma,t), \tilde{w}(\alpha,\gamma,t)\in X^{k}$, 
    \[
    D_{\g}(L_{i}^{+}(h))[w]\in Y^{k+2},
    \]
     and 
     \begin{align}\label{dgammaestimateLi}
     &\|\frac{L_{i}^{+}(h)(\al,\g,t)-L_{i}^{+}(h)(\al,\g',t)}{\g-\g'}-D_{\g}(L_{i}^{+}(h))[w](\al,\g,t)\|_{C_{\al}^{k+2}[0,\frac{\pi}{4}]}\\\nonumber
     &\lesssim \|\frac{h(\al,\g,t)-h(\al,\g',t)}{\g-\g'}-w(\al,\g,t)\|_{H_{\al}^{k}[0,\pi]}+|\g-\g'|+\|h(\al,\g)-h(\al,\g',t)\|_{H_{\al}^{k}[0,\pi]},
     \end{align}
     and when $\frac{d}{d\g}h \in X^{k}$, we have
     \[
     \frac{d}{d\g}L_{i}^{+}(h)(\al,\g,t)=D_{\g}(L_{i}^{+}(h))[\frac{d}{d\g}h].
     \]
     
    \item $\|D_{\g}(L_{i}^{+}(h))[w](\al,\g,t)-D_{\g}(L_{i}^{+}(h))[\tilde{w}](\al,\g,t)\|_{Y^{k+2}}\lesssim\|w(\al,\g,t)-\tilde{w}(\al,\g,t)\|_{X^{k}}$,
    \item \begin{align*}
    &\quad\|D_{\g}(L_{i}^{+}(h))[w](\al,\g',t)-D_{\g}(L_{i}^{+}(h))[w](\al,\g,t)\|_{C_{\al}^{k+2}[0,\frac{\pi}{4}]}\\
    &\lesssim\|w(\al,\g,t)-w(\al,\g',t)\|_{H_{\al}^{k}[0,\pi]}+|\g-\g'|+\|h(\al,\g,t)-h(\al,\g',t)\|_{H_{\al}^{k}[0,\pi]},
    \end{align*}
    \item 
    \begin{align*}
    &\quad\|D_{\g}(L_{i}^{+}(h))[w](\al,\g,t')-D_{\g}(L_{i}^{+}(h))[w](\al,\g,t)\|_{C_{\al}^{k+2}[0,\frac{\pi}{4}]}\\
    &\lesssim\|w(\al,\g,t)-w(\al,\g,t')\|_{H_{\al}^{k}[0,\pi]}+ \mathcal{O}(t-t')+\|h(\al,\g,t)-h(\al,\g,t')\|_{H_{\al}^{k}[0,\pi]},
    \end{align*}
     \item $\supp_{\al}B_i(h)\subset [0,\frac{\pi}{4}].$\label{3Bsupp}
\item \begin{equation}\label{boundestimateB}
    \|B_i(h)(\alpha,\g,t)\|_{X^{k}}\lesssim 1,   
    \end{equation}
    %\item $\|\bar{\partial}_{\g}B_i(h)(\alpha,\g,t)\|_{X^{k}}\lesssim 1$,  $\|\bar{\partial}_{\g}^4 B_{i}(h)(\al,\g,t)\|_{X^{k}}\lesssim 1$,\label{3Bgammaderi} 
    \item  $\|D_{h}B_i(h)(\alpha,\g,t)[g]\|_{X^{k}}\lesssim \|g\|_{X^{k}}$,  %$\|D_{h}^2 B_{i}(h)(\al,\g,t)[g,\tilde{g}]\|_{X^{k}}\lesssim \|g\|_{X^{k}}\|\tilde{g}\|_{X^{k}},$ \label{3Bdh2}
     %\item  $\|D_{h}\bar{\partial}_{\g}B_i(h)(\alpha,\g,t)[g]\|_{X^{k}}\lesssim \|g\|_{X^{k}}$,
         \item$
    \|B_{i}(h)(\alpha,\gamma,t)-B_{i}(h)(\alpha,\gamma,t')\|_{X^{k}}
    \lesssim \mathcal{O}(t-t')+\|h(\alpha,\gamma,t)-h(\alpha,\gamma,t')\|_{X^{k}}.$
     \item There exists operator $D_\gamma(B_i(h))$ such that for $w\in X^{k}$, 
    \[
    D_{\g}(B_i(h))[w]\in X^{k},
    \]
     and 
     \begin{align}\label{dgammaestimateBi}
     &\quad\|\frac{B_i(h)(\al,\g)-B_i(h)(\al,\g')}{\g-\g'}-D_{\g}(B_i(h))[w](\al,\g)\|_{H_{\al}^{k}[0,\frac{\pi}{4}]}\\\nonumber
     &\lesssim \|\frac{h(\al,\g)-h(\al,\g')}{\g-\g'}-w(\al,\g)\|_{H_{\al}^{k}[0,\pi]}+|\g-\g'|+\|h(\al,\g)-h(\al,\g')\|_{H_{\al}^{k}[0,\pi]},
     \end{align}
     and when $\frac{d}{d\g}h \in X^{k}$, we have
     \[
     \frac{d}{d\g}B_i(h)(\al,\g)=D_{\g}(B_i(h))[\frac{d}{d\g}h]
     \]
    \label{3Bdh2ga}
    \item$
    \|B_{i}(h)(\alpha,\gamma,t)-B_{i}(h)(\alpha,\gamma,t')\|_{X^{k}}
    \lesssim \mathcal{O}(t-t')+\|h(\alpha,\gamma,t)-h(\alpha,\gamma,t')\|_{X^{k}}.$
       \item $\|D_{\g}(B_{i}(h))[w](\al,\g,t)-D_{\g}(B_{i}(h))[\tilde{w}](\al,\g,t)\|_{X^{k}}\lesssim\|w(\al,\g,t)-\tilde{w}(\al,\g,t)\|_{X^{k}}$,
    \item \begin{align*}
    &\quad\|D_{\g}(B_{i}(h))[w](\al,\g',t)-D_{\g}(B_{i}(h))[w](\al,\g,t)\|_{H_{\al}^{k}[0,\frac{\pi}{4}]}\\
    &\lesssim\|w(\al,\g,t)-w(\al,\g',t)\|_{H_{\al}^{k}[0,\pi]}+|\g-\g'|+\|h(\al,\g,t)-h(\al,\g',t)\|_{H_{\al}^{k}[0,\pi]},
    \end{align*}
    \item 
    \begin{align*}
    &\quad\|D_{\g}(B_{i}(h))[w](\al,\g,t')-D_{\g}(B_{i}(h))[w](\al,\g,t)\|_{H_{\al}^{k}[0,\frac{\pi}{4}]}\\
    &\lesssim\|w(\al,\g,t)-w(\al,\g,t')\|_{H_{\al}^{k}[0,\pi]}+ \mathcal{O}(t-t')+\|h(\al,\g,t)-h(\al,\g,t')\|_{H_{\al}^{k}[0,\pi]},
    \end{align*}
\end{enumerate}

Moreover, when $\g=0$. we have $L_i^{+}(h)(\al,0,t)=\bar{L}_i^{+}(h(\al,0,t),t)$ maps $(H_{\al}^{k}[0,\pi]\cap\{h|\supp h\subset [0,\frac{\pi}{4}]\})\times [0,t_s] \to C_{\al}^{k+2}[0,\frac{\pi}{4}]$, $B_i(h)(\al,0,t)=\bar{B}_i(h(\al,0,t),t)$ maps $(H_{\al}^{k}[0,\pi]\cap\{h|\supp h\subset [0,\frac{\pi}{4}]\})\times [0,t_s] \to H_{\al}^{k}[0,\pi]$, and satisfy
\begin{enumerate}
\item $\|\bar{L}_i^{+}(h,t)\|_{C_{\al}^{k+2}[0,\frac{\pi}{4}]}\lesssim 1$,\label{3barl0}
\item $\|D_h\bar{L}_i^{+}(h,t)[g]\|_{C_{\al}^{k+2}[0,\frac{\pi}{4}]}\lesssim \|g\|_{H_{\al}^{k}[0,\pi]}$,\label{3barl0lip}
\item $\|\bar{B}_i^{+}(h,t)\|_{H_{\al}^{k}[0,\pi]}\lesssim 1$,\label{3barb0}
\item $\|D_h\bar{B}_i^{+}(h,t)[g]\|_{H_{\al}^{k}[0,\pi]}\lesssim \|g\|_{H_{\al}^{k}[0,\pi]}$\label{3barblip}.
\end{enumerate}

\begin{rem}
    Roughly speaking, those conditions show $L_i^{+}(h)$ vanishes at $\alpha=0$, satisfies the refined R-T condition, and are sufficiently smooth with respect to $\g,t,h$ in $C_{\al}^{k+2}$. $B_i(h)$ are sufficiently smooth with respect to $\g,t,h$ in $H_{\al}^{k}$. $L_i^{+}(h)|_{\g=0}$, $B_i(h)|_{\g=0}$ only depend on $h|_{\g=0}$.
\end{rem}

\subsection{Analysis of the modified equation}
In this section our goal is to show the following theorem:

\begin{theorem}\label{theoremgeneralizedequa}
The modified equation \eqref{modifiedequation} satisfies the generalized equation \eqref{3GE01} when $\kappa(t)>0$. $\delta$, $\delta_{s}$, $t_s$ are chosen to fit the arc-chord condition in lemma \ref{arcchord}, the refined R-T condition in lemma \ref{rfR-T}.
\end{theorem}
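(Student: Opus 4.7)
The plan is to show that the modified equation \eqref{modifiedequation} fits the template \eqref{3GE01} by identifying each piece and then checking the long list of refined R-T, vanishing, and smoothness conditions. Since $\kappa(t)>0$ implies $\tau(t)=0$ by \eqref{defkappa}, the transport coefficient in \eqref{3M11chequ} reduces to $\lambda(\alpha_\gamma^t)\kappa(t)\partial_\alpha h$, which I will split as $\lambda(\alpha)\kappa(t)\partial_\alpha h + (\lambda(\alpha_\gamma^t)-\lambda(\alpha))\kappa(t)\partial_\alpha h$: the first summand is exactly $M_{1,1}(h)$ in the generalized equation, and the second is absorbed into one of the $B_i$ (on the support of $\tilde{f}^{+}$ the difference vanishes anyway, since $\lambda\equiv 1$ there). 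Next I identify $L_1^{+}(h)=B_{D^{-60}(h)}$ from \eqref{M1chequ} and $L_2^{+}(h)$ as the limit coefficient extracted in \eqref{3M21equationn}, i.e.\ $L_2^{+}(h)(\alpha,\gamma,t)=-\lambda(\alpha_\gamma^t)\lim_{\beta\to\alpha}\frac{d}{d\beta}K(D^{-60}h(\alpha)-D^{-60}h(\beta)+\tilde{f}^L(\alpha_\gamma^t)-\tilde{f}^L(\beta_\gamma^t))(\alpha-\beta)^2$. All remaining pieces of $T(h)$, namely $O^0$, each $O^i(h)$ in \eqref{O1icnew}--\eqref{O5ic}, the term $\lambda(\alpha)T_{fixed}(\alpha_\gamma^t)$, plus the cutoff correction above, are packaged as the finite list $\{B_i(h)\}$.

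The vanishing conditions $L_i^{+}(h)(0,\gamma,t)=0$ follow from three facts evaluated at $\alpha=0$, $\tau(t)=0$: (i) $\alpha_\gamma^t|_{\alpha=0}=ic(0)\gamma t=0$ by \eqref{cdefi}; (ii) $D^{-60}(h)(0,\gamma,t)=0$ by the definition of $D^{-i}$; and (iii) $\tilde{f}^{+}$ vanishes to high order at $0$ by \eqref{fLspace}. Plugging these into the explicit formula for $B_{D^{-60}(h)}$ shows that the two principal-value integrals cancel and the prefactor $\frac{1}{1+ic'(0)\gamma t}-1$ vanishes, hence $L_1^{+}(h)(0,\gamma,t)=0$; an identical cancellation holds for $L_2^{+}$ since the bracket defining it acquires an overall factor $\lambda(\alpha_\gamma^t)$ times a regular $K$-type expression whose limit is finite. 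For the refined R-T inequality I will invoke Lemma \ref{rfR-T}: the real part of $L_2^{+}$ is, up to higher-order perturbations, controlled below by the physical Rayleigh--Taylor contribution coming from $\partial_\alpha^2\tilde{f}_1(0,t)>0$ via \eqref{assumption}, while the imaginary parts of both $L_1^{+}$ and $L_2^{+}$ are of size $\mathcal{O}(c(\alpha)\gamma t)=\mathcal{O}(\delta_c\delta^2 t_s)$. Choosing $\delta,\delta_c,t_s$ sufficiently small makes the imaginary parts a factor-of-$18$ smaller than $-\Re L_2^{+}$.

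The smoothness conditions (1)--(16) and \ref{3barl0}--\ref{3barblip} are a long but mechanical verification. Boundedness of $L_i^{+}(h)$ in $Y^{k+2}$ and of $B_i(h)$ in $X^{k}$ follows from the arc-chord bound of Lemma \ref{arcchord} applied to the $K_{-\sigma}^i$, $K_{-\sigma,\zeta}^i$ kernels together with the product rule and the fact that $D^{-j}$ gains $j$ derivatives (by \eqref{3d-ipro}); the factors $X_{i'}, \tilde{X}_i, \tilde{f}^L$ are uniformly smooth by \eqref{Xispace}, and the prefactors $(1+ic'(\beta+\tau(t))\gamma t)$ and $\lambda(\alpha_\gamma^t)$ are $C^\infty$ in the relevant variables. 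The Fréchet derivatives with respect to $h$, and the operators $D_\gamma(L_i^{+}(h))$, $D_\gamma(B_i(h))$, are obtained by formally differentiating through the integrands: for the $\gamma$-derivative one uses that every $\gamma$-dependence enters through either $\alpha_\gamma^t=\alpha+ic(\alpha)\gamma t$ (handled by the chain rule and the analyticity of the $K_{-\sigma}$ kernel on $D_A$) or through $D^{-i}(h)$ (handled by the explicit $\gamma$-dependence of the $D^{-i}$ integrand in \eqref{D-formularnew}); the commuting lemmas in the Appendix convert these derivatives into $X^{k}$- and $Y^{k+2}$-bounded operators. Time continuity in $t$ reduces to the $C^1$-in-$t$ regularity of $x$, $\tilde{f}^{L}$, and $\tilde{X}_i$ from \eqref{Xispace} combined with the $h$-Lipschitz estimates just established. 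Finally, the support condition $\mathrm{supp}_\alpha B_i(h)\subset[0,\pi/4]$ is enforced by the $\lambda(\alpha_\gamma^t)$ cutoff, whose support lies in $\{|\alpha|\leq 20\delta\}\subset[0,\pi/4]$ for $\delta$ small; the restriction $\gamma=0$ statements reducing $L_i^{+}|_{\gamma=0}$, $B_i|_{\gamma=0}$ to functions of $h(\cdot,0,t)$ alone follow because at $\gamma=0$ every integrand collapses to its real-axis value.

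The main obstacle is the refined R-T inequality. The term $L_2^{+}$ has real part that one wants to be strictly negative uniformly on $\mathrm{supp}\,\lambda$, but shifting the contour by $ic(\alpha)\gamma t$ inevitably perturbs both real and imaginary parts, and one must simultaneously absorb imaginary contributions from $L_1^{+}$ (which itself is a long expression involving both the transport shift $ic(\alpha)\gamma$ and a non-local integral difference). Verifying that the parabolic sign from $\partial_\alpha^2\tilde{f}_1(0,t)>0$ survives this perturbation is precisely what Lemma \ref{rfR-T} accomplishes, and so the proof of Theorem \ref{theoremgeneralizedequa} is completed by combining that lemma with the routine but extensive smoothness bookkeeping above.
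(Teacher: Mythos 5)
Your decomposition into $M_{1,1}+M_{1,2}+M_{2,1}+\sum_i B_i$ matches the paper's route (Lemmas \ref{3maintermtype}, \ref{3boundedtermtype}, then the battery of smoothness lemmas \ref{L1main}--\ref{Tfixed behaviour} plus Lemma \ref{rfR-T}), but your argument for the vanishing condition $L_{2}^{+}(h)(0,\gamma,t)=0$ is wrong. You write that ``an identical cancellation holds for $L_2^{+}$ since the bracket defining it acquires an overall factor $\lambda(\alpha_\gamma^t)$ times a regular $K$-type expression whose limit is finite.'' There is no such factor: in \eqref{M21def} the cutoff $\lambda$ multiplies the \emph{product} $L_2^{+}(h)\int_0^{2\alpha}\frac{h(\alpha)-h(\beta)}{(\alpha-\beta)^2}d\beta$ and is not part of $L_2^{+}$. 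And even if it were, $\lambda(0_\gamma^t)=\lambda(0)=1$, so a ``finite limit'' multiplied by $\lambda$ would remain finite and nonzero. Unlike $L_1^{+}$, the coefficient $L_2^{+}(h)(\alpha,\gamma,t)=-\lim_{\beta\to\alpha}\bigl(\frac{d}{d\beta}K(\cdots)(\alpha-\beta)^2\bigr)$ has no telescoping-subtraction structure; its vanishing at $\alpha=0$ comes from a completely different mechanism, namely the turnover condition $\partial_\alpha\tilde f_1(0,t)=0$ from \eqref{turnoverconditionnew} together with $\partial_\alpha D^{-60}h(0,\gamma,t)=0$. For the particular kernel $K(x)=\frac{\sin x_1}{\cosh x_2-\cos x_1}$ these make the argument $A_1+B_1$ of the numerator $\sin$ vanish to second order as $\beta\to\alpha\to 0^+$, so $\lim_{\alpha\to 0}\lim_{\beta\to\alpha}\frac{\sin(A_1+B_1)}{\alpha-\beta}=\partial_\alpha\tilde f_1^{L}(0,t)=0$; this is the whole point of Lemma \ref{Lh1lemma}. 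Since the vanishing of $L_2^{+}$ at the turnover point is precisely what makes the equation degenerate there, this gap is not cosmetic: without invoking the turnover-point condition, the central structural fact of the theorem is missing from your argument.

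A secondary issue in the same vein: because $L_2^{+}(0,\gamma,t)=0$, both sides of \eqref{Refined R-T conditon0} vanish at $\alpha=0$, so the heuristic ``$-\Re L_2^+$ is bounded below while $|\Im L_i^+|$ is $\mathcal O(\delta_c\delta^2t_s)$'' does not directly close; the paper's Lemma \ref{rfR-T} correctly divides out the degeneracy by passing to the first $\alpha$-derivatives \eqref{Refined R-T conditon} and then uses the strict positivity of $\partial_\alpha^2\tilde f_1(0,0)$ (Lemma \ref{secondederivative}) and continuity in $t$. You do cite Lemma \ref{rfR-T}, so the proof outline appeals to the right fact, but your explanation misrepresents the geometry near $\alpha=0$. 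Finally, the split $\lambda(\alpha_\gamma^t)\kappa\partial_\alpha h=\lambda(\alpha)\kappa\partial_\alpha h+(\lambda(\alpha_\gamma^t)-\lambda(\alpha))\kappa\partial_\alpha h$ with the residual ``absorbed into one of the $B_i$'' would be illegal if the residual were nonzero (it carries a full $\partial_\alpha h$, which cannot be an $X^k$-bounded $B_i$); it happens to be identically zero since $\lambda(\alpha_\gamma^t)=\lambda(\alpha)$ by construction (Lemma \ref{3maintermtype}), so the split is vacuous rather than harmful, but it should be dropped rather than rationalized as an $B_i$ contribution.
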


For the sake of simplicity, We use $\Delta g(\alpha,\gamma,t)$ to show $g(\alpha,\gamma,t)-g(\beta,\gamma,t)$ and also use notations in \eqref{notation01}, \eqref{notation02}, \eqref{Xifunction} and \eqref{notation03h}. If we take $A=\Delta V_h^{30}(\alpha,\gamma,t)$, $B=\Delta V_{\tilde{f}^{L}}^{61}(\alpha_{\gamma}^{t},t)$, $C=\Delta V_{X}(\alpha_{\gamma}^{t},t)$, we have 
\begin{align*}
    &K_{-\sigma}^{j}(\Delta V_h^{30}(\alpha,\gamma,t),\Delta V_{\tilde{f}^{L}}^{61}(\alpha_{\gamma}^{t},t),\Delta V_{X}(\alpha_{\gamma}^{t},t))=\\
    &K_{-\sigma}^{j}(V_h^{30}(\alpha,\gamma,t)-V_h^{30}(\beta,\gamma,t), V_{\tilde{f}^{L}}^{61}(\alpha_{\gamma}^{t},t)-V_{\tilde{f}^{L}}^{61}(\beta_{\gamma}^{t},t),V_{X}(\alpha_{\gamma}^{t},t)-V_{X}(\beta_{\gamma}^{t},t)).
\end{align*}
We also let $\bar{\partial}_{\gamma}$ be the partial derivative with respect to $\gamma$ assuming $h$ does not depend on $\gamma.$

Now we separate the terms in \eqref{modifiedequation} in several types. They correspond to the terms in the generalized equation \eqref{3GE01}. We note that terms with the same notation eg.($M_{1,1}(h), M_{1,2}(h)$) will always correspond to each other.
\begin{lemma}\label{3maintermtype}
When $\kappa(t)>0$, we have $\al_{\gamma}^{t}=\al+ic(\al)\gamma t.$ Moreover, $\lambda(\alpha_{\gamma}^{t})=\lambda(\alpha)$, $M_{1,1}(h)$ in \eqref{modifiedequation} satisfies:
\begin{align}\label{3M11chequG01}
    M_{1,1}(h)=\lambda(\al_{\gamma}^{t})\kappa(t)\frac{dh(\alpha,\gamma,t)}{d\alpha}.
\end{align}

$M_{1,2}$ satisfies
\begin{equation}\label{3M12chequG01}
    M_{1,2}(h)=\lambda(\al_{\gamma}^{t})L_1^{+}(h)(\al,\g,t)\partial_{\al}h(\al,\g,t),
\end{equation}

with  \begin{align}\label{3L1form01}
    &\quad L_{1}^{+}(h)(\alpha,\gamma,t)\\\nonumber
    &=B_{D^{-60}(h)}(\alpha,\gamma,t)=[\frac{1}{1+ic'(\alpha)\gamma t}(ic(\alpha)\gamma +\frac{\partial_{t}x}{\partial_{\alpha}x}(\alpha_{\gamma}^{t},t)\\\nonumber&\quad+\frac{1}{(\partial_{\alpha}x)(\alpha_{\gamma}^{t},t)}
    \cdot p.v.\int_{-\pi}^{\pi}K_{-1}^{i}(\Delta V_h^{30}(\alpha,\gamma,t),\Delta V_{\tilde{f}^{L}}^{61}(\alpha_{\gamma}^{t},t),\Delta V_{X}(\alpha_{\gamma}^{t},t))X_i(\beta_{\gamma}^{t},t)(1+ic'(\beta)\gamma t)d\beta
    -\frac{\partial_{t}x}{\partial_{\alpha}x}(0,t)\\\nonumber
    &\quad -\frac{1}{(\partial_{\alpha}x)(0,t)}p.v\int_{-\pi}^{\pi}K_{-1}^{i}(\Delta V_h^{30}(0,\gamma,t),\Delta V_{\tilde{f}^{L}}^{61}(0,t),\Delta V_{X}(0,t))X_i(\beta_{\gamma}^{t},t)(1+ic'(\beta)\gamma t)d\beta)]\\
       &\quad+(\frac{1}{1+ic'(\alpha)\gamma t} -1)\kappa(t),
\end{align}
for some $K_{-1}$ type kernel. $M_{2,1}$ satisfies
\begin{equation}\label{3M21chequG01}
    M_{2,1}(h)=\lambda(\al_{\gamma}^{t})L_2^{+}(h)(\al,\g,t)\int_{0}^{2\alpha}\frac{(h(\alpha,\gamma,t)-h(\beta,\gamma,t))}{(\alpha-\beta)^2}d\beta,
\end{equation}
with 
\begin{align*}
    L_2^{+}(h)(\alpha,\gamma,t)=-\lim_{\beta\to\alpha}(\frac{d}{d\beta}K_{-1}^{j}(\Delta V_h^{30}(\alpha,\gamma,t),\Delta V_{\tilde{f}^{L}}^{61}(\alpha_{\gamma}^{t},t),\Delta V_{X}(\alpha_{\gamma}^{t},t))(\alpha-\beta)^2)
\end{align*}
with particular $K_{-1}^{j}(A,B,C)=\frac{\sin(A_1+B_1)}{\cosh(A_2+B_2)-\cos(A_1+B_1)}$.

\end{lemma}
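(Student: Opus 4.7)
The plan is to verify the three formulas by direct substitution, organized around three elementary observations: (i) $\tau(t)=0$, (ii) $\lambda(\alpha_\gamma^t)=\lambda(\alpha)$, and (iii) the base kernel $K$ from \eqref{kdefi} fits the $K_{-1}$-type template \eqref{k-sigma}. For (i), the hypothesis $\kappa(t)>0$ together with $\kappa(0)>0$ (which holds since $\kappa$ is continuous and nonvanishing, by Lemma~\ref{conditionchange}) forces $\tau(t)\equiv 0$ and hence $\tau'(t)\equiv 0$ via the definition \eqref{defkappa}. Consequently $\alpha_\gamma^t=\alpha+ic(\alpha)\gamma t$, and every integration limit of the form $-\tau(t)$ or $2\alpha+\tau(t)$ appearing in \eqref{3M11chequ}, \eqref{M1chequ} and \eqref{3M21equationn} collapses to $0$ or $2\alpha$ respectively.

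For (ii), I would split on whether $\alpha$ lies in $\supp c\subset[0,\delta/8]$. Off this support $\alpha_\gamma^t=\alpha\in\mathbb{R}$ and the identity is trivial. On the support, $|\alpha_\gamma^t-\alpha|\leq \|c\|_\infty t\leq \delta t$, so for $t\leq t_s$ sufficiently small the point $\alpha_\gamma^t$ lies inside the complex disk $D_\delta$ on which $\lambda$ is extended by $1$; but simultaneously $|\alpha|\leq \delta/8<10\delta$, so by \eqref{lambdadefi} $\lambda(\alpha)=\lambda_0(\alpha/10)=1$ on the real line. The two agree. Substituting (i) and (ii) into \eqref{3M11chequ} yields \eqref{3M11chequG01} immediately.

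Having made these reductions, the identification of $M_{1,2}$ is obtained by setting $L_1^+(h):=B_{D^{-60}(h)}$ in \eqref{M1chequ} and specializing the definition of $B_{D^{-60}(h)}$ after \eqref{M1chequ} to $\tau(t)=0$. To place $L_1^+$ in the form \eqref{3L1form01}, I would observe that $K$ matches the template \eqref{k-sigma} with the exponents $c_j=1$, $m_0=m_1=1$ and $m_2=m_3=m_4=m_5=m_6=m_7=0$, and that the Jacobian $\tfrac{dx}{d\beta}(\beta_\gamma^t)=(\partial_\alpha x)(\beta_\gamma^t)(1+ic'(\beta)\gamma t)$ exhibits exactly the factor $X_i(\beta_\gamma^t)(1+ic'(\beta)\gamma t)$ for the component $X_i=\partial_\alpha x$ of $V_X$; the arguments of $K_{-1}^i$ are precisely $\Delta V_h^{30}$ (from the leading two components of $D^{-60}(h)(\alpha,\gamma,t)-D^{-60}(h)(\beta,\gamma,t)$), together with $\Delta V_{\tilde f^L}^{61}$ and $\Delta V_X$ carried trivially. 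For $M_{2,1}$, I would return to \eqref{3M21equationn}: after imposing $\tau(t)=0$ and $\lambda(\alpha_\gamma^t)=\lambda(\alpha)$, the outer coefficient is exactly $-\lim_{\beta\to\alpha}\bigl(\partial_\beta K(\cdot)(\alpha-\beta)^2\bigr)$ with $K$ realized as the displayed $K_{-1}^j$, and the $p.v.$ integral is over $(0,2\alpha)$, matching \eqref{3M21chequG01}.

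I do not expect any serious obstacle; the lemma is a bookkeeping step translating the concrete modified equation \eqref{modifiedequation} into the abstract template \eqref{3GE01}. The mildest subtlety is the verification of (ii), which relies crucially on the quadratic vanishing $c(\alpha)=\delta_c\alpha^2$ near $\alpha=0$ and the smallness of $\|c\|_\infty t$ so that $\alpha_\gamma^t$ never leaves the domain where $\lambda$ has been extended analytically.
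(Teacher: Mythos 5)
Your proposal is correct and takes essentially the same approach as the paper, whose proof is a one-line pointer to the definitions of $\alpha_\gamma^t$, $\tau(t)$, $\lambda$, $c$, $M_{1,1}$, $M_{1,2}$, $M_{2,1}$, and $K$; you have simply filled in the routine verifications (that $\tau\equiv 0$ when $\kappa>0$, that $\lambda(\alpha_\gamma^t)=\lambda(\alpha)$ via the disk extension and $\supp c\subset[0,\delta/8]\subset\{|\alpha|\leq 10\delta\}$, and that $K=\sin/(\cosh-\cos)$ realizes the $K_{-1}$ template with $m_0=m_1=1$ and all other exponents zero).
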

\begin{proof}
   It directly follows from the definition of $\alpha_{\gamma}^{t}$ \eqref{alphadefi}, $\tau(t)$ \eqref{defkappa}, $\lambda(\alpha)$ \eqref{lambdadefi}, $c(\alpha)$ \eqref{cdefi}, the $M_{1,2}$\eqref{M1chequ},  $M_{1,1}$\eqref{3M11chequ} and $M_{2,1}$\eqref{3M21equationn}, kernel $K$ \eqref{kdefi}.
\end{proof}

\begin{lemma}\label{3boundedtermtype}
When $\kappa(t)>0$, we have  $\al_{\gamma}^{\tau}=\al+ic(\al)\gamma t.$ Terms in  $O^{0}$, $O^{1,i}$, $O^{3,i}$ and  $O^{5,i}$  can be written as the sum of the following types with $b_i\leq 60$:

\textbf{The first type}:
\begin{align}\label{firsttypeestimate}
(B_1^{i}(h))_{\mu}=
 \lambda(\alpha_{\gamma}^{t})p.v.\int_{0}^{2\alpha}  \tilde{B}_{1}^{i}(h)(\alpha,\beta,\gamma,t)\frac{D^{-60+b_i}h_v(\beta,\gamma,t)}{(\alpha-\beta)}d\beta,
\end{align}

with $\tilde{B}_1^{i}(h)$ one of the following two types:
sub-type 1:
\begin{align*}
    &\tilde{B}_1^{i}(h)(\alpha,\beta,\gamma,t)=((\alpha-\beta)^2\frac{d}{d\beta}(K_{-1}^{i}(\Delta V_{h}^{30}(\alpha,\gamma,t),\Delta V_{\tilde{f}^{L}}^{61}(\alpha_{\gamma}^{t},t),\Delta V_{X}(\alpha_{\gamma}^{t},t)))X_i(\beta_{\gamma}^{t},t)\\
    &-\lim_{\beta\to\alpha}((\alpha-\beta)^2\frac{d}{d\beta}(K_{-1}^{i}(\Delta V_{h}^{30}(\alpha,\gamma,t),\Delta V_{\tilde{f}^{L}}^{61}(\alpha_{\gamma}^{t},t),\Delta V_{X}(\alpha_{\gamma}^{t},t)))X_i(\alpha_{\gamma}^{t},t))\frac{1}{\alpha-\beta}
\end{align*}
or the form sub-type 2
\begin{align*}
    &\tilde{B}_1^{i}(h)(\alpha,\beta,\gamma,t)=((\alpha-\beta)(K_{-1}^{i}(\Delta V_{h}^{30}(\alpha,\gamma,t),\Delta V_{\tilde{f}^{L}}^{61}(\alpha_{\gamma}^{t},t),\Delta V_{X}(\alpha_{\gamma}^{t},t)))X_i(\beta_{\gamma}^{t},t)(1+ic'(\beta)\gamma t).
\end{align*}

\textbf{The second type}:
\begin{align}\label{secondtypeestimate}
(B_2^{i}(h))_{\mu}=
 \lambda(\alpha_{\gamma}^{t})\tilde{B}_2^{i}(h)(\alpha,\gamma,t)D^{-60+b_i}h_v(\alpha,\gamma,t) \text{ or } \lambda(\alpha_{\gamma}^{t})\tilde{B}_2^{i}(h)(\alpha,\gamma,t)D^{-60+b_i}h_v(2\alpha,\gamma,t),\\\nonumber
 \text{ or } \lambda(\alpha_{\gamma}^{t})\tilde{B}_2^{i}(h)(\alpha,\gamma,t)D^{-60+b_i}h_v(0,\gamma,t)
\end{align}

with $\tilde{B}_2^{i}(h)$ having the following six forms:
sub-type 1:
\begin{align*}
  \tilde{B}_2^{i}(h)(\alpha,\gamma,t)=p.v. \int_{-\pi}^{\pi}K_{-1}^{i}(\Delta V_h^{30}(\alpha,\gamma,t),\Delta V_{\tilde{f}^{L}}^{61}(\alpha_{\gamma}^{t},t),\Delta V_{X}(\alpha_{\gamma}^{t},t))X_i(\beta_{\gamma}^{t},t)(1+ic'(\beta)\gamma t)d\beta,
\end{align*}
sub-type 2 (with particular $K_{-1}^{i}(A,B,C)=\frac{\sin(A_1+B_1)}{\cosh(A_2+B_2)-\cos(A_1+B_1)}$)
\begin{align*}
    \tilde{B}_2^{i}(h)(\alpha,\gamma,t)=K_{-1}^{i}(\Delta V_h^{30}(\alpha,\gamma,t),\Delta V_{\tilde{f}^{L}}^{61}(\alpha_{\gamma}^{t},t),\Delta V_{X}(\alpha_{\gamma}^{t},t)|_{\beta=2\alpha},
\end{align*}
or 
\begin{align*}
    \tilde{B}_2^{i}(h)(\alpha,\gamma,t)=K_{-1}^{i}(\Delta V_h^{30}(\alpha,\gamma,t),\Delta V_{\tilde{f}^{L}}^{61}(\alpha_{\gamma}^{t},t),\Delta V_{X}(\alpha_{\gamma}^{t},t)|_{\beta=0},
\end{align*}

sub-type 3
\begin{align*}
    & \tilde{B}_2^{i}(h)(\alpha,\gamma,t)=\int_{0}^{1}d\zeta\int_{2\alpha}^{\pi}(\frac{d}{d\beta}(K_{-2}^{i}( \zeta V_h^{30}(\alpha,\gamma,t)-V^{30}(\beta_0^{t},t),\Delta V_{\tilde{f}^{L}}^{61}(\alpha_{\gamma}^{t},t),\Delta V_{X}(\alpha_{\gamma}^{t},t)))X_{i}(\beta_{0}^{t},t)\tilde{f}^{+(b_i)}(\beta_0^{t})d\beta,
\end{align*}
sub-type 4
\begin{align*}
    &\tilde{B}_2^{i}(h)(\alpha,\gamma,t)=\int_{0}^{1}d\zeta\int_{2\alpha}^{\pi}(K_{-2}^{i}( \zeta V_h^{30}(\alpha,\gamma,t)-V^{30}(\beta_0^{t},t),\Delta V_{\tilde{f}^{L}}^{61}(\alpha_{\gamma}^{t},t),\Delta V_{X}(\alpha_{\gamma}^{t},t))X_{i}(\beta_{0}^{t},t)\tilde{f}^{+(b_i)}(\beta_0^{t})d\beta,
\end{align*}
sub-type 5
\begin{align*}
    &\tilde{B}_2^{i}(h)(\alpha,\gamma,t)=p.v.\int_{0}^{2\al}((\alpha-\beta)^2\frac{d}{d\beta}(K_{-1}^{i}(\Delta V_{h}^{30}(\alpha,\gamma,t),\Delta V_{\tilde{f}^{L}}^{61}(\alpha_{\gamma}^{t},t),\Delta V_{X}(\alpha_{\gamma}^{t},t)))\\
    &-\lim_{\beta\to\alpha}((\alpha-\beta)^2\frac{d}{d\beta}(K_{-1}^{i}(\Delta V_{h}^{30}(\alpha,\gamma,t),\Delta V_{\tilde{f}^{L}}^{61}(\alpha_{\gamma}^{t},t),\Delta V_{X}(\alpha_{\gamma}^{t},t))))\frac{1}{(\alpha-\beta)^2}d\beta,
\end{align*}
sub-type 6
\begin{align*}
\tilde{B}_2^{i}(h)(h)(\alpha,\gamma,t)=\tilde{X}_i(\alpha_{\gamma}^{t},t).
\end{align*}

\textbf{The third type}:
\begin{align}\label{thirdtypeestimate}
(B_{3}^{i}(h))_{\mu}=\lambda(\alpha_{\gamma}^{t})\int_{0}^{\gamma}D^{-60+b_i}h_v(2\alpha,\eta,t)\tilde{B}_3^{i}(h)(\alpha,\gamma,\eta,t)d\eta.
\end{align}
with $\tilde{B}_3^{i}(h)$ having the following two forms:
sub-type 1
\begin{align*}
    &\tilde{B}_3^{i}(h)(\alpha,\gamma,\eta,t)=\frac{d}{d\beta}(K_{-1}^{i}( V_h^{30}(\alpha,\gamma,t)-V_h^{30}(\beta,\eta,t), V_{\tilde{f}^{L}}^{61}(\alpha_{\gamma}^{t},t)-V_{\tilde{f}^{L}}^{61}(\beta_{\eta}^{t},t), V_{X}(\alpha_{\gamma}^{t},t)-V_{X}(\beta_{\eta}^{t},t))|_{\beta=2\alpha}\\
    &\times \frac{ic(2\alpha)t}{1+ic'(2\alpha)\eta t}.
\end{align*}
and sub-type 2
\begin{align*}
    &\tilde{B}_3^{i}(h)(\alpha,\gamma,\eta,t)=K_{-1}^{i}( V_h^{30}(\alpha,\gamma,t)-V_h^{30}(\beta,\eta,t), V_{\tilde{f}^{L}}^{61}(\alpha_{\gamma}^{t},t)-V_{\tilde{f}^{L}}^{61}(\beta_{\eta}^{t},t), V_{X}(\alpha_{\gamma}^{t},t)-V_{X}(\beta_{\eta}^{t},t))|_{\beta=2\alpha}\\
    &X_{i}((2\alpha)_{\eta}^t)\times ic(2\alpha)t.
\end{align*}

\textbf{The forth type}: terms does not depending on $h$:
\begin{align}\label{fourthtypeestimate}
B_{4}^{i}=\lambda(\alpha_{\gamma}^{t})\int_{2\alpha}^{\pi}\tilde{B}_4^{i}(\alpha_{\gamma}^{t},\beta,t)d\beta
\end{align}
with $\tilde{B}_4^{i}$ having the following two forms:
sub-type 1
\begin{align*}
    &(\tilde{B}_4^{i}(\alpha_{\gamma}^{t},\beta,t))_{\mu}=(\frac{d}{d\beta}(K_{-1}^{i}( -V^{30}(\beta_0^{t},t),\Delta V_{\tilde{f}^{L}}^{61}(\alpha_{\gamma}^{t},t),\Delta V_{X}(\alpha_{\gamma}^{t},t)))X_{i}(\beta_{0}^{t},t)\tilde{f}_{v}^{+(b_i)}(\beta_0^{t},t).
\end{align*}
sub-type 2
\begin{align*}
    &(\tilde{B}_4^{i}(\alpha_{\gamma}^{t},\beta,t))_{\mu}=(K_{-1}^{i}( -V^{30}(\beta_0^{t},t),\Delta V_{\tilde{f}^{L}}^{61}(\alpha_{\gamma}^{t},t),\Delta V_{X}(\alpha_{\gamma}^{t},t))X_{i}(\beta_0^{t},t)\tilde{f}_{v}^{+(b_i)}(\beta_0^{t},t).
\end{align*}
  \end{lemma}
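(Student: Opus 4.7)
The proposal is purely structural: given the explicit decompositions already derived in Section 3, the lemma is established by matching each summand to one of the four declared templates. Since $\kappa(t)>0$ forces $\tau(t)=0$ by \eqref{defkappa}, we automatically have $\alpha_\gamma^t = \alpha+ic(\alpha)\gamma t$, and the truncation interval $[-\tau(t),2\alpha+\tau(t)]$ becomes $[0,2\alpha]$, which is exactly the integration range appearing in the first type \eqref{firsttypeestimate} and in the inner integrals of several $\tilde B_2^i$ sub-types. No analytic estimate is needed: the claim is an algebraic identification.

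First I would dispose of the easy cases $O^{3,i}$ and $O^{5,i}$. Expression \eqref{O3ic} is literally the second type with $\tilde{B}_2^i$ equal to sub-type 1, and \eqref{O5ic} is the second type with $\tilde{B}_2^i$ equal to sub-type 6. Next I would walk through the five pieces of $O^{1,i}$ in \eqref{O1icnewequation}: the first piece is the second type (sub-type 1); the second piece, the principal value on $[0,2\alpha]$, is the first type (sub-type 2), since we can view $D^{-60+b_i}h_v(\beta,\gamma,t)/(\alpha-\beta)$ paired with a kernel that includes the factor $(\alpha-\beta)$; the third piece (the contour integral in $\eta$) is the third type (sub-type 2); $Term_{2,3,1}$ does not depend on $h$ and is the fourth type (sub-type 2); and $Term_{2,3,2}$ becomes the second type (sub-type 4) after absorbing the $\zeta_0$-integration into $\tilde{B}_2^i$.

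The heart of the proof is handling $O^0$ from \eqref{3f2new}. The boundary evaluations $Term_1$, $Term_2$, $Term_4$ are directly of second type with $\tilde{B}_2^i$ of sub-type 2 (kernels evaluated at $\beta=2\alpha$ or $\beta=0$). The principal-value piece $Term_{3,2,1}$ matches the second type via sub-type 5, because the quantity inside the brackets in \eqref{3M21equationn} is exactly the difference between a kernel times $(\alpha-\beta)^2$ and its limit at $\beta=\alpha$, divided by $\alpha-\beta$. Its companion $Term_{3,2,2}$ is the first type with $\tilde{B}_1^i$ of sub-type 1. The contour-remainder $Term_{5,1}$ is the third type (sub-type 1). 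Finally $Term_{5,2,1}$ is of the fourth type (sub-type 1) as it has no $h$-dependence, and $Term_{5,2,2}$ lands in the second type (sub-type 3) through its $\zeta_1$-integration representation.

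The main obstacle is purely bookkeeping: one must verify for every expansion of $\frac{d}{d\beta}K(\cdot)$ via the product/quotient rule that each resulting summand still fits the $K_{-\sigma}^j$ template \eqref{k-sigma} with the exponents $m_0,\ldots,m_7$ satisfying $m_1+m_3+m_5+m_6+m_7-2m_0\ge -\sigma$. This reduces to two closure observations, which I would record as sub-remarks: (i) the class $\{K_{-\sigma}^j\}$ is closed under multiplication by arbitrary components of $A,B,C$ (the exponent $m_5,m_6,m_7$ absorbs the new factor without changing $\sigma$); and (ii) differentiation in $\beta$ of $(\cosh(A_2+B_2)-\cos(A_1+B_1))^{-m_0}$ or of $\sin,\cos,\sinh,\cosh$ factors costs at most one unit of $\sigma$, with the derivative of the argument contributing a $\partial_\beta A$ or $\partial_\beta B$ factor that falls into the same pool as in (i). Once these closures are in hand, the case check becomes mechanical and yields the claimed list.
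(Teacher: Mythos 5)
Your proposal is correct and matches the paper's proof essentially verbatim: the paper also simply reads off, from the decompositions already recorded in \eqref{3f2new}, \eqref{O1icnew}, \eqref{O3ic}, \eqref{O5ic}, which of the four declared templates each term belongs to, and the assignments you list agree term by term with the paper's. The closing remarks about closure of the $K_{-\sigma}$ class under differentiation and multiplication are a sensible amplification of why the kernels remain of the declared form, but they are not a departure from the paper's route.
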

  \begin{proof}
  From \eqref{3f2new}, we have 
\begin{align*}
O^{0}
=&\underbrace{Term_1}_{\text{a sum of terms of } B_2, \text{ sub-type }2}+  \underbrace{Term_2}_{\text{a sum of terms of } B_2, \text{ sub-type }2}+ \underbrace{Term_{3,2,1}}_{B_2\text{ sub-type }5}\\&\underbrace{+Term_{3,2,2}}_{B_1\text{ sub-type }1}+\underbrace{Term_4}_{B_2, \text{ sub-type }2}
+\underbrace{Term_{5,1}}_{B_3, \text{ sub-type }1} +\underbrace{Term_{ 5,2,1}}_{B_4^{FT}, \text{ sub-type }1}+\underbrace{Term_{5,2,2}}_{\text{a sum of terms of } B_2, \text{ sub-type }3}.
\end{align*}
From \eqref{O1icnew} we get
\begin{align*}
&O^{1,i}=\underbrace{Term_1}_{B_2, \text{ sub-type }1} +\underbrace{Term_{2,1}}_{B_1, \text{ sub-type }2} + \underbrace{Term_{ 2,2}}_{B_3, \text{ sub-type }2} + \underbrace{Term_{2,3,1}}_{B_4^{FT}, \text{ sub-type }2}+\underbrace{Term_{ 2,3,2}}_{\text{a sum of terms of } B_2, \text{ sub-type }4}.
\end{align*}
 From \eqref{O3ic} $O^{3,i}$ is type $B_2$ with sub-type 1.
From  \eqref{O5ic},  $O^{5,i}$ is in $B_2$ with sub type 6.
  \end{proof}
In conclusion, from  \eqref{modifiedequation} and above lemmas \ref{3maintermtype}, \ref{3boundedtermtype}, we have when $\kappa(t)>0$:
\begin{align}\label{conclu:modified equation}
    \frac{dh(\alpha,\gamma,t)}{dt}=T^{+}(h)=
       M_{1,1}(h)+M_{1,2}(h)+M_{2,1}(h)+O^{0}+\sum_{i}O^{i}(h)+\lambda(\alpha)T_{fixed}(\alpha_{\gamma}^{t}), 
\end{align}
with $M_{1,1}$, $M_{1,2}$, $M_{2,1}$ the structure as in lemma \ref{3maintermtype}. $O_{i}$ satisfies:
\begin{equation}\label{Oterms}
O^{0}+O^{1,i}+O^{3,i}+O^{5,i}=\sum_{\tilde{i}}\sum_{j=1}^{4}B^{\tilde{i}}_{j}(h).
\end{equation}
\subsubsection{The vanishing and smoothness conditions}
In the following lemmas, we aim to show the $L_i^{+}(h)$, $B_i(h)$ described in the lemmas \ref{3maintermtype}, \ref{3boundedtermtype}  satisfy the smoothness conditions and the vanishing condition in the generalized equation \eqref{3GE01}. The R-T condition will be postponed to the next section.

Due to the fact that $L_i^{+}(h)$, $\tilde{B}_{1}^i(h)$, $\tilde{B}_{2}^i(h)$, $\tilde{B}_{3}^i(h)$, $\tilde{B}_{4}^i(h)$ in each type only depend on $D^{-60+s_i}h$ with $s_i\leq 30$, the proof is tedious but the idea is simple: we only take less than 12th derivative but we have 30th derivative to lose.

We also remark that from the analyticity of kernel $K_{-\sigma}(A,B,C)$ in \eqref{k-sigma}, we only show the boundedness conditions (eg. $\|L_i^{+}(h)\|_{Y^{k+2}}\lesssim 1$, $\|B_i(h)\|_{X^{k}}\lesssim 1$ ) and other smoothness conditions follow similar way.

The estimate for $L_1^{+}(h)$ follows from lemma \ref{L1main} and \eqref{3L1form01}. From lemma \ref{Lh1lemma}, \eqref{3M21chequG01}, we get the properties of $L_2^{+}(h)$.  Lemmas \ref{Lh3lemma}, \ref{Lh4lemma}, \ref{Lh5lemma}, \ref{Lhjlemma} show the estimate for $B_{1}^{i}(h)$, $B_{2}^{i}(h)$, $B_{3}^{i}(h)$, and $B_{4}^{i}.$ From \eqref{Oterms},  $O^{0}$, $O^{1,i}$, $O^{3,i}$ and  $O^{5,i}$  can be bounded. From \eqref{O2ic}, \eqref{O4ic}, we claim $O^{4,i}$, $O^{2,i}$ can be estimated as $O^{1,i}$. $\lambda(\al)T_{fixed}(\alpha_{\gamma}^t,t)$ can be controlled by lemma \ref{Tfixed behaviour}.

In the following lemmas (\ref{Lh1lemma} to \ref{Tfixed behaviour}) we assume $h$, $g$, $\tilde{g}$ satisfies \eqref{3hsmall}, with $\delta_s$, $t_s$ sufficiently small from arc-chord condition in lemma \ref{arcchord}.
\begin{lemma}\label{L1main}
If \begin{align*}
    L(h)(\alpha,\gamma,t)=p.v. \int_{-\pi}^{\pi}K_{-1}^{i}(\Delta V_h^{30}(\alpha,\gamma,t),\Delta V_{\tilde{f}^{L}}^{61}(\alpha_{\gamma}^{t},t),\Delta V_{X}(\alpha_{\gamma}^{t},t))X_i(\beta_{\gamma}^{t},t)(1+ic'(\beta)\gamma t)d\beta,
\end{align*}
for some $K_{-1}$ type kernel,  then we have the vanishing condition \begin{equation}\label{continuitycondition}
lim_{\alpha\to 0^{+}}L(h)(\alpha,\gamma,t)=L(h)(0,\gamma,t).
\end{equation}
Moreover, it satisfies the smoothness conditions of $L_i^{+}(h)$ in \eqref{3GE01} with
\begin{align}\label{dgammaL1}
D_{\gamma}L(h)[w]=\bar{\partial}_{\gamma}L(h)+D_h{L(h)}[w].
\end{align}
Here we let $\bar{\partial}_{\gamma}$ be the partial derivative with respect to $\gamma$ assuming $h$ does not depend on $\gamma.$
\end{lemma}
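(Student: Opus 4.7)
The plan is to treat $L(h)$ as a Cauchy-type singular integral with a smooth amplitude and to exploit the built-in regularity budget: the highest-order entry of $V_h^{30}$ is $D^{-30}h$ (so $30$ integrations of $h$ sit inside the integrand), and $X_i$, $\tilde f^L$ lie in $C^{38}$ on $\supp\lambda$ by \eqref{Xispace}, while the conclusion asks for only $k+2\le 14$ derivatives in $\alpha$. This surplus lets me differentiate freely under the integral.

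First I would isolate the p.v. singularity. Near $\beta=\alpha$, a $K_{-1}^i$ kernel of \eqref{k-sigma} has denominator $(\cosh-\cos)^{m_0}$ comparable to $(\alpha-\beta)^{2m_0}$ by the arc-chord bound in lemma \ref{arcchord}, while the numerator factors contributed by the differences in $A,B,C$ supply at least $-1+2m_0$ powers of $(\alpha-\beta)$. Thus $K_{-1}^{i}\cdot X_i(\beta_\gamma^t,t)(1+ic'(\beta)\gamma t)$ reduces to a Cauchy-type kernel times a smooth amplitude; the principal value is well-defined by the standard odd-part cancellation after one Taylor subtraction. Differentiating under the integral up to $k+2$ times then gives $\|L(h)\|_{C_\alpha^{k+2}}\lesssim 1$: each $\partial_\alpha$ either lowers a $D^{-i}h$ factor (still within the $30$-derivative budget, controlled by \eqref{3d-ipro}) or replaces $K_{-1}^{i}$ by another element of the same family. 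The smallness assumption $\|h(\cdot,\gamma,t)-h(\cdot,\gamma,0)\|_{X^1}\le\delta_s$ enters only to keep $\cosh-\cos$ uniformly comparable to $(\alpha-\beta)^2$.

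Second, for the vanishing/continuity condition $\lim_{\alpha\to 0^+}L(h)(\alpha,\gamma,t)=L(h)(0,\gamma,t)$, once the integrand is written as an absolutely convergent piece plus a well-defined Cauchy part with smooth amplitude, $\alpha$ enters both the amplitude and its evaluation points continuously, so dominated convergence applies; uniform control of the dominating function near $\alpha=0$ comes from the arc-chord bound and the $C^{38}$ regularity of $X_i$, $\tilde f^L$. The Lipschitz-in-$h$ bound $\|D_hL(h)[g]\|_{Y^{k+2}}\lesssim\|g\|_{X^k}$ is the same chain rule: differentiating in $h$ lowers one exponent in the $A$-argument of $K_{-1}^{i}$ and multiplies by $D^{-60+s_i}g$ with $s_i\le 30$, and then \eqref{3d-ipro} gives $\|D^{-60+s_i}g\|_{H^{k+60-s_i}}\lesssim\|g\|_{X^k}$. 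The time-continuity estimate follows analogously, using that $\tilde f^L$, $x$, $X_i$ are $C^1$ in $t$ by \eqref{Xispace}.

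Third, I would establish the $D_\gamma$-structure. The formula
\[
D_\gamma L(h)[w]=\bar{\partial}_\gamma L(h)+D_h L(h)[w]
\]
is forced by the chain rule, since $\gamma$ enters the integrand explicitly through $\alpha_\gamma^{t}$, $\beta_\gamma^{t}$ and $1+ic'(\beta)\gamma t$, and implicitly through $h(\alpha,\gamma,t)$. The estimates for $D_\gamma$ then reduce to the same Cauchy-plus-smooth-amplitude analysis applied to finite differences in $\gamma$; the identity $\alpha_\gamma^{t}-\alpha_{\gamma'}^{t}=ic(\alpha)(\gamma-\gamma')t$ produces an extra factor of $t$ which absorbs the modulus of continuity. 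The $\gamma=0$ specializations \ref{3barl0}--\ref{3barl0lip} hold because $\alpha_0^{t}=\alpha$ and $V_h^{30}(\cdot,0,t)$ depends only on $h(\cdot,0,t)$. The main obstacle I anticipate is the bookkeeping to verify that no differentiation step in $\alpha$ or $\gamma$ pushes the kernel past the $K_{-1}$ order threshold: this is a routine but lengthy induction on the exponents $(m_0,\dots,m_7)$ in \eqref{k-sigma}, presenting no conceptual difficulty thanks to the $30$-derivative surplus in $V_h^{30}$.
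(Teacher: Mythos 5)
Your decomposition into "Cauchy kernel $\times$ smooth amplitude" and the observation that $V_h^{30}$ carries a $30$-order vanishing at $\alpha=0$ (via $D^{-30}h,\dots,D^{-60}h$) both match the paper, and your dominated-convergence argument for \eqref{continuitycondition} is essentially the paper's $Term_{0,1}+Term_{0,2}$ split. However, the smoothness claim has a genuine gap.

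The step that fails is "this surplus lets me differentiate freely under the integral." You are treating the regularity of $\tilde f^L$, $X_i$ \emph{as functions of their argument} ($C^{99}$, $C^{38}$) as if it transfers to the composed integrand. It does not: the contour map $\beta\mapsto\beta_\gamma^t=\beta+ic(\beta)\gamma t$ is only $C^{1,1}$ across $\beta=0$, because $c(\alpha)=\delta_c\alpha^2$ for $\alpha\ge 0$ and $c\equiv 0$ for $\alpha<0$, so $c''$ jumps. Hence for $\gamma\neq 0$ the $h$-independent pieces $\tilde f^L(\beta_\gamma^t)$, $V_X(\beta_\gamma^t)$ are only $C^{1,1}$ in $\beta$ at $\beta=0$, and the Taylor-under-the-integral representation $\partial_\alpha^{k+2}\int\frac{g(\alpha)-g(\beta)}{\alpha-\beta}\,d\beta=\int\int_0^1 s^{k+2}g^{(k+3)}(\beta+s(\alpha-\beta))\,ds\,d\beta$ requires $g\in C^{k+3}$ on the whole segment $[-\pi,\alpha]$, including a neighborhood of $0$. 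Your $30$-derivative surplus in $V_h^{30}$ is irrelevant to this obstruction: it controls only the $h$-dependent factor, while the loss of regularity sits in $\tilde f^L$ and $X_i$ composed with the non-smooth $c$.

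The paper removes this obstruction with an idea you do not have: split $\tilde f^L=\tilde f^{L,L}+\tilde f^{-}$ as in \eqref{fLequation2}--\eqref{fLequation2new} and separate $L(h)$ into $Term_1$ (no $h$) plus $Term_2$ ($h$-dependent, handled by the $V_h^{30}$ vanishing much as you propose). In $Term_{1,1}$, only the polynomial piece $\tilde f^{L,L}$ and $X^L$ appear; these are analytic in a strip about $[-\frac{\delta}{4},\frac{\delta}{4}]$ (see \eqref{spacefLL}), so the contour over $\beta<0$ can be shifted from $\beta_\gamma^t=\beta$ to $\tilde\beta_\gamma^t=\beta+i\tilde c(\beta)\gamma t$ with $\tilde c(\beta)=c(|\beta|)\in C^{100}$; after this deformation the integrand really is $C^{k+4}$ across $\beta=0$ and your differentiation argument applies (lemma \ref{K-1wholebehavior}). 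In $Term_{1,2}$ every surviving factor carries at least one copy of $\tilde f^{-}$ or $X^{-}$, which vanish to order $\ge 30$ at $0$ and are supported in $\{\beta<0\}$ where $c\equiv 0$; this restores enough decay to kill the singularity at $\alpha=0^+$ (lemma \ref{alphabetafar-}). Without this analytic-continuation/vanishing dichotomy the bound $\|L(h)\|_{Y^{k+2}}\lesssim 1$ cannot be obtained by pushing derivatives under the integral, so your argument as written does not close.
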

\begin{proof}
We first show \eqref{continuitycondition}.
We have
\begin{align*}
   &\qquad L(h)(\alpha,\gamma,t)\\
   &=p.v. \int_{-\pi}^{\pi}K_{-1}^{i}(\Delta V_h^{30}(\alpha,\gamma,t),\Delta V_{\tilde{f}^{L}}^{61}(\alpha_{\gamma}^{t},t),\Delta V_{X}(\alpha_{\gamma}^{t},t))X_{i}(\beta_{\gamma}^{t},t)(1+ic'(\beta)\gamma t)d\beta\\
   &=\lim_{\beta\to\alpha}(K_{-1}^{i}( \Delta V_h^{30}(\alpha,\gamma,t),\Delta V_{\tilde{f}^{L}}^{61}(\alpha_{\gamma}^{t},t),\Delta V_{X}(\alpha_{\gamma}^{t},t))(\alpha-\beta))X_{i}(\alpha_{\gamma}^{t},t)(1+ic'(\alpha)\gamma t)p.v.\int_{-\pi}^{\pi}\frac{1}{\alpha-\beta}d\beta\\
   &\quad+\int_{-\pi}^{\pi}(K_{-1}^{i}(\Delta V_h^{30}(\alpha,\gamma,t),\Delta V_{\tilde{f}^{L}}^{61}(\alpha_{\gamma}^{t},t),\Delta V_{X}(\alpha_{\gamma}^{t},t))(\alpha-\beta)X_{i}(\beta_{\gamma}^{t},t)(1+ic'(\beta)\gamma t)\\
   &\qquad -\lim_{\beta\to\alpha}(K_{-1}^{i}( \Delta V_h^{30}(\alpha,\gamma,t),\Delta V_{\tilde{f}^{L}}^{61}(\alpha_{\gamma}^{t},t),\Delta V_{X}(\alpha_{\gamma}^{t},t))(\alpha-\beta))X_{i}(\alpha_{\gamma}^{t},t)(1+ic'(\alpha)\gamma t))\frac{1}{\alpha-\beta}d\beta\\
   &=Term_{0,1}+Term_{0,2}
\end{align*}
Since $c(\alpha)\in C^{1,1}(-\infty,\infty)$ (\eqref{cdefi}), from definition of $a_{\gamma}^{t}$ \eqref{alphadefi}, the space of $\tilde{f}^{L}$ and $X_i$ \eqref{Xispace}, we have when $s_l\leq 30$
 \begin{align}\label{spacelemma}
 D^{-60+s_l}h(\alpha,\gamma,t), \partial_{\alpha}^{d_i}\tilde{f}^{L}(\alpha_{\gamma}^{t}), X(\alpha_{\gamma}^{t}) \in C_{\gamma}^{0}([-1,1], C_{\alpha}^{1,1}[-\pi,\pi]).
 \end{align}
 Recall that $K_{-\sigma}^{j}(A,B,C)$ is of $-\sigma$ type if for $A$, $B$, $C$ in $R^n$, it has the form
\begin{align}\label{k-sigma02}
&c_j\frac{\sin(A_1+B_1)^{m_1}\cos(A_1+B_1)^{m_2}}{(\cosh(A_2+B_2)-\cos(A_1+B_1))^{m_0}}\\\nonumber
    &\times(\sinh(A_2+B_2))^{m_3}(\cosh(A_2+B_2))^{m_4}\Pi_{j=1}^{m_5}(A_{\lambda_{j}})\Pi_{j=1}^{m_6}(B_{\lambda_{j,2}})\Pi_{j=1}^{m_7}(C_{\lambda_{j,3}}),
\end{align}
with $m_1+m_3+m_5+m_6+m_7-2m_0\geq -\sigma$. $c_j$ is a constant.
Here we can take
\[
A_i,B_i,C_i\in\{\partial_{\alpha}^{d_i}\tilde{f}^{L}(\alpha_{\gamma}^{t},t)-\partial_{\alpha}^{d_i}\tilde{f}^{L}(\beta_{\gamma}^{t},t), D^{-60+s_i}h(\alpha,\gamma,t)-D^{-60+s_i}h(\beta,\gamma,t), X_i(\alpha_{\gamma}^{t},t)-X_i(\beta_{\gamma}^{t},t)\}
\]
 with $d_i\leq 61$, $s_{i}\leq 30$.
 Then each $\sin, \sinh, A_{j}, B_{j}, C_{j}$ gives an order of $(\alpha-\beta)$ and the denominator gives an order of $(\alpha-\beta)^{-2m_0}$. Hence from $m_1+m_3+m_5+m_6+m_7-2m_0\geq -\sigma$ and the arc-chord condition \eqref{arcchord4}, we have the estimate:
 \[
 \lim_{\beta\to\alpha}(K_{-1}^{i}( \Delta V_h^{30}(\alpha,\gamma,t),\Delta V_{\tilde{f}^{L}}^{61}(\alpha_{\gamma}^{t},t),\Delta V_{X}(\alpha_{\gamma}^{t},t))(\alpha-\beta))\in  C_{\gamma}^{0}([-1,1], C_{\alpha}^{0,1}[0,\pi]).
\]
Then we have $\lim_{\alpha\to 0^{+}}Term_{0,1}(\alpha,\gamma,t)=Term_{0,1}(0,\gamma,t)$.
For $Term_{0,2}$, to use Lebesgue's convergence theorem, we only need to show the uniform bound with respect to $\alpha,\beta$. From the structure of $K_{-\sigma}$ and \eqref{spacelemma}, it is enough to show for any $g(\alpha)\in C_{\alpha}^{1,1}[-\pi,\pi]$, the following estimates hold:
\begin{equation}\label{lemmaboundaryes01}
|\frac{g'(\alpha)-g'(\beta)}{\alpha-\beta}|\lesssim 1,
\end{equation}
and
\begin{equation}\label{lemmaboundaryes02}
|\frac{1}{\alpha-\beta}(\frac{g(\alpha)-g(\beta)}{\alpha-\beta}-\lim_{\beta\to\alpha}\frac{g(\alpha)-g(\beta)}{\alpha-\beta})|\lesssim 1.
\end{equation}
The first estimate \eqref{lemmaboundaryes01} holds directly because $g'\in C_{\alpha}^{0,1}$. For the second one \eqref{lemmaboundaryes02}, we use 
\begin{align*}
&\frac{1}{\alpha-\beta}(\frac{g(\alpha)-g(\beta)}{\alpha-\beta}-\lim_{\beta\to\alpha}\frac{g(\alpha)-g(\beta)}{\alpha-\beta})\\
&=\frac{1}{\alpha-\beta}(\int_{0}^{1}g'(\beta+\zeta(\alpha-\beta))d\zeta-g'(\alpha))\\
&=\int_{0}^{1}\frac{(g'(\beta+\zeta(\alpha-\beta))-g'(\alpha))}{\alpha-\beta}d\zeta.
\end{align*}
Then the bound \eqref{lemmaboundaryes02} holds because $g'\in C_{\alpha}^{0,1}$.
we could use the Lebesgue convergence theorem to get the limit.

Now we show the smoothness condition. We will only show the boundedness conditions and other conditions follow a similar way. We first separate the function in the integral into several parts in \eqref{separateterm3}. One part is analytic with respect to $\beta$ ($Term_{1,1}$), other parts has good boundary behavior ($Term_{1,2}$ and $Term_{2}$).
We first separate the function into one term depending on $h$ and another doesn't: \begin{align}\label{separationl1}
   &\quad L(h)(\alpha,\gamma,t)\\\nonumber
   &=p.v. \int_{-\pi}^{\pi}K_{-1}^{i}(0,\Delta V_{\tilde{f}^{L}}^{61}(\alpha_{\gamma}^{t},t),\Delta V_{X}(\alpha_{\gamma}^{t},t))X_{i}(\beta_{\gamma}^{t},t)(1+ic'(\beta)\gamma t)d\beta\\\nonumber
   &\quad+p.v.\int_{-\pi}^{\pi}\int_{0}^{1}\frac{d}{d\zeta}K_{-1}^{i}(\zeta \Delta V_h^{30}(\alpha,\gamma,t),\Delta V_{\tilde{f}^{L}}^{61}(\alpha_{\gamma}^{t},t),\Delta V_{X}(\alpha_{\gamma}^{t},t))X_{i}(\beta_{\gamma}^{t},t)(1+ic'(\beta)\gamma t)d\beta d\zeta\\\nonumber
   &=Term_1+Term_2.
\end{align}

Then like in \eqref{fLequation}, we further separate $\tilde{f}^{L}(\alpha,t)$ into the part that is polynomial in $[-\frac{\delta}{2},-\frac{\delta}{2}]$ and the part that vanishes up to 99th order at 0,
which is
\begin{equation}\label{fLequation2}
\tilde{f}^{-}(\alpha,t)=(\tilde{f}(\alpha,t)-\sum_{i=1}^{100}\frac{\tilde{f}^{<i>}(0)}{i!}\alpha^i)\lambda(\alpha)1_{\alpha\leq 0},
\end{equation}
and 
\begin{equation}\label{fLequation2new}
\tilde{f}^{L}(\alpha,t)=\tilde{f}^{L,L}(\alpha,t)+\tilde{f}^{-}(\alpha,t).
\end{equation}
Here 
\begin{align}\label{f-fLLspace}
&\tilde{f}^{-}(\alpha,t)\in C^1([0,t_0],C^{99}(-\infty,\infty)),\\\nonumber
&\partial_{\alpha}^{j}\tilde{f}^{-}(0,t)=0,&\text{when } j\leq 99,\\\nonumber
& \tilde{f}^{-}(\alpha,t)=0, &\text{when } \alpha>0, \\\nonumber
&\tilde{f}^{L,L}(\alpha,t) \text{ is a polynomial in } [-\frac{\delta}{2},\frac{\delta}{2}].
\end{align}
Moreover, we also separate $X_i$ into terms with respect to $\tilde{f}^{-}$  and $\tilde{f}^{L,L}$. From def of $X_i$ \eqref{Xifunction}, we have
\begin{equation}
X_i(\alpha,t)=(\partial_{\alpha}^{l_i}\tilde{f}^{L}(\alpha,t))^{\phi_{i}}(\partial_{\alpha}^{l_i'}(\frac{1}{\frac{dx}{d\alpha}(\alpha,t)}))^{\phi_{i}'}(\partial_{\alpha}^{l_i''}\frac{d}{d\alpha}x(\alpha,t))^{\phi_{i''}},
\end{equation}
with $l_i,l_i',l_1''\leq 61$, $\phi_i,\phi_{i}',\phi_{i''} \in \{0,1\}$.
Then we define
\begin{align}\label{XLfunction}
   X_i^{L}=\bigg\{\begin{array}{cc}
\partial_{\alpha}^{l_i}\tilde{f}^{L,L}(\alpha,t)(\partial_{\alpha}^{l_i'}(\frac{1}{\frac{dx}{d\alpha}(\alpha,t)}))^{\phi_{i}'}(\partial_{\alpha}^{l_i''}\frac{d}{d\alpha}x(\alpha,t))^{\phi_{i''}}&   \phi_i=1,\\
 (\frac{1}{\frac{dx}{d\alpha}(\alpha,t)})^{\phi_{i}'}(\partial_{\alpha}^{l_i''}\frac{d}{d\alpha}x(\alpha,t))^{\phi_{i''}}& \phi_i=0, 
      \end{array}
      \end{align}
\begin{align}\label{X-function}
   X_i^{-}=\bigg\{\begin{array}{cc}
\partial_{\alpha}^{l_i}\tilde{f}^{-}(\alpha,t)(\partial_{\alpha}^{l_i'}(\frac{1}{\frac{dx}{d\alpha}(\alpha,t)}))^{\phi_{i}'}(\partial_{\alpha}^{l_i''}\frac{d}{d\alpha}x(\alpha,t))^{\phi_{i''}}&   \phi_i=1,\\
0& \phi_i=0.
      \end{array}
      \end{align}
      Like in \eqref{notation01}, we denote:
\begin{align}\label{notation03f-}
V^{k}_{\tilde{f}^{-}} (\alpha,t) =(\tilde{f}^{-}_1(\alpha,t), \tilde{f}^{-}_2(\alpha,t), \partial_{\alpha}\tilde{f}^{-}_1(\alpha,t),\partial_{\alpha}\tilde{f}^{-}_2(\alpha,t), ...,\partial_{\alpha}^{k}\tilde{f}^{-}_1(\alpha,t),\partial_{\alpha}^{k}\tilde{f}^{-}_2(\alpha,t)).
\end{align}
\begin{align}\label{notation03fLL}
V^{k}_{\tilde{f}^{L,L}} (\alpha,t) =(\tilde{f}^{L,L}_1(\alpha,t), \tilde{f}^{L,L}_2(\alpha,t), \partial_{\alpha}\tilde{f}^{L,L}_1(\alpha,t),\partial_{\alpha}\tilde{f}^{L,L}_2(\alpha,t), ...,\partial_{\alpha}^{k}\tilde{f}^{L,L}_1(\alpha,t),\partial_{\alpha}^{k}\tilde{f}^{L,L}_2(\alpha,t)).
\end{align}
\begin{align}\label{notation03xL}
V^{k}_{X^{L}} (\alpha,t) =(X_i^{L}(\alpha,t)).
\end{align}
\begin{align}\label{notation03x-}
V^{k}_{X^{-}} (\alpha,t) =(X_i^{-}(\alpha,t)).
\end{align}
From the above definition, we also have
\begin{equation}\label{spacefLL}
V_{\tilde{f}^{L,L}}^{61}(\alpha_{\gamma}^{t},t), V_{X^L}(\alpha_{\gamma}^{t},t) \text{ are analytic in } \tilde{D}=\{\alpha+iy||\alpha|\leq \frac{\delta}{4}\}
\end{equation}
Then we rewrite $Term_1$, we have
\begin{align}\label{separationl2}
    &\quad Term_1\\\nonumber
    &=p.v. \int_{-\pi}^{\pi}K_{-1}^{i}(0,\Delta V_{\tilde{f}^{L,L}}^{61}(\alpha_{\gamma}^{t},t)+\Delta V_{\tilde{f}^{-}}^{61}(\alpha_{\gamma}^{t},t),\Delta V_{X^L}(\alpha_{\gamma}^{t},t)+\Delta V_{X^-}(\alpha_{\gamma}^{t},t))\\\nonumber
    &\qquad\cdot(X_{i}^{L}(\beta_{\gamma}^{t},t)+X_{i}^{-}(\beta_{\gamma}^{t},t))(1+ic'(\beta)\gamma t)d\beta\\\nonumber
    &=p.v. \int_{-\pi}^{\pi}K_{-1}^{i}(0,\Delta V_{\tilde{f}^{L,L}}^{61}(\alpha_{\gamma}^{t},t),\Delta V_{X^L}(\alpha_{\gamma}^{t},t))\cdot(X_{i}^{L}(\beta_{\gamma}^{t},t))(1+ic'(\beta)\gamma t)d\beta\\\nonumber
    &\quad+\int_0^{1}d\zeta p.v. \int_{-\pi}^{\pi}\frac{d}{d\zeta}(K_{-1}^{i}(0,\Delta V_{\tilde{f}^{L,L}}^{61}(\alpha_{\gamma}^{t},t)+\zeta\Delta V_{\tilde{f}^{-}}^{61}(\alpha_{\gamma}^{t},t),\Delta V_{X^L}(\alpha_{\gamma}^{t},t)+\zeta\Delta V_{X^-}(\alpha_{\gamma}^{t},t))\\\nonumber
    &\qquad\cdot(X_{i}^{L}(\beta_{\gamma}^{t},t)+\zeta X_{i}^{-}(\beta_{\gamma}^{t},t)))(1+ic'(\beta)\gamma t)d\beta \\\nonumber
    &=Term_{1,1}+Term_{1,2}.
\end{align}
Combining \eqref{separationl1} and \eqref{separationl2}, we have 
\begin{align}\label{separateterm3}
L(h)=Term_{1,1}+Term_{1,2}+Term_{2}. 
\end{align}
Now we start from  $Term_{1,1}$. From \eqref{spacefLL}
, we can change the contour of $Term_{1,1}$ when $\al>0$, $\beta<0$. 
Let \begin{align}\label{definitiontildec}
   \tilde{c}(\alpha)=\bigg\{\begin{array}{cc}
 c(\alpha)&   \alpha \geq 0,\\
 c(-\alpha)& \alpha\leq 0, 
      \end{array}
      \end{align}
and
\begin{align}\label{definitiontildecalpha}
    \tilde{\alpha}_{\gamma}^{t}=\alpha+i\tilde{c}(\alpha)\gamma t.
\end{align}
Since by the definition $c(\alpha)=\alpha^2$ when $\alpha$ is sufficiently small, we have $\tilde{c}(\alpha)\in C^{100}(-\infty,\infty)$.
By the arc-chord condition \eqref{arcchord3L}, there is no zero point of denominator when $\beta\leq 0$, $\alpha>0$, therefore we could change the contour, and have
\begin{align*}
    Term_{1,1}=p.v. \int_{-\pi}^{\pi}K_{-1}^{i}(0,\Delta V_{\tilde{f}^{L,L}}^{61}(\tilde{\alpha}_{\gamma}^{t},t),\Delta V_{X^L}(\tilde{\alpha}_{\gamma}^{t},t))X_{i}^{L}(\tilde{\beta}_{\gamma}^{t},t)(1+i\tilde{c}'(\beta)\gamma t)d\beta.
\end{align*}

Then from the kernel estimate \eqref{K-1changed}, \eqref{K-1changedt}, $Term_{1,1}$ satisfies the smoothness conditions.

For $Term_{1,2}$, from \eqref{X-function}, we have
\begin{align*}
    &\quad\frac{d}{d\zeta}(K_{-1}^{i}(0,\Delta V_{\tilde{f}^{L,L}}^{61}(\alpha_{\gamma}^{t},t)+\zeta\Delta V_{\tilde{f}^{-}}^{61}(\alpha_{\gamma}^{t},t),\Delta V_{X^L}(\alpha_{\gamma}^{t},t)+\zeta\Delta V_{X^-}(\alpha_{\gamma}^{t},t))(X_{i}^{L}(\beta_{\gamma}^{t},t)+\zeta X_{i}^{-}(\beta_{\gamma}^{t},t)))\\
    &=K_{-1}^{i}(0,\Delta V_{\tilde{f}^{L,L}}^{61}(\alpha_{\gamma}^{t},t)+\zeta\Delta V_{\tilde{f}^{-}}^{61}(\alpha_{\gamma}^{t},t),\Delta V_{X^L}(\alpha_{\gamma}^{t},t)+\zeta\Delta V_{X^-}(\alpha_{\gamma}^{t},t))\partial_{\beta}^{l_i}\tilde{f}^{-}(\beta_{\gamma}^{t},t)\\
    &\qquad\cdot \phi_i\cdot((\partial_{\beta}^{l_i'}\frac{1}{\partial_{\beta}x})(\beta_{\gamma}^{t},t))^{\phi_{i}'}((\partial_{\beta}^{l_i''}\partial_{\beta}x)(\beta_{\gamma}^{t},t))^{\phi_{i}''}\\
    &\quad+\sum_{j'}K_{-2}^{j'}(0,\Delta V_{\tilde{f}^{L,L}}^{61}(\alpha_{\gamma}^{t},t)+\zeta\Delta V_{\tilde{f}^{-}}^{61}(\alpha_{\gamma}^{t},t),\Delta V_{X^L}(\alpha_{\gamma}^{t},t)+\zeta\Delta V_{X^-}(\alpha_{\gamma}^{t},t))(\partial_{\alpha}^{l_j'}\tilde{f}^{-}(\alpha_{\gamma}^{t},t)-\partial_{\beta}^{l_j'}\tilde{f}^{-}(\beta_{\gamma}^{t},t))\\
    &\qquad\cdot (X_{i}^{L}(\beta_{\gamma}^{t},t)+\zeta X_{i}^{-}(\beta_{\gamma}^{t},t))\\
    &\quad+\sum_{n}K_{-2}^{n}(0,\Delta V_{\tilde{f}^{L,L}}^{61}(\alpha_{\gamma}^{t},t)+\zeta\Delta V_{\tilde{f}^{-}}^{61}(\alpha_{\gamma}^{t},t),\Delta V_{X^L}(\alpha_{\gamma}^{t},t)+\zeta\Delta V_{X^-}(\alpha_{\gamma}^{t},t))\phi_n\\
    &\qquad\cdot(\partial_{\alpha}^{l_n}\tilde{f}^{-}(\alpha_{\gamma}^{t},t)((\partial_{\alpha}^{l_n'}\frac{1}{\partial_{\alpha}x})(\alpha_{\gamma}^{t},t))^{\phi_{n}'}((\partial_{\alpha}^{l_n''}\partial_{\alpha}x)(\alpha_{\gamma}^{t},t))^{\phi_{n}''}-\partial_{\beta}^{l_n}\tilde{f}^{-}(\beta_{\gamma}^{t},t)((\partial_{\beta}^{l_n'}\frac{1}{\partial_{\beta}x})(\beta_{\gamma}^{t},t))^{\phi_{n}'}((\partial_{\beta}^{l_n''}\partial_{\beta}x)(\beta_{\gamma}^{t},t))^{\phi_{n}''})\\
    &\qquad\cdot (X_{i}^{L}(\beta_{\gamma}^{t},t)+\zeta X_{i}^{-}(\beta_{\gamma}^{t},t))\\
    &=KTerm_{1,2,1}+KTerm_{1,2,2}+KTerm_{1,2,3},
\end{align*}
for some $-2$ type functions $K_{-2}^{j'}$ and $K_{-2}^{n}$,  with all index of derivative less than 61, $\phi_{i}, \phi_{i}', \phi_{i}'', \phi_{n},\phi_{n}', \phi_{n}''\in \{0,1\}$.
Here the first term shows up when the derivative hits on $(X_{i}^{L}(\beta_{\gamma}^{t},t)+\zeta X_{i}^{-}(\beta_{\gamma}^{t},t))$, the second one shows up when it hits on $\zeta\Delta V_{\tilde{f}^{-}}^{61}(\alpha_{\gamma}^{t},t)$, the last one shows up when it hits on $\zeta\Delta V_{X^-}(\alpha_{\gamma}^{t},t)$.
Then from \eqref{separationl2} the term becomes
\begin{align*}
    &Term_{1,2}\\
    &=Term_{1,2,1}+Term_{1,2,2}+Term_{1,2,3}\\
    &=\int_{0}^{1}d\zeta p.v.\int_{-\pi}^{\pi}KTerm_{1,2,1}(1+ic'(\beta)\gamma t)d\beta+\int_{0}^{1}d\zeta p.v.\int_{-\pi}^{\pi}KTerm_{1,2,2}(1+ic'(\beta)\gamma t)d\beta\\
    &\quad+\int_{0}^{1}d\zeta p.v.\int_{-\pi}^{\pi}KTerm_{1,2,3}(1+ic'(\beta)\gamma t)d\beta.
\end{align*}
For $Term_{1,2,1}$, we have
\begin{align*}
    &\quad Term_{1,2,1}\\
    &=\int_{0}^{1}d\zeta p.v.\int_{-\pi}^{\pi}K_{-1}^{i}(0,\Delta V_{\tilde{f}^{L,L}}^{61}(\alpha_{\gamma}^{t},t)+\zeta\Delta V_{\tilde{f}^{-}}^{61}(\alpha_{\gamma}^{t},t),\Delta V_{X^L}(\alpha_{\gamma}^{t},t)+\zeta\Delta V_{X^-}(\alpha_{\gamma}^{t},t))\\ &\qquad\cdot\partial_{\beta}^{l_i}\tilde{f}^{-}(\beta_{\gamma}^{t},t)\cdot \phi_i\cdot((\partial_{\beta}^{l_i'}\frac{1}{\partial_{\beta}x})(\beta_{\gamma}^{t},t))^{\phi_{i}'}((\partial_{\beta}^{l_i''}\partial_{\beta}x)(\beta_{\gamma}^{t},t))^{\phi_{i}''}(1+ic'(\beta)\gamma t)d\beta\\
    &=\int_{0}^{1}d\zeta p.v.\int_{-\pi}^{0}K_{-1}^{i}(0,\Delta V_{\tilde{f}^{L,L}}^{61}(\alpha_{\gamma}^{t},t)+\zeta\Delta V_{\tilde{f}^{-}}^{61}(\alpha_{\gamma}^{t},t),\Delta V_{X^L}(\alpha_{\gamma}^{t},t)+\zeta\Delta V_{X^-}(\alpha_{\gamma}^{t},t))\\
   &\qquad\cdot\partial_{\beta}^{l_i}\tilde{f}^{-}(\beta_{\gamma}^{t},t)\cdot \phi_i\cdot((\partial_{\beta}^{l_i'}\frac{1}{\partial_{\beta}x})(\beta_{\gamma}^{t},t))^{\phi_{i}'}((\partial_{\beta}^{l_i''}\partial_{\beta}x)(\beta_{\gamma}^{t},t))^{\phi_{i}''}(1+ic'(\beta)\gamma t)d\beta,
\end{align*}
where we use $\tilde{f}^{-}(\beta_{\gamma}^{t},t)=0$ when $\beta>0$ \eqref{fLequation2}. 

From the definition of $\tilde{f}^{-}$ \eqref{fLequation2}, and $c(\beta)=0$ when $\beta<0$ \eqref{cdefi}, we have when $\beta<0$, $|\partial_{\beta}^{l_i}\tilde{f}^{-}(\beta_{\gamma}^{t},t)|=|\partial_{\beta}^{l_i}\tilde{f}^{-}(\beta,t)|\lesssim |\beta|^{30}\eqref{f-fLLspace}.$ Then from lemma \ref{alphabetafar-}, we have the result for $Term_{1,2,1}$ because the singularity from $K_{-1}$ can be compensated by $|\beta|^{30}.$

Since from \eqref{fLequation2}, $\tilde{f}^{-}(\alpha_{\gamma}^t,t)=0$ when $\alpha\geq 0$, then $\partial_{\alpha}^{l_i'}\tilde{f}^{-}(\alpha_{\gamma}^{t},t)$ and $\partial_{\alpha}^{l_n'}\tilde{f}^{-}(\alpha_{\gamma}^{t},t)$ disappear in $Term_{1,2,2}$ and  $Term_{1,2,3}$.
 Hence these two terms can be done in the same way as $Term_{1,2,1}$ by lemma \ref{alphabetafar-}.
\color{black}

For $Term_2$, from \eqref{separationl1}, we first claim that
 \begin{align*}
     &\quad\frac{d}{d\zeta}K_{-1}^{i}(\zeta \Delta V_h^{30}(\alpha,\gamma,t),\Delta V_{\tilde{f}^{L}}^{61}(\alpha_{\gamma}^{t},t),\Delta V_{X}(\alpha_{\gamma}^{t},t))\\
     &=\sum_{l}K_{-2}^{l}(\zeta \Delta V_h^{30}(\alpha,\gamma,t),\Delta V_{\tilde{f}^{L}}^{61}(\alpha_{\gamma}^{t},t),\Delta V_{X}(\alpha_{\gamma}^{t},t))(D^{-60+s_l}h(\alpha,\gamma,t)-D^{-60+s_l}h(\beta,\gamma,t)),
     \end{align*}
with $s_l\leq 30$ for some $-2$ type $K_{-2}^{l}$.
Therefore we have
\begin{align*}
    &\quad Term_2\\
    &=p.v.\int_{-\pi}^{\pi}\int_{0}^{1}\frac{d}{d\zeta}K_{-1}^{i}(\Delta \zeta V_h^{30}(\alpha,\gamma,t),\Delta V_{\tilde{f}^{L}}^{61}(\alpha_{\gamma}^{t},t),\Delta V_{X}(\alpha_{\gamma}^{t},t))X_{i}(\beta_{\gamma}^{t},t)(1+ic'(\beta)\gamma t)d\beta d\zeta\\
    &=\sum_{l}\int_{-\pi}^{0}\int_{0}^{1}K_{-2}^{l}(\zeta \Delta V_h^{30}(\alpha,\gamma,t),\Delta V_{\tilde{f}^{L}}^{61}(\alpha_{\gamma}^{t},t),\Delta V_{X}(\alpha_{\gamma}^{t},t))(1+ic'(\beta)\gamma t)X_{i}(\beta_{\gamma}^{t},t)d\beta d\zeta D^{-60+s_l}h(\alpha,\gamma,t)\\
    &\quad+\sum_{l}p.v.\int_{0}^{\pi}\int_{0}^{1}K_{-2}^{l}(\zeta \Delta V_h^{30}(\alpha,\gamma,t),\Delta V_{\tilde{f}^{L}}^{61}(\alpha_{\gamma}^{t},t),\Delta V_{X}(\alpha_{\gamma}^{t},t)) X_{i}(\beta_{\gamma}^{t},t)\\
    &\qquad\cdot (D^{-60+s_l}h(\alpha,\gamma,t)-D^{-60+s_l}h(\beta,\gamma,t))(1+ic'(\beta)\gamma t) d\beta d\zeta\\
    &=Term_{2,1}+\sum_{l}\int_{0}^{1}\int_{0}^{\pi}(K_{-2}^{l}(\zeta \Delta V_h^{30}(\alpha,\gamma,t),\Delta V_{\tilde{f}^{L}}^{61}(\alpha_{\gamma}^{t},t),\Delta V_{X}(\alpha_{\gamma}^{t},t))(\alpha-\beta)^2)\\
    &\qquad\frac{(D^{-60+s_l}h(\alpha,\gamma,t)-D^{-60+s_l}h(\beta,\gamma,t)-(\alpha-\beta)\frac{d}{d\alpha}D^{-60+s_l}h(\alpha,\gamma,t))}{(\alpha-\beta)^2}X_{i}(\beta_{\gamma}^{t},t)(1+ic'(\beta)\gamma t)d\beta d\zeta\\
    &\quad+\sum_{l}\frac{d}{d\alpha}D^{-60+s_l}h(\alpha,\gamma,t)\int_{0}^{1}p.v.\int_{0}^{\pi}(K_{-2}^{l}(\zeta \Delta V_h^{30}(\alpha,\gamma,t),\Delta V_{\tilde{f}^{L}}^{61}(\alpha_{\gamma}^{t},t),\Delta V_{X}(\alpha_{\gamma}^{t},t))(\alpha-\beta)^2)(1+ic'(\beta)\gamma t)\\
    &\qquad X_{i}(\beta_{\gamma}^{t},t)\frac{1}{\alpha-\beta}d\beta d\zeta\\
    &=Term_{2,1}+Term_{2,2}+Term_{2,3}.
\end{align*}
Here we separate the singular part in the second step. Since $s_l\leq 30$, from the definition of $D^{-60+s_{l}}$ \eqref{D-formularnew}, we have $|D^{-60+s_l}h(\alpha)|\lesssim |\alpha|^{30}.$
Then the singularity in $Term_{2,1}$ can be compensated for $\beta\leq 0$, $\alpha>0$ by corollary \ref{alphabetafarleft} (the behavior of $K_{-\sigma}$ when $\beta<0$) since $|\beta-\alpha|\geq |\alpha|$.
$Term_{2,2}$ can be controlled directly from kernel estimate lemma \ref{alphabetanear} (the behavior of $K^{-\sigma}$ when $\beta>0$) because there is no singularity.

For the last term, we further take the most singular term and get
\begin{align*}
    &\quad Term_{2,3}\\
    &=\sum_{l}\frac{d}{d\alpha}D^{-60+s_l}h(\alpha,\gamma,t)\int_{0}^{1}p.v.\int_{0}^{\pi}[(K_{-2}^{l}(\zeta \Delta V_h^{30}(\alpha,\gamma,t),\Delta V_{\tilde{f}^{L}}^{61}(\alpha_{\gamma}^{t},t),\Delta V_{X}(\alpha_{\gamma}^{t},t))(\alpha-\beta)^2)\\
    &\qquad\cdot(1+ic'(\beta)\gamma t)X_{i}(\beta_{\gamma}^{t},t)\\
    &\qquad-\lim_{\beta\to\alpha}((K_{-2}^{l}(\zeta \Delta V_h^{30}(\alpha,\gamma,t),\Delta V_{\tilde{f}^{L}}^{61}(\alpha_{\gamma}^{t},t),\Delta V_{X}(\alpha_{\gamma}^{t},t))(\alpha-\beta)^2))X_{i}(\alpha_{\gamma}^{t},t)(1+ic'(\alpha)\gamma t)]\frac{1}{\alpha-\beta} d\beta d\zeta\\
    &\quad+\sum_{l}\frac{d}{d\alpha}D^{-60+s_l}h(\alpha,\gamma,t)\int_{0}^{1}\lim_{\beta\to\alpha}(K_{-2}^{l}(\zeta \Delta V_h^{30}(\alpha,\gamma,t),\Delta V_{\tilde{f}^{L}}^{61}(\alpha_{\gamma}^{t},t),\Delta V_{X}(\alpha_{\gamma}^{t},t))(\alpha-\beta)^2)X_{i}(\alpha_{\gamma}^{t},t)\\
    &\qquad\cdot(1+ic'(\alpha)\gamma t)p.v.\int_{0}^{\pi}\frac{1}{\alpha-\beta}d\beta d\zeta\\
    &=Term_{2,3,1}+Term_{2,3,2}.
\end{align*}
$Term_{2,3,1}$ can be bounded in the similar way as in $Term_{2,2}$ from lemma \ref{alphabetanear}.
 For $Term_{2,3,2}$, the log singularity in $p.v.\int_{0}^{\pi}\frac{1}{\alpha-\beta}d\beta$ is compensated by 
 $|\frac{d}{d\alpha}D^{-60+s_l}h(\alpha,\gamma,t)|\lesssim |\alpha|^{29}$. Hence from lemma \ref{alphabetanear} (the behavior of $K^{-\sigma}$ when $\beta>0$),  $Term_{2,3,2}$  also satisfies the conditions.
 
For $D_\gamma L(h)[w]$, \eqref{dgammaL1} holds through direct calculation by finding the operator that satisfies
\[
D_\gamma L(h)[\frac{dh}{d\gamma}]=\frac{d}{d\gamma}L(h).
\]
Other smoothness conditions for $L_i^{+}$ holds similarly as in the boundedness estimate.
\end{proof}
\begin{lemma}\label{Lh1lemma}
 If
\begin{align*}
    L_2^{+}(h)(\alpha,\gamma,t)=-\lim_{\beta\to\alpha}(\frac{d}{d\beta}K_{-1}^{j}(\Delta V_h^{30}(\alpha,\g ,t),\Delta V_{\tilde{f}^{L}}^{61}(\alpha_{\gamma}^{t},t),\Delta V_{X}(\alpha_{\gamma}^{t},t))(\alpha-\beta)^2)
\end{align*}
with particular $K_{-1}^{j}(A,B,C)=\frac{\sin(A_1+B_1)}{\cosh(A_2+B_2)-\cos(A_1+B_1)}$, then it satisfies the vanishing condition :
\[
lim_{\alpha\to 0^{+}}L_{2}^{+}(h)(\alpha,\gamma,t)=0.
\]
and 
the smoothness conditions for $L_i^{+}$ in \eqref{3GE01} with
\begin{align}\label{dgammaL2}
D_{\gamma}L_2^{+}(h)[w]=\bar{\partial}_{\gamma}L_2^{+}(h)+D_h{L_{2}^{+}(h)}[w].
\end{align}
\end{lemma}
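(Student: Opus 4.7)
The plan is to compute the limit explicitly and obtain a clean rational formula for $L_2^+(h)$, from which both the vanishing condition and all the smoothness conditions follow by direct manipulation. Introduce the shorthand
\[
p_j(\beta,\g,t) \;=\; D^{-60}h_j(\beta,\g,t) + \tilde{f}^L_j(\beta_{\g}^{t},t), \qquad j=1,2,
\]
so that with $A=\Delta V_{h}^{30}$, $B=\Delta V_{\tilde f^L}^{61}$ one has $A_1+B_1 = p_1(\alpha)-p_1(\beta)$ and $A_2+B_2 = p_2(\alpha)-p_2(\beta)$, and the particular kernel becomes
\[
K_{-1}^{j}(\Delta V_h^{30},\Delta V_{\tilde f^L}^{61},\Delta V_X) \;=\; \frac{\sin(p_1(\alpha)-p_1(\beta))}{\cosh(p_2(\alpha)-p_2(\beta))-\cos(p_1(\alpha)-p_1(\beta))}.
\]

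Setting $R=\alpha-\beta$ and Taylor-expanding $p_j(\beta)=p_j(\alpha)-\partial_\alpha p_j(\alpha)R+O(R^2)$, one gets $\sin(\cdot)=\partial_\alpha p_1(\alpha)\,R+O(R^2)$ and, using $\cosh - \cos = \tfrac{1}{2}(\psi^2+\phi^2)+O((\phi^2+\psi^2)^2)$,
\[
\cosh(p_2(\alpha)-p_2(\beta))-\cos(p_1(\alpha)-p_1(\beta)) \;=\; \tfrac{1}{2}\bigl[(\partial_\alpha p_1)^2+(\partial_\alpha p_2)^2\bigr]R^2 + O(R^3).
\]
The arc-chord condition (Lemma \ref{arcchord}) forces $(\partial_\alpha p_1(\alpha))^2+(\partial_\alpha p_2(\alpha))^2$ to be bounded below uniformly on $\supp\lambda$, so dividing and multiplying by $(\alpha-\beta)^2$ gives
\[
K_{-1}^{j}\cdot(\alpha-\beta)^2 \;=\; \frac{2\,\partial_\alpha p_1(\alpha,\g,t)}{(\partial_\alpha p_1)^2+(\partial_\alpha p_2)^2}\,R + O(R^2).
\]
Differentiating in $\beta$ and sending $\beta\to\alpha$ yields the explicit formula
\[
L_2^{+}(h)(\alpha,\g,t) \;=\; \frac{2\,\partial_\alpha p_1(\alpha,\g,t)}{(\partial_\alpha p_1(\alpha,\g,t))^2+(\partial_\alpha p_2(\alpha,\g,t))^2}.
\]

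For the vanishing condition at $\alpha=0$: since the integrals defining $D^{-i}$ in \eqref{D-formularnew} start at $-\tau(t)=0$, both $D^{-60}h_j(0,\g,t)=0$ and $D^{-59}h_j(0,\g,t)=\partial_\alpha D^{-60}h_j(0,\g,t)=0$ (using $c'(0)=0$). Combined with the turnover condition $\partial_\alpha \tilde{f}_1(0,t)=0$ from \eqref{turnoverconditionnew}, this gives $\partial_\alpha p_1(0,\g,t)=0$, while $\partial_\alpha p_2(0,\g,t)=\partial_\alpha \tilde f_2(0,t)\neq 0$ by the arc-chord condition at the turnover point. The explicit formula then yields $\lim_{\alpha\to 0^+}L_2^{+}(h)(\alpha,\g,t)=0$.

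The smoothness conditions $(1)$--$(8)$ and items \eqref{3barl0}, \eqref{3barl0lip} reduce via the explicit formula to smoothness and Lipschitz estimates for $\partial_\alpha p_j$ and for the scalar $(\partial_\alpha p_1)^2+(\partial_\alpha p_2)^2$ bounded below. Since $p_j = D^{-60}h_j + \tilde f^L_j(\cdot_{\g}^{t})$ and we only need $k+2 \le 14$ derivatives while $D^{-60}$ gains $60$ integrations (and $\tilde f^L$ is in $C^{99}$), there is a comfortable $30$-derivative margin; the $Y^{k+2}$ estimates then follow from the chain rule applied to the rational expression, together with \eqref{3d-ipro} for the mapping properties of $D^{-i}$. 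The identity \eqref{dgammaL2} is produced by formally differentiating the closed-form expression and using the chain rule: whenever $h$ is $C^1$ in $\g$, one has $\partial_\g L_2^{+}(h)=\bar\partial_\g L_2^{+}(h)+D_h L_2^{+}(h)[\partial_\g h]$, and replacing $\partial_\g h$ by a generic test element $w$ defines $D_\g L_2^{+}(h)[w]$; the discrete-quotient estimate \eqref{dgammaestimateLi} follows from the mean value theorem applied to the rational expression.

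The main obstacle is not the computation of the limit (which is straightforward) but the bookkeeping involved in verifying all the smoothness conditions; the decisive ingredient that makes everything routine is the lower bound on $(\partial_\alpha p_1)^2+(\partial_\alpha p_2)^2$ coming from the arc-chord condition, together with the $30$-derivative gain from $D^{-60}$, which puts every term well inside $Y^{k+2}$ for $k\le 12$.
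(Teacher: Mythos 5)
Your approach differs from the paper's in a useful way: you derive the closed-form expression for $L_2^{+}(h)$ and deduce both the vanishing condition and all the smoothness conditions from it, whereas the paper proves the smoothness conditions by writing $\frac{d}{d\beta}K_{-1}^{j}$ as a sum of $K_{-2}$ type kernels times smooth $G$-factors and then invoking the general kernel estimate (Lemma \ref{alphabetanear}), reserving the explicit computation only for the vanishing condition. Your explicit formula $L_2^+(h) = \pm\frac{2\partial_\alpha p_1}{(\partial_\alpha p_1)^2+(\partial_\alpha p_2)^2}$ is indeed exact (it is precisely the residue-type coefficient $g(0)$ in the expansion $K(\alpha,\beta) = g(R)/R$ with $R=\alpha-\beta$, and the higher-order remainder in $(\partial_\beta K)(\alpha-\beta)^2 = g(R)-g'(R)R$ drops out in the limit), and the $60$-fold integration in $D^{-60}$ together with $\tilde f^{L}\in C^{99}$ leaves a wide margin for the $C^{k+2}_\alpha$ estimates with $k\leq 12$. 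Your route is more transparent, at the cost of having to carry the full formula rather than only the leading kernel structure through each of the many smoothness conditions; for a reader the tradeoff is essentially a wash.

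You do, however, have a sign error that would cause trouble outside this lemma. The expression $\frac{d}{d\beta}K_{-1}^j(\cdots)(\alpha-\beta)^2$ in the paper means $\bigl(\partial_\beta K_{-1}^j\bigr)\cdot(\alpha-\beta)^2$, i.e.\ the $\beta$-derivative is applied only to the kernel, with the factor $(\alpha-\beta)^2$ multiplied afterward; this is evident both from the derivation in \eqref{3M21equationn}, where the singular integrand $\frac{d}{d\beta}(K)(h_\mu(\alpha)-h_\mu(\beta))$ is factored as $\frac{d}{d\beta}(K)(\alpha-\beta)^2\cdot\frac{h_\mu(\alpha)-h_\mu(\beta)}{(\alpha-\beta)^2}$, and from the explicit product-rule computation in Lemma \ref{secondederivative}. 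Your computation instead differentiates the full product $K\cdot(\alpha-\beta)^2$. Since $K\sim c/R$ with $c=\frac{2\partial_\alpha p_1}{(\partial_\alpha p_1)^2+(\partial_\alpha p_2)^2}$, the two interpretations differ by $-2\lim K\cdot(\alpha-\beta)=-2c$, and you end up with $L_2^+ = +c$ where the correct value is $L_2^+ = -c$ (this matches Lemma \ref{secondederivative}, which finds $-L_2^+(\alpha,\gamma,0) = 2\tilde f_1'/((\tilde f_1')^2+(\tilde f_2')^2) > 0$ near $\alpha=0^+$ under the assumption $\tilde f_1''(0)>0$). For the present lemma the sign is immaterial — the vanishing condition follows from $\partial_\alpha p_1(0,\gamma,t)=0$ and the smoothness estimates see only the modulus — but the sign is exactly what the refined R-T condition \eqref{Refined R-T conditon0} hinges on ($-\Re L_2^+ \geq 0$), so if this formula were quoted verbatim downstream in Lemma \ref{rfR-T} it would be false. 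You should fix the interpretation of the derivative.
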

\begin{proof}
From sufficiently good smoothness of $V_{h}^{30}$ \eqref{D-formularnew}, \eqref{notation03h}, $\tilde{f}^{L}$ \eqref{fLspace}, $X$ and $\tilde{X}$ \eqref{Xispace},  we claim 
\begin{align*}
&\frac{d}{d\beta}(K_{-1}^{i}(\Delta V_h^{30}(\alpha,\gamma,t),\Delta V_{\tilde{f}^{L}}^{61}(\alpha_{\gamma}^{t},t),\Delta V_{X}(\alpha_{\gamma}^{t},t)))\\\nonumber
&=\sum_{l'}K_{-(2)}^{l'}(\Delta V_h^{30}(\alpha,\gamma,t),\Delta V_{\tilde{f}^{L}}^{61}(\alpha_{\gamma}^{t},t),\Delta V_{X}(\alpha_{\gamma}^{t},t))) G_{l'}(\alpha,\beta,\gamma, t),
\end{align*}
with  $G(\al,\beta,\g,l)\in C_{\g}^{0}([-1,1],C_{\al,\beta}^{k+4}([0,\frac{\pi}{4}]\times[0,\pi])). $ Then we could use lemma \ref{alphabetanear} with $-2$ type kernel
, to show 
\begin{equation}\label{kernelestimatenearorigin}
(\alpha-\beta)^2\frac{d}{d\beta}(K_{-1}^{i}(\Delta V_h^{30}(\alpha,\gamma,t),\Delta V_{\tilde{f}^{L}}^{61}(\alpha_{\gamma}^{t},t),\Delta V_{X}(\alpha_{\gamma}^{t},t)))\in C_{\gamma}^{0}([-1,1], C^{k+4}_{\alpha,\beta}([0,\frac{\pi}{4}]\times [0,\pi])),
\end{equation}
then the boundedness estimate \eqref{boundestimateL} follows.

For $D_\gamma L_2^{+}(h)[w]$, \eqref{dgammaL2} holds through direct calculation by finding the operator that satisfies
\[
D_\gamma L_2^{+}(h)[\frac{dh}{d\gamma}]=\frac{d}{d\gamma}L_2^{+}(h).
\]
Other smoothness conditions holds similarly as in the boundedness estimate.

The vanishing condition relies on the particular kernel. We have
\[
 L_{2}^{+}(h)(\alpha,\gamma,t)=-\lim_{\beta\to\alpha}(\frac{d}{d\beta}(\frac{\sin(A_1+B_1)}{\cosh(A_2+B_2)-\cos(A_1+B_1)})(\alpha-\beta)^2),
\]
with $A_\mu=D^{-60}h_\mu(\alpha,\gamma,t)-D^{-60}h_\mu(\beta,\gamma,t)$, $B_\mu=\tilde{f}^{L}_{\mu}(\alpha_\gamma^t,t)-\tilde{f}^{L}_{\mu}(\beta_\gamma^t,t)$.
Then
\begin{align*}
    &\frac{d}{d\beta}(\frac{\sin(A_1+B_1)}{\cosh(A_2+B_2)-\cos(A_1+B_1)})(\alpha-\beta)^2\\
    &=\frac{\cos(A_1+B_1)(\alpha-\beta)^2}{\cosh(A_1+B_1)-\cos(A_1+B_1)}\frac{d}{d\beta}(A_1+B_1)\\
    &-\frac{\sin(A_1+B_1)(\sinh(A_2+B_2)\frac{d}{d\beta}(A_2+B_2)+\sin(A_1+B_1)\frac{d}{d\beta}(A_1+B_1))}{(\cosh(A_1+B_1)-\cos(A_2+B_2))^2}(\alpha-\beta)^2.
\end{align*}
From \eqref{turnoverconditionnew}, \eqref{fLequation},  $\partial_{\alpha}\tilde{f}_1^{L}(0,t)=0$. Moreover, $\partial_{\alpha}D^{-60}h(0,\gamma,t)=0$. Then we have 
\[\lim_{\alpha\to 0}\lim_{\alpha\to\beta}\frac{\sin(A_1+B_1)}{(\alpha-\beta)}=\lim_{\alpha\to 0^{+}}\lim_{\alpha\to\beta}\frac{A_1+B_1}{(\alpha-\beta)}=\partial_{\alpha}\tilde{f}_1^{L}(0,t)=0,
\]
and the vanishing condition follows.

\end{proof}
\begin{lemma}\label{Lh3lemma}
The kernel of the first type \eqref{firsttypeestimate} $\tilde{B}_{1}^{i}(h)$ can be extended to:
\begin{align}\label{boundednesscontildeB1}
\tilde{B}_{1}^{i}(h) \in C_{\gamma}^{0}([-1,1], C_{\alpha,\beta}^{k+2}([0,\frac{\pi}{4}]\times [0,\frac{2}{3}\pi])).
\end{align}Moreover, 
$B_{1}^{i}(h)$ satisfies the smoothness conditions in \eqref{3GE01} with
\begin{align}\label{dBL1}
D_{\gamma}B_1^{i}(h)[w]=\bar{\partial}_{\gamma}B_1^{i}(h)+D_h{B_1^{i}(h)}[w].
\end{align}
\end{lemma}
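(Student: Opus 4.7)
The plan is to establish the joint kernel regularity \eqref{boundednesscontildeB1} first, and then use this together with the truncated Hilbert-transform structure of \eqref{firsttypeestimate} to deduce the listed smoothness conditions on the operator $B_1^i(h)$. The guiding principle is that $\tilde{B}_1^i(h)$ depends on $h$ only through $D^{-60+s_l}h$ with $s_l\le 30$, whereas the required estimates only involve $k\le 12$ derivatives, leaving a comfortable margin.

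For subtype 2, $\tilde{B}_1^i(h)(\alpha,\beta,\gamma,t)=(\alpha-\beta)K_{-1}^i(\Delta V_h^{30},\Delta V_{\tilde{f}^L}^{61},\Delta V_X)X_i(\beta_\gamma^t)(1+ic'(\beta)\gamma t)$, so the explicit factor $(\alpha-\beta)$ absorbs the $-1$-order singularity of $K_{-1}^i$. Combining the smoothness statements \eqref{D-formularnew}, \eqref{fLspace}, \eqref{Xispace} with the arc-chord condition \eqref{arcchord00} and invoking lemma \ref{alphabetanear}, one obtains the desired joint $C_{\alpha,\beta}^{k+2}$ regularity. For subtype 1, I would first show that
\[
\Phi(\alpha,\beta,\gamma,t):=(\alpha-\beta)^2\tfrac{d}{d\beta}\bigl(K_{-1}^i(\Delta V_h^{30},\Delta V_{\tilde{f}^L}^{61},\Delta V_X)\bigr)X_i(\beta_\gamma^t)
\]
belongs to $C_\gamma^0([-1,1],C_{\alpha,\beta}^{k+3}([0,\tfrac{\pi}{4}]\times[0,\tfrac{2\pi}{3}]))$: decomposing $\tfrac{d}{d\beta}K_{-1}^i=\sum_{l'}K_{-2}^{l'}G_{l'}$ exactly as in \eqref{kernelestimatenearorigin}, the $(\alpha-\beta)^2$ factor compensates the $-2$-order singularity. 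Since subtype 1 is the Newton quotient $(\Phi(\alpha,\beta,\gamma,t)-\Phi(\alpha,\alpha,\gamma,t))/(\alpha-\beta)$, it inherits exactly one less derivative of joint regularity, yielding \eqref{boundednesscontildeB1}.

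Given the kernel bound, I turn to $B_1^i(h)$ itself. The symmetric interval $[0,2\alpha]$ makes available the standard commutator-type decomposition
\[
B_1^i(h)=\lambda(\alpha_\gamma^t)\Bigl[\tilde{B}_1^i(h)(\alpha,\alpha,\gamma,t)\,p.v.\!\!\int_0^{2\alpha}\!\!\frac{D^{-60+b_i}h_v(\beta,\gamma,t)}{\alpha-\beta}\,d\beta+\int_0^{2\alpha}\!\!\frac{\tilde{B}_1^i(h)(\alpha,\beta,\gamma,t)-\tilde{B}_1^i(h)(\alpha,\alpha,\gamma,t)}{\alpha-\beta}\,D^{-60+b_i}h_v(\beta,\gamma,t)\,d\beta\Bigr].
\]
The first piece is a truncated Hilbert transform of $D^{-60+b_i}h_v$ (well-defined because $\tilde{B}_1^i(h)(\alpha,\alpha,\gamma,t)$ is smooth by \eqref{boundednesscontildeB1}), bounded on $H_\alpha^k$ for $k\le 12$; the second piece has a kernel with one fewer derivative of joint regularity, again producing an $H_\alpha^k$-bounded integral operator. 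Together with the localization by $\lambda(\alpha_\gamma^t)$ (which enforces $\supp B_1^i(h)\subset[0,\tfrac{\pi}{4}]$ via \eqref{lambdadefi} for $\delta$ small), this yields \eqref{boundestimateB}. The Lipschitz-type conditions in $h$, $\gamma$, $t$ are handled by differentiating under the integral: derivatives that strike $K_{-1}^i$ produce $K_{-2}$-type kernels contracted against differences like $D^{-60+s_l}h(\alpha)-D^{-60+s_l}h(\beta)$ that restore the compensating $(\alpha-\beta)$ factor, and the resulting expressions are again of subtype-1 or subtype-2 form. The operator $D_\gamma B_1^i(h)[w]$ is defined by the ansatz \eqref{dBL1}, in perfect parallel with \eqref{dgammaL1} and \eqref{dgammaL2}; verifying \eqref{dgammaestimateBi} reduces to checking that substituting $w=\tfrac{d}{d\gamma}h$ reproduces $\tfrac{d}{d\gamma}B_1^i(h)$, which is a direct chain-rule computation.

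The main obstacle I anticipate is the moving upper endpoint $\beta=2\alpha$. Differentiating $B_1^i(h)$ in $\alpha$ generates a boundary contribution proportional to $\tilde{B}_1^i(h)(\alpha,2\alpha,\gamma,t)\,D^{-60+b_i}h_v(2\alpha,\gamma,t)/\alpha$, whose apparent $\alpha^{-1}$ singularity must be controlled by the vanishing of $D^{-60+b_i}h_v$ at the origin. This vanishing is inherited from $\tilde{f}^{+(j)}(0,t)=0$ for $j\le 99$ in \eqref{fLspace} through the definition \eqref{D-formularnew} (valid here because $\tau(t)=0$ when $\kappa(t)>0$). The analogous boundary contribution when differentiating in $\gamma$ picks up an extra factor $ic(2\alpha)t$ with $c(2\alpha)=O(\alpha^2)$ by \eqref{cdefi}, which provides even more room. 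Careful accounting of these cancellations is what makes the boundary terms admissible, and once they are absorbed into $B_1^i$- or $B_2^i$-type expressions in the sense of lemma \ref{3boundedtermtype}, all the smoothness and vanishing conditions on $B_1^i(h)$ follow.
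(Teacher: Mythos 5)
The kernel regularity argument for $\tilde{B}_1^i(h)$ and the commutator-type decomposition of $B_1^i(h)$ (splitting off $\tilde{B}_1^i(h)(\alpha,\alpha,\gamma,t)$ as a multiplier against the truncated singular integral, plus a regularized commutator remainder) are both essentially the same as in the paper.

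There is, however, a genuine gap in how you control the boundary terms that arise when you differentiate the truncated Hilbert-transform piece. You assert that the $\alpha^{-1}$ singularity in the boundary contribution at $\beta=2\alpha$ is controlled by ``the vanishing of $D^{-60+b_i}h_v$ at the origin,'' tracing this to $\tilde{f}^{+(j)}(0,t)=0$ via \eqref{D-formularnew}. But the generalized equation \eqref{3GE01} treats $h$ as an abstract element of $X^k$ with $\|h-h(\cdot,\cdot,0)\|_{X^1}\le\delta_s$; for $t>0$ there is no reason $h(0,\gamma,t)=0$, and indeed $X^k$ imposes no vanishing at the endpoint. For $b_i<60$ the operator $D^{-60+b_i}$ does produce a function that vanishes at $0$, but the paper explicitly reduces to the worst case $b_i=60$, where $D^{-60+b_i}h_v=h_v$ and your vanishing argument fails. (It fails already at the level of the $L^2$ estimate for the boundary term, since $h_v(2\alpha)/\alpha\notin L^2$ near $0$ unless $h_v(0)=0$.)

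The way the paper gets around this is that the principal value over the symmetric interval $[0,2\alpha]$ automatically supplies the subtraction, turning the problematic boundary terms into centered differences of the form
$\partial_\alpha^{j-j'-1}\frac{h_v^{(j')}(0,\gamma,t)-h_v^{(j')}(\alpha,\gamma,t)}{\alpha}$ and the analogous expression at $2\alpha$. These are then rewritten via the fundamental theorem of calculus as
$-\int_0^1 h_v^{(j)}(\alpha\eta,\gamma,t)\,\eta^{j-j'-1}\,d\eta$, which are controlled in $L^2_\alpha$ by Minkowski's inequality and $\|h_v\|_{H^k}$ alone --- no vanishing at the endpoint is needed, and none is available. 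Your proof needs this Taylor-formula step (or an equivalent one); the appeal to vanishing inherited from $\tilde{f}^+$ is not valid within the generalized-equation framework where $B_1^i(h)$ must be estimated for arbitrary $h\in X^k$ near the initial data.
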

\begin{proof}
We first show the boundedness condition \eqref{boundednesscontildeB1}.
For the first sub-type $\tilde{B}_{1}^{i}(h)$, we use \eqref{kernelestimatenearorigin} from the previous lemma. For the sub-type 2 $\tilde{B}_{1}^{i}(h)$, we use the smoothness of $X_i$ \eqref{Xispace} and the estimate for kernel $K_{-1}$ lemma \ref{alphabetanear} directly. For the boundedness condition of $B_1^i(h)$ \eqref{boundestimateB}, we only need to show when $b_i=60$, we have
$$
\|B_1^i(h)\|_{H_{\alpha}^{k}[0,\frac{\pi}{4}]}\lesssim\|\tilde{B}_1^i(h)\|_{C_{\alpha,\beta}^{k+2}([0,\frac{\pi}{4}]\times [0,\frac{2}{3}\pi])}\|h\|_{H_{\alpha}^{k}[0,\pi]}.
$$
In fact, since $\lambda$ is sufficiently smooth, we only need to control the integral. We have for $j\leq k$:
\begin{align*}
&\partial_{\alpha}^{j}(p.v.\int_{0}^{2\alpha} \tilde{B}_1^i(h)(\alpha,\beta,\gamma)\frac{h_v(\beta,\gamma,t)}{(\alpha-\beta)}d\beta)\\
&=\partial_{\alpha}^{j}(\int_{0}^{2\alpha}  (\tilde{B}_1^i(h)(\alpha,\beta,\gamma)-\tilde{B}_1^i(h)(\alpha,\alpha,\gamma))\frac{h_v(\beta,\gamma,t)}{(\alpha-\beta)}d\beta)\\&+\partial_{\alpha}^{j}(\tilde{B}_1^i(h)(\alpha,\alpha,\gamma) \int_{0}^{2\alpha} \frac{h_v(\beta,\gamma,t)-h_v(\alpha,\gamma,t)}{(\alpha-\beta)}d\beta)\\
&=Term_1+Term_2.
\end{align*}
For $Term_1$, since $\|\frac{\tilde{B}_1^i(h)(\alpha,\beta,\gamma)-\tilde{B}_1^i(h)(\alpha,\alpha,\gamma)}{\alpha-\beta}\|_{C_{\alpha,\beta}^{k+1}([0,\frac{\pi}{4}]\times [0,\frac{2}{3}\pi])}\lesssim \|\tilde{B}_1^i(h)\|_{C_{\alpha,\beta}^{k+2}([0,\frac{\pi}{4}]\times [0,\frac{2}{3}\pi])}$, we have the desired estimate. For $Term_2$, we only need to show
$$\|\partial_{\alpha}^{j}\int_{0}^{2\alpha} \frac{h_v(\beta,\gamma,t)-h_v(\alpha,\gamma,t)}{(\alpha-\beta)}d\beta\|_{L_{\alpha}^2[0,\frac{\pi}{4}]}\lesssim \|h\|_{X^{k}}.$$ We have
\begin{align*}
&\quad\partial_{\alpha}^{j}(\int_{0}^{2\alpha} \frac{h_v(\beta,\gamma,t)-h_v(\alpha,\gamma,t)}{(\alpha-\beta)}d\beta)\\
&=\partial_{\alpha}^{j}(\int_{-\alpha}^{\alpha} \frac{h_v(\alpha-\beta,\gamma,t)-h_v(\alpha,\gamma,t)}{\beta}d\beta)\\
&=\int_{-\alpha}^{\alpha} \frac{\partial_{\alpha}^{j}h_v(\alpha-\beta,\gamma,t)-\partial_{\alpha}^{j}h_v(\alpha,\gamma,t)}{\beta}d\beta\\
&\quad+\sum_{j'\leq j-1} C_{j,j'}\partial_{\alpha}^{j-j'-1}\frac{h_{v}^{(j')}(0,\gamma,t)-h_{v}^{(j')}(\alpha,\gamma,t)}{\alpha}+\sum_{j'\leq j-1} \tilde{C}_{j,j'}\partial_{\alpha}^{j-j'-1}\frac{h_{v}^{(j')}(2\alpha,\gamma,t)-h_{v}^{(j')}(\alpha,\gamma,t)}{\alpha}\\
&=Term_{2,1}+Term_{2,2}+Term_{2,3}.
\end{align*}
Then from Hilbert's inequality, $Term_{2,1}$ can be controlled. For $Term_{2,2}$, we can use the following estimate:
\begin{align*}
&\quad\|\partial_{\alpha}^{j-j'-1}\frac{h_{v}^{(j')}(0,\gamma,t)-h_{v}^{(j')}(\alpha,\gamma,t)}{\alpha}\|_{L^2_{\alpha}[0,\frac{\pi}{4}]}\\
&=\|\int_{0}^{1}h_{v}^{(j)}(\alpha\eta,\gamma,t)\eta^{j-j'-1}d\eta\|_{L^2_{\alpha}[0,\frac{\pi}{4}]}\\
&\lesssim \|h_{v}(\alpha,\gamma,t)\|_{H^{k}}.
\end{align*}

$Term_{2,3}$ can be controlled in a similar way as $Term_{2,2}.$

For $D_\gamma B_1^{+}(h)[w]$, \eqref{dBL1} holds through direct calculation by finding the operator that satisfies
\[
D_\gamma B_1^{i}(h)[\frac{dh}{d\gamma}]=\frac{d}{d\gamma}B_1^{i}(h).
\]
Other smoothness conditions of $B_1^{i}(h)$ hold similarly as in the boundedness estimate.
\end{proof}
\begin{lemma}\label{Lh4lemma}
The kernel $\tilde{B}_{2}^{i}(h)$ \eqref{secondtypeestimate} in second type term can be extended to 
\begin{equation}\label{boundednesscontildeB2}
\tilde{B}_{2}^{i}(h)\in \bar{Y}^{k+2}:= C_{\gamma}^{0}([-1,1], C_{\alpha}^{k+2}[0,\frac{1}{4}\pi]).
\end{equation}

Moreover, 
$B_{2}^{i}(h)$ satisfies the smoothness conditions for $B_i(h)$ in \eqref{3GE01}, with 
\begin{align}\label{dBL2}
D_{\gamma}B_2^{i}(h)[w]=\bar{\partial}_{\gamma}B_2^{i}(h)+D_h{B_2^{i}(h)}[w].
\end{align}
\end{lemma}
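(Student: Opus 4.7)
The plan is to reduce the lemma to the kernel estimate \eqref{boundednesscontildeB2}, then deduce the $X^{k}$ bounds on $B_{2}^{i}(h)$ from the product structure $B_{2}^{i}(h) = \lambda(\alpha_{\gamma}^{t})\, \tilde{B}_{2}^{i}(h)(\alpha,\gamma,t)\, D^{-60+b_{i}}h_{v}(\alpha^{*},\gamma,t)$, where $\alpha^{*} \in \{\alpha, 2\alpha, 0\}$. Each of the six sub-types of $\tilde{B}_{2}^{i}(h)$ is handled separately. Sub-type 1 is identical in form to $L(h)$ treated in Lemma \ref{L1main}, so its regularity is inherited directly. Sub-type 6 is $\tilde{X}_{i}(\alpha_{\gamma}^{t},t)$, which sits in $\bar{Y}^{k+2}$ by \eqref{Xispace}. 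The remaining sub-types 2, 3, 4, 5 are the substantive cases.

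For sub-types 3 and 4, the inner integration variable $\beta$ runs over $[2\alpha,\pi]$, so $|\beta-\alpha|\geq \alpha$ and in particular $\beta_{0}^{t}$ stays uniformly away from $\alpha_{\gamma}^{t}$ on the support of $\lambda$. Splitting $\tilde{f}^{L} = \tilde{f}^{L,L} + \tilde{f}^{-}$ as in the proof of Lemma \ref{L1main} and applying the kernel bounds of Lemma \ref{alphabetafar-} on the $\beta<0$ part and Lemma \ref{alphabetanear} on the $\beta>0$ part yields uniform $C^{k+2}_{\alpha}$-smoothness in the outer variable; the moving lower limit $2\alpha$ produces boundary contributions at $\beta=2\alpha$ which are again $C^{k+2}_{\alpha}$-smooth because $|\beta-\alpha|=\alpha$ together with the arc-chord condition \eqref{arcchord00} controls the denominator of $K_{-2}$. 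For sub-type 2, we evaluate $K_{-1}$ on the diagonal $\beta = 2\alpha$ or $\beta = 0$; the $h$-dependent entries $\Delta V_{h}^{30}(\alpha,\gamma,t)$ vanish to order $30$ as $\alpha \to 0$ by the structure of $D^{-60+s_{l}}h$ in \eqref{D-formularnew}, so the numerator is smooth and the denominator is uniformly non-degenerate by the arc-chord condition (Lemma \ref{arcchord}). For sub-type 5, the subtraction of the limit $\lim_{\beta\to\alpha}((\alpha-\beta)^{2}\frac{d}{d\beta}K_{-1}^{i})$ inside the integrand removes the $(\alpha-\beta)^{-2}$ singularity exactly as in the treatment of $Term_{2,3,1}$ in Lemma \ref{L1main}, leaving an integrand that is Lipschitz in $\beta$, and the resulting integral is in $\bar{Y}^{k+2}$.

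With \eqref{boundednesscontildeB2} in hand, the estimate $\|B_{2}^{i}(h)\|_{X^{k}}\lesssim 1$ follows from the standard product inequality in $H^{k}_{\alpha}$ when $\alpha^{*}=\alpha$, from a change of variables $\alpha \mapsto 2\alpha$ preserving $H^{k}$ when $\alpha^{*}=2\alpha$, and from $|D^{-60+b_{i}}h_{v}(0,\gamma,t)|\lesssim \|h\|_{X^{k}}$ (by \eqref{D-formularnew}) when $\alpha^{*}=0$. The Lipschitz-in-$h$, continuity-in-$t$, and Gateaux-in-$\gamma$ estimates for $B_{2}^{i}(h)$ reduce, sub-type by sub-type, to the analogous estimates on $\tilde{B}_{2}^{i}(h)$, which are proved by the same splitting strategy as in Lemma \ref{L1main}. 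Formula \eqref{dBL2} is verified by checking directly that $\frac{d}{d\gamma}B_{2}^{i}(h) = \bar{\partial}_{\gamma}B_{2}^{i}(h) + D_{h}B_{2}^{i}(h)[\tfrac{dh}{d\gamma}]$ whenever $\frac{dh}{d\gamma}\in X^{k}$, and by verifying the distance bound \eqref{dgammaestimateBi} from the corresponding bound on $\tilde{B}_{2}^{i}$ and on $D^{-60+b_{i}}h_{v}$.

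The main obstacle is the sub-type 2 case at $\beta=2\alpha$, where the evaluation point of $K_{-1}$ collapses onto the diagonal as $\alpha\to 0$, precisely at the turnover point; the proof has to exploit the high-order vanishing of $D^{-60+s_{l}}h$ at $\alpha=0$ (with $s_{l}\leq 30$) to see that the numerator of $K_{-1}$ decays fast enough, and simultaneously rely on the arc-chord condition to keep the denominator away from zero uniformly in $\alpha$ on the support of $\lambda$.
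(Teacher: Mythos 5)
Your treatment of sub-type 2 contains a genuine error. You claim ``the denominator is uniformly non-degenerate by the arc-chord condition,'' but by the estimates of Lemma \ref{arcchord} the denominator $\cosh(A_2+B_2)-\cos(A_1+B_1)|_{\beta=2\alpha}$ degenerates like $\alpha^2$ as $\alpha\to0$, since $|\alpha-\beta|=\alpha$. This quadratic degeneracy is exactly the obstruction the lemma must overcome, not a non-issue. The paper's argument exploits the \emph{particular} kernel $K_{-1}^{j}(A,B,C)=\frac{\sin(A_1+B_1)}{\cosh(A_2+B_2)-\cos(A_1+B_1)}$: the numerator $\sin(A_1+B_1)|_{\beta=2\alpha}$ vanishes to second order in $\alpha$ because \emph{both} $A_1$ (from the vanishing of $D^{-60}h_1$) \emph{and} $B_1=\tilde{f}^L_1(\alpha_\gamma^t)-\tilde{f}^L_1((2\alpha)_\gamma^t)$ are $O(\alpha^2)$, and the second of these is the crucial point --- it relies on the turnover condition $\partial_\alpha\tilde{f}^L_1(0,t)=0$ from \eqref{turnoverconditionnew}. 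Your explanation that ``the $h$-dependent entries vanish to order $30$'' addresses only $A_1$ and entirely misses the role of the turnover condition in controlling $B_1$; absent that, $B_1$ vanishes only to first order and the ratio blows up like $1/\alpha$.

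Your argument for sub-types 3 and 4 is also broken. The integration variable $\beta$ runs over $[2\alpha,\pi]\subset(0,\pi]$, so your proposed decomposition into ``the $\beta<0$ part and the $\beta>0$ part'' via Lemmas \ref{alphabetafar-} and \ref{alphabetanear} is vacuous: $\beta$ is never negative. Your claim that $\beta_0^t$ ``stays uniformly away from $\alpha_\gamma^t$'' is false --- as $\alpha\to0$ with $\beta\to2\alpha$, both points converge to the turnover point and the kernel degenerates. The paper's actual argument invokes Corollary \ref{alphabetafarright} to bound the $\partial_\alpha^{k+3}$-differentiated $K_{-2}$ kernel by $\beta^{-(k+6)}$, and then the essential compensation is $|\tilde{f}^{+(b_i)}(\beta_0^t)|\lesssim|\beta|^{38}$ (from \eqref{fLspace}, since $b_i\leq60$ and $\tilde{f}^+$ vanishes to order $99$ at the origin). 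You never invoke this high-order vanishing of $\tilde{f}^+$, so nothing in your argument controls the singularity at $\beta=0$.
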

\begin{proof}
We first show the condition \eqref{boundednesscontildeB2}. The boundedness condition \eqref{boundestimateB} follows directly from  \eqref{boundednesscontildeB2} and \eqref{secondtypeestimate}.

The sub-type 1 has the same form as in lemma \ref{L1main}. Therefore we can use lemma \ref{L1main} to get the result.

The sub-type 2 relies on the particular $K_{-1}^{j}(A,B,C)=\frac{\sin(A_1+B_1)}{\cosh(A_2+B_2)-\cos(A_1+B_1)}$. We will show this for the first type and the other one follows the same way.
For this boundedness condition, the main problem comes from the $-1$ th order singularity given by the index of the kernel. This is fixed by the fact that, for this particular kernel,  $B_1=\tilde{f}^{L}(\al_{\g}^{t})$ gives one extra order of $\al$ in the numerator. 

More specifically,  we have 
\begin{align*}
&\quad A_1+B_1|_{\beta=2\alpha}\\
&=D^{-60}h_1(\alpha,\gamma,t)-D^{-60}h_1(2\alpha,\gamma,t)+\tilde{f}^{L}_{1}(\alpha_{\gamma}^{t},t)-\tilde{f}^{L}_{1}((2\alpha)_{\gamma}^{t},t)\\
    &=-(\alpha)^2\int_{0}^{1}\frac{D^{-59}h_1(\alpha+(\alpha)\zeta,\gamma,t)}{\alpha}(1+ic'((\alpha)(1+\zeta))\gamma t)d\zeta\\
    &\quad -(\alpha)^2\int_{0}^{1}\frac{(\partial_{\alpha}\tilde{f}^{L}_1)((\alpha)(1+\zeta)+ic((\alpha)(1+\zeta))\gamma t)}{(\alpha)}(1+ic'((\alpha)(1+\zeta))\gamma t)d\zeta .
\end{align*}
Since $D^{-59}h_1(\alpha+\zeta(\alpha),\gamma,t)|_{\alpha=0}=0$ and $\partial_{\alpha}\tilde{f}^{L}_{1}(0,t)=0$ \eqref{turnoverconditionnew}, \eqref{fLequation}, from the smoothness of $\tilde{f}^{L}$ \eqref{fLspace} we have
\begin{align*}
    \|\frac{\sin(A_1+B_1)|_{\beta=2\alpha}}{(\alpha)^2}\|_{\overline{Y}^{k+3}}\lesssim 1.
\end{align*}
Then the boundedness estimate follows from the natural upper bound for the denominator:
\begin{align*}
 \|\frac{\cosh(A_2+B_2)-\cos(A_1+B_1)|_{\beta=2\alpha}}{(\alpha)^2}\|_{\overline{Y}^{k+3}}\lesssim 1,
\end{align*}
and the lower bound comes from the R-T condition lemma \ref{arcchord}.
\begin{align*}
 |\frac{\cosh(A_2+B_2)-\cos(A_1+B_1)|_{\beta=2\alpha}}{(\alpha)^2}|\gtrsim 1.
\end{align*}

For sub-type 3 and sub-type 4, from the estimate for the kernel $K_{-\sigma}$ in this setting (corollary \ref{alphabetafarright}), we know the derivative $\partial_{\al}^{k+3}$ gives at most $(\frac{1}{\beta})^{k+6}$ th order singularity. This is compensated by  $|\tilde{f}^{+(b_i)}(\beta)|\lesssim |\beta^{38}|$ \eqref{fLspace}. For sub-type 5, we could use \eqref{kernelestimatenearorigin} from the previous lemma. 
Sub-type  6 is trivial from the smoothness condition of $\tilde{X}$ \eqref{Xispace}.

For $D_\gamma B_2^{+}(h)[w]$, \eqref{dBL2} holds through direct calculation by finding the operator that satisfies
\[
D_\gamma B_2^{i}(h)[\frac{dh}{d\gamma}]=\frac{d}{d\gamma}B_2^{i}(h).
\]
Other smoothness conditions hold similarly as in the boundedness estimate.
\end{proof}
\begin{lemma}\label{Lh5lemma}
The kernel $\tilde{B}_{3}^{i}(h)$ \eqref{thirdtypeestimate} in third type term  can be extended to
\begin{align}\label{boundednesscontildeB3}
\tilde{B}_{3}^{i}(h)\in C_{\gamma,\eta}^{0}([-1,1]\times[-1,1], C_{\alpha}^{k+2}[0,\frac{1}{4}\pi]).
\end{align}
Moreover, $B_{3}^{i}(h)$ satisfies the smoothness conditions for $B_{i}$ in \eqref{3GE01}.
\end{lemma}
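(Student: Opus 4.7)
The plan mirrors the proofs of Lemma \ref{Lh3lemma} and Lemma \ref{Lh4lemma}. The entire difficulty concentrates at $\alpha=0$, where both sub-types feature the kernel $K_{-\sigma}$ evaluated at the collinear point $\beta=2\alpha$, so that $\alpha-\beta=-\alpha$ and the kernel develops a singularity of order $\alpha^{-1}$ in sub-type 2 and $\alpha^{-2}$ in sub-type 1 (after the $\partial_\beta$-derivative converts $K_{-1}^i$ into a sum of $K_{-2}$ kernels). The critical observation, which is the whole reason the third type was constructed with the prefactor $ic(2\alpha)t$, is that $c(2\alpha)=4\delta_c\alpha^2$ for $\alpha\leq \frac{\delta}{64}$ by \eqref{cdefi}, which precisely cancels these singularities.

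First I would prove the kernel bound \eqref{boundednesscontildeB3}. For sub-type 2, I factor
\[
K_{-1}^i|_{\beta=2\alpha}\cdot ic(2\alpha)t = \bigl[(\alpha-\beta)K_{-1}^i\bigr]_{\beta=2\alpha}\cdot \frac{ic(2\alpha)t}{-\alpha},
\]
and note that $(\alpha-\beta)K_{-1}^i$ lies in $C_\gamma^0\cap C_\eta^0([-1,1]^2, C^{k+2}_{\alpha,\beta})$ by the kernel estimates of the Appendix (the analogue of \eqref{kernelestimatenearorigin}), while $c(2\alpha)/\alpha\in C^{k+2}_\alpha[0,\tfrac{\pi}{4}]$ because $c\in C^{100}$ and vanishes quadratically at $0$. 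For sub-type 1 the same trick works with $[(\alpha-\beta)^2 K_{-2}]|_{\beta=2\alpha}\cdot c(2\alpha)/\alpha^2$ in place of the above; continuity in $(\gamma,\eta)$ follows from the continuous dependence of $V^{30}_h$, $V^{61}_{\tilde{f}^L}$ and $V_X$ on $\gamma$ and $\eta$, exactly as in lemma \ref{Lh4lemma}. The factor $X_i((2\alpha)_\eta^t)$ in sub-type 1 is smooth by \eqref{Xispace}, while the denominator $1+ic'(2\alpha)\eta t$ in sub-type 2 is bounded away from zero for $t$ small, so it contributes no new singularity.

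Next I would derive the smoothness conditions on $B_3^i(h)$. From \eqref{thirdtypeestimate}, $B_3^i(h)$ is an integral over $\eta\in[0,\gamma]$, so the boundedness \eqref{boundestimateB} and the Lipschitz estimate in $h$ in the $X^k$ norm follow from \eqref{boundednesscontildeB3} together with the control of $D^{-60+b_i}h_v(2\alpha,\eta,t)$ in $H^k_\alpha[0,\pi]$, which holds because $b_i\leq 60$ (so we lose at most the full $60$ derivatives we started with) and the change of variable $\alpha\mapsto 2\alpha$ is smooth; the cut-off $\lambda(\alpha_\gamma^t)$ enforces the support condition. For the $\gamma$-derivative I take
\[
D_\gamma B_3^i(h)[w]=\bar\partial_\gamma B_3^i(h)+D_h B_3^i(h)[w],
\]
where $\bar\partial_\gamma$ includes (i) the boundary contribution from the upper limit $\eta=\gamma$, evaluating $\tilde{B}_3^i$ and $D^{-60+b_i}h_v(2\alpha,\cdot,t)$ at $\eta=\gamma$, and (ii) the $\gamma$-dependence through $\alpha_\gamma^t=\alpha+ic(\alpha)\gamma t$ and $V^{30}_h,V^{61}_{\tilde{f}^L},V_X$. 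The estimate \eqref{dgammaestimateBi}, as well as the time continuity and the mixed conditions, reduce by the chain rule to the bounds from Step 1 and the corresponding estimates for $h$.

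The main obstacle is Step 1, the joint $(\alpha,\gamma,\eta)$ regularity after the $c(2\alpha)t$ cancellation; once the kernel has been placed in $C^{k+2}_\alpha$, everything in Steps 2 and 3 is bookkeeping that parallels Lemmas \ref{Lh3lemma} and \ref{Lh4lemma}. A minor care point is that here the kernel depends on two complex translations $\alpha_\gamma^t$ and $\beta_\eta^t$ with independent parameters $\gamma\neq\eta$ in general, but this only affects the form of $\bar\partial_\gamma$ and does not alter the singularity analysis, since the cancellation is driven entirely by the factor of $c(2\alpha)$ attached at $\beta=2\alpha$.
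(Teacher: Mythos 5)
Your proposal follows essentially the same route as the paper: the key mechanism is the cancellation of the $\alpha^{-2}$ (resp.\ $\alpha^{-1}$) singularity of the $K_{-2}$ (resp.\ $K_{-1}$) kernel at the collinear point $\beta=2\alpha$ by the quadratic vanishing $c(2\alpha)=4\delta_c\alpha^2$, followed by appealing to kernel estimates from the Appendix, and the $D_\gamma$ formula via $\bar\partial_\gamma+D_h$. One small imprecision to note: you claim that $(\alpha-\beta)K_{-1}^i$ with mixed parameters $\gamma\neq\eta$ lies in $C_{\alpha,\beta}^{k+2}$ as an ``analogue'' of \eqref{kernelestimatenearorigin}, but \eqref{kernelestimatenearorigin} is stated for the equal-parameter kernel and does not transfer directly; the mixed-parameter case is precisely what Lemma~\ref{gammachange} treats, and it only provides the regularity \emph{after} restriction to $\beta=2\alpha$, where the arc-chord lower bound \eqref{arcchord502} applies. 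Since the restricted kernel is the only one that appears in $\tilde B_3^i$, this suffices, and the paper's proof invokes Lemma~\ref{gammachange} and the smoothness of $\frac{c(2\alpha)}{\alpha^2}$ to the same end.
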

Moreover, we have
\begin{align}\label{dBL3}
&\quad D_{\gamma}B_3^{i}(h)[w]\\\nonumber
&=\lambda(\alpha_{\gamma}^{t})D^{-60+b_i}h_v(2\alpha,\gamma,t)\tilde{B}_3^{i}(h)(\alpha,\gamma,\gamma,t)+\lambda(\alpha_{\gamma}^{t})\int_{0}^{\gamma}D^{-60+b_i}h_v(2\alpha,\eta,t)D_{\gamma}\tilde{B}_3^{i}(h)[w](\alpha,\gamma,\eta,t)d\eta.
\end{align}
Here for sub-type 1 
\begin{align*}
    &\quad D_{\gamma}\tilde{B}_3^{i}(h)[w](\alpha,\gamma,\eta,t)\\\nonumber
    &=\bar{\partial_{\gamma}}\tilde{B}_3^{i}(h)(\alpha,\gamma,\eta,t)\\
    &\quad+\frac{d}{d\zeta}\frac{d}{d\beta}(K_{-1}^{i}( V_h^{30}(\alpha,\gamma,t)+ \zeta V_w^{30}(\alpha,\gamma,t)-V_h^{30}(\beta,\eta,t), V_{\tilde{f}^{L}}^{61}(\alpha_{\gamma}^{t},t)-V_{\tilde{f}^{L}}^{61}(\beta_{\eta}^{t},t), V_{X}(\alpha_{\gamma}^{t},t)-V_{X}(\beta_{\eta}^{t},t))|_{\beta=2\alpha,\zeta=0}\\
    &\qquad\times \frac{ic(2\alpha)t}{1+ic'(2\alpha)\eta t}.
\end{align*}
and for sub-type 2
\begin{align*}
     &\quad D_{\gamma}\tilde{B}_3^{i}(h)[w](\alpha,\gamma,\eta,t)\\\nonumber
     &=\bar{\partial_{\gamma}}\tilde{B}_3^{i}(h)(\alpha,\gamma,\eta,t)+\\\nonumber
    &\frac{d}{d\zeta}(K_{-1}^{i}( V_h^{30}(\alpha,\gamma,t)+ \zeta V_w^{30}(\alpha,\gamma,t)-V_h^{30}(\beta,\eta,t), V_{\tilde{f}^{L}}^{61}(\alpha_{\gamma}^{t},t)-V_{\tilde{f}^{L}}^{61}(\beta_{\eta}^{t},t), V_{X}(\alpha_{\gamma}^{t},t)-V_{X}(\beta_{\eta}^{t},t))|_{\beta=2\alpha,\zeta=0}\\\nonumber
    &\qquad\times ic(2\alpha)t,
\end{align*}
with  $V_w^{30}=(D^{-60}w_1,D^{-60}w_2,D^{-59}w_1,D^{-59}w_2,....D^{-30}w_1, D^{-30}w_2)$ as in \eqref{notation03h}.
\begin{proof}
We first show the condition \eqref{boundednesscontildeB3}. The boundedness condition \eqref{boundestimateB} follows directly from \eqref{boundednesscontildeB3} and \eqref{thirdtypeestimate}.

From sufficiently good smoothness of $V_{h}^{30}$ \eqref{D-formularnew}, \eqref{notation03h}, $\tilde{f}^{L}$ \eqref{fLspace}, $X$ and $\tilde{X}$ \eqref{Xispace},  we claim
\begin{align*}
&\quad\frac{d}{d\beta}K_{-1}(V_h^{30}(\alpha,\gamma,t)-V_h^{30}(\beta,\eta,t), V_{\tilde{f}^{L}}^{61}(\alpha_{\gamma}^{t},t)-V_{\tilde{f}^{L}}^{61}(\beta_{\eta}^{t},t), V_{X}(\alpha_{\gamma}^{t},t)-V_{X}(\alpha_{\eta}^{t},t))\\
&=\sum_{l'}K_{-(2)}^{l'}(V_h^{30}(\alpha,\gamma,t)-V_h^{30}(\beta,\eta,t), V_{\tilde{f}^{L}}^{61}(\alpha_{\gamma}^{t},t)-V_{\tilde{f}^{L}}^{61}(\beta_{\eta}^{t},t), V_{X}(\alpha_{\gamma}^{t},t)-V_{X}(\alpha_{\eta}^{t},t))G_{l'}(\beta,\g,\eta,t),
\end{align*}
with $G_{l'}(
\beta,\g,\eta,t)\in C_{\g,\eta}^{0}([-1,1]\times[-1,1], C_{\beta}^{k+4}[0,\pi]).$
Then from the estimate of kernel of $K_{-1}$ and $K_{-2}$ (lemma \ref{gammachange}), we only need to show the singularity of $\frac{1}{\alpha^{2}}$ can be compensated. This is done from the definition of $c(\alpha)$ \eqref{cdefi} which satisfies:
\begin{align}\label{calphaspace}
\frac{ic(2\alpha)}{(\alpha)^2}\in C^{97}_{\alpha}[0,\pi].
\end{align}
For $D_\gamma B_3^{+}(h)[w]$, \eqref{dBL3} holds through direct calculation by finding the operator that satisfies
\[
D_\gamma B_3^{i}(h)[\frac{dh}{d\gamma}]=\frac{d}{d\gamma}B_3^{i}(h).
\]
Other smoothness conditions hold similarly as in the boundedness estimate.
\end{proof}

\begin{lemma}\label{Lhjlemma}
The $B_4^{i}$, kernel $\tilde{B}_{4}^{i}$ \eqref{fourthtypeestimate} in fourth type term can be extended to
\begin{align}\label{boundednesscontildeB4}
 \tilde{B}_{4}^{i}\in C_{\gamma}^{4}([-1,1], C_{\alpha,\beta}^{14}(\bar{\Omega})),
\end{align}
 with $\Omega=\{(\alpha,\beta)|\alpha\in (0,\frac{1}{4}\pi], \beta \in (0,\frac{3}{2}\pi], \beta \geq 2\alpha\}$, 
 \begin{equation}\label{B4iestimate}
 B_{4}^{i}\in C_{\gamma}^{4}([-1,1], C_{\alpha}^{14}[0,\frac{\pi}{4}]),
 \end{equation}
 and satisfies when $l''+j'\leq 13$, $l\leq 4$:
 \begin{align}\label{condition left term boundary0}
    \lim_{\alpha\to 0^{+}}(\frac{d}{d\gamma})^{l}(\frac{d}{d\alpha})^{l''}((\partial_{\alpha}^{j'}\tilde{B}_4^{i})(\alpha_{\gamma}^{t},2\alpha,t))=0.
\end{align}Moreover, $B_{4}^{i}$ satisfies the smoothness conditions  for $B_i$ in \eqref{3GE01} with
\begin{align}\label{dBL4}
D_{\gamma}B_4^{i}[w]=\partial_{\gamma}B_{4}^{i}=\bar{\partial}_{\gamma}B_{4}^{i}.
\end{align}
\end{lemma}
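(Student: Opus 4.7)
Since $B_4^i$ and its kernel $\tilde B_4^i$ do not depend on $h$, most of the smoothness conditions in \eqref{3GE01} degenerate trivially: every $D_h$-derivative vanishes, and the $D_\gamma$-derivative reduces to the ordinary partial $\partial_\gamma$, explaining the identification $D_\gamma B_4^i[w]=\partial_\gamma B_4^i=\bar\partial_\gamma B_4^i$. The nontrivial content of the lemma is therefore the regularity statements \eqref{boundednesscontildeB4}, \eqref{B4iestimate} and the boundary-vanishing identity \eqref{condition left term boundary0}.

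The plan is to exploit two structural features of the formulas for $\tilde B_4^i$. First, the domain of integration is $\beta\ge 2\alpha$, so we are uniformly away from the diagonal $\alpha=\beta$; accordingly, every instance of $K_{-1}^i$ or $\partial_\beta K_{-1}^i$ appearing in sub-type 1 or sub-type 2 can be controlled on $\bar\Omega$ by the far-from-diagonal kernel estimate of Corollary~\ref{alphabetafarright} (with $\beta>0$), together with the smoothness of $\tilde f^L$, $X_i$, and $c(\alpha)$ recorded in \eqref{Xispace}, \eqref{fLspace}, \eqref{cdefi}. Second, the pointwise factor $\tilde f_v^{+(b_i)}(\beta_0^t,t)$ with $b_i\le 60$ inherits from \eqref{fLspace} the vanishing $\tilde f^{+(j)}(0,t)=0$ for $j\le 99$, so
\[
|\partial_\beta^{m}\tilde f_v^{+(b_i)}(\beta_0^t,t)|\lesssim |\beta|^{99-b_i-m}\lesssim |\beta|^{39-m}
\]
for $m$ in the range we care about. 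This high-order vanishing at $\beta=0$ compensates any mild endpoint singularity that the kernel could a priori produce at $\beta=0$, and is the mechanism by which one gets $14$ space derivatives and $4$ derivatives in $\gamma$ uniformly on $\bar\Omega$. Integrating in $\beta\in[2\alpha,\pi]$ then gives \eqref{B4iestimate}; the $\alpha$-derivatives land either inside the integral (controlled by \eqref{boundednesscontildeB4}) or on the lower limit $\beta=2\alpha$ (requires \eqref{condition left term boundary0}) or on the prefactor $\lambda(\alpha_\gamma^t)$ (harmless since $\lambda\in C^{100}$ and its support excludes no relevant values).

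The core estimate is therefore the vanishing statement \eqref{condition left term boundary0}. I would prove it by writing, at $\beta=2\alpha$,
\[
\tilde B_4^i(\alpha_\gamma^t,2\alpha,t) \;=\; \mathcal K(\alpha,\gamma,t)\cdot \tilde f_v^{+(b_i)}(2\alpha),
\]
where $\mathcal K$ lumps together the $K_{-1}^i$ factor, the $X_i$ factor, and (in sub-type 1) the $\beta$-derivative of $K_{-1}^i$. By Step 1, $\mathcal K$ is $C^{14}_\alpha C^4_\gamma$ on a neighborhood of $\alpha=0$ (the only potential singularity from $K_{-1}^i$ at the origin is a single negative power of $\alpha$, which the polynomial vanishing of $\tilde f^{+(s_i)}(\beta_0^t)$ in the kernel arguments absorbs, cf.\ the argument used in the proof of Lemma~\ref{L1main} via the decomposition \eqref{separateterm3}). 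Any combination of at most $13$ $\alpha$-derivatives and $4$ $\gamma$-derivatives applied to the product loses at most $13$ orders of vanishing from $\tilde f_v^{+(b_i)}(2\alpha)$, which vanishes to order $99-b_i\ge 39$ at $\alpha=0$; at least $26$ orders of vanishing survive and force the limit to be $0$.

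The main obstacle is bookkeeping the many sub-cases of $K_{-1}^i$ and of $X_i$ while keeping track of which factors contribute vanishing at $\alpha=0$. Once the algebraic identity at $\beta=2\alpha$ is in place and the kernel estimate of Corollary~\ref{alphabetafarright} is applied uniformly on $\bar\Omega$, the remaining smoothness conditions in \eqref{3GE01} for $B_i(h)$ follow immediately: the Lipschitz conditions in $h$ and $w$ are vacuous (no $h$-dependence), the Lipschitz conditions in $t$ reduce to continuity of $\partial_t$ of the fixed ingredients $\tilde f^L$, $X_i$, $x(\alpha,t)$, and the $D_\gamma$ estimates follow by differentiating the explicit formulas in $\gamma$ and applying the same kernel estimate.
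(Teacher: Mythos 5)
Your overall plan is the same as the paper's: reduce everything to (i) the far-from-diagonal kernel estimate (Corollary~\ref{alphabetafarright}), (ii) the high-order vanishing of $\tilde f^{+(b_i)}(\beta_0^t)$, and (iii) the trivial $D_h$- and $D_\gamma$-structure since $B_4^i$ is $h$-independent. These are exactly the ingredients the paper invokes, and your explanation of why the Lipschitz-in-$h$, Lipschitz-in-$w$, and $D_\gamma$ conditions degenerate is correct.

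There is, however, a genuine flaw in your core argument for \eqref{condition left term boundary0}. You factor $\tilde B_4^i(\alpha_\gamma^t,2\alpha,t)=\mathcal K(\alpha,\gamma,t)\cdot\tilde f_v^{+(b_i)}(2\alpha)$ and assert that $\mathcal K$ is $C^{14}_\alpha C^4_\gamma$ near $\alpha=0$ because ``the only potential singularity from $K_{-1}^i$ at the origin is a single negative power of $\alpha$, which the polynomial vanishing of $\tilde f^{+(s_i)}(\beta_0^t)$ in the kernel arguments absorbs.'' This is not correct in general. The vanishing of the first argument $A=-V^{30}(\beta_0^t)$ helps only when the kernel contains a multiplicative factor $A_{\lambda_j}$ (that is, $m_5\ge 1$ in \eqref{k-sigma}). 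For the relevant $K_{-1}$-type terms produced by $\partial_\beta K$ this need not happen: the singularity then comes from $\sin(A_1+B_1)/(\cosh(A_2+B_2)-\cos(A_1+B_1))^{m_0}$ where $B=\Delta V_{\tilde f^L}^{61}$ vanishes only to first order in $|\alpha-\beta|=|\alpha|$ at $\beta=2\alpha$, leaving a genuine $\alpha^{-1}$ (and after $\partial_\beta$, an $\alpha^{-2}$) singularity in $\mathcal K$. So $\mathcal K$ alone is not in $C^{14}_\alpha$. Moreover, Corollary~\ref{alphabetafarright} only gives quantitative singular bounds of the form $|\partial_\alpha^l\partial_\beta^{l'}\partial_\gamma^j \tilde K_{-\sigma}|\lesssim \beta^{-(l+l'+j+\sigma)}$; it does not give smoothness.

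The correct argument (and the one the paper uses) does not try to make $\mathcal K$ smooth. Instead, one keeps the singular bound on the kernel from Corollary~\ref{alphabetafarright}, and notes that the \emph{outer} factor $\tilde f^{+(b_i)}(\beta_0^t)$ with $b_i\le 60$ satisfies $|\tilde f^{+(b_i)}(\beta)|\lesssim|\beta|^{38}$. When $\beta=2\alpha$ and up to $17$ derivatives land on the product, the kernel loses at most $\sim 19$ powers of $\alpha$, while the outer factor retains at least $\sim 21$ powers, so the product still vanishes as $\alpha\to 0^+$. Your arithmetic in the last sentence of the proof already implicitly contains the corrected count; the error is only in the intermediate claim that $\mathcal K$ is smooth, which should be removed and replaced by the explicit singular bound.
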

\begin{proof}
We first show the boundedness condition \eqref{boundednesscontildeB4}, \eqref{B4iestimate}.  \eqref{boundestimateB} follows direcly from \eqref{boundednesscontildeB4} and \eqref{fourthtypeestimate}.

From sufficiently good smoothness of  $\tilde{f}^{L}$ \eqref{fLspace}, $X$ and $\tilde{X}$ \eqref{Xispace}, 
We have
\begin{equation}\label{dbetaksigma4}
\begin{split}
&\frac{d}{d\beta}K_{-1}(-V^{30}(\beta_0^{t}),\Delta V_{\tilde{f}^{L}}^{61}(\alpha_{\gamma}^{t},t),\Delta V_{X}(\alpha_{\gamma}^{t},t))\\
&=\sum_{l'}K_{-2}^{l'}(-V^{30}(\beta_0^{t},t),\Delta V_{\tilde{f}^{L}}^{61}(\alpha_{\gamma}^{t},t),\Delta V_{X}(\alpha_{\gamma}^{t},t)) G_{l'}(\alpha, \beta,\gamma, t),
\end{split}
\end{equation}
with $G_{l'}(,\alpha, \beta,\gamma, t)\in C_{\gamma}^{4}([-1,1], C_{\alpha,\beta}^{16}([0,\pi]\times [0,\pi])).$
The same expression holds for $\frac{d}{d\alpha}$ and $\frac{d}{d\gamma}$.  
Then the estimates \eqref{boundednesscontildeB4}, \eqref{B4iestimate} for both sub-types follows from kernel estimate corollary \ref{alphabetafarright} and the fact that $|\tilde{f}^{+(b_i)}(\beta)|\lesssim |\beta|^{38}$ when $b_i\leq 60$.
\eqref{condition left term boundary0} also holds from the same argument.

For $D_\gamma B_4^{i}[w]$, \eqref{dBL4} holds through direct calculation by finding the operator that satisfies
\[
D_\gamma B_4^{i}[\frac{dh}{d\gamma}]=\frac{d}{d\gamma}B_4^{i}.
\]
We also use the fact that $B_{4}^{i}$ does not depend on $h$.

Other smoothness conditions hold similarly as in the boundedness estimate.

\end{proof}
\begin{lemma}\label{Tfixed behaviour}
 $\lambda(\al_{\gamma}^{t})T_{fixed}(\alpha_{\gamma}^t,t)\in {C_{\gamma}^{4}([-1,1],C^{16}_{\alpha}[0,\frac{\pi}{4}])}$ and satisfies the smoothness conditions in \eqref{3GE01}.
\end{lemma}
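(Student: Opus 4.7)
The plan is to exploit the fact that $T_{fixed}(\alpha,t)$, defined in \eqref{Tfixed}, is built entirely out of $\tilde{f}^L$, $x(\alpha,t)$, and the kernel $K$, none of which involve $\tilde{f}^+$ or $h$. In particular, by \eqref{fLspace} the function $\tilde{f}^L(\cdot,t)$ admits an analytic extension to $\{\alpha+iy:0<\alpha<\delta/2\}$ and lies in $C^1([-t_0,t_0],C^{99}(-\infty,\infty))$, and by \eqref{initialchangevariable} the map $x(\alpha,t)$ is real-analytic in $\alpha$ and $C^1$ in $t$. Consequently $T_{fixed}(\alpha,t)$ itself extends analytically to the same strip. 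Since in the case $\kappa(t)>0$ we have $\tau(t)=0$ and $\alpha_\gamma^t=\alpha+ic(\alpha)\gamma t$ with $c\in C^{100}$ and $\|c\|\leq \delta$, the composed function $T_{fixed}(\alpha_\gamma^t,t)$ is well-defined on $[0,\pi/4]\times[-1,1]\times[0,t_s]$ for $\delta,t_s$ small.

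First I would write $T_{fixed}(\alpha_\gamma^t,t)$ as a finite sum obtained by distributing $\partial_\alpha^{60}$ inside \eqref{Tfixed}. The transport part $\frac{d}{d\alpha}\tilde{f}^L\cdot(\partial_t x/\partial_\alpha x)$ and the $t$-derivative part $d\tilde{f}^L/dt$ give local terms whose $k$-th $\alpha$-derivatives cost at most $60+k$ derivatives on $\tilde{f}^L$; since $60+16<99$, these are controlled in $C_\alpha^{16}$. The singular integral part is handled by writing the $\partial_\alpha^{60}$ derivative in the form of the expansions already used for $O_0^{1,i},\dots,O_0^{4,i}$ but with every occurrence of $\tilde{f}^+$ replaced by $\tilde{f}^L$; each such term is a kernel of type $K_{-1}$ or $K_{-1,\zeta}$ applied to analytic data, so its boundary values along the curve $\alpha_\gamma^t$ can be estimated by the kernel lemmas (lemma \ref{alphabetanear}, corollary \ref{alphabetafarright}, lemma \ref{alphabetafar-}) exactly as in the proof of lemma \ref{L1main}.

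Second, for the $\gamma$-regularity, I would observe that the map $\gamma\mapsto \alpha_\gamma^t$ is smooth and that $T_{fixed}$ is analytic in $\alpha$; hence $\gamma$-derivatives of $T_{fixed}(\alpha_\gamma^t,t)$ reduce to $\alpha$-derivatives multiplied by powers of $ic(\alpha)t$, which stay in $C_\gamma^4([-1,1],C_\alpha^{16}[0,\pi/4])$ with norm $\lesssim 1$. For the support condition, multiplication by $\lambda(\alpha_\gamma^t)$ enforces $\supp_\alpha\subset[0,20\delta]\subset[0,\pi/4]$ once $\delta$ is small. For the $t$-regularity required by condition 11 of \eqref{3GE01}, the $C^1$-regularity of $\tilde{f}^L$ and $x$ in $t$, combined with the explicit $t$-dependence of $\alpha_\gamma^t$ through $c(\alpha)\gamma t$, yields the desired Lipschitz bound with an $\mathcal{O}(t-t')$ modulus.

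Finally, since $T_{fixed}$ does not depend on $h$, the Fréchet derivative $D_h B_i(h)[g]$ and the limit in condition 12 vanish identically; one simply sets $D_\gamma(B_i(h))[w]=\bar\partial_\gamma(\lambda(\alpha_\gamma^t)T_{fixed}(\alpha_\gamma^t,t))$, which is independent of $w$, and conditions 4--7 reduce to the $\gamma$-regularity already established. The main (and really only) obstacle is the bookkeeping for the singular integral after taking $60$ derivatives: one has to verify that the combined order of the $K_{-\sigma}$ kernels, the symmetrisation $(\alpha-\beta)$ factors, and the vanishing of $\tilde{f}^L$ on $\alpha\geq 0$ (cf.\ \eqref{fLequation2}--\eqref{f-fLLspace}) together yield enough cancellation at $\beta\to\alpha$ and enough decay at $\beta<0$ to absorb the $16$ further $\alpha$-derivatives needed for $C_\alpha^{16}$; this is essentially the same accounting carried out in lemmas \ref{L1main}--\ref{Lhjlemma}, only simpler because no $D^{-i}h$ factors appear.
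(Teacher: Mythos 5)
Your proposal follows essentially the same route as the paper: split $T_{fixed}$ into its local (transport and $\partial_t$) and singular-integral pieces, observe that it is independent of $h$ so the Fr\'echet-derivative conditions are trivial, and control the rest by the kernel estimates. Two small inaccuracies are worth flagging. First, after the $\partial_\alpha^{60}$ expansion the singular part of $T_{fixed}$ produces kernels of type $K_0$ applied to $(0,\Delta V^{61}_{\tilde f^L},\Delta V_X)$ --- not $K_{-1}$ or $K_{-1,\zeta}$ (there is no $\zeta$-interpolation because $\tilde f^+$ never appears) --- and it is precisely this $K_0$ structure, i.e.\ absence of singularity, that makes the estimate immediate via lemma~\ref{alphabetanear}, with no further cancellation accounting needed. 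Second, you invoke ``vanishing of $\tilde f^L$ on $\alpha\geq 0$,'' but it is $\tilde f^-$, not $\tilde f^L$, that vanishes there; this property is not used in the paper's proof and your argument should not rely on it.
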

\begin{proof}
From \eqref{Tfixed}, we have
\begin{align}\label{Tfixedequation}
&\quad T_{fixed}(\alpha,t)\\\nonumber
&=\partial_{\alpha}^{60}(-\frac{d\tilde{f}_{\mu}^{L}(\alpha,t)}{dt}+\frac{d\tilde{f}_{\mu}^{L}}{d\alpha}(\alpha,t)(\frac{\frac{dx(\alpha,t)}{dt}}{\frac{dx(\alpha,t)}{d\alpha}})\\\nonumber&\quad+\int_{-\pi}^{\pi}K(\tilde{f}^{L}(\alpha,t)-\tilde{f}^{L}(\beta,t))(\frac{\partial_{\alpha}\tilde{f}_{\mu}^{L}(\alpha,t)}{\frac{dx(\alpha,t)}{d\alpha}}-\frac{\partial_{\beta}\tilde{f}_{\mu}^{L}(\beta,t)}{\frac{dx(\beta,t)}{d\beta}})(\frac{dx(\beta,t)}{d\beta})d\beta)\\\nonumber
&=T_{fixed}^1(\alpha,t)+T_{fixed}^2(\alpha,t)+T_{fixed}^3(\alpha,t).
\end{align}
We have $\tilde{f}^{L}(\alpha,t)\in C^{1}([0,t], C^{99}(-\infty,\infty))$ and analytic in $\{\alpha+iy|0<\alpha<\frac{\pi}{2}\}$ \eqref{fLspace}, then $T_{fixed}^1(\alpha_{\gamma}^{t},t)$, $T_{fixed}^2(\alpha_{\gamma}^{t},t)$ satisfy the conditions. 
For $T_{fixed}^3(\alpha,t)$, first from the arc-chord condition \eqref{arcchord1}, it is well defined. Moreover, we have
\begin{align}\label{Tfixedequation2}
   T_{fixed}^3(\alpha,t)= \sum_{j}\int_{-\pi}^{\pi}K_{0}^{j}(0,V^{61}_{\tilde{f}^{L}}(\alpha,t)-V^{61}_{\tilde{f}^{L}}(\beta,t),V_{X}(\alpha,t)-V_{X}(\beta,t))X_j(\beta,t)d\beta,
\end{align}
for some $K_{0}$ type kernels.
Then 
\begin{align}\label{Tfixedequation2}
   T_{fixed}^3(\alpha_{\gamma}^{t},t)= \sum_{j}\int_{-\pi}^{\pi}K^{j}_{0}(0,V^{61}_{\tilde{f}^{L}}(\alpha_{\gamma}^{t},t)-V^{61}_{\tilde{f}^{L}}(\beta_{\gamma}^{t},t),V_{X}(\alpha_{\gamma}^{t},t)-V_{X}(\beta_{\gamma}^{t},t))X_j(\beta_{\gamma}^{t},t)(1+ic'(\beta)\gamma t)d\beta,
\end{align}
for some $K_{0}$ type kernel $K_{0}^{j}$. Then the smoothness conditions follow from the estimate of the kernel (lemma \ref{alphabetanear}).
\end{proof}
\subsubsection{The Refined R-T condition when $\kappa(t)>0$}
In this section, we use the following lemma to show the refined R-T condition is satisfied. We will first show at $t=0$ the condition is satisfied. Then we use the smoothness condition of $L_i^{+}(h)$ to show if $t$ is sufficiently small, the sign will not change.
\begin{lemma}\label{rfR-T}
When $\alpha\in (0, \frac{\pi}{4}) \cap \supp{\lambda}(\alpha)$, for $h$, $t$
 satisfying
 \[
 h\in X^{1},
 \]
\[
\|h(\alpha,\gamma,t)-h(\al,\g,0)\|_{X^{1}}\lesssim\delta_s,
\]
\[
h(\al,\g,0)=\tilde{f}^{+(60)}(\al,0),
\]
and
\[
0\leq t\leq t_s,
\] 

there exists $\delta$ \eqref{lambda0defi}, $t_s$, $\delta_s$ sufficiently small, such that we have the following inequality:

\begin{equation}\label{Refined R-T conditon0}
-\Re L_2^{+}(h)(\alpha,\gamma,t)\geq 18 |\Im L_1^{+}(h)(\alpha,\gamma,t)|+18|\Im L_2^{+}(h)(\alpha,\gamma,t)|.
\end{equation}
\end{lemma}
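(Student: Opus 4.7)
The plan is to verify the refined Rayleigh--Taylor inequality \eqref{Refined R-T conditon0} in two stages: first pointwise at $t = 0$ by direct computation, and then propagate to small $t > 0$ using the vanishing condition $L_i^+(h)(0, \gamma, t) = 0$ together with the Lipschitz-in-$(t, h)$ smoothness of $L_i^+$ in the $C^{k+2}_\alpha$ norm (condition (3) with $k = 1$).

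At $t = 0$, the contour $\alpha_\gamma^0 = \alpha$ is real and $h(\cdot, \gamma, 0) = \tilde f^{+(60)}(\cdot, 0)$ is independent of $\gamma$, so a Taylor expansion of the distinguished kernel $K_{-1}^j(A, B, C) = \sin(A_1 + B_1)/(\cosh(A_2 + B_2) - \cos(A_1 + B_1))$ near $\beta = \alpha$ reduces $L_2^+$ to
\[
L_2^+(h)(\alpha, \gamma, 0) \;=\; -\,\frac{2 \tilde f_1'(\alpha, 0)}{\lvert \tilde f'(\alpha, 0) \rvert^2}.
\]
Under \eqref{assumption} ($\partial_\alpha^2 \tilde f_1(0, 0) > 0$) and \eqref{extracondition2} (no other turnover on $\mathrm{supp}\,\lambda$), a further Taylor expansion at $\alpha = 0$ yields $-\Re L_2^+(h)(\alpha, \gamma, 0) \geq c_* \alpha$ on $(0, \delta/8]$, together with a uniform positive lower bound $c_*'$ on the rest of $\mathrm{supp}\,\lambda \cap (0, \pi/4)$, for constants $c_*, c_*' > 0$ depending only on the initial data. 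Substituting $t = 0$ into \eqref{3L1form01} leaves $ic(\alpha)\gamma$ as the only non-real summand, so $\Im L_1^+(h)(\alpha, \gamma, 0) = c(\alpha)\gamma$ (supported in $[0, \delta/8]$ with $\lvert c(\alpha) \rvert \leq \delta_c \alpha^2$ on $[0, \delta/32]$) and $\Im L_2^+(h)(\alpha, \gamma, 0) = 0$.

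For the propagation to $t > 0$, the crucial observation is that the vanishing condition forces the difference $G(\alpha) := L_i^+(h)(\alpha, \gamma, t) - L_i^+(h)(\alpha, \gamma, 0)$ to vanish at $\alpha = 0$, so the fundamental theorem of calculus combined with condition (3) upgrades the Lipschitz bound to
\[
\lvert G(\alpha) \rvert \;\leq\; \alpha \, \lVert \partial_\alpha G \rVert_{L^\infty_\alpha} \;\lesssim\; \alpha \bigl( t + \lVert h(\cdot, \gamma, t) - h(\cdot, \gamma, 0) \rVert_{X^1}\bigr) \;\leq\; C \alpha (t + \delta_s).
\]
Taking imaginary parts and combining with the first step yields $\lvert \Im L_2^+(h)(\alpha, \gamma, t) \rvert \leq C \alpha (t + \delta_s)$, $\lvert \Im L_1^+(h)(\alpha, \gamma, t) \rvert \leq c(\alpha) \lvert \gamma \rvert + C\alpha(t + \delta_s)$, and $-\Re L_2^+(h)(\alpha, \gamma, t) \geq c_* \alpha - C\alpha(t + \delta_s)$ on $(0, \delta/8]$. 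Plugging these into \eqref{Refined R-T conditon0} and dividing through by $\alpha$, the inner region $\alpha \leq \delta/32$ (where $c(\alpha) = \delta_c \alpha^2$) reduces to $c_* \geq 18 \delta_c \alpha \lvert \gamma \rvert + O(t + \delta_s)$, which holds uniformly once $\delta \delta_c$ and $t_s + \delta_s$ are chosen sufficiently small; the transition region $\alpha \in (\delta/32, \delta/8]$ and the outer region $\alpha \in (\delta/8, 20\delta]$ are then handled in the same spirit using the uniform lower bound $c_*'$ on $-\Re L_2^+$ and the smallness of $c$.

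The main obstacle is the simultaneous degeneracy of $-\Re L_2^+$ and of the dominant imaginary contribution at $\alpha = 0$. A naive $L^\infty_\alpha$ Lipschitz estimate would only give $\lvert \Im L_i^+(h)(\alpha, \gamma, t) - \Im L_i^+(h)(\alpha, \gamma, 0) \rvert \lesssim t + \delta_s$ uniformly in $\alpha$, which would swamp the $c_* \alpha$ lower bound for $\alpha$ sufficiently small. It is precisely the vanishing condition $L_i^+(h)(0, \gamma, t) = 0$ that rescues the argument, by forcing the $t$-perturbation itself to scale linearly in $\alpha$ and thereby match the degeneracy of the real part.
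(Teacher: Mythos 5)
Your proposal is correct and follows the same overall strategy as the paper: establish the inequality at $t=0$ by an explicit computation of $L_2^+$, use the vanishing conditions $L_i^+(h)(0,\gamma,t)=0$ to convert the $C^{k+2}_\alpha$ Lipschitz-in-$(t,h)$ smoothness into a bound that degenerates like $\alpha$, and then absorb the perturbation for small $t_s$, $\delta_s$, $\delta$. The paper carries this out at the derivative level: it reduces \eqref{Refined R-T conditon0} to the pointwise inequality $-\partial_\alpha\Re L_2^+ - 18|\partial_\alpha\Im L_1^+| - 18|\partial_\alpha\Im L_2^+| > 0$ on $(0,20\delta]$, proves a uniform lower bound $-\partial_\alpha\Re L_2^+(\alpha,\gamma,0) > C_0 > 0$ (lemma~\ref{secondederivative}), and then propagates using the $C^3_\alpha$ continuity of $L_i^+$ in $(t,h)$. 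Your version integrates this: the FTC together with $L_i^+(0,\gamma,t)=0$ gives $|G(\alpha)|\leq\alpha\|\partial_\alpha G\|_{L^\infty}$, which is exactly the same mechanism written in primitive rather than derivative form.

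One place where your argument is actually sharper than the paper's: you correctly compute $\Im L_1^+(h)(\alpha,\gamma,0)=c(\alpha)\gamma$, coming from the $ic(\alpha)\gamma$ summand in \eqref{3L1form01}, which carries no factor of $t$ (it originates from $\tfrac{d}{dt}(\alpha+ic(\alpha)\gamma t)=ic(\alpha)\gamma$). The paper instead asserts ``$\partial_\alpha\Im L_1^+(h)(\alpha,\gamma,0)=\partial_\alpha\Im L_2^+(h)(\alpha,\gamma,0)=0$ because everything is real at $t=0$,'' which is not quite right for $L_1^+$; the correct statement is $\partial_\alpha\Im L_1^+(\alpha,\gamma,0)=c'(\alpha)\gamma$, which is nonzero but of size $O(\delta)$ since $\|c\|_{C^{100}}\leq\delta$. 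This is harmless for the paper's conclusion (since $\delta$ is among the parameters being taken small and $C_0$ is a fixed positive constant determined by $\partial_\alpha^2\tilde f_1(0,0)$), but your explicit tracking of the $c(\alpha)\gamma$ contribution is the more accurate accounting, and it makes transparent exactly which smallness ($\delta_c\delta$, $t_s+\delta_s$) is used where. The only place to tighten your write-up is the transition region $\alpha\in(\delta/32,\delta/8]$: there $c(\alpha)\neq\delta_c\alpha^2$ but you still have $0\leq c(\alpha)\lesssim\delta_c\delta^2$ from the $C^{100}$ bound and the boundary value $c(\delta/32)=\delta_c(\delta/32)^2$, which is what you need to close the estimate against $-\Re L_2^+\gtrsim c_*\alpha\gtrsim c_*\delta$; you gestured at this but it is worth stating.
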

\begin{proof}
From vanishing conditions of $L_1^{+}(h)$ and $L_2^{+}(h)$ (lemma \ref{Lh1lemma} and \ref{L1main}), we have 
\[
\Re L_2^{+}(h)(0,\gamma,t)=\Im L_1^{+}(h)(0,\gamma,t)=\Im L_1^{+}(h)(0,\gamma,t)=0.
\]
Since $\supp\tilde{\lambda}(\alpha_\gamma^t)\subset [-20\delta,20\delta]$, it is enough to show the following refined R-T condition for $\alpha\in (0, 20\delta]$:
\begin{equation}\label{Refined R-T conditon}
-\partial_{\alpha}\Re L_2^{+}(h)(\alpha,\gamma,t)-18|\partial_{\alpha}\Im L_1^{+}(h)(\al,\gamma,t)|-18|\partial_{\alpha}\Im L_2^{+}(h)(\al,\gamma,t)|>0.
\end{equation}
We first show a lower bound for $-\partial_{\alpha}\Re L_2^{+}(h)(\alpha,\gamma,t)$ when $t=0$:
\begin{lemma}\label{secondederivative}
When $t=0$, if $\delta$ is sufficiently small, there exists $C_0>0$ depending on $\delta$ such that
\begin{align*}
  -\partial_{\alpha}\Re L_2^{+}(h)(\alpha,\gamma,0)> C_{0}. 
    \end{align*}
 when $\alpha\in [0,20\delta]$.
\end{lemma}
\begin{proof}
Recall that from \eqref{3M21chequG01},
\[
-L_2^{+}(h)(\alpha,\gamma,t)=\lim_{\beta\to\alpha}(\frac{d}{d\beta}(\frac{\sin(A_1+B_1)}{\cosh(A_2+B_2)-\cos(A_1+B_1)})(\alpha-\beta)^2),
\]
where $A+B=D^{-60}(h)(\alpha,\gamma,t)-D^{-60}(h)(\beta,\gamma,t)+\tilde{f}^{L}(\alpha_{\gamma}^{t},t)-\tilde{f}^{L}(\beta_{\gamma}^{t},t)$.

When $t=0$, since $h(\alpha,\gamma,0)=\tilde{f}^{+(60)}(\alpha,0)$, from \eqref{cut function0}, \eqref{D-formularnew} we have
\[
A+B=\tilde{f}^{+}(\alpha,0)-\tilde{f}^{+}(\beta,0)+\tilde{f}^{L}(\alpha,0)-\tilde{f}^{L}(\beta,0)=\tilde{f}(\alpha,0)-\tilde{f}(\beta,0).
\]
Then 

\begin{align*}
    &-L_2^{+}(h)(\alpha,\gamma,0)=\lim_{\beta\to\alpha}(\frac{d}{d\beta}(\frac{\sin(\tilde{f}_1(\alpha,0)-\tilde{f}_1(\beta,0))}{\cosh(\tilde{f}_2(\alpha,0)-\tilde{f}_2(\beta,0))-\cos(\tilde{f}_1(\alpha,0)-\tilde{f}_1(\beta,0))})(\alpha-\beta)^2)\\
    &=\lim_{\beta\to\alpha}((\frac{-\cos(\tilde{f}_1(\alpha,0)-\tilde{f}_1(\beta,0))\tilde{f}_1^{'}(\beta,0)}{\cosh(\tilde{f}_2(\alpha,0)-\tilde{f}_2(\beta,0))-\cos(\tilde{f}_1(\alpha,0)-\tilde{f}_1(\beta,0))}-\\
    &\frac{\sin(\tilde{f}_1(\alpha,0)-\tilde{f}_1(\beta,0))(-\sinh(\tilde{f}_2(\alpha,0)-\tilde{f}_2(\beta,0))\tilde{f}_2^{'}(\beta,0)-\sin(\tilde{f}_1(\alpha,0)-\tilde{f}_1(\beta,0))\tilde{f}_1^{'}(\beta,0))}{(\cosh(\tilde{f}_2(\alpha,0)-\tilde{f}_2(\beta,0))-\cos(\tilde{f}_1(\alpha,0)-\tilde{f}_1(\beta,0)))^2})(\alpha-\beta)^2)\\
    &=-\frac{2\tilde{f}_1^{'}(\alpha,0)}{(\tilde{f}_1^{'}(\alpha,0))^2+(\tilde{f}_2^{'}(\alpha,0))^2}+\frac{4\tilde{f}_1^{'}(\alpha,0)}{(\tilde{f}_1^{'}(\alpha,0))^2+(\tilde{f}_2^{'}(\alpha,0))^2}\\
    &=2\frac{\tilde{f}_1^{'}(\alpha,0)}{(\tilde{f}_1^{'}(\alpha,0))^2+(\tilde{f}_2^{'}(\alpha,0))^2}
\end{align*}
By the assumption \eqref{assumption} we have $\frac{d^2}{d\alpha^2}\tilde{f}_1(\alpha,0)|_{\alpha=0}>0$, moreover, by the condition of turnover points, we have
$\frac{d}{d\alpha}\tilde{f}_1(\alpha,0)|_{\alpha=0}=0$. From the arc-chord condition \eqref{arcchord0}, we have $(\tilde{f}_1^{'}(\alpha,0))^2+(\tilde{f}_2^{'}(\alpha,0))^2>0$. Then
by letting $\delta$ sufficiently small, we get the result.
\end{proof}
\begin{lemma}
When $\delta$ is sufficiently small, 
\eqref{Refined R-T conditon} holds for small $\delta_s$, $t_s$.
\end{lemma}
\begin{proof}
From lemma \ref{secondederivative}, we know at $t=0$,
\begin{align*}
    -\partial_{\alpha}\Re L_2(\tilde{f}^{{+}(60)})(\alpha,\gamma,0)> C_0,
    \end{align*}
    when $\alpha\in [0,20\delta]$.
    $\partial_{\alpha}\Im L_1^{+}(h)(\al,\gamma,0)=\partial_{\alpha}\Im L_2^{+}(h)(\al,\gamma,0)=0$ because everything is real at $t=0$.
From smoothness condition of $L_{i}^{+}(h)$ lemmas, \ref{L1main}, \ref{Lh1lemma}, we also have
\begin{align*}
   & \quad \|L_i^{+}(h)(\alpha,\gamma,t)-L_i^{+}(h)(\alpha,\gamma,0)\|_{C_{\alpha}^{3}[0,\frac{\pi}{4}]}\\
    &\lesssim \|h(\alpha,\gamma,t)-h(\alpha,\gamma,0)\|_{X^{1}}+\mathcal{O}(t)\\
    &=\|h(\alpha,\gamma,t)-\tilde{f}^{{+}(60)}(\al,0)\|_{X^{1}}+\mathcal{O}(t)
\end{align*}
Therefore when $\alpha\in [0,20\delta]$, we have
\begin{align*}
&|\partial_{\alpha}\Re L_2^{+}(h)(\al,\g,t)-\partial_{\alpha}\Re L_2^{+}(h)(\al,\g,0)|+|\partial_{\alpha}\Im L_2^{+}(h)(\al,\g,t)-\partial_{\alpha}\Im L_2^{+}(h)(\al,\g,0)|\\
&+|\partial_{\alpha} \Im L_1^{+}(h)(\al,\g,t)-\partial_{\alpha} \Im L_1^{+}(h)(\al,\g,0)|\lesssim O(t)+\|h(\alpha,\gamma,t)-\tilde{f}^{+(60)}(\alpha,t)\|_{X^1}.
\end{align*}
%Moreover, when $0<\alpha< 0$, from \eqref{Lhcondition3}, we have
%\begin{align*}
    %&\|L_{h^{\epsilon,s}}^{j}(\alpha,\gamma,t)-L_{h^{\epsilon,s}}^{j}(-\tau(t'),\gamma,t)\|_{C^{0}_{\gamma}([-1,1],C^{1}_{\alpha}[0,-\tau(t')])}\\
%&\leq (\tau(t)-\tau(t'))\|L^{i}_{h^{\epsilon,s}}(\alpha,\gamma,t)\|_{C^{0}_{\gamma}([-1,1],C^{2}_{\alpha}[0,-\tau(t')]}\lesssim B(\|h\|_{U})(\tau(t)-\tau(t')).
%\end{align*}
%Then 
%\begin{align*}
    %&|\partial_{\alpha}d(\alpha,\gamma,t)-\partial_{\alpha}d(0,\gamma,t)|+|\partial_{\alpha}d_2(\alpha,\gamma,t)-\partial_{\alpha}d_2(0,\gamma,t)|+|\partial_{\alpha}d_3(\alpha,\gamma,t)-\partial_{\alpha}d_3(0,\gamma,t)|\lesssim B(\|h\|_{U})|\tau(t)|.
%\end{align*}
 Hence there exists small $\delta$, $t_{s}$, $\delta_{s}$, such that \eqref{Refined R-T conditon} holds.
 \end{proof}
\end{proof}

\section{Existence and uniqueness for a generalized equation for $\kappa(t)>0$ }\label{kappa1sectionexistence}
In this section, we aim to show the existence and uniqueness theorem \ref{existence1theorem}.
\begin{theorem}\label{existence1theorem}
    For any equation $T^{+}(h)$ and initial data $h(\al,\g,0)$ satisfying the generalized equation \eqref{3GE01}.  There exists sufficiently small time $\bar{t}$ and solution $h(\al,\g,t)$ such that  
    \begin{align}\label{hspaceconclusion}
\begin{cases}
    &h(\al,\g,t)\in C_{t}^1([0,\bar{t}], X^{5}), \frac{d}{d\g}h\in C_{t}^{1}([0,\bar{t}], X^{4}),\\
    &h(\al,\g,t)\in C_{\g}^{1}([-1,1],C_{\al}^{2}[0,\pi]),\\
    &\frac{d}{d\g}\frac{d}{dt}h=\frac{d}{dt}\frac{d}{d\g}h.
\end{cases}
\end{align}
Moreover, for all $g(\al,t)\in C_{t}^{1}([0,\bar{t}],H^{5}[0,\pi]))\cap\{h|\supp h \subset  [0,\frac{\pi}{4}]\}$ satisfying the equation \eqref{3GE01} when $\gamma=0$ and the initial data condition $g(\al,0)= h(\al,\gamma,0)=\tilde{f}^{+(60)}(\alpha,t)$, we have  when $t\leq \bar{t}$,
\begin{equation}\label{huniqueness01}
g(\al,t)=h(\al,0,t).
\end{equation}
\label{model1generalizedequation}\end{theorem}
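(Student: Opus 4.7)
The plan is to construct $h$ by a Picard iteration that freezes the nonlinear coefficients. Set $h_0(\alpha,\gamma,t) := h(\alpha,\gamma,0)$, and given $h_{n-1}$, let $h_n$ solve the linear problem obtained from \eqref{3GE01} by replacing $L_i^+(h)$, $B_i(h)$ with $L_i^+(h_{n-1})$, $B_i(h_{n-1})$. Each linear step is degenerate parabolic in $\alpha$ with principal part $M_{2,1}$ (where $\Re L_2^+ < 0$ by the Refined R-T condition) coupled with a transport in the favorable direction, since $\kappa(t) > 0$ implies characteristics exit $[0,\pi]$ through $\alpha = 0$. Each linear step can then be solved on each $\gamma$-slice via a vanishing viscosity approximation, with uniform bounds coming from the energy estimate described below.

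The key a priori estimate is for $\|h\|_{X^k}$ with $k \leq 12$, obtained by computing $\tfrac12 \tfrac{d}{dt}\|\partial_\alpha^k h\|_{L^2_\alpha[0,\pi]}^2$. For the pairing with $\partial_\alpha^k M_{2,1}(h)$, the leading piece $\lambda L_2^+(h)\,\mathrm{p.v.}\!\int_0^{2\alpha}\frac{\partial_\alpha^k h(\alpha)-\partial_\alpha^k h(\beta)}{(\alpha-\beta)^2}\,d\beta$ is bounded below using the adapted G\r{a}rding inequality from section \ref{Garding}, with a dominant positive-definite term coming from $-\Re L_2^+(h)$. The factor of $18$ in the Refined R-T condition absorbs both commutator losses from distributing $\partial_\alpha^k$ across the nonlocal operator and contributions from $\Im L_i^+$. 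For $M_{1,1}$, integration by parts yields the favorable boundary term $-\tfrac12 \lambda(0)\kappa(t)|\partial_\alpha^k h(0,\gamma,t)|^2 \leq 0$ together with a bounded interior commutator. For $M_{1,2}$, the vanishing condition $L_1^+(h)(0,\gamma,t)=0$ combined with $L_1^+(h)\in C^{k+2}_\alpha$ gives $|L_1^+(h)(\alpha)|\lesssim \alpha$, so Hardy's inequality controls the transport. The $B_i$ are lower order by \eqref{boundestimateB}. Gr\"onwall then yields $\|h_n\|_{X^{12}}\leq C$ uniformly on some short interval $[0,\bar t]$, with $\bar t$ depending only on $\delta_s$, $t_s$.

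Contractivity of the iteration in $X^1$, via the Lipschitz continuity of $L_i^+, B_i$ in $h$, gives $h_n\to h$ in $X^1$; interpolation with the uniform $X^{12}$ bound promotes this convergence to $C^0_t([0,\bar t], X^5)$, and the equation itself lifts regularity in $t$ to $C^1_t$. For the $\gamma$-regularity, the difference quotient $(h(\alpha,\gamma,t)-h(\alpha,\gamma',t))/(\gamma-\gamma')$ satisfies a linear equation of the same structural type built from $D_\gamma L_i^+$ and $D_\gamma B_i$ through conditions \eqref{dgammaestimateLi} and \eqref{dgammaestimateBi}; the same energy estimate gives convergence to $\partial_\gamma h \in C^1_t([0,\bar t], X^4)$ and the commutation $\partial_t\partial_\gamma h = \partial_\gamma\partial_t h$. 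For uniqueness at $\gamma = 0$: by properties (\ref{3barl0})--(\ref{3barblip}), $L_i^+(h)|_{\gamma=0}$ and $B_i(h)|_{\gamma=0}$ depend only on $h|_{\gamma=0}$; hence $d := g - h(\cdot,0,\cdot)$ satisfies a linear equation of type \eqref{3GE01} with Lipschitz coefficients and zero initial data, and an $L^2_\alpha$ energy estimate together with Gr\"onwall forces $d \equiv 0$.

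The main obstacle is the interplay at the origin between the non-smoothness of $\lambda(\alpha)\mathbf{1}_{\alpha\geq 0}$, the non-vanishing of $\partial_\alpha^k h(0,\gamma,t)$, and the nonlocal half-line operator in $M_{2,1}$. Standard G\r{a}rding inequalities fail in this setting, so the adapted inequality from section \ref{Garding} must absorb the boundary-induced singularity into the dominant parabolic term. The factor $18$ in the Refined R-T condition is calibrated exactly to cover the worst commutator at the highest derivative level, and tracking how much smoothness of $L_i^+, B_i$ (controlled only in $C^{k+2}_\alpha$ or $X^k$) is consumed against the derivative loss from the Hilbert-type operator on the half-line is the most delicate bookkeeping in the argument.
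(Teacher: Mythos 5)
Your proposal captures the top-level ingredients (refined G\r{a}rding inequality, favorable boundary term from $\kappa(t)>0$, R-T sign condition, Gr\"onwall in $X^{12}$, $\gamma$-difference quotients, reduction of uniqueness at $\gamma=0$ to a linear estimate), but it misses the one technical device that makes the higher-order a priori estimate close, and it misattributes the role of the constant $18$. Concretely: when you distribute $\partial_\alpha^k$ across $\int_0^{2\alpha}\frac{h(\alpha)-h(\beta)}{(\alpha-\beta)^2}\,d\beta$, the $\beta$-endpoints produce boundary terms of the form $\partial_\alpha^{l'}h(0)/\alpha^{\,l-l'+2}$ and $\partial_\alpha^{l'}h(2\alpha)/\alpha^{\,l-l'+2}$ with $l\le k-1$ (this is exactly $Term\ 2$ and $Term\ 3$ in the proof of Lemma \ref{coro22ep}). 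Multiplying by $L_2^+(h)$, which vanishes only to first order at $0$, kills a single power of $\alpha$ and still leaves a singularity of order $\alpha^{-(k-1)}$ at worst. Since the solution does not retain $\partial_\alpha^j h(0,\gamma,t)=0$ for $j<k$ at later times (the $B_i$ contributions need not vanish at the origin), these boundary contributions are not controlled, and the $X^k$ energy estimate does not close — no matter how you choose the constant in the R-T condition. The paper resolves this by building the Taylor subtraction $h-\sum_{j<k}\frac{\alpha^j}{j!}h^{(j)}(0)$ directly into the desingularized operators $\tilde M_{1,2}^{k,\epsilon}$ and $\tilde M_{2,1}^{k,\epsilon}$ (see \eqref{3M12perturb}, \eqref{3M22perturb} and Lemmas \ref{coro22ep}, \ref{coro22epm12}, \ref{3lemmaotherterms}), so that $\partial_\alpha^k$ of the desingularized term reduces, modulo bounded pieces, to the half-line operator applied directly to $\partial_\alpha^k h$. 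This subtraction is the essential commutator control; the factor $18$ is calibrated to the G\r{a}rding lemma \ref{Garding} (to absorb the $|\Im L_i^+|$ cross-terms after splitting $h$ into real and imaginary parts, with $C=9$ in each of \eqref{ene1}--\eqref{ene2}) and has nothing to do with the $\partial_\alpha^k$-commutator.

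There is also a structural difference: the paper does not do a frozen-coefficient Picard iteration with vanishing viscosity per linear step. It applies Picard directly to the nonlinear $\epsilon$-desingularized operator $T^\epsilon(h)$, which is Lipschitz in $X^{12}$ because all denominators $(\alpha-\beta)^2+\epsilon^2$ are nonvanishing and all singular endpoints are removed by the Taylor subtraction, derives $\epsilon$-uniform bounds including the $\gamma$-equicontinuity estimate \eqref{3hgammabound}, and then extracts a limit by Arzel\`a--Ascoli. Your frozen-coefficient iteration with vanishing viscosity is a plausible alternative in principle, but the same boundary-commutator obstruction would reappear in the uniform-in-viscosity energy estimate for the linear step, so you would need the Taylor subtraction there as well. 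In its current form, your sketch does not contain the mechanism required to close the estimate, so there is a genuine gap.
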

\begin{rem}
We do not assume any periodic boundary conditions in space $H_{\alpha}^{k}[0,\pi]$ and $C_{\alpha}^{k+2}[0,\pi]$.
\end{rem}

\subsection{The existence of $h$}\label{hexsi}
We plan to show after suitable perturbation of $T$ to $T^{\epsilon}$, we have the energy estimate
\[
\Re<T^{\ep}(h),h>_{X^{k}}\leq C(\|h\|_{X^{k}}),
\]
for $1\leq k\leq 12$, with $C(\cdot)$ a bounded function.
Then we can apply the Picard theorem for $\epsilon>0$ and use the energy estimate to take the limit of $\epsilon$ to 0. 
\subsubsection{Perturbation}
We first perturb the main terms $M_{1,1}$, $M_{1,2}$ and $M_{2,1}$. The terms including $h^{(j)}(0)$ deal with the singularity caused by the boundary ($0$ and $2\alpha$ in the integral) when $\ep>0$.  Notice that this kind of singularity will not happen in the limit case when $\ep=0$.

%The perturbation includes 2 steps. After first step, the operator behaves well on $X\cap \{h|\partial_{\alpha}^{j}h(0,\gamma)=0, 0\leq j\leq k-1\}$. The second step 

%For the first step, we have
%\begin{equation}\label{3M11perturb}
%\begin{split}
%M_{1,1}^{\epsilon}(h)=\kappa(t)\lambda(\alpha)\frac{h(\alpha+\epsilon)-h(\alpha)}{\epsilon}.
%\end{split}
%\end{equation}
%\begin{equation}\label{3M12perturb}
%\begin{split}
%M_{1,2}^{\epsilon}(h)=\lambda(\alpha)\int_{0}^{2\alpha}\frac{(h(\alpha)-h(\beta))(\alpha-\beta)\epsilon}{((\alpha-\beta)^2+\epsilon^2)^2}d\beta \frac{2}{\pi}L_1^{+}(h).
%\end{split}
%\end{equation}

%\begin{equation}\label{3M22perturb}
%\begin{split}
%M_{2,1}^{\epsilon}&=-%\lambda(\alpha)L_2^{+}(h)\int_{0}^{2\al}\frac{h(\alpha)-h(\beta)}{(\alpha-\beta)^2+\epsilon^2}d\beta.
%\end{split}
%\end{equation}
For any $1\leq k \leq 12$, we let
\begin{equation}\label{3M11perturb2}
\begin{split}
\tilde{M}_{1,1}^{\epsilon}(h)=\lambda(\alpha)\frac{h(\alpha+\epsilon)-h(\alpha)}{\epsilon}\kappa(t),
\end{split}
\end{equation}
\begin{equation}\label{3M12perturb}
\begin{split}
\tilde{M}_{1,2}^{k,\epsilon}(h)&=\lambda(\alpha)\int_{0}^{2\alpha}\frac{((h(\alpha)-\sum_{j=0}^{k-1}\frac{\al^{j}}{j!}h^{(j)}(0))-(h(\beta)-\sum_{j=0}^{k-1}\frac{\beta^{j}}{j!}h^{(j)}(0)))(\alpha-\beta)\epsilon}{((\alpha-\beta)^2+\epsilon^2)^2}d\beta \frac{2}{\pi}L_1^{+}(h)\\
&+\lambda(\alpha)\sum_{j=1}^{k-1}\frac{\al^{j-1}}{(j-1)!}h^{(j)}(0)L_1^{+}(h),
\end{split}
\end{equation}
\begin{equation}\label{3M22perturb}
\begin{split}
\tilde{M}_{2,1}^{k,\epsilon}(h)&=\lambda(\alpha)L_2^{+}(h)\int_{0}^{2\al}\frac{(h(\alpha)-\sum_{j=0}^{k-1}\frac{\al^{j}}{j!}h^{(j)}(0))-(h(\beta)-\sum_{j=0}^{k-1}\frac{\beta^{j}}{j!}h^{(j)}(0))}{(\alpha-\beta)^2+\epsilon^2}d\beta\\
&+\lambda(\alpha)L_2^{+}(h)\sum_{j=0}^{k-1}h^{(j)}(0)p.v.\int_{0}^{2\alpha}\frac{\frac{\al^{j}}{(j)!}-\frac{\beta^{j}}{(j)!}}{(\alpha-\beta)^2}d\beta.
\end{split}
\end{equation}
Here the notation $\sum_{j=1}^{k-1}$ is taken to be $0$ when $k=1$.

We will keep other terms and let
\begin{equation}\label{perturbationterm}
T^{\epsilon}(h)=\tilde{M}_{1,1}^{\epsilon}+\tilde{M}_{1,2}^{k,\epsilon}+\tilde{M}_{2,1}^{k,\epsilon}+\sum_{i}B_i.
\end{equation}

Now we explain why the terms defined above are perturbation of original $M_{1,1}$, $M_{1,2}$, $M_{2,1}$.
\begin{lemma}\label{limitlemma01}
If $h\in H_{\alpha}^{k}[0,\pi]$ with $k\geq 3$, then we have
\begin{equation}\label{limitcase11}
\lim_{\epsilon\to 0}\tilde{M}_{1,1}^{\epsilon}(h)=M_{1,1}(h),
\end{equation}
\begin{equation}\label{limitcase12}
\lim_{\epsilon\to 0}\tilde{M}_{1,2}^{k,\epsilon}(h)=M_{1,2}(h),
\end{equation}
\begin{equation}\label{limitcase21}
\lim_{\epsilon\to 0}\tilde{M}_{2,1}^{k,\epsilon}(h)=M_{2,1}(h).
\end{equation}
The limit is in the sense of strong convergence in $L^{2}_{\alpha}[0,\pi].$
\end{lemma}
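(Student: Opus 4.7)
The plan is to handle each of the three limits independently using the common decomposition $h=P_k+g$, where $P_k(\alpha):=\sum_{j=0}^{k-1}\tfrac{\alpha^j}{j!}h^{(j)}(0)$ is the order-$(k-1)$ Taylor polynomial of $h$ at $0$. Since $k\geq 3$, the Sobolev embedding $H^k[0,\pi]\hookrightarrow C^{k-1}[0,\pi]$ makes each trace $h^{(j)}(0)$ a continuous functional of $h$, yields $g\in C^2[0,\pi]$ with $g^{(j)}(0)=0$ for $0\leq j\leq k-1$, and gives the uniform bound $|g(\alpha)-g(\beta)-g'(\alpha)(\alpha-\beta)|\lesssim |\alpha-\beta|^2\,\|g''\|_{L^\infty}$. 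The first limit \eqref{limitcase11} is then the classical $L^2$-convergence of the forward-difference quotient to $\partial_\alpha h$ for $h\in H^1$, multiplied by the bounded factor $\lambda(\alpha)\kappa(t)$; no further work is required.

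For \eqref{limitcase21}, the first integral in $\tilde M_{2,1}^{k,\epsilon}$ involves only $g$. The exact symmetric cancellation $\int_0^{2\alpha}\tfrac{\alpha-\beta}{(\alpha-\beta)^2+\epsilon^2}\,d\beta=0$ allows me to replace $g(\alpha)-g(\beta)$ by $g(\alpha)-g(\beta)-g'(\alpha)(\alpha-\beta)$ in the numerator without changing the value of the integral, after which the quadratic vanishing above produces an $\epsilon$-uniform integrable majorant. Dominated convergence then gives the pointwise-in-$\alpha$ limit $\int_0^{2\alpha}\tfrac{g(\alpha)-g(\beta)-g'(\alpha)(\alpha-\beta)}{(\alpha-\beta)^2}\,d\beta$, which is exactly the principal-value interpretation of $\int_0^{2\alpha}\tfrac{g(\alpha)-g(\beta)}{(\alpha-\beta)^2}\,d\beta$. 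Adding back the polynomial piece (the second sum in $\tilde M_{2,1}^{k,\epsilon}$, which is already a p.v. integral) recombines with $g$ to give $\mathrm{p.v.}\!\int_0^{2\alpha}\tfrac{h(\alpha)-h(\beta)}{(\alpha-\beta)^2}\,d\beta$, i.e. $M_{2,1}(h)/(\lambda L_2^+(h))$. Boundedness of $L_2^+(h)$ from lemma \ref{Lh1lemma} and a second dominated convergence in $\alpha$ (with the same majorant) upgrade this to strong $L^2[0,\pi]$-convergence.

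For \eqref{limitcase12}, the second sum in $\tilde M_{1,2}^{k,\epsilon}$ is precisely $\lambda(\alpha) L_1^+(h)\,\partial_\alpha P_k(\alpha)$, so it remains to show that the first integral converges to $\lambda L_1^+(h)\,g'(\alpha)$. The kernel identity $\tfrac{(\alpha-\beta)\epsilon}{((\alpha-\beta)^2+\epsilon^2)^2}=\tfrac12\,\partial_\beta\tfrac{\epsilon}{(\alpha-\beta)^2+\epsilon^2}$ followed by integration by parts reduces the first integral to $\tfrac{1}{\pi}\int_0^{2\alpha}g'(\beta)\tfrac{\epsilon}{(\alpha-\beta)^2+\epsilon^2}\,d\beta$, up to boundary contributions at $\beta=0,\,2\alpha$ of size $O(\epsilon)$ (the Poisson kernel there stays bounded away from its singularity). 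This is the Poisson integral of $g'$ on the interval $[0,2\alpha]$ which is symmetric about $\alpha$, so by standard approximation-of-identity it converges to $g'(\alpha)$ pointwise and in $L^2$. The main technical obstacle throughout is upgrading pointwise convergence to strong $L^2_\alpha$-convergence near $\alpha=0$, where the effective integration range $[0,2\alpha]$ shrinks; this is handled uniformly by exploiting $g^{(j)}(0)=0$ for $j\leq k-1\geq 2$ to produce an $\epsilon$-independent $L^2$-dominating function on a neighborhood of the origin, allowing Vitali's (or just dominated) convergence to close the argument.
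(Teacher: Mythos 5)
Your proof is correct but takes a genuinely different route. You decompose $h=P_k+g$ at the outset, so the regularized integrals act on the remainder $g$, whose order-$(k-1)$ vanishing at $0$ does double duty: it makes the integrands converge pointwise and furnishes the $\epsilon$-independent $L^2$ majorant near $\alpha=0$. The paper does not decompose $h$; it keeps $h$ in the integrals and moves the Taylor pieces into the auxiliary quantities $b_{1,j}^\epsilon, b_{2,j}^\epsilon$ of \eqref{3bj1}--\eqref{3bj2}, then proves a single $\alpha$-weighted sub-lemma (\ref{limitlemme0}) valid for any $g\in H^3$, closing the estimate near $\alpha=0$ by borrowing the factor $\alpha$ from the vanishing of $L_i^+(h)$ at the origin. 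So where you exploit the vanishing of the remainder $g$, the paper exploits the vanishing of the Rayleigh--Taylor coefficient; your argument is therefore somewhat more self-contained. For the $M_{1,2}$ kernel you also diverge on the computation itself: you integrate by parts to expose the Poisson approximate identity $\int_0^{2\alpha}g'(\beta)P_\epsilon(\alpha-\beta)\,d\beta\to g'(\alpha)$, whereas the paper subtracts the linear Taylor term $g'(\alpha)(\alpha-\beta)$ inside the integral (the same device it uses for $M_{2,1}$, and which you apply, via symmetric cancellation, only to $M_{2,1}$) and then shows $\tfrac{2}{\pi}\int_0^{2\alpha}\tfrac{(\alpha-\beta)^2\epsilon}{((\alpha-\beta)^2+\epsilon^2)^2}\,d\beta\to 1$. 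One imprecision in your write-up: the parenthetical that the integration-by-parts boundary contribution at $\beta=0, 2\alpha$ is $O(\epsilon)$ because ``the Poisson kernel there stays bounded away from its singularity'' is only true for $\alpha$ bounded away from $0$; near the origin the kernel $\tfrac{\epsilon}{\alpha^2+\epsilon^2}$ is of size $\epsilon^{-1}$, and it is the bound $|g(\alpha)|, |g(2\alpha)|\lesssim\alpha^{k-1}$ (available since $k\geq 3$) that keeps the product uniformly $O(\epsilon)$. You do invoke this vanishing at the end, but it needs to be cited already at the boundary-term step.
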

\begin{proof}
\eqref{limitcase11} is trivial from the definition \eqref{3M11perturb2} and \eqref{M11def}.
For \eqref{limitcase12}, we first rewrite the $\tilde{M}_{1,2}^{k,\epsilon}$, $\tilde{M}_{2,1}^{k,\epsilon}$.
Let
\begin{equation}\label{3bj1}
b_{1,j}^{\epsilon}(\al)=\begin{cases}\int_{0}^{2\al}\frac{(\frac{\al^{j}}{j!}-\frac{\beta^{j}}{j!})(\al-\beta)\epsilon}{((\al-\beta)^2+\epsilon^2)^2}d\beta\frac{2}{\pi}-\frac{\al^{j-1}}{(j-1)!} &j\geq 1,\\
0 &j=0,
\end{cases}
\end{equation}
\begin{equation}\label{3bj2}
b_{2,j}^{\epsilon}(\al)=\begin{cases}\int_{0}^{2\al}\frac{\frac{\al^{j}}{j!}-\frac{\beta^{j}}{j!}}{(\al-\beta)^2+\epsilon^2}d\beta-\int_{0}^{2\al}\frac{\frac{\al^{j}}{j!}-\frac{\beta^{j}}{j!}}{(\al-\beta)^2}\beta &j\geq 1\\
0 & j=0.
\end{cases}
\end{equation}
Then we have
\begin{equation}\label{3M12split}
\begin{split}
\tilde{M}_{1,2}^{k,\epsilon}&=\lambda(\alpha)\int_{0}^{2\alpha}\frac{(h(\alpha)-h(\beta))(\alpha-\beta)\epsilon}{((\alpha-\beta)^2+\epsilon^2)^2}d\beta \frac{2}{\pi}L_1^{+}(h)(\al,\g)-\lambda(\alpha)\sum_{j=0}^{k-1}b_{1,j}^{\epsilon}(\al)L_1^{+}(h)(\al,\g)\frac{2}{\pi}h^{(j)}(0,\g)\\
&=\tilde{M}_{1,2,I}^{\ep}(h)+\tilde{M}_{1,2,B}^{\ep}(h).
\end{split}
\end{equation}
\begin{equation}\label{3M21split}
\begin{split}
\tilde{M}_{2,1}^{k,\epsilon}&=\lambda(\alpha)L_2^{+}(h)(\al,\g)\int_{0}^{2\alpha}\frac{(h(\alpha)-h(\beta))}{(\alpha-\beta)^2+\epsilon^2}d\beta -\lambda(\alpha)\sum_{j=0}^{k-1}b_{2,j}^{\epsilon}(\al)L_2^{+}(h)(\al,\g)h^{(j)}(0,\g)\\
&=\tilde{M}_{2,1,I}^{\ep}(h)+\tilde{M}_{2,1,B}^{\ep}(h).
\end{split}
\end{equation}
Now we introduce a lemma to control the limit behavior
\begin{lemma}\label{limitlemme0}
For $g(\al)\in H^{3}_{\alpha}[0,\pi]$, we have
\begin{equation}\label{3b1g0}
\|\al\frac{2}{\pi}\int_{0}^{2\al}\frac{(g(\al)-g(\beta))(\al-\beta)\ep}{((\al-\beta)^2+\ep^2)^2}d\beta-g'(\al)\al\|_{H_{\al}^{1}[0,\frac{\pi}{4}]}\lesssim_{g}O(\epsilon),
\end{equation}
and
\begin{equation}\label{3b2g0}
\|\int_{0}^{2\al}\frac{(g(\al)-g(\beta))}{((\al-\beta)^2+\ep^2)}d\beta-p.v.\int_{0}^{2\al}\frac{(g(\al)-g(\beta))}{(\al-\beta)^2}d\beta\|_{H_{\al}^{1}[0,\frac{\pi}{4}]}\lesssim_{g}O(\epsilon).
\end{equation}
\end{lemma}
\begin{proof}
We have
\begin{equation}
\begin{split}
&\quad \frac{2}{\pi}\int_{0}^{2\al}\frac{(g(\al)-g(\beta))(\al-\beta)\ep}{((\al-\beta)^2+\ep^2)^2}d\beta\\
&=\frac{2}{\pi}\int_{0}^{2\al}\frac{(g(\al)-g(\beta)-g'(\al)(\al-\beta))(\al-\beta)\ep}{((\al-\beta)^2+\ep^2)^2}d\beta\\
&\quad+\frac{2}{\pi}g'(\al)\int_{0}^{2\al}\frac{(\al-\beta)^2\ep}{((\al-\beta)^2+\ep^2)^2}d\beta,
\end{split}
\end{equation}
and 
\begin{equation}
\begin{split}
    &\quad\frac{d}{d\al}\frac{2}{\pi}\int_{0}^{2\al}\frac{(g(\al)-g(\beta))(\al-\beta)\ep}{((\al-\beta)^2+\ep^2)^2}d\beta\\
    &=\frac{2}{\pi}\frac{d}{d\al}(\int_{-\al}^{\al}\frac{(g(\al)-g(\al-\beta)-g'(\al)\beta)(\beta)\ep}{(\beta^2+\ep^2)^2}d\beta)+\frac{2}{\pi}\frac{d}{d\al}(g'(\al)\int_{-\al}^{\al}\frac{\beta^2\ep}{(\beta^2+\ep^2)^2}d\beta)\\
    &=\frac{2}{\pi}\int_{-\al}^{\al}\frac{(g'(\al)-g'(\al-\beta)-g''(\al)\beta)(\beta)\ep}{(\beta^2+\ep^2)^2}d\beta+\frac{2}{\pi}\frac{g(\al)-g(0)-g'(\al)\al}{(\al^2+\ep^2)^2}\al\ep-\frac{2}{\pi}\frac{g(\al)-g(2\al)+g'(\al)\al}{(\al^2+\ep^2)^2}\al\ep\\
    &\quad-\frac{2}{\pi}g''(\al)\int_{-\al}^{\al}\frac{\beta^2\ep}{(\beta^2+\ep^2)^2}d\beta+\frac{2}{\pi}g'(\al)[\frac{\al^2\ep}{(\al^2+\ep^2)^2}+\frac{\al^2\ep}{(\al^2+\ep^2)^2}].
    \end{split}
\end{equation}
Since $\lim_{\ep\to 0}\frac{2}{\pi}\int_{0}^{2\al}\frac{(\al-\beta)^2\ep}{((\al-\beta)^2+\ep^2)^2}d\beta=\frac{2}{\pi} \int_{-\infty}^{\infty}\frac{\beta^2}{(\beta^2+1)^2}d\beta =1$ almost everywhere, and 
\[
|\int_{0}^{2\al}\frac{(\al-\beta)^2\ep}{((\al-\beta)^2+\ep^2)^2}d\beta|\lesssim 1,
\]
\[
\lim_{\ep\to 0}\frac{\al^2\ep}{(\al^2+\ep^2)^2}\al =0,
\]
\[
\frac{\al^2\ep}{(\al^2+\ep^2)^2}\al\lesssim 1,
\]
from Lebesgues' convergence theorem, we get the \eqref{3b1g0}.
For \eqref{3b2g0}, similarly, we have
\begin{equation}
\int_{0}^{2\al}\frac{(g(\al)-g(\beta))}{((\al-\beta)^2+\ep^2)}d\beta=\int_{-\al}^{\al}\frac{(g(\al)-g(\al-\beta)-g'(\al)\beta)}{(\beta^2+\ep^2)}d\beta,
\end{equation}
and
\begin{align*}
\frac{d}{d\al}\int_{0}^{2\al}\frac{(g(\al)-g(\beta))}{((\al-\beta)^2+\ep^2)}d\beta&=\int_{-\al}^{\al}\frac{(g'(\al)-g'(\al-\beta)-g''(\al)\beta)}{(\beta^2+\ep^2)}d\beta+\frac{g(\al)-g(0)-g'(\al)\al}{\al^2+\ep^2}\\
&\quad+\frac{g(\al)-g(2\al)+g'(\al)\al}{\al^2+\ep^2}.
\end{align*}
The Lebesgue's convergence theorem gives the result. 
\end{proof}

Then from \eqref{3M12split}, \eqref{3M21split}, we have
\begin{align*}
\|\tilde{M}_{1,2,B}^{\ep}(h)\|_{X^{0}}\leq \|\frac{1}{\alpha}L_{1}^{+}(h)(\alpha)\|_{C_{\gamma}^{0}([0,1],C_{\alpha}^{0}[0,\pi])}\sum_{j=0}^{k-1}\|b_{1,j}^{\ep}(\alpha)\alpha\|_{X^{0}}\|h\|_{X^{k}}.
\end{align*}
\begin{align*}
\|\tilde{M}_{2,1,B}^{\ep}(h)\|_{X^{0}}\leq \|\frac{1}{\alpha}L_{2}^{+}(h)(\alpha)\|_{C_{\gamma}^{0}([0,1],C_{\alpha}^{0}[0,\pi])}\sum_{j=0}^{k-1}\|b_{2,j}^{\ep}(\alpha)\alpha\|_{X^{0}}\|h\|_{X^{k}}.
\end{align*}
\begin{align*}
&\quad\|\tilde{M}_{1,2,I}^{\ep}(h)-M_{1,2}(h)\|_{X^{0}}=\|\tilde{M}_{1,2,I}^{\ep}(h)-\lambda(\alpha)h'(\alpha)L_{1}^{+}(h)\|_{X^{0}}\\&\leq \|\frac{1}{\alpha}L_{1}^{+}(h)(\alpha)\|_{C_{\gamma}^{0}([0,1],C_{\alpha}^{0}[0,\pi])}\|\lambda(\alpha)\alpha(\frac{2}{\pi}\int_{0}^{2\alpha}\frac{(h(\alpha)-h(\beta))(\alpha-\beta)\epsilon}{((\alpha-\beta)^2+\epsilon^2)^2}d\beta-h'(\alpha))\|_{X^{0}}\|h\|_{X^{k}}.
\end{align*}
\begin{align*}
&\quad\|\tilde{M}_{2,1,I}^{\ep}(h)-M_{2,1}(h)\|_{X^{0}}=\|\tilde{M}_{2,1,I}^{\ep}(h)-\lambda(\alpha)L_{2}^{+}(h)\int_{0}^{2\alpha}\frac{h(\alpha)-h(\beta)}{(\alpha-\beta)^2+\epsilon^2}d\beta\|_{X^{0}}\\&\leq \|\frac{1}{\alpha}L_{2}^{+}(h)(\alpha)\|_{C_{\gamma}^{0}([0,1],C_{\alpha}^{0}[0,\pi])}\|\lambda(\alpha)\alpha(\int_{0}^{2\alpha}\frac{h(\alpha)-h(\beta)}{(\alpha-\beta)^2+\epsilon^2}d\beta-p.v.\int_{0}^{2\alpha}\frac{h(\alpha)-h(\beta)}{(\alpha-\beta)^2}d\beta)\|_{X^{0}}\|h\|_{X^{k}}.
\end{align*}
From vanishing and smoothness conditions of $L_{i}^{+}(h)$, we have 
\[
\|\frac{1}{\alpha}L_{i}^{+}(h)(\alpha)\|_{C_{\gamma}^{0}([-1,1], C_{\alpha}^{0}[0,\pi])} \lesssim \|L_{i}^{+}(h)(\alpha)\|_{C_{\gamma}^{0}([-1,1], C_{\alpha}^{1}[0,\pi])}.
\]
By taking the $g$ as $h$ and $\frac{\alpha^{j}}{j!}$, we can control all four terms above from lemma \ref{limitlemme0}, and have the result.
\end{proof}

\subsubsection{Control for the good terms}
We first show some lemmas to separate the main terms. Roughly speaking, we want to show the only unbounded terms of $\partial_{\al}^{k}T^{\ep}(h)$ in $L^{2}_{\al}$ are those terms in $\tilde{M}_{1,1}^{k,\ep}+\tilde{M}_{1,2}^{k,\ep}+\tilde{M}_{2,1}^{k,\ep}$ with all derivatives hitting on $h$.  
\begin{lemma}\label{coro22k0100}
For $h\in H_{\al}^{1}[0,\pi]$, with $h(0)=0$, $\supp h \subset [0,\frac{3}{4}\pi]$, we have 
\begin{equation}\label{3coro03k011}
\|\int_{0}^{2\al}\frac{h(\al)-h(\beta)}{(\al-\beta)^2+\ep^2}d\beta\|_{L_{\al}^{2}[0,\frac{\pi}{4}]}\lesssim \|h\|_{H_{\al}^{1}[0,\pi]},
\end{equation}
and
\begin{equation}\label{3corom12k01}
\|\int_{0}^{2\al}\frac{(h(\al)-h(\beta))(\al-\beta)\ep}{((\al-\beta)^2+\ep^2)^2}d\beta\|_{L_{\al}^{2}[0,\frac{\pi}{4}]}\lesssim \|h\|_{H_{\al}^{1}[0,\pi]},
\end{equation}
\end{lemma}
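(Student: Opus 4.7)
The strategy is to separate a translation-invariant ``bulk'' in each integral (to be bounded by Plancherel) from explicit boundary corrections (handled by Hardy's inequality and truncated Hilbert transform theory). Since $h(0)=0$ and $\supp h\subset[0,\frac{3}{4}\pi]$, I extend $h$ by zero to $\mathbb{R}$, which preserves $H^{1}$ regularity with the same norm.

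For \eqref{3coro03k011}, substituting $\beta=\al-\gamma$ and averaging in $\gamma\mapsto-\gamma$ yields the symmetric form
\[
T_{\ep}h(\al):=\int_{0}^{2\al}\frac{h(\al)-h(\beta)}{(\al-\beta)^{2}+\ep^{2}}\,d\beta=\frac{1}{2}\int_{-\al}^{\al}\frac{2h(\al)-h(\al+\gamma)-h(\al-\gamma)}{\gamma^{2}+\ep^{2}}\,d\gamma.
\]
I decompose $T_{\ep}h=\frac{1}{2}L_{\ep}h-R_{\ep}h$, where $L_{\ep}h$ is the analogous integral over all of $\mathbb{R}$ and $R_{\ep}h$ collects the tail $|\gamma|>\al$. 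A direct Fourier computation shows that $L_{\ep}$ acts as the multiplier $2\pi(1-e^{-|\xi|\ep})/\ep$, bounded above by $2\pi|\xi|$, so $\|L_{\ep}h\|_{L^{2}}\lesssim\|h'\|_{L^{2}}$ uniformly in $\ep$. Using $h(\al-\gamma)=0$ for $\gamma>\al$, the tail collapses to $R_{\ep}h(\al)=2h(\al)B_{\ep}(\al)-P_{\ep}h(\al)$ with $B_{\ep}(y):=\int_{y}^{\infty}dz/(z^{2}+\ep^{2})\le 1/y$ and $P_{\ep}h(\al):=\int_{2\al}^{3\pi/4}h(v)/((v-\al)^{2}+\ep^{2})\,dv$; Hardy's inequality controls $h(\al)B_{\ep}(\al)$ in $L^{2}$ using $h(0)=0$.

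The main obstacle is $P_{\ep}h$. I would integrate by parts using the antiderivative $\arctan((v-\al)/\ep)/\ep$, cancel the $\ep^{-1}$-divergent boundary contribution by writing $\arctan(\cdot)/\ep=\pi/(2\ep)-B_{\ep}(\cdot)$ together with $h(3\pi/4)=0$, and obtain
\[
P_{\ep}h(\al)=h(2\al)B_{\ep}(\al)+\int_{2\al}^{3\pi/4}h'(v)B_{\ep}(v-\al)\,dv.
\]
Hardy handles the first term. For the integral, note that $B_{\ep}(v-\al)$ coincides on $\{v>\al\}$ with the odd kernel $A_{\ep}(y):=\mathrm{sign}(y)(\pi/2-\arctan(|y|/\ep))/\ep$; integrating the distributional identity $A_{\ep}'=-(y^{2}+\ep^{2})^{-1}+(\pi/\ep)\delta_{0}$ gives the Fourier symbol $\hat{A}_{\ep}(\xi)=\pi(1-e^{-|\xi|\ep})/(i\xi\ep)$, bounded by $\pi$, so $g\mapsto g*A_{\ep}$ is bounded $L^{2}(\mathbb{R})\to L^{2}(\mathbb{R})$ uniformly in $\ep$. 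Writing the integral as $-(h'*A_{\ep})(\al)-\int_{0}^{2\al}h'(v)A_{\ep}(v-\al)\,dv$, the remaining symmetric truncation at the variable radius $\al$ is controlled by the Cotlar-type maximal inequality for the uniform Calder\'on--Zygmund family $\{A_{\ep}\}_{\ep>0}$, with a constant independent of $\ep$.

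Estimate \eqref{3corom12k01} follows the same outline after the observation $(\al-\beta)\ep/((\al-\beta)^{2}+\ep^{2})^{2}=\ep\,\partial_{\beta}[(2((\al-\beta)^{2}+\ep^{2}))^{-1}]$. Integration by parts produces a boundary term $-\ep h(2\al)/(2(\al^{2}+\ep^{2}))$, bounded in $L^{2}$ via $\ep/(\al^{2}+\ep^{2})\le 1/(2\al)$ (AM--GM) and Hardy, plus $\frac{\pi}{2}\int_{0}^{2\al}h'(\beta)P_{\ep}^{\mathrm{P}}(\al-\beta)\,d\beta$, where $P_{\ep}^{\mathrm{P}}(y):=\ep/(\pi(y^{2}+\ep^{2}))$ is the Poisson kernel. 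The full-line convolution $h'*P_{\ep}^{\mathrm{P}}$ is a contraction on $L^{2}$, and the tail $\int_{2\al}^{3\pi/4}h'(v)P_{\ep}^{\mathrm{P}}(\al-v)\,dv$ is bounded in $L_{\al}^{2}$ by a Cauchy--Schwarz and Fubini argument using $\|P_{\ep}^{\mathrm{P}}\|_{L^{1}}=1$.
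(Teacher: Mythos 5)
Your proof is correct and uses a genuinely different decomposition from the paper. The paper writes $\int_{0}^{2\alpha}=\int_{0}^{\pi}-\int_{2\alpha}^{\pi}$; it handles the tail $\int_{2\alpha}^{\pi}$ by separating $h(\alpha)$ from $h(\beta)$ and applying Hardy's inequality plus the classical Hilbert inequality, and for the full range $\int_{0}^{\pi}$ it replaces $(\alpha-\beta)^{2}+\epsilon^{2}$ by $\sin^{2}(\alpha-\beta)+\epsilon^{2}$, passes to the torus $\mathcal{T}$, and pairs against a test function after shifting half a fractional derivative via $\Lambda_{\mathcal{T}}^{\pm 1/2}$; the second estimate \eqref{3corom12k01} for $\int_{0}^{\pi}$ is done by a mean-value trick and Minkowski's integral inequality rather than a torus argument. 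You instead symmetrize to the second-difference form on $\mathbb{R}$, compute the exact Fourier multiplier $2\pi(1-e^{-|\xi|\ep})/\ep\le 2\pi|\xi|$ for the full-line piece, reduce the variable-radius tail by integrating against the antiderivative, cancel the $\ep^{-1}$ divergences exactly using $h(0)=h(3\pi/4)=0$, and close the argument with Hardy plus a uniform-in-$\ep$ Cotlar bound for the truncated maximal operator of the odd family $\{A_\ep\}$ (uniformity following from $\|\hat A_\ep\|_\infty\le\pi$ and $|A_\ep'(y)|\le y^{-2}$); for \eqref{3corom12k01} you integrate by parts into the Poisson kernel, which is a contraction. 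What your route buys is that it stays on $\mathbb{R}$ throughout and avoids the torus and the fractional Laplacian bookkeeping, at the cost of importing the Cotlar maximal inequality; the paper's approach stays closer to the periodic structure and uses only Hardy, Hilbert, and elementary Fourier facts on the circle. Both are valid; the computations you outline (the symbol of $L_\ep$, the distributional identity $A_\ep'=-(y^{2}+\ep^{2})^{-1}+(\pi/\ep)\delta_{0}$ giving $\hat A_\ep(\xi)=\pi(1-e^{-|\xi|\ep})/(i\xi\ep)$, the cancellation of the $\pi/(2\ep)$ boundary terms) check out.
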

\begin{proof}
For \eqref{3coro03k011}, we only need to show 
\begin{equation}\label{3coro0301}
\|\int_{2\al}^{\pi}\frac{h(\al)-h(\beta)}{(\al-\beta)^2+\ep^2}d\beta\|_{L_{\al}^{2}[0,\frac{\pi}{4}]}\lesssim \|h\|_{H_{\al}^{1}[0,\pi]},
\end{equation}
and
\begin{equation}\label{3coro0302}
\|\int_{0}^{\pi}\frac{h(\al)-h(\beta)}{(\al-\beta)^2+\ep^2}d\beta\|_{L_{\al}^{2}[0,\frac{\pi}{4}]}\lesssim \|h\|_{H_{\al}^{1}[0,\pi]}.
\end{equation}
For \eqref{3coro0301}, we have
 \begin{align}\label{newseparation16}
&\int_{2\al}^{\pi}\frac{h(\alpha)-h(\beta)}{(\alpha-\beta)^2+\epsilon^2}d\beta\\\nonumber
&=(\al)\int_{2\al}^{\pi}\frac{\frac{h(\alpha)}{\al}}{(\alpha-\beta)^2+\epsilon^2}d\beta-\int_{2\al}^{\pi}\frac{\beta\frac{h(\beta)}{\beta}}{(\alpha-\beta)^2+\epsilon^2}d\beta\\\nonumber
&=Term_1+Term_2.
\end{align}
Since 
\[
\al\int_{2\al}^{\pi}\frac{1}{(\alpha-\beta)^2+\epsilon^2}d\beta
\lesssim\int_{2\al}^{\pi}\frac{\al}{(\alpha-\beta)^2}d\beta \lesssim 1, \]
and
\begin{equation}\label{0l1bound}
\|\frac{h(\al)}{\al}\|_{L^2}=\|\frac{h(\al)-h(0)}{\al}\|_{L^2}\lesssim \|h\|_{H_{\al}^{1}[0,\pi]}.
\end{equation}
$Term_1$ can be bounded.
We also have
\begin{align*}
    |Term_2|\leq \int_{2\al}^{\pi}\frac{\frac{|h(\beta)|}{\beta}}{|\alpha-\beta|}d\beta\lesssim \int_{2\al}^{\pi}\frac{\frac{|h(\beta)|}{\beta}}{|\alpha+\beta|}d\beta.
\end{align*}
Then it can be bounded by Hilbert's inequality and we get
\begin{align}
 \|Term_2\|_{L^2_{\al}[0,\frac{\pi}{4}]}\lesssim\|\frac{h(\beta)}{\beta}\|_{L_{\al}^2[0,\pi]}\lesssim \|h\|_{H_{\al}^{1}[0,\pi]},
\end{align}
where we use \eqref{0l1bound} in the last step.
Hence we have \eqref{3coro0301}.

For \eqref{3coro0302}, we have
the following estimate:
\begin{align}\label{sinsingular}
\|\int_{0}^{\pi}\frac{h(\alpha)-h(\beta)}{(\alpha-\beta)^2+\epsilon^2}d\beta-\int_{0}^{\pi}\frac{h(\alpha)-h(\beta)}{\sin^2(\alpha-\beta)+\epsilon^2}d\beta\|_{L^2_{\al}[0,\frac{\pi}{4}]}\lesssim\|h\|_{H^{1}_{\al}[0,\pi]}.
\end{align}
In fact, since $\supp h\subset[0,\frac{3}{4}\pi]$, we have
\begin{align*}
    \|\int_{\frac{7\pi}{8}}^{\pi}\frac{h(\alpha)-h(\beta)}{(\alpha-\beta)^2+\epsilon^2}d\beta\|_{L^2_{\al}[0,\frac{\pi}{4}]}\lesssim\|h(\alpha)\|_{L_{\al}^2[0,\pi]}.
\end{align*}
Moreover,
\begin{align*}
    &\quad\|\int_{0}^{\frac{7\pi}{8}}\frac{h(\alpha)-h(\beta)}{(\alpha-\beta)^2+\epsilon^2}d\beta-\int_{0}^{\frac{7\pi}{8}}\frac{h(\alpha)-h(\beta)}{\sin^2(\alpha-\beta)+\epsilon^2}d\beta\|_{L^2_{\al}[0,\frac{\pi}{4}]}\\
    &\lesssim \|\int_{0}^{\frac{7\pi}{8}}|h(\alpha)-h(\beta)||\frac{\sin(\alpha-\beta)^2-(\alpha-\beta)^2}{(\sin^2(\alpha-\beta)+\epsilon^2)((\alpha-\beta)^2+\epsilon^2)}|d\beta\|_{L^2_{\al}[0,\frac{\pi}{4}]}\\
    &\lesssim\|h(\alpha)\|_{L_{\al}^2[0,\pi]}.
\end{align*}
We also have
\begin{align*}
    &\quad\|\int_{\frac{7\pi}{8}}^{\pi}\frac{h(\alpha)-h(\beta)}{\sin^2(\alpha-\beta)+\epsilon^2}d\beta\|_{L^2_{\al}[0,\frac{\pi}{4}]}\\
    &=\|\int_{\frac{7\pi}{8}}^{\pi}\frac{h(\alpha)}{\sin^2(\alpha-\beta)+\epsilon^2}d\beta\|_{L^2_{\al}[0,\frac{\pi}{4}]}\\
    &\lesssim\|h(\alpha)\frac{1}{(\al)}\|_{L^2_{\al}[0,\frac{\pi}{4}]}+\|h(\alpha)\|_{L^2_{\al}[0,\pi]}\\
    &\lesssim\|h(\alpha)\|_{H^1_{\al}[0,\pi]}.
\end{align*}
Here we use \eqref{0l1bound} in the last inequality. Therefore \eqref{sinsingular} is true.

Moreover, if $h$ $\in$ $H^{2}[\mathcal{T}]$, where $\mathcal{T}$ is the torus of length $\pi$, for any $f\in C^\infty(\mathcal{T})$, we have
\begin{align*}
    &\int_{0}^{\pi}f(\alpha)\int_{0}^{\pi}\frac{h(\alpha)-h(\beta)}{\sin^2(\alpha-\beta)+\epsilon^2}d\beta d\alpha\\
    &=\int_{0}^{\pi}\Lambda_{\mathcal{T}}^{-\frac{1}{2}}(f-\hat{f}(0))(\alpha)\int_{0}^{\pi}\frac{\Lambda_{\mathcal{T}}^{\frac{1}{2}}h(\alpha)-\Lambda_{\mathcal{T}}^{\frac{1}{2}}h(\beta)}{\sin^2(\alpha-\beta)+\epsilon^2}d\beta d\alpha\\
   & \lesssim(\int_{0}^{\pi}\int_{0}^{\pi}\frac{(\Lambda_{\mathcal{T}}^{-\frac{1}{2}}(f(\alpha)-\hat{f}(0))-\Lambda_{\mathcal{T}}^{-\frac{1}{2}}(f(\beta)-\hat{f}(0)))^2}{\sin^2(\alpha-\beta)}d\beta d\alpha)^{\frac{1}{2}}\\
    &\quad\cdot(\int_{0}^{\pi}\int_{0}^{\pi}\frac{(\Lambda_{\mathcal{T}}^{\frac{1}{2}}h(\alpha)-\Lambda_{\mathcal{T}}^{\frac{1}{2}}h(\beta))^2}{\sin^2(\alpha-\beta)}d\beta d\alpha)^{\frac{1}{2}}\\
    &\lesssim (\int_{0}^{\pi}\Lambda_{\mathcal{T}}^{-\frac{1}{2}}(f(\alpha)-\hat{f}(0))\Lambda_{\mathcal{T}}^{\frac{1}{2}}(f(\alpha)-\hat{f}(0))d\alpha)^{\frac{1}{2}}(\int_{0}^{\pi}\Lambda_{\mathcal{T}}^{\frac{1}{2}}(h(\alpha))\Lambda_{\mathcal{T}}^{\frac{3}{2}}(h(\alpha))d\alpha)^{\frac{1}{2}}\\
    &\lesssim \|f\|_{L^2}\|\Lambda_{\mathcal{T}} h(\alpha)\|_{L^2}\lesssim \|f\|_{L^2}\|h(\alpha)\|_{H^1}.
\end{align*}
Since  $h(0)=0$, $h=0$ when $\alpha>\frac{3\pi}{4}$ and $h\in H^1[0,\pi] $, we have $h\in H^{1}[\mathcal{T}]$. Then we could choose a sequence in $H^{2}$ to approximate it and get the result.
Therefore
\begin{equation}\label{H1}
\|\int_{0}^{\pi}\frac{h(\alpha)-h(\beta)}{\sin^2(\alpha-\beta)+\epsilon^2}d\beta\|_{L^2_{\al}[0,\frac{\pi}{4}]}\lesssim\|h\|_{H^{1}_{\al}[0,\pi]}.
\end{equation}
Then from \eqref{sinsingular}, \eqref{H1}, we have \eqref{3coro0302}.

Now we deduce \eqref{3corom12k01}. Similarly, we only need to show 
\begin{equation}\label{3coro0301m12}
\|\int_{2\al}^{\pi}\frac{(h(\al)-h(\beta))(\al-\beta)\ep}{((\al-\beta)^2+\ep^2)^2}d\beta\|_{L_{\al}^{2}[0,\frac{\pi}{4}]}\lesssim \|h\|_{H_{\al}^{1}[0,\pi]},
\end{equation}
and
\begin{equation}\label{3coro0302m12}
\|\int_{0}^{\pi}\frac{(h(\al)-h(\beta))(\al-\beta)\ep}{((\al-\beta)^2+\ep^2)^2}d\beta\|_{L_{\al}^{2}[0,\frac{\pi}{4}]}\lesssim \|h\|_{H_{\al}^{1}[0,\pi]}.
\end{equation}
 For \eqref{3coro0301m12}, we have 
\begin{align*}
&\int_{2\al}^{\pi}\frac{(h(\alpha)-h(\beta))(\alpha-\beta)\epsilon}{((\alpha-\beta)^2+\epsilon^2)^2}d\beta\\
&=(\al)\int_{2\al}^{\pi}\frac{(\frac{h(\alpha)}{\al})(\alpha-\beta)\epsilon}{((\alpha-\beta)^2+\epsilon^2)^2}d\beta-\int_{2\al}^{\pi}\frac{(\alpha-\beta)\beta\epsilon\frac{h(\beta)}{\beta}}{((\alpha-\beta)^2+\epsilon^2)^2}d\beta\\
&=Term_{1}+Term_{2}.
\end{align*}
Since
\[|\frac{(\alpha-\beta)\epsilon}{((\alpha-\beta)^2+\epsilon^2)^2}|\lesssim \frac{1}{(\alpha-\beta)^2+\epsilon^2},
\]
it can be bounded in the same way as in the corresponding terms in \eqref{newseparation16}.

For \eqref{3coro0302m12}, we get
\begin{align*}
    &\quad \|\int_{0}^{\pi}\frac{(h(\alpha)-h(\beta))(\alpha-\beta)\epsilon}{((\alpha-\beta)^2+\epsilon^2)^2}d\beta\|_{L^2_{\alpha}[0,\frac{\pi}{4}]}\\
    &=\|\int_{\alpha}^{-\pi+\alpha}\frac{\int_{0}^{1}h'(\alpha-\zeta(\beta))d\zeta\beta^2\epsilon}{(\beta^2+\epsilon^2)^2}d\beta\|_{L^2_{\alpha}[0,\frac{\pi}{4}]}\\
    &\lesssim \int_{-\pi}^{\pi}\frac{\|\int_{0}^{1}h'(\alpha-\zeta(\beta))d\zeta\|_{L^2_{\alpha}[0,\frac{\pi}{4}]}\beta^2\epsilon}{(\beta^2+\epsilon^2)^2}d\beta\\
    &\lesssim\|h\|_{H^1[0,\pi]}\int_{-\pi}^{\pi}\frac{\beta^2\epsilon}{(\beta^2+\epsilon^2)^2}d\beta\\
     &\lesssim\|h\|_{H^1[0,\pi]}.
\end{align*}
\end{proof}
In next corollary, we show that $h(0)=0$, $\supp h \subset [0,\frac{3}{4}\pi]$ conditions are not needed.
\begin{corollary}\label{coro22k01}
For $h\in H_{\al}^{1}[0,\pi]$, we have 
\begin{equation}\label{3coro03k01102}
\|\int_{0}^{2\al}\frac{h(\al)-h(\beta)}{(\al-\beta)^2+\ep^2}d\beta\|_{L_{\al}^{2}[0,\frac{\pi}{4}]}\lesssim \|h\|_{H_{\al}^{1}[0,\pi]},
\end{equation}
and
\begin{equation}\label{3corom12k0102}
\|\int_{0}^{2\al}\frac{(h(\al)-h(\beta))(\al-\beta)\ep}{((\al-\beta)^2+\ep^2)^2}d\beta\|_{L_{\al}^{2}[0,\frac{\pi}{4}]}\lesssim \|h\|_{H_{\al}^{1}[0,\pi]},
\end{equation}
\end{corollary}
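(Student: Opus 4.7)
The plan is to deduce this corollary from the preceding Lemma \ref{coro22k0100} by an elementary cutoff reduction that only modifies $h$ outside the region that the integrals actually see.

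First, I would observe that in both estimates the outer variable $\alpha$ lies in $[0,\tfrac{\pi}{4}]$, so $\beta$ ranges over $[0,2\alpha]\subseteq[0,\tfrac{\pi}{2}]$. Consequently the left-hand sides depend on $h$ only through its values on $[0,\tfrac{\pi}{2}]$, and $h$ may be replaced on $[\tfrac{\pi}{2},\pi]$ by anything convenient without changing the quantity being estimated.

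Second, to manufacture a function meeting the hypotheses of Lemma \ref{coro22k0100} (namely $\tilde h(0)=0$ and $\supp\tilde h\subset[0,\tfrac{3}{4}\pi]$), I would pick a smooth cutoff $\chi\in C^\infty[0,\pi]$ with $\chi\equiv 1$ on $[0,\tfrac{\pi}{2}]$ and $\chi\equiv 0$ on $[\tfrac{3\pi}{4},\pi]$, and define $\tilde h(\alpha)=(h(\alpha)-h(0))\chi(\alpha)$. Then $\tilde h(0)=0$ and $\supp\tilde h\subset[0,\tfrac{3\pi}{4}]$. Moreover, for $\alpha\in[0,\tfrac{\pi}{4}]$ and $\beta\in[0,2\alpha]$ one has $\chi(\alpha)=\chi(\beta)=1$, so the identity $\tilde h(\alpha)-\tilde h(\beta)=h(\alpha)-h(\beta)$ holds throughout the region of integration, and the integrands in \eqref{3coro03k01102} and \eqref{3corom12k0102} are identical when $h$ is replaced by $\tilde h$.

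Third, I would apply Lemma \ref{coro22k0100} directly to $\tilde h$, which supplies the two estimates with right-hand side $\|\tilde h\|_{H^{1}_{\alpha}[0,\pi]}$. It then suffices to observe $\|\tilde h\|_{H^{1}_{\alpha}[0,\pi]}\lesssim\|h\|_{H^{1}_{\alpha}[0,\pi]}$, which follows from the product rule together with the one-dimensional Sobolev embedding giving $|h(0)|\lesssim\|h\|_{H^{1}_{\alpha}[0,\pi]}$. There is no genuine obstacle here: the corollary is essentially a bookkeeping consequence of the preceding lemma, the only subtlety being the need to center $h$ at $h(0)$ before multiplying by $\chi$, so that the boundary condition $\tilde h(0)=0$ is preserved under the truncation.
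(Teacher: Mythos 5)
Your proposal is correct and coincides with the paper's own proof: the paper also introduces a cutoff $\tilde{\lambda}$ equal to $1$ on $[0,\tfrac{\pi}{2}]$ and supported in $[0,\tfrac{3\pi}{4}]$, sets $\tilde h(\alpha)=\tilde{\lambda}(\alpha)(h(\alpha)-h(0))$, checks that the integrands are unchanged for $\alpha\in[0,\tfrac{\pi}{4}]$, and then invokes Lemma \ref{coro22k0100} together with $\|\tilde h\|_{H^1}\lesssim\|h\|_{H^1}$. There is nothing to add.
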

\begin{proof}
Let $\tilde{\lambda}(\al)$ be a sufficiently smooth cut off function with $\supp \tilde{\lambda}\subset [0,\frac{3\pi}{4}]$, and $ \tilde{\lambda}(\al)=1$ when $\al \in [0,\frac{\pi}{2}]$. Then if we let
\[
\tilde{h}(\al)=\tilde{\lambda}(\al)(h(\al)-h(0)),
\]
when $\al \in [0,\frac{\pi}{4}]$, we have
\begin{equation}
\begin{split}
&\int_{0}^{2\al}\frac{h(\al)-h(\beta)}{(\al-\beta)^2+\ep^2}d\beta\\
&=\int_{0}^{2\al}\frac{(h(\al)-h(0))-(h(\beta)-h(0))}{(\al-\beta)^2+\ep^2}d\beta\\
&=\int_{0}^{2\al}\frac{\tilde{h}(\al)-\tilde{h}(\beta)}{(\al-\beta)^2+\ep^2}d\beta.
\end{split}
\end{equation}
Similarly,  when $\al \in [0,\frac{\pi}{4}]$, 
\begin{equation}
\begin{split}
&\int_{0}^{2\al}\frac{(h(\al)-h(\beta))(\al-\beta)\ep}{((\al-\beta)^2+\ep^2)^2}d\beta\\
&=\int_{0}^{2\al}\frac{(\tilde{h}(\al)-\tilde{h}(\beta))(\al-\beta)\ep}{((\al-\beta)^2+\ep^2)^2}d\beta,
\end{split}
\end{equation}
where we also use $\tilde{\lambda}(\beta)=1$ for $\beta \in [0,2\alpha]\subset[0,\frac{\pi}{2}].$
Moreover, $\tilde{h}(\al)$ satisfies
\[
\|\tilde{h}(\al)\|_{H_{\al}^{1}[0,\pi]}\lesssim \|h\|_{H_{\al}^{1}[0,\pi]}.
\]
Then we could use the previous lemma \ref{coro22k0100} for $\tilde{h}$ and have the result.
\end{proof}
For the higher-order derivatives, we first introduce a lemma:

\begin{lemma}\label{farboundarybound}
If $h(\alpha)\in H^{k}[0,\pi]$ with $\partial_\alpha^{l'} h(0)=0$ for $0\leq l'\leq k-1$, then for $j\leq k$ we have
\[
\|\frac{\partial_{\alpha}^{j}h(\alpha)}{(\alpha)^{k-j}}\|_{L_{\al}^2[0,\pi]}\lesssim \|h(\alpha)\|_{H^{k}[0,\pi]}.
\]
\end{lemma}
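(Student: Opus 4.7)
The plan is a direct iterated Hardy estimate. First, note that for $j = k$ the bound $\|\partial_\alpha^k h\|_{L^2[0,\pi]} \leq \|h\|_{H^k[0,\pi]}$ is trivial, so I only need to handle $0 \leq j \leq k-1$. In that range, the vanishing conditions $\partial_\alpha^{l'}h(0) = 0$ for $l' \leq k-1$ let me write Taylor's formula with integral remainder, anchored at $0$ with $k$-th order remainder:
\[
\partial_\alpha^{j} h(\alpha) \;=\; \frac{1}{(k-j-1)!}\int_{0}^{\alpha} (\alpha-\beta)^{k-j-1}\, \partial_\alpha^{k} h(\beta)\, d\beta.
\]

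Next I change variables $\beta = \alpha(1-s)$, which turns the representation into a homogeneous-in-$\alpha$ form that trivially yields the desired power of $\alpha$ in the denominator:
\[
\frac{\partial_\alpha^{j} h(\alpha)}{\alpha^{k-j}} \;=\; \frac{1}{(k-j-1)!}\int_{0}^{1} s^{k-j-1}\, \partial_\alpha^{k} h\!\bigl(\alpha(1-s)\bigr)\, ds.
\]
Applying Minkowski's inequality in $L^{2}_\alpha[0,\pi]$ and the obvious dilation estimate $\|\partial_\alpha^{k}h(\alpha(1-s))\|_{L^2_\alpha[0,\pi]} \leq (1-s)^{-1/2}\|\partial_\alpha^{k}h\|_{L^2[0,\pi]}$ then gives
\[
\Bigl\|\frac{\partial_\alpha^{j} h(\alpha)}{\alpha^{k-j}}\Bigr\|_{L^2_\alpha[0,\pi]} \;\lesssim\; \Bigl(\int_{0}^{1} s^{k-j-1}(1-s)^{-1/2}\, ds\Bigr) \|\partial_\alpha^{k} h\|_{L^2[0,\pi]},
\]
with the $s$-integral a finite Beta function since $k-j-1 \geq 0$. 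Bounding $\|\partial_\alpha^{k}h\|_{L^2}$ by $\|h\|_{H^k[0,\pi]}$ finishes the estimate.

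There is no real obstacle: the only subtlety is that one should not attempt a direct integration by parts in $\int |\partial^j h/\alpha^{k-j}|^2\, d\alpha$, since the individual boundary terms at $0$ are only controlled after all the Hardy iterations are combined. The Taylor-remainder route bypasses that issue entirely by reducing everything, in one step, to an $L^2$ norm of $\partial_\alpha^k h$. (Equivalently, one could iterate the one-variable Hardy inequality $\|g/\alpha\|_{L^2} \lesssim \|g'\|_{L^2}$ for $g(0)=0$ a total of $k-j$ times on $g = \partial_\alpha^{j}h,\,\partial_\alpha^{j+1}h,\dots$, which is valid because each intermediate function still vanishes at $0$.)
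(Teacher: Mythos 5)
Your proof is correct and is essentially the same as the paper's: the paper writes $\partial_\alpha^{j}h(\alpha)=\alpha\int_0^1\partial_\alpha^{j+1}h(\alpha(1-\zeta))\,d\zeta$ and iterates this one step at a time (using Minkowski together with the $L^2$ scaling bound, which produces the integrable factor $(1-\zeta)^{k-j-3/2}$), whereas you telescope that iteration into a single application of the $(k-j)$-th order Taylor integral remainder and land directly on $\|\partial_\alpha^k h\|_{L^2}$. The two arguments rest on identical ingredients --- the vanishing conditions at $0$, Minkowski's inequality, and the $L^2$ dilation estimate --- and your closing parenthetical about iterating the one-step Hardy inequality is in fact precisely the form the paper's proof takes.
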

\begin{proof}
When $j=k$, it is trivial. When $j\leq k-1$,
we have
\begin{align*}
    &\quad\|\frac{\partial_{\alpha}^{j}h(\alpha)}{(\alpha)^{k-j}}\|_{L^2}= \|\frac{\int_{0}^{1}\partial_{\alpha}^{j+1}h(\alpha-\zeta(\alpha))d\zeta}{(\alpha)^{k-j-1}}\|_{L^2}\\
    &\leq \int_{0}^{1}\|\frac{\partial_{\alpha}^{j+1}h(\tilde{\alpha})}{(\tilde{\alpha})^{k-j-1}(\frac{1}{1-\zeta})^{k-j-1}}\|_{L^2}\frac{1}{\sqrt{1-\zeta}}d\zeta\\
    &\leq \int_{0}^{1}\|\frac{\partial_{\alpha}^{j+1}h(\tilde{\alpha})}{(\tilde{\alpha})^{k-j-1}}\|_{L^2}(1-\zeta)^{k-j-1}\frac{1}{\sqrt{1-\zeta}}d\zeta\\
    &\lesssim\|\frac{\partial_{\alpha}^{j+1}h(\tilde{\alpha})}{(\tilde{\alpha})^{k-j-1}}\|_{L^2}.
\end{align*}
We could repeat this process and get the result.
\end{proof}
Then we separate the main term for the $\tilde{M}_{2,1}^{k,\epsilon}$  term and control the good terms:
\begin{lemma}\label{coro22ep}
For any $k\geq 1$, if $h\in H_{\al}^{k}[0,\pi]$, we have 
\begin{equation}\label{3coro22ep02}
\begin{split}
&\quad \|\al\partial_{\al}^{k}\int_{0}^{2\al}\frac{(h(\al)-\sum_{j=0}^{k-1}\frac{\al^{j}}{j!}h^{(j)}(0))-(h(\beta)-\sum_{j=0}^{k-1}\frac{\beta^{j}}{j!}h^{(j)}(0))}{(\al-\beta)^2+\ep^2}d\beta-\al\int_{0}^{2\al}\frac{h^{(k)}(\al)-h^{(k)}(\beta)}{(\al-\beta)^2+\ep^2}d\beta\|_{L_{\al}^{2}[0,\frac{\pi}{4}]}\\
&\lesssim \|h\|_{H_{\al}^{k}[0,\pi]}.
\end{split}
\end{equation}
Moreover, for any $l\leq k-1$, we have
\begin{equation}\label{3coro22ep0202}
\begin{split}
&\quad\|\partial_{\al}^{l}\int_{0}^{2\al}\frac{(h(\al)-\sum_{j=0}^{k-1}\frac{\al^{j}}{j!}h^{(j)}(0))-(h(\beta)-\sum_{j=0}^{k-1}\frac{\beta^{j}}{j!}h^{(j)}(0))}{(\al-\beta)^2+\ep^2}d\beta\\
&\qquad-\int_{0}^{2\al}\frac{\partial_{\al}^{l}(h(\al)-\sum_{j=0}^{k-1}\frac{\al^{j}}{j!}h^{(j)}(0))-\partial_{\beta}^{l}(h(\beta)-\sum_{j=0}^{k-1}\frac{\beta^{j}}{j!}h^{(j)}(0))}{(\al-\beta)^2+\ep^2}d\beta\|_{L_{\al}^{2}[0,\frac{\pi}{4}]}\\
&\lesssim \|h\|_{H_{\al}^{k}[0,\pi]},
\end{split}
\end{equation}
and
\begin{equation}\label{3coro22ep01}
\|\partial_{\al}^{l}\int_{0}^{2\al}\frac{(h(\al)-\sum_{j=0}^{k-1}\frac{\al^{j}}{j!}h^{(j)}(0))-(h(\beta)-\sum_{j=0}^{k-1}\frac{\beta^{j}}{j!}h^{(j)}(0))}{(\al-\beta)^2+\ep^2}d\beta\|_{L_{\al}^{2}[0,\frac{\pi}{4}]}\lesssim \|h\|_{H_{\al}^{k}[0,\pi]}.
\end{equation}
\end{lemma}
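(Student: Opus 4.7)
The plan is to isolate the two effects that differentiation produces on the singular integral: the ``principal part'' in which every derivative acts on the integrand, and the ``boundary part'' collected from differentiating the variable limit $2\alpha$ and the free variable $\alpha$ in the numerator. The Taylor-remainder structure of $g(\alpha):=h(\alpha)-\sum_{j=0}^{k-1}\frac{\alpha^{j}}{j!}h^{(j)}(0)$, combined with the Hardy-type lemma \ref{farboundarybound}, will make the boundary part harmless uniformly in $\ep$. Note $g^{(j)}(0)=0$ for $0\le j\le k-1$ and $g^{(k)}=h^{(k)}$.

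First I would put the integral into symmetric form by the substitution $u=\alpha-\beta$:
\begin{equation*}
I(\alpha):=\int_{0}^{2\alpha}\frac{g(\alpha)-g(\beta)}{(\alpha-\beta)^{2}+\ep^{2}}\,d\beta=\int_{-\alpha}^{\alpha}\frac{g(\alpha)-g(\alpha-u)}{u^{2}+\ep^{2}}\,du.
\end{equation*}
A single Leibniz differentiation yields the interior term $\int_{-\alpha}^{\alpha}\frac{g'(\alpha)-g'(\alpha-u)}{u^{2}+\ep^{2}}du$ plus two boundary terms $\frac{g(\alpha)}{\alpha^{2}+\ep^{2}}$ (from $u=\alpha$, using $g(0)=0$) and $\frac{g(\alpha)-g(2\alpha)}{\alpha^{2}+\ep^{2}}$ (from $u=-\alpha$). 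Iterating $l$ times, I would group the output as
\begin{equation*}
\partial_{\alpha}^{l}I(\alpha)=I_{l}(\alpha)+\mathcal{B}^{l}(\alpha),\qquad I_{l}(\alpha):=\int_{-\alpha}^{\alpha}\frac{g^{(l)}(\alpha)-g^{(l)}(\alpha-u)}{u^{2}+\ep^{2}}\,du,
\end{equation*}
where $\mathcal{B}^{l}$ is a finite linear combination of schematic terms of the form
\begin{equation*}
\partial_{\alpha}^{r}\!\Bigl(\tfrac{1}{\alpha^{2}+\ep^{2}}\Bigr)\,\bigl(g^{(j_{1})}(\alpha)-g^{(j_{2})}(2\alpha)\bigr)\quad\text{or}\quad\partial_{\alpha}^{r}\!\Bigl(\tfrac{1}{\alpha^{2}+\ep^{2}}\Bigr)\,g^{(j_{1})}(\alpha),
\end{equation*}
with indices constrained by $j_{1},j_{2}\le l-1$ and the net pole order in $\alpha$ at most $k-\max(j_{1},j_{2})$ once we use that every ``boundary value at $0$'' vanishes and every $g^{(j_{2})}(2\alpha)$ with $j_{2}\le k-1$ can be rewritten as $\int_{0}^{2\alpha}g^{(j_{2}+1)}$ to expose additional powers of $\alpha$.

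Each term of $\mathcal{B}^{l}$ is then $L^{2}[0,\pi/4]$-bounded by $\|h\|_{H^{k}}$ thanks to lemma \ref{farboundarybound}, which gives $\|g^{(j)}(\alpha)/\alpha^{k-j}\|_{L^{2}}\lesssim\|g\|_{H^{k}}\lesssim\|h\|_{H^{k}}$ uniformly in $\ep$. This immediately yields \eqref{3coro22ep0202}, because by construction $\partial_{\alpha}^{l}I-\int_{0}^{2\alpha}\frac{g^{(l)}(\alpha)-g^{(l)}(\beta)}{(\alpha-\beta)^{2}+\ep^{2}}d\beta=\mathcal{B}^{l}$. For \eqref{3coro22ep01} with $l\le k-1$, I would bound $I_{l}$ by applying corollary \ref{coro22k01} to $g^{(l)}$: since $g^{(l)}(0)=0$ and $g^{(l)}\in H^{1}$, corollary \ref{coro22k01} gives $\|I_{l}\|_{L^{2}[0,\pi/4]}\lesssim\|g^{(l)}\|_{H^{1}[0,\pi]}\lesssim\|h\|_{H^{k}}$, and \eqref{3coro22ep0202} closes the estimate. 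Finally, \eqref{3coro22ep02} is the case $l=k$ multiplied by $\alpha$: the principal part $I_{k}$ is exactly $\int_{0}^{2\alpha}\frac{h^{(k)}(\alpha)-h^{(k)}(\beta)}{(\alpha-\beta)^{2}+\ep^{2}}d\beta$ since $g^{(k)}=h^{(k)}$, while the extra factor of $\alpha$ in front of $\mathcal{B}^{k}$ supplies the one Hardy power needed when $g^{(k)}(0)=h^{(k)}(0)$ may fail to vanish.

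The main obstacle is the index-bookkeeping inside $\mathcal{B}^{l}$: for every choice the Leibniz rule offers (derivative on the numerator, derivative on $\tfrac{1}{\alpha^{2}+\ep^{2}}$, derivative on the variable limit $\pm\alpha$), one must verify that the negative power of $\alpha$ produced is precisely balanced by the Taylor-vanishing order of the surviving $g^{(j)}$ factor, so that lemma \ref{farboundarybound} applies. Once this pole/zero matching is tracked, each piece reduces to a single application of lemma \ref{farboundarybound} or of corollary \ref{coro22k01}, and the bounds are uniform in $\ep$, which is essential for the subsequent $\ep\to 0$ limit in section \ref{kappa1sectionexistence}.
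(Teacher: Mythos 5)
Your proposal is correct and follows essentially the same route as the paper: reduce to the Taylor remainder $g$ with vanishing derivatives at $0$, symmetrize via $u=\alpha-\beta$, apply Leibniz to split into the interior term and boundary terms, control each boundary term by matching pole order against the vanishing order using Lemma \ref{farboundarybound}, and deduce \eqref{3coro22ep01} from \eqref{3coro22ep0202} together with Corollary \ref{coro22k01}. The extra power of $\alpha$ in \eqref{3coro22ep02} is used exactly as in the paper to compensate for the $q=k$ case where $g^{(k)}(0)$ need not vanish.
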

\begin{proof}
We claim it it enough to show the case when $h$ satisfying $\partial_{\al}^{j}h(0)=0$ for $1\leq j\leq k-1$.

In fact, we can do the estimate for $\tilde{h}=h(\al)-\sum_{j=0}^{k-1}\frac{\al^{j}}{j!}h^{(j)}(0)$  and we have

\[
\partial_{\al}^{k}\tilde{h}(\al)=\partial_{\al}^{k}h(\al),
\]
and
\[
\|\tilde{h}(\al)\|_{H_{\al}^{k}[0,\pi]}\lesssim \|h\|_{H_{\al}^{k}[0,\pi]},
\]
from the Sobolev embedding theorem. 

In this case, for \eqref{3coro22ep02}, we have
\begin{align*}
    &\quad \partial_{\al}^{q}\int_{0}^{2\al}\frac{(h(\al)-\sum_{j=0}^{k-1}\frac{\al^{j}}{j!}h^{(j)}(0))-(h(\beta)-\sum_{j=0}^{k-1}\frac{\beta^{j}}{j!}h^{(j)}(0))}{(\al-\beta)^2+\ep^2}d\beta\\ &=\partial_{\alpha}^{q}\int_{0}^{2\alpha}\frac{h(\alpha)-h(\beta)}{(\alpha-\beta)^2+\epsilon^2}d\beta\\
    &=\partial_{\alpha}^{q}\int_{-(\alpha)}^{\alpha}\frac{h(\alpha)-h(\alpha-\beta)}{(\beta)^2+\epsilon^2}d\beta\\
    &=\int_{-(\alpha)}^{\alpha}\frac{\partial_{\alpha}^{q}h(\alpha)-\partial_{\alpha}^{q}h(\alpha-\beta)}{(\beta)^2+\epsilon^2}d\beta+\sum_{l+l'=q-1}C^{l,l'}\partial_{\alpha}^{l-l'}(\frac{\partial_{\alpha}^{l'}h(\alpha)-\partial_{\alpha}^{l'}h(0)}{(\alpha)^2+\epsilon^2})\\
    &+\sum_{l+l'=q-1}C^{l,l'}\partial_{\alpha}^{l-l'}(\frac{\partial_{\alpha}^{l'}h(\alpha)-\partial_{\alpha}^{l'}h(2\alpha)}{(\alpha)^2+\epsilon^2})\\
    &=\int_{-(\alpha)}^{\alpha}\frac{\partial_{\alpha}^{q}h(\alpha)-\partial_{\alpha}^{q}h(\alpha-\beta)}{(\beta)^2+\epsilon^2}d\beta+ Term\ 2 + Term \ 3.
\end{align*}
We have 
\[
|Term\ 2|+|Term\ 3|\lesssim \sum_{l\leq q-1, l'\leq l}\frac{|\partial_{\alpha}^{l'}h(2\alpha)|}{(\alpha)^{l-l'+2}}+\sum_{l \leq q-1,l'\leq l}\frac{|\partial_{\alpha}^{l'}h(\alpha)|}{(\alpha)^{l-l'+2}},
\]
and 
\[
(\alpha)(|Term\ 2|+|Term\ 3|)\lesssim \sum_{l\leq q-1, l'\leq l}\frac{|\partial_{\alpha}^{l'}h(2\alpha)|}{(\alpha)^{l-l'+1}}+\sum_{l\leq q-1, l'\leq l}\frac{|\partial_{\alpha}^{l'}h(\alpha)|}{(\alpha)^{l-l'+1}}.
\]
They can be bounded by lemma \ref{farboundarybound}:
\[
\|(\alpha)(|Term\ 2|+|Term\ 3|)\|_{L^{2}_{\alpha}[0,\frac{\pi}{4}]}\lesssim\|h\|_{H^{k}_{\alpha}[0,\pi]}.
\]Then we get \eqref{3coro22ep02} when taking $q=k$.

The second inequality \eqref{3coro22ep0202} can be done in the same way by taking $q=l$.
The last inequality \eqref{3coro22ep01} follows from \eqref{3coro22ep0202} and \eqref{3coro03k01102} in lemma \ref{coro22k01}.

\end{proof}
Similarly, for $M_{1,2}$ type term, we have
\begin{lemma}\label{coro22epm12}
For any $k\geq 1$, if $h\in H_{\al}^{k}[0,\pi]$, we have 
\begin{equation}\label{3coro22ep020102}
\begin{split}
&\quad\|\al\partial_{\al}^{k}\int_{0}^{2\al}\frac{((h(\al)-\sum_{j=0}^{k-1}\frac{\al^{j}}{j!}h^{(j)}(0))-(h(\beta)-\sum_{j=0}^{k-1}\frac{\beta^{j}}{j!}h^{(j)}(0)))(\al-\beta)\ep}{((\al-\beta)^2+\ep^2)^2}d\beta\\
&\qquad-\al\int_{0}^{2\al}\frac{(h^{(k)}(\al)-h^{(k)}(\beta))(\al-\beta)\ep}{((\al-\beta)^2+\ep^2)^2}d\beta\|_{L_{\al}^{2}[0,\frac{\pi}{4}]}\\
&\lesssim \|h\|_{H_{\al}^{k}[0,\pi]}.
\end{split}
\end{equation}
Moreover, for any $l\leq k-1$, we have
\begin{equation}\label{3coro22ep020202}
\begin{split}
&\quad\|\partial_{\al}^{l}\int_{0}^{2\al}\frac{((h(\al)-\sum_{j=0}^{k-1}\frac{\al^{j}}{j!}h^{(j)}(0))-(h(\beta)-\sum_{j=0}^{k-1}\frac{\beta^{j}}{j!}h^{(j)}(0)))(\al-\beta)\ep}{((\al-\beta)^2+\ep^2)^2}d\beta\\
&\qquad-\int_{0}^{2\al}\frac{(\partial_{\al}^{l}(h(\al)-\sum_{j=0}^{k-1}\frac{\al^{j}}{j!}h^{(j)}(0))-\partial_{\beta}^{l}(h(\beta)-\sum_{j=0}^{k-1}\frac{\beta^{j}}{j!}h^{(j)}(0)))(\al-\beta)\ep}{((\al-\beta)^2+\ep^2)^2}d\beta\|_{L_{\al}^{2}[0,\frac{\pi}{4}]}\\
&\lesssim \|h\|_{H_{\al}^{k}[0,\pi]},
\end{split}
\end{equation}
and
\begin{equation}\label{3coro22ep0102020}
\|\partial_{\al}^{l}\int_{0}^{2\al}\frac{((h(\al)-\sum_{j=0}^{k-1}\frac{\al^{j}}{j!}h^{(j)}(0))-(h(\beta)-\sum_{j=0}^{k-1}\frac{\beta^{j}}{j!}h^{(j)}(0)))(\alpha-\beta)\epsilon}{((\al-\beta)^2+\ep^2)^2}d\beta\|_{L_{\al}^{2}[0,\frac{\pi}{4}]}\lesssim \|h\|_{H_{\al}^{k}[0,\pi]},
\end{equation}
\begin{proof}
The proof is similar as in lemma \ref{coro22ep} by using \eqref{3corom12k0102} instead of \eqref{3coro03k01102} in the last step.
\end{proof}
\end{lemma}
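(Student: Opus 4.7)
My plan is to follow the template of Lemma~\ref{coro22ep} verbatim, replacing the scalar kernel $\frac{1}{(\alpha-\beta)^2+\epsilon^2}$ by the antisymmetric kernel $\frac{(\alpha-\beta)\epsilon}{((\alpha-\beta)^2+\epsilon^2)^2}$ at each step, and invoking estimate~\eqref{3corom12k0102} of Corollary~\ref{coro22k01} in place of~\eqref{3coro03k01102}. The first reduction is identical: setting $\tilde{h}(\alpha)=h(\alpha)-\sum_{j=0}^{k-1}\frac{\alpha^j}{j!}h^{(j)}(0)$, the Sobolev embedding gives $\|\tilde h\|_{H^k_\alpha[0,\pi]}\lesssim\|h\|_{H^k_\alpha[0,\pi]}$ and $\partial_\alpha^j\tilde h(0)=0$ for $0\leq j\leq k-1$, so it suffices to prove each of the three inequalities with $h$ itself assumed to vanish to order $k-1$ at the origin (and the Taylor correction dropped).

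Next I would symmetrize the integral by the change of variable $\beta\mapsto\alpha-\beta$, writing
\begin{equation*}
\int_0^{2\alpha}\frac{(h(\alpha)-h(\beta))(\alpha-\beta)\epsilon}{((\alpha-\beta)^2+\epsilon^2)^2}\,d\beta
=\int_{-\alpha}^{\alpha}\frac{(h(\alpha)-h(\alpha-\beta))\beta\epsilon}{(\beta^2+\epsilon^2)^2}\,d\beta.
\end{equation*}
Applying $\partial_\alpha^q$ for $q\leq k$ and differentiating under the integral, the interior term produces $\int_{-\alpha}^{\alpha}\frac{(\partial_\alpha^q h(\alpha)-\partial_\alpha^q h(\alpha-\beta))\beta\epsilon}{(\beta^2+\epsilon^2)^2}\,d\beta$, which matches the desired main piece when $q=k$ (or its $\partial^l$ analogue when $q=l$). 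The remaining terms come from differentiating the endpoints $\pm\alpha$ and have the schematic form $\sum_{l+l'=q-1}C^{l,l'}\partial_\alpha^{l-l'}\bigl(\frac{(\partial_\alpha^{l'}h(\alpha)-\partial_\alpha^{l'}h(0))\alpha\epsilon}{(\alpha^2+\epsilon^2)^2}\bigr)$ and the analogous term at $2\alpha$. Because $|\frac{\alpha\epsilon}{(\alpha^2+\epsilon^2)^2}|\lesssim\frac{1}{\alpha^2+\epsilon^2}$, these boundary remainders are pointwise dominated (up to the factor $1/\alpha^{l-l'+2}$) by exactly the same expressions that appear in the proof of Lemma~\ref{coro22ep}, so Lemma~\ref{farboundarybound} furnishes the $L^2_\alpha[0,\frac{\pi}{4}]$ bound $\lesssim\|h\|_{H^k_\alpha[0,\pi]}$. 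This yields \eqref{3coro22ep020102} by taking $q=k$ and \eqref{3coro22ep020202} by taking $q=l$.

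For the third inequality \eqref{3coro22ep0102020}, I would combine \eqref{3coro22ep020202} with the commuted-derivative form of estimate~\eqref{3corom12k0102} applied to $\partial_\alpha^l\tilde h$: since $\partial_\alpha^l\tilde h\in H^{k-l}_\alpha[0,\pi]\subset H^1_\alpha[0,\pi]$, Corollary~\ref{coro22k01} gives $\|\int_0^{2\alpha}\frac{(\partial_\alpha^l\tilde h(\alpha)-\partial_\beta^l\tilde h(\beta))(\alpha-\beta)\epsilon}{((\alpha-\beta)^2+\epsilon^2)^2}\,d\beta\|_{L^2_\alpha[0,\frac{\pi}{4}]}\lesssim\|\partial_\alpha^l\tilde h\|_{H^1_\alpha[0,\pi]}\lesssim\|h\|_{H^k_\alpha[0,\pi]}$, and the triangle inequality concludes. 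The main obstacle, as in Lemma~\ref{coro22ep}, is bookkeeping the many boundary-type terms produced by successive differentiations and verifying that none of them produce a power of $1/\alpha$ too singular for Lemma~\ref{farboundarybound} to absorb; once the antisymmetric-kernel bound $|\frac{(\alpha-\beta)\epsilon}{((\alpha-\beta)^2+\epsilon^2)^2}|\lesssim\frac{1}{(\alpha-\beta)^2+\epsilon^2}$ is invoked, each such remainder becomes structurally identical to its counterpart in the previous lemma.
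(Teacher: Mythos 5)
Your proposal matches the paper's intended argument: the paper's proof of this lemma is exactly "the proof is similar as in Lemma~\ref{coro22ep} by using \eqref{3corom12k0102} instead of \eqref{3coro03k01102} in the last step," and you have fleshed out precisely that, including the key pointwise bound $|\frac{(\alpha-\beta)\epsilon}{((\alpha-\beta)^2+\epsilon^2)^2}|\lesssim\frac{1}{(\alpha-\beta)^2+\epsilon^2}$ which makes the boundary remainders structurally identical to those of Lemma~\ref{coro22ep} so that Lemma~\ref{farboundarybound} again applies. The reduction to $\tilde h$, the change of variable, the Leibniz split into interior and endpoint terms, and the final triangle-inequality step using \eqref{3corom12k0102} on $\partial_\alpha^l\tilde h$ are all exactly as the paper intends.
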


\begin{lemma}\label{3lemmaotherterms}
For $1\leq k\leq 12$, $h\in X^{k}$, we have 
\begin{equation}\label{3boundedmain}
\begin{split}
\|&\partial_{\alpha}^{k}(\tilde{M}_{1,1}^{\epsilon}+\tilde{M}_{1,2}^{k,\epsilon}+\tilde{M}_{2,1}^{k,\epsilon}(h))-\lambda(\alpha)L_2^{+}(h)\int_{0}^{2\alpha}\frac{h^{(k)}(\alpha)-h^{(k)}(\beta)}{(\alpha-\beta)^2+\epsilon^2}d\beta-\lambda(\alpha)\kappa(t)\frac{h^{(k)}(\alpha+\epsilon)-h^{(k)}(\al)}{\epsilon}-\\
&\lambda(\alpha)\int_{0}^{2\alpha}\frac{(h^{(k)}(\alpha)-h^{(k)}(\beta))(\alpha-\beta)\epsilon}{((\alpha-\beta)^2+\epsilon^2)^2}d\beta \frac{2}{\pi}L_1^{+}(h)\|_{X^{0}}\lesssim C(\|h\|_{X^{k}}),
\end{split}
\end{equation}
\begin{equation}\label{3boundedB}
\|B_i(h)\|_{X^{k}}\lesssim C(\|h\|_{X^{k}}).
\end{equation}
For $0\leq j\leq k-1$, we have
\begin{equation}\label{3boundedlower}
\begin{split}
\|\tilde{M}_{1,1}^{\epsilon}+\tilde{M}_{1,2}^{k,\epsilon}+\tilde{M}_{2,1}^{k,\epsilon}\|_{X^{j}}\lesssim C(\|h\|_{X^{k}})\|h\|_{X^{k}},
\end{split}
\end{equation}
\end{lemma}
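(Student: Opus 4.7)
The plan is to prove each of the three estimates in sequence, exploiting the fact that $\tilde{M}_{1,1}^{\epsilon}$, $\tilde{M}_{1,2}^{k,\epsilon}$, $\tilde{M}_{2,1}^{k,\epsilon}$ are finite-difference/mollified versions of transport and singular-integral operators for which the commutators with $\partial_\alpha^k$ have already been analyzed in Lemmas \ref{coro22ep} and \ref{coro22epm12}. The estimate \eqref{3boundedB} is essentially immediate: the smoothness condition \eqref{boundestimateB} on $B_i(h)$ established in Lemmas \ref{Lh3lemma}--\ref{Tfixed behaviour} already says $\|B_i(h)\|_{X^k}\lesssim 1$ for any $1\le k\le 12$ as long as $h$ lies in a small $X^1$-neighborhood of the initial datum, which is built into the standing hypothesis \eqref{3hsmall}; the $C(\|h\|_{X^k})$ upper bound in \eqref{3boundedB} is therefore even weaker than what the conditions provide.

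For \eqref{3boundedmain}, I would treat each of the three perturbed main terms in turn. First, $\partial_\alpha^k \tilde M_{1,1}^{\epsilon}(h)$ equals $\lambda(\alpha)\kappa(t)\frac{h^{(k)}(\alpha+\epsilon)-h^{(k)}(\alpha)}{\epsilon}$ plus a sum (via Leibniz) of terms involving $\partial_\alpha^{j}\lambda$ with $j\ge 1$ multiplying lower-order finite differences of $h$; those remainders are uniformly controlled in $\epsilon$ by $\|h\|_{X^k}$ since $\lambda$ is compactly supported and each finite difference of $h^{(j)}$ with $j\le k-1$ is bounded in $L^2$ by $\|h^{(j+1)}\|_{L^2}$. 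For $\partial_\alpha^k \tilde M_{2,1}^{k,\epsilon}(h)$, I would expand the outer product $\lambda(\alpha) L_2^{+}(h)\cdot (\text{integral}+\text{boundary sum})$ with Leibniz: the only term that can fail to be in $L^2$ uniformly in $\epsilon$ is the one where all $k$ derivatives fall on the integral, and Lemma \ref{coro22ep} \eqref{3coro22ep02} shows precisely that this equals $\int_0^{2\alpha}\frac{h^{(k)}(\alpha)-h^{(k)}(\beta)}{(\alpha-\beta)^2+\epsilon^2}d\beta$ up to a remainder in $L^2$ with norm $\lesssim\|h\|_{H_\alpha^k}$ (after dividing by $\alpha$, which is harmless because $\lambda L_2^{+}(h)/\alpha$ is bounded by the vanishing condition on $L_2^{+}$). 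All other terms in the Leibniz expansion have at most $k-1$ derivatives on the integral, so \eqref{3coro22ep01} applies and gives $L^2$ bounds by $\|h\|_{X^k}$; the derivatives falling on $L_2^{+}(h)$, $\lambda$, or on the boundary sum $\sum_j h^{(j)}(0)\, p.v.\!\int \frac{\alpha^j/j!-\beta^j/j!}{(\alpha-\beta)^2}d\beta$ are bounded using \eqref{boundestimateL}, the smoothness of $\lambda$, and Sobolev embedding. The $\tilde M_{1,2}^{k,\epsilon}$ piece is handled in exactly the same way, but invoking Lemma \ref{coro22epm12} \eqref{3coro22ep020102} in place of \eqref{3coro22ep02}.

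For \eqref{3boundedlower}, at order $j\le k-1$ there is no critical term to isolate: applying the lower-order estimates \eqref{3coro22ep01} and \eqref{3coro22ep0102020} (which yield $L^2$ bounds in terms of $\|h\|_{H^k}$ directly, without pulling out a leading singular term) together with \eqref{boundestimateL} gives the desired $C(\|h\|_{X^k})\|h\|_{X^k}$ bound without loss. The polynomial boundary terms in $\tilde M_{1,2}^{k,\epsilon}$ and $\tilde M_{2,1}^{k,\epsilon}$ are smooth in $\alpha$ and the sums $\sum_j |h^{(j)}(0)|$ are controlled by $\|h\|_{X^k}$ via trace, so they cause no trouble.

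The technical heart of the argument is the commutator step inside \eqref{3boundedmain}: one must show that after peeling off the single singular term where all $k$ derivatives sit on $h$, every other term in the Leibniz expansion is uniformly bounded in $\epsilon$. The obstacle is that naively, derivatives of the Cauchy-type kernel $1/((\alpha-\beta)^2+\epsilon^2)$ blow up as $\epsilon\to 0$. This is exactly the point addressed by Lemma \ref{coro22ep}: by using the Taylor-subtracted integrand $h-\sum_{j<k}\frac{\alpha^j}{j!}h^{(j)}(0)$, pulling derivatives onto $h$ generates boundary contributions at $\beta=0$ and $\beta=2\alpha$ that are precisely cancelled by the explicit $\sum_j h^{(j)}(0)$ polynomials in \eqref{3M22perturb}; the leftover pieces are then estimated by Lemma \ref{farboundarybound}, which converts $\partial_\alpha^{j}h(\alpha)/\alpha^{k-j}$ into $\|h\|_{H^k}$. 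Once this is recognized, everything reduces to bookkeeping.
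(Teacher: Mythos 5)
Your proof is correct and follows essentially the same route the paper takes: peeling off the leading singular term from the Leibniz expansion, invoking the vanishing of $L_i^{+}(h)$ at $\alpha=0$ (together with its $Y^{k+2}$ bound) to absorb the factor of $\alpha$ that appears in \eqref{3coro22ep02} and \eqref{3coro22ep020102}, and controlling all lower-order Leibniz terms by \eqref{3coro22ep01} and \eqref{3coro22ep0102020}; the paper's own proof cites exactly these ingredients without spelling out the bookkeeping. Your identification of the boundary-polynomial compensation (the explicit $\sum_j h^{(j)}(0)$ terms) as the mechanism that makes the commutator argument uniform in $\epsilon$, and your observation that \eqref{3boundedB} is immediate from the $B_i$ smoothness condition \eqref{boundestimateB}, are both accurate.
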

\begin{proof}
 \eqref{3boundedmain} follows from the equations \eqref{3M11perturb2}, \eqref{3M12perturb}, \eqref{3M22perturb}, the conditions of $L_i^{+}(h)$ in \eqref{3GE01} and the estimates \eqref{3coro22ep02}, \eqref{3coro22ep01} in lemma \ref{coro22ep}, estimates \eqref{3coro22ep020102} \eqref{3coro22ep0102020} in lemma \ref{coro22epm12}.
  
  \eqref{3boundedB} follows from the condition of $B_i$ in \eqref{3GE01} .
  \eqref{3boundedlower} follows from \eqref{3M11perturb2}, \eqref{3M12perturb}, \eqref{3M22perturb}, the conditions of $L_i^{+}(h)$ in \eqref{3GE01}  and the estimate \eqref{3coro22ep01} in lemma \ref{coro22ep} and estimate \eqref{3coro22ep0102020} in lemma \ref{coro22epm12}.
\end{proof}
We also introduce two similar corollaries to be used later. One is for  the operator replacing the $h$ by $g$ in $\tilde{M}_{1,2}^{k,\epsilon}$, $\tilde{M}_{2,1}^{k,\epsilon}(h)$, another is for the limit.
\begin{corollary}\label{3lemmaotherterms02}
For $1\leq k\leq 12$, $h\in X^{k}$, $g\in H_{\alpha}^{k}[0,\pi]$,  we have 
\begin{equation}\label{3boundedmain01g}
\begin{split}
\|&\partial_{\alpha}^{k}(\lambda(\alpha)\frac{2}{\pi}L_1^{+}(h)(\alpha,\gamma)\int_{0}^{2\alpha}\frac{(g(\alpha)-g(\beta))(\alpha-\beta)\epsilon}{((\alpha-\beta)^2+\epsilon^2)^2}d\beta+\sum_{j=0}^{k-1}b_{1,j}^{\epsilon}(\alpha)g^{<j>}(0))\\
&-\lambda(\al)\frac{2}{\pi}L_1^{+}(h)(\alpha,\gamma)\int_{0}^{2\al}\frac{(g^{<k>}(\al)-g^{<k>}(\beta))(\al-\beta)\ep}{((\al-\beta)^2+\ep^2)^2}d\beta\|_{X^{0}}\\
\lesssim &C(\|h\|_{X^{k}})\|g\|^{2}_{H_{\alpha}^{k}[0,\pi]},
\end{split}
\end{equation}
\begin{equation}\label{3M22perturbg}
\begin{split}
\|&\partial_{\alpha}^{k}(\lambda(\alpha)L_2^{+}(h)\int_{0}^{2\alpha}\frac{(g(\alpha)-g(\beta))}{(\alpha-\beta)^2+\epsilon^2}d\beta+\sum_{j=0}^{k-1}b_{2,j}^{\epsilon}(\alpha)g^{<j>}(0))\\
&-\lambda(\al)L_2^{+}(h)\int_{0}^{2\al}\frac{g^{<k>}(\al)-g^{<k>}(\beta)}{(\al-\beta)^2+\ep^2}d\beta\|_{X^{0}}\\
\lesssim & \|g\|^{2}_{H_{\alpha}^{k}[0,\pi]}C(\|h\|_{X^{k}}).
\end{split}
\end{equation} 
When $j\leq k-1,$ we have
\begin{equation*}
\begin{split}
\|&\partial_{\alpha}^{j}(\lambda(\alpha)\frac{2}{\pi}L_1^{+}(h)(\alpha,\gamma)\int_{0}^{2\alpha}\frac{(g(\alpha)-g(\beta))(\alpha-\beta)\epsilon}{((\alpha-\beta)^2+\epsilon^2)^2}d\beta+\sum_{j=0}^{k-1}b_{1,j}^{\epsilon}(\alpha)g^{<j>}(0))\|_{X^{0}}\\
\lesssim &C(\|h\|_{X^{k}})\|g\|^{2}_{H_{\alpha}^{k}[0,\pi]},
\end{split}
\end{equation*}
and
\begin{equation*}
\|\partial_{\alpha}^{j}(\lambda(\alpha)L_2^{+}(h)\int_{0}^{2\alpha}\frac{(g(\alpha)-g(\beta))}{(\alpha-\beta)^2+\epsilon^2}d\beta+\sum_{j=0}^{k-1}b_{2,j}^{\epsilon}(\alpha)g^{<j>}(0))\|_{X^{0}}
\lesssim  \|g\|^{2}_{H_{\alpha}^{k}[0,\pi]}C(\|h\|_{X^{k}}).
\end{equation*} 
\end{corollary}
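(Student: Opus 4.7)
The strategy is to mimic the proof of Lemma \ref{3lemmaotherterms} but with careful bookkeeping to track that the factor $L_i^{+}(h)$ depends on $h$ while the integral and boundary-correction terms depend on $g$. By the smoothness conditions \eqref{boundestimateL} on $L_i^{+}$, we have $\|L_i^{+}(h)\|_{Y^{k+2}}\lesssim 1$ with the implicit constant absorbed into $C(\|h\|_{X^k})$, so $L_i^{+}(h)$ plays the role of a multiplier of class $C_\alpha^{k+2}$. Writing the expression schematically as $L_i^{+}(h)\cdot I_\epsilon[g]$, where $I_\epsilon[g]$ denotes the bracketed integral-plus-boundary-correction in $g$, Leibniz gives
\[
\partial_\alpha^{k}(L_i^{+}(h)\cdot I_\epsilon[g]) = L_i^{+}(h)\,\partial_\alpha^{k}I_\epsilon[g] + \sum_{j=1}^{k}\binom{k}{j}\partial_\alpha^{j}L_i^{+}(h)\,\partial_\alpha^{k-j}I_\epsilon[g],
\]
so the task reduces to bounding the "main" term (the one producing the highest-order integral of $g^{(k)}$) and showing that the commutator pieces are controlled in $X^0$.

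For the main term, since $L_i^{+}(h)$ vanishes at $\alpha=0$ by the vanishing condition, one factors $L_i^{+}(h)=\alpha\,\tilde L_i^{+}(h)$ with $\tilde L_i^{+}(h)$ still bounded in $C_\alpha^{k+1}$, and then applies estimate \eqref{3coro22ep02} of Lemma \ref{coro22ep} (respectively \eqref{3coro22ep020102} of Lemma \ref{coro22epm12}) to $g$ in place of $h$; the factor of $\alpha$ supplies precisely what is needed on the left-hand sides of those estimates. The commutator terms with $j\ge 1$ derivatives on $L_i^{+}(h)$ carry at most $k-1$ derivatives on $I_\epsilon[g]$, so they are controlled by the lower-order estimates \eqref{3coro22ep01} and \eqref{3coro22ep0102020} applied to $g$.

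The main obstacle is the careful treatment of the boundary correction terms $\sum_{j=0}^{k-1}b_{i,j}^{\epsilon}(\alpha)g^{(j)}(0)$: these were engineered (compare \eqref{3bj1}--\eqref{3bj2} and the decompositions \eqref{3M12split}--\eqref{3M21split}) to compensate the Taylor subtractions $\sum_{j=0}^{k-1}\frac{\alpha^j}{j!}g^{(j)}(0)$ hidden inside $I_\epsilon[g]$, so that after extracting the main piece $L_i^{+}(h)$ times the unperturbed integral of $g^{(k)}$, what remains is genuinely bounded in $L^2$ uniformly in $\epsilon$. One checks this by differentiating the $b_{i,j}^\epsilon$ expressions directly and applying the Hardy-type Lemma \ref{farboundarybound} to handle the powers of $\alpha$ in denominators; the point-evaluations $g^{(j)}(0)$ for $j\le k-1$ are bounded by $\|g\|_{H^k[0,\pi]}$ via Sobolev embedding. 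Finally, the lower-order statements for $j\le k-1$ follow by the same scheme using only \eqref{3coro22ep01} and \eqref{3coro22ep0102020}, so that no extra factor of $\alpha$ is needed on the left.
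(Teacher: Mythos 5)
Your proof is correct and follows essentially the same route as the paper's: the paper's one-line proof simply refers to Lemma \ref{3lemmaotherterms} with the $\tilde M_{1,2}^{k,\epsilon}, \tilde M_{2,1}^{k,\epsilon}$ rewritten via \eqref{3M12split}, \eqref{3M21split}, and your Leibniz decomposition, factoring $L_i^{+}(h)=\alpha\,\tilde L_i^{+}(h)$ via the vanishing condition, and applying Lemmas \ref{coro22ep} and \ref{coro22epm12} to $g$ (with the boundary terms $b_{i,j}^\epsilon g^{(j)}(0)$ recombining into the Taylor-subtraction structure those lemmas require) is precisely the unpacked version of that argument. The supporting references (Lemma \ref{farboundarybound} for the $\alpha$-powers, Sobolev embedding for $g^{(j)}(0)$, \eqref{3coro22ep01}/\eqref{3coro22ep0102020} for the lower-order and commutator pieces) match what the paper invokes in proving Lemma \ref{3lemmaotherterms}.
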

\begin{proof}
The proof is similar as in lemma \ref{3lemmaotherterms}. Here we write the formula with expression of $\tilde{M}_{1,2}^{k,\epsilon}$,  $\tilde{M}_{2,1}^{k,\epsilon}$ in \eqref{3M12split}, \eqref{3M21split} instead of \eqref{3M12perturb} and \eqref{3M22perturb}.
\end{proof}
\begin{corollary}\label{3coro22}
For any $k\geq 0$, if $h\in H^{k}_{\alpha}[0,\pi],$ then for $0\leq j \leq k-1$, we have 
\begin{equation}\label{3coro22lim1}
\|\partial_{\al}^{j}p.v.\int_{0}^{2\al}\frac{h(\al)-h(\beta)}{(\al-\beta)^2}d\beta\|_{L_{\al}^{2}[0,\frac{\pi}{4}]}\lesssim \|h\|_{H_{\al}^{k}[0,\pi]},
\end{equation}
and for $h\in H^{k+1}_{\alpha}[0,\pi],$
\begin{equation}\label{3coro22lim2}
\|\al\partial_{\al}^{k}p.v.\int_{0}^{2\al}\frac{h(\al)-h(\beta)}{(\al-\beta)^2}d\beta-\al p.v.\int_{0}^{2\al}\frac{h^{(k)}(\al)-h^{(k)}(\beta)}{(\al-\beta)^2}d\beta\|_{L_{\al}^{2}[0,\frac{\pi}{4}]}\lesssim \|h\|_{H_{\al}^{k}[0,\pi]}.
\end{equation}
\end{corollary}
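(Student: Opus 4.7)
The plan is to deduce Corollary~\ref{3coro22} by taking the limit $\epsilon\to 0$ in the $\epsilon$-regularized bounds \eqref{3coro22ep01} and \eqref{3coro22ep02} from Lemma~\ref{coro22ep}. The main algebraic observation is that subtracting a Taylor polynomial does not essentially change the principal value integral. Writing $\tilde h(\alpha)=h(\alpha)-\sum_{j=0}^{k-1}\frac{\alpha^j}{j!}h^{(j)}(0)$, one has
\[
p.v.\int_0^{2\alpha}\frac{h(\alpha)-h(\beta)}{(\alpha-\beta)^2}d\beta
=p.v.\int_0^{2\alpha}\frac{\tilde h(\alpha)-\tilde h(\beta)}{(\alpha-\beta)^2}d\beta+\sum_{j=0}^{k-1}h^{(j)}(0)\,p.v.\int_0^{2\alpha}\frac{(\alpha^j-\beta^j)/j!}{(\alpha-\beta)^2}d\beta.
\]
Direct computation using $p.v.\int_0^{2\alpha}\frac{d\beta}{\alpha-\beta}=0$ shows each correction integral is a polynomial in $\alpha$ of degree at most $j-1$; their derivatives are uniformly bounded on $[0,\pi/4]$ so their contribution to the $L^2$ norm is controlled by $\sum_{j=0}^{k-1}|h^{(j)}(0)|$, which is in turn bounded by $\|h\|_{H^k[0,\pi]}$ via the Sobolev embedding $H^k[0,\pi]\hookrightarrow C^{k-1}[0,\pi]$. (The case $k=0$ is trivial since no subtraction is present.)

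With the polynomial piece handled, it remains to prove the same $L^2$ bounds for the $\tilde h$ integral. For smooth $h$ one verifies pointwise in $\alpha\in(0,\pi/4]$ the convergence
\[
\partial_\alpha^{j}\int_0^{2\alpha}\frac{\tilde h(\alpha)-\tilde h(\beta)}{(\alpha-\beta)^2+\epsilon^2}d\beta\;\longrightarrow\;\partial_\alpha^{j}\,p.v.\int_0^{2\alpha}\frac{\tilde h(\alpha)-\tilde h(\beta)}{(\alpha-\beta)^2}d\beta\quad(j\le k-1),
\]
by writing $\tilde h(\alpha)-\tilde h(\beta)=\tilde h'(\alpha)(\alpha-\beta)+R(\alpha,\beta)$ with $R$ having an integrable singularity, and applying dominated convergence after differentiating. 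Combining this pointwise limit with the uniform-in-$\epsilon$ estimate \eqref{3coro22ep01} and Fatou's lemma yields the desired $L^2$ bound on the limit; a standard density argument extends the inequality from smooth $h$ to all $h\in H^k[0,\pi]$. This proves \eqref{3coro22lim1}.

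For \eqref{3coro22lim2}, I apply the same Fatou/limit procedure to the inequality \eqref{3coro22ep02}. The extra regularity hypothesis $h\in H^{k+1}[0,\pi]$ guarantees $h^{(k)}\in H^1$, which is exactly what is needed for the two individual principal value integrals $p.v.\int_0^{2\alpha}\frac{h^{(k)}(\alpha)-h^{(k)}(\beta)}{(\alpha-\beta)^2}d\beta$ and $\partial_\alpha^k p.v.\int_0^{2\alpha}\frac{h(\alpha)-h(\beta)}{(\alpha-\beta)^2}d\beta$ to be individually well-defined as $L^2$ functions (so that subtracting them is meaningful); the net commutator estimate is then by $\|h\|_{H^k}$ just as \eqref{3coro22ep02} provides uniformly in $\epsilon$. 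The polynomial corrections from $\tilde h$ vs.\ $h$ are bounded as before by Sobolev embedding.

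The main technical nuisance is justifying that $\partial_\alpha^j$ commutes with the limit $\epsilon\to 0$, since a naive differentiation under the integral fails near the diagonal. This is handled by working directly with the differentiated integrals (using the explicit formula for $\partial_\alpha^j\int_{-\alpha}^{\alpha}\cdots\,d\beta$ given in the proof of Lemma~\ref{coro22ep}, which produces boundary terms at $\beta=\pm\alpha$ plus a tamer singular integral with $\partial_\alpha^j\tilde h$), and applying Fatou to each piece separately; the same density reduction to smooth $h$ then finishes both parts.
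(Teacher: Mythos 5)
Your proposal is correct and follows essentially the same route as the paper's proof: subtract the Taylor polynomial of $h$ at $\alpha=0$ (controlling the polynomial correction integrals by Sobolev embedding), then pass to the limit $\epsilon\to 0$ in the uniform-in-$\epsilon$ estimates \eqref{3coro22ep01}, \eqref{3coro22ep02} from Lemma~\ref{coro22ep}, finishing with a density argument from smooth $h$ to general $h\in H^k$. The only difference is a minor technical device: where the paper establishes the limiting bound via weak $L^2$ convergence together with weak compactness of $H^{k-1}$, you use pointwise convergence of each differentiated piece combined with Fatou's lemma, which is an equivalent and equally standard way to pass the uniform bound to the limit.
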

\begin{proof}

We still first show the case where $\partial_{\al}^{j}h(0)=0$ when $1\leq j\leq k-1$. 

For \eqref{3coro22lim1}, if $h(\al)$ is also in $C^{2}[0,\pi]$, we have 
$\int_{0}^{2\al}\frac{h(\al)-h(\beta)}{(\al-\beta)^2+\ep^2}d\beta$ weakly converges to $p.v.\int_{0}^{2\al}\frac{h(\al)-h(\beta)}{(\al-\beta)^2}d\beta$ in $L^2_{\al}[0,\frac{\pi}{4}]$. From the uniform bound \eqref{3coro22ep01} and the weakly compactness of $H^{k-1}_{\al}$, we have
\begin{equation}
\|p.v.\int_{0}^{2\al}\frac{h(\al)-h(\beta)}{(\al-\beta)^2}d\beta\|_{H_{\al}^{k-1}[0,\frac{\pi}{4}]}\lesssim \|h\|_{H_{\al}^{k}[0,\pi]}.
\end{equation}
For $h\in H^{k}$, we could use the approximation of the smooth function to get the result.

For \eqref{3coro22lim2}, if $k=0$, the bound is trivial. When $k\geq 1$, $h\in H^{k+2}$, we could use the weak convergence and the uniform bound \eqref{3coro22ep02} to get the result. For $h\in H^{k+1}$, we also use the approximation of smooth function.

When $h$ does not vanish at $0$ up to $k-1$ orders, for 
$\tilde{h}(\al)=h(\al)-\sum_{j=0}^{k-1}\frac{\al^{j}}{j!}h^{(j)}(0)$, \eqref{3coro22lim1} and \eqref{3coro22lim2} are satisfied. Moreover, we have
\[
p.v.\int_{0}^{2\al}\frac{\al^j-\beta^j}{(\al-\beta)^2}d\beta
\]
is in $C^{\infty}$ and 
\[
|\partial_{\al}^{j}h(0)|\lesssim \|h(\al)\|_{H_{\al}^{k}[0,\pi]},
\]
from Sobolev's embedding theorem. Then \eqref{3coro22lim1} \eqref{3coro22lim2}  also hold in this case.
\end{proof}
\begin{lemma}\label{323}
If $h\in C_{t}^{0}([0,t_0], X^{k})$ for $k\geq 1$, then we have
\[
\|T(h)\|_{C_{t}^{0}([0,t_0], X^{k-1})}\lesssim 1.
\]
\end{lemma}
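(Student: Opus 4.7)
The plan is to decompose $T(h)$ according to \eqref{modifiedequation} into its six natural summands and bound each separately in $C_{t}^{0}([0,t_0], X^{k-1})$. Writing
\begin{equation*}
T(h) = M_{1,1}(h) + M_{1,2}(h) + M_{2,1}(h) + O^{0} + \sum_{i}O^{i}(h) + \lambda(\alpha)T_{fixed}(\alpha_{\gamma}^{t}),
\end{equation*}
I would show that each summand lies in $C_{t}^{0}([0,t_0], X^{k-1})$ with a bound depending only on $\|h\|_{C_{t}^{0}([0,t_0], X^{k})}$. The bookkeeping here is that the three main terms $M_{1,1}, M_{1,2}, M_{2,1}$ each lose exactly one derivative, while every remaining term is already controlled in $X^{k}$ or better.

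For the transport terms $M_{1,1}$ and $M_{1,2}$, the expressions \eqref{3M11chequ}, \eqref{M1chequ} display them as $\lambda(\alpha)\cdot(\text{coefficient})\cdot \partial_\alpha h$. The operator $\partial_\alpha$ maps $X^{k}\to X^{k-1}$ trivially, and $\kappa(t)+\tau'(t)$ is bounded continuous on $[0,t_0]$. For $M_{1,2}$, the coefficient $L_{1}^{+}(h) = B_{D^{-60}h}$ satisfies the smoothness bound \eqref{boundestimateL} (with $k$ replaced by $k-1$), i.e.\ $\|L_{1}^{+}(h)\|_{Y^{k+1}}\lesssim 1$, from Lemma \ref{L1main} and the identification \eqref{3L1form01}. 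Since $Y^{k+1}$ embeds into $C^{k-1}_{\alpha}$ and multiplication by a $C^{k-1}$ function preserves $H^{k-1}$, we obtain $\|M_{1,2}(h)\|_{X^{k-1}}\lesssim \|h\|_{X^{k}}$.

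For the singular term $M_{2,1}(h) = \lambda L_2^{+}(h)\,\text{p.v.}\int_{0}^{2\alpha}\frac{h(\alpha)-h(\beta)}{(\alpha-\beta)^2}\,d\beta$, I invoke Corollary \ref{3coro22}: the inequality \eqref{3coro22lim1} with index $j = k-1$ shows the principal value integral maps $H^{k}[0,\pi]$ into $H^{k-1}[0,\pi/4]$. Lemma \ref{Lh1lemma} gives $\|L_{2}^{+}(h)\|_{Y^{k+1}}\lesssim 1$, so the product is controlled in $X^{k-1}$. This singular integral is the only delicate point, but the needed estimate is already established.

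For the bounded terms, Lemma \ref{3boundedtermtype} and Lemmas \ref{Lh3lemma}--\ref{Lhjlemma} give $\|B_{j}^{i}(h)\|_{X^{k}}\lesssim 1$ under the smallness assumption on $h$, so $O^{0}+\sum_{i}O^{i}(h)\in X^{k}\subset X^{k-1}$. Finally, $\lambda(\alpha)T_{fixed}(\alpha_{\gamma}^{t})$ lies in $C_{\gamma}^{4}([-1,1], C^{16}_{\alpha}[0,\pi/4])$ by Lemma \ref{Tfixed behaviour}, which embeds into $X^{k-1}$ since $k\leq 12$. Continuity in $t$ follows term-by-term from the $t$-continuity conditions on $L_i^{+}(h)$ and $B_i(h)$ combined with $h\in C_{t}^{0}([0,t_0], X^{k})$. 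The main obstacle is precisely $M_{2,1}$, but because Corollary \ref{3coro22} already encodes the $H^{k}\to H^{k-1}$ mapping property along with the boundary-correction structure, no fresh estimate is required.
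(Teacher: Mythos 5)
Your proof is essentially correct and rests on the same ingredients as the paper's one-line proof: Corollary \ref{3coro22} (specifically \eqref{3coro22lim1}) for the singular $M_{2,1}$ term, the $Y^{k+2}$-smoothness of the coefficients $L_i^{+}(h)$, and the $X^{k}$-boundedness \eqref{boundestimateB} of the $B_i(h)$. The one thing worth flagging is a conflation of levels: Lemma \ref{323} sits inside the analysis of the \emph{abstract} generalized equation \eqref{3GE01}, where $T(h)=T^{+}(h)=M_{1,1}(h)+M_{1,2}(h)+M_{2,1}(h)+\sum_{i}B_{i}(h)$, and the proof should quote only the abstract conditions on $L_i^{+}(h)$ and $B_i(h)$. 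You instead invoke the concrete decomposition \eqref{modifiedequation} with $O^{0}$, $O^{i}$, and $\lambda T_{fixed}$ and then cite Lemmas \ref{3boundedtermtype}--\ref{Lhjlemma} to rebound those pieces; that detour is harmless (those lemmas are precisely what verifies the abstract conditions via Theorem \ref{theoremgeneralizedequa}), but it is not needed, since the abstract boundedness $\|B_i(h)\|_{X^{k}}\lesssim 1$ and the $Y^{k+2}$-control of $L_i^{+}$ are already hypotheses of the setting. The estimates themselves are the same either way.
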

\begin{proof}
The result follows from \eqref{3GE01}, the conditions of $L_i^{+}(h)$, $B_i(h)$ and lemma \ref{3coro22}.
\end{proof}
\subsubsection{Refined G\r{a}rding's inequality}
In this section we show an important lemma to control the main term in the energy estimate.
\begin{lemma}\label{Garding}
Let $c(\alpha),\tilde{c}(\alpha) \in C^2[0,\pi]$,$f,g\in L^{2}[0,\pi]$ be real functions, satisfying 
\[
|c(\alpha)|\lesssim \alpha, |\tilde{c}(\alpha)|\lesssim\alpha,\supp{c},\supp{\tilde{c}}, \supp{f}, \supp{g}\subset [0,\frac{\pi}{4}],
\]
there exists $C=9$, such that when $C|c(\alpha)|\leq \tilde{c}(\alpha)$, we have 
\begin{align}\label{ene1}
    &\quad\int_{0}^{\pi}c(\alpha)f(\alpha)\int_{0}^{\pi}\frac{(\alpha-\beta)\epsilon}{((\alpha-\beta)^2+\epsilon^2)^2}(g(\alpha)-g(\beta))d\beta\\\nonumber
    &\leq \int_{0}^{\pi}\tilde{c}(\alpha)f(\alpha)\int_{0}^{\pi}\frac{1}{(\alpha-\beta)^2+\epsilon^2}(f(\alpha)-f(\beta))d\beta d\alpha\\\nonumber
    &\quad+\int_{0}^{\pi}\tilde{c}(\alpha)g(\alpha)\int_{0}^{\pi}\frac{1}{(\alpha-\beta)^2+\epsilon^2}(g(\alpha)-g(\beta))d\beta d\alpha+\tilde{B.T}, 
\end{align}
and
\begin{align}\label{ene2}
    &\quad\int_{0}^{\pi}c(\alpha)f(\alpha)\int_{0}^{\pi}\frac{1}{(\alpha-\beta)^2+\epsilon^2}(g(\alpha)-g(\beta))d\beta d\alpha\\\nonumber
    &\leq \int_{0}^{\pi}\tilde{c}(\alpha)f(\alpha)\int_{0}^{\pi}\frac{1}{(\alpha-\beta)^2+\epsilon^2}(f(\alpha)-f(\beta))d\beta d\alpha\\\nonumber
    &\quad+\int_{0}^{\pi}\tilde{c}(\alpha)g(\alpha)\int_{0}^{\pi}\frac{1}{(\alpha-\beta)^2+\epsilon^2}(g(\alpha)-g(\beta))d\beta d\alpha+\tilde{B.T},
\end{align}
where
\[
\tilde{B.T}\lesssim (\|f\|_{L^2}^2+\|g\|_{L^2}^2)
\]independent of $\epsilon$. 
\end{lemma}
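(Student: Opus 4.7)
The strategy is to symmetrize the bilinear forms using the (anti)symmetry of the kernels, apply the pointwise inequality $2|AB|\le A^2+B^2$ with weights calibrated to $9|c|\le\tilde{c}$, and absorb the asymmetric pieces into $\tilde{B.T.}$ using $C^2$-regularity of $c,\tilde{c}$. Denote $K_1(\alpha,\beta)=(\alpha-\beta)\epsilon/((\alpha-\beta)^2+\epsilon^2)^2$ and $K_2(\alpha,\beta)=1/((\alpha-\beta)^2+\epsilon^2)$.

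\emph{Inequality \eqref{ene1}.} The kernel $K_1$ is antisymmetric under $\alpha\leftrightarrow\beta$. Symmetrizing gives $2\,\mathrm{LHS}_{\eqref{ene1}}=\iint(c(\alpha)f(\alpha)+c(\beta)f(\beta))(g(\alpha)-g(\beta))K_1$. Further writing $c(\alpha)f(\alpha)+c(\beta)f(\beta)=\tfrac12(c(\alpha)+c(\beta))(f(\alpha)+f(\beta))+\tfrac12(c(\alpha)-c(\beta))(f(\alpha)-f(\beta))$, the first summand is $\alpha\leftrightarrow\beta$-symmetric and its pairing with the antisymmetric $(g(\alpha)-g(\beta))K_1$ vanishes. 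Hence
\[
\mathrm{LHS}_{\eqref{ene1}}=\tfrac14\iint(c(\alpha)-c(\beta))(f(\alpha)-f(\beta))(g(\alpha)-g(\beta))K_1.
\]
Since $|c(\alpha)-c(\beta)|\,|K_1|\lesssim (\alpha-\beta)^2\epsilon/((\alpha-\beta)^2+\epsilon^2)^2 \le \epsilon/((\alpha-\beta)^2+\epsilon^2)=:P_\epsilon(\alpha-\beta)$, a Poisson-type kernel with $\|P_\epsilon\ast h\|_{L^2}\lesssim\|h\|_{L^2}$ uniformly in $\epsilon$, Cauchy--Schwarz gives $|\mathrm{LHS}_{\eqref{ene1}}|\lesssim\|f\|_{L^2}^2+\|g\|_{L^2}^2$. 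So \eqref{ene1} holds by absorbing this into $\tilde{B.T.}$ and discarding the non-negative RHS dissipation terms.

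\emph{Inequality \eqref{ene2}.} Since $K_2$ is symmetric, symmetrization gives $\mathrm{LHS}_{\eqref{ene2}}=T_M+T_E$ where
\[
T_M=\tfrac14\iint(c(\alpha)+c(\beta))(f(\alpha)-f(\beta))(g(\alpha)-g(\beta))K_2,\quad T_E=\tfrac14\iint(c(\alpha)-c(\beta))(f(\alpha)+f(\beta))(g(\alpha)-g(\beta))K_2,
\]
and analogously the RHS dissipation terms split as $T_M^{hh}+T_E^{hh}$ with $T_M^{hh}=\tfrac14\iint(\tilde{c}(\alpha)+\tilde{c}(\beta))(h(\alpha)-h(\beta))^2K_2\ge 0$ for $h=f,g$. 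The pointwise bound $|c(\alpha)+c(\beta)|\,|AB|\le\tfrac{|c(\alpha)|+|c(\beta)|}{2}(A^2+B^2)\le\tfrac{\tilde{c}(\alpha)+\tilde{c}(\beta)}{18}(A^2+B^2)$ (with $A=f(\alpha)-f(\beta)$, $B=g(\alpha)-g(\beta)$) yields $|T_M|\le \tfrac{1}{18}(T_M^{ff}+T_M^{gg})$, so the main comparison $T_M\le T_M^{ff}+T_M^{gg}$ holds with room to spare. For $T_E,T_E^{ff},T_E^{gg}$, note that $\tilde{K}_c(\alpha,\beta):=(c(\alpha)-c(\beta))K_2=q_c(\alpha,\beta)\cdot(\alpha-\beta)/((\alpha-\beta)^2+\epsilon^2)$ with $q_c=(c(\alpha)-c(\beta))/(\alpha-\beta)\in L^\infty$ (by $c\in C^2$) is a first-Calder\'on-commutator-type kernel bounded on $L^2$ with $\|\tilde{K}_c\|_{L^2\to L^2}\lesssim\|c'\|_{L^\infty}$ uniformly in $\epsilon$, and similarly for $\tilde{c}$. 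Expanding $(f(\alpha)+f(\beta))(g(\alpha)-g(\beta))=(fg)(\alpha)-(fg)(\beta)+f(\beta)g(\alpha)-f(\alpha)g(\beta)$ and exploiting antisymmetry of $\tilde{K}_c$ to cancel the divergent $T(1)$-type contributions, each error term is realized as a bilinear pairing of $f,g$ against $\tilde{K}_c$-operator actions and bounded by $C(\|f\|_{L^2}^2+\|g\|_{L^2}^2)$ uniformly in $\epsilon$. Absorbing into $\tilde{B.T.}$ completes \eqref{ene2}.

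\emph{Main obstacle.} The uniform-in-$\epsilon$ control of $T_E,T_E^{ff},T_E^{gg}$. A naive Cauchy--Schwarz produces a $|\log\epsilon|$ factor from $\int_0^\pi|\alpha-\beta|K_2\,d\beta$, which destroys the required $\epsilon$-independence. The log is eliminated only by exploiting the Calder\'on-commutator cancellation encoded in the antisymmetry of $\tilde{K}_c$: the apparently divergent $T(1)$ contributions that appear when one rewrites $T_E$ as a single operator applied to $f$ or $g$ cancel against each other once the expansion $(f(\alpha)+f(\beta))(g(\alpha)-g(\beta))=(fg)(\alpha)-(fg)(\beta)+\cdots$ is used in tandem with the sign asymmetry of $\tilde{K}_c$.
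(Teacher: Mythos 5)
Your proposal has two genuine gaps, one per inequality, and both come from the same place: you are missing the reflection correction that the paper uses to control boundary effects on $[0,\pi]$.

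\textbf{Gap in \eqref{ene1}.} Your symmetrization $2\,\mathrm{LHS}=\iint(c(\alpha)f(\alpha)+c(\beta)f(\beta))(g(\alpha)-g(\beta))K_1$ and the polarization identity are correct, but the claim that the first summand pairs to zero is a parity error. The factor $(g(\alpha)-g(\beta))K_1(\alpha,\beta)$ is \emph{symmetric} under $\alpha\leftrightarrow\beta$ (a product of two antisymmetric factors), not antisymmetric; paired with the symmetric factor $(c(\alpha)+c(\beta))(f(\alpha)+f(\beta))$ the integrand is symmetric and the integral does not vanish. That dropped term in fact carries the dissipation-scale contribution: as $\epsilon\to0$, $\int_0^\pi K_1(g(\alpha)-g(\beta))\,d\beta\to\tfrac{\pi}{2}g'(\alpha)$, so the LHS behaves like $\int cf g'$, which is not controlled by $\|f\|_{L^2}\|g\|_{L^2}$ when $f\ne g$. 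So the LHS of \eqref{ene1} is not a $\tilde{B.T.}$; it must genuinely be compared against the RHS dissipation. The paper does this via the Hilbert transform identity $H_{\mathcal{R}}\bigl(\tfrac{\beta\epsilon}{(\beta^2+\epsilon^2)^2}\bigr)=\tfrac12\bigl(\tfrac{1}{\beta^2+\epsilon^2}-\tfrac{2\epsilon^2}{(\beta^2+\epsilon^2)^2}\bigr)$ (lemma \ref{le2}), converting the $K_1$ bilinear form into a $K_2$-type form in $f$ and $H_{\mathcal{R}}g$, and then proceeds as in \eqref{ene2}; lemma \ref{le4} does the matching conversion on the RHS so the $\tilde c$-dissipation in $H_{\mathcal{R}}f,H_{\mathcal{R}}g$ is available for the Hölder step.

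\textbf{Gap in \eqref{ene2}.} The error terms $T_E,T_E^{ff},T_E^{gg}$ are not bounded uniformly in $\epsilon$ under the stated hypotheses. Writing $T_E=T_{E,1}+T_{E,2}$ with $T_{E,1}=\tfrac12\int(fg)(\alpha)\bigl[\int_0^\pi(c(\alpha)-c(\beta))K_2\,d\beta\bigr]d\alpha$, the inner integral has a genuine logarithmic singularity near $\alpha=0$: its leading part is $c'(\alpha)\int_0^\pi\tfrac{\alpha-\beta}{(\alpha-\beta)^2+\epsilon^2}d\beta\sim c'(\alpha)\log\alpha$, and the hypotheses only give $|c(\alpha)|\lesssim\alpha$ and $c\in C^2$, so $c'(0)$ need not vanish. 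On the full line the Calder\'on commutator $T(1)$ integral is controlled, but the truncation to $[0,\pi]$ (equivalently, extending $c$ by zero, which has a kink at $0$ when $c'(0)\ne 0$) destroys that cancellation. Your remark about exploiting the antisymmetry of $\tilde K_c$ does not fix this: that antisymmetry was already consumed in reducing to $T_{E,1}$, and the remaining $T(1)$ piece is really there, as one can see by taking $f=g$ (which forces $T_E$ onto the $T_{E,1}$ channel). The paper's fix is to introduce the reflection kernel $K_2(\alpha+\beta)$ when symmetrizing (lemma \ref{le3}), so that the combination $-\int_0^\pi\tfrac{c(\alpha)-c(\beta)}{(\alpha-\beta)^2+\epsilon^2}d\alpha+\int_0^\pi\tfrac{c(\alpha)}{(\alpha+\beta)^2+\epsilon^2}d\alpha$ is uniformly bounded (lemma \ref{boundene}); the two boundary logarithms cancel. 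Without that reflection, the comparison $T_M\le T_M^{ff}+T_M^{gg}$ is formally fine but the error terms are not $\tilde{B.T.}$, and the lemma does not follow. These two missing mechanisms — the Hilbert transform conversion for \eqref{ene1} and the reflection principle for the boundary log — are the substance of the paper's proof.
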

\begin{corollary}\label{3corogardingmain}
With same conditions as in lemma \ref{Garding}, we have
\begin{align}\label{ene102}
    &\int_{0}^{\pi}c(\alpha)f(\alpha)\int_{0}^{2\al}\frac{(\alpha-\beta)\epsilon}{((\alpha-\beta)^2+\epsilon^2)^2}(g(\alpha)-g(\beta))d\beta d\al\\\nonumber
    &\leq \int_{0}^{\pi}\tilde{c}(\alpha)f(\alpha)\int_{0}^{2\al}\frac{1}{(\alpha-\beta)^2+\epsilon^2}(f(\alpha)-f(\beta))d\beta d\alpha\\\nonumber
    &+\int_{0}^{\pi}\tilde{c}(\alpha)g(\alpha)\int_{0}^{2\al}\frac{1}{(\alpha-\beta)^2+\epsilon^2}(g(\alpha)-g(\beta))d\beta d\alpha+\tilde{B.T}, 
\end{align}
and
\begin{align}\label{ene202}
    &\int_{0}^{\pi}c(\alpha)f(\alpha)\int_{0}^{2\al}\frac{1}{(\alpha-\beta)^2+\epsilon^2}(g(\alpha)-g(\beta))d\beta d\alpha\\\nonumber
    &\leq \int_{0}^{\pi}\tilde{c}(\alpha)f(\alpha)\int_{0}^{2\al}\frac{1}{(\alpha-\beta)^2+\epsilon^2}(f(\alpha)-f(\beta))d\beta d\alpha\\\nonumber
    &+\int_{0}^{\pi}\tilde{c}(\alpha)g(\alpha)\int_{0}^{2\al}\frac{1}{(\alpha-\beta)^2+\epsilon^2}(g(\alpha)-g(\beta))d\beta d\alpha+\tilde{B.T},
\end{align}
\end{corollary}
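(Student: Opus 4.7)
The plan is to reduce Corollary \ref{3corogardingmain} to Lemma \ref{Garding} by writing
\begin{equation*}
\int_{0}^{2\alpha} = \int_{0}^{\pi} - \int_{2\alpha}^{\pi}
\end{equation*}
on every inner integral appearing in \eqref{ene102} and \eqref{ene202} (both sides). The $\int_{0}^{\pi}$ pieces assemble into exactly the expressions estimated by \eqref{ene1} and \eqref{ene2} of the lemma, so it suffices to bound the six resulting ``tail'' integrals of the generic form
\begin{equation*}
I[a,u,v] := \int_{0}^{\pi} a(\alpha) u(\alpha) \int_{2\alpha}^{\pi} \frac{v(\alpha)-v(\beta)}{(\alpha-\beta)^2+\epsilon^2}\,d\beta\,d\alpha,
\end{equation*}
with $a\in\{c,\tilde c\}$ and $u,v\in\{f,g\}$, by a constant multiple of $\|f\|_{L^2}^2+\|g\|_{L^2}^2$ that is independent of $\epsilon$. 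These will be absorbed into the boundary term already present in Lemma \ref{Garding}.

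The key geometric observation is that on the region $\beta\geq 2\alpha$ one has $|\alpha-\beta|\geq \max(\alpha,\beta/2)$, hence
\begin{equation*}
(\alpha-\beta)^2 \gtrsim \alpha\beta \gtrsim \alpha(\alpha+\beta),
\end{equation*}
while the hypothesis gives $|a(\alpha)|\lesssim \alpha$. Splitting $v(\alpha)-v(\beta)$ into two pieces, the first piece is controlled by
\begin{equation*}
\left|\int a(\alpha)u(\alpha)v(\alpha)\int_{2\alpha}^{\pi}\frac{d\beta}{(\alpha-\beta)^2+\epsilon^2}\,d\alpha\right|\lesssim \int |u(\alpha)v(\alpha)|\,d\alpha \lesssim \|u\|_{L^2}\|v\|_{L^2},
\end{equation*}
using the trivial bound $\int_{2\alpha}^{\pi}(\alpha-\beta)^{-2}\,d\beta \lesssim 1/\alpha$. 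For the second piece, the geometric observation and the support condition $\supp v\subset[0,\tfrac{\pi}{4}]$ yield $|a(\alpha)|/((\alpha-\beta)^2+\epsilon^2)\lesssim 1/(\alpha+\beta)$, so that the classical Hilbert inequality
\begin{equation*}
\int_0^\infty\!\!\int_0^\infty \frac{|u(\alpha)||v(\beta)|}{\alpha+\beta}\,d\alpha\,d\beta \leq \pi\|u\|_{L^2}\|v\|_{L^2}
\end{equation*}
gives the desired bound. In both estimates the inequality $(\alpha-\beta)^2+\epsilon^2\geq (\alpha-\beta)^2$ is all that is needed, so no $\epsilon$ dependence survives.

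For \eqref{ene102} the identical argument applies after the pointwise comparison
\begin{equation*}
\left|\frac{(\alpha-\beta)\epsilon}{((\alpha-\beta)^2+\epsilon^2)^2}\right|\leq \frac{1}{2((\alpha-\beta)^2+\epsilon^2)},
\end{equation*}
which reduces the LHS tail to the form already handled; the $\int_{0}^{\pi}$ parts are then controlled by \eqref{ene1} of the lemma. The only nontrivial point of the whole argument is the geometric estimate $(\alpha-\beta)^2\gtrsim \alpha(\alpha+\beta)$ on $\{\beta\geq 2\alpha\}$, which is exactly what is needed to upgrade the trivial estimate $|c(\alpha)|\lesssim \alpha$ into Hilbert-integrable decay; everything else is routine.
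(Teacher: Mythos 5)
Your proposal is correct and follows essentially the same route as the paper: decompose $\int_0^{2\alpha}=\int_0^{\pi}-\int_{2\alpha}^{\pi}$, absorb the full-integral pieces via Lemma \ref{Garding}, split the tail integrand into $v(\alpha)$ and $v(\beta)$ contributions, and bound them by the trivial $\int_{2\alpha}^{\pi}(\alpha-\beta)^{-2}d\beta\lesssim\alpha^{-1}$ estimate and Hilbert's inequality respectively, with the pointwise kernel comparison handling the $\epsilon$-weighted case. The paper phrases the tail bound as an $L^2[0,\tfrac{\pi}{4}]$ estimate on $\alpha\int_{2\alpha}^{\pi}\tfrac{|h(\alpha)-h(\beta)|}{(\alpha-\beta)^2+\epsilon^2}d\beta$, but the content and the key geometric inequality $\alpha/(\alpha-\beta)^2\lesssim 1/(\alpha+\beta)$ on $\{\beta\geq 2\alpha\}$ are the same.
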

\begin{proof}
It is enough to show
 \begin{align}\label{farbound2eq0}
\|(\al)\int_{2\alpha}^{\pi}\frac{|h(\alpha)-h(\beta)|}{(\alpha-\beta)^2+\epsilon^2}d\beta\|_{L^2_{\al}[0,\frac{\pi}{4}]}\lesssim\|h(\alpha)\|_{L_{\al}^2[0,\pi]}.
\end{align}
and 
\begin{align}\label{farbound2eq1}
    \|(\al)\int_{2\alpha}^{\pi}\frac{|(h(\alpha)-h(\beta))(\alpha-\beta)\epsilon|}{((\alpha-\beta)^2+\epsilon^2)^2}d\beta\|_{L^2_{\al}[0,\frac{\pi}{4}]}\lesssim\|h(\alpha)\|_{L_{\al}^2[0,\pi]}.
\end{align}

For \eqref{farbound2eq0}, we have  
\begin{align*}
&\quad|(\al)\int_{2\alpha}^{\pi}\frac{|h(\alpha)-h(\beta)|}{(\alpha-\beta)^2+\epsilon^2}d\beta|\\
&\leq |(\al)\int_{\alpha-\pi}^{-\al}\frac{1}{(\beta)^2+\epsilon^2}d\beta| |h(\alpha)|\\
&\quad+|(\al)\int_{2\alpha}^{\pi}\frac{|h(\beta)|}{(\alpha-\beta)^2+\epsilon^2}d\beta|\\
&\lesssim |h(\alpha)|+ Term_2(\alpha).
\end{align*}
Moreover, we have
\begin{align}\label{Hilbertinequalityapply}
    &Term_2(\alpha)=|(\alpha)\int_{2\alpha}^{\pi}\frac{|h(\beta)|}{(\alpha-\beta)^2+\epsilon^2}d\beta|\\\nonumber
    &\lesssim\int_{0}^{\pi}\frac{|h(\beta)|}{(\alpha+\beta)}d\beta.
\end{align}
Hence by Hilbert's Inequality, we have
\[
\|Term_2(\alpha)\|_{L^2_{\al}[0,\frac{\pi}{4}]}\lesssim\|h(\alpha)\|_{L_{\al}^2[0,\pi]}.
\]
Then we have the result.

For \eqref{farbound2eq1}, we have
\[|\frac{(\alpha-\beta)\epsilon}{((\alpha-\beta)^2+\epsilon^2)^2}|\lesssim \frac{1}{(\alpha-\beta)^2+\epsilon^2},
\]
and the same proof holds.
\end{proof}

\begin{corollary}\label{Garding limit}
If we also assume $f,g \in H^{1}$, we have

\begin{align*}
    &\quad \int_{0}^{\pi}c(\alpha)f(\alpha)\frac{\pi}{2}g'(\alpha)d\alpha\\
    &\leq \int_{0}^{\pi}\tilde{c}(\alpha)f(\alpha)p.v.\int_{0}^{2\al}\frac{1}{(\alpha-\beta)^2}(f(\alpha)-f(\beta))d\beta d\alpha\\
    &\quad+\int_{0}^{\pi}\tilde{c}(\alpha)g(\alpha)p.v.\int_{0}^{2\al}\frac{1}{(\alpha-\beta)^2}(g(\alpha)-g(\beta))d\beta d\alpha+\tilde{B.T}, 
\end{align*}
and
\begin{align*}
    &\quad\int_{0}^{\pi}c(\alpha)f(\alpha)p.v.\int_{0}^{2\alpha}\frac{1}{(\alpha-\beta)^2}(g(\alpha)-g(\beta))d\beta d\alpha\\
    &\leq \int_{0}^{\pi}\tilde{c}(\alpha)f(\alpha)p.v.\int_{0}^{2\alpha}\frac{1}{(\alpha-\beta)^2}(f(\alpha)-f(\beta))d\beta d\alpha\\
    &\quad+\int_{0}^{\pi}\tilde{c}(\alpha)g(\alpha)p.v.\int_{0}^{2\alpha}\frac{1}{(\alpha-\beta)^2}(g(\alpha)-g(\beta))d\beta d\alpha+\tilde{B.T}. 
\end{align*}
\end{corollary}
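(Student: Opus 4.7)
The plan is to pass to the limit $\epsilon \to 0^+$ in the two inequalities \eqref{ene102} and \eqref{ene202} of Corollary \ref{3corogardingmain}. Since those inequalities hold for every $\epsilon>0$ with the boundary term $\tilde{B.T}$ bounded uniformly by $\|f\|_{L^2}^2+\|g\|_{L^2}^2$, it suffices to identify the pointwise (or $L^1_\alpha$) limits of each of the three non-boundary terms, check that the limit inequalities depend continuously on $f,g\in H^1$, and invoke a density argument.

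First I would treat the case $f,g\in C^\infty_c([0,\frac{\pi}{4}])$, where Lemma \ref{limitlemme0} applies directly. The two key ingredients are the identities \eqref{3b1g0} and \eqref{3b2g0}: the former says that as $\epsilon\to 0$,
\[
\alpha\cdot\frac{2}{\pi}\int_{0}^{2\alpha}\frac{(g(\alpha)-g(\beta))(\alpha-\beta)\epsilon}{((\alpha-\beta)^2+\epsilon^2)^2}\,d\beta \;\longrightarrow\; \alpha\, g'(\alpha)
\]
in $H^1_\alpha[0,\frac{\pi}{4}]$, while the latter gives the analogous convergence of $\int_{0}^{2\alpha}\frac{g(\alpha)-g(\beta)}{(\alpha-\beta)^2+\epsilon^2}\,d\beta$ to the principal value $\mathrm{p.v.}\int_{0}^{2\alpha}\frac{g(\alpha)-g(\beta)}{(\alpha-\beta)^2}\,d\beta$ in the same space. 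Pairing with $c(\alpha)f(\alpha)\in L^2$ and using $|c(\alpha)|\lesssim\alpha$ to absorb the factor of $\alpha$, the LHS of \eqref{ene102} converges to $\int_0^\pi c(\alpha)f(\alpha)\frac{\pi}{2}g'(\alpha)\,d\alpha$, and the LHS of \eqref{ene202} converges to $\int_0^\pi c(\alpha)f(\alpha)\,\mathrm{p.v.}\int_0^{2\alpha}\frac{g(\alpha)-g(\beta)}{(\alpha-\beta)^2}\,d\beta\,d\alpha$. The two main RHS double integrals converge to their p.v.\ analogues by the same argument applied to the pairs $(\tilde c f,f)$ and $(\tilde c g,g)$. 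The boundary term is bounded uniformly in $\epsilon$, so along any subsequence $\epsilon_k\to 0$ we may extract a further subsequence along which $\tilde{B.T}$ converges to a limit still controlled by $\|f\|_{L^2}^2+\|g\|_{L^2}^2$, which we absorb into the new $\tilde{B.T}$.

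To remove the smoothness assumption, I would approximate $f,g\in H^1$ by $f_n,g_n\in C^\infty_c([0,\frac{\pi}{4}])$ converging in $H^1_\alpha[0,\pi]$; the inequality holds for each $n$ after passing $\epsilon\to 0$. Then, using Corollary \ref{3coro22} (which bounds $\mathrm{p.v.}\int_0^{2\alpha}\frac{h(\alpha)-h(\beta)}{(\alpha-\beta)^2}\,d\beta$ in $L^2_\alpha[0,\frac{\pi}{4}]$ by $\|h\|_{H^1}$) together with $\|c\|_{C^1},\|\tilde c\|_{C^1}\lesssim 1$, each side of the limit inequality is continuous on $H^1\times H^1$, so passing $n\to\infty$ yields the claimed estimate for the original $f,g$. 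The only delicate point is the LHS of the first inequality, where we need $\int c f g'\,d\alpha$ to be the genuine limit of the $\epsilon$-regularized expression (not merely bounded); this is guaranteed by the $O(\epsilon)$ quantitative convergence in \eqref{3b1g0}, which survives pairing with $cf\in L^2$. No term requires a subtler argument than Lebesgue dominated convergence combined with these two preparatory lemmas, so the proof is essentially a careful book-keeping of limits and the main obstacle is merely verifying uniformly-in-$\epsilon$ integrability near the boundaries $\beta=0$ and $\beta=2\alpha$, which is handled exactly as in the proof of Lemma \ref{limitlemme0}.
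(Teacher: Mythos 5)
Your proposal is correct and follows exactly the approach the paper intends: the paper's own proof of this corollary is the single sentence ``We can take the limit of the previous lemma and get the result,'' referring to Corollary \ref{3corogardingmain}, and you have simply spelled out the details — identifying the $\epsilon\to 0$ limits of the three main terms via Lemma \ref{limitlemme0}, noting the uniform bound on $\tilde{B.T}$, and closing with a density argument in $H^1$ using the continuity furnished by Corollary \ref{3coro22}. The bookkeeping (including the $\tfrac{\pi}{2}$ normalization coming from \eqref{3b1g0}) is accurate.
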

\begin{proof}
We can take the limit of the previous lemma and get the result.
\end{proof}
\begin{proof}
Now we start to show the lemma \ref{Garding}. First, we introduce some lemmas.

\begin{lemma}\label{le2}
Let $H_R$ be the Hilbert transform on the real line. We have
\begin{align*}
&\quad \int_{0}^{\pi}c(\alpha)f(\alpha)\int_{0}^{\pi}\frac{(\alpha-\beta)\epsilon}{((\alpha-\beta)^2+\epsilon^2)^2}g(\beta)d\beta d\alpha\\
&=-\frac{1}{2}\int_{0}^{\pi}c(\alpha)f(\alpha)\int_{0}^{\pi}(H_{\mathcal{R}}g(\beta)-H_{\mathcal{R}}g(\alpha))(\frac{1}{((\alpha-\beta)^2+\epsilon^2)}-\frac{2\epsilon^2}{((\alpha-\beta)^2+\epsilon^2)^2})d\beta d\alpha+\tilde{B.T}.
\end{align*}
\end{lemma}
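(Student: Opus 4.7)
The heart of the matter is a pointwise identity on $\mathbb{R}$ relating the two kernels through the Hilbert transform; the lemma then follows by bounding the tail created by restricting $\beta$ to $[0,\pi]$. I extend $f,g$ by zero to $\mathbb{R}$ and set $P_\epsilon(x)=\frac{\epsilon}{x^2+\epsilon^2}$, $Q_\epsilon(x)=\frac{x}{x^2+\epsilon^2}$. A direct computation writes the two kernels as derivatives:
\begin{equation*}
K_1(x):=\frac{x\epsilon}{(x^2+\epsilon^2)^2}=-\tfrac12\,\partial_x P_\epsilon(x),\qquad K_2(x):=\frac{x^2-\epsilon^2}{(x^2+\epsilon^2)^2}=-\partial_x Q_\epsilon(x).
\end{equation*}
Using the classical identity $Q_\epsilon\ast g = P_\epsilon\ast H_{\mathcal{R}}g$ on $L^2(\mathbb{R})$ (verifiable either by a residue/partial-fraction computation of the inner $\gamma$-integral $\int\frac{\epsilon\,d\gamma}{((\alpha-\gamma)^2+\epsilon^2)(\gamma-\beta)}=\frac{\pi(\alpha-\beta)}{(\alpha-\beta)^2+\epsilon^2}$, or by checking the Fourier multipliers $\widehat{P_\epsilon}=\pi e^{-|\xi|\epsilon}$ and $\widehat{Q_\epsilon}=-i\pi\operatorname{sgn}(\xi)e^{-|\xi|\epsilon}$), together with $H_{\mathcal{R}}^2=-I$, I obtain
\begin{equation*}
K_1\ast g = -\tfrac12\partial_\alpha(P_\epsilon\ast g),\qquad K_2\ast H_{\mathcal{R}}g = -\partial_\alpha(Q_\epsilon\ast H_{\mathcal{R}}g) = -\partial_\alpha(P_\epsilon\ast H_{\mathcal{R}}^2 g) = \partial_\alpha(P_\epsilon\ast g),
\end{equation*}
so that $K_1\ast g = -\tfrac12\,K_2\ast H_{\mathcal{R}}g$ pointwise on $\mathbb{R}$.

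Next I symmetrize. Differentiating $\int_{\mathbb{R}}P_\epsilon\,dx=\pi$ with respect to $\epsilon$ yields $\int_{\mathbb{R}}K_2(x)\,dx = 0$, which lets me insert $H_{\mathcal{R}}g(\alpha)$ for free:
\begin{equation*}
\int_{\mathbb{R}}K_1(\alpha-\beta)g(\beta)\,d\beta = -\tfrac12\int_{\mathbb{R}}K_2(\alpha-\beta)\bigl(H_{\mathcal{R}}g(\beta)-H_{\mathcal{R}}g(\alpha)\bigr)\,d\beta.
\end{equation*}
Multiplying by $c(\alpha)f(\alpha)$ and integrating in $\alpha\in[0,\pi]$, the left side matches the lemma's left side because $\supp g\subset[0,\pi/4]\subset[0,\pi]$. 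Splitting the right-hand $\beta$-integral as $\int_0^\pi+\int_{\mathbb{R}\setminus[0,\pi]}$ produces the displayed right-hand side of the lemma plus a remainder
\begin{equation*}
\tilde{B.T.} = -\tfrac12\int_0^\pi c(\alpha)f(\alpha)\int_{\mathbb{R}\setminus[0,\pi]}K_2(\alpha-\beta)\bigl(H_{\mathcal{R}}g(\beta)-H_{\mathcal{R}}g(\alpha)\bigr)\,d\beta\,d\alpha.
\end{equation*}

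Finally, the bound on $\tilde{B.T.}$ is routine. For $\alpha\in\supp c\subset[0,\pi/4]$ and $\beta\notin[0,\pi]$ one has $|\alpha-\beta|\geq 3\pi/4$, hence $|K_2(\alpha-\beta)|\leq \frac{1}{(\alpha-\beta)^2+\epsilon^2}\lesssim 1$ uniformly in $\epsilon$, and $\int_{|\alpha-\beta|\geq 3\pi/4}|K_2(\alpha-\beta)|^{2}\,d\beta\lesssim 1$. Cauchy--Schwarz in $\beta$ together with $L^2$-boundedness of $H_{\mathcal{R}}$ control the $H_{\mathcal{R}}g(\beta)$ piece, while the $H_{\mathcal{R}}g(\alpha)$ piece is handled by $\int|K_2|\,d\beta\lesssim 1$ and Cauchy--Schwarz in $\alpha$, giving $|\tilde{B.T.}|\lesssim \|f\|_{L^2}\|H_{\mathcal{R}}g\|_{L^2}\lesssim \|f\|_{L^2}^2+\|g\|_{L^2}^2$, with implicit constant independent of $\epsilon$. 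The only delicate point I anticipate is justifying the differentiation-under-the-integral and $H_{\mathcal{R}}^2=-I$ manipulations for $g$ merely in $L^2$; both are standard since $P_\epsilon, Q_\epsilon\in L^1\cap L^\infty$ for each fixed $\epsilon>0$ and the above identities reduce to multiplier statements on the Fourier side.
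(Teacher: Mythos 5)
Your main identity is correct and in fact the same one the paper uses: both routes come down to $H_{\mathcal R}\bigl(\tfrac{x\epsilon}{(x^2+\epsilon^2)^2}\bigr)=\tfrac12\tfrac{x^2-\epsilon^2}{(x^2+\epsilon^2)^2}$, applied either via Poisson/conjugate-Poisson kernels (you) or via duality of $H_{\mathcal R}$ (the paper), and then $\int_{\mathbb R}K_2=0$ to insert $H_{\mathcal R}g(\alpha)$. The problem is your tail estimate, and it is a real gap, not a cosmetic one.

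You claim that for $\alpha\in\supp c\subset[0,\pi/4]$ and $\beta\notin[0,\pi]$ one has $|\alpha-\beta|\geq 3\pi/4$. That is only true for $\beta\geq\pi$. For $\beta<0$ the distance is $\alpha+|\beta|$, which is arbitrarily small as $\alpha,-\beta\to 0$. Consequently the bounds you invoke, $|K_2(\alpha-\beta)|\lesssim 1$, $\int_{\mathbb R\setminus[0,\pi]}|K_2(\alpha-\beta)|\,d\beta\lesssim 1$, and $\int_{\mathbb R\setminus[0,\pi]}|K_2(\alpha-\beta)|^2\,d\beta\lesssim 1$, are \emph{not} uniform in $\epsilon$: near the corner $\alpha=\beta=0$ one only has $|K_2|\leq\frac{1}{(\alpha-\beta)^2+\epsilon^2}\sim\epsilon^{-2}$, and $\int_{\mathbb R}|K_2|\sim\epsilon^{-1}$. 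Equivalently, the $H_{\mathcal R}g(\alpha)$ part of your remainder is $-\tfrac12\,c(\alpha)f(\alpha)H_{\mathcal R}g(\alpha)\cdot\bigl(-\int_0^\pi K_2(\alpha-\beta)d\beta\bigr)$, and $\int_0^\pi K_2(\alpha-\beta)d\beta = \tfrac{\alpha-\pi}{(\alpha-\pi)^2+\epsilon^2}-\tfrac{\alpha}{\alpha^2+\epsilon^2}$ blows up like $\epsilon^{-1}$ at $\alpha\sim\epsilon$; it is only after multiplication by $c(\alpha)$, using the \emph{unused} hypothesis $|c(\alpha)|\lesssim\alpha$, that this becomes uniformly bounded.

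To close the gap you must use $|c(\alpha)|\lesssim\alpha$, exactly as the paper does: for $\beta<0$ one has $\bigl|c(\alpha)K_2(\alpha-\beta)\bigr|\lesssim\frac{\alpha}{(\alpha+|\beta|)^2}\lesssim\frac{1}{\alpha+|\beta|}$, a Hilbert-type kernel, and Hilbert's inequality gives the $L^2_\alpha\to L^2_\beta$ bound uniformly in $\epsilon$; similarly $|c(\alpha)|\cdot\bigl|\tfrac{\alpha}{\alpha^2+\epsilon^2}\bigr|\lesssim 1$ controls the $H_{\mathcal R}g(\alpha)$ piece. The $\beta\geq\pi$ tail is fine as you wrote it, since there $|\alpha-\beta|\geq 3\pi/4$ does hold. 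Without the extra factor of $\alpha$ from $c$, the estimate simply fails.
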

\begin{proof}
Since 
\begin{align}\label{Hilbert inequality}
H_{\mathcal{R}}(\frac{\epsilon}{\beta^2+\epsilon^2})=\frac{\beta}{\beta^2+\epsilon^2},
\end{align}
By changing the order of the Hilbert transform and the derivative, we have
\begin{equation}\label{newhilbertderi}
H_{\mathcal{R}}(\frac{\beta \epsilon}{(\beta^2+\epsilon^2)^2})=\frac{1}{2}\frac{\beta^2-\epsilon^2}{(\beta^2+\epsilon^2)^2}=\frac{1}{2}[\frac{1}{\beta^2+\epsilon^2}-\frac{2\epsilon^2}{(\beta^2+\epsilon^2)^2}].\end{equation}

Now we extend $g(\beta)$ to be a function $g$ on $\mathbb{R}$ with $g(\beta)=0$ on $[0,\pi]^c$. By the property of Hilbert transform, we have 
\begin{align}\label{changeintegrand1}
   &\int_{0}^{\pi}c(\alpha)f(\alpha)\int_{0}^{\pi}\frac{(\alpha-\beta)\epsilon}{((\alpha-\beta)^2+\epsilon^2)^2}g(\beta)d\beta d\alpha\\\nonumber
   =&\int_{0}^{\pi}c(\alpha)f(\alpha)\int_{-\infty}^{\infty}\frac{(\alpha-\beta)\epsilon}{((\alpha-\beta)^2+\epsilon^2)^2}g(\beta)d\beta d\alpha\\\nonumber
   =&-\int_{0}^{\pi}c(\alpha)f(\alpha)\int_{-\infty}^{\infty}\frac{1}{2}[\frac{1}{(\alpha-\beta)^2+\epsilon^2}-\frac{2\epsilon^2}{((\alpha-\beta)^2+\epsilon^2)^2}]H_{\mathcal{R}}g(\beta)d\beta d\alpha
\end{align}
Now we claim
  \begin{align}\label{IE4}
  &\int_{0}^{\pi}c(\alpha)f(\alpha)\int_{-\infty}^{\infty}[\frac{1}{(\alpha-\beta)^2+\epsilon^2}-\frac{2\epsilon^2}{((\alpha-\beta)^2+\epsilon^2)^2}]H_{\mathcal{R}}g(\beta)d\beta d\alpha\\\nonumber
  =&\int_{0}^{\pi}c(\alpha)f(\alpha)\int_{0}^{\pi}[\frac{1}{(\alpha-\beta)^2+\epsilon^2}-\frac{2\epsilon^2}{((\alpha-\beta)^2+\epsilon^2)^2}]H_{\mathcal{R}}g(\beta)d\beta d\alpha+\tilde{B.T}.
  \end{align}
Let us verify it.  Since $|c(\alpha)|\lesssim |\alpha|$, we can use the Hilbert's Inequality and have 
 \begin{align*}
     \|f(\alpha)c(\alpha)\int_{-\infty}^{0}\frac{1}{(\alpha-\beta)^2+\epsilon^2}H_{\mathcal{R}}g(\beta)d\beta\|_{L^1[0,\pi]}&\lesssim\|\int_{0}^{\infty}\frac{1}{\alpha+\beta}|H_{\mathcal{R}}g(-\beta)|d\beta\|_{L_{\al}^2[0,\pi]} \|f\|_{L_{\al}^2[0,\pi]}\\
     &\lesssim \|H_{\mathcal{R}}(g)\|_{L^2[0,\infty)}\|f\|_{L_{\al}^2[0,\pi]}=\tilde{B.T}, \end{align*}
     and
      \begin{align*}
    \|f(\alpha)c(\alpha)\int_{-\infty}^{0}\frac{2\epsilon^2}{((\alpha-\beta)^2+\epsilon^2)^2}H_{\mathcal{R}}g(\beta)d\beta\|_{L^1[0,\pi]}&\lesssim\|\int_{0}^{\infty}\frac{1}{\alpha+\beta}|H_{\mathcal{R}}g(-\beta)|d\beta\|_{L_{\al}^2[0,\pi]} \|f\|_{L_{\al}^2[0,\pi]}\\
     &\lesssim \|H_{\mathcal{R}}(g)\|_{L^2[0,\infty)}\|f\|_{L_{\al}^2[0,\pi]}=\tilde{B.T}. \end{align*}
 For the integral from $
\pi$ to $\infty$, we use $\supp{c(\alpha)}\subset[0,\frac{\pi}{4}]$ and we have 
\begin{align*}
&\quad\|\int_{\pi}^{\infty}\frac{2\epsilon^2}{((\alpha-\beta)^2+\epsilon^2)^2}H_{\mathcal{R}}g(\beta)d\beta\|_{L_{\alpha}^{\infty}[0,\frac{\pi}{4}]}+\|\int_{\pi}^{\infty}\frac{1}{(\alpha-\beta)^2+\epsilon^2}H_{\mathcal{R}}g(\beta)d\beta\|_{L_{\alpha}^{\infty}[0,\frac{\pi}{4}]}\\
&\lesssim\|\int_{\pi}^{\infty}\frac{1}{(\beta-\frac{\pi}{4})^2}|H_{\mathcal{R}}g(\beta)|d\beta\|_{L_{\alpha}^{\infty}[0,\frac{\pi}{4}]}\lesssim\|H_{\mathcal{R}}(g)\|_{L^2(0,\infty)}.
\end{align*}
Then \eqref{IE4} follows.
Moreover
\begin{equation}\label{integralcancel}
    \int_{0}^{\pi}(\frac{1}{(\alpha-\beta)^2+\epsilon^2}-\frac{2\epsilon^2}{(\alpha-\beta)^2+\epsilon^2})d\beta=\int_{0}^{\pi}\frac{d}{d\beta}\frac{\alpha-\beta}{(\alpha-\beta)^2+\epsilon^2}d\beta=\frac{\alpha-\pi}{(\alpha-\pi)^2+\epsilon^2}-\frac{\alpha}{\alpha^2+\epsilon^2}.
\end{equation}
Since $|\frac{\alpha-\pi}{(\alpha-\pi)^2+\epsilon^2}-\frac{\alpha}{\alpha^2+\epsilon^2}|\cdot|c(\alpha)|\lesssim 1$ on $[0,\frac{\pi}{4}]$, we have
\begin{align}\label{IE5}
&\int_{0}^{\pi}c(\alpha)f(\alpha)\int_{0}^{\pi}[\frac{1}{(\alpha-\beta)^2+\epsilon^2}-\frac{2\epsilon^2}{((\alpha-\beta)^2+\epsilon^2)^2}]H_{\mathcal{R}}g(\beta)d\beta d\alpha\\\nonumber
=&\int_{0}^{\pi}c(\alpha)f(\alpha)\int_{0}^{\pi}[\frac{1}{(\alpha-\beta)^2+\epsilon^2}-\frac{2\epsilon^2}{((\alpha-\beta)^2+\epsilon^2)^2}](H_{\mathcal{R}}g(\beta)-H_{\mathcal{R}}g(\alpha))d\beta d\alpha+\tilde{B.T}.\\\nonumber
\end{align}
Combining \eqref{IE5}, \eqref{IE4} and \eqref{changeintegrand1}, we can finish the lemma.
\end{proof}
\begin{lemma}\label{le3}
We have
\begin{align*}
    &\quad\int_{0}^{\pi}\int_{0}^{\pi}c(\alpha)f(\alpha)(g(\alpha)-g(\beta))\frac{1}{(\alpha-\beta)^2+\epsilon^2}d\beta d\alpha\\
    &=\frac{1}{2}\int_{0}^{\pi}\int_{0}^{\pi}c(\alpha)(f(\alpha)-f(\beta))(g(\alpha)-g(\beta))(\frac{1}{(\alpha-\beta)^2+\epsilon^2}-\frac{1}{(\alpha+\beta)^2+\epsilon^2})d\beta d\alpha+\tilde{B.T},
\end{align*}
and
\begin{align*}
&\quad\int_{0}^{\pi}\int_{0}^{\pi}c(\alpha)f(\alpha)(g(\alpha)-g(\beta))\frac{\epsilon^2}{((\alpha-\beta)^2+\epsilon^2)^2}d\beta d\alpha\\
    &=\frac{1}{2}\int_{0}^{\pi}\int_{0}^{\pi}c(\alpha)(f(\alpha)-f(\beta))(g(\alpha)-g(\beta))(\frac{\epsilon^2}{((\alpha-\beta)^2+\epsilon^2)^2}-\frac{\epsilon^2}{((\alpha+\beta)^2+\epsilon^2)^2})d\beta d\alpha+\tilde{B.T}.
\end{align*}
We remark that we do not need the $\supp f, \supp g$ condition here.
\end{lemma}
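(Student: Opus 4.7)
The plan is a symmetrization argument; I describe the first identity in detail, the second being analogous with the kernel $\frac{\epsilon^2}{(x^2+\epsilon^2)^2}$ in place of $K_\epsilon(x):=\frac{1}{x^2+\epsilon^2}$. Writing the left-hand side as $I:=\int_0^\pi\!\int_0^\pi c(\alpha)f(\alpha)(g(\alpha)-g(\beta))K_\epsilon(\alpha-\beta)\,d\beta\,d\alpha$, I would first add and subtract the mirror kernel to get
\[
I = I_{-} + \int\!\!\int c(\alpha)f(\alpha)(g(\alpha)-g(\beta))K_\epsilon(\alpha+\beta),
\]
with $I_{-}:=\int\!\!\int c(\alpha)f(\alpha)(g(\alpha)-g(\beta))[K_\epsilon(\alpha-\beta)-K_\epsilon(\alpha+\beta)]$. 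Since the difference kernel is symmetric in $(\alpha,\beta)$, swapping $\alpha\leftrightarrow\beta$ in $I_{-}$ and averaging yields
\[
I_{-}=\tfrac{1}{2}\int\!\!\int\bigl[c(\alpha)(f(\alpha)-f(\beta))+(c(\alpha)-c(\beta))f(\beta)\bigr](g(\alpha)-g(\beta))\bigl[K_\epsilon(\alpha-\beta)-K_\epsilon(\alpha+\beta)\bigr],
\]
after using the algebraic identity $c(\alpha)f(\alpha)-c(\beta)f(\beta)=c(\alpha)(f(\alpha)-f(\beta))+(c(\alpha)-c(\beta))f(\beta)$. The first summand on the right is exactly the main term of the claimed identity; it remains to show that the second summand (call it $R_1$) together with the leftover $R_2:=\int\!\!\int c(\alpha)f(\alpha)(g(\alpha)-g(\beta))K_\epsilon(\alpha+\beta)$ are bounded by $\|f\|_{L^2}^2+\|g\|_{L^2}^2$ uniformly in $\epsilon$.

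For the $K_\epsilon(\alpha-\beta)$ part of $R_1$, I would factor $c(\alpha)-c(\beta)=\frac{c(\alpha)-c(\beta)}{\alpha-\beta}\,(\alpha-\beta)$; the first factor is a bounded multiplier (since $c\in C^2$), and multiplication by $(\alpha-\beta)$ converts $K_\epsilon(\alpha-\beta)$ into the truncated Hilbert kernel $\frac{\alpha-\beta}{(\alpha-\beta)^2+\epsilon^2}$, which is $L^2\to L^2$ bounded uniformly in $\epsilon$; this handles the $K_\epsilon(\alpha-\beta)$-contribution via a Calder\'on-commutator type argument after expanding $(g(\alpha)-g(\beta))$ into four pointwise products. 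For the $K_\epsilon(\alpha+\beta)$-contributions in both $R_1$ and $R_2$, the hypothesis $|c(\alpha)|\lesssim\alpha$ combined with $\alpha,\beta\ge 0$ gives the pointwise bound $c(\alpha)K_\epsilon(\alpha+\beta)\lesssim \frac{1}{\alpha+\beta}$, after which the off-diagonal pieces $\int\!\!\int \frac{f(\beta)g(\alpha)}{\alpha+\beta}\,d\beta\,d\alpha$ are controlled by Hilbert's classical integral inequality $\int_0^\infty\!\int_0^\infty \frac{u(\alpha)v(\beta)}{\alpha+\beta}\,d\alpha\,d\beta\le \pi\|u\|_{L^2}\|v\|_{L^2}$, independently of $\epsilon$.

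The main obstacle will be the diagonal contribution of the form $\int f(\beta)g(\beta)\bigl[\int(c(\alpha)-c(\beta))K_\epsilon(\alpha+\beta)\,d\alpha\bigr]d\beta$, whose inner bracket is only logarithmic in $1/\beta$ near $\beta=0$ and therefore not an $L^\infty$ weight. To eliminate this log I would exploit the factorization
\[
K_\epsilon(\alpha-\beta)-K_\epsilon(\alpha+\beta)=\frac{4\alpha\beta}{[(\alpha-\beta)^2+\epsilon^2][(\alpha+\beta)^2+\epsilon^2]},
\]
which vanishes on $\{\alpha=0\}\cup\{\beta=0\}$, by regrouping all remaining $K_\epsilon(\alpha+\beta)$ pieces so that they appear only inside this difference; the resulting kernel is then genuinely bounded by $O(1/(\alpha+\beta))$ without the boundary log, and Hilbert's integral inequality closes the estimate. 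The second identity follows by the same scheme, with $K_\epsilon$ replaced by $\frac{\epsilon^2}{(x^2+\epsilon^2)^2}$ and using the analogous difference formula (which factors with the same $\alpha\beta$ prefactor) together with the Hilbert-derivative identity $\frac{\beta\epsilon}{(\beta^2+\epsilon^2)^2}=\frac{1}{2}\partial_\beta H_\epsilon(\beta)$ already employed in Lemma~\ref{le2}.
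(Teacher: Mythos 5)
Your opening moves match the paper's: symmetrize $\alpha\leftrightarrow\beta$, use $c(\alpha)f(\alpha)-c(\beta)f(\beta)=c(\alpha)(f(\alpha)-f(\beta))+(c(\alpha)-c(\beta))f(\beta)$ to peel off the main term, and recognize that the residue must be controlled with Hilbert's inequality and a log-cancellation argument. The off-diagonal pieces (pairing against $g(\alpha)$ or $g(\beta)$ with a genuine $1/(\alpha+\beta)$ kernel) are handled as you say, and that part agrees with the paper's $J_2,J_3,J_4,J_5$.

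There is however a genuine gap in how you propose to close the diagonal contribution $\int_0^\pi f(\beta)g(\beta)\bigl[\int_0^\pi(c(\alpha)-c(\beta))[K_\epsilon(\alpha-\beta)-K_\epsilon(\alpha+\beta)]\,d\alpha\bigr]d\beta$. You assert that, thanks to the factorization $K_\epsilon(\alpha-\beta)-K_\epsilon(\alpha+\beta)=\frac{4\alpha\beta}{[(\alpha-\beta)^2+\epsilon^2][(\alpha+\beta)^2+\epsilon^2]}$, the resulting kernel is \emph{pointwise} $O(1/(\alpha+\beta))$ and Hilbert's inequality closes the estimate. That pointwise bound is false: since $4\alpha\beta\le(\alpha+\beta)^2$, the factorization only gives $K_\epsilon(\alpha-\beta)-K_\epsilon(\alpha+\beta)\lesssim \frac{1}{(\alpha-\beta)^2+\epsilon^2}$, i.e.\ the difference kernel is still as singular as $K_\epsilon(\alpha-\beta)$ on the diagonal, and $|c(\alpha)-c(\beta)|\,[K_\epsilon(\alpha-\beta)-K_\epsilon(\alpha+\beta)]\lesssim\frac{1}{|\alpha-\beta|+\epsilon}$, whose $\alpha$-integral is still $\log(1/\epsilon)$. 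No size bound will do here; the uniform-in-$\epsilon$ bound on the inner $\alpha$-integral requires a genuine \emph{cancellation} argument (Taylor-expand $c(\alpha)-c(\beta)$, exploit the odd PV, and change variables so that the logs from $K_\epsilon(\alpha-\beta)$ and from the reflected piece cancel). That is exactly what the paper's Lemma~\ref{boundene} (and its companion Lemma~\ref{newlebound} for the squared kernel) supplies, and it is the one non-routine step your plan is missing. Relatedly, your earlier statement that the Calder\'on-commutator boundedness "handles the $K_\epsilon(\alpha-\beta)$-contribution" is only true for the piece paired against $g(\alpha)$; commutator $L^2$-boundedness gives $\|T_\epsilon f\|_{L^2}\lesssim\|f\|_{L^2}$, which says nothing about the diagonal pairing $\int f(\beta)g(\beta)\,(T_\epsilon\mathbf 1)(\beta)\,d\beta$, where $T_\epsilon\mathbf 1$ is precisely the quantity with the logarithmic singularity.
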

\begin{proof}
For the first inequality, we interchange $\alpha$ and $\beta$ and have,
\begin{align*}
    &\int_{0}^{\pi}\int_{0}^{\pi}c(\alpha)f(\alpha)(g(\alpha)-g(\beta))\frac{1}{(\alpha-\beta)^2+\epsilon^2}d\beta d\alpha\\
    =&\frac{1}{2}\int_{0}^{\pi}\int_{0}^{\pi}c(\alpha)(f(\alpha)-f(\beta))(g(\alpha)-g(\beta))(\frac{1}{(\alpha-\beta)^2+\epsilon^2}-\frac{1}{(\alpha+\beta)^2+\epsilon^2})d\beta d\alpha\\
    &+\frac{1}{2}\int_{0}^{\pi}\int_{0}^{\pi}(c(\alpha)-c(\beta))f(\beta)(g(\alpha)-g(\beta))\frac{1}{(\alpha-\beta)^2+\epsilon^2}d\beta d\alpha\\
    &+\frac{1}{2}\int_{0}^{\pi}\int_{0}^{\pi}c(\alpha)(f(\alpha)-f(\beta))(g(\alpha)-g(\beta))(\frac{1}{(\alpha+\beta)^2+\epsilon^2})d\beta d\alpha\\
    =&I_1+I_2+I_3.
\end{align*}
We now show $I_2 +I_3$ are good terms.
\begin{align*}
    &I_2+I_3\\
    =&\frac{1}{2}\int_{0}^{\pi}\int_{0}^{\pi}(c(\alpha)-c(\beta))f(\beta)(g(\alpha)-g(\beta))\frac{1}{(\alpha-\beta)^2+\epsilon^2}d\beta d\alpha\\
    &+\frac{1}{2}\int_{0}^{\pi}\int_{0}^{\pi}c(\alpha)(f(\alpha)-f(\beta))(g(\alpha)-g(\beta))(\frac{1}{(\alpha+\beta)^2+\epsilon^2})d\beta d\alpha\\
    =&\frac{1}{2}\int_{0}^{\pi}(-\int_{0}^{\pi}\frac{(c(\alpha)-c(\beta))}{(\alpha-\beta)^2+\epsilon^2}d\alpha+\int_{0}^{\pi}\frac{c(\alpha)}{(\alpha+\beta)^2+\epsilon^2}d\alpha)f(\beta)g(\beta)d\beta \\
    &+\frac{1}{2}\int_{0}^{\pi}\int_0^{\pi}\frac{(c(\alpha)-c(\beta))g(\alpha)}{(\alpha-\beta)^2+\epsilon^2}d\alpha f(\beta) d\beta\\
    &+\frac{1}{2}\int_{0}^{\pi}\int_0^{\pi}\frac{1}{(\alpha+\beta)^2+\epsilon^2}d\beta c(\alpha)f(\alpha) g(\alpha) d\alpha\\
    &-\frac{1}{2}\int_{0}^{\pi}\int_0^{\pi}\frac{(c(\alpha))f(\beta)}{(\alpha+\beta)^2+\epsilon^2}d\beta g(\alpha) d\alpha\\
    &-\frac{1}{2}\int_{0}^{\pi}\int_0^{\pi}\frac{(c(\alpha))g(\beta)}{(\alpha+\beta)^2+\epsilon^2}d\beta f(\alpha) d\alpha\\
    =& J_1+J_2+J_3+J_4+J_5.
\end{align*}
For the first term, from lemma \ref{boundene}, we have that $-\int_{0}^{\pi}\frac{(c(\alpha)-c(\beta))}{(\alpha-\beta)^2+\epsilon^2}d\alpha+\int_{0}^{\pi}\frac{c(\alpha)}{(\alpha+\beta)^2+\epsilon^2}d\alpha$ is bounded. Then $J_1=\tilde{B.T}.$ 

For $J_2$, if we again extend $g(\beta)$ to be a function $g$ on $\mathbb{R}$ with $g(\beta)=0$ on $[0,\pi]^c$, from \eqref{Hilbert inequality},  we have 
\begin{align*}
    &\int_0^{\pi}\frac{(c'(\alpha)(\alpha-\beta))g(\alpha)}{(\alpha-\beta)^2+\epsilon^2}d\alpha\\
    &=\int_{-\infty}^{\infty}\frac{(c'(\alpha)(\alpha-\beta))g(\alpha)}{(\alpha-\beta)^2+\epsilon^2}d\alpha\\
    &=\int_{-\infty}^{\infty}\frac{\epsilon}{(\alpha-\beta)^2+\epsilon^2}H_{\mathcal{R}}(c'(\alpha)g(\alpha))d\alpha.
\end{align*}
Hence 
\begin{align*}
    &\|\int_0^{\pi}\frac{(c(\alpha)-c(\beta))g(\alpha)}{(\alpha-\beta)^2+\epsilon^2}d\alpha\|_{L^2}\lesssim\|g\|_{L^2}+\|\int_0^{\pi}\frac{(c(\alpha)-c(\beta)-c'(\alpha)(\alpha-\beta))g(\alpha)}{(\alpha-\beta)^2+\epsilon^2}d\alpha\|_{L^2}\\
    &\lesssim \|g\|_{L^2}.
\end{align*}
Then $J_2$ can be controlled by Holder inequality.

$J_3$ is bounded because the boundness of $\int_0^{\pi}\frac{1}{(\alpha+\beta)^2+\epsilon^2}d\beta c(\alpha)$, and $J_4, J_5$ can be controlled by the Hilbert's inequality and $|\frac{c(\alpha)}{(\alpha+\beta)^2+\epsilon^2}|\lesssim |\frac{1}{\alpha+\beta}|$. 

For the second inequality, we have
\begin{align*}
    &\int_{0}^{\pi}\int_{0}^{\pi}c(\alpha)f(\alpha)(g(\alpha)-g(\beta))\frac{\epsilon^2}{((\alpha-\beta)^2+\epsilon^2)^2}d\beta d\alpha\\
    =&\frac{1}{2}\int_{0}^{\pi}\int_{0}^{\pi}c(\alpha)(f(\alpha)-f(\beta))(g(\alpha)-g(\beta))(\frac{\epsilon^2}{((\alpha-\beta)^2+\epsilon^2)^2}-\frac{\epsilon^2}{((\alpha+\beta)^2+\epsilon^2)^2})d\beta d\alpha\\
    &+\frac{1}{2}\int_{0}^{\pi}\int_{0}^{\pi}(c(\alpha)-c(\beta))f(\beta)(g(\alpha)-g(\beta))\frac{\epsilon^2}{((\alpha-\beta)^2+\epsilon^2)^2}d\beta d\alpha\\
    &+\frac{1}{2}\int_{0}^{\pi}\int_{0}^{\pi}c(\alpha)(f(\alpha)-f(\beta))(g(\alpha)-g(\beta))\frac{\epsilon^2}{((\alpha+\beta)^2+\epsilon^2)^2}d\beta d\alpha\\
    =&\tilde{I}_1+\tilde{I}_2+\tilde{I}_3.
\end{align*}
Similarly,
 \begin{align*}
     &\tilde{I_2}+\tilde{I_3}\\
    =&\frac{1}{2}\int_{0}^{\pi}\int_{0}^{\pi}(c(\alpha)-c(\beta))f(\beta)(g(\alpha)-g(\beta))\frac{\epsilon^2}{((\alpha-\beta)^2+\epsilon^2)^2}d\beta d\alpha\\
    &+\frac{1}{2}\int_{0}^{\pi}\int_{0}^{\pi}c(\alpha)(f(\alpha)-f(\beta))(g(\alpha)-g(\beta))\frac{\epsilon^2}{((\alpha+\beta)^2+\epsilon^2)^2}d\beta d\alpha\\
    =&\frac{1}{2}\int_{0}^{\pi}(-\int_{0}^{\pi}\frac{(c(\alpha)-c(\beta))(\epsilon^2)}{((\alpha-\beta)^2+\epsilon^2)^2}d\alpha+\int_{0}^{\pi}\frac{c(\alpha)\epsilon^2}{((\alpha+\beta)^2+\epsilon^2)^2}d\alpha)f(\beta)g(\beta)d\beta \\
    &+\frac{1}{2}\int_{0}^{\pi}\int_0^{\pi}\frac{(c(\alpha)-c(\beta))g(\alpha)\epsilon^2}{((\alpha-\beta)^2+\epsilon^2)^2}d\alpha f(\beta) d\beta\\
    &+\frac{1}{2}\int_{0}^{\pi}\int_0^{\pi}\frac{\epsilon^2}{((\alpha+\beta)^2+\epsilon^2)^2}d\beta c(\alpha)f(\alpha) g(\alpha) d\alpha\\
    &-\frac{1}{2}\int_{0}^{\pi}\int_0^{\pi}\frac{(c(\alpha))f(\beta)\epsilon^2}{((\alpha+\beta)^2+\epsilon^2)^2}d\beta g(\alpha) d\alpha\\
    &-\frac{1}{2}\int_{0}^{\pi}\int_0^{\pi}\frac{(c(\alpha))g(\beta)\epsilon^2}{((\alpha+\beta)^2+\epsilon^2)^2}d\beta f(\alpha) d\alpha\\
    =& \tilde{J}_1+\tilde{J}_2+\tilde{J}_3+\tilde{J}_4+\tilde{J}_5.
\end{align*}
From lemma \ref{newlebound}, we have $\tilde{J}_1=\tilde{B.T}.$
For $\tilde{J}_2$, we have
\[
|\int_0^{\pi}\frac{(c(\alpha)-c(\beta))g(\alpha)\epsilon^2}{((\alpha-\beta)^2+\epsilon^2)^2}d\alpha|\lesssim |\int_0^{\pi}\frac{\epsilon |g(\alpha)|}{(\alpha-\beta)^2+\epsilon^2}d\alpha|.
\]
Then $\tilde{J}_2=\tilde{B.T}.$
$\tilde{J}_3$, $\tilde{J}_4$ and $\tilde{J}_5$ can be controlled in the same way as $J_3$, $J_4$ and $J_5$ by using 
\[
|\frac{\epsilon^2}{(\alpha+\beta)^2+\epsilon^2}|\lesssim 1.
\]
\end{proof}
\begin{lemma}\label{boundene}
We have $\sup_{0\leq \beta\leq \pi}|-\int_{0}^{\pi}\frac{c(\alpha)-c(\beta)}{(\alpha-\beta)^2+\epsilon^2}d\alpha+\int_{0}^{\pi}\frac{c(\alpha)}{(\alpha+\beta)^2+\epsilon^2}d\alpha|\lesssim 1$.
\end{lemma}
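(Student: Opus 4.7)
The plan is to compute the two integrals sufficiently explicitly to isolate the logarithmic/arctangent parts that carry the $\epsilon$-dependence, and then show that after combining them the coefficients in front of any potentially divergent piece vanish at the right rate. Write $E(\beta,\epsilon)$ for the expression inside the absolute value and split $E = E_1 + E_2$ with
\[ E_1 = -\int_0^\pi \frac{c(\alpha)-c(\beta)}{(\alpha-\beta)^2+\epsilon^2}\,d\alpha,\qquad E_2 = \int_0^\pi \frac{c(\alpha)}{(\alpha+\beta)^2+\epsilon^2}\,d\alpha. \]
Notice the useful sanity check that $E(0,\epsilon)\equiv 0$: any workable bound must reveal this cancellation. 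Since for $\epsilon\geq 1$ the trivial estimate $\|\cdot\|_{L^\infty}\leq 1/\epsilon^2$ gives the result, I will assume $\epsilon\leq 1$ throughout.

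For $E_1$, I would Taylor-expand $c$ around $\beta$: since $c\in C^2[0,\pi]$, write $c(\alpha)-c(\beta)=c'(\beta)(\alpha-\beta)+R_1(\alpha,\beta)$ with $|R_1|\lesssim (\alpha-\beta)^2$. The $R_1$ contribution is trivially $O(1)$ because the integrand is uniformly bounded by $\|c''\|_\infty$. The main term is handled by the antiderivative $\frac{1}{2}\ln((\alpha-\beta)^2+\epsilon^2)$, giving
\[ E_1 = -\frac{c'(\beta)}{2}\,\ln\!\frac{(\pi-\beta)^2+\epsilon^2}{\beta^2+\epsilon^2}+O(1). \]
For $E_2$, since $c(0)=0$ (from $|c(\alpha)|\lesssim \alpha$), I would expand around the origin: $c(\alpha)=c'(0)\alpha+R_2(\alpha)$ with $|R_2|\lesssim \alpha^2$. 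The $R_2$ contribution is $O(1)$ because $\alpha^2/((\alpha+\beta)^2+\epsilon^2)\leq 1$. The main term is computed by the substitution $u=\alpha+\beta$, yielding
\[ E_2 = \frac{c'(0)}{2}\,\ln\!\frac{(\pi+\beta)^2+\epsilon^2}{\beta^2+\epsilon^2}-\frac{c'(0)\beta}{\epsilon}\Bigl[\arctan\tfrac{\pi+\beta}{\epsilon}-\arctan\tfrac{\beta}{\epsilon}\Bigr]+O(1). \]

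Summing $E_1+E_2$, the $\ln((\pi\pm\beta)^2+\epsilon^2)$ pieces are uniformly bounded on $\beta\in[0,\pi]$, $\epsilon\leq 1$, and the two coefficients of $\ln(\beta^2+\epsilon^2)$ combine into $\tfrac{1}{2}(c'(\beta)-c'(0))\ln(\beta^2+\epsilon^2)$. Since $|c'(\beta)-c'(0)|\lesssim \beta$, this reduces to showing $\beta|\ln(\beta^2+\epsilon^2)|=O(1)$ on $[0,\pi]\times(0,1]$, which follows at once by splitting $\beta\leq\epsilon$ vs $\beta\geq\epsilon$ and using $\epsilon|\ln\epsilon|, \beta|\ln\beta|\in L^\infty$. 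The only remaining term is the arctangent piece in $E_2$; using $\arctan x-\arctan y=\arctan\frac{x-y}{1+xy}$ and $\arctan(z)\leq z$ for $z>0$,
\[ \frac{\beta}{\epsilon}\Bigl(\arctan\tfrac{\pi+\beta}{\epsilon}-\arctan\tfrac{\beta}{\epsilon}\Bigr)=\frac{\beta}{\epsilon}\arctan\!\frac{\pi\epsilon}{\epsilon^2+\beta(\pi+\beta)}\leq \frac{\pi\beta}{\epsilon^2+\beta(\pi+\beta)}\leq 1,\]
which finishes the estimate.

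The step I expect to be the main bookkeeping obstacle is packaging the remainders so that the implicit constants depend only on $\|c\|_{C^2}$ and not on $\beta$ or $\epsilon$; in particular keeping careful track of the cancellation of the $1/\epsilon$ factor in the arctangent contribution, which is where the hypothesis $c(0)=0$ (equivalently $|c(\alpha)|\lesssim \alpha$) enters decisively. Everything else is elementary calculus.
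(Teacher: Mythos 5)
Your overall strategy of computing the antiderivatives explicitly and tracking the coefficients of the potentially divergent logarithmic and arctangent pieces is clean, and the computations themselves are correct; the handling of the $\ln(\beta^2+\epsilon^2)$ cancellation via $|c'(\beta)-c'(0)|\lesssim\beta$ and of the arctangent term via $\arctan x - \arctan y = \arctan\frac{x-y}{1+xy}$ are both sound. However, there is a genuine gap in the sentence ``the $\ln((\pi\pm\beta)^2+\epsilon^2)$ pieces are uniformly bounded on $\beta\in[0,\pi]$, $\epsilon\leq 1$.'' This is false for the minus sign: as $\beta\to\pi$ and $\epsilon\to 0$, the argument $(\pi-\beta)^2+\epsilon^2$ tends to $0$, so $\ln((\pi-\beta)^2+\epsilon^2)$ blows up. The only thing that saves the estimate is that the coefficient of this term is $-\tfrac12 c'(\beta)$, and the standing hypothesis of the enclosing Lemma \ref{Garding} is $\supp c\subset[0,\tfrac{\pi}{4}]$, so $c'(\beta)=0$ for $\beta$ away from the origin. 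Without the support condition the statement is actually \emph{false}: take $c(\alpha)=\alpha$ (which satisfies $c\in C^2$ and $|c(\alpha)|\lesssim\alpha$), $\beta=\pi$, and let $\epsilon\to 0$; then in your decomposition the $\ln(\beta^2+\epsilon^2)$ pieces cancel, the arctangent piece stays bounded, but $-\tfrac12\ln((\pi-\beta)^2+\epsilon^2)=-\ln\epsilon\to+\infty$.

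So you must invoke the support condition somewhere. Once you do, your proof closes: for $\beta\geq\tfrac{\pi}{4}$ (say) you have $c(\beta)=c'(\beta)=0$ and the problematic log carries a zero coefficient, while for $\beta<\tfrac{\pi}{4}$ you have $(\pi-\beta)^2\geq(3\pi/4)^2$ and the log is genuinely bounded. The paper's own proof makes the same division explicitly (cases $\beta\geq\pi/3$ and $\beta<\pi/3$), but then proceeds differently for the small-$\beta$ case: it splits the first integral at $\alpha=2\beta$, kills the linear part on $[0,2\beta]$ by odd symmetry, shifts variables on $[2\beta,\pi]$ to align the denominator with $(\alpha+\beta)^2+\epsilon^2$, and bounds the resulting five pieces term-by-term. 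Your antiderivative computation replaces that shift-and-compare bookkeeping with a direct cancellation of $\ln(\beta^2+\epsilon^2)$; both are fine, but neither can dispense with the support hypothesis.
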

\begin{proof}
Since $\supp c$ and $\supp \tilde{c}$ $\subset [0,\frac{\pi}{4}]$, when $\beta\geq\frac{\pi}{3}$, we have
\begin{align*}
    &\quad|-\int_{0}^{\pi}\frac{c(\alpha)-c(\beta)}{(\alpha-\beta)^2+\epsilon^2}d\alpha+\int_{0}^{\pi}\frac{c(\alpha)}{(\alpha+\beta)^2+\epsilon^2}d\alpha|\\
    &=|-\int_{0}^{\frac{\pi}{4}}\frac{(c(\alpha))}{(\alpha-\beta)^2+\epsilon^2}d\alpha+\int_{0}^{\pi}\frac{c(\alpha)}{(\alpha+\beta)^2+\epsilon^2}d\alpha|\\
    &\lesssim 1.
\end{align*}
When $\beta\leq \frac{\pi}{3}$, we have
\begin{align*}
    &\quad-\int_{0}^{\pi}\frac{(c(\alpha)-c(\beta))}{(\alpha-\beta)^2+\epsilon^2}d\alpha+\int_{0}^{\pi}\frac{c(\alpha)}{(\alpha+\beta)^2+\epsilon^2}d\alpha\\
    &=-\int_{2\beta}^{\pi}\frac{(c(\alpha)-c(\beta))}{(\alpha-\beta)^2+\epsilon^2}d\alpha+\int_{0}^{\pi}\frac{c(\alpha)}{(\alpha+\beta)^2+\epsilon^2}d\alpha-\int_{0}^{2\beta}\frac{(c(\alpha)-c(\beta)-c'(\beta)(\alpha-\beta))}{(\alpha-\beta)^2+\epsilon^2}d\alpha\\
    &=-\int_{0}^{\pi-2\beta}\frac{c(\alpha+2\beta)}{(\alpha+\beta)^2+\epsilon^2}d\alpha+\int_{0}^{\pi-2\beta}\frac{c(\beta)}{(\alpha+\beta)^2+\epsilon^2}d\alpha+\int_{0}^{\pi-2\beta}\frac{c(\alpha)}{(\alpha+\beta)^2+\epsilon^2}d\alpha+\int_{\pi-2\beta}^{\pi}\frac{c(\alpha)}{(\alpha+\beta)^2+\epsilon^2}d\alpha\\
    &\quad-\int_{0}^{2\beta}\frac{(c(\alpha)-c(\beta)-c'(\beta)(\alpha-\beta))}{(\alpha-\beta)^2+\epsilon^2}d\alpha\\
    &=Term_1+Term_2+Term_3+Term_4+Term_5.
\end{align*}
Here
\[
|Term_1+Term_3|=|\int_{0}^{\pi-2\beta}\frac{c(\alpha+2\beta)-c(\alpha)}{(\alpha+\beta)^2+\epsilon^2}d\alpha|\lesssim 2\beta\int_{0}^{\pi-2\beta}\frac{1}{(\alpha+\beta)^2}d\alpha\lesssim 1,
\]
\[
|Term_2|\lesssim  \beta\int_{0}^{\pi}\frac{1}{(\alpha+\beta)^2}d\alpha\lesssim 1,
\]
and since $\beta\leq\frac{\pi}{3}$,
\[
|Term_4|\lesssim 1,
\]

\[
|Term_5|\lesssim  2\beta\lesssim 1.
\]
Then we have the result.
\end{proof}
\begin{lemma}\label{newlebound}
We have
$\sup_{0\leq \beta\leq \pi}|-\int_{0}^{\pi}\frac{(c(\alpha)-c(\beta))\epsilon^2}{((\alpha-\beta)^2+\epsilon^2)^2}d\alpha+\int_{0}^{\pi}\frac{c(\alpha)\epsilon^2}{((\alpha+\beta)^2+\epsilon^2)^2}d\alpha|\lesssim 1$.
\end{lemma}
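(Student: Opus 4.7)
I will prove the bound by following the same case split as in Lemma~\ref{boundene}, treating $\beta \geq \pi/3$ and $\beta \leq \pi/3$ separately, with the key new difficulty being that the kernel $\epsilon^2/((\alpha-\beta)^2+\epsilon^2)^2$ is more singular near the diagonal and must be controlled using both the smoothness of $c$ and algebraic cancellation.

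For $\beta \geq \pi/3$: since $\supp c \subset [0,\pi/4]$, we have $c(\beta)=0$, so the first integral reduces to $-\int_0^{\pi/4}\frac{c(\alpha)\epsilon^2}{((\alpha-\beta)^2+\epsilon^2)^2}d\alpha$, on which $|\alpha-\beta|\geq \pi/12$; using $\frac{\epsilon^2}{(x^2+\epsilon^2)^2}\leq \frac{1}{x^2+\epsilon^2}\leq \frac{144}{\pi^2}$, this is $O(1)$. The second integral is bounded similarly since $\alpha+\beta \geq \pi/3$ on $\supp c$.

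For $\beta \leq \pi/3$: I split the first integral as $\int_0^{2\beta}+\int_{2\beta}^{\pi}$. On the near-diagonal piece, I Taylor expand $c(\alpha)=c(\beta)+c'(\beta)(\alpha-\beta)+r(\alpha,\beta)$ with $|r|\lesssim (\alpha-\beta)^2$. The linear part $c'(\beta)(\alpha-\beta)$ integrates to zero over the interval $[0,2\beta]$ by the oddness of $(\alpha-\beta)/((\alpha-\beta)^2+\epsilon^2)^2$ about $\alpha=\beta$; the quadratic remainder contributes
\[
\left|\int_0^{2\beta}\frac{r(\alpha,\beta)\epsilon^2}{((\alpha-\beta)^2+\epsilon^2)^2}d\alpha\right|\lesssim \int_{-\beta}^{\beta}\frac{u^2\epsilon^2}{(u^2+\epsilon^2)^2}du \leq \int_{\mathbb{R}}\frac{u^2\epsilon^2}{(u^2+\epsilon^2)^2}du \lesssim 1.
\]
For the far piece, I rewrite $-\int_{2\beta}^\pi \frac{(c(\alpha)-c(\beta))\epsilon^2}{((\alpha-\beta)^2+\epsilon^2)^2}d\alpha = -\int_{2\beta}^\pi \frac{c(\alpha)\epsilon^2}{((\alpha-\beta)^2+\epsilon^2)^2}d\alpha + c(\beta)\int_{2\beta}^\pi \frac{\epsilon^2}{((\alpha-\beta)^2+\epsilon^2)^2}d\alpha$. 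Using $|c(\beta)|\lesssim \beta$ and splitting cases $\beta\leq\epsilon$ (bound integral by $\pi/(4\epsilon)$) and $\beta\geq\epsilon$ (bound by $\epsilon^2/(3\beta^3)$), the second term satisfies $|c(\beta)|\cdot \int \lesssim \min(\beta/\epsilon,\epsilon^2/\beta^2)\lesssim 1$.

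Finally, I combine the remaining first-integral piece with the second integral of the lemma. Substituting $\alpha=u+2\beta$ in $-\int_{2\beta}^\pi\frac{c(\alpha)\epsilon^2}{((\alpha-\beta)^2+\epsilon^2)^2}d\alpha$ yields $-\int_0^{\pi-2\beta}\frac{c(u+2\beta)\epsilon^2}{((u+\beta)^2+\epsilon^2)^2}du$, which combined with $\int_0^\pi\frac{c(\alpha)\epsilon^2}{((\alpha+\beta)^2+\epsilon^2)^2}d\alpha$ gives
\[
\int_0^{\pi-2\beta}\frac{(c(u)-c(u+2\beta))\epsilon^2}{((u+\beta)^2+\epsilon^2)^2}du + \int_{\pi-2\beta}^{\pi}\frac{c(\alpha)\epsilon^2}{((\alpha+\beta)^2+\epsilon^2)^2}d\alpha.
\]
The boundary term vanishes because $\pi-2\beta \geq \pi/3 > \pi/4$ forces $c=0$ on the integration range. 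For the main term, the Lipschitz bound $|c(u)-c(u+2\beta)|\lesssim \beta$ combined with the same case split ($\beta\lessgtr\epsilon$) for $\int_\beta^\infty\frac{\epsilon^2}{(v^2+\epsilon^2)^2}dv$ gives a bound by $\min(\pi\beta/(4\epsilon),\epsilon^2/(3\beta^2))\lesssim 1$. The main obstacle is precisely this case $\beta \sim \epsilon$ where neither the near-origin nor tail estimate is individually good; the minimum construction resolves it.
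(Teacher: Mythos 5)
Your proof is correct and follows the same strategy the paper uses in Lemma~\ref{boundene}, to which the paper's proof of this lemma simply defers: split on $\beta\gtrless\pi/3$, use a second-order Taylor expansion near the diagonal with oddness cancellation of the linear term, and combine the far piece with the $(\alpha+\beta)$-kernel via the shift $\alpha\mapsto\alpha+2\beta$ so that only a Lipschitz difference and a vanishing boundary piece remain. One minor simplification worth noting: the case split on $\beta\lessgtr\epsilon$ is unnecessary, since $\frac{\epsilon^2}{(v^2+\epsilon^2)^2}\leq\frac{1}{v^2}$ already gives $\int_\beta^\infty\frac{\epsilon^2}{(v^2+\epsilon^2)^2}\,dv\leq\frac{1}{\beta}$ directly, after which the factor $\beta$ from the Lipschitz bound (or from $|c(\beta)|\lesssim\beta$) closes the estimate exactly as in the paper's treatment of the corresponding terms in Lemma~\ref{boundene}.
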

\begin{proof}
The proof is similar as in lemma \ref{boundene}.
\end{proof}
\begin{lemma}\label{le4}
We have
\begin{align}\label{le4eq}
   &\int_{0}^{\pi}\int_{0}^{\pi}c(\alpha)f(\alpha)\frac{f(\alpha)-f(\beta)}{(\alpha-\beta)^2+\epsilon^2}d\beta d\alpha\\\nonumber
    &=\int_{0}^{\pi}\int_{0}^{\pi}c(\alpha)H_{\mathcal{R}}f(\alpha)(H_{\mathcal{R}}f(\alpha)-H_{\mathcal{R}}f(\beta))(\frac{1}{(\alpha-\beta)^2+\epsilon^2})d\beta d\alpha+\tilde{B.T}.
\end{align}
\end{lemma}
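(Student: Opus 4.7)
The plan is to lift the identity to the whole real line, where the Hilbert transform is an $L^2$-isometry commuting with the inner integral operator, and then treat the weight $c(\alpha)$ as a Calder\'on-type commutator that contributes only to the boundary term $\tilde{B.T}$.

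First, extend $f$ by zero outside $[0,\pi]$. Since $\supp c\subset[0,\pi/4]$ is at fixed positive distance from $\mathbb{R}\setminus[0,\pi]$ and $|c(\alpha)|\lesssim\alpha$, replacing the inner $\int_0^\pi\cdots d\beta$ by $\int_\mathbb{R}\cdots d\beta$ introduces only an error
\[
\int_0^\pi c(\alpha)f(\alpha)\int_{\mathbb{R}\setminus[0,\pi]}\frac{f(\alpha)}{(\alpha-\beta)^2+\epsilon^2}\,d\beta\,d\alpha
\]
(and its $H_{\mathcal{R}}f$ counterpart), bounded by $\|f\|_{L^2}^2$ uniformly in $\epsilon$ by the same Hilbert-type estimates already used in the proofs of Lemmas~\ref{le2} and \ref{le3}. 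Hence it suffices to prove the identity on $\mathbb{R}$ modulo $\tilde{B.T}$.

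On $\mathbb{R}$ the inner operator is $T_\epsilon f(\alpha):=\int_\mathbb{R}\frac{f(\alpha)-f(\beta)}{(\alpha-\beta)^2+\epsilon^2}\,d\beta=\frac{\pi}{\epsilon}(I-P_\epsilon\ast)f$, i.e.\ the real self-adjoint Fourier multiplier $\frac{\pi}{\epsilon}(1-e^{-\epsilon|\xi|})$; it commutes with $H_{\mathcal{R}}$ and satisfies the uniform pointwise bound $|m_\epsilon(\xi)|\leq\pi|\xi|$. Writing $f=-H_{\mathcal{R}}(H_{\mathcal{R}}f)$ and using skew-adjointness of $H_{\mathcal{R}}$ on $L^2(\mathbb{R})$,
\begin{align*}
\int_\mathbb{R} cf\,T_\epsilon f\,d\alpha
 &=-\int_\mathbb{R} cH_{\mathcal{R}}(H_{\mathcal{R}}f)\,T_\epsilon f\,d\alpha
 =\int_\mathbb{R} H_{\mathcal{R}}f\cdot H_{\mathcal{R}}(cT_\epsilon f)\,d\alpha\\
 &=\int_\mathbb{R} cH_{\mathcal{R}}f\,T_\epsilon(H_{\mathcal{R}}f)\,d\alpha
 +\int_\mathbb{R} H_{\mathcal{R}}f\cdot[H_{\mathcal{R}},c]T_\epsilon f\,d\alpha.
\end{align*}
After reversing the extension, the first term on the right gives the desired right-hand side modulo $\tilde{B.T}$, so everything reduces to the uniform bound
\[
\bigl|\textstyle\int_\mathbb{R} H_{\mathcal{R}}f\cdot[H_{\mathcal{R}},c]\,T_\epsilon f\,d\alpha\bigr|\lesssim\|f\|_{L^2}^2.
\]

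Using self-adjointness of the Calder\'on commutator $[H_{\mathcal{R}},c]$ (with kernel $-\frac{1}{\pi}\frac{c(\alpha)-c(\beta)}{\alpha-\beta}$) and of $T_\epsilon$, this commutator integral equals $\int_\mathbb{R} f\cdot T_\epsilon[H_{\mathcal{R}},c]H_{\mathcal{R}}f\,d\alpha$. The key observation is that $T_\epsilon[H_{\mathcal{R}},c]H_{\mathcal{R}}$ is bounded $L^2\to L^2$ uniformly in $\epsilon$. Indeed, the Taylor expansion $c(\alpha)-c(\beta)=c'(\alpha)(\alpha-\beta)+O(\|c''\|_\infty(\alpha-\beta)^2)$, together with the compact support and $C^{1,1}$ regularity of $c$, yields $[H_{\mathcal{R}},c]:L^2\to H^1$ with norm $\lesssim\|c\|_{C^{1,1}}$; and $|m_\epsilon(\xi)|\leq\pi|\xi|$ gives $T_\epsilon:H^1\to L^2$ uniformly in $\epsilon$. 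Composing and applying Cauchy--Schwarz finishes the proof. The main technical obstacle is to make the Sobolev smoothing property $[H_{\mathcal{R}},c]:L^2\to H^1$ rigorous with a constant depending only on $\|c\|_{C^{1,1}}$ and $|\supp c|$; this is a classical computation via the explicit representation of the commutator kernel, combined with the compactness of $\supp c$ to tame the otherwise non-integrable tails in the $\alpha$-derivative of the kernel.
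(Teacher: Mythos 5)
Your proposal is correct in outline but follows a genuinely different route from the paper, and it is worth contrasting the two. The paper splits the kernel $\tfrac{1}{(\alpha-\beta)^2+\epsilon^2}$ into the piece $\tfrac{1}{(\alpha-\beta)^2+\epsilon^2}-\tfrac{2\epsilon^2}{((\alpha-\beta)^2+\epsilon^2)^2}$ (which is the Hilbert transform of the odd kernel $\tfrac{2\beta\epsilon}{(\beta^2+\epsilon^2)^2}$) and the residual $\tfrac{2\epsilon^2}{((\alpha-\beta)^2+\epsilon^2)^2}$, proving \eqref{bd01} and \eqref{bd02} separately; in \eqref{bd02} the paper hits precisely the commutator integral $[H_{\mathcal{R}},c]$ (display \eqref{equation2compliment}) and bounds it by a lengthy case-by-case splitting of the domain. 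You instead keep the kernel intact, observe that the extended operator $T_\epsilon$ is the real, even Fourier multiplier $\tfrac{\pi}{\epsilon}(1-e^{-\epsilon|\xi|})$ which commutes with $H_{\mathcal{R}}$ and satisfies $|m_\epsilon(\xi)|\le\pi|\xi|$, and then the whole problem collapses to the single commutator term $\int f\cdot T_\epsilon[H_{\mathcal{R}},c]H_{\mathcal{R}}f$, which you control by the factorization $T_\epsilon:H^1\to L^2$ uniformly and $[H_{\mathcal{R}},c]:L^2\to H^1$. This is cleaner and more conceptual — it shows that the whole lemma is a corollary of the Calder\'on first commutator theorem composed with a trivial multiplier bound — whereas the paper's proof is self-contained and does not invoke Calder\'on's theorem, at the cost of the elaborate splitting and the case analysis in \eqref{equation2compliment}. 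Two caveats you should tighten: first, $[0,\pi/4]$ is \emph{not} at positive distance from $\mathbb{R}\setminus[0,\pi]$ (they touch at $\alpha=0$); what actually controls the restriction error near $0$ is the vanishing $|c(\alpha)|\lesssim\alpha$ together with Hilbert's inequality, as in \eqref{IE4}, and you should not lean on a positive-distance argument there. Second, the crucial estimate $[H_{\mathcal{R}},c]:L^2\to H^1$ is asserted but not proved, and your Taylor-expansion heuristic does not close: the $\alpha$-derivative of the commutator kernel is globally bounded by $\|c''\|_\infty$ but has no decay in $\beta$ when $\alpha\in\supp c$ (it behaves like $c'(\alpha)/(\alpha-\beta)$), so neither Schur's test nor compact support alone suffices; one must exploit the principal-value structure, writing $\partial_\alpha[H_{\mathcal{R}},c]=c'(\alpha)H_{\mathcal{R}}-C_1(c)$ (a bounded-weight Hilbert transform minus the Calder\'on first commutator), which is exactly Calder\'on's theorem for Lipschitz $c$ — noting also that the zero extension of a $C^2[0,\pi]$ function with $|c(\alpha)|\lesssim\alpha$ need only be Lipschitz across $\alpha=0$, not $C^{1,1}$, so Lipschitz regularity is what you should be working with.
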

\begin{proof}
We first show 
\begin{align}\label{bd01}
    &\quad \int_{0}^{\pi}\int_{0}^{\pi}c(\alpha)f(\alpha)(f(\alpha)-f(\beta))(\frac{1}{(\alpha-\beta)^2+\epsilon^2}-\frac{2\epsilon^2}{((\alpha-\beta)^2+\epsilon^2)^2})d\beta d\alpha\\\nonumber
    &=\int_{0}^{\pi}\int_{0}^{\pi}c(\alpha)H_{\mathcal{R}}f(\alpha)(H_{\mathcal{R}}f(\alpha)-H_{\mathcal{R}}f(\beta))(\frac{1}{(\alpha-\beta)^2+\epsilon^2}-\frac{2\epsilon^2}{((\alpha-\beta)^2+\epsilon^2)^2})d\beta d\alpha+\tilde{B.T}.
\end{align}
 We have
\begin{align*}
    &\int_{0}^{\pi}\int_{0}^{\pi}c(\alpha)f(\alpha)(f(\alpha)-f(\beta))(\frac{1}{(\alpha-\beta)^2+\epsilon^2}-\frac{2\epsilon^2}{((\alpha-\beta)^2+\epsilon^2)^2})d\beta d\alpha\\
    =&_{\eqref{integralcancel}}-\int_{0}^{\pi}c(\alpha)f(\alpha)\int_{0}^{\pi}f(\beta)(\frac{1}{(\alpha-\beta)^2+\epsilon^2}-\frac{2\epsilon^2}{((\alpha-\beta)^2+\epsilon^2)^2})d\beta d\alpha+\tilde{B.T}\\
    =&_{\eqref{IE4}}-\int_{0}^{\pi}c(\alpha)f(\alpha)\int_{-\infty}^{\infty}f(\beta)(\frac{1}{(\alpha-\beta)^2+\epsilon^2}-\frac{2\epsilon^2}{((\alpha-\beta)^2+\epsilon^2)^2})d\beta d\alpha+\tilde{B.T}\\
    =&_{\eqref{newhilbertderi}}-2\int_{0}^{\pi}c(\alpha)f(\alpha)\int_{-\infty}^{\infty}H_{\mathcal{R}}f(\beta)\frac{(\alpha-\beta)\epsilon}{((\alpha-\beta)^2+\epsilon^2)^2}d\beta d\alpha+\tilde{B.T}\\
    =&-2\int_{0}^{\pi}f(\alpha)\int_{-\infty}^{\infty}c(\beta)H_{\mathcal{R}}f(\beta)\frac{(\alpha-\beta)\epsilon}{((\alpha-\beta)^2+\epsilon^2)^2}d\beta d\alpha+2\int_{0}^{\pi}f(\alpha)\int_{-\infty}^{\infty}H_{\mathcal{R}}f(\beta)\frac{(c(\beta)-c(\alpha))(\alpha-\beta)\epsilon}{((\alpha-\beta)^2+\epsilon^2)^2}d\beta d\alpha\\
    &\quad+\tilde{B.T}\\
    =&-2\int_{0}^{\pi}c(\beta)H_{\mathcal{R}}f(\beta)\int_{-\infty}^{\infty}f(\alpha)\frac{(\alpha-\beta)\epsilon}{((\alpha-\beta)^2+\epsilon^2)^2}d\alpha d\beta+\tilde{B.T}\\
    =&_{\eqref{newhilbertderi}}-\int_{0}^{\pi}c(\beta)H_{\mathcal{R}}f(\beta)\int_{-\infty}^{\infty}H_{\mathcal{R}}f(\alpha)(\frac{1}{(\alpha-\beta)^2+\epsilon^2}-\frac{2\epsilon^2}{((\alpha-\beta)^2+\epsilon^2)^2}) d\alpha d\beta+\tilde{B.T}\\
     =&-\int_{0}^{\pi}c(\alpha)H_{\mathcal{R}}f(\alpha)\int_{-\infty}^{\infty}H_{\mathcal{R}}f(\beta)(\frac{1}{(\alpha-\beta)^2+\epsilon^2}-\frac{2\epsilon^2}{((\alpha-\beta)^2+\epsilon^2)^2}) d\alpha d\beta+\tilde{B.T}\\
   =&_{\eqref{IE4}}-\int_{0}^{\pi}c(\alpha)H_{\mathcal{R}}f(\alpha)\int_{0}^{\pi}H_{\mathcal{R}}f(\beta)(\frac{1}{(\alpha-\beta)^2+\epsilon^2}-\frac{2\epsilon^2}{((\alpha-\beta)^2+\epsilon^2)^2}) d\alpha d\beta+\tilde{B.T}\\
    =&_{\eqref{integralcancel}}-\int_{0}^{\pi}c(\alpha)H_{\mathcal{R}}f(\alpha)\int_{0}^{\pi}(H_{\mathcal{R}}f(\beta)-H_{\mathcal{R}}f(\alpha))(\frac{1}{(\alpha-\beta)^2+\epsilon^2}-\frac{2\epsilon^2}{((\alpha-\beta)^2+\epsilon^2)^2}) d\beta d\alpha+\tilde{B.T},
\end{align*}
where we use Hilbert's inequality and the fact that $|c(\alpha)|\lesssim |\alpha|$, $\supp c(\alpha)\subset [0,\frac{\pi}{4}]$.

Then we only need to show
\begin{align}\label{bd02}
     &\quad \int_{0}^{\pi}\int_{0}^{\pi}c(\alpha)f(\alpha)(f(\alpha)-f(\beta))(\frac{\epsilon^2}{((\alpha-\beta)^2+\epsilon^2)^2})d\beta d\alpha\\\nonumber
    &=\int_{0}^{\pi}\int_{0}^{\pi}c(\alpha)H_{\mathcal{R}}f(\alpha)(H_{\mathcal{R}}f(\alpha)-H_{\mathcal{R}}f(\beta))(\frac{\epsilon^2}{((\alpha-\beta)^2+\epsilon^2)^2})d\beta d\alpha+\tilde{B.T}.
\end{align}
From \eqref{bd02} and \eqref{bd01}, we can get \eqref{le4eq}.

For the (LHS) of \eqref{bd02}, we  extend $f(\alpha)$, $c(\alpha)$ to be a function on $\mathbb{R}$ with $f(\alpha)=0$, $c(\alpha)=0$ on $[0,\pi]^c$. From similar estimate as in \eqref{IE4}, we have
\begin{align}\label{IE402}
 &\int_{0}^{\pi}c(\alpha)f(\alpha)\int_{-\infty}^{\infty}\frac{\epsilon^2}{((\alpha-\beta)^2+\epsilon^2)^2}f(\beta)d\beta d\alpha\\\nonumber
=&\int_{0}^{\pi}c(\alpha)f(\alpha)\int_{0}^{\pi}\frac{\epsilon^2}{((\alpha-\beta)^2+\epsilon^2)^2}f(\beta)d\beta d\alpha+\tilde{B.T}.
\end{align}
\begin{align}\label{IE4021}
 &\int_{0}^{\pi}c(\alpha)H_Rf(\alpha)\int_{-\infty}^{\infty}\frac{\epsilon^2}{((\alpha-\beta)^2+\epsilon^2)^2}H_{R}f(\beta)d\beta d\alpha\\\nonumber
=&\int_{0}^{\pi}c(\alpha)H_{R}f(\alpha)\int_{0}^{\pi}\frac{\epsilon^2}{((\alpha-\beta)^2+\epsilon^2)^2}H_{R}f(\beta)d\beta d\alpha+\tilde{B.T}.
\end{align}
Moreover, for $\frac{\pi}{4}>\alpha>0$, we have
\[
|\int_{-\infty}^{0}\frac{\epsilon^2}{((\alpha-\beta)^2+\epsilon^2)^2}d\beta c(\alpha)|\lesssim |\int^{\infty}_{0}\frac{1}{(\alpha+\beta)^2}d\beta|| c(\alpha)|\lesssim 1,
\]
and
\[
|\int_{\pi}^{\infty}\frac{\epsilon^2}{((\alpha-\beta)^2+\epsilon^2)^2}d\beta c(\alpha)|\lesssim |\int^{\infty}_{\pi}\frac{1}{(\frac{\pi}{4}-\beta)^2}d\beta|| c(\alpha)|\lesssim 1.
\]
Hence we have
\begin{align}\label{IE403}
 &\int_{0}^{\pi}c(\alpha)f^2(\alpha)\int_{-\infty}^{\infty}\frac{\epsilon^2}{((\alpha-\beta)^2+\epsilon^2)^2}d\beta d\alpha\\\nonumber
=&\int_{0}^{\pi}c(\alpha)f^2(\alpha)\int_{0}^{\pi}\frac{\epsilon^2}{((\alpha-\beta)^2+\epsilon^2)^2}d\beta d\alpha+\tilde{B.T},
\end{align}
and \begin{align}\label{IE40302}
 &\int_{0}^{\pi}c(\alpha)(H_{R}f(\alpha))^2\int_{-\infty}^{\infty}\frac{\epsilon^2}{((\alpha-\beta)^2+\epsilon^2)^2}d\beta d\alpha\\\nonumber
=&\int_{0}^{\pi}c(\alpha)(H_{R}f(\alpha))^2\int_{0}^{\pi}\frac{\epsilon^2}{((\alpha-\beta)^2+\epsilon^2)^2}d\beta d\alpha+\tilde{B.T}.
\end{align}
Therefore, since $\supp c\subset[0,\frac{\pi}{4}], \supp f\subset[0,\pi],$ we have
\begin{align*}
    &\int_{0}^{\pi}\int_{0}^{\pi}c(\alpha)f(\alpha)(f(\alpha)-f(\beta))(\frac{\epsilon^2}{((\alpha-\beta)^2+\epsilon^2)^2})d\beta d\alpha\\
    =&\eqref{IE402}, \eqref{IE403}\int_{0}^{\pi}c(\alpha)f(\alpha)\int_{-\infty}^{\infty}(f(\alpha)-f(\beta))\frac{\epsilon^2}{((\alpha-\beta)^2+\epsilon^2)^2}d\beta d\alpha+\tilde{B.T}.\\
    =&\int_{-\infty}^{\infty}c(\alpha)f(\alpha)\int_{-\infty}^{\infty}(f(\alpha)-f(\beta))\frac{\epsilon^2}{((\alpha-\beta)^2+\epsilon^2)^2}d\beta d\alpha+\tilde{B.T}.\\
    =&\int_{-\infty}^{\infty}H_{\mathcal{R}}(c(\alpha)f(\alpha))\int_{-\infty}^{\infty}(H_{\mathcal{R}}f(\alpha)-H_{\mathcal{R}}f(\beta))\frac{\epsilon^2}{((\alpha-\beta)^2+\epsilon^2)^2}d\beta d\alpha+\tilde{B.T}.\\
    =&\int_{-\infty}^{\infty}c(\alpha)H_{\mathcal{R}}f(\alpha)\int_{-\infty}^{\infty}(H_{\mathcal{R}}f(\alpha)-H_{\mathcal{R}}f(\beta))\frac{\epsilon^2}{((\alpha-\beta)^2+\epsilon^2)^2}d\beta d\alpha\\
    &\quad-\frac{1}{\pi}\int_{-\infty}^{\infty}\int_{0}^{\pi}\frac{(c(\alpha)-c(\gamma))f(\gamma)}{\alpha-\gamma}d\gamma\int_{-\infty}^{\infty}(H_{\mathcal{R}}f(\alpha)-H_{\mathcal{R}}f(\beta))\frac{\epsilon^2}{((\alpha-\beta)^2+\epsilon^2)^2}d\beta d\alpha+\tilde{B.T}\\
    &=\eqref{IE4021},\eqref{IE40302}\int_{0}^{\pi}c(\alpha)H_{\mathcal{R}}f(\alpha)\int_{0}^{\pi}(H_{\mathcal{R}}f(\alpha)-H_{\mathcal{R}}f(\beta))\frac{\epsilon^2}{((\alpha-\beta)^2+\epsilon^2)^2}d\beta d\alpha\\
    &\quad-\frac{1}{\pi}\int_{-\infty}^{\infty}\int_{0}^{\pi}\frac{(c(\alpha)-c(\gamma))f(\gamma)}{\alpha-\gamma}d\gamma\int_{-\infty}^{\infty}(H_{\mathcal{R}}f(\alpha)-H_{\mathcal{R}}f(\beta))\frac{\epsilon^2}{((\alpha-\beta)^2+\epsilon^2)^2}d\beta d\alpha+\tilde{B.T}.
\end{align*}
Then in order to show \eqref{bd02}, we are left to show 
\begin{equation}\label{equation2compliment}
    \int_{-\infty}^{\infty}\int_{0}^{\pi}\frac{(c(\alpha)-c(\gamma))f(\gamma)}{\alpha-\gamma}d\gamma\int_{-\infty}^{\infty}(H_{\mathcal{R}}f(\alpha)-H_{\mathcal{R}}f(\beta))\frac{\epsilon^2}{((\alpha-\beta)^2+\epsilon^2)^2}d\beta d\alpha=\tilde{B.T}.
\end{equation}

Here
\begin{align*}
     &\int_{-\infty}^{\infty}\int_{0}^{\pi}\frac{(c(\alpha)-c(\gamma))f(\gamma)}{\alpha-\gamma}d\gamma\int_{-\infty}^{\infty}(H_{\mathcal{R}}f(\alpha)-H_{\mathcal{R}}f(\beta))\frac{\epsilon^2}{((\alpha-\beta)^2+\epsilon^2)^2}d\beta d\alpha\\
     =& \int_{-\infty}^{\infty}\int_{0}^{\pi}\frac{(c(\alpha)-c(\gamma))f(\gamma)}{\alpha-\gamma}d\gamma\int_{-\infty}^{\infty}(H_{\mathcal{R}}f(\alpha))\frac{\epsilon^2}{((\alpha-\beta)^2+\epsilon^2)^2}d\beta d\alpha\\
     &-\int_{-\infty}^{\infty}\int_{0}^{\pi}\frac{(c(\alpha)-c(\gamma))f(\gamma)}{\alpha-\gamma}d\gamma\int_{-\infty}^{\infty}(H_{\mathcal{R}}f(\beta))\frac{\epsilon^2}{((\alpha-\beta)^2+\epsilon^2)^2}d\beta d\alpha\\
     =& \int_{-\infty}^{\infty}\int_{0}^{\pi}\frac{(c(\alpha)-c(\gamma))f(\gamma)}{\alpha-\gamma}d\gamma\int_{-\infty}^{\infty}(H_{\mathcal{R}}f(\alpha))\frac{\epsilon^2}{((\alpha-\beta)^2+\epsilon^2)^2}d\beta d\alpha\\
     &-\int_{-\infty}^{\infty}\int_{-\infty}^{\infty}\int_{0}^{\pi}\frac{(c(\beta)-c(\gamma))f(\gamma)}{\beta-\gamma}d\gamma(H_{\mathcal{R}}f(\alpha))\frac{\epsilon^2}{((\alpha-\beta)^2+\epsilon^2)^2}d\beta d\alpha\\
     =& \int_{-\infty}^{\infty}\int_{-\infty}^{\infty}\int_{0}^{\pi}(\frac{(c(\alpha)-c(\gamma))}{\alpha-\gamma}-\frac{(c(\beta)-c(\gamma))}{\beta-\gamma})f(\gamma)d\gamma\frac{\epsilon^2}{((\alpha-\beta)^2+\epsilon^2)^2}d\beta (H_{\mathcal{R}}f(\alpha))d\alpha.
\end{align*}
Then we separate the integral into 5 parts. For the integral over $\alpha<0,$ and  $\beta <0$,
\begin{align*}
    & |\int_{-\infty}^{0}\int_{-\infty}^{0}\int_{0}^{\pi}(\frac{(c(\alpha)-c(\gamma))}{\alpha-\gamma}-\frac{(c(\beta)-c(\gamma))}{\beta-\gamma})f(\gamma)d\gamma\frac{\epsilon^2}{((\alpha-\beta)^2+\epsilon^2)^2}d\beta (H_{\mathcal{R}}f(\alpha))d\alpha|\\
    = &| \int_{0}^{\infty}\int_{0}^{\infty}\int_{0}^{\pi}(\frac{c(\gamma)}{\alpha+\gamma}-\frac{c(\gamma)}{\beta+\gamma})f(\gamma)d\gamma\frac{\epsilon^2}{((\alpha-\beta)^2+\epsilon^2)^2}d\beta H_{\mathcal{R}}f(-\alpha)d\alpha|\\
    \leq&|\int_{0}^{\infty}\int_{0}^{\infty}\frac{\int_{0}^{\pi}\frac{|c(\gamma)|}{|(\alpha+\gamma)(\beta+\gamma)|}|f(\gamma)|d\gamma\epsilon^2|\alpha-\beta|}{((\alpha-\beta)^2+\epsilon^2)^2}d\beta |H_{\mathcal{R}}f(-\alpha)|d\alpha|\\
    \leq&|\int_{0}^{\infty}\int_{0}^{\infty}\frac{\int_{0}^{\pi}\frac{|f(\gamma)|}{|(\alpha+\gamma)|}d\gamma\epsilon^2|\alpha-\beta|}{((\alpha-\beta)^2+\epsilon^2)^2}d\beta |H_{\mathcal{R}}f(-\alpha)|d\alpha|\\
    \lesssim&|\int_{0}^{\infty}\int_{0}^{\pi}\frac{|f(\gamma)|}{|(\alpha+\gamma)|}d\gamma |H_{\mathcal{R}}f(-\alpha)|d\alpha|\\
    =&\tilde{B.T},
\end{align*}
where we use the Holder inequality and the Hilbert's inequality in the last step.

For the integral over $\alpha<0$, $\beta>0$,\[\frac{(c(\alpha)-c(\gamma))}{\alpha-\gamma}-\frac{(c(\beta)-c(\gamma))}{\beta-\gamma}=\frac{c(\alpha)-c(\beta)}{\alpha-\gamma}-\frac{(c(\beta)-c(\gamma))(\alpha-\beta)}{(\alpha-\gamma)(\beta-\gamma)}.
\]
 We have 
\begin{align*}
&|\int_{-\infty}^{0}\int_{0}^{\infty}\int_{0}^{\pi}\frac{c(\alpha)-c(\beta)}{\alpha-\gamma}f(\gamma)d\gamma\frac{\epsilon^2}{((\alpha-\beta)^2+\epsilon^2)^2}d\beta (H_{\mathcal{R}}f(\alpha))d\alpha|\\
\leq&|\int_{0}^{\infty}\int_{0}^{\infty}\int_{0}^{\pi}\frac{1}{\alpha+\gamma}|f(\gamma)|d\gamma\frac{\epsilon^2|c(-\alpha)-c(\beta)|}{((\alpha+\beta)^2+\epsilon^2)^2}d\beta |H_{\mathcal{R}}f(-\alpha)|d\alpha|\\
\lesssim&|\int_{0}^{\infty}\int_{0}^{\pi}\frac{|f(\gamma)|}{|(\alpha+\gamma)|}d\gamma |H_{\mathcal{R}}f(-\alpha)|d\alpha|\\
&=\tilde{B.T}.
\end{align*}
Here we use $|c(-\alpha)-c(\beta)|\lesssim |\alpha+\beta|$.
Moreover, 
\begin{align*}
&|\int_{-\infty}^{0}\int_{0}^{\infty}\int_{0}^{\pi}\frac{(c(\beta)-c(\gamma))(\alpha-\beta)}{(\alpha-\gamma)(\beta-\gamma)}f(\gamma)d\gamma\frac{\epsilon^2}{((\alpha-\beta)^2+\epsilon^2)^2}d\beta (H_{\mathcal{R}}f(\alpha))d\alpha|\\
\lesssim&|\int_{0}^{\infty}\int_{0}^{\infty}\int_{0}^{\pi}\frac{1}{\alpha+\gamma}|f(\gamma)|d\gamma\frac{\epsilon^2|\alpha+\beta|}{((\alpha+\beta)^2+\epsilon^2)^2}d\beta |H_{\mathcal{R}}f(-\alpha)|d\alpha|\\
\lesssim&|\int_{0}^{\infty}\int_{0}^{\pi}\frac{|f(\gamma)|}{|(\alpha+\gamma)|}d\gamma |H_{\mathcal{R}}f(-\alpha)|d\alpha|\\
&=\tilde{B.T},
\end{align*}
where we use $|\frac{(c(\beta)-c(\gamma))}{\beta-\gamma}|\lesssim 1$.
For the integral over $\alpha>0$, $\beta<0$,
\[\frac{(c(\alpha)-c(\gamma))}{\alpha-\gamma}-\frac{(c(\beta)-c(\gamma))}{\beta-\gamma}=\frac{(c(\alpha)-c(\gamma))(\beta-\alpha)}{(\alpha-\gamma)(\beta-\gamma)}+\frac{c(\alpha)-c(\beta)}{\beta-\gamma}.
\]
We have 
\begin{align*}
    &|\int_{0}^{\infty}\int_{-\infty}^{0}\int_{0}^{\pi}\frac{(c(\alpha)-c(\beta))}{(\beta-\gamma)}f(\gamma)d\gamma\frac{\epsilon^2}{((\alpha-\beta)^2+\epsilon^2)^2}d\beta (H_{\mathcal{R}}f(\alpha))d\alpha|\\
   = &|\int_{0}^{\infty}\int_{0}^{\infty}\int_{0}^{\pi}\frac{1}{(\beta+\gamma)}f(\gamma)d\gamma\frac{\epsilon^2(c(\alpha)-c(-\beta))}{((\alpha+\beta)^2+\epsilon^2)^2}d\beta (H_{\mathcal{R}}f(\alpha))d\alpha|\\
   \lesssim&|\int_{0}^{\infty}\int_{0}^{\infty}\int_{0}^{\pi}\frac{1}{(\beta+\gamma)}|f(\gamma)|d\gamma\frac{\epsilon^2|\alpha+\beta|}{((\alpha+\beta)^2+\epsilon^2)^2}d\beta |H_{\mathcal{R}}(f)(\alpha)|d\alpha|\\
   \lesssim&\|H_{\mathcal{R}}f\|_{L^2}\|\int_{-\infty}^{\infty}\int_{0}^{\pi}\frac{1}{(\beta+\gamma)}|f(\gamma)|1_{\beta\geq 0}d\gamma\frac{\epsilon^2|\alpha+\beta|}{((\alpha+\beta)^2+\epsilon^2)^2}d\beta\|_{L^2}\\
   \lesssim &\|H_{\mathcal{R}}(f)\|_{L^2}\|\int_{0}^{\pi}\frac{1}{(\beta+\gamma)}|f(\gamma)|d\gamma\|_{L^2}\int_{-\infty}^{\infty}\frac{\epsilon^2|\beta|}{((\beta)^2+\epsilon^2)^2}d\beta\\
   =&\tilde{B.T}.
\end{align*}
Moreover,
\begin{align*}
     &|\int_{0}^{\infty}\int_{-\infty}^{0}\int_{0}^{\pi}\frac{(c(\alpha)-c(\gamma))(\alpha-\beta)}{(\beta-\gamma)(\alpha-\gamma)}f(\gamma)d\gamma\frac{\epsilon^2}{((\alpha-\beta)^2+\epsilon^2)^2}d\beta H_{\mathcal{R}}f(\alpha)d\alpha|\\
      \leq&|\int_{0}^{\infty}\int_{0}^{\infty}\int_{0}^{\pi}\frac{1}{(\beta+\gamma)}|f(\gamma)|d\gamma\frac{\epsilon^2|\alpha+\beta|}{((\alpha+\beta)^2+\epsilon^2)^2}d\beta |H_{\mathcal{R}}f(\alpha)|d\alpha|,\\
\end{align*}
Then we can use the same way as in the previous inequality to bound it.

For the integral over $2\pi>\alpha>0$, $\beta>0$, we have
\[
|\frac{(c(\alpha)-c(\gamma))}{\alpha-\gamma}-\frac{(c(\beta)-c(\gamma))}{\beta-\gamma}|\lesssim |\alpha-\beta|.
\]
Hence
\begin{align*}
    &|\int_{0}^{2\pi}\int_{0}^{\infty}\int_{0}^{\pi}\frac{(c(\alpha)-c(\gamma))}{\alpha-\gamma}-\frac{(c(\beta)-c(\gamma))}{\beta-\gamma}f(\gamma)d\gamma\frac{\epsilon^2}{((\alpha-\beta)^2+\epsilon^2)^2}d\beta (H_{\mathcal{R}}f(\alpha))d\alpha|\\
    &\lesssim|\int_{0}^{2\pi}\int_{0}^{\infty}\int_{0}^{\pi}|f(\gamma)|d\gamma\frac{\epsilon^2|\alpha-\beta|}{((\alpha-\beta)^2+\epsilon^2)^2}d\beta |H_{\mathcal{R}}f(\alpha)|d\alpha|\\
    &\lesssim\int_{0}^{2\pi}|H_{\mathcal{R}}f(\alpha)|d\alpha\int_{0}^{\pi}|f(\gamma)|d\gamma\\
    &\lesssim \|f\|_{L^2}^2\\
    &=\tilde{B.T}.
\end{align*}
For the integral over $\beta>0, \alpha>2\pi$,
\[\frac{(c(\alpha)-c(\gamma))}{\alpha-\gamma}-\frac{(c(\beta)-c(\gamma))}{\beta-\gamma}=\frac{c(\alpha)-c(\beta)}{\alpha-\gamma}-\frac{(c(\beta)-c(\gamma))(\alpha-\beta)}{(\alpha-\gamma)(\beta-\gamma)}.
\]
We have
\begin{align*}
     &\quad|\int_{2\pi}^{\infty}\int_{0}^{\infty}\int_{0}^{\pi}\frac{c(\alpha)-c(\beta)}{\alpha-\gamma}f(\gamma)d\gamma\frac{\epsilon^2}{((\alpha-\beta)^2+\epsilon^2)^2}d\beta (H_{\mathcal{R}}f(\alpha))d\alpha|\\
     &\lesssim|\int_{2\pi}^{\infty}\int_{0}^{\infty}\int_{0}^{\pi}|f(\gamma)|d\gamma\frac{\epsilon^2|\alpha-\beta|}{((\alpha-\beta)^2+\epsilon^2)^2}d\beta \frac{|H_{\mathcal{R}}f(\alpha)|}{\alpha-\pi}d\alpha|\\
     &\lesssim\int_{2\pi}^{\infty}|H_{\mathcal{R}}f(\alpha)|\frac{1}{\alpha-\pi}d\alpha\int_{0}^{\pi}|f(\gamma)|d\gamma\\
     &\lesssim (\int_{2\pi}^{\infty}|H_{\mathcal{R}}f(\alpha)|^2d\alpha)^{\frac{1}{2}}\int_{0}^{\pi}|f(\gamma)|d\gamma\\
     &=\tilde{B.T},
\end{align*}
and
\begin{align*}
     &|\int_{2\pi}^{\infty}\int_{0}^{\infty}\int_{0}^{\pi}\frac{(c(\beta)-c(\gamma))(\alpha-\beta)}{(\alpha-\gamma)(\beta-\gamma)}f(\gamma)d\gamma\frac{\epsilon^2}{((\alpha-\beta)^2+\epsilon^2)^2}d\beta (H_{\mathcal{R}}f(\alpha))d\alpha|\\
      \lesssim&|\int_{2\pi}^{\infty}\int_{0}^{\infty}\int_{0}^{\pi}|f(\gamma)|d\gamma\frac{\epsilon^2|\alpha-\beta|}{((\alpha-\beta)^2+\epsilon^2)^2}d\beta \frac{|H_{\mathcal{R}}f(\alpha)|}{\alpha-\pi}d\alpha|\\
      =&\tilde{B.T}.
\end{align*}
Then we can use the same method as before to show the result. From the above estimates, we have \eqref{equation2compliment}. Thus \eqref{bd02} follows.
By \eqref{bd01} and \eqref{bd02}, we can show the lemma.
\end{proof}
Now we use show lemma \ref{Garding}. We first show \eqref{ene1}.  By lemma \ref{le2},
\begin{align}
    (LHS)=&\frac{1}{2}\int_{0}^{\pi}c(\alpha)f(\alpha)\int_{0}^{\pi}(H_{\mathcal{R}}g(\beta)-H_{\mathcal{R}}g(\alpha))(\frac{1}{((\alpha-\beta)^2+\epsilon^2)}-\frac{2\epsilon^2}{((\alpha-\beta)^2+\epsilon^2)^2})d\beta d\alpha\\\nonumber
    &+\int_{0}^{\pi}c(\alpha)f(\alpha)g(\alpha)\int_{0}^{\pi}\frac{(\alpha-\beta)\epsilon}{((\alpha-\beta)^2+\epsilon^2)^2}d\beta d\alpha+\tilde{B.T}\\\nonumber
    =&\frac{1}{2}\int_{0}^{\pi}c(\alpha)f(\alpha)\int_{0}^{\pi}(H_{\mathcal{R}}g(\beta)-H_{\mathcal{R}}g(\alpha))(\frac{1}{((\alpha-\beta)^2+\epsilon^2)}-\frac{2\epsilon^2}{((\alpha-\beta)^2+\epsilon^2)^2})d\beta d\alpha+\tilde{B.T}.\\\nonumber
    =&\frac{1}{2}\int_{0}^{\pi}c(\alpha)f(\alpha)\int_{0}^{\pi}(H_{\mathcal{R}}g(\beta)-H_{\mathcal{R}}g(\alpha))(\frac{1}{((\alpha-\beta)^2+\epsilon^2)})d\beta d\alpha\\\nonumber
    &-\frac{1}{2}\int_{0}^{\pi}c(\alpha)f(\alpha)\int_{0}^{\pi}(H_{\mathcal{R}}g(\beta)-H_{\mathcal{R}}g(\alpha))(\frac{2\epsilon^2}{((\alpha-\beta)^2+\epsilon^2)^2})d\beta d\alpha+\tilde{B.T}\\\nonumber
    =&L_1+L_2+\tilde{B.T}.
\end{align}
Next, we use lemma \ref{le3} and get
\begin{equation}
    L_1=\frac{1}{4}\int_{0}^{\pi}\int_{0}^{\pi}c(\alpha)(f(\alpha)-f(\beta))(H_{\mathcal{R}}g(\beta)-H_{\mathcal{R}}g(\alpha))(\frac{1}{(\alpha-\beta)^2+\epsilon^2}-\frac{1}{(\alpha+\beta)^2+\epsilon^2})d\beta d\alpha+\tilde{B.T},
\end{equation}
and
\begin{equation}
    L_2=-\frac{1}{2}\int_{0}^{\pi}\int_{0}^{\pi}c(\alpha)(f(\alpha)-f(\beta))(H_{\mathcal{R}}g(\beta)-H_{\mathcal{R}}g(\alpha))(\frac{\epsilon^2}{((\alpha-\beta)^2+\epsilon^2)^2}-\frac{\epsilon^2}{((\alpha+\beta)^2+\epsilon^2)^2})d\beta d\alpha+\tilde{B.T}.
\end{equation}
Since we have when $0\leq \alpha\leq \pi,0\leq \beta\leq \pi$, 
\[
\frac{1}{(\alpha-\beta)^2+\epsilon^2}-\frac{1}{(\alpha+\beta)^2+\epsilon^2}>0,
\]
\[
\frac{\epsilon^2}{((\alpha-\beta)^2+\epsilon^2)^2}-\frac{\epsilon^2}{((\alpha+\beta)^2+\epsilon^2)^2}>0,
\]
\[
\frac{\epsilon^2}{((\alpha-\beta)^2+\epsilon^2)^2}-\frac{\epsilon^2}{((\alpha+\beta)^2+\epsilon^2)^2}\leq 4(\frac{1}{(\alpha-\beta)^2+\epsilon^2}-\frac{1}{(\alpha+\beta)^2+\epsilon^2}).
\]
By Holder's inequality, we get 
\begin{align}\label{ene3}
    |(LHS)|\leq&\frac{9}{4}\int_{0}^{\pi}\int_{0}^{\pi}|c(\alpha)|(f(\alpha)-f(\beta))^2(\frac{1}{(\alpha-\beta)^2+\epsilon^2}-\frac{1}{(\alpha+\beta)^2+\epsilon^2})d\beta d\alpha\\\nonumber
    &+\frac{9}{4}\int_{0}^{\pi}\int_{0}^{\pi}|c(\alpha)|(H_{\mathcal{R}}g(\beta)-H_{\mathcal{R}}g(\alpha))^2(\frac{1}{(\alpha-\beta)^2+\epsilon^2}-\frac{1}{(\alpha+\beta)^2+\epsilon^2})d\beta d\alpha+\tilde{B.T}.
\end{align}
Moreover, in \eqref{ene1}, by lemma \ref{le3} and \ref{le4},
\begin{align}\label{ene4}
    (RHS)=&\frac{1}{4}\int_{0}^{\pi}\int_{0}^{\pi}\tilde{c}(\alpha)(f(\alpha)-f(\beta))^2(\frac{1}{(\alpha-\beta)^2+\epsilon^2}-\frac{1}{(\alpha+\beta)^2+\epsilon^2})d\beta d\alpha\\\nonumber
    &+\frac{1}{4}\int_{0}^{\pi}\int_{0}^{\pi}\tilde{c}(\alpha)(H_{\mathcal{R}}f(\beta)-H_{\mathcal{R}}f(\alpha))^2(\frac{1}{(\alpha-\beta)^2+\epsilon^2}-\frac{1}{(\alpha+\beta)^2+\epsilon^2})d\beta d\alpha\\\nonumber
    &+\frac{1}{4}\int_{0}^{\pi}\int_{0}^{\pi}\tilde{c}(\alpha)(g(\alpha)-g(\beta))^2(\frac{1}{(\alpha-\beta)^2+\epsilon^2}-\frac{1}{(\alpha+\beta)^2+\epsilon^2})d\beta d\alpha\\\nonumber
    &+\frac{1}{4}\int_{0}^{\pi}\int_{0}^{\pi}\tilde{c}(\alpha)(H_{\mathcal{R}}g(\beta)-H_{\mathcal{R}}g(\alpha))^2(\frac{1}{(\alpha-\beta)^2+\epsilon^2}-\frac{1}{(\alpha+\beta)^2+\epsilon^2})d\beta d\alpha\\\nonumber
    &+\tilde{B.T}.
\end{align}
By \eqref{ene3} and \eqref{ene4}, we can take $C=9$ and get \eqref{ene1}.
For \eqref{ene2}, by lemma \ref{le3}, we have
\begin{align*}
    &\quad(LHS)\\   
&=\frac{1}{2}\int_{0}^{\pi}\int_{0}^{\pi}c(\alpha)(f(\alpha)-f(\beta))(g(\alpha)-g(\beta))(\frac{1}{(\alpha-\beta)^2+\epsilon^2}-\frac{1}{(\alpha+\beta)^2+\epsilon^2})d\beta d\alpha+\tilde{B.T}.\\
    &\leq \frac{1}{4}\int_{0}^{\pi}\int_{0}^{\pi}c(\alpha)(f(\alpha)-f(\beta))^2(\frac{1}{(\alpha-\beta)^2+\epsilon^2}-\frac{1}{(\alpha+\beta)^2+\epsilon^2})d\beta d\alpha\\
    &\quad+\frac{1}{4}\int_{0}^{\pi}\int_{0}^{\pi}c(\alpha)(g(\alpha)-g(\beta))^2(\frac{1}{(\alpha-\beta)^2+\epsilon^2}-\frac{1}{(\alpha+\beta)^2+\epsilon^2})d\beta d\alpha+\tilde{B.T}.
\end{align*}
The (RHS) of \eqref{ene2} is same as \eqref{ene4}, since \eqref{ene1} and \eqref{ene2} have same (RHS). Then we can again take $C=9$.
\end{proof}

\subsubsection{Energy estimate}
Now we show the energy estimate. It is sufficient to show 
\begin{equation}\label{3eneryestimate}
\Re<h,T^{\epsilon}(h)>_{X^{k}}\lesssim C(\|h\|_{X^{k}}),
\end{equation}
for some bounded function $C(\cdot)$ independent of $\epsilon$. We will abuse the notation and use $C(\cdot)$ to show any bounded function and B.T to show any terms bounded by  $C(\|h\|_{X^{k}})$.

Then from lemma \ref{3lemmaotherterms}, conditions in \eqref{3GE01}, we have
\begin{equation}\label{3maintermscontrol}
\begin{split}
\Re<h,T^{\epsilon}(h)>_{X^{k}}&=\Re \int_{0}^{\pi}h^{(k)}(\alpha)\overline{\lambda(\alpha)\int_{0}^{2\alpha}\frac{(h^{(k)}(\alpha)-h^{(k)}(\beta))(\alpha-\beta)\epsilon}{((\alpha-\beta)^2+\epsilon^2)^2}d\beta \frac{2}{\pi}L_1^{+}(h)d\beta}\\
&\quad +\Re \int_{0}^{\pi}h^{(k)}(\alpha)\overline{\lambda(\alpha)L_2^{+}(h)\int_{0}^{2\alpha}\frac{h^{(k)}(\alpha)-h^{(k)}(\beta)}{(\alpha-\beta)^2+\epsilon^2}d\beta}\\
&\quad +\Re \int_{0}^{\pi}h^{(k)}(\alpha)\overline{\lambda(\alpha)\frac{h^{(k)}(\alpha+\epsilon)-h^{(k)}(\al)}{\epsilon}d\beta}\kappa(t)\\
&\quad+B.T.\\
&=E_1+E_2+E_3+B.T.
\end{split}
\end{equation}
with 
\[
B.T\lesssim C(\|h\|_{X^{k}}).
\]
Then in order to show \eqref{3eneryestimate}, from conditions in \eqref{3GE01}, we only need to show the following lemma \ref{30101}. In fact, from \eqref{3linearcoro01}, we have $E_1+E_2\lesssim C(\|h\|_{X^{k}})$ and from \eqref{3linearcoro02}, we have $E_3\lesssim C(\|h\|_{X^{k}}).$
\begin{lemma}\label{30101}
 If $g\in L_{\al}^{2}[0,\pi]\cap \{g(x)=0|x\geq \frac{\pi}{4}\}$, $\kappa(t)>0$, $L_1^{+}(h)$, $L_2^{+}(h)$ satisfying the following conditions:

\begin{equation}\label{RT2condition}
 \quad 18 |\Im L_{1}^{+}(h)(\al,\g,t)|+18 |\Im L_{2}^{+}(h)(\al,\g,t)|\leq -\Re L_2^{+}(h)(\al,\g,t), \text{ when }\al\in \supp{\lambda}, 
\end{equation}
\[
L_{i}^{+}(h)(0,\g,t)=0,
\]
\[
\|L_{i}^{+}(h)(\alpha,\g,t)\|_{C_{\alpha}^{2}[0,\frac{\pi}{4}]}\lesssim 1,
\]
 then we have
\begin{equation}\label{3linearcoro01}
\begin{split}
\Re &<g, \lambda(\al)L_1^{+}(h)(\al)\int_{0}^{2\al}\frac{(g(\al)-g(\beta))(\al-\beta)\ep}{((\al-\beta)^2+\ep^2)^2}d\beta\frac{2}{\pi}\\
&\quad+\lambda(\alpha)L_2^{+}(h)\int_{0}^{2\alpha}\frac{(g(\alpha)-g(\beta))}{(\alpha-\beta)^2+\epsilon^2}d\beta>_{L_{\al}^{2}[0,\pi]}\\
&\lesssim \|g\|_{L_{\al}^{2}[0,\pi]}^{2}.
\end{split}
\end{equation}
and
\begin{equation}\label{3linearcoro02}
\begin{split}
\Re &<g, \lambda(\alpha)\frac{g(\alpha+\epsilon)-g(\alpha)}{\epsilon}\kappa(t)>_{L_{\al}^{2}[0,\pi]}\lesssim \|g\|_{L_{\al}^{2}[0,\pi]}^{2}.
\end{split}
\end{equation}
\end{lemma}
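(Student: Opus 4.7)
The plan is to split $g=g_1+ig_2$ into real and imaginary parts, and likewise $L_j^{+}(h)=a_j+ib_j$ with $a_j=\Re L_j^{+}$, $b_j=\Im L_j^{+}$, so that the real part of the inner product in \eqref{3linearcoro01} expands into four real quadratic forms: a dominant diagonal term
$II=\int\lambda a_2(g_1K_2g_1+g_2K_2g_2)\,d\alpha$, a transport-type diagonal term $I=\tfrac{2}{\pi}\int\lambda a_1(g_1K_1g_1+g_2K_1g_2)\,d\alpha$, and two cross pieces $III=\tfrac{2}{\pi}\int\lambda b_1(g_2K_1g_1-g_1K_1g_2)\,d\alpha$, $IV=\int\lambda b_2(g_2K_2g_1-g_1K_2g_2)\,d\alpha$, where $K_1u=\int_0^{2\alpha}\frac{(u(\alpha)-u(\beta))(\alpha-\beta)\epsilon}{((\alpha-\beta)^2+\epsilon^2)^2}d\beta$ and $K_2u=\int_0^{2\alpha}\frac{u(\alpha)-u(\beta)}{(\alpha-\beta)^2+\epsilon^2}d\beta$.

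The term $II$ is the good one: a symmetrisation in $\alpha\leftrightarrow\beta$ (as in Lemma \ref{le3}) gives $\int uK_2u\ge 0$ up to $\tilde{B.T}$, and the refined R-T condition \eqref{RT2condition} yields $a_2\le -18(|b_1|+|b_2|)\le 0$, so $II\le \tilde{B.T}$ and $Q(u):=\int\lambda(-a_2)\,uK_2u\,d\alpha\ge 0$ becomes the absorbing budget. For each of the four bilinear sub-pieces of $III+IV$ I apply Corollary \ref{3corogardingmain} with real coefficient $c=\pm\lambda b_j$; the hypothesis $|c|\lesssim\alpha$ is met because $L_j^{+}(0,\gamma,t)=0$ together with $\|L_j^{+}\|_{C^2}\lesssim 1$ forces $|\lambda b_j(\alpha)|\lesssim\alpha$ on $\supp\lambda$. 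Choosing majorants $\tilde c_j=9\lambda|b_j|$ and using $\tfrac{2}{\pi}<1$ for the two $K_1$-type sub-pieces inside $III$, the R-T bound $18(|b_1|+|b_2|)\le -a_2$ gives $|III|+|IV|\le Q(g_1)+Q(g_2)+\tilde{B.T}=-II+\tilde{B.T}$. Hence $II+III+IV\le \tilde{B.T}\lesssim \|g\|_{L^2}^2$.

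For $I$, a direct computation shows $\int_0^{2\alpha}\frac{(\alpha-\beta)\epsilon}{((\alpha-\beta)^2+\epsilon^2)^2}d\beta=\tfrac{\epsilon}{2(\alpha^2+\epsilon^2)}-\tfrac{\epsilon}{2(\alpha^2+\epsilon^2)}=0$, so with $c=\tfrac{2}{\pi}\lambda a_1$ one has $\int c g_iK_1g_i\,d\alpha=-\int c(\alpha) g_i(\alpha)\int_0^{2\alpha}g_i(\beta)\tfrac{(\alpha-\beta)\epsilon}{((\alpha-\beta)^2+\epsilon^2)^2}d\beta\,d\alpha$. Splitting $\int_0^{2\alpha}=\int_0^{\pi}-\int_{2\alpha}^{\pi}$, the far-away piece satisfies $\tfrac{1}{|\beta-\alpha|}\le\tfrac{3}{\alpha+\beta}$ for $\beta\ge 2\alpha$, so Hilbert's inequality bounds it by $\tilde{B.T}$. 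In the full-range piece $\int_0^{\pi}\!\int_0^{\pi}$ the kernel is antisymmetric in $\alpha\leftrightarrow\beta$, so swapping produces $\tfrac12\bigl(c(\alpha)-c(\beta)\bigr)g_i(\alpha)g_i(\beta)\cdot\text{(antisym.\ kernel)}$; the $C^2$-bound $|c(\alpha)-c(\beta)|\lesssim|\alpha-\beta|$ and the pointwise estimate $\tfrac{|\alpha-\beta|^2\epsilon}{((\alpha-\beta)^2+\epsilon^2)^2}\le\tfrac{\epsilon}{(\alpha-\beta)^2+\epsilon^2}$ reduce matters to the Poisson-kernel convolution $\|P_\epsilon\ast|g_i|\|_{L^2}\le\pi\|g_i\|_{L^2}$. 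This gives $|I|\lesssim\|g\|_{L^2}^2$ without drawing on the $-II$ budget, completing \eqref{3linearcoro01}.

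For the transport estimate \eqref{3linearcoro02}, the polarisation identity $\Re\bigl(g(\alpha)\overline{(g(\alpha+\epsilon)-g(\alpha))}\bigr)=\tfrac12(|g(\alpha+\epsilon)|^2-|g(\alpha)|^2)-\tfrac12|g(\alpha+\epsilon)-g(\alpha)|^2$, followed by the change of variables $\alpha\mapsto\alpha-\epsilon$ in the shifted piece (legitimate since $\supp g\subset[0,\tfrac{\pi}{4}]$ makes the $[\pi,\pi+\epsilon]$ contribution vanish), rewrites the inner product as
\[
\tfrac{\kappa(t)}{2}\int_\epsilon^{\pi}\tfrac{\lambda(\alpha-\epsilon)-\lambda(\alpha)}{\epsilon}|g(\alpha)|^2 d\alpha-\tfrac{\kappa(t)}{2\epsilon}\int_0^{\epsilon}\lambda(\alpha)|g(\alpha)|^2 d\alpha-\tfrac{\kappa(t)}{2\epsilon}\int_0^{\pi}\lambda(\alpha)|g(\alpha+\epsilon)-g(\alpha)|^2 d\alpha.
\]
By the mean value theorem the first term is bounded by $\tfrac{\kappa(t)}{2}\|\lambda'\|_\infty\|g\|_{L^2}^2$ uniformly in $\epsilon$; the remaining two terms are non-positive because $\kappa(t)>0$ and $\lambda\ge 0$, which is exactly the expected good boundary behaviour of outgoing transport with positive velocity at $\alpha=0$. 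The principal obstacle throughout is the constant bookkeeping: the factor $18$ in \eqref{RT2condition} was calibrated precisely against $2\times C=2\times 9$ from G\r{a}rding applied to the four cross sub-pieces, and the extra $\tfrac{2}{\pi}<1$ in front of the $K_1$-type cross pieces is what prevents the absorption from overflowing the $-II$ budget and allows $I$ to be handled independently.
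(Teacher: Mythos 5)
Your proof is correct and follows essentially the same route as the paper's: you split into real and imaginary parts, control the $\Re L_1^+$ contribution via the odd-kernel cancellation $\int_0^{2\alpha}\frac{(\alpha-\beta)\epsilon}{((\alpha-\beta)^2+\epsilon^2)^2}d\beta=0$ (this is precisely the content of the paper's Lemma \ref{integration by part control}), absorb the four cross terms into the dominant $\Re L_2^{+}$ quadratic form using Corollary \ref{3corogardingmain} together with the refined R-T bound, and establish \eqref{3linearcoro02} from the polarisation identity and the sign of $\kappa(t)>0$. The only differences are organizational (your $I,II,III,IV$ decomposition versus the paper's $I_1,I_2$) and minor constant bookkeeping, which you tracked correctly.
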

\begin{proof}
We separate the real and imaginary part of $(LHS)$ of \eqref{3linearcoro01} and have
\begin{equation}\label{3E3split01}
\begin{split}
    I_1=&\Re \int_{0}^{\pi}g(\alpha)\overline{\lambda(\alpha)\frac{2}{\pi}L_1^{+}(h)(\al) \int_{0}^{2\al}\frac{g(\alpha)-g(\beta))(\alpha-\beta)\epsilon}{((\alpha-\beta)^2+\epsilon^2)^2}d\beta}d\alpha\\
=&\int_{0}^{\pi}\frac{2}{\pi}\lambda(\alpha)\Re L_1^{+}(h)(\al)\Re g(\alpha)\int_{0}^{2\al}\frac{(\Re g(\alpha)-\Re  g(\beta))(\alpha-\beta)\epsilon}{((\alpha-\beta)^2+\epsilon^2)^2}d\beta d\alpha\\
    &+ \int_{0}^{\pi}\frac{2}{\pi}\lambda(\alpha)\Re L_1^{+}(h)(\al)\Im g(\alpha)\int_{0}^{2\al}\frac{(\Im g(\alpha)-\Im g(\beta))(\alpha-\beta)\epsilon}{((\alpha-\beta)^2+\epsilon^2)^2}d\beta d\alpha\\
    &- \int_{0}^{\pi}\frac{2}{\pi}\lambda(\alpha)\Im L_1^{+}(h)(\al)\Re g(\alpha)\int_{0}^{2\al}\frac{(\Im g(\alpha)-\Im g(\beta))(\alpha-\beta)\epsilon}{((\alpha-\beta)^2+\epsilon^2)^2}d\beta d\alpha\\
    &+ \int_{0}^{\pi}\frac{2}{\pi}\lambda(\alpha)\Im L_1^{+}(h)(\al) \Im g(\alpha)\int_{0}^{2\al}\frac{(\Re g(\alpha)-\Re  g(\beta))(\alpha-\beta)\epsilon}{((\alpha-\beta)^2+\epsilon^2)^2}d\beta d\alpha
\end{split}
\end{equation}

\begin{equation}\label{3E1split}
\begin{split}
    I_2=&\quad \Re<g, \lambda(\alpha)L_2^{+}(h)(\alpha)\int_{0}^{2\alpha}\frac{(g(\alpha)-g(\beta))}{(\alpha-\beta)^2+\epsilon^2}d\beta>\\=&\int_{0}^{\pi}\lambda(\alpha)\Re L_2^{+}(h)(\al) \Re g(\alpha)\int_{0}^{2\al}\frac{\Re g(\alpha)-\Re g(\beta)}{(\alpha-\beta)^2+\epsilon^2}d\beta d\alpha\\
    &+ \int_{0}^{\pi}\lambda(\alpha)\Re L_2^{+}(h)(\al) \Im g(\alpha)\int_{0}^{2\al}\frac{\Im g(\alpha)-\Im g(\beta)}{(\alpha-\beta)^2+\epsilon^2}d\beta d\alpha\\
    &- \int_{0}^{\pi}\lambda(\alpha)\Im L_2^{+}(h)(\al) \Re  g(\alpha)\int_{0}^{2\al}\frac{\Im g(\alpha)-\Im g(\beta)}{(\alpha-\beta)^2+\epsilon^2}d\beta d\alpha\\
    &+\int_{0}^{2\al}\lambda(\alpha)\Im L_2^{+}(h)(\al)  \Im g(\alpha)\int_{0}^{2\al}\frac{\Re g(\alpha)-\Re g(\beta)}{(\alpha-\beta)^2+\epsilon^2}d\beta d\alpha
\end{split}
\end{equation}
For $I_1$, we need an extra lemma for the good term.
\begin{lemma}\label{integration by part control}
If $L_1^{+}(h)(0)=0$, $L_1^{+}(h)(\alpha)\in C^{1}_{\alpha}[0,\frac{\pi}{4}]$, $g\in L_{\al}^{2}[0,\pi]\cap \{g(x)=0|x\geq \frac{\pi}{2}\}$,  we have
\begin{align*}
    |\int_{0}^{\pi}\lambda(\al)L_1^{+}(h)(\al)g(\alpha)\int_{0}^{2\al}\frac{(g(\alpha)-g(\beta))(\alpha-\beta)\epsilon}{((\alpha-\beta)^2+\epsilon^2)^2}d\beta d\alpha|\lesssim \|L_{1}^{+}(h)(\alpha)\|_{C_{\al}^{1}[0,\frac{\pi}{4}]}\|g\|^2_{L_{\al}^2[0,\pi]}.
    \end{align*}
\begin{proof}
We separate $g(\alpha)$, $g(\beta)$ and get
\begin{align*}
     &|\int_{0}^{\pi}\lambda(\al)L_1^{+}(h)(\al)g(\alpha)\int_{0}^{2\al}\frac{(g(\alpha)-g(\beta))(\alpha-\beta)\epsilon}{((\alpha-\beta)^2+\epsilon^2)^2}d\beta d\alpha|\\
     &=
      |\int_{0}^{\pi}\lambda(\al)L_1^{+}(h)(\al)g(\alpha)\int_{0}^{2\al}\frac{(g(\beta))(\alpha-\beta)\epsilon}{((\alpha-\beta)^2+\epsilon^2)^2}d\beta d\alpha|\\
        &\lesssim |\int_{0}^{\pi}\lambda(\al)L_1^{+}(h)(\al)g(\alpha)\int_{2\al}^{\pi}\frac{(g(\beta))(\alpha-\beta)\epsilon}{((\alpha-\beta)^2+\epsilon^2)^2}d\beta d\alpha|+
      |\int_{0}^{\pi}\lambda(\al)L_1^{+}(h)(\al)g(\alpha)\int_{0}^{\pi}\frac{(g(\beta))(\alpha-\beta)\epsilon}{((\alpha-\beta)^2+\epsilon^2)^2}d\beta d\alpha|\\
     &\lesssim \int_{0}^{\frac{\pi}{4}}\|L_1^{+}(h)\|_{C^{1}_{\alpha}[0,\frac{\pi}{4}]}|g(\alpha)|\int_{2\al}^{\pi}\frac{|g(\beta)|}{\al+\beta}d\alpha\\
     &\quad+ |\frac{1}{2}\int_{0}^{\pi}g(\alpha)\int_{0}^{\pi}\frac{(\lambda(\al)L_1^{+}(h)(\al)-\lambda(\beta)(L_1^{+}(h)(\beta))}{\alpha-\beta}\frac{(g(\beta))(\alpha-\beta)^2\epsilon}{((\alpha-\beta)^2+\epsilon^2)^2}d\beta d\alpha|\\
     &\lesssim \|L_1^{+}(h)(\al)\|_{C_{\alpha}^{1}[0,\frac{\pi}{4}]}\|g\|^2_{L_{\al}^2[0,\pi]}\\
     &\quad+ |\int_{0}^{\pi}|g(\alpha)|\int_{0}^{\pi}\|L_1^{+}(h)(\al)\|_{C_{\alpha}^{1}[0,\frac{\pi}{4}]}\frac{|g(\beta)|\epsilon}{(\alpha-\beta)^2+\epsilon^2}d\beta d\alpha|\\
     &\lesssim \|L_1^{+}(h)(\al)\|_{C_{\alpha}^{1}[0,\frac{\pi}{4}]}\|g\|^2_{L_{\al}^2[0,\pi]},
\end{align*}
where we use the Young's inequality in the last step.
\end{proof}
\end{lemma}

Then from lemma \ref{integration by part control}, \eqref{3E3split01}, we can control the term with $\Re L_{1}^{+}(h)$ and have
\begin{equation}\label{3E3split}
\begin{split}
    I_1=&\Re \int_{0}^{\pi}g(\alpha)\overline{\lambda(\alpha)\frac{2}{\pi}L_1^{+}(h)(\al) \int_{0}^{2\al}\frac{g(\alpha)-g(\beta))(\alpha-\beta)\epsilon}{((\alpha-\beta)^2+\epsilon^2)^2}d\beta}d\alpha\\
    =&- \int_{0}^{\pi}\frac{2}{\pi}\lambda(\alpha)\Im L_1^{+}(h)(\al)\Re g(\alpha)\int_{0}^{2\al}\frac{(\Im g(\alpha)-\Im g(\beta))(\alpha-\beta)\epsilon}{((\alpha-\beta)^2+\epsilon^2)^2}d\beta d\alpha\\
    &+ \int_{0}^{\pi}\frac{2}{\pi}\lambda(\alpha)\Im L_1^{+}(h)(\al)\Im g(\alpha)\int_{0}^{2\al}\frac{(\Re g(\alpha)-\Re  g(\beta))(\alpha-\beta)\epsilon}{((\alpha-\beta)^2+\epsilon^2)^2}d\beta d\alpha\\
    &+B.T.^{0},
\end{split}
\end{equation}
where 
\[
B.T.^{0}\lesssim\|g\|^2_{L_{\al}^2[0,\pi]}.
\] 

Then from \eqref{3E1split} and \eqref{3E3split} and the conditions of $L_i^{+}(h)$ \eqref{RT2condition}, we could use the refined G\r{a}rding's inequality lemma \ref{Garding} to get \eqref{3linearcoro01}.

For \eqref{3linearcoro02} , the good sign $ \kappa(t)>0$ gives us the desired estimate. We have
\begin{equation}
\begin{split}
&\int_{0}^{\pi}g(\alpha)\overline{\lambda(\alpha)\frac{g(\alpha+\epsilon)-g(\al)}{\epsilon}}d\beta\\
=&\int_{0}^{\pi}\Re g(\alpha)\lambda(\alpha)\frac{\Re g(\alpha+\epsilon)-\Re g(\al)}{\epsilon}d\beta\\
&+\int_{0}^{\pi}\Im g(\alpha)\lambda(\alpha)\frac{\Im g(\alpha+\epsilon)-\Im g(\al)}{\epsilon}d\beta.
\end{split}
\end{equation}
Moreover,
\begin{align*}
&\int_{0}^{\pi}\Re g(\alpha)\lambda(\alpha)\frac{\Re g(\alpha+\epsilon)-\Re g(\al)}{\epsilon}d\beta\\
=&\frac{1}{\epsilon}\int_{0}^{\pi}\Re g(\alpha)\lambda(\alpha)\Re g(\alpha+\epsilon)d\beta-\frac{1}{2\epsilon}\int_{0}^{\pi} (\Re g(\alpha))^2\lambda(\alpha)d\al-\frac{1}{2\epsilon}\int_{0}^{\pi}(\Re g(\alpha+\epsilon))^2\lambda(\alpha+\epsilon)d\al\\
&-\frac{1}{2\epsilon}\int_{0}^{\epsilon}(\Re g(\alpha))^2\lambda(\alpha)d\al\\
=&-\frac{1}{\epsilon}\int_{0}^{\pi}\lambda(\alpha)(\Re g(\al)-\Re g(\al+\epsilon))^2d\al+\frac{1}{2\epsilon}\int_{0}^{\pi}(\Re g(\al+\epsilon))^2(\lambda(\alpha)-\lambda(\alpha+\epsilon))d\al\\
&-\frac{1}{\epsilon}\int_{0}^{\epsilon}(\Re g(\al))^2\lambda(\al)d\al\\
\leq &\int_{0}^{\pi}(\Re g(\alpha+\epsilon))^2\frac{|\lambda(\alpha)-\lambda(\alpha+\epsilon)|}{2\epsilon}d\alpha\\
\lesssim& \|\Re g(\alpha))\|_{L^2([0,\pi])}^{2}.
\end{align*}
Here we use the fact that $\supp \lambda \subset [0,\frac{\pi}{2}]$. The term of $\Im g$ can be treated in the same way.

Now we introduce several corollaries for later use.
\end{proof}
\begin{corollary}\label{301}
For any $12\geq k\geq 1$, $\kappa(t)>0$, if $g\in H^{k}_{\al}[0,\pi]\cap\{\supp g\subset[0,\frac{\pi}{4}]\}$, $h\in X^{k}$ with $h$, $L_1^{+}(h)$, $L_2^{+}(h)$ satisfying properties in \eqref{3GE01}, we have
\begin{equation}\label{3linearcoro0102}
\begin{split}
\Re &<g(\al), \lambda(\al)L_1^{+}(h)(\al,\gamma)\int_{0}^{2\al}\frac{(g(\al)-g(\beta))(\al-\beta)\ep}{((\al-\beta)^2+\ep^2)^2}d\beta\frac{2}{\pi}+\lambda(\al)L_1^{+}(h)(\al,\g)\sum_{j=0}^{k-1}b_{1,j}^{\ep}(\al)g^{<j>}(0)\frac{2}{\pi}\\
&+\lambda(\alpha)L_2^{+}(h)(\alpha,\g)\int_{0}^{2\alpha}\frac{(g(\alpha)-g(\beta))}{(\alpha-\beta)^2+\epsilon^2}d\beta -\lambda(\alpha)\sum_{j=0}^{k-1}b_{2,j}^{\epsilon}(\al)L_2^{+}(h)(\al,\g)g^{<j>}(0)>_{X^{k}}\\
&\lesssim \|g\|_{H^{k}_{\al}[0,\pi]}^{2}C(\|h\|_{X^{k}}).
\end{split}
\end{equation}
\begin{equation}\label{3linearcoro0202}
\begin{split}
\Re &<g,\lambda(\alpha)\frac{g(\alpha+\epsilon)-g(\alpha)}{\epsilon}\kappa(t)>_{H_{\al}^{k}[0,\pi]}\lesssim \|g\|_{H_{\al}^{k}[0,\pi]}^{2}C(\|h\|_{X^{k}}).
\end{split}
\end{equation}
\end{corollary}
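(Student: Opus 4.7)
The strategy is a standard bootstrap: reduce the $H_{\alpha}^{k}$--level estimates \eqref{3linearcoro0102}--\eqref{3linearcoro0202} to the $L^{2}_{\alpha}$--level estimates already proved in Lemma \ref{30101}, applied to $g^{(k)}$ in place of $g$, by controlling every commutator of $\partial_{\alpha}^{k}$ with the singular integral operators and with the cutoff $\lambda$. The Taylor subtractions and boundary corrections $b_{1,j}^{\ep}, b_{2,j}^{\ep}$ in the definition of $\tilde{M}_{1,2}^{k,\ep}$ and $\tilde{M}_{2,1}^{k,\ep}$ are tailored precisely so that these commutators remain $\ep$--uniformly bounded in $L^{2}_{\alpha}$.

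For the first inequality \eqref{3linearcoro0102}, recognize the quantity under the inner product as the $g$--version of $\tilde{M}_{1,2}^{k,\ep}(g) + \tilde{M}_{2,1}^{k,\ep}(g)$ (with $L_{i}^{+}$ still depending on $h$) in the form \eqref{3M12split}--\eqref{3M21split}. By Corollary \ref{3lemmaotherterms02},
\[
\partial_{\alpha}^{k}\bigl[\text{full expression}\bigr] = \lambda(\alpha)\tfrac{2}{\pi}L_{1}^{+}(h)\!\int_{0}^{2\alpha}\!\tfrac{(g^{(k)}(\alpha)-g^{(k)}(\beta))(\alpha-\beta)\ep}{((\alpha-\beta)^{2}+\ep^{2})^{2}}d\beta + \lambda(\alpha)L_{2}^{+}(h)\!\int_{0}^{2\alpha}\!\tfrac{g^{(k)}(\alpha)-g^{(k)}(\beta)}{(\alpha-\beta)^{2}+\ep^{2}}d\beta + \mathcal{R}^{\ep},
\]
with $\|\mathcal{R}^{\ep}\|_{L^{2}_{\alpha}[0,\pi]} \lesssim C(\|h\|_{X^{k}})\|g\|_{H^{k}_{\alpha}[0,\pi]}$ uniformly in $\ep$. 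Pairing with $g^{(k)}$ in $L^{2}_{\alpha}$, the two leading singular pieces fall under Lemma \ref{30101} applied with $g^{(k)}$ in place of $g$: indeed $\supp g \subset [0,\pi/4]$ forces $\supp g^{(k)} \subset [0,\pi/4]$, the refined Rayleigh--Taylor and vanishing conditions on $L_{i}^{+}(h)$ are inherited from the assumed conditions on $h$, so \eqref{3linearcoro01} delivers a bound of $\|g^{(k)}\|_{L^{2}}^{2} \le \|g\|_{H^{k}}^{2}$. The remainder $\langle g^{(k)}, \mathcal{R}^{\ep}\rangle_{L^{2}}$ is closed by Cauchy--Schwarz, and the lower--order contributions $\langle g^{(j)}, \partial_{\alpha}^{j}[\cdots]\rangle_{L^{2}}$ for $j < k$ are absorbed using the subcritical bound \eqref{3boundedlower} of Lemma \ref{3lemmaotherterms} combined with Cauchy--Schwarz.

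For the transport estimate \eqref{3linearcoro0202}, I use that $\partial_{\alpha}$ commutes with the finite difference, so
\[
\partial_{\alpha}^{k}\!\Bigl(\lambda(\alpha)\kappa(t)\tfrac{g(\alpha+\ep)-g(\alpha)}{\ep}\Bigr) = \lambda(\alpha)\kappa(t)\tfrac{g^{(k)}(\alpha+\ep)-g^{(k)}(\alpha)}{\ep} + \sum_{j=0}^{k-1}\binom{k}{j}\lambda^{(k-j)}(\alpha)\kappa(t)\tfrac{g^{(j)}(\alpha+\ep)-g^{(j)}(\alpha)}{\ep}.
\]
The top--order term is paired with $g^{(k)}$ and controlled by \eqref{3linearcoro02} of Lemma \ref{30101}, whose two key hypotheses, $\kappa(t) > 0$ and $\supp g^{(k)} \subset [0,\pi/4]$, both hold. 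For each commutator with $j \le k-1$, rewrite $\frac{g^{(j)}(\alpha+\ep)-g^{(j)}(\alpha)}{\ep} = \int_{0}^{1} g^{(j+1)}(\alpha+\zeta\ep)\,d\zeta$, whose $L^{2}_{\alpha}[0,\pi]$--norm is bounded by $\|g\|_{H^{j+1}} \le \|g\|_{H^{k}}$ uniformly in $\ep$, so Cauchy--Schwarz closes the bound.

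The main obstacle is the uniform--in--$\ep$ estimate on $\mathcal{R}^{\ep}$: the kernels $\frac{(\alpha-\beta)\ep}{((\alpha-\beta)^{2}+\ep^{2})^{2}}$ and $\frac{1}{(\alpha-\beta)^{2}+\ep^{2}}$ are individually of size $\ep^{-1}$, so a naive application of $\partial_{\alpha}^{k}$ would lose factors of $\ep^{-1}$. The reason this does not happen is that (i) the Taylor subtraction $h - \sum_{j=0}^{k-1}\frac{\alpha^{j}}{j!}h^{(j)}(0)$ vanishes to order $k$ at the origin, producing an $\alpha^{k}$ factor that is traded against derivatives via Lemma \ref{farboundarybound}, and (ii) the boundary corrections $b_{i,j}^{\ep}$ were engineered so that their combined contribution cancels precisely the boundary residues that would otherwise appear in Lemmas \ref{coro22ep} and \ref{coro22epm12}. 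Once these ingredients are assembled, the corollary is a routine bootstrap from the scalar estimates of Lemma \ref{30101}, with the vanishing condition $L_{i}^{+}(h)(0,\gamma,t) = 0$ providing the extra $\alpha$ weight needed to absorb the lower--order commutator from integration against $\partial_{\alpha}\lambda$ in \eqref{3E3split}.
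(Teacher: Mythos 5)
Your proof is correct and follows the same route as the paper: apply Corollary~\ref{3lemmaotherterms02} to peel off the top-order derivative, pair the leading singular piece with $g^{(k)}$ via Lemma~\ref{30101}, and absorb the commutators; \eqref{3linearcoro0202} is closed by a Leibniz expansion plus \eqref{3linearcoro02} with $g^{(k)}$. The only blemish is that the subcritical ($j<k$) contributions should be cited from the lower-order estimates of Corollary~\ref{3lemmaotherterms02} (which is the $g$-version) rather than \eqref{3boundedlower} of Lemma~\ref{3lemmaotherterms} (which concerns $h$), but this is a bookkeeping slip, not a gap in the argument.
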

\begin{proof}
We can use lemma \ref{3lemmaotherterms02} and lemma \ref{30101} to get the result. For \eqref{3linearcoro0102}, we can use lemma \ref{3lemmaotherterms02} to control the bounded terms, and \eqref{3linearcoro01} to control the cancellation of the higher-order term.
\eqref{3linearcoro0202} directly follows from \eqref{3linearcoro02}.
\end{proof}
When $g$ is sufficiently smooth,  we could let $\epsilon\to 0$ and have
\begin{corollary}\label{32}
For  $g\in H^{1}_{\al}[0,\pi]\cap\{\supp g\in[0,\frac{\pi}{2}]\}$, $h\in X^{1}$ with $L_1^{+}(h)$, $L_2^{+}(h)$ satisfying satisfying properties in \eqref{3GE01}, then we have for $k_1=0,1$, $\kappa(t)>0$,
\begin{equation}\label{M11estimateenergy}
\begin{split}
\Re &<g, \lambda(\alpha)g'(\alpha)\kappa(t)>_{H^{k_1}_{\alpha}[0,\pi]}\\
&\lesssim \|g\|_{H^{k_1}_{\alpha}[0,\pi]}^{2}C(\|h\|_{X^{1}}).
\end{split}
\end{equation}
\begin{equation}\label{L12estimateenergy}
\begin{split}
\Re &<g, \lambda(\al)L_1^{+}(h)(\al,\g)g'(\al,\gamma)+\lambda(\alpha)L_2^{+}(h)(\alpha,\gamma)p.v.\int_{0}^{2\alpha}\frac{(g(\alpha,\gamma)-g(\beta,\gamma))}{(\alpha-\beta)^2}d\beta>_{H^{k_1}_{\alpha}[0,\pi]}\\
&\lesssim \|g\|_{H^{k_1}_{\alpha}[0,\pi]}^{2}C(\|h\|_{X^{1}}).
\end{split}
\end{equation}
\end{corollary}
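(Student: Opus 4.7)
The plan is to derive Corollary \ref{32} as the $\epsilon\to 0$ limit of the $\epsilon$-perturbed estimates already proved in Lemma \ref{30101} and Corollary \ref{301}. Concretely, for the $L^2$ estimate ($k_1=0$) I would invoke Lemma \ref{30101} directly with test function $g$, and for the $H^1$ estimate ($k_1=1$) I would apply Corollary \ref{301} with $k=1$.

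A key simplification when $k=1$ is that the boundary-correction sums $\sum_{j=0}^{k-1}b_{i,j}^\epsilon(\alpha)g^{(j)}(0)$ collapse: by the definitions \eqref{3bj1}--\eqref{3bj2} both $b_{1,0}^\epsilon$ and $b_{2,0}^\epsilon$ vanish identically, while the polynomial subtraction in the integrand reduces to subtracting the constant $g(0)$, which then cancels in the difference $(g(\alpha)-g(0))-(g(\beta)-g(0))$. So for $k=1$ the operator estimated by Corollary \ref{301} is exactly the $\epsilon$-regularization of $\lambda L_1^+(h)g' + \lambda L_2^+(h)\,\mathrm{p.v.}\int_0^{2\alpha}\frac{g(\alpha)-g(\beta)}{(\alpha-\beta)^2}d\beta$, with no spurious polynomial remainders surviving in the limit.

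For sufficiently smooth $g$ (say $g\in C^\infty_c((0,\pi/2))$), Lemma \ref{limitlemma01} gives strong $L^2$-convergence of the $\epsilon$-regularized nonlocal operators to their principal-value limits, and $\frac{g(\alpha+\epsilon)-g(\alpha)}{\epsilon}\to g'(\alpha)$ in $H^{k_1}$ is standard. Since the right-hand side $\|g\|_{H^{k_1}}^2 C(\|h\|_{X^1})$ is uniform in $\epsilon$, the inequality passes to the limit. Alternatively the transport estimate \eqref{M11estimateenergy} admits a direct integration-by-parts proof using
\[
\Re\langle g,\lambda g'\kappa\rangle_{L^2} = -\frac{\kappa}{2}\int_0^\pi |g|^2\lambda'\, d\alpha - \frac{\kappa\lambda(0)}{2}|g(0)|^2,
\]
where $\kappa>0$ makes the boundary term at $\alpha=0$ have the favorable sign; the $H^1$ case follows by pairing with $g'$ and a similar computation, together with the $L^\infty$ bound on $\lambda'$.

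To pass from the smooth class to $g\in H^1$ (or the higher Sobolev regularity needed to make sense of the $H^1$-pairing in \eqref{L12estimateenergy}), I would use density together with continuity of both sides: Corollary \ref{3coro22} controls the principal-value operator in the appropriate Sobolev norm, so the left-hand side of \eqref{L12estimateenergy} depends continuously on $g$. The main obstacle I expect is tracking the commutator of $\partial_\alpha$ with $\int_0^{2\alpha}(\cdot)/(\alpha-\beta)^2\, d\beta$ in the $k_1=1$ case: the variable upper limit $2\alpha$ produces boundary evaluations at $\beta=2\alpha$ that were already absorbed in Lemma \ref{coro22ep} at the $\epsilon$-level, and one must verify these do not blow up when $\epsilon\to 0$ and that the refined R-T inequality is preserved in the limit so that the G\r{a}rding-type cancellation underlying Lemma \ref{30101} still produces the desired sign.
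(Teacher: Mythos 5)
Your proposal is correct and follows essentially the same route as the paper: both pass to the $\epsilon\to 0$ limit of Lemma \ref{30101} (for $k_1=0$) and Corollary \ref{301} with $k=1$ (for $k_1=1$), using the key observation that $b_{1,0}^\epsilon=b_{2,0}^\epsilon=0$ so the boundary-correction sums vanish when $k=1$. The direct integration-by-parts derivation you offer for \eqref{M11estimateenergy} and the density discussion are reasonable supplements but not needed beyond what the cited lemmas already provide.
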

\begin{proof}
For \eqref{L12estimateenergy}, since  $b_{1,0}^{\epsilon}=b_{2,0}^{\epsilon}=0$ (\eqref{3bj1}, \eqref{3bj2}), we could take limit of \eqref{3linearcoro0102} in corollary \ref{301} when $k_1=1$, and of \eqref{3linearcoro01} in corollary \ref{30101} when $k_1=0$ . For \eqref{M11estimateenergy}, we could take limit of \eqref{3linearcoro0202} in corollary \ref{301} when $k_1=1$, and of \eqref{3linearcoro02} in corollary \ref{30101} when $k_1=0$.
\end{proof}

\subsubsection{The Picard theorem and the limit}
 Since there is no singularity when $\epsilon>0$, it is easy to see that when $k=12$, $T^\epsilon(h)$ is a Lipschitz map from $C_t^{0}{([0,t_{\epsilon}],X^{12})}$ to itself. From the Picard theorem, for any $\epsilon>0$, there exists $t_{\epsilon}$, $h^\ep\in C_{t}^{0}([0,t_{\ep}],X^{k})$ such that
\begin{equation}\label{picard1}
\begin{split}
&\frac{dh^{\ep}(\al,\g,t)}{dt}=T^{\ep}(h^{\ep})(\al,\g,t),\\
&h^{\ep}(\al,\g,0)=f^{+}(\al,0).
\end{split}
\end{equation}
We also have the integration equation
\begin{equation}\label{3picard2}
\begin{split}
&h^{\ep}(\al,\g,t)=f^{+}(\al,0)+\int_{0}^{t}T^{\ep}(h^{\ep})(\al,\g,s)ds.
\end{split}
\end{equation}
Moreover, by the energy estimate \eqref{3eneryestimate},   conditions of generalized equation \eqref{3GE01}, there exists a bound independent of $\epsilon$:
\begin{equation}\label{unitbound}
\|h^{\ep}(\al,\g,t)\|_{C_{t}([0,t_1],X^{12})}\lesssim 1 ,
\end{equation}
for some $t_1\leq t_s$.

Now we show that there exists $\ep_{n}$, such that $h^{\ep_{n}}$ converges strongly in $C_{t}^{0}([0,t_1], X^{11}).$ From the uniform bound in \eqref{unitbound}, we only need to show the strong convergence in $C_{t}^{0}([0,t_1], X^{1}).$ We first prove the following estimate:
%The main obstacle appears when we bound the difference of $h(\al,\g)-h(\al,\g')$. The best estimate we could get is the $H_{\al}^{k-1}$, but $h^{k-1}(0,\gamma)$ shown in \eqref{3bj1} and \eqref{3bj2} is not bounded by $\|h\|_{H_{\al}^{k-1}}$. 
%We will use the fact that $\al b_{j,1}^{\ep}$ and $b_{j,2}^{\ep}$ strongly tends to 0 in $H^1_{\al}[0,\pi]$ and Arzela-Ascoli theorem to show the convergence. 
\begin{lemma}
For $\gamma, \gamma' \in[-1,1],$ $t\in[0,t_1]$, when $k=12$, we have
\begin{equation}\label{3gammabound}
\Re<h^{\ep}(\al,\g)-h^{\ep}(\al,\g'),T^{\ep}(h^{\ep})(\al,\g)-T^{\ep}(h^{\ep})(\al,\g')>_{H_{\al}^{1}[0,\pi]}\lesssim |\g-\g'|^2+ O(\ep)+\|h^{\ep}(\al,\g)-h^{\ep}(\al,\g')\|_{H_{\al}^{1}[0,\pi]}^2,
\end{equation}
with $\lesssim$ does not depend on $\ep$, $\g$, $\g'$ or t.
\end{lemma}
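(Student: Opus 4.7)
Let $\Delta h(\alpha,t) := h^{\epsilon}(\alpha,\gamma,t) - h^{\epsilon}(\alpha,\gamma',t)$. The plan is to split
$$T^{\epsilon}(h^{\epsilon})(\cdot,\gamma) - T^{\epsilon}(h^{\epsilon})(\cdot,\gamma') \;=\; \mathcal{L}^{\epsilon}_{\gamma}[\Delta h] \;+\; R(\gamma,\gamma'),$$
where $\mathcal{L}^{\epsilon}_{\gamma}$ has exactly the structure of $\tilde M_{1,1}^{\epsilon}+\tilde M_{1,2}^{k,\epsilon}+\tilde M_{2,1}^{k,\epsilon}+\sum_i B_i$ with all coefficients $L_j^{+}(h^{\epsilon})(\cdot,\gamma,t)$ and $B_i(h^{\epsilon})(\cdot,\gamma,t)$ frozen at the slice $\gamma$ and acting linearly on $\Delta h$, while $R$ gathers the terms produced by replacing those coefficients by their values at $\gamma$ versus $\gamma'$ (with the argument $h^{\epsilon}(\cdot,\gamma',t)$ left in). Then the quantity to control decomposes as $\Re\langle \Delta h,\mathcal{L}^{\epsilon}_{\gamma}[\Delta h]\rangle_{H^{1}} + \Re\langle \Delta h, R\rangle_{H^{1}}$.

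For the first piece, one observes that $\supp \Delta h \subset [0,\tfrac{\pi}{4}]$ is inherited from $h^{\epsilon}\in X^{12}$, and that $\Delta h$ enjoys the same $k$-vanishing at $\alpha=0$ used in the boundary corrections $b_{i,j}^{\epsilon}$. Corollary \ref{301} applied with $g=\Delta h$ and $h=h^{\epsilon}(\cdot,\gamma,t)$ then gives
$$\Re\langle \Delta h,\mathcal{L}^{\epsilon}_{\gamma}[\Delta h]\rangle_{H^{1}_{\alpha}[0,\pi]}\;\lesssim\; \|\Delta h\|_{H^{1}_{\alpha}[0,\pi]}^{2}\,C\big(\|h^{\epsilon}\|_{X^{12}}\big),$$
and the uniform bound \eqref{unitbound} makes the $C(\cdot)$ factor $\lesssim 1$. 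All the refined G\r{a}rding and refined R-T machinery from Section~\ref{kappa1sectiongene} is doing the work here, together with the good sign of $\kappa(t)$ for the $\tilde M_{1,1}^{\epsilon}$ contribution.

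For the remainder, the smoothness conditions of the generalized equation \eqref{3GE01}---specifically items (3)--(7) for $L_i^{+}(h)$ and \eqref{3Bdh2ga}--(16) for $B_i(h)$---imply Lipschitz bounds of the form
$$\|L_i^{+}(h^{\epsilon})(\cdot,\gamma,t)-L_i^{+}(h^{\epsilon})(\cdot,\gamma',t)\|_{C^{k+2}_{\alpha}[0,\frac{\pi}{4}]}\;\lesssim\; |\gamma-\gamma'|+\|\Delta h\|_{X^{k}},$$
and similarly for $B_i$, obtained by integrating the operators $D_\gamma L_i^{+}, D_\gamma B_i$ along the segment from $\gamma'$ to $\gamma$. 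Working at $k=12$ while measuring only in $H^{1}$ leaves many orders of regularity to spare, so a direct computation (handling each of the four model types in Lemma~\ref{3boundedtermtype} separately) yields $\|R\|_{H^{1}_{\alpha}[0,\pi]}\lesssim |\gamma-\gamma'|+\|\Delta h\|_{H^{1}_{\alpha}[0,\pi]}$. A Cauchy--Schwarz step together with $ab\le \tfrac12(a^2+b^2)$ then produces $|\langle \Delta h,R\rangle_{H^{1}}|\lesssim |\gamma-\gamma'|^2+\|\Delta h\|_{H^{1}}^{2}$. The $O(\epsilon)$ error appears through the boundary-correction terms carrying the factors $b_{i,j}^{\epsilon}(\alpha)$ in $\tilde M_{1,2}^{k,\epsilon}$ and $\tilde M_{2,1}^{k,\epsilon}$: the difference $h^{\epsilon(j)}(0,\gamma,t)-h^{\epsilon(j)}(0,\gamma',t)$ interacts with $b_{i,j}^{\epsilon}$ only up to a non-uniform contribution of size $\epsilon$ (cf.~the convergence statements in Lemmas~\ref{limitlemma01}--\ref{limitlemme0}), producing the additive $O(\epsilon)$ in the bound.

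The main obstacle is the bookkeeping in the splitting $T^{\epsilon}(h^{\epsilon})(\gamma)-T^{\epsilon}(h^{\epsilon})(\gamma')=\mathcal{L}^{\epsilon}_{\gamma}[\Delta h]+R$ for the nonlocal singular integrals $\tilde M_{1,2}^{k,\epsilon}$ and $\tilde M_{2,1}^{k,\epsilon}$, whose boundary-correction pieces mix $L_j^{+}(h^{\epsilon})(\cdot,\gamma,t)$ with pointwise boundary values $h^{\epsilon(j)}(0,\gamma,t)$. One must verify that the portion of $R$ in which derivatives fall on $h^{\epsilon}(\cdot,\gamma',t)$ is still bounded in $H^{1}$; this is where the loss-of-derivatives budget (control in $X^{12}$ but only measurement in $H^{1}$) is crucial, and where the vanishing of $L_i^{+}(h)$ at $\alpha=0$ is used to tame $b_{i,j}^{\epsilon}$ near the origin uniformly in $\epsilon$. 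Once this bookkeeping is carried out term by term along the classification of Lemma~\ref{3boundedtermtype}, the inequality \eqref{3gammabound} follows.
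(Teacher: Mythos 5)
Your proposal is correct and follows essentially the same strategy as the paper: decompose $T^{\epsilon}(h^{\epsilon})(\gamma)-T^{\epsilon}(h^{\epsilon})(\gamma')$ into a piece linear in $\Delta h$ with coefficients frozen at $\gamma$ (controlled by Corollary \ref{301} at $k=1$, where $b_{i,0}^{\epsilon}=0$, together with the uniform $X^{12}$ bound \eqref{unitbound}), a piece where $L_i^{+}$ and $B_i$ carry the $\gamma$-difference (controlled by the smoothness conditions of \eqref{3GE01} and Lemmas \ref{coro22ep}, \ref{coro22epm12}), and the $b_{i,j}^{\epsilon}$ boundary corrections which contribute $O(\epsilon)$. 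One small clarification: the $O(\epsilon)$ does not come from the smallness of $h^{\epsilon(j)}(0,\gamma)-h^{\epsilon(j)}(0,\gamma')$; each boundary term is already $O(\epsilon)$ in $H^{1}$ on its own, because $\|\alpha\, b_{i,j}^{\epsilon}(\alpha)\|_{H^{1}}=O(\epsilon)$ by Lemma \ref{limitlemme0} (applied with $g=\alpha^{j}/j!$), $\|L_i^{+}(\cdot,\gamma)/\alpha\|_{C^{1}}\lesssim 1$ via the vanishing condition, and $|h^{\epsilon,(j)}(0,\gamma)|\lesssim 1$ from \eqref{unitbound}.
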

\begin{proof}
 From the equation \eqref{perturbationterm}, when $k=12$, we have
\begin{equation}\label{Tsplit}
\begin{split}
&\quad T^{\ep}(h^{\ep})(\al,\g)-T^{\ep}(h^{\ep})(\al,\g')\\
&=\tilde{M}_{1,1}^{\ep}(h^{\ep})(\al,\g)-\tilde{M}_{1,1}^{\ep}(h^{\ep})(\al,\g')\\
&\quad+\tilde{M}_{1,2}^{12,\ep}(h^{\ep})(\al,\g)-\tilde{M}_{1,2}^{12,\ep}(h^{\ep})(\al,\g')\\
&\quad+\tilde{M}_{2,1}^{12,\ep}(h^{\ep})(\al,\g)-\tilde{M}_{2,1}^{12,\ep}(h^{\ep})(\al,\g')\\
&\quad+\sum_{i}(B_i(h^{\ep})(\al,\g)-B_i(h^{\ep})(\al,\g')).
\end{split}
\end{equation}
We can use the notation in \eqref{3M12split} \eqref{3M21split} and have
\begin{align}\label{Msplitsum}
&\quad \tilde{M}_{1,2}^{12,\ep}(h^{\ep})(\al,\g)-\tilde{M}_{1,2}^{12,\ep}(h^{\ep})(\al,\g')
+\tilde{M}_{2,1}^{12,\ep}(h^{\ep})(\al,\g)-\tilde{M}_{2,1}^{12,\ep}(h^{\ep})(\al,\g')\\\nonumber
&=\tilde{M}_{1,2,I}^{\ep}(h^{\ep})(\al,\g)-\tilde{M}_{1,2,I}^{\ep}(h^{\ep})(\al,\g')
+\tilde{M}_{2,1,I}^{\ep}(h^{\ep})(\al,\g)-\tilde{M}_{2,1,I}^{\ep}(h^{\ep})(\al,\g')\\\nonumber
&\quad-\underbrace{\lambda(\alpha)\sum_{j=0}^{11}b_{1,j}^{\epsilon}(\al)((L_1^{+}(h^{\ep})(\al,\g)\frac{2}{\pi}h^{\ep,(j)}(0,\g))-(L_1^{+}(h^{\ep})(\al,\g')\frac{2}{\pi}h^{\ep,(j)}(0,\g')))}_{Term_2}\\\nonumber
&\quad-\underbrace{\lambda(\alpha)\sum_{j=0}^{11}b_{2,j}^{\epsilon}(\al)(L_2^{+}(h^{\ep})(\al,\g)h^{\ep,(j)}(0,\g)-L_2^{+}(h^{\ep})(\al,\g')h^{\ep,(j)}(0,\g'))}_{Term_3}.
\end{align}
From the space of $h^{\ep}$ \eqref{unitbound}, we have for $j\leq 11$, $|h^{\ep,(j)}(0,\g)|\lesssim 1$. Then from estimates \eqref{3b1g0}, \eqref{3b2g0}, and vanishing condition of $L_i^{+}(h)$, by taking $g$ as $\frac{\alpha^{j}}{j!}$, we have
\[
\|Term_{2}\|_{H_{\al}^{1}[0,\pi]}+\|Term_{3}\|_{H_{\al}^{1}[0,\pi]}\lesssim O(\ep).
\]
Then from the equation \eqref{Tsplit}, \eqref{Msplitsum}, by using the conditions of $L_i^{+}(h)$, $B_i(h)$ in \eqref{3GE01} and the uniform bound of $h$ in $X^{12}$ \eqref{unitbound}, and  we have
\begin{equation}\label{Tdiffequation}
\begin{split}
&T^{\ep}(h^{\ep})(\al,\g)-T^{\ep}(h^{\ep})(\al,\g')=\lambda(\al)\kappa(t)\frac{h^{\ep}(\al+\ep,\g)-h^{\ep}(\al+\ep,\g')-(h^{\ep}(\al,\g)-h^{\ep}(\al,\g'))}{\ep}\\
&\quad+\lambda(\al)\int_{0}^{2\al}\frac{(h^{\ep}(\al,\g)-h^{\ep}(\al,\g'))-(h^{\ep}(\beta,\g)-h^{\ep}(\beta,\g'))(\al-\beta)\ep}{((\al-\beta)^2+\ep^2)^2}d\beta L_1^{+}(h^{\ep})(\al,\g)\frac{2}{\pi}\\
&\quad+\lambda(\al)\int_{0}^{2\al}\frac{(h^{\ep}(\al,\g)-h^{\ep}(\al,\g'))-(h^{\ep}(\beta,\g)-h^{\ep}(\beta,\g'))}{(\al-\beta)^2+\ep^2}d\beta L_2^{+}(h^{
\ep})(\al,\g)\\
&\quad+\lambda(\al)\int_{0}^{2\al}\frac{(h^{\ep}(\al,\g')-h^{\ep}(\beta,\g'))(\al-\beta)\ep}{((\al-\beta)^2+\ep^2)^2}d\beta (L_1^{+}(h^{\ep})(\al,\g)-L_1^{+}(h^{\ep})(\al,\g'))\frac{2}{\pi}\\
&\quad+\lambda(\al)\int_{0}^{2\al}\frac{(h^{\ep}(\al,\g')-h^{\ep}(\beta,\g'))}{(\al-\beta)^2+\ep^2}d\beta (L_2^{+}(h^{
\ep})(\al,\g)-L_2^{+}(h^{
\ep})(\al,\g'))\\
&\quad+B.T_{\g},\\
&=T_{\g,1}+T_{\g,2}+T_{\g,3}+T_{\g,4}+T_{\g,5}+B.T_{\g}.
\end{split}
\end{equation}
where $\|B.T_{\g}\|_{H_{\al}^{1}[0,\pi]}\lesssim O(\ep)+|\g-\g'|+\|h^{\ep}(\al,\g)-h^{\ep}(\al,\g')\|_{H_{\al}^1[0,\pi]}.$

From corollary \ref{coro22k01} and lemma \ref{coro22ep}, \ref{coro22epm12}, and the vanishing and smoothness conditions of $L_i^{+}(h)$ in \eqref{3GE01}, we have
\begin{align}\label{Tg45es}
&\quad\|T_{\g,4}+T_{\g,5}\|_{H_{\al}^1[0,\pi]}\\\nonumber
&\lesssim \|\frac{L_1^{+}(h^{\ep})(\al,\g)-L_1^{+}(h^{\ep})(\al,\g')}{\alpha}\|_{C_{\al}^1[0,\frac{\pi}{4}]}+\|\frac{L_2^{+}(h^{\ep})(\al,\g)-L_2^{+}(h^{\ep})(\al,\g')}{\alpha}\|_{C_{\al}^1[0,\frac{\pi}{4}]}\\\nonumber
&\lesssim \|L_1^{+}(h^{\ep})(\al,\g)-L_1^{+}(h^{\ep})(\al,\g')\|_{C_{\al}^2[0,\frac{\pi}{4}]}+\|L_2^{+}(h^{\ep})(\al,\g)-L_2^{+}(h^{\ep})(\al,\g')\|_{C_{\al}^2[0,\frac{\pi}{4}]}\\\nonumber
&\lesssim|\g-\g'|+\|h^{\ep}(\al,\g)-h^{\ep}(\al,\g')\|_{H_{\al}^1[0,\pi]}.
\end{align}

Since $b_{1,0}^{\ep}=b_{2,0}^{\ep}=0$ from \eqref{3bj1}, \eqref{3bj2}, by taking $k=1$ in the estimate \eqref{3linearcoro0102} in corollary \ref{301}, we have 
\begin{align}\label{Tg3es}
\quad\Re<h^{\ep}(\al,\g)-h^{\ep}(\al,\g'),T_{\g,2}+T_{\g,3}>_{H_{\al}^{1}[0,\pi]}\lesssim &\|h^{\ep}(\al,\g)-h^{\ep}(\al,\g')\|_{H_{\al}^1[0,\pi]}^{2}.
\end{align}
By using estimate \eqref{3linearcoro0202}, we have

\begin{align}\label{Tg1es}
\quad\Re<h^{\ep}(\al,\g)-h^{\ep}(\al,\g'),T_{\g,1}>_{H_{\al}^{1}[0,\pi]}\lesssim &\|h^{\ep}(\al,\g)-h^{\ep}(\al,\g')\|_{H_{\al}^1[0,\pi]}^{2}.
\end{align}
Combining the estimates \eqref{Tdiffequation}, \eqref{Tg45es}, \eqref{Tg3es}, \eqref{Tg1es}, we have the bound.
\end{proof}
 Therefore by using esimate \eqref{3gammabound}, from Gronwall's inequality, and $h^{\ep}(\al,\g)-h^{\ep}(\al,\g')|_{t=0}=0$ \eqref{picard1}, we have for $0\leq t\leq t_1$,
\begin{equation}\label{3hgammabound}
\|h^{\ep}(\al,\g)-h^{\ep}(\al,\g')\|_{H_{\al}^{1}[0,\pi]}\lesssim O(\ep)+|\gamma-\gamma'|.
\end{equation}
Moreover, from \eqref{3boundedlower}, \eqref{3boundedB}, we have
\begin{equation}\label{3tbound}
\|T^{\epsilon}(h^{\ep})(\al,\g,t)\|_{X^{1}}\lesssim 1.
\end{equation}
Combining \eqref{3picard2}, and \eqref{3hgammabound}, we have
\begin{equation}\label{3gammatdiff}
 \|h^{\ep}(\al,\g,t)-h^{\ep}(\al,\g',t')\|_{H_{\al}^{1}[0,\pi]}\lesssim |\g-\g'|+|t-t'|+O(\ep).
\end{equation}

We could use the Arzela-Ascoli theorem to show the convergence. In fact,
for each fixed $t\in [0,t_1], \g \in [-1,1]$, by the weak compactness, and Sobolev's embedding theorem, there exists $\ep_{n}^{t,\g}$, and $h(\al,\g,t)\in H_{\al}^{12}[0,\pi]$, with
\[
\lim_{n\to \infty}\epsilon_{n}^{t,\gamma}=0,
\]
\[
\|h^{\ep_{n}^{t,\g}}(\cdot,\g,t)-h(\cdot,\g,t)\|_{H_{\al}^{1}[0,\pi]}\to 0.
\]
Then there exists a sequence $\ep_{n}$ such that for a countable dense set ${\g_i}\subset [-1,1]$, ${t_j} \subset [0,t_1]$, we have \[
\lim_{n\to\infty}\|h^{\epsilon_{n}}(\al,\g_i,t_j)-h(\al,\g_i,t_j)\|_{H_{\al}^{1}[0,\pi]}= 0.
\]
From \eqref{3gammatdiff}, we also have
\[
\lim_{n\to \infty}\sup_{\gamma\in [-1,1],t\in [0,t_1]}\|h^{\epsilon_{n}}(\al,\g,t)-h(\al,\g,t)\|_{H_{\al}^{1}[0,\pi]}= 0.
\]
From the uniform bound \eqref{unitbound}, the interpolation theorem gives 
 \[
\lim_{n\to \infty}\sup_{\gamma\in [-1,1],t\in [0,t_1]}\|h^{\epsilon_{n}}(\al,\g,t)-h(\al,\g,t)\|_{H_{\al}^{11}[0,\pi]}= 0.
\] 
We then have 
\begin{equation}\label{limitspace01}
h(\al,\g,t)\in C_{t}^{0}([0,t_1], X^{11})
\end{equation}
and the uniform convergence.

Hence from \eqref{perturbationterm}, and lemma \ref{limitlemma01}, we could take the limit of \eqref{3picard2} and have 
\begin{equation}\label{3E2new}
\begin{split}
&h(\al,\g,t)=f^{+}(\al,0)+\int_{0}^{t}T(h)(\al,\g,s)ds.
\end{split}
\end{equation}
From lemma \ref{323}, and the space of $h$ \eqref{limitspace01}, we have
\[
\|T(h)(\al,\g,t)\|_{C_{t}^{0}([0,t_1], X^{10})}\lesssim C(\|h(\al,\g,t)\|_{C_{t}^{0}([0,t_1], X^{11})}).
\]
Therefore
\begin{equation}\label{3space1}
h(\al,\g,t)\in C_{t}^{1}([0,t_0], X^{10})\cap L_{t}^{\infty}([0,t_1], L_{\gamma}^{\infty}([-1,1],H_{\al}^{12}[0,\pi])).
\end{equation}
\subsection{The differentiability with respect to $\g$}
Now we show $h(\al,\g,t)$ is differentiable with respect to $\g$. We first show the Lipschitz condition :
\begin{lemma}
We have
\begin{equation}\label{3differenbound1}
\|h(\al,\g,t)-h(\al,\g',t)\|_{H_{\al}^{9}[0,\pi]}\lesssim |\g-\g'|.
\end{equation}
\end{lemma}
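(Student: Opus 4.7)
The plan is to upgrade the $H_\alpha^1$ Lipschitz bound \eqref{3hgammabound} on $h^\epsilon(\cdot,\gamma,t)-h^\epsilon(\cdot,\gamma',t)$ to an $H_\alpha^9$ Lipschitz bound by re-running the energy argument at the higher Sobolev level, and then passing to the limit $\epsilon_n\to 0$ exactly as in the construction of $h$. Set
\[
w^\epsilon(\alpha,\gamma,\gamma',t) := h^\epsilon(\alpha,\gamma,t)-h^\epsilon(\alpha,\gamma',t),
\]
so that $w^\epsilon|_{t=0}=0$ (since $h^\epsilon(\alpha,\gamma,0)=\tilde f^{+(60)}(\alpha,0)$ is $\gamma$-independent) and $\partial_t w^\epsilon = T^\epsilon(h^\epsilon)(\gamma)-T^\epsilon(h^\epsilon)(\gamma')$. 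First I would repeat the decomposition leading to \eqref{Tdiffequation}, but keep every term; namely, split the right-hand side into the transport/singular-integral pieces acting on $w^\epsilon$ with coefficients frozen at $\gamma$, plus correction pieces in which the singular operators act on the fixed function $h^\epsilon(\gamma')$ multiplied by the differences $L_i^+(h^\epsilon)(\gamma)-L_i^+(h^\epsilon)(\gamma')$, $B_i(h^\epsilon)(\gamma)-B_i(h^\epsilon)(\gamma')$, and boundary terms involving $b_{1,j}^\epsilon,b_{2,j}^\epsilon$.

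Next I would perform an $H_\alpha^9$ energy estimate. The main terms of type (i) are handled by \eqref{3linearcoro0102} and \eqref{3linearcoro0202} of Corollary \ref{301} at $k=9$, which give
\[
\Re\langle w^\epsilon,\; \tilde M_{1,1}^\epsilon+\tilde M_{1,2}^{9,\epsilon}+\tilde M_{2,1}^{9,\epsilon}\bigr|_{\text{leading}}\rangle_{H_\alpha^9} \;\lesssim\; \|w^\epsilon\|_{H_\alpha^9}^2\,C(\|h^\epsilon\|_{X^{12}}),
\]
where the refined R--T condition and the boundary vanishing of $L_i^+$ produce the necessary cancellation exactly as in Lemma \ref{30101}. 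For the correction terms of type (ii), the smoothness assumption \eqref{boundestimateL} applied at $k=9$ gives $L_i^+$ and its Lipschitz-in-$h$ increments in $C_\alpha^{11}$, so
\[
\|L_i^+(h^\epsilon)(\gamma)-L_i^+(h^\epsilon)(\gamma')\|_{C_\alpha^{11}} \;\lesssim\; |\gamma-\gamma'|+\|w^\epsilon\|_{H_\alpha^9},
\]
and, combined with the fixed $H_\alpha^{12}$-bound on $h^\epsilon(\gamma')$ from \eqref{unitbound} and Lemmas \ref{coro22ep}, \ref{coro22epm12}, these contribute $\lesssim(|\gamma-\gamma'|+\|w^\epsilon\|_{H_\alpha^9})\|w^\epsilon\|_{H_\alpha^9}$. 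The $B_i$ and boundary-correction terms of type (iii) are bounded in $H_\alpha^9$ by $O(\epsilon)+|\gamma-\gamma'|+\|w^\epsilon\|_{H_\alpha^9}$ using the $B_i$ smoothness conditions in \eqref{3GE01} together with \eqref{3b1g0}, \eqref{3b2g0} applied to $g=\alpha^j/j!$ (the boundary values $h^{\epsilon,(j)}(0,\gamma)$ are uniformly bounded thanks to \eqref{unitbound} and Sobolev embedding).

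Putting these together yields
\[
\tfrac{d}{dt}\|w^\epsilon(\cdot,\gamma,\gamma',t)\|_{H_\alpha^9}^2 \;\lesssim\; \|w^\epsilon\|_{H_\alpha^9}^2+|\gamma-\gamma'|^2+O(\epsilon),
\]
with constants depending only on $C(\|h^\epsilon\|_{X^{12}})\lesssim 1$. Since $w^\epsilon|_{t=0}=0$, Gronwall's inequality gives
\[
\|h^\epsilon(\alpha,\gamma,t)-h^\epsilon(\alpha,\gamma',t)\|_{H_\alpha^9[0,\pi]} \;\lesssim\; |\gamma-\gamma'|+O(\epsilon) \quad\text{on }[0,t_1],
\]
uniformly in $\gamma,\gamma'$. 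Taking the subsequential limit $\epsilon_n\to 0$ using the strong convergence $h^{\epsilon_n}\to h$ in $C_t^0([0,t_1],X^{11})$ established above yields the claimed bound \eqref{3differenbound1}.

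The main obstacle is verifying that the $H_\alpha^9$ energy estimate indeed closes with only the uniform $X^{12}$ bound on $h^\epsilon$: after nine $\alpha$-derivatives hit the singular kernels and coefficients, one must confirm that no term demands more than twelve derivatives of $h^\epsilon$ or more than eleven derivatives of the coefficients $L_i^+$, $B_i$, which is exactly the margin granted by the smoothness assumptions in the generalized equation \eqref{3GE01} evaluated at $k=9$. Once this bookkeeping is checked, the rest is a direct application of Gronwall's inequality and the already-established convergence.
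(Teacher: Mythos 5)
Your plan is to run the $\gamma$-Lipschitz energy estimate for the regularized solutions $h^\epsilon$ at the $H^9_\alpha$ level and then pass to the limit $\epsilon_n\to 0$. This is not the paper's route, and it has a genuine gap: the paper instead performs the $H^9$ energy estimate \emph{directly on the limit function} $h$, using the unperturbed equation \eqref{3GE01} together with the $\epsilon=0$ versions of the energy bounds, namely Lemma \ref{3coro22} and Corollary \ref{32} with $k_1=0$. There are no $b_{i,j}^\epsilon$-terms in that argument at all.

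The gap in your approach is the claim that the boundary-correction pieces involving $b_{1,j}^\epsilon,b_{2,j}^\epsilon$ are $O(\epsilon)$ in $H^9_\alpha$. The lemmas you cite, \eqref{3b1g0} and \eqref{3b2g0}, give that control only in $H^1_\alpha$, and the paper itself is careful to use the $\epsilon$-level Lipschitz bound \eqref{3hgammabound} only in $H^1_\alpha$ for exactly this reason. The higher-order analogue in fact fails: writing $b_{1,j}^\epsilon(\alpha)=\frac{2}{\pi}\int_{-\alpha/\epsilon}^{\alpha/\epsilon}\frac{(g(\alpha)-g(\alpha-\epsilon u))\,u}{\epsilon\,(u^2+1)^2}\,du - g'(\alpha)$ with $g(\alpha)=\alpha^j/j!$, one sees that $b_{1,j}^\epsilon$ is essentially a function of the self-similar variable $\alpha/\epsilon$ in the transition region $\alpha\sim\epsilon$, so $\partial_\alpha^m b_{1,j}^\epsilon = O(\epsilon^{-m})$ there. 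Even after multiplying by the vanishing factor $L_i^+(h)\sim\alpha$, for small $j$ the $H^9_\alpha$-norm of $\alpha\,b_{1,j}^\epsilon$ blows up like a negative power of $\epsilon$ as $\epsilon\to 0$, rather than tending to zero. Hence the uniform-in-$\epsilon$ $H^9$ Gronwall argument you propose does not close, and the subsequent limit step cannot inherit the desired estimate.

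The fix, which is what the paper does, is to exploit the fact that the limit $h$ has already been upgraded by interpolation to $C_t^1([0,t_1],X^{10})\cap L_t^\infty L_\gamma^\infty H_\alpha^{12}$ (see \eqref{3space1}), so you can differentiate the unperturbed equation for $h(\alpha,\gamma,t)-h(\alpha,\gamma',t)$ nine times in $\alpha$ with room to spare, isolate the top-order transport and singular-integral terms (handled by Corollary \ref{32} with $k_1=0$, using the refined R--T condition), and dump everything else into a remainder $BT_\gamma$ controlled via the $C^{k+2}$-smoothness of $L_i^+$, $B_i$ and Lemma \ref{3coro22}. Gronwall then gives \eqref{3differenbound1} with no reference to $\epsilon$.
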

\begin{proof}
From equation \eqref{3GE01}, we get
\begin{equation}
\begin{split}
&\quad\frac{d}{dt}(h(\alpha,\g)-h(\alpha,\g'))\\
&=M_{1,1}(h)(\al,\g)-M_{1,1}(h)(\al,\g')+M_{1,2}(h)(\al,\g)-M_{1,2}(h)(\al,\g')\\
&\quad+M_{2,1}(h)(\al,\g)-M_{2,1}(h)(\al,\g')+\sum_{i}(B_i(h)(\al,\g)-B_i(h)(\al,\g')).
\end{split}
\end{equation}
From the conditions of $B_i(h)$, $L_i^{+}(h)$ in \eqref{3GE01}, we have
\begin{equation}
\begin{split}
&\frac{d}{dt}(h(\al,\g)-h(\al,\g'))\\
&=\lambda(\al)\kappa(t)\partial_{\al}(h(\al,\g)-h(\al,\g'))\\
&\quad+\lambda(\al)L_1^{+}(h)(\al,\g)\frac{\pi}{2}\partial_{\al}(h(\al,\g)-h(\al,\g'))\\
&\quad+\lambda(\al)L_2^{+}(h)(\al,\g)p.v.\int_{0}^{2\al}\frac{(h(\al,\g)-h(\al,\g))-(h(\beta,\g)-h(\beta,\g'))}{(\al-\beta)^2}d\beta\\
&\quad+BT_{\g},
\end{split}
\end{equation}
where 
\[
\|BT_{\g}\|_{H_{\al}^{9}[0,\pi]}\lesssim |\g-\g'|+\|h(\al,\g)-h(\al,\g')\|_{H_{\al}^{9}[0,\pi]}.
\]
Then from lemma \ref{3coro22}, we have
\begin{equation}
\begin{split}
&\quad\Re<h(\al,\g)-h(\al,\g'),\frac{d}{dt}(h(\al,\g)-h(\al,\g'))>_{H_{\al}^{9}[0,\pi]}\\
&\leq <h^{(9)}(\al,\g)-h^{(9)}(\al,\g'),\\
&\quad\lambda(\al)(h^{(10)}(\al,\g)-h^{(10)}(\al,\g'))\kappa(t)+\lambda(\al)L_1^{+}(h)(\al,\g)(h^{(10)}(\al,\g)-h^{(10)}(\al,\g'))\\
&\quad+\lambda(\al)L_2^{+}(h)(\al,\g) p.v.\int_{0}^{2\al}\frac{(h^{(9)}(\al,\g)-h^{(9)}(\al,\g'))-(h^{(9)}(\beta,\g)-h^{(9)}(\beta,\g'))}{(\al-\beta)^2}d\beta>_{L_{\al}^{2}[0,\pi]}\\
&\quad+C\|h(\al,\g)-h(\al,\g')\|_{H_{\al}^{9}[0,\pi]}^2+C|\g-\g'|^2.
\end{split}
\end{equation}
Therefore, from corollary \ref{32} with $k_1=0$, we have
\[
\Re<h(\al,\g)-h(\al,\g'),\frac{d}{dt}(h(\al,\g)-h(\al,\g'))>_{H_{\al}^{9}[0,\pi]}\lesssim \|h(\al,\g)-h(\al,\g')\|_{H_{\al}^{9}[0,\pi]}^2+|\g-\g'|^2.
\]
Finally, from \eqref{3g0}, $h(\al,\g)-h(\al,\g')|_{t=0}=0$, the Gronwall's inequality tells us \eqref{3differenbound1}.
\end{proof}
Then we deal with the differentiability.
Let $w(\al,\g,t)$  be the solution of the following equation
\begin{equation}\label{3eqw}
\begin{split}
\frac{dw(\al,\g)}{dt}&=\lambda(\al)w'(\al,\g)k(t)+\lambda(\al)L_1^{+}(h)(\al,\g)w'(\al,\g)+\lambda(\al)L_2^{+}(h)(\al,\g)p.v.\int_{0}^{2\al}\frac{w(\al,\g)-w(\beta,\g)}{(\al-\beta)^2}d\beta\\
&\quad+\lambda(\al)(D_{\g}L_1^{+}(h))[w](\al,\g)h'(\al,\g)+\lambda(\al)(D_{\g}L_2^{+}(h))[w](\al,\g)p.v.\int_{0}^{2\al}\frac{h(\al,\g)-h(\beta,\g)}{(\al-\beta)^2}d\beta\\
&\quad+\sum_{i}D_{\g}B_i[w](\al,\g),\\
&=\lambda(\al)w'(\al,\g)k(t)+\lambda(\al)L_1^{+}(h)(\al,\g)w'(\al,\g)+\lambda(\al)L_2^{+}(h)(\al,\g)p.v.\int_{0}^{2\al}\frac{w(\al,\g)-w(\beta,\g)}{(\al-\beta)^2}d\beta\\
&\quad+\sum_{j}\tilde{\tilde{B}}_{j}(w)(\al,\gamma),
\end{split}
\end{equation}
with $w(\al,\g)|_{t=0}=0$.
From the space of $h$ \eqref{3space1}, and condition of $L_i^{+}(h)$, $B_i(h)$ in \eqref{3GE01},  we claim in space $X^{6}$  we could repeat the similar process of the existence of $h$ and have for some $0<t_2\leq t_1$, 
\begin{equation}\label{3spacew}
w(\al,\g,t)\in C_{t}^{1}([0,t_2], X^{4})\cap L_{t}^{\infty}([0,t_2], L_{\g}^{\infty}([-1,1],H_{\al}^{6}[0,\pi])).
\end{equation}
 
 Let $v(\al,\g,t)=\frac{h(\g)-h(\g')}{\g-\g'}-w(\g)$. In order to get the differentiability, We want to show
 \begin{lemma}\label{differgamma01}
\begin{equation}\label{3diffgammalimit}
\lim_{\g\to\g'}\|v(\al,\g,\g')\|_{H_{\al}^{1}[0,\pi]}=0.
\end{equation}
\end{lemma}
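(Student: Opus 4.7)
The plan is to derive a PDE for $v(\alpha,\gamma,\gamma',t)$, show it has the same principal structure as the $h$-equation \eqref{3GE01} plus a genuinely small forcing, and conclude by an $H^1_\alpha$ energy estimate combined with Gronwall. Note that $v\vert_{t=0}=0$, since the difference quotient of $h(\cdot,\gamma,0)=h(\cdot,\gamma',0)=\tilde f^{+(60)}(\cdot,0)$ vanishes by \eqref{3g0} and $w\vert_{t=0}=0$ by construction.

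First, I would subtract \eqref{3eqw} from the difference quotient of \eqref{3GE01}. Writing everything abstractly, the subtraction naturally splits into two kinds of terms. For the $M_{1,1}$ piece one gets exactly $\lambda(\alpha)\kappa(t)\partial_\alpha v$. For the $M_{1,2}$ piece, using the identity
\begin{align*}
\frac{L_1^+(h)(\gamma)\partial_\alpha h(\gamma)-L_1^+(h)(\gamma')\partial_\alpha h(\gamma')}{\gamma-\gamma'}
&=L_1^+(h)(\gamma)\,\partial_\alpha\frac{h(\gamma)-h(\gamma')}{\gamma-\gamma'}
+\frac{L_1^+(h)(\gamma)-L_1^+(h)(\gamma')}{\gamma-\gamma'}\partial_\alpha h(\gamma'),
\end{align*}
and subtracting the corresponding contribution of \eqref{3eqw}, I get the main term $\lambda(\alpha)L_1^+(h)(\alpha,\gamma)\partial_\alpha v$ plus the remainder
\[
\lambda(\alpha)\Big[\tfrac{L_1^+(h)(\gamma)-L_1^+(h)(\gamma')}{\gamma-\gamma'}-D_\gamma L_1^+(h)[w]\Big]\partial_\alpha h(\gamma')
+\lambda(\alpha)D_\gamma L_1^+(h)[w]\big(\partial_\alpha h(\gamma')-\partial_\alpha h(\gamma)\big).
\]
The exact same decomposition applies to $M_{2,1}$ and each $B_i$, producing the main terms $\lambda(\alpha)L_2^+(h)(\alpha,\gamma)\,\mathrm{p.v.}\!\int_0^{2\alpha}\tfrac{v(\alpha)-v(\beta)}{(\alpha-\beta)^2}\,d\beta$ and remainders controlled by the smoothness conditions \eqref{dgammaestimateLi}, \eqref{dgammaestimateBi}.

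Second, I would estimate the remainders. By \eqref{dgammaestimateLi}--\eqref{dgammaestimateBi} applied with the choice $w=w(\alpha,\gamma,t)$ from \eqref{3eqw}, each bracketed factor has $C_\alpha^{3}$ (resp.\ $H_\alpha^{1}$) norm controlled by
\[
\Big\|\tfrac{h(\gamma)-h(\gamma')}{\gamma-\gamma'}-w\Big\|_{X^{1}}+|\gamma-\gamma'|+\|h(\gamma)-h(\gamma')\|_{X^1}
=\|v\|_{X^1}+|\gamma-\gamma'|+\|h(\gamma)-h(\gamma')\|_{X^1}.
\]
Next, using the Lipschitz estimate \eqref{3differenbound1}, $\|h(\gamma)-h(\gamma')\|_{H^9_\alpha}\lesssim|\gamma-\gamma'|$, while $\partial_\alpha h(\gamma)-\partial_\alpha h(\gamma')$ is controlled in $H^8_\alpha$ by $|\gamma-\gamma'|$. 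Combining with the boundedness $\|\partial_\alpha h(\gamma')\|_{X^{10}}\lesssim 1$ from \eqref{3space1} and the corresponding bounds on $D_\gamma L_i^+[w]$, $D_\gamma B_i[w]$ coming from their smoothness conditions (applied with $w\in X^4$ from \eqref{3spacew}), the total remainder $R(\alpha,\gamma,\gamma',t)$ satisfies
\[
\|R\|_{H^1_\alpha[0,\pi]}\lesssim \|v\|_{H^1_\alpha}+|\gamma-\gamma'|.
\]

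Third, I would run the $H^1_\alpha$ energy estimate. The PDE for $v$ has the form
\[
\partial_t v=\lambda(\alpha)\kappa(t)\partial_\alpha v
+\lambda(\alpha)L_1^+(h)\partial_\alpha v
+\lambda(\alpha)L_2^+(h)\,\mathrm{p.v.}\!\!\int_0^{2\alpha}\!\tfrac{v(\alpha)-v(\beta)}{(\alpha-\beta)^2}d\beta+R,
\]
with the same principal part treated in Corollary \ref{32}. Applying \eqref{M11estimateenergy} and \eqref{L12estimateenergy} with $k_1=1$ gives
\[
\tfrac{1}{2}\tfrac{d}{dt}\|v\|_{H^1_\alpha}^2
\le C\|v\|_{H^1_\alpha}^2+\|v\|_{H^1_\alpha}\|R\|_{H^1_\alpha}
\le C\|v\|_{H^1_\alpha}^2+C|\gamma-\gamma'|\,\|v\|_{H^1_\alpha}.
\]
Since $v\vert_{t=0}=0$, Gronwall on $[0,t_2]$ yields $\|v(\cdot,\gamma,\gamma',t)\|_{H^1_\alpha}\lesssim |\gamma-\gamma'|$, which implies \eqref{3diffgammalimit} in a quantitative form.

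The main obstacle I anticipate is the bookkeeping for the nonlocal $M_{2,1}$ term: when one differences in $\gamma$ the factor $L_2^+(h)(\gamma)$ together with the singular integral acting on $h$, one must regroup carefully so that the leading singular operator acts on $v$ (controlled by Corollary \ref{32}) while the residual singular integral $\mathrm{p.v.}\!\int_0^{2\alpha}\!\frac{h(\alpha,\gamma)-h(\beta,\gamma)}{(\alpha-\beta)^2}d\beta$ gets paired with the small factor $\tfrac{L_2^+(h)(\gamma)-L_2^+(h)(\gamma')}{\gamma-\gamma'}-D_\gamma L_2^+(h)[w]$. Corollary \ref{3coro22} bounds this residual singular integral in $H^{k-1}_\alpha$ by $\|h\|_{H^k_\alpha}$, so choosing to work in $H^1_\alpha$ (rather than higher) ensures all factors are available from \eqref{3space1} and \eqref{3spacew}. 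Once this pairing is set up, the three steps above close the estimate.
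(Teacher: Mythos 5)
Your proposal is correct and follows essentially the same route as the paper: you derive the $v$-equation via the same telescoping identities for the $L_i^+$ and $B_i$ contributions, control the remainder using conditions \eqref{dgammaestimateLi}, \eqref{dgammaestimateBi} together with the Lipschitz bound \eqref{3differenbound1} and the a priori regularity of $h$ and $w$ from \eqref{3space1}, \eqref{3spacew} and Corollary \ref{3coro22}, and close with Corollary \ref{32} plus Gronwall from the zero initial datum. The only cosmetic difference is that you keep the forcing as $C|\gamma-\gamma'|\,\|v\|_{H^1_\alpha}$ (yielding the slightly sharper Lipschitz bound $\|v\|_{H^1_\alpha}\lesssim|\gamma-\gamma'|$) whereas the paper absorbs it as a constant $\mathcal{O}(|\gamma-\gamma'|)$ term; both give the stated limit.
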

\begin{proof}
Since we have
\begin{equation}
\begin{split}
&\quad \frac{L_1^{+}(h)(\al,\g)h'(\al,\g)-L_1^{+}(h)(\al,\g')h'(\al,\g')}{\g-\g'}-L_1^{+}(h)(\al,\g)w'(\al,\gamma)-D_{\g}L_1^{+}[w](\al,\g)h'(\al,\g)\\
&=L_1^{+}(h)(\al,\g)[\frac{h'(\al,\g)-h'(\al,\g')}{\g-\g'}-w'(\al, \g)]+(\frac{L_1^{+}(h)(\al,\g)-L_1^{+}(h)(\al,\g')}{\g-\g'}-D_{\g}L_1^{+}[w](\al,\g))h'(\al,\g')\\
&\quad+D_{\g}L_{1}^{+}[w](\al,\g)(h'(\al,\g')-h'(\al,\g)).
\end{split}
\end{equation}
and
\begin{equation}
\begin{split}
&\quad\frac{L_2^{+}(h)(\al,\g)p.v.\int_{0}^{2\al}\frac{h(\al,\g)-h(\beta,\g)}{(\al-\beta)^2}d\beta-L_2^{+}(h)(\al,\g')p.v.\int_{0}^{2\al}\frac{h(\al,\g')-h(\beta,\g')}{(\al-\beta)^2}d\beta}{\g-\g'}\\
&\qquad-L_2^{+}(h)(\al,\g)p.v.\int_{0}^{2\al}\frac{w(\al,\g)-w(\beta,\g)}{(\al-\beta)^2}d\beta-D_{\g}L_{2}^{+}[w](\al,\g)p.v.\int_{0}^{2\al}\frac{h(\al,\g)-h(\beta,\g)}{(\al-\beta)^2}d\beta\\
&=L_2^{+}(h)(\al,\g)p.v.\int_{0}^{2\al}\frac{(\frac{h(\al,\g)-h(\al,\g')}{\g-\g'}-w(\al,\g))-(\frac{h(\beta,\g)-h(\beta,\g')}{\g-\g'}-w(\beta,\g))}{(\al-\beta)^2}d\beta\\
&\quad+(\frac{L_2^{+}(h)(\al,\g)-L_2^{+}(h)(\al,\g')}{\g-\g'}-D_{\g}L_{2}^{+}[w](\al,\g))p.v.\int_{0}^{2\al}\frac{h(\al,\g')-h(\beta,\g')}{(\al-\beta)^2}d\beta\\
&\quad+D_{\g}L_{2}^{+}[w](\al,\g)p.v.\int_{0}^{2\al}\frac{(h(\al,\g')-h(\al,\g))-(h(\beta,\g')-h(\beta,\g))}{(\al-\beta)^2}d\beta.
\end{split}
\end{equation}
From \eqref{3GE01} and \eqref{3eqw}, we have
\begin{equation}
    \begin{split}
    \frac{d}{dt}v(\al,\g,\g')=&\lambda(\al)\kappa(t)\pa_{\al}v(\al,\g,\g')+\lambda(\al)L_1^{+}(h)(\al,\g)\pa_{\al}v(\al,\g,\g')\\
    &+\lambda(\al)p.v.\int_{0}^{2\al}\frac{v(\al,\g,\g')-v(\beta,\g,\g')}{(\al-\beta)^2}d\beta L_2^{+}(h)(\al,\g)\\
    &+\lambda(\al)(\frac{L_1^{+}(h)(\al,\g)-L_1^{+}(h)(\al,\g')}{\g-\g'}-D_{\g}L_{1}^{+}[w](\al,\g))h'(\al,\g')\\
    &+\lambda(\al)(\frac{L_2^{+}(h)(\al,\g)-L_2^{+}(h)(\al,\g')}{\g-\g'}-D_{\g}L_{2}^{+}[w](\al,\g))p.v.\int_{0}^{2\al}\frac{h(\al,\g')-h(\beta,\g')}{(\al-\beta)^2}d\beta\\
&+\lambda(\al)D_{\g}L_{1}^{+}[w](\al,\g)(h'(\al,\g')-h'(\al,\g))\\
&+\lambda(\al)D_{\g}L_{2}^{+}[w](\al,\g)p.v.\int_{0}^{2\al}\frac{(h(\al,\g')-h(\al,\g))-(h(\beta,\g')-h(\beta,\g))}{(\al-\beta)^2}d\beta\\
&+\sum_{j}[\frac{B_j(h)(\al,\g)-B_j(h)(\al,\g')}{\g-\g'}-D_{\g}B_{j}[w](\al,\g)].
    \end{split}
\end{equation}
Then from the properties of $L_i^{+}(h)$, $B_i(h)$ in \eqref{3GE01}, and the space of $h$, $w$ \eqref{3space1}, \eqref{3spacew}, the Lipschitz condition of $h$ with respect to $\g$ \eqref{3differenbound1}, and corollary \ref{3coro22}, we have
\begin{equation}
    \begin{split}
    <v(\al,\g,\g'),\frac{d}{dt} v(\al,\g,\g')>_{H_{\al}^{1}[0,\pi]}\leq&\\
    &<v(\al,\g,\g'),\lambda(\al)\kappa(t)v(\al,\g,\g')+\lambda(\al)L_1^{+}(h)(\al,\g)\pa_{\al}v(\al,\g,\g')\\
    &+\lambda(\al)p.v.\int_{0}^{2\al}\frac{v(\al,\g,\g')-v(\beta,\g,\g')}{(\al-\beta)^2}d\beta L_2^{+}(h)(\al,\g)>_{H_{\al}^{1}[0,\pi]}\\
    &+C \|v(\al,\g,\g')\|^2_{H_{\al}^{1}[0,\pi]}+\mathcal{O}(|\g-\g'|).
    \end{split}
\end{equation}
From corollary \ref{32}, we have
\begin{equation}
    <v(\al,\g,\g'),\frac{d}{dt}v(\al,\g,\g')>_{H_{\al}^{1}[0,\pi]}\lesssim \mathcal{O}|\g-\g'|+\|v(\al,\g,\g')\|^2_{H_{\al}^{1}[0,\pi]}.
\end{equation}
Then \eqref{3diffgammalimit} follows from Gronwall's inequality and 
initial value $v(\al,\g,\g')|_{t=0}=0$ (from \eqref{3g0}).
\end{proof}
Then by using the interpolation theorem, from \eqref{3differenbound1} and \eqref{3spacew}, we have
\begin{align*}
\lim_{\g\to\g'}\|v(\al,\g,\g')\|_{H_{\al}^{5}[0,\pi]=0}.
\end{align*}
Moreover, from the equation of $h$ \eqref{3GE01}, and equation of $w$ \eqref{3eqw}, we have
\[
\frac{d}{dt}w=\frac{d}{dt}\frac{d}{d\g}h=\frac{d}{d\g}\frac{d}{dt}h.
\]
In conclusion, we have
\begin{align}\label{hspaceconclusion01}
\begin{cases}
    &h(\al,\g,t)\in C_{t}^1([0,t_1], X^{9}), \frac{d}{d\g}h\in C_{t}^{1}([0,t_2], X^{4}),\\
    &h(\al,\g,t)\in C_{\g}^{1}([-1,1], H_{\al}^{5}[0,\pi]),\\
    &\frac{d}{d\g}\frac{d}{dt}h=\frac{d}{dt}\frac{d}{d\g}h.
\end{cases}
\end{align}
Thus \eqref{hspaceconclusion} is verified.
\subsection{Uniqueness}\label{huni}
In this section, we show \eqref{huniqueness01}: for all $g(\al,t)\in C_{t}^{1}([0,t_2],H^{5}_{\alpha}[0,\pi]))\cap\{h|\supp h \subset  [0,\frac{\pi}{4}]\}$ satisfying the equation \eqref{3GE01} when $\gamma=0$ and the initial data condition $g(\al,0)= h(\al,\gamma,0)$, we have
\[
g(\al,t)=h(\al,\g,t).
\]
 
 Since both $h(\al,0,t)$, $g(\al,t)$ satisfies the equation \eqref{3GE01}, we have
 \begin{equation}
\begin{split}
   \frac{dh(\al,0)}{dt}&=T^{+}(h)(\al,0)\\
   &=M_{1,1}(h)(\al,0)+M_{1,2}(h)(\al,0)+M_{2,1}(h)(\al,0)+\sum_{i}B_i(h)(\al,0),
\end{split}
\end{equation}
 \begin{equation}
\begin{split}
   \frac{dg(\al)}{dt}&=T^{+}(g)(\al,0)\\
   &=M_{1,1}(g)(\al,0)+M_{1,2}(g)(\al,0)+M_{2,1}(g)(\al,0)+\sum_{i}B_i(g)(\al,0).
\end{split}
\end{equation}
When $t=0$, $g(\al,0)-h(\al,0)|_{t=0}=0$. It is sufficient to show 
\begin{equation}\label{3unibound1}
\begin{split}
    \Re<h(\al,0)-g(\al), T(h)(\al,0)-T(g)(\al,0)>_{H_{\al}^{1}[0,\pi]}\lesssim \|g(\al)-h(\al,0)\|_{H_{\al}^{1}[0,\pi]}^2.
\end{split}
\end{equation}
We have 
\begin{equation}
    \begin{split}
    &\quad\frac{d(h(\al,0)-g(\al))}{dt}\\
    &=\lambda(\al)\kappa(t)(h'(\al,0)-g'(\al))+\lambda(\al)L_1^{+}(h)(\al,0)(h'(\al,0)-g'(\al))\\
    &\quad+\lambda(\al)L_2^{+}(h)(\al,0)\int_{0}^{2\al}\frac{(h(\al,0)-g(\al))-(h(\beta,0)-g(\beta))}{(\al-\beta)^2}d\beta\\
    &\quad+\lambda(\al)(L_1^{+}(h)(\al,0)-L_1^{+}(g)(\al,0))g'(\al)\\
    &\quad+\lambda(\al)(L_2^{+}(h)(\al,0)-L_2^{+}(g)(\al,0))\int_{0}^{2\al}\frac{g(\al)-g(\beta)}{(\al-\beta)^2}d\beta\\
    &\quad+\sum_{i}(B_i(h)(\al,0)-B_i(g)(\al,0)).
    \end{split}
\end{equation}
Then from conditions of $B_{i}(h)$, $L_{i}^{+}(h)$ in \eqref{3GE01}, we have
\begin{equation}
    \begin{split}
        &\quad\Re<h(\al,0)-g(\al), \frac{d}{dt}(h(\al,0)-g(\al))>_{H^1_{\al}[0,\pi]}\\
        &\leq \Re <h(\al,0)-g(\al),\lambda(\al)\kappa(t)(h'(\al,0)-g'(\al))+\lambda(\al)L_1^{+}(h)(\al,0)(h'(\al,0)-g'(\al))\\
        &\quad+\lambda(\al)L_2^{+}(h)(\al,0)\int_{0}^{2\al}\frac{(h(\al,0)-g(\al))-(h(\beta,0)-g(\beta))}{(\al-\beta)^2}d\beta>_{H^1_{\al}[0,\pi]}+C\|h(\al,0)-g(\al)\|_{H^1_{\al}[0,\pi]}^2.
    \end{split}
\end{equation}
We could use corollary \ref{32} to get \eqref{3unibound1}.

\section{Analyticity when $\kappa(t)>0$}\label{analyticity+section}
From the conditions of \eqref{3GE01}, previous theorems \ref{theoremgeneralizedequa}, \ref{existence1theorem}, we have the existence of solution $h$ to equation \eqref{modifiedequation} : there exists  $\bar{t}>0$ such that $h$ satisfies
\begin{align}\label{hspaceconclusion0}
\begin{cases}
    &h(\al,\g,t)\in C_{t}^1([0,\bar{t}], X^{5}), \frac{d}{d\g}h\in C_{t}^{1}([0,\bar{t}], X^{4}),\\
    &h(\al,\g,t)\in C_{\g}^{1}([-1,1], C_{\al}^{2}[0,\pi]),\\
    &\frac{d}{d\g}\frac{d}{dt}h=\frac{d}{dt}\frac{d}{d\g}h.
\end{cases}
\end{align}
The initial condition: 
\begin{equation}\label{initialdatacondition01}
h(\alpha,\gamma,0)=\tilde{f}^{+(60)}(\alpha,0).
\end{equation}
Moreover, from our derivation of \eqref{modifiedcondition}, $\tilde{f}^{+(60)}(\alpha,t)$ is also a equation of $\eqref{modifiedequation}$ when $\gamma=0$, $\kappa(t)>0$. Then from the uniqueness in theorem, \ref{existence1theorem}, we have for $t\leq \bar{t}$, 
\begin{equation}\label{3uniqh}
    h(\al,0,t)=\tilde{f}^{+(60)}(\al,t).
\end{equation}

Now we show the analyticity:
\begin{lemma}\label{analyticitylemmao1}
When $0\leq t\leq t_1$, $h(\alpha,0,t)$  can be analytically extended to region $\{(\alpha+iy)|0\leq\alpha\leq \frac{\pi}{4}, |y|\leq c(\alpha)t\}.$
\end{lemma}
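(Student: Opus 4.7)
\medskip

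\noindent\textbf{Proof proposal for Lemma \ref{analyticitylemmao1}.}

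The plan is to use the family of solutions $\{h(\alpha,\gamma,t)\}_{\gamma\in[-1,1]}$ produced by Theorem \ref{existence1theorem} to build the analytic extension. First I would verify that the map
\[
\Phi_t:[0,\tfrac{\pi}{4}]\times[-1,1]\longrightarrow\mathbb{C},\qquad (\alpha,\gamma)\mapsto \alpha+ic(\alpha)\gamma t
\]
is a $C^1$ diffeomorphism onto the closed tongue region $\overline{D_A}\cap\{0\le\Re z\le\tfrac{\pi}{4}\}$. This is immediate for sufficiently small $t$ since $c(\alpha)\ge 0$, $c\in C^{1,1}$ with $\|c\|_{C^{1,1}}\le\delta$ \eqref{cdefi}, so $\partial_\alpha(\Re\Phi_t)=1$, $\partial_\gamma(\Im\Phi_t)=c(\alpha)t$, and the image fills the tongue. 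Using this, I define the candidate extension
\[
H(z,t):=h(\alpha(z),\gamma(z),t),\qquad z\in \Phi_t([0,\tfrac{\pi}{4}]\times[-1,1]).
\]
By \eqref{hspaceconclusion0} we have $h\in C^1_{\alpha,\gamma}$, so $H$ is $C^1$ in $z$, and by \eqref{3uniqh} it agrees with $\tilde f^{+(60)}(\cdot,t)=h(\cdot,0,t)$ on the real axis.

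The heart of the argument is to verify that $H$ satisfies the Cauchy--Riemann equation, which by the chain rule and the formulas $\partial_\gamma z=ic(\alpha)t$, $\partial_\alpha z=1+ic'(\alpha)\gamma t$ is equivalent to
\[
A(h)(\alpha,\gamma,t):=\frac{ic(\alpha)t}{1+ic'(\alpha)\gamma t}\,\partial_\alpha h(\alpha,\gamma,t)-\partial_\gamma h(\alpha,\gamma,t)=0
\]
(this is precisely the operator $A$ introduced in the Notation section). At $t=0$ the initial datum \eqref{initialdatacondition01} is $\gamma$-independent, so $\partial_\gamma h|_{t=0}=0$; since $c(\alpha)t|_{t=0}=0$ as well, $A(h)|_{t=0}\equiv 0$. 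To propagate this in time I would derive an evolution equation for $w:=A(h)$ by applying $A$ to \eqref{modifiedequation} and commuting $A$ past the operators $M_{1,1},M_{1,2},M_{2,1}$ and each of the $O^i$; the crucial structural fact, which the author has flagged will be handled by commuting lemmas in the Appendix, is that the modified equation was built so that every term retains the complex-analyticity structure under the contour change. Consequently $[A,T](h)$ is not a derivative loss on $w$ but produces only terms that are either linear in $w$ with coefficients of the same type as $L_i^+(h),B_i(h)$ or else vanish identically because $T_{fixed}$ and the frozen pieces depending on $\tilde f^{+}(\beta_0^t)$ and $\tilde f^L$ are analytic in $D_A$.

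Once the commutator identities are in hand, the evolution equation for $w$ will fit the generalized framework \eqref{3GE01} with zero right-hand side in the leading part and with $w$ playing the role of $h$. The energy estimate machinery developed in Section \ref{kappa1sectionexistence} (the refined G{\aa}rding inequality of Lemma \ref{Garding} together with Corollary \ref{32}) then yields
\[
\frac{d}{dt}\|w(\cdot,\gamma,t)\|_{H^{k}_\alpha[0,\pi]}^{2}\lesssim C\bigl(\|h\|_{X^{k}}\bigr)\,\|w(\cdot,\gamma,t)\|_{H^{k}_\alpha[0,\pi]}^{2}
\]
for some $k\le 4$ consistent with \eqref{hspaceconclusion0}. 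Since $w(\cdot,\gamma,0)\equiv 0$, Gr\"onwall forces $w\equiv 0$ on $[0,\bar t]\times[-1,1]$, so $A(h)=0$ throughout, $H$ satisfies the Cauchy--Riemann equation, and $H(\cdot,t)$ is analytic on the tongue region. Since $H(\alpha,t)=h(\alpha,0,t)$ on the real segment $[0,\tfrac{\pi}{4}]$, this is the desired analytic continuation.

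The main obstacle is the commutator step: establishing the precise form of $[A,T]$ in such a way that the resulting equation for $w$ still has refined R--T coefficients and bounded remainder so that the energy estimate closes. The singular pieces $M_{2,1}$ and the boundary terms in $F_2$ produced by the cutoff at $2\alpha$ are the most delicate, because $A$ involves an $\alpha$-derivative and the integrand depends on $\beta$ through $\beta_\gamma^t$; careful use of the factor $1+ic'(\beta)\gamma t$ and of the particular form $c(\alpha)=\delta_c\alpha^2$ near the turnover point should allow the $A$-derivative to pass through the principal value integral without producing new singularities beyond those already controlled by the generalized framework.
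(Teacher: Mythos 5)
Your proposal matches the paper's approach essentially exactly: define $A(h)$ as the Cauchy--Riemann operator, observe $A(h)|_{t=0}=0$ from the $\gamma$-independent initial data and $c(\alpha)t|_{t=0}=0$, derive an evolution inequality $\Re\langle A(h),\tfrac{d}{dt}A(h)\rangle_{X^1}\lesssim\|A(h)\|_{X^1}^2$ by commuting $A$ through the modified equation (this is precisely what the paper's Lemma~\ref{analyenergyestimate} establishes, using the commuting lemmas in the Appendix), and close by Gr\"onwall. The paper's proof is terser---it cites Lemma~9.1 of the earlier paper for the fact that $A(h)=0$ yields the analytic extension, rather than spelling out the diffeomorphism $\Phi_t$---but the structure and the key steps are the same.
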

\begin{proof}
We plan to use a similar way as in \cite{muskatregulatiryJia} by verifying Cauchy-Riemann condition. Let 
\[
A(h)=\frac{ic(\alpha)t}{1+ic'(\alpha)\gamma t}\partial_{\alpha}h-\partial_{\gamma}h.
\]
From lemma 9.1 in \cite{muskatregulatiryJia}, it is enough to show $\|A(h)\|_{X^1}=0.$
From the definition of $c(\alpha)$ \eqref{cdefi} and initial data \eqref{initialdatacondition01}, we get $\|A(h)\|_{X^1}|_{t=0}=0$. Then we can use the next lemma and Gronwall's inequality to get the result.
\end{proof}
\begin{lemma}\label{analyenergyestimate}
When $\kappa(t)>0$, we have the following estimate for $h$ satisfies \eqref{extendedequation01}, \eqref{hspaceconclusion0}, \eqref{initialdatacondition01}, \eqref{3uniqh}:
\[
\Re<A(h), \frac{dA(h)}{dt}>_{X^{1}}\lesssim \|A(h)\|^{2}_{X^{1}}.
\]
We remark that the proof till \eqref{analyticityestimateG} is not dependent on the sign of $\kappa(t)$. 

\end{lemma}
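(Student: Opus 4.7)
The plan is to derive a linear evolution equation for $A(h)$ that mirrors the structure of the generalized equation \eqref{3GE01}, and then close the energy estimate in $X^1$ by reusing the inequalities already established in lemma \ref{30101} and corollary \ref{32}. Since $A$ is, up to a non-vanishing factor, the $\bar\partial$ operator along the family of curves $\alpha\mapsto\alpha+ic(\alpha)\gamma t$, showing $\|A(h)\|_{X^1}=0$ would give analyticity; the lemma will provide the Gronwall ingredient.

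First I would differentiate $A(h)=\frac{ic(\alpha)t}{1+ic'(\alpha)\gamma t}\partial_\alpha h-\partial_\gamma h$ in $t$, obtaining
\[
\partial_t A(h)=\partial_t\!\left(\tfrac{ic(\alpha)t}{1+ic'(\alpha)\gamma t}\right)\partial_\alpha h+A(T(h)),
\]
where the first summand is a smooth bounded multiple of $\partial_\alpha h$ and hence is controlled by $\|h\|_{X^5}+\|A(h)\|_{X^1}$ via \eqref{hspaceconclusion0}. The main work is to rewrite $A(T(h))$ as a linear operator $T^{\mathrm{lin}}$ acting on $A(h)$ plus a bounded remainder. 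Concretely, for every building block appearing in \eqref{modifiedequation} — $M_{1,1}(h)$, $M_{1,2}(h)$, $M_{2,1}(h)$, the pieces of $F_2(h)$ (the boundary terms, the $Term_3$ principal value, and the $Term_{5,i}$ contour pieces), the $B_i$'s from lemma \ref{3boundedtermtype}, and the fixed term $\lambda T_{fixed}$ — I would invoke the commuting lemmas promised in the Appendix to produce an identity of the form
\[
A\bigl(\text{block}(h)\bigr)=\text{block}^{\mathrm{lin}}\bigl(A(h)\bigr)+\mathcal{R},\qquad \|\mathcal{R}\|_{X^1}\lesssim C(\|h\|_{X^{5}})\bigl(\|A(h)\|_{X^1}+\|A(h)\|_{X^1}\|h\|_{X^5}\bigr),
\]
where $\text{block}^{\mathrm{lin}}$ has the same principal coefficients $\kappa(t)$, $L_1^+(h)$, $L_2^+(h)$ as the original block. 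The key algebraic fact making this work is that the modified equation was built so that $A$ commutes with each kernel $K_{-\sigma}$ up to smooth remainders: schematically, $A$ applied to a function of $(\alpha+ic(\alpha)\gamma t,\beta+ic(\beta)\gamma t)$ vanishes identically, so that $A$ only hits the $h$-dependent arguments $D^{-60}h$, $D^{-60+b_i}h$, and its action there reproduces the same integral with $h$ replaced by $D^{-60+b_i}A(h)$, thanks to the commutation of $A$ with $D^{-i}$ guaranteed by the formula \eqref{D-formularnew}. The vanishing condition $L_i^+(h)|_{\alpha=0}=0$, together with the quadratic vanishing $c(\alpha)=\delta_c\alpha^2$ near the origin, ensures that the weight $\tfrac{ic(\alpha)t}{1+ic'(\alpha)\gamma t}$ in $A$ never produces an unbalanced singularity when combined with $\partial_\alpha$.

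Once the identity $\partial_t A(h)=T^{\mathrm{lin}}(A(h))+\mathcal{R}$ is in place, taking the $X^1$ inner product with $A(h)$ gives
\[
\Re\langle A(h),\partial_t A(h)\rangle_{X^1}=\Re\langle A(h),T^{\mathrm{lin}}(A(h))\rangle_{X^1}+\Re\langle A(h),\mathcal{R}\rangle_{X^1}.
\]
The first term is handled exactly as in the $X^1$ energy estimate \eqref{3eneryestimate} for $T^+$: $T^{\mathrm{lin}}$ inherits the refined R-T condition of lemma \ref{rfR-T} because it carries the same $L_1^+(h), L_2^+(h)$, so corollary \ref{32} (with $k_1=1$) supplies the bound $\lesssim\|A(h)\|_{X^1}^2$. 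The second term is handled by Cauchy--Schwarz and the bound on $\mathcal{R}$ together with the uniform control of $\|h\|_{X^5}$ from \eqref{hspaceconclusion0}. The transport term $\lambda(\alpha)\kappa(t)\partial_\alpha A(h)$ is estimated using \eqref{3linearcoro02}, and this is the only step that would see the sign of $\kappa(t)$; however, the one-sided sign needed there holds only after the boundary computation for $\partial_\alpha^k$, and for $A(h)$ (where $A(h)|_{t=0}=0$ by construction) the same integration by parts goes through without using any sign of $\kappa(t)$, consistent with the remark in the statement.

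The main obstacle is carrying out the commutator computation cleanly for the nonlocal pieces $M_{2,1}(h)$, $Term_3$ in $F_2$, and the $O^{1,i}$ part of $B_1^i(h)$: here $A$ must be pushed inside a principal-value integral with three different ``boundary'' points ($\alpha=0$, $\beta=2\alpha$, $\beta=-\tau(t)=0$), and one has to verify that the contour deformations performed in \S\ref{nearmodified} exactly absorb the potentially singular boundary contributions produced by $A$. This is the sole place where the precise form of the modification of $F_2$ (the integration-by-parts producing $Term_{5,1}$ and $Term_{5,2}$ on the symmetric interval $[-\tau(t),2\alpha+\tau(t)]$) is exploited: it is what guarantees that the commutators are of lower order and hence bounded by $\|A(h)\|_{X^1}$ rather than $\|A(h)\|_{X^2}$, allowing Gronwall to close in $X^1$ and lemma \ref{analyticitylemmao1} to conclude analyticity.
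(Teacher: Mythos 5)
Your opening step is where the argument breaks. You write
\[
\partial_t A(h)=\partial_t\!\left(\tfrac{ic(\alpha)t}{1+ic'(\alpha)\gamma t}\right)\partial_\alpha h+A(T(h)),
\]
and dispose of the first summand as ``a smooth bounded multiple of $\partial_\alpha h$ ... controlled by $\|h\|_{X^5}+\|A(h)\|_{X^1}$.'' That bound is true, but it is fatal for the Gronwall you want. The term $\partial_t\bigl(\tfrac{ic(\alpha)t}{1+ic'(\alpha)\gamma t}\bigr)\partial_\alpha h=\tfrac{ic(\alpha)}{(1+ic'(\alpha)\gamma t)^2}\partial_\alpha h$ does \emph{not} vanish when $A(h)=0$ (e.g.\ for an analytic $h$), so after pairing with $A(h)$ in $X^1$ you obtain at best
\[
\Re\langle A(h),\partial_t A(h)\rangle_{X^1}\lesssim\|A(h)\|_{X^1}^2+\|A(h)\|_{X^1}\cdot\|h\|_{X^5},
\]
and the inhomogeneous piece prevents the conclusion $A(h)\equiv 0$ from $A(h)|_{t=0}=0$: Gronwall gives $\|A(h)\|_{X^1}(t)\lesssim C(e^{Ct}-1)\neq 0$. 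The paper avoids this by the commutator identity of lemma \ref{analyticity 01} (stated via lemma \ref{switch}): the exact operator $\frac{d}{dt}-\frac{ic(\alpha)\gamma}{1+ic'(\alpha)\gamma t}\frac{d}{d\alpha}$ commutes with $A$, and — crucially — the term $N_1(h)=\frac{ic(\alpha)\gamma}{1+ic'(\alpha)\gamma t}\partial_\alpha h$ is already present inside $T(h)$ (it is the first piece of $B_{D^{-60}(h)}$ in $M_{1,2}$). Isolating it as in \eqref{3anasplit} yields the identity $\partial_t A(h)=N_1(A(h))+A(T(h)-N_1(h))$; the commutator residue you treated as a remainder is thereby converted into a genuine transport term $N_1(A(h))$ acting on $A(h)$ itself. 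Missing this absorption mechanism is the central gap in your proposal.

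A secondary but important omission: before applying $A$ to the pieces of $F_2$ and $O^{1,i}$ that involve $\tilde f^{+(60)}(\beta_0^t)$ on the outer range $\beta\in[2\alpha,\pi]$, the paper uses the uniqueness identity \eqref{3uniqh} to replace $\tilde f^{+(60)}(\beta_0^t)$ by $h(\beta,0,t)$ (and similarly $\tilde f^+(\beta_0^t)$ by $D^{-60}(h)(\beta,0,t)$). Without this replacement the boundary contributions produced by $A$ from the moving endpoint $\beta=2\alpha$ (and from the $\eta$-contour pieces) do not combine into $D_h(\cdot)[A(h)]$ plus bounded remainders; the cancellation identities of the Appendix lemmas \ref{forO1} and \ref{forM2} are precisely designed around this consistent single-unknown rewriting. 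Your remark that $A$ kills any function of $(\alpha_\gamma^t,\beta_\gamma^t)$ is also imprecise: $A$ is a $\partial_\gamma$-type operator and does not commute with $\beta_\gamma^t=\beta+ic(\beta)\gamma t$; what vanishes is $A(\tilde f^L(\alpha_\gamma^t))$, $A(\lambda(\alpha_\gamma^t))$, etc.\ because those functions are analytic in $\alpha_\gamma^t$. Finally, your account of where the sign of $\kappa(t)$ enters is reversed: the sign is used in the \emph{last} step (bounding $\Re\langle A(h),M_{1,1}(A(h))\rangle$ via \eqref{3linearcoro02} or, for $\kappa<0$, via the boundary vanishing in Section \ref{analyticity-section}); the derivation of the identity \eqref{analyticityestimateG} itself is sign-independent, which is the content of the remark in the lemma.
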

\begin{proof}
From \eqref{defkappa}, we have $\tau(t)=0$. Then by  \eqref{extendedequation01},  when  $0<\al<\pi$, the solution $h$ satisfies
\begin{align}\label{modifiedequation2}
    \frac{dh(\alpha,\gamma,t)}{dt}=T(h)=
       M_{1,1}(h)+M_{1,2}(h)+F_2(h)+\sum_{i}O^{i}(h)+\lambda(\alpha)T_{fixed}(\alpha_{\gamma}^{t}) 
\end{align}
with the initial value $h(\alpha,\gamma,0)=\tilde{f}^{+(60)}(\alpha,0)$.

First, we combine two main terms $M_{1,1}$, $M_{1,2}$ in \eqref{extendedequation01} and have:
\begin{align}\label{3anasplit}
&\quad M_{1,1}(h)+M_{1,2}(h)\\\nonumber
&=\bigg(\frac{ic(\al)\g}{1+ic'(\al)\g t}\frac{dh(\al,\g,t)}{d\al}\bigg)\\\nonumber
&\quad+\bigg(\lambda(\alpha_{\gamma}^{t})\frac{\frac{dh(\al,\g,t)}{d\al}}{1+ic'(\al)\g t}(\kappa(t)+\frac{\partial_{t}x}{\partial_{\alpha}x}(\alpha_{\gamma}^{t})  -\frac{\partial_{t}x}{\partial_{\alpha}x}(0)\\\nonumber&\quad+\int_{-\pi}^{\pi}K(D^{-60}(h)(\alpha,\gamma, t)-D^{-60}(h)(\beta,\gamma, t)+\tilde{f}^{L}(\alpha_{\gamma}^{t})-\tilde{f}^{L}(\beta_{\gamma}^{t}))(\frac{1}{(\partial_{\alpha}x)(\alpha_{\gamma}^{t})}-\frac{1}{(\partial_{\beta}x)(\beta_{\gamma}^{t})})\frac{dx(\beta_{\gamma}^{t})}{d\beta}d\beta\\\nonumber
  &\quad-\frac{1}{(\partial_{\alpha}x)(0)}p.v.\int_{-\pi}^{\pi}K(D^{-60}(h)(0,\gamma, t)-D^{-60}(h)(\beta,\gamma, t)+\tilde{f}^{L}(0)-\tilde{f}^{L}(\beta_{\gamma}^{t}))\frac{dx(\beta_{\gamma}^{t})}{d\beta}d\beta)\bigg)\\\nonumber
  &\quad+\bigg(\lambda(\alpha_{\gamma}^{t})\frac{\frac{dh(\al,\g,t)}{d\al}}{1+ic'(\al)\g t}p.v.\int_{-\pi}^{\pi}K(D^{-60}(h)(\alpha,\gamma, t)-D^{-60}(h)(\beta,\gamma, t)+\tilde{f}^{L}(\alpha_{\gamma}^{t})-\tilde{f}^{L}(\beta_{\gamma}^{t}))(1+ic'(\beta)\gamma t)d\beta\bigg)\\\nonumber
  &=N_1(h)+N_2(h)+N_3(h).
\end{align}
    Here we use the fact that $\lambda(\alpha_{\gamma}^{t})=\lambda(\alpha+\tau(t))=1$, $c(\alpha)$ is supported in where $\lambda(\alpha)=1$ and $\tau(t)=0$ since $\kappa(t)>0$.

    From lemma \ref{switch},
we have
\[
A(\frac{d}{dt}-\frac{ic(\al)\g}{1+ic'(\al)\g t }\frac{d}{d\al})h=(\frac{d}{dt}-\frac{ic(\al)\g}{1+ic'(\al)\g t}\frac{d}{d\al})A(h).
\]
Then from the trivial term $A(\lambda(\alpha_{\gamma}^{t}))=0$, we have,
\begin{equation}\label{3Ahsplit}
\frac{dA(h)(\alpha,\gamma,t)}{dt}=N_1(A(h))+
       A(N_2(h)+N_3(h)+F_2(h)+\sum_{i}O^{i}(h)+\lambda(\alpha)T_{fixed}(\alpha_{\gamma}^{t})). 
\end{equation}
We will calculate $A(N_2(h))$, $A(N_{3}(h)+F_2(h))$ and $A(O^{i}(h))$  and $A(\lambda(\alpha)T_{fixed}(\alpha_{\gamma}^{t})))$ separately. For $A(N_2(h))$, we first use the following lemma to remove the p.v. before the integral.
\begin{lemma}\label{AB24new}
We have
\begin{align}\label{estimatekernelanalyticity01}
K(D^{-60}(h)(0,\gamma, t)-D^{-60}(h)(\beta,\gamma, t)+\tilde{f}^{L}(0)-\tilde{f}^{L}(\beta_{\gamma}^{t}))\in C_{\gamma}^{1}([-1,1], L_{\beta}^{\infty}[-\pi,\pi]),
\end{align}

\begin{align}\label{estimatekernelanalyticity02}
K(D^{-60}(h)(0,\gamma, t)-D^{-60}(h)(\beta,\gamma, t)+\tilde{f}^{L}(0)-\tilde{f}^{L}(\beta_{\gamma}^{t}))c(\beta) \in C_{\gamma}^{1}([-1,1], C_{\beta}^{1}[-\pi,\pi]),
\end{align}
and
\begin{align}\label{estimatekernelanalyticity03}
(\nabla K)(D^{-60}(h)(0,\gamma, t)-D^{-60}(h)(\beta,\gamma, t)+\tilde{f}^{L}(0)-\tilde{f}^{L}(\beta_{\gamma}^{t}))c(\beta) \in C_{\gamma}^{1}([-1,1], L^{\infty}_{\beta}[-\pi,\pi]).
\end{align}
\end{lemma}
\begin{proof}
Although
\[
K(x_1,x_2)=\frac{\sin(x_1)}{\cos(x_1)-\cosh(x_2)}
\]
is an $K_{-1}$ type operator from def \eqref{k-sigma}, the property of $\tilde{f}^{L}(\beta_{\gamma}^{t})$ make sure there is no singularity in the integral. In fact, from the def of $D^{-60}$(\eqref{D-formularnew}) and $\tilde{f}^{L}(\beta_{\gamma}^{t})$ (\eqref{fLequation},\eqref{turnoverconditionnew}), we have \[
\frac{d}{d\beta}(D^{-60}(h))(0,\gamma,t)=0 \quad \frac{d}{d\beta}(\tilde{f}^{L})(0,t)=0. 
\]
Then from the smoothness of $h$ \eqref{hspaceconclusion} and $\tilde{f}^{L}(\beta_{\gamma}^{t})$ \eqref{fLequation},  we get
\[
\frac{\tilde{f}^L_1(0)-\tilde{f}^L_1(\beta_{\gamma}^{t})}{\beta^2},\frac{D^{-60}(h)(\beta,\gamma,t)-D^{-60}(h)(0,\gamma,t)}{\beta^2} \in C^{1}_{\gamma}([-1,1], L_{\beta}^{\infty}([-\pi,\pi])).
\]
Therefore from the arc-cord condition  \eqref{arcchord4} we have the result \eqref{estimatekernelanalyticity01}.
Additionally, we have $|\frac{c(\beta)}{\beta^2}|\lesssim 1$, $\nabla K$ is $K_{-2}$ type and 
\[
\frac{\tilde{f}^L_1(0)-\tilde{f}^L_1(\beta_{\gamma}^{t})}{\beta},\frac{D^{-60}(h)(\beta,\gamma,t)-D^{-60}(h)(0,\gamma,t)}{\beta} \in C^{1}_{\gamma}([-1,1], C_{\beta}^{1}([-\pi,\pi])).
\]
Then \eqref{estimatekernelanalyticity02}, \eqref{estimatekernelanalyticity03} follows.

\end{proof} 

From \eqref{initialchangevariable}, \eqref{fLequation},  $\tilde{f}^{L}(\alpha,t)$ , $\partial_{t}x(\alpha,t)$, and  $x(\alpha,t)$ are analytic in $\tilde{D}_A=\{\alpha+iy|0< \alpha<\frac{\delta }{4}\}$. By using lemma \ref{switch} to change $A$ and $\frac{\frac{d}{d\alpha}}{1+ic'(\alpha)\gamma t}$, we get
\begin{align}\label{3n21}
&\quad A(N_2(h))\\\nonumber
&=\lambda(\alpha_{\gamma}^{t})\frac{\frac{dA(h)}{d\al}(\al,\g,t)}{1+ic'(\al)\g t}[\kappa(t)+\frac{\partial_{t}x}{\partial_{\alpha}x}(\alpha_{\gamma}^{t})  -\frac{\partial_{t}x}{\partial_{\alpha}x}(0)\\\nonumber
&\quad+\int_{-\pi}^{\pi}K(D^{-60}(h)(\alpha,\gamma, t)-D^{-60}(h)(\beta,\gamma, t)+\tilde{f}^{L}(\alpha_{\gamma}^{t})-\tilde{f}^{L}(\beta_{\gamma}^{t}))(\frac{1}{(\partial_{\alpha}x)(\alpha_{\gamma}^{t})}-\frac{1}{(\partial_{\beta}x)(\beta_{\gamma}^{t})})\frac{dx(\beta_{\gamma}^{t})}{d\beta}d\beta\\\nonumber
  &\quad-\frac{1}{(\partial_{\alpha}x)(0)}\int_{-\pi}^{\pi}K(D^{-60}(h)(0,\gamma, t)-D^{-60}(h)(\beta,\gamma, t)+\tilde{f}^{L}(0)-\tilde{f}^{L}(\beta_{\gamma}^{t}))\frac{dx(\beta_{\gamma}^{t})}{d\beta}d\beta]\\\nonumber
  &\quad+[A(\int_{-\pi}^{\pi}K(D^{-60}(h)(\alpha,\gamma, t)-D^{-60}(h)(\beta,\gamma, t)+\tilde{f}^{L}(\alpha_{\gamma}^{t})-\tilde{f}^{L}(\beta_{\gamma}^{t}))(\frac{1}{(\partial_{\alpha}x)(\alpha_{\gamma}^{t})}-\frac{1}{(\partial_{\beta}x)(\beta_{\gamma}^{t})})\frac{dx(\beta_{\gamma}^{t})}{d\beta}d\beta)\\\nonumber
  &\quad-A(\frac{1}{(\partial_{\alpha}x)(0)}\int_{-\pi}^{\pi}K(D^{-60}(h)(0,\gamma, t)-D^{-60}(h)(\beta,\gamma, t)+\tilde{f}^{L}(0)-\tilde{f}^{L}(\beta_{\gamma}^{t}))\frac{dx(\beta_{\gamma}^{t})}{d\beta}d\beta)]\frac{\frac{dh}{d\al}(\al,\g,t)}{1+ic'(\al)\g t}\lambda(\alpha_{\gamma}^{t}).
\end{align}

By letting 
\begin{equation*}
\ddot{h}(\alpha)=(D^{-60}(h)(\alpha,\gamma,t),\tilde{f}^{L}(\alpha_{\gamma}^{t}),\frac{1}{(\partial_{\alpha}x)(\alpha_{\gamma}^{t})}),
\end{equation*}
\begin{equation*}
    \breve{K}(\ddot{h}(\alpha)-\ddot{h}(\beta))=K(D^{-60}(h)(\alpha,\gamma, t)-D^{-60}(h)(\beta,\gamma, t)+\tilde{f}^{L}(\alpha_{\gamma}^{t})-\tilde{f}^{L}(\beta_{\gamma}^{t}))(\frac{1}{(\partial_{\alpha}x)(\alpha_{\gamma}^{t})}-\frac{1}{(\partial_{\beta}x)(\beta_{\gamma}^{t})}),
\end{equation*}
and 
\begin{equation*}
    X(\beta,\gamma)=(\partial_{\beta}x)(\beta_{\gamma}^{t}),
\end{equation*}
from commuting lemma \ref{forM1}, and  corollary \ref{forD},  we have,
\begin{equation}\label{AB21new2}
\begin{split}
  &\quad A(\int_{-\pi}^{\pi}K(D^{-60}(h)(\alpha,\gamma, t)-D^{-60}(h)(\beta,\gamma, t)+\tilde{f}^{L}(\alpha_{\gamma}^{t})-\tilde{f}^{L}(\beta_{\gamma}^{t}))(\frac{1}{(\partial_{\alpha}x)(\alpha_{\gamma}^{t})}-\frac{1}{(\partial_{\beta}x)(\beta_{\gamma}^{t})})\frac{dx(\beta_{\gamma}^{t})}{d\beta}d\beta)\\
  &=D_{h}(\int_{-\pi}^{\pi}K(D^{-60}(h)(\alpha,\gamma, t)-D^{-60}(h)(\beta,\gamma, t)+\tilde{f}^{L}(\alpha_{\gamma}^{t})-\tilde{f}^{L}(\beta_{\gamma}^{t}))\\
  &\quad \cdot(\frac{1}{(\partial_{\alpha}x)(\alpha_{\gamma}^{t})}-\frac{1}{(\partial_{\beta}x)(\beta_{\gamma}^{t})})\frac{dx(\beta_{\gamma}^{t})}{d\beta}d\beta)[A(h)].
  \end{split}
\end{equation}
Similarly, from smoothness lemma \ref{AB24new}, commuting corollary \ref{forM12} and corollary \ref{forD}, we have
\begin{equation}\label{AB21new3}
\begin{split}
  &\quad A(\frac{1}{(\partial_{\alpha}x)(0)}\int_{-\pi}^{\pi}K(D^{-60}(h)(0,\gamma, t)-D^{-60}(h)(\beta,\gamma, t)+\tilde{f}^{L}(0)-\tilde{f}^{L}(\beta_{\gamma}^{t}))\frac{dx(\beta_{\gamma}^{t})}{d\beta}d\beta)\\
  &=(D_{h}(\frac{1}{(\partial_{\alpha}x)(0)}\int_{-\pi}^{\pi}K(D^{-60}(h)(0,\gamma, t)-D^{-60}(h)(\beta,\gamma, t)+\tilde{f}^{L}(0)-\tilde{f}^{L}(\beta_{\gamma}^{t}))\frac{dx(\beta_{\gamma}^{t})}{d\beta}d\beta)[A(h)],
  \end{split}
\end{equation}
where we also use $D^{-60}(h)(0,\gamma, t)=0$, $D^{-60}(A(h))(0,\gamma, t)=0$ from \eqref{D-formularnew}.

Therefore, from \eqref{3anasplit}, \eqref{3n21}, \eqref{AB21new2}, and \eqref{AB21new3}, we have
\begin{equation}\label{3anan2}
\begin{split}
A(N_2(h))=D_h(N_2(h))[A(h)].
\end{split}
\end{equation}

%The reason we do this separation is that the first term comes from the derivative when it hits on $\al+ic(\al)\gamma t$, which makes it play a different role from the other terms. 

Next, we calculate $A(N_3(h)+F_2(h))$ together to get rid of the principal value. 
We again rewrite $F_2$. From \eqref{M2c02}, by integration by parts, we have
\begin{equation}
\begin{split}
(F_2(h))_{\mu}&=-\lambda(\al_{\gamma}^{t})p.v.\int_{0}^{2\alpha}(K(D^{-60}(h)(\alpha,\gamma, t)-D^{-60}(h)(\beta,\gamma, t)+\tilde{f}^{L}(\alpha_{\gamma}^{t})-\tilde{f}^{L}(\beta_{\gamma}^{t}))\frac{d}{d\beta}(h_{\mu}(\beta,\gamma,t))d\beta\\
 &\quad+\lambda(\al_{\gamma}^{t})K(D^{-60}(h)(\alpha,\gamma, t)-D^{-60}(h)(2\alpha,\gamma, t)+\tilde{f}^{L}(\alpha_{\gamma}^{t})-\tilde{f}^{L}((2\alpha)_{\gamma}^{t}))h_{\mu}(2\alpha,\gamma,t)\\
 &\quad-\lambda(\al_{\gamma}^{t})\int_{0}^{\gamma}\frac{\frac{d}{d\beta}(K(D^{-60}(h)(\alpha,\gamma, t)-D^{-60}(h)(\beta,\eta, t)+\tilde{f}^{L}(\alpha_{\gamma}^{t})-\tilde{f}^{L}(\beta_{\eta}^{t}))|_{\beta=2\alpha}}{1+ic'(2\alpha)\eta t}h_{\mu}(2\alpha,\eta,t)ic(2\alpha) t d\eta\\
    &\quad+\lambda(\al_{\gamma}^{t})\int_{2\alpha}^{\pi}\frac{d}{d\beta}(K(D^{-60}(h)(\alpha,\gamma, t)-\tilde{f}^{+}(\beta_0^ t)+\tilde{f}^{L}(\alpha_{\gamma}^{t})-\tilde{f}^{L}(\beta_{0}^{t}))\tilde{f}_{\mu}^{+(60)}(\beta_{0}^{t})d\beta.
\end{split}
\end{equation}
From \eqref{3uniqh}, we can replace the $\tilde{f}^{+(60)}(\beta_{0}^{t})$ by $h(\beta,0,t)$ , and get
\begin{align*}
(F_2(h))_{\mu}&=-\lambda(\al_{\gamma}^{t})p.v.\int_{0}^{2\alpha}(K(D^{-60}(h)(\alpha,\gamma, t)-D^{-60}(h)(\beta,\gamma, t)+\tilde{f}^{L}(\alpha_{\gamma}^{t})-\tilde{f}^{L}(\beta_{\gamma}^{t}))\frac{d}{d\beta}(h_{\mu}(\beta,\gamma,t))d\beta\\
    &\quad+\lambda(\al_{\gamma}^{t})K(D^{-60}(h)(\alpha,\gamma, t)-D^{-60}(h)(2\alpha,\gamma, t)+\tilde{f}^{L}(\alpha_{\gamma}^{t})-\tilde{f}^{L}((2\alpha)_{\gamma}^{t}))h_{\mu}(2\alpha,\gamma,t)\\
 &\quad+\lambda(\al_{\gamma}^{t})\int_{0}^{\gamma}\nabla K(D^{-60}(h)(\alpha,\gamma, t)-D^{-60}(h)(2\alpha,\eta, t)+\tilde{f}^{L}(\alpha_{\gamma}^{t})-\tilde{f}^{L}((2\alpha)_{\eta}^{t}))\\
    &\qquad\cdot(D^{-59}(h)(2\alpha,\eta,t)+(\tilde{f}^{L})'((2\alpha)_{\eta}^{t}))h_{\mu}((2\alpha),\eta,t)ic(2\alpha) t d\eta\\
    &\quad-\lambda(\al_{\gamma}^{t})\int_{2\alpha}^{\pi}\nabla K(D^{-60}(h)(\alpha,\gamma, t)-D^{-60}(h)(\beta,0, t)+\tilde{f}^{L}(\alpha_{\gamma}^{t})-\tilde{f}^{L}(\beta_{0}^{t}))\\
    &\qquad\cdot(D^{-59}(h)(\beta,0,t)+(\tilde{f}^{L})'(\beta_{0}^{t}))h_{\mu}(\beta,0, t)d\beta.
\end{align*}
Then from the definition of  $N_3(h)$ \eqref{3anasplit}, we have
\begin{align*}
    &\quad A((N_3(h)+F_2(h))_{\mu})\\
    &=\lambda(\al_{\gamma}^{t})A(\int_{0}^{2\alpha}(K(D^{-60}(h)(\alpha,\gamma, t)-D^{-60}(h)(\beta,\gamma, t)+\tilde{f}^{L}(\alpha_{\gamma}^{t})-\tilde{f}^{L}(\beta_{\gamma}^{t}))\\
    &\qquad(\frac{\frac{d}{d\alpha}(h_{\mu})(\alpha,\gamma,t)}{1+ic'(\alpha)\gamma t}-\frac{\frac{d}{d\beta}(h_{\mu})(\beta,\gamma,t)}{1+ic'(\beta)\gamma t})(1+ic'(\beta)\gamma t)d\beta\\
    &\quad+K(D^{-60}(h)(\alpha,\gamma, t)-D^{-60}(h)(2\alpha,\gamma, t)+\tilde{f}^{L}(\alpha_{\gamma}^{t})-\tilde{f}^{L}((2\alpha)_{\gamma}^{t}))h_{\mu}(2\alpha,\gamma,t)\\
 &\quad+\int_{0}^{\gamma}\nabla K(D^{-60}(h)(\alpha,\gamma, t)-D^{-60}(h)(2\al,\eta, t)+\tilde{f}^{L}(\alpha_{\gamma}^{t})-\tilde{f}^{L}((2\al)_{\eta}^{t}))\\
    &\qquad\cdot(D^{-59}(h)(2\alpha,\eta,t)+(\tilde{f}^{L})'((2\alpha)_{\eta}^{t}))h_{\mu}(2\alpha,\eta,t)ic(2\alpha) t d\eta\\
    &\quad-\int_{2\alpha}^{\pi}\nabla K(D^{-60}(h)(\alpha,\gamma, t)-D^{-60}(h)(\beta,0, t)+\tilde{f}^{L}(\alpha_{\gamma}^{t})-\tilde{f}^{L}(\beta_{0}^{t}))\\
    &\qquad\cdot(D^{-59}(h)(\beta,0,t)+(\tilde{f}^{L})'(\beta_{0}^{t}))h_{\mu}(\beta,0, t)d\beta\\
    &\quad+\int_{[-\pi,0]\cup[2\alpha,\pi]}K(D^{-60}(h)(\alpha,\gamma, t)-D^{-60}(h)(\beta,\gamma, t)+\tilde{f}^{L}(\alpha_{\gamma}^{t})-\tilde{f}^{L}(\beta_{\gamma}^{t}))(1+ic'(\beta)\gamma t)d\beta\\
    &\qquad\cdot\frac{\frac{d}{d\alpha}(h_{\mu})(\alpha,\gamma,t)}{1+ic'(\alpha)\gamma t}).
\end{align*}
From commuting corollary \ref{forM2} and \ref{forD}, by letting
\[
\ddot{h}(\alpha,\gamma)=D^{-60}(h)(\alpha,\gamma,t)+\tilde{f}^{L}(\alpha_{\gamma}^{t}),
\]
\[
\breve{K}=K,
\]
and
\[
\breve{h}(\alpha,\gamma)=h_{\mu}(\alpha,\gamma,t),
\]
we have
\begin{align}\label{A-Mequation}
 &\quad A((N_3(h)+F_2(h))_{\mu})\\\nonumber&=\lambda(\al_{\gamma}^{t})\int_{0}^{2\alpha}(K(D^{-60}(h)(\alpha,\gamma, t)-D^{-60}(h)(\beta,\gamma, t)+\tilde{f}^{L}(\alpha_{\gamma}^{t})-\tilde{f}^{L}(\beta_{\gamma}^{t}))\\\nonumber
&\qquad(\underbrace{\frac{\frac{d}{d\alpha}A(h_{\mu})(\alpha,\gamma,t)}{1+ic'(\alpha)\gamma t}}_{A_{2,0}}\underbrace{-\frac{\frac{d}{d\beta}A(h_{\mu})(\beta,\gamma,t)}{1+ic'(\beta)\gamma t})}_{A_{2,1}}(1+ic'(\beta)\gamma t)d\beta\\\nonumber
&\quad+\lambda(\al_{\gamma}^{t})\int_{0}^{2\alpha}(D_{h}K(D^{-60}(h)(\alpha,\gamma, t)-D^{-60}(h)(\beta,\gamma, t)+\tilde{f}^{L}(\alpha_{\gamma}^{t})-\tilde{f}^{L}(\beta_{\gamma}^{t}))[A(h)])\\\nonumber
     &\qquad(\underbrace{\frac{\frac{d}{d\alpha}h_{\mu}(\alpha,\gamma,t)}{1+ic'(\alpha)\gamma t}}_{A_{2,2}}\underbrace{-\frac{\frac{d}{d\beta}h_{\mu}(\beta,\gamma,t)}{1+ic'(\beta)\gamma t})}_{A_{2,3}}(1+ic'(\beta)\gamma t)d\beta\\\nonumber
    &\quad\underbrace{+\lambda(\al_{\gamma}^{t})K(D^{-60}(h)(\alpha,\gamma, t)-D^{-60}(h)(2\alpha,\gamma, t)+\tilde{f}^{L}(\alpha_{\gamma}^{t})-\tilde{f}^{L}((2\alpha)_{\gamma}^{t}))(A(h))_{\mu}(2\alpha,\gamma,t)}_{A_{2,4}}\\\nonumber
    &\quad\underbrace{+\lambda(\al_{\gamma}^{t})D_hK(D^{-60}(h)(\alpha,\gamma, t)-D^{-60}(h)(2\alpha,\gamma, t)+\tilde{f}^{L}(\alpha_{\gamma}^{t})-\tilde{f}^{L}((2\alpha)_{\gamma}^{t}))[A(h)](h)_{\mu}(2\alpha,\gamma,t)}_{A_{2,5}}\\\nonumber
   &\quad\underbrace{+\lambda(\al_{\gamma}^{t})\int_{0}^{\gamma}\nabla K(D^{-60}(h)(\alpha,\gamma, t)-D^{-60}(h)(2\alpha,\eta, t)+\tilde{f}^{L}(\alpha_{\gamma}^{t})-\tilde{f}^{L}((2\alpha)_{\eta}^{t}))}_{A_{2,6}}\\\nonumber
    &\qquad\underbrace{\cdot(D^{-59}(h)(2\al,\eta,t)+(\tilde{f}^{L})'((2\alpha)_{\eta}^{t}))\frac{2ic(\alpha)t(1+ic'(2\alpha)\eta t)}{(1+ic'(\alpha)\gamma t)(ic(2\alpha) t)}A(h_{\mu})((2\alpha),\eta,t)ic(2\alpha) t d\eta}_{A_{2,6}}\\\nonumber
    &\quad+\underbrace{\lambda(\al_{\gamma}^{t})\int_{0}^{\gamma}\nabla K(D^{-60}(h)(\alpha,\gamma, t)-D^{-60}(h)(2\alpha,\eta, t)+\tilde{f}^{L}(\alpha_{\gamma}^{t})-\tilde{f}^{L}((2\alpha)_{\eta}^{t}))}_{A_{2,7}}\\\nonumber
    &\qquad\underbrace{\cdot D^{-59}(A(h))(2\alpha,\eta,t)h_{\mu}(2\alpha,\eta,t)\frac{2ic(\alpha)t(1+ic'(2\alpha)\eta t)}{(1+ic'(\alpha)\gamma t)(ic(2\alpha) t)}ic(2\alpha) t d\eta}_{A_{2,7}}\\\nonumber
     &\quad+\underbrace{\lambda(\al_{\gamma}^{t})\int_{0}^{\gamma}\nabla^2 K(D^{-60}(h)(\alpha,\gamma, t)-D^{-60}(h)(2\alpha,\eta, t)+\tilde{f}^{L}(\alpha_{\gamma}^{t})-\tilde{f}^{L}((2\alpha)_{\eta}^{t}))}_{A_{2,8}}\\\nonumber
     &\qquad\underbrace{[(D^{-60}(A(h))(\alpha,\gamma, t)-\frac{2ic(\alpha)t(1+ic'(2\alpha)\eta t)}{(1+ic'(\alpha)\gamma t)(ic(2\alpha) t)}D^{-60}(A(h))(2\alpha,\eta, t))}_{A_{2,8}},\\\nonumber
    &\qquad\underbrace{(D^{-59}(h)(2\al,\eta,t)+(\tilde{f}^{L})'((2\al)_{\eta}^{t}))]h_{\mu}(2\al,\eta,t)ic(2\alpha) t d\eta}_{A_{2,8}},\\\nonumber
     &\quad\underbrace{-\lambda(\al_{\gamma}^{t})\int_{2\al}^{\pi}\nabla^2 K(D^{-60}(h)(\alpha,\gamma, t)-D^{-60}(h)(\beta,0, t)+\tilde{f}^{L}(\alpha_{\gamma}^{t})-\tilde{f}^{L}(\beta_{0}^{t}))[D^{-60}(A(h))(\alpha,\gamma,t),}_{A_{2,9}}\\\nonumber
    &\qquad\underbrace{(D^{-59}(h)(\beta,0,t)+(\tilde{f}^{L})'(\beta_{0}^{t}))]h_{\mu}(\beta,0, t)d\beta}_{A_{2,9}}\\\nonumber
    &\quad\underbrace{+\lambda(\al_{\gamma}^{t})\int_{[-\pi,0]\cup[2\al,\pi]}D_h(K(D^{-60}(h)(\alpha,\gamma, t)-D^{-60}(h)(\beta,\gamma, t)+\tilde{f}^{L}(\alpha_{\gamma}^{t})-\tilde{f}^{L}(\beta_{\gamma}^{t})))[A(h)]}_{A_{2,10}}\\\nonumber
    &\qquad\underbrace{\cdot(1+ic'(\beta)\gamma t)d\beta\frac{\frac{d}{d\alpha}(h_{\mu})(\alpha,\gamma,t)}{1+ic'(\al)\gamma t}}_{A_{2,10}}\\\nonumber
    &\quad+\underbrace{\lambda(\al_{\gamma}^{t})\int_{[-\pi,0]\cup[2\al,\pi]}K(D^{-60}(h)(\alpha,\gamma, t)-D^{-60}(h)(\beta,\gamma, t)+\tilde{f}^{L}(\alpha_{\gamma}^{t})-\tilde{f}^{L}(\beta_{\gamma}^{t}))(1+ic'(\beta)\gamma t)d\beta}_{A_{2,11}}\\\nonumber
    &\qquad\underbrace{\cdot\frac{\frac{d}{d\alpha}A(h_{\mu})(\alpha,\gamma,t)}{1+ic'(\al)\gamma t}}_{A_{2,11}}\\\nonumber
    &=A_{2,0}+A_{2,1}+A_{2,2}+A_{2,3}+A_{2,4}+A_{2,5}+A_{2,6}+A_{2,7}+A_{2,8}+A_{2,9}+A_{2,10}+A_{2,11}.
\end{align}
Here some terms disappear because $A(\lambda(\al_{\gamma}^{t}))=0$ and $A(\tilde{f}^{L}(\alpha_{\gamma}^{t}))=0$. Here we also use
\[
\frac{\frac{d}{d\alpha}(D^{-60}(h)(\alpha,\gamma,t)+\tilde{f}^{L}(\alpha_{\gamma}^{t}))}{1+ic'(\alpha)\gamma t}=D^{-59}(h)(\alpha,\gamma,t)+(\tilde{f}^{L})'(\alpha_{\gamma}^{t}).
\]
From the def of $N_3(h)$ \eqref{3anasplit}, we have
\begin{equation}\label{3N3ah} A_{2,0}+A_{2,2}+A_{2,10}+A_{2,11}=D_{h}((N_3(h))_{\mu})[A(h)].
\end{equation}
For the term $A_{2,1}$, we again do the integration by parts and have
\begin{align}\label{3a21inte}
   &\quad A_{2,1}\\\nonumber
   &=-\lambda(\al_{\gamma}^{t})p.v.\int_{0}^{2\al}(K(D^{-60}(h)(\alpha,\gamma, t)-D^{-60}(h)(\beta,\gamma, t)+\tilde{f}^{L}(\alpha_{\gamma}^{t})-\tilde{f}^{L}(\beta_{\gamma}^{t}))\frac{d}{d\beta}(A(h_{\mu})(\beta,\gamma,t))d\beta\\\nonumber
   &=-\lambda(\al_{\gamma}^{t})K(D^{-60}(h)(\alpha,\gamma, t)-D^{-60}(h)(2\al,\gamma, t))+\tilde{f}^{L}(\alpha_{\gamma}^{t})-\tilde{f}^{L}((2\al)_{\gamma}^{t}))\\\nonumber
   &\qquad (A(h_{\mu})(2\al,\gamma,t)-A(h_{\mu})(\alpha,\gamma,t))\\\nonumber
   &\quad +\lambda(\al_{\gamma}^{t})K(D^{-60}(h)(\alpha,\gamma, t)-D^{-60}(h)(0,\gamma, t))+\tilde{f}^{L}(\alpha_{\gamma}^{t})-\tilde{f}^{L}((0)_{\gamma}^{t}))(A(h_{\mu})(0,\gamma,t)-A(h_{\mu})(\alpha,\gamma,t))\\\nonumber
   &\quad +\lambda(\al_{\gamma}^{t})p.v.\int_{0}^{2\al}\frac{d}{d\beta}(K(D^{-60}(h)(\alpha,\gamma, t)-D^{-60}(h)(\beta,\gamma, t)+\tilde{f}^{L}(\alpha_{\gamma}^{t})-\tilde{f}^{L}(\beta_{\gamma}^{t})))\\\nonumber
   &\qquad(A(h_{\mu})(\beta,\gamma,t)-A(h_{\mu})(\alpha,\gamma,t))d\beta\\\nonumber
   &=A_{2,1,1}+A_{2,1,2}\\\nonumber 
   &\quad +\lambda(\al_{\gamma}^{t})\lim_{\beta\to\alpha}(\frac{d}{d\beta}(K(D^{-60}(h)(\alpha,\gamma, t)-D^{-60}(h)(\beta,\gamma, t)+\tilde{f}^{L}(\alpha_{\gamma}^{t})-\tilde{f}^{L}(\beta_{\gamma}^{t})))(\alpha-\beta)^2)\\\nonumber
   &\qquad p.v.\int_{0}^{2\al}\frac{(A(h_{\mu})(\beta,\gamma,t)-A(h_{\mu})(\alpha,\gamma,t))}{(\alpha-\beta)^2}d\beta\\\nonumber
   &\quad -\lambda(\al_{\gamma}^{t})\int_{0}^{2\al}[\lim_{\beta\to\alpha}(\frac{d}{d\beta}(K(D^{-60}(h)(\alpha,\gamma, t)-D^{-60}(h)(\beta,\gamma, t)+\tilde{f}^{L}(\alpha_{\gamma}^{t})-\tilde{f}^{L}(\beta_{\gamma}^{t})))(\alpha-\beta)^2)\\\nonumber
   &\qquad -\frac{d}{d\beta}(K(D^{-60}(h)(\alpha,\gamma, t)-D^{-60}(h)(\beta,\gamma, t)+\tilde{f}^{L}(\alpha_{\gamma}^{t})-\tilde{f}^{L}(\beta_{\gamma}^{t})))(\alpha-\beta)^2]\\
   &\qquad\cdot\frac{(A(h_{\mu})(\beta,\gamma,t)-A(h_{\mu})(\alpha,\gamma,t))}{(\alpha-\beta)^2}d\beta\\\nonumber
   &=A_{2,1,1}+A_{2,1,2}+A_{2,1,M}\\\nonumber
   &\quad -\lambda(\al_{\gamma}^{t})p.v.\int_{0}^{2\al}[\lim_{\beta\to\alpha}(\frac{d}{d\beta}(K(D^{-60}(h)(\alpha,\gamma, t)-D^{-60}(h)(\beta,\gamma, t)+\tilde{f}^{L}(\alpha_{\gamma}^{t})-\tilde{f}^{L}(\beta_{\gamma}^{t})))(\alpha-\beta)^2)\\\nonumber
   &\quad -\frac{d}{d\beta}(K(D^{-60}(h)(\alpha,\gamma, t)-D^{-60}(h)(\beta,\gamma, t)+\tilde{f}^{L}(\alpha_{\gamma}^{t})-\tilde{f}^{L}(\beta_{\gamma}^{t})))(\alpha-\beta)^2]\frac{(A(h_{\mu})(\beta,\gamma,t))}{(\alpha-\beta)^2}d\beta\\\nonumber
   &\quad +\lambda(\al_{\gamma}^{t})p.v.\int_{0}^{2\al}[\lim_{\beta\to\alpha}(\frac{d}{d\beta}(K(D^{-60}(h)(\alpha,\gamma, t)-D^{-60}(h)(\beta,\gamma, t)+\tilde{f}^{L}(\alpha_{\gamma}^{t})-\tilde{f}^{L}(\beta_{\gamma}^{t})))(\alpha-\beta)^2)\\\nonumber
   &\quad -\frac{d}{d\beta}(K(D^{-60}(h)(\alpha,\gamma, t)-D^{-60}(h)(\beta,\gamma, t)+\tilde{f}^{L}(\alpha_{\gamma}^{t})-\tilde{f}^{L}(\beta_{\gamma}^{t})))(\alpha-\beta)^2]\frac{1}{(\alpha-\beta)^2}d\beta(A(h_{\mu})(\alpha,\gamma,t))\\\nonumber
   &=A_{2,1,1}+A_{2,1,2}+A_{2,1,M}+A_{2,1,4}+A_{2,1,5}.
\end{align}
Combining \eqref{3a21inte}, \eqref{A-Mequation} and \eqref{3N3ah}, we have
\begin{equation}\label{3a21main}
\begin{split}
&\quad A((N_3(h)+M_{2,1}(h))_{\mu})\\
&=A_{2,1,M}+A_{2,1,1}+A_{2,1,2}+A_{2,1,4}+A_{2,1,5}+\sum_{j=3}^{9}A_{2,j}+D_{h}((N_3(h))_{\mu})[A(h)]\\
&=\lambda(\al_{\gamma}^{t})(L_2^{+}(h))p.v.\int_{0}^{2\al}\frac{(A(h_{\mu})(\alpha,\gamma,t)-A(h_{\mu})(\beta,\gamma,t))}{(\alpha-\beta)^2}d\beta\\
&\quad +A_{2,1,1}+A_{2,1,2}+A_{2,1,4}+A_{2,1,5}+\sum_{j=3}^{9}A_{2,j}+D_{h}((N_3(h))_{\mu})[A(h)],
\end{split}
\end{equation}
where in the last step we use notation in \eqref{3M12chequG01}.
Now we show except $A_{2,1,M}$ and $D_{h}((N_3(h))_{\mu})[A(h)]$, other terms are bounded term  $B.T^{0}$, where $B.T^{0}$ denote the terms satisfying 
\[
\|B.T^{0}\|_{X^1}\lesssim\|A(h)\|_{X^1}.
\]
\begin{lemma}\label{AM2L2}
All $X^1$ norms of $A_{2,1,1}$ to $A_{2,1,5}$ and $A_{2,3}$ to $A_{2,9}$ are bounded by $\|A(h)\|_{X^1}.$
\end{lemma}
\begin{proof}
For
$A_{2,1,1}$, $A_{2,1,2}$ and $A_{2,1,5}$, from definition of $K$ \eqref{kdefi} and $K_{-\sigma}$ type operator \eqref{k-sigma}, by using notation in \eqref{firsttypeestimate}, \eqref{secondtypeestimate}, we have 
\[
A_{2,1,1}=\lambda(\al_{\gamma}^{t})\tilde{B}_{2}^{j_1}(h)(\alpha,\gamma,t)(A(h)(2\alpha,\gamma,t)-A(h)(\alpha,\gamma,t)),
\]
\[
A_{2,1,2}=\lambda(\al_{\gamma}^{t})\tilde{B}_{2}^{j_2}(h)(\alpha,\gamma,t)(A(h)(0,\gamma,t)-A(h)(\alpha,\gamma,t)),
\]
and
\[
A_{2,1,4}=-\lambda(\al_{\gamma}^{t})p.v.\int_{0}^{2\alpha}\tilde{B}^{j_3}_{1}(h)(\alpha,\beta,\gamma,t)\frac{A(h)(\beta,\gamma,t)}{(\alpha-\beta)}d\beta,
\]
\[
A_{2,1,5}=\lambda(\al_{\gamma}^{t})\tilde{B}_{2}^{j_4}(h)(\alpha,\gamma,t)A(h)(\alpha,\gamma,t).
\]

The first two are of sub-type 2 from notation \eqref{secondtypeestimate}, the third one is the sub-type one from notation \eqref{firsttypeestimate}, and the last one is of sub-type 5 in \eqref{secondtypeestimate}. 
Therefore, by kernel estimate for $\tilde{B}_{1}^{j}$ and $\tilde{B}_{2}^{j}$ in lemma \ref{Lh4lemma}, \ref{Lh3lemma},  and Hilbert's inequality, we have the bound for $A_{2,1,1}$ to $A_{2,1,5}$.

For $A_{2,3}$, from  definition of $K$ \eqref{kdefi}, the notation in \eqref{k-sigma}, we can treat $K(D^{-60}(h)(\alpha,\gamma, t)-D^{-60}(h)(\beta,\gamma, t)+\tilde{f}^{L}(\alpha_{\gamma}^{t})-\tilde{f}^{L}(\beta_{\gamma}^{t}))$ as a kernel of type $\sigma=-1$ . Therefore from lemma \ref{alphabetanear}, and space of $h$ \eqref{hspaceconclusion0} we have the bound.

For  $A_{2,4}$ and $A_{2,5}$, we have  $K(D^{-60}(h)(\alpha,\gamma, t)-D^{-60}(h)(2\alpha,\gamma, t)+\tilde{f}^{L}(\alpha_{\gamma}^{t})-\tilde{f}^{L}((2\alpha)_{\gamma}^{t}))$ is a term of sub-type 2 of $\tilde{B}_{2}^{j}(h)$ in \eqref{secondtypeestimate}. Then we have $A_{2,4}+A_{2,5}=D_{h}(B_{2}^{j}(h))[A(h)]$ with $b_i=60.$ Thus by smoothness conditions in \eqref{3GE01} proved in lemma \ref{Lh4lemma}  ,  we can also control these two terms.

For $A_{2,6}$, and $A_{2,7}$, we have the component of $\nabla K(D^{-60}(h)(\alpha,\gamma, t)-D^{-60}(h)(2\alpha,\eta, t)+\tilde{f}^{L}(\alpha_{\gamma}^{t})-\tilde{f}^{L}((2\alpha)_{\eta}^{t}))$ has the form $K_{-2}$ in kernel estimate lemma \ref{gammachange}.
Moreover, from \eqref{cdefi}, we have
\[
|\frac{ic(\alpha)t}{(\alpha)^2}|\lesssim 1.
\]
Then from kernel estimate lemma \ref{gammachange}, we have the result.
For $A_{2,8}$, we have the component of $\nabla^2 K(D^{-60}(h)(\alpha,\gamma, t)-D^{-60}(h)(2\alpha,\eta, t)+\tilde{f}^{L}(\alpha_{\gamma}^{t})-\tilde{f}^{L}((2\alpha)_{\eta}^{t}))$ has the form $K_{-3}^{j}$ in kernel estimate lemma \ref{gammachange}.

Moreover, from \eqref{cdefi}, \eqref{D-formularnew}, and lemma \ref{farboundarybound} we get
\[
|\frac{ic(\alpha)t}{(\alpha)^2}|\lesssim 1,
\]

\[
\|\frac{D^{-60}(A(h))(\alpha,\gamma,t)}{\alpha}\|_{C^{0}_{\gamma}([-1,1], H^1_{\alpha}[0,\frac{\pi}{4}])}\lesssim \|A(h)\|_{C^{0}_{\gamma}([-1,1], L^2_{\alpha}[0,\pi])},
\]

\[
\|\frac{D^{-60}(A(h))(2\alpha,\eta,t)}{\alpha}\|_{C^{0}_{\gamma}([-1,1], H^1_{\alpha}[0,\frac{\pi}{4}])}\lesssim \|A(h)\|_{C^{0}_{\gamma}([-1,1], L^2_{\alpha}[0,\pi])}.
\]
Then from lemma \ref{gammachange}, we have the bound.

For $A_{2,9}$, we still use the component of $\nabla^2 K(D^{-60}(h)(\alpha,\gamma, t)-D^{-60}(h)(\beta,0, t)+\tilde{f}^{L}(\alpha_{\gamma}^{t})-\tilde{f}^{L}(\beta_{0}^{t}))$ has the form $K_{-3}$ in corollary \ref{alphabetafarright}.
Moreover, from uniqueness \eqref{initialdatacondition01} and choice of $\tilde{f}^{+}$ \eqref{cut function0} we have
\[
\|\frac{h(\beta,0,t)}{(\beta)^3}\|_{L^{\infty}_{\beta}[0,\pi]}=\|\frac{\tilde{f}^{+(60)}(\beta,t)}{(\beta)^3}\|_{L^{\infty}_{\beta}[0,\pi]}\lesssim 1.
\]
Then from kernel estimate corollary \ref{alphabetafarright}, we can bound this term.
%therefore
%\begin{align*}
   % &\|A_{2,8}\|_{C^{0}_{\gamma}([-1,1], L^2_{\alpha}(0,\frac{\pi}{4}))}\\
   % &\lesssim\|D^{-60}(A(h))\|_{C^{0}_{\gamma}([-1,1], L^2_{\alpha}(0,\frac{\pi}{4}))}\|\nabla^2 K(D^{-60}(h)(\alpha,\gamma, t)-D^{-60}(h)(\beta,0, t)+\tilde{f}^{L}(\alpha_{\gamma}^{t})-\tilde{f}^{L}(\beta_{0}^{t})\beta^3\|_{C^{0}_{\gamma}([-1,1], L^2_{\alpha}(0,\frac{\pi}{4}))}\\
  %  &\lesssim\|D^{-60}A(h)\|_{C^{0}_{\gamma}([-1,1], L^2_{\alpha}(0,\frac{\pi}{4}))}\\
   % &\lesssim\|A(h)\|_{C^{0}_{\gamma}([-1,1], L^2_{\alpha}(0,\frac{\pi}{4}))}.
    %\end{align*}
\end{proof}
Now we show $A(O^{j,i})=B.T^{0}$ with $1\leq j\leq 5$ and $\lambda(\al_{\gamma}^{t})T_{fixed}(\alpha_{\gamma}^{t},t)=B.T^{0}$.
\begin{lemma}\label{3anao1}
We have 
\[
\|A(O^{1,i})\|_{X^1}\lesssim\|A(h)\|_{X^1}.
\]
\end{lemma}
\begin{proof}
First, we still rewrite the $O^{1,i}$ by changing the $\tilde{f}^{+(60)}(\beta,t)$ by $h(\beta,0,t)$ and also write singular terms together to cancel the $p.v.$. From \eqref{O1icnewequation}, we have
\begin{align*}
&\quad(O^{1,i}(h))_{\mu}\\
&=\lambda(\al_{\gamma}^{t})p.v.\int_{-\pi}^{\pi}K_{-1}^{i}(V^{30}_{h}(\alpha,\gamma,t)-V^{30}_{h}(\beta,\gamma,t),V^{61}_{\tilde{f}^{L}}(\alpha_{\gamma}^{t})-V^{61}_{\tilde{f}^{L}}(\beta_{\gamma}^{t}),V_X(\alpha_{\gamma}^{t})-V_X(\beta_{\gamma}^{t}))X_{i'}(\beta_{\gamma}^{t})\\
&\qquad D^{-60+b_i}(h_v)(\alpha,\gamma,t)(1+ic'(\beta)\gamma t)d\beta\\
&\quad-\lambda(\al_{\gamma}^{t})p.v.\int_{0}^{2\alpha}K_{-1}^{i}(V_{h}^{30}(\alpha,\gamma,t)-V_{h}^{30}(\beta,\gamma,t),V^{61}_{\tilde{f}^{L}}(\alpha_{\gamma}^{t})-V^{61}_{\tilde{f}^{L}}(\beta_{\gamma}^{t}),V_X(\alpha_{\gamma}^{t})-V_X(\beta_{\gamma}^{t}))X_{i'}(\beta_{\gamma}^{t})\\
&\qquad(D^{-60+b_i}(h_v)(\beta,\gamma,t))(1+ic'(\beta)\gamma t)d\beta\\
&\quad+\lambda(\al_{\gamma}^{t})\int_{0}^{\gamma}K_{-1}^{i}(V^{30}_h(\alpha,\gamma,t)-V^{30}_h(2\alpha,\eta,t),V^{61}_{\tilde{f}^{L}}(\alpha_{\gamma}^{t})-V^{61}_{\tilde{f}^{L}}((2\alpha)_{\eta}^{t}),V_{X}(\alpha_{\gamma}^{t})-V_{X}((2\alpha)_{\eta}^{t})\\&\qquad X_{i'}((2\alpha)_{\eta}^{t})(D^{-60+b_i}(h_v)(2\alpha,\eta,t))(ic(2\alpha)t)d\eta\\
&\quad-\lambda(\al_{\gamma}^{t})\int_{2\alpha}^{\pi}K_{-1}^{i}(V^{30}_h(\alpha,\gamma,t)-V_{h}^{30}(\beta,0,t),V^{61}_{\tilde{f}^{L}}(\alpha_{\gamma}^{t})-V^{61}_{\tilde{f}^{L}}(\beta_{0}^{t}),V_X(\alpha_{\gamma}^{t})-V_X(\beta_{0}^{t}))X_{i'}(\beta_{0}^{t})\\
&\qquad (D^{-60+b_i}h_v)(\beta,0,t)d\beta\\
&=\lambda(\al_{\gamma}^{t})\int_{0}^{2\alpha}K_{-1}^{i}(V_{h}^{30}(\alpha,\gamma,t)-V_{h}^{30}(\beta,\gamma,t),V^{61}_{\tilde{f}^{L}}(\alpha_{\gamma}^{t})-V^{61}_{\tilde{f}^{L}}(\beta_{\gamma}^{t}),V_X(\alpha_{\gamma}^{t})-V_X(\beta_{\gamma}^{t}))X_{i'}(\beta_{\gamma}^{t})\\
&\qquad(D^{-60+b_i}(h_v)(\alpha,\gamma,t)-D^{-60+b_i}(h_v)(\beta,\gamma,t))(1+ic'(\beta)\gamma t)d\beta\\
&\quad+\lambda(\al_{\gamma}^{t})\int_{0}^{\gamma}K_{-1}^{i}(V^{30}_h(\alpha,\gamma,t)-V^{30}_h(2\alpha,\eta,t),V^{61}_{\tilde{f}^{L}}(\alpha_{\gamma}^{t})-V^{61}_{\tilde{f}^{L}}((2\alpha)_{\eta}^{t}),V_{X}(\alpha_{\gamma}^{t})-V_{X}((2\alpha)_{\eta}^{t})\\&\qquad X_{i'}((2\alpha)_{\eta}^{t})(D^{-60+b_i}(h_v)(2\alpha,\eta,t))(ic(2\alpha)t)d\eta\\
&\quad-\lambda(\al_{\gamma}^{t})\int_{2\alpha}^{\pi}K_{-1}^{i}(V^{30}_h(\alpha,\gamma,t)-V_{h}^{30}(\beta,0,t),V^{61}_{\tilde{f}^{L}}(\alpha_{\gamma}^{t})-V^{61}_{\tilde{f}^{L}}(\beta_{0}^{t}),V_X(\alpha_{\gamma}^{t})-V_X(\beta_{0}^{t}))X_{i'}(\beta_{0}^{t})\\
&\qquad(D^{-60+b_i}h_v)(\beta,0,t)d\beta\\
&\quad+\lambda(\al_{\gamma}^{t})\int_{[-\pi,0]\cup[2\alpha,\pi]}K_{-1}^{i}(V^{30}_{h}(\alpha,\gamma,t)-V^{30}_{h}(\beta,\gamma,t),V^{61}_{\tilde{f}^{L}}(\alpha_{\gamma}^{t})-V^{61}_{\tilde{f}^{L}}(\beta_{\gamma}^{t}),V_X(\alpha_{\gamma}^{t})-V_X(\beta_{\gamma}^{t}))X_{i'}(\beta_{\gamma}^{t})\\
&\qquad D^{-60+b_i}(h_v)(\alpha,\gamma,t)(1+ic'(\beta)\gamma t)d\beta.
\end{align*}
Then by commuting corollary \ref{forO1},  by letting
\[
\breve{K}=K_{-1}^{i},
\]
\[
\ddot{h}(\alpha,\gamma)=(V_{h}^{30}(\alpha,\gamma,t), V_{\tilde{f}^{L}}^{61}(\alpha_{\gamma}^{t}), V_{X}(\alpha_{\gamma}^{t})),
\]
\[
X(\beta,\gamma)=X_{i'}(\beta_{\gamma}^{t}),
\]
\[
\breve{h}(\alpha,\gamma)
=D^{-60+b_i}h_v(\alpha,\gamma,t),\]
we have
\begin{align*}
  &\quad  A(O^{1,i}(h))\\
  &=\underbrace{\lambda(\al_{\gamma}^{t})p.v.\int_{-\pi}^{\pi}D_{h}K_{-1}^{i}(V^{30}_{h}(\alpha,\gamma,t)-V^{30}_{h}(\beta,\gamma,t),V^{61}_{\tilde{f}^{L}}(\alpha_{\gamma}^{t})-V^{61}_{\tilde{f}^{L}}(\beta_{\gamma}^{t}),V_X(\alpha_{\gamma}^{t})-V_X(\beta_{\gamma}^{t}))[A(h)]X_{i'}(\beta_{\gamma}^{t})}_{AO_{1,1}}\\
&\qquad \underbrace{(1+ic'(\beta)\gamma t)d\beta D^{-60+b_i}(h_v)(\alpha,\gamma,t)}_{AO_{1,1}}\\
&\quad\underbrace{+\lambda(\al_{\gamma}^{t})p.v.\int_{-\pi}^{\pi}K_{-1}^{i}(V^{30}_{h}(\alpha,\gamma,t)-V^{30}_{h}(\beta,\gamma,t),V^{61}_{\tilde{f}^{L}}(\alpha_{\gamma}^{t})-V^{61}_{\tilde{f}^{L}}(\beta_{\gamma}^{t}),V_X(\alpha_{\gamma}^{t})-V_X(\beta_{\gamma}^{t}))X_{i'}(\beta_{\gamma}^{t})}_{AO_{1,2}}\\
&\qquad\underbrace{D^{-60+b_i}(A(h_v))(\alpha,\gamma,t)(1+ic'(\beta)\gamma t)d\beta}_{AO_{1,2}}\\
&\quad \underbrace{-\lambda(\al_{\gamma}^{t})p.v.\int_{0}^{2\alpha}D_{h}K_{-1}^{i}(V_{h}^{30}(\alpha,\gamma,t)-V_{h}^{30}(\beta,\gamma,t),V^{61}_{\tilde{f}^{L}}(\alpha_{\gamma}^{t})-V^{61}_{\tilde{f}^{L}}(\beta_{\gamma}^{t}),V_X(\alpha_{\gamma}^{t})-V_X(\beta_{\gamma}^{t}))[A(h)]X_{i'}(\beta_{\gamma}^{t})}_{AO_{1,3}}\\
&\qquad\underbrace{(D^{-60+b_i}(h_v)(\beta,\gamma,t))(1+ic'(\beta)\gamma t)d\beta}_{AO_{1,3}}\\
&\quad\underbrace{-\lambda(\al_{\gamma}^{t})p.v.\int_{0}^{2\alpha}K_{-1}^{i}(V_{h}^{30}(\alpha,\gamma,t)-V_{h}^{30}(\beta,\gamma,t),V^{61}_{\tilde{f}^{L}}(\alpha_{\gamma}^{t})-V^{61}_{\tilde{f}^{L}}(\beta_{\gamma}^{t}),V_X(\alpha_{\gamma}^{t})-V_X(\beta_{\gamma}^{t}))X_{i'}(\beta_{\gamma}^{t})}_{AO_{1,4}}\\
&\qquad\underbrace{(D^{-60+b_i}(A(h_v))(\beta,\gamma,t))(1+ic'(\beta)\gamma t)d\beta}_{AO_{1,4}}\\
&\quad\underbrace{+\lambda(\al_{\gamma}^{t})\int_{0}^{\gamma}\nabla_1K_{-1}^{i}(V^{30}_h(\alpha,\gamma,t)-V^{30}_h(2\alpha,\eta,t),V^{61}_{\tilde{f}^{L}}(\alpha_{\gamma}^{t})-V^{61}_{\tilde{f}^{L}}((2\alpha)_{\eta}^{t}),V_{X}(\alpha_{\gamma}^{t})-V_{X}((2\alpha)_{\eta}^{t})}_{AO_{1,5}}\\
&\qquad\underbrace{\cdot(V^{30}_{A(h)}(\alpha,\gamma,t)-\frac{2ic(\alpha)t(1+ic'(2\alpha)\eta t)}{(1+ic'(\alpha)\gamma t)(ic(2\alpha) t)}V^{30}_{A(h)}(2\alpha,\eta,t))}_{AO_{1,5}}\\
&\qquad\underbrace{X_{i'}((2\alpha)_{\eta}^{t})(D^{-60+b_i}(h_v)(2\alpha,\eta,t))(ic(2\alpha)t)d\eta}_{AO_{1,5}}\\
&\quad\underbrace{+\lambda(\al_{\gamma}^{t})\int_{0}^{\gamma}K_{-1}^{i}(V^{30}_h(\alpha,\gamma,t)-V^{30}_h(2\alpha,\eta,t),V^{61}_{\tilde{f}^{L}}(\alpha_{\gamma}^{t})-V^{61}_{\tilde{f}^{L}}((2\alpha)_{\eta}^{t}),V_{X}(\alpha_{\gamma}^{t})-V_{X}((2\alpha)_{\eta}^{t})}_{AO_{1,6}}\\&\qquad\underbrace{\cdot X_{i'}((2\alpha)_{\eta}^{t})\frac{2ic(\alpha)t(1+ic'(2\alpha)\eta t)}{(1+ic'(\alpha)\gamma t)(ic(2\alpha) t)}(D^{-60+b_i}(A(h_v))(2\alpha,\eta,t))(ic(2\alpha)t)d\eta}_{AO_{1,6}}\\
&\quad\underbrace{-\lambda(\al_{\gamma}^{t})\int_{2\alpha}^{\pi}\nabla_1 K_{-1}^{i}(V^{30}_h(\alpha,\gamma,t)-V_{h}^{30}(\beta,0,t)),V^{61}_{\tilde{f}^{L}}(\alpha_{\gamma}^{t})-V^{61}_{\tilde{f}^{L}}(\beta_{0}^{t}),V_X(\alpha_{\gamma}^{t})-V_X(\beta_{0}^{t}))\cdot V^{30}_{A(h)}(\alpha,\gamma,t)X_{i'}(\beta_{0}^{t})}_{AO_{1,7}}\\
&\qquad\underbrace{\cdot( D^{-60+b_i}h_v)(\beta,0,t)d\beta}_{AO_{1,7}}\\
&=AO_{1,1}+AO_{1,2}+AO_{1,3}+AO_{1,4}+AO_{1,5}+AO_{1,6}+AO_{1,7}.
\end{align*}
Here $AO_{1,1}$ is a sum of $FA_9$ and a term in $FA_1$ in commuting corollary \ref{forO1}. $AO_{1,2}$ is a sum of $FA_{11}$ and a term in $FA_2$. Here some terms disappear because  $A(V_{\tilde{f}^{L}}(\alpha_{\gamma}^{t}))=A(V_{X}(\alpha_{\gamma}^{t}))=0$ and $A(X_{i'}(\alpha_{\gamma}^{t}))=0$. We also use the notation 
\[
V_{A(h)}^{30}=(D^{-60}A(h_1),D^{-60}A(h_2),D^{-59}A(h_1),D^{-59}A(h_2),....D^{-30}A(h_1), D^{-30}A(h_2))
\]as in \eqref{notation03h}.

For $AO_{1,1}$ and $AO_{1,2}$,  
\[
p.v.\int_{-\pi}^{\pi}K_{-1}^{i}(V^{30}_{h}(\alpha,\gamma,t)-V^{30}_{h}(\beta,\gamma,t),V^{61}_{\tilde{f}^{L}}(\alpha_{\gamma}^{t})-V^{61}_{\tilde{f}^{L}}(\beta_{\gamma}^{t}),V_X(\alpha_{\gamma}^{t})-V_X(\beta_{\gamma}^{t}))X_{i'}(\beta_{\gamma}^{t})(1+ic'(\beta)\gamma t)d\beta
\]
is of the same form as the sub-type 1 of $\tilde{B}_{2}^{i}(h)$  in \eqref{secondtypeestimate}. Then $AO_{1,1}+AO_{1,2}=D_{h}(B_{2}^{i}(h))[A(h)]$. From lemma \ref{Lh4lemma}, we can control $AO_{1,1}+AO_{1,2}$.

For $AO_{1,3}$ and $AO_{1,4}$,
$K_{-1}^{i}(V_{h}^{30}(\alpha,\gamma,t)-V_{h}^{30}(\beta,\gamma,t),V^{61}_{\tilde{f}^{L}}(\alpha_{\gamma}^{t})-V^{61}_{\tilde{f}^{L}}(\beta_{\gamma}^{t}),V_X(\alpha_{\gamma}^{t})-V_X(\beta_{\gamma}^{t}))X_{i'}(\beta_{\gamma}^{t})(1+ic'(\beta)\gamma t)(\alpha-\beta)$ is of the form sub-type 2 of $\tilde{B}_{1}^{i}(h)$ in the lemma \ref{Lh3lemma}.  Then $AO_{1,3}+AO_{1,4}=D_{h}(B_{1}^{i}(h))[A(h)]$. From lemma \ref{Lh3lemma},  it can be bounded. 

For $AO_{1,5}$,  the component of $\nabla_1K_{-1}^{i}(V^{30}_h(\alpha,\gamma,t)-V^{30}_h(2\alpha,\eta,t),V^{61}_{\tilde{f}^{L}}(\alpha_{\gamma}^{t})-V^{61}_{\tilde{f}^{L}}((2\alpha)_{\eta}^{t}),V_{X}(\alpha_{\gamma}^{t})-V_{X}((2\alpha)_{\eta}^{t})$ has the form $K_{-2}^{j}$ in lemma \ref{gammachange}.
Moreover,
\[
|\frac{ic(\alpha)t}{(\alpha)^2}|\lesssim 1.
\]
Then from lemma \ref{gammachange}, we have the result.

For $AO_{1,6}$, the component of $K_{-1}^{i}(V^{30}_h(\alpha,\gamma,t)-V^{30}_h(2\alpha,\eta,t),V^{61}_{\tilde{f}^{L}}(\alpha_{\gamma}^{t})-V^{61}_{\tilde{f}^{L}}((2\alpha)_{\eta}^{t}),V_{X}(\alpha_{\gamma}^{t})-V_{X}((2\alpha)_{\eta}^{t})$ has the form $K_{-1}^{j}$ in lemma \ref{gammachange}. Then it can be bounded in the same way as in $AO_{1,5}$.

For $AO_{1,7}$, we have the component of $\nabla_1 K_{-1}^{i}(V^{30}_h(\alpha,\gamma,t)-V_{h}^{30}(\beta,0,t)),V^{61}_{\tilde{f}^{L}}(\alpha_{\gamma}^{t})-V^{61}_{\tilde{f}^{L}}(\beta_{0}^{t}),V_X(\alpha_{\gamma}^{t})-V_X(\beta_{0}^{t}))$ has the form $K_{-2}^{j}$ in lemma \ref{alphabetafarright}.
Moreover, from the uniqueness,
\[
\|\frac{h(\beta,0,t)}{(\beta)^2}\|_{L^{\infty}_{\beta}[0,\pi]}=\|\frac{\tilde{f}^{+(60)}(\beta,t)}{(\beta)^2}\|_{L^{\infty}_{\beta}[0,\pi]}\lesssim 1,
\]
Then from lemma \ref{alphabetafarright}, we can bound this term.
\end{proof}
\begin{corollary}\label{3anao2}
We have
\[
\|A(O^{2,i})\|_{X^{1}}\lesssim\|A(h)\|_{X^{1}},
\]
and
\[
\|A(O^{4,i})\|_{X^{1}}\lesssim\|A(h)\|_{X^{1}}.
\]
\end{corollary}
\begin{proof}
From \eqref{O2ic} and \eqref{O4ic}, we have $O^{2,i}$, $O^{4,i}$ behaves similarly as $O^{1,i}$. They can be bounded in the same way as $A(O^{1,i})$.
\end{proof}
\begin{lemma}\label{3anao3}
We have
\[
\|A(O^{3,i})\|_{X^1}\lesssim\|A(h)\|_{X^1},
\]
\[
\|A(O^{5,i})\|_{X^1}\lesssim\|A(h)\|_{X^1},
\]
and 
\[
A(\lambda(\al_{\gamma}^{t})T_{fixed}(\alpha_{\gamma}^{t},t))=0.
\]
\end{lemma}
\begin{proof}
From \eqref{O3ic}, we have
\begin{align}
&O^{3,i}(h)=\lambda(\al_{\gamma}^{t})\int_{-\pi}^{\pi}K_{0}^{i}(V^{30}_{h}(\alpha,\gamma,t)-V^{30}_h(\beta,\gamma,t),V^{61}_{\tilde{f}^{L}}(\alpha_{\gamma}^{t})-V^{61}_{\tilde{f}^{L}}(\beta_{\gamma}^{t}),V_X(\alpha_{\gamma}^{t})-V_X(\beta_{\gamma}^{t}))X_{i'}(\beta_{\gamma}^{t})\\\nonumber
&(1+ic'(\beta)\gamma t ) d\beta D^{-60+b_i}(h_{\mu})(\alpha,\gamma,t).
\end{align}
Then 
\begin{align*}
  &\quad A(O^{3,i}(h))\\
  &=\lambda(\al_{\gamma}^{t})A(\int_{-\pi}^{\pi}K_{0}^{i}(V^{30}_{h}(\alpha,\gamma,t)-V^{30}_h(\beta,\gamma,t),V^{61}_{\tilde{f}^{L}}(\alpha_{\gamma}^{t})-V^{61}_{\tilde{f}^{L}}(\beta_{\gamma}^{t}),V_X(\alpha_{\gamma}^{t})-V_X(\beta_{\gamma}^{t}))X_{i'}(\beta_{\gamma}^{t})\\
&\qquad(1+ic'(\beta)\gamma t ) d\beta) D^{-60+b_i}(h_{\mu})(\alpha,\gamma,t)\\
&\quad+\lambda(\al_{\gamma}^{t})\int_{-\pi}^{\pi}K_{0}^{i}(V^{30}_{h}(\alpha_{\gamma}^{t})-V^{30}_h(\beta_{\gamma}^{t}),V^{61}_{\tilde{f}^{L}}(\alpha_{\gamma}^{t})-V^{61}_{\tilde{f}^{L}}(\beta_{\gamma}^{t}),V_X(\alpha_{\gamma}^{t})-V_X(\beta_{\gamma}^{t}))X_{i'}(\beta_{\gamma}^{t})(1+ic'(\beta)\gamma t ) d\beta\\
&\qquad D^{-60+b_i}A(h_{\mu})(\alpha,\gamma,t)\\
&=AO_{3,1}+AO_{3,2}.
\end{align*}
Since from \eqref{k-sigma}, there is no singularity in $K_{0}^{i}$, from commuting lemma \ref{forM1}, when $\alpha\geq 0$, we have
\begin{align*}
AO_{3,1}&=D_{h}(\int_{-\pi}^{\pi}K_{0}^{i}(V^{30}_{h}(\alpha,\gamma,t)-V^{30}_h(\beta,\gamma,t),V^{61}_{\tilde{f}^{L}}(\alpha_{\gamma}^{t})-V^{61}_{\tilde{f}^{L}}(\beta_{\gamma}^{t}),V_X(\alpha_{\gamma}^{t})-V_X(\beta_{\gamma}^{t}))X_{i'}(\beta_{\gamma}^{t})\\
&\quad(1+ic'(\beta)\gamma t ) d\beta)[A(h)] D^{-60+b_i}(h_{\mu})(\alpha,\gamma,t),
\end{align*}
where we use $A(\tilde{f}^{L}(\alpha_{\gamma}^{t},t))=0$ and $A(X_i(\alpha_{\gamma}^{t},t))=0$. 

Since from \eqref{k-sigma}, a $K_{0}$ type kernel is also a $K_{-1}$ type kernel,  the \[
\int_{-\pi}^{\pi}K_{0}^{i}(V^{30}_{h}(\alpha,\gamma,t)-V^{30}_h(\beta,\gamma,t),V^{61}_{\tilde{f}^{L}}(\alpha_{\gamma}^{t})-V^{61}_{\tilde{f}^{L}}(\beta_{\gamma}^{t}),V_X(\alpha_{\gamma}^{t})-V_X(\beta_{\gamma}^{t}))X_{i'}(\beta_{\gamma}^{t})(1+ic'(\beta)\gamma t ) d\beta\]
is of the same form as $\tilde{B}_{2}^{i}(h)$ as the sub-type 1 in \eqref{secondtypeestimate}. Then $AO_{3,1}+AO_{3,2}=D_hB_{2}^{i}(h)[A(h)]$ with $b_i=60$. From lemma \ref{Lh4lemma} we can control $AO_{3,1}+AO_{3,2}$.

From \eqref{O5ic}, and commuting corollary \ref{forD}, it is easy to get
\[
\|A(O^{5,i})\|_{X^1}\lesssim\|A(h)\|_{X^1}.
\]
From \eqref{kdefi}, \eqref{Tfixedequation}, lemma \ref{forM1}, since $\tilde{f}^{L}(\alpha,t)$, $x(\alpha,t)$ is analytic in the region $\{\alpha+iy|0< |\alpha|<\frac{\pi}{4}\}$, we have $A(T_{fixed}^{1}(\alpha_{\gamma}^{t},t))=0$ and  $A(T_{fixed}^{2}(\alpha_{\gamma}^{t},t))=0$ and $A(T_{fixed}^{3}(\alpha_{\gamma}^{t},t))=0$. Therefore $A(\lambda(\al_{\gamma}^{t})T_{fixed}(\alpha_{\gamma}^{t},t))=0$.
\end{proof}
Then from \eqref{3Ahsplit}, \eqref{3anan2}, \eqref{3a21main} and lemmas \ref{AM2L2}, \ref{3anao1}, \ref{3anao2}, \ref{3anao3}, we have
\begin{align}\label{3Ahsplit2}
&\quad \frac{dA(h)(\alpha,\gamma,t)}{dt}\\\nonumber
&=N_1(A(h))+
       D_hN_2(h)[A(h)]+D_hN_3(h)[A(h)]\\\nonumber
       &\quad+ \lambda(\al_{\gamma}^{t})(L_2^{+}(h))p.v.\int_{0}^{2\al}\frac{(A(h_{\mu})(\alpha,\gamma,t)-A(h_{\mu})(\beta,\gamma,t))}{(\alpha-\beta)^2}d\beta\\\nonumber
       &\quad+B.T^0. 
\end{align}
From \eqref{3anasplit}, and equation of $M_{1,1}$, $M_{1,2}$ \eqref{3M11chequ}, \eqref{M1chequ}, equation of $N_{1}$, $N_{2}$, $N_{3}$ \eqref{3anasplit}, we have
\begin{equation}
\begin{split} 
    &\quad N_1(A(h))+
       D_hN_2(h)[A(h)]+D_hN_3(h)[A(h)]\\
       &=D_hN_1(A(h))+
       D_hN_2(h)[A(h)]+D_hN_3(h)[A(h)]\\
       &=D_h(M_{1,1}+M_{1,2})[A(h)]\\
       &=M_{1,1}(A(h))+\lambda(\al_{\gamma}^{t})(L_1^{+}(h))\frac{d}{d\al}A(h)+\lambda(\al_{\gamma}^{t})D_h(L_1^{+}(h))[A(h)]\frac{d}{d\al}h.
\end{split}
\end{equation}
Here we also use notation in \eqref{3L1form01}.

From smoothness condition of $L_i^{+}(h)$ in \eqref{3GE01}, we have $\lambda(\al_{\gamma}^{t})D_h(L_1^{+}(h))[A(h)]\frac{d}{d\al}h=B.T^{0}.$
In conclusion, we get 
\begin{align}\label{analyticityestimateG}
&\quad\frac{dA(h)(\alpha,\gamma,t)}{dt}\\\nonumber
&=M_{1,1}(A(h))+\lambda(\al_{\gamma}^{t})(L_1^{+}(h)(\alpha))\frac{d}{d\al}A(h)+\lambda(\al_{\gamma}^{t})(L_2^{+}(h)(\alpha))p.v.\int_{0}^{2\alpha}\frac{A(h)(\beta)-A(h)(\alpha)}{(\alpha-\beta)^2}d\beta+ B.T^{0}.
\end{align}
Since $\tau(t)=0$ \eqref{defkappa}, from \eqref{3M11chequ}, we have
\[
M_{1,1}(A(h))=\lambda(\alpha_{\gamma}^{t})\kappa(t)\frac{dh_{\mu}(\alpha,\gamma,t)}{d\alpha}.\]
Since $\kappa(t)>0$, we use energy estimate corollary \ref{32} and have the estimate.
\end{proof}
\section{Behavior of the modified equation for $\kappa(t)<0$}\label{kappa2sectiongene}

Now we move on to the case when $\kappa(t)<0$.
We will show it fit another generalized equation \eqref{3Tequation2}. In this case $\tau(t)>0$ and is increasing (from \eqref{con:tautneg}).
\begin{align}\label{3Tequation2}
\frac{dh(\al,\g,t)}{dt}=T^{-}(h)=\begin{cases}
    M_{1,2}(h)+M_{2,1}(h)+\sum_{i}B_{M,i}+\sum_{i=1}B_i &\al>-\tau(t)\\
    0 &\al\leq -\tau(t),
\end{cases}
\end{align}
with initial data satisfying $h(\al,\g,0)=h(\al,\g',0).$
Here 
\[
M_{1,2}(h)=\lambda(\al+\tau(t))L_1^{-}(h)(\al,\g,t)\partial_{\al}h(\al,\g,t),
\]
\[
M_{2,1}(h)=\lambda(\al+\tau(t))L_2^{-}(h)(\al,\g,t)\int_{-\tau(t)}^{2\al+\tau(t)}\frac{h(\al,\g,t)-h(\beta,\g,t)}{(\al-\beta)^2}d\beta,
\]
\[
B_{M,i}(h)=\lambda(\al+\tau(t))\int_{-\tau(t)}^{2\al+\tau(t)}L_{B,i}(h)(\al,\beta,\g,t)\frac{D^{-60+b_i}(h)(\beta,\g,t)}{(\al-\beta)}d\beta,
\]
with $b_i\leq 60$, $D^{-i}$ the same definition as \eqref{D-formularnew02}: \begin{align}\label{D-formularnew02}
&D^{-i}(\tilde{h})(\alpha,\gamma,t)\\\nonumber
&=\bigg\{\begin{array}{cc}
         \int_{-\tau(t)}^{\alpha}(1+ic'(\alpha_1+\tau(t))\gamma t) ...\int_{-\tau(t)}^{\alpha_{i-1}}(1+ic'(\alpha_i+\tau(t))\gamma t)\tilde{h}(\alpha_i)d\alpha_i d\alpha_{i-1}...d\alpha_1 &  -\tau(t)< \alpha \leq \pi\\\nonumber
          0 & -\pi\leq\alpha\leq -\tau(t),
    \end{array}
\end{align}
with $L_i^{-}(h)$, $B_i(h)$ satisfying the following conditions.

Let 
\[
C_{\gamma}^{i_0}([-1,1], H^{k}[-\pi,\pi])=W^{i_0,k}.
\]
For $\delta$ in \eqref{fLequation} sufficiently small, there exists $\delta_s,t_s>0$ sufficiently small such that for any $1\leq k\leq 12$, $0\leq i_0\leq 4$, if $h,g,\tilde{g},t$ satisfy
\begin{align}\label{hspacew}
h,g\in W^{i_0,k}\cap \{h| \supp h\in [ -\tau(t),\frac{\pi}{8}]\},
\end{align}
\[ 
\sup_{\gamma,t}\|h(\alpha,\gamma,t)-h(\alpha,\gamma,0)\|_{W^{0,1}_{\alpha}[-\tau(t),\pi]}\lesssim \delta_s,
  \]
  \[
  0\leq t\leq t_s,
  \]
\[
h(\alpha,\gamma,0)=\tilde{f}^{+(60)}(\alpha,0),
\]
then $L_{i}^{-}(h)(\al,\g,t)$ $L_{B,i}(h)$, $B_i(h)$ satisfies the following inequalities:

\textbf{Refined R-T conditions:}
\begin{align}\label{3Lbound02}
  18 |\Im L_{1}^{-}(h)(\al,\g,t)|+18 |\Im L_{2}^{-}(h)(\al,\g,t)|\leq -\Re L_2^{+}(h)(\al,\g,t),\text{ when } \lambda(\alpha+\tau(t))\neq 0.
\end{align}
\textbf{Vanishing conditions:}
\begin{align}\label{3L002}
L_{i}^{-}(h)(-\tau(t),\g,t)=0, \text{(R-T coefficient vanishes at 0)}.
\end{align}
\textbf{Boundary conditions:}
\begin{enumerate}
    % \item For any $0\leq t'< t\leq t_*$, $\lim_{t'\to t}\|B_i(h)(\al,\g,t)\|_{{C_{\gamma}^{i_0}([-1,1], H^{k}[-\tau(t),\tau(t')])}}=0,$ and 
     
   % $\lim_{t\to t'}\|B_i(h)(\al,\g,t)\|_{{C_{\gamma}^{i_0}([-1,1], H^{k}[-\tau(t),\tau(t')])}}=0.$
     \item  For $0\leq t\leq t_s$,  we have $\lim_{\alpha\to -\tau(t)^{+}}\sum_{i}\partial_{\al}^{j}\partial_{\g}^{j'}B_i(h)(\alpha,\g,t)=0$, when $0\leq j\leq k-1$, $0\leq j'\leq i_0$.
     \item  For $0\leq t'\leq t\leq t_s$, if $h\in C_{t}([0,t_s], W^{i_0,k})$, we have\[
\lim_{t'\to t}\|\sum_{i}B_i(h)(\alpha,\gamma,t)\|_ { C_{\gamma}^{i_{0}}([-1,1],H^k_{\alpha}[-\tau(t),\tau(t')])}=0,
\] and \[
\lim_{t\to t'}\|\sum_{i}B_i(h)(\alpha,\gamma,t)\|_ { C_{\gamma}^{i_{0}}([-1,1],H^k_{\alpha}[-\tau(t),\tau(t')])}=0.
\]
\end{enumerate}
\textbf{Smoothness conditions:}
\begin{enumerate}
    \item $\sup_{t}\|L_i^{-}(h)(\alpha,\gamma,t)\|_{C_{\gamma}^{i_0}([-1,1], C^{k+2}_{\alpha}[-\tau(t),\frac{\pi}{4}])}\lesssim 1$,\label{Lhcondition02bound}
    \item $\sup_{t}\|D_{h}L_i^{-}(h)\alpha,\gamma,t)[g]\|_{C_{\gamma}^{i_0}([-1,1], C^{k+2}_{\alpha}[-\tau(t),\frac{\pi}{4}])}\lesssim \|g\|_{W^{i_0,k}}$,\label{Lhcondition02lip}
    \item For any $0\leq t'< t\leq t_s$,\label{Lhcondition02continuitytime}
    \[
    \|L_{i}^{-}(h)(\alpha,\gamma,t)-L_{i}^{-}(h)(\alpha,\gamma,t')\|_{C_{\gamma}^{i_0}([-1,1], C_{\alpha}^{k+1}[-\tau(t'),\frac{\pi}{4}])}\\\nonumber
    \lesssim \mathcal{O}(t-t')+\|h(\alpha,\gamma,t)-h(\alpha,\gamma,t')\|_{W^{i_0,k}}.
    \]
     \item $\sup_{t}\|L_{B,i}(h)(\alpha,\beta, \gamma,t)\|_{C_{\gamma}^{i_0}([-1,1], C^{k+2}_{\alpha,\beta}[-\tau(t),\frac{\pi}{4}]\times[-\tau(t),\frac{2}{3}\pi])}\lesssim 1$,\label{LBhcondition02bound}
    \item $\sup_{t}\|D_{h}L_{B,i}(h)(\al,\beta, \g,t)[g]\|_{C_{\gamma}^{i_0}([-1,1], C^{k+2}_{\alpha,\beta}[-\tau(t),\frac{\pi}{4}]\times[-\tau(t),\frac{2}{3}\pi])}\lesssim \|g\|_{W^{i_0,k}}$,\label{LBhcondition02lip}
    \item For any $0\leq t'< t\leq t_s$,\label{LBhcondition02continuitytime}
    \begin{align*}
&\|L_{B,i}(h)(\alpha,\gamma,t)-L_{B,i}(h)(\alpha,\gamma,t')\|_{C_{\gamma}^{i_0}([-1,1], C^{k+2}_{\alpha,\beta}[-\tau(t'),\frac{\pi}{4}]\times[-\tau(t'),\frac{2}{3}\pi]))}\\\nonumber
   & \lesssim \mathcal{O}(t-t')+\|h(\alpha,\gamma,t)-h(\alpha,\gamma,t')\|_{W^{i_0,k}}.
    \end{align*}
     \item $\sup_{t}\|B_i(h)(\alpha,\gamma,t)\|_{C_{\gamma}^{i_0}([-1,1], H^{k}_{\alpha}[-\tau(t),\pi])}\lesssim 1$,\label{Bcondition02bound}
    \item $\sup_{t}\|D_{h}B_i(h)\alpha,\gamma,t)[g]\|_{C_{\gamma}^{i_0}([-1,1], H^{k}_{\alpha}[-\tau(t),\pi])}\lesssim \|g\|_{W^{i_0,k}}$,\label{Bcondition02lip}
    \item For any $0\leq t'< t\leq t_s$,\label{Bcondition02continuitytime}
    \[
    \|B_{i}(h)(\alpha,\gamma,t)-B_{i}(h)(\alpha,\gamma,t')\|_{C_{\gamma}^{i_0}([-1,1], H^{k}[-\tau(t'),\pi])}\\\nonumber
    \lesssim \mathcal{O}(t-t')+\|h(\alpha,\gamma,t)-h(\alpha,\gamma,t')\|_{W^{i_0,k}}.
    \]
     \item $\supp_{\al}\tilde{B}_i(h)\subset [-\tau(t),\frac{\pi}{4}].$\label{3Bsupp02}

     Moreover, when $\g=0$. we have $L_i^{-}(h)(\al,0,t)=\bar{L}_i^{-}(h,t)|$, $L_{B,i}(h)(\al,0,t)=\bar{L}_{B,i}(h,t)$, $B_i(h)(\al,0,t)=\bar{B}_{i}(h,t)$ only depends on $h|_{\g=0}$, and satisfy
     \item $\sup_{t}\|\bar{L}_i^{-}(h,t)\|_{C_{\al}^{k+2}[-\tau(t),\frac{\pi}{4}]}\lesssim 1$,\label{302barl0}
\item $\sup_{t}\|D_h\bar{L}_i^{-}(h,t)[g]\|_{C_{\al}^{k+2}[-\tau(t),\frac{\pi}{4}]}\lesssim \|g|_{\g=0}\|_{H_{\al}^{k}[-\pi,\pi]}$.\label{302barl0lip}
\item $\sup_{t}\|\bar{L}_{B,i}(h,t)\|_{C_{\al}^{k+2}[-\tau(t),\frac{\pi}{4}]}\lesssim 1$,\label{302LBbarl0}
%\item $\sup_{t}\|D_h\bar{L}_{B,i}(h,t)[g]\|_{C_{\al}^{k+2}[-\tau(t),\frac{\pi}{4}]}\lesssim \|g|_{\g=0}\|_{H_{\al}^{k}[-\pi,\pi]}$.\label{302LBbarl0lip}
\item $\sup_{t}\|D_h\bar{L}_{B,i}(h,t)[g]\|_{C_{\al}^{k+2}[-\tau(t),\frac{\pi}{4}]}\lesssim \|g|_{\g=0}\|_{H_{\al}^{k}[-\pi,\pi]}$.\label{302barl0lip}
\item $\sup_{t}\|\bar{B}_{i}(h,t)\|_{H_{\al}^{k}[-\tau(t),\frac{\pi}{4}]}\lesssim 1$,\label{302LBbarl0}
\item $\sup_{t}\|D_h\bar{B}_{i}(h,t)[g]\|_{H_{\al}^{k}[-\tau(t),\frac{\pi}{4}]}\lesssim \|g|_{\g=0}\|_{H_{\al}^{k}[-\pi,\pi]}$.\label{302LBbarl0lip}
\end{enumerate}
Here $|\mathcal{O}(t-t')|\to 0$ as $t\to t'$ independent of $h$. $D_h$ is the Gateaux derivative,

\subsubsection{A generalized equation for $\kappa(t)<0$}
In this section, we will focus on the $\kappa(t)<0$ case. We still aim to show the following theorem:

\begin{theorem}\label{theoremgeneralizedequa02}
The modified equation \eqref{modifiedequation} satisfies the generalized equation \eqref{3Tequation2} when $\kappa(t)<0$. $\delta$, $\delta_{s}$, $t_s$ are chosen to fit the arc-chord condition in lemma \ref{arcchord}, the refined R-T condition in lemma \ref{RTlemma02}.
\end{theorem}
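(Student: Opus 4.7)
The plan is to follow the same overall architecture as the proof of Theorem \ref{theoremgeneralizedequa}, adapted to the shifted interval $[-\tau(t),\pi]$. The key structural observation is that when $\kappa(t)<0$, the definition \eqref{defkappa} gives $\tau'(t)=-\kappa(t)$, hence the transport coefficient $\kappa(t)+\tau'(t)$ appearing in \eqref{aftertranslationequation} and in $M_{1,1}$ from \eqref{3M11chequ} vanishes identically. This is precisely why the generalized equation \eqref{3Tequation2} omits an $M_{1,1}$ term. The remaining main terms $M_{1,2}$ and $M_{2,1}$ from \eqref{3mainequationchange} retain exactly the same structural form as in Lemma \ref{3maintermtype}, modulo the translation $\alpha\mapsto\alpha+\tau(t)$ in the cutoff $\lambda$ and in the integration limits; thus I would set $L_1^{-}(h)$ and $L_2^{-}(h)$ by the analogues of \eqref{3L1form01} and the formula in Lemma \ref{3maintermtype}, and all structural claims carry over verbatim.

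Next, I would perform the analog of Lemma \ref{3boundedtermtype} to classify the contributions of $F_2(h)$, the $O^{i}(h)$ terms, and $\lambda T_{fixed}$. The terms with a singular integrand $\frac{1}{\alpha-\beta}$ over $[-\tau(t),2\alpha+\tau(t)]$ (arising from the $Term_{3,2,2}$ piece of $F_2$ and from the $Term_{2,1}$ piece of each $O^{1,i}$) I would collect into $B_{M,i}$; everything else—boundary evaluations at $2\alpha+\tau(t)$ and $-\tau(t)$, the $Term_{5,1}$ contour integrals in $\eta$, the $Term_{5,2}$ integrals over $[2\alpha+\tau(t),\pi]$, the $O^{3,i},O^{5,i}$ terms, and $T_{fixed}$—I would collect into the $B_i$. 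The refined R-T condition \eqref{3Lbound02} and the vanishing condition \eqref{3L002} then follow from the very same arguments as in Lemma \ref{rfR-T} and Lemmas \ref{L1main}, \ref{Lh1lemma}, because at $\alpha=-\tau(t)$ the shifted turnover value is $x(0,t)$, where $\partial_\alpha\tilde f_1$ vanishes and $\partial_\alpha^2\tilde f_1>0$, exactly as before. The $C^{i_0}_\gamma$ smoothness conditions (items \ref{Lhcondition02bound}--\ref{302LBbarl0lip}) would be verified by the same kernel estimates from Appendix-style lemmas \ref{alphabetanear}, \ref{gammachange}, \ref{alphabetafarright}, using the smoothness of $h,\tilde f^{L}, X_i,\tilde X_i$ and the fact that now $h$ is more regular in $\gamma$ because the translation $\tau(t)$ keeps the solution away from the corner-type singularity at $\gamma=0$.

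The main new obstacle, and the step I expect to require the most care, is verifying the boundary conditions on $\sum_i B_i(h)$ at $\alpha=-\tau(t)$ (items after \eqref{3L002}). At $\alpha=-\tau(t)$, the inner integration interval $[-\tau(t),2\alpha+\tau(t)]$ collapses to a single point, which kills the $Term_{3}$, $Term_{4}$, and $Term_{5,1}$ contributions directly; the $Term_{1}$ and $Term_{2}$ evaluations require that $D^{-60}(h)(-\tau(t),\gamma,t)=0$, which holds by definition \eqref{D-formularnew}, together with $\tilde f^{L}(\alpha_\gamma^t,t)$ vanishing to order $99$ at its base point (from \eqref{fLequation}, \eqref{turnoverconditionnew}) so that the $K(\cdots)$ factors remain bounded and produce the required order of smallness. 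The $Term_{5,2}$ and analogous $Term_{2,3}$ contributions involve integrals over $[2\alpha+\tau(t),\pi]$ whose integrand remains analytic at $\alpha=-\tau(t)$; combined with the factor $\lambda(\alpha+\tau(t))$, which is smooth and identically $1$ near the boundary, these give the required $C^{k}\times C^{i_0}_\gamma$ vanishing rates. The time-continuity statement is a consequence of $\tau(t)$ being $C^1$ and increasing \eqref{con:tautneg}, so that $[-\tau(t),-\tau(t')]$ shrinks continuously.

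Finally, I would collect the estimates and note that $T_{fixed}(\alpha_\gamma^t,t)$, handled by the analogue of Lemma \ref{Tfixed behaviour}, is absorbed into the $B_i$ class with its boundary vanishing coming from the polynomial-subtraction structure of $\tilde f^{+}$ in \eqref{fLequation}. Choosing $\delta,\delta_s,t_s$ small enough to simultaneously enforce the arc-chord condition (Lemma \ref{arcchord}) and the refined R-T condition (Lemma \ref{RTlemma02}) completes the verification of \eqref{3Tequation2}.
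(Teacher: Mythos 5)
The overall architecture of your proposal is right: $\kappa(t)+\tau'(t)=0$ kills $M_{1,1}$, the $M_{1,2},M_{2,1}$ terms carry over by translation, and the bounded terms are sorted into $B_{M,i}$ (the truly singular $\frac{1}{\alpha-\beta}$ pieces) and $B_i$. The vanishing and smoothness conditions, and the refined R-T condition via Lemma \ref{RTlemma02}, do go through by translated versions of the earlier kernel lemmas. What does \emph{not} go through as you describe it is the verification of the two boundary conditions on $\sum_i B_i(h)$ at the moving endpoint $\alpha=-\tau(t)$.

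Your claim that the $h$-independent contributions (the $Term_{5,2,1}$-, $Term_{2,3,1}$-, and $\lambda T_{fixed}$-type pieces, collected in the paper into $FT$) vanish at $\alpha=-\tau(t)$ because ``the integrand remains analytic at $\alpha=-\tau(t)$'' and ``$\lambda(\alpha+\tau(t))$ is smooth and identically $1$ near the boundary'' is not a proof — in fact the second observation points in exactly the opposite direction, since $\lambda\equiv 1$ near the boundary means the cutoff cannot supply any vanishing. $T_{fixed}$ depends only on $\tilde f^{L}$ and $x$, not on $\tilde f^{+}$, so the ``polynomial-subtraction structure of $\tilde f^{+}$'' is irrelevant to its behaviour, and there is no obvious structural reason for the integrals over $[2\alpha+\tau(t),\pi]$ (or $T_{fixed}(\alpha_\gamma^t,t)$ itself) to vanish to order $k-1$ at $\alpha=-\tau(t)$. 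The paper instead uses an indirect, cancellation argument (Lemmas \ref{gamma0estimatelemma}, \ref{gammaestimate}): at $\gamma=0$ one knows that $\tilde f^{+(60)}(\alpha+\tau(t),t)$ satisfies the modified equation \eqref{Tequationnew}, the left-hand side $\partial_t \tilde f^{+(60)}(\alpha+\tau(t),t)$ vanishes to high order at $\alpha=-\tau(t)$ by \eqref{fLequation}, and every $h$-dependent term on the right also vanishes there by the same mechanism you correctly identify in Lemma \ref{FTbound}; hence the only remaining term, $FT(\alpha,0,t)$, must vanish too. Extending this from $\gamma=0$ to general $\gamma$ then relies on $\alpha_\gamma^t\to 0$ as $\alpha\to -\tau(t)^+$ together with the smoothness \eqref{newestimatekernel0302} and \eqref{newestimatekernelboundary}, not on a direct evaluation. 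Without this step, your proposal does not actually establish the boundary conditions required by \eqref{3Tequation2}, and these are precisely what make the $\kappa<0$ generalized equation well-posed on the shrinking domain, so the gap is essential. A similar comment applies to the time-continuity boundary condition: it requires the $L^2$-continuity lemmas \ref{continuitytl2}, \ref{continuitytl202} applied to $FT$, not just $\tau\in C^1$.
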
We start by separating the terms into different types. We use the same classification as in lemmas \ref{3maintermtype}, \ref{3boundedtermtype} when $\kappa(t)>0$ because the only difference between two cases is a translation from $\alpha$ to $\alpha+\tau(t)$. For the sake of simplicity, we use the same notation except change the notation of $B_{1, i}(h)$ in lemma \ref{3boundedtermtype} to $B_{M,i}(h)$.
\begin{lemma}\label{3maintermtype02}
When $\kappa(t)<0$, we have  $\al_{\gamma}^{\tau}=\al+\tau(t)+ic(\al+\tau(t))\gamma t.$ $M_{1,1}(h)$ in \eqref{modifiedequation} satisfies:
\begin{align}\label{3M11chequ02}
    M_{1,1}(h)=0.
\end{align}

$M_{1,2}$ satisfies
\begin{equation}\label{3M12chequ02}
    M_{1,2}(h)=\lambda(\al+\tau(t))L_1^{-}(h)(\al,\g,t)\partial_{\al}h(\al,\g,t),
\end{equation}

with  \begin{equation}\label{3L1form}
    \begin{split}
    &\quad L_{1}^{-}(h)(\alpha,\gamma,t)=B_{D^{-60}(h)}(\alpha,\gamma,t)\\&=[\frac{1}{1+ic'(\alpha+\tau(t))\gamma t}(ic(\alpha+\tau(t))\gamma +\frac{\partial_{t}x}{\partial_{\alpha}x}(\alpha_{\gamma}^{t},t)\\&
\qquad+\frac{1}{(\partial_{\alpha}x)(\alpha_{\gamma}^{t},t)}
    \cdot p.v.\int_{-\pi}^{\pi}K_{-1}^{i}(\Delta V_h^{30}(\alpha,\gamma,t),\Delta V_{\tilde{f}^{L}}^{61}(\alpha_{\gamma}^{t},t),\Delta V_{X}(\alpha_{\gamma}^{t},t))X_i(\beta_{\gamma}^{t},t)(1+ic'(\beta+\tau(t))\gamma t)d\beta
    \\&\qquad-\frac{\partial_{t}x}{\partial_{\alpha}x}(0,t)-\frac{1}{(\partial_{\alpha}x)(0,t)}p.v\int_{-\pi}^{\pi}K_{-1}^{i}(\Delta V_h^{30}(0,\gamma,t),\Delta V_{\tilde{f}^{L}}^{61}(0,t),\Delta V_{X}(0,t))X_i(\beta_{\gamma}^{t},t)(1+ic'(\beta+\tau(t))\gamma t)d\beta)] \\  &\quad+(\frac{1}{1+ic'(\alpha+\tau(t))\gamma t} -1)\kappa(t),
    \end{split}
\end{equation}
for some $K_{-1}$ type kernel. $M_{2,1}$ satisfies
\begin{equation}\label{3M21chequ02}
    M_{2,1}(h)=\lambda(\al+\tau(t))L_2^{-}(h)(\al,\g,t)\int_{-\tau(t)}^{2\alpha+\tau(t)}\frac{(h(\alpha,\gamma,t)-h(\beta,\gamma,t))}{(\alpha-\beta)^2+\epsilon^2}d\beta,
\end{equation}
with 
\begin{align*}
    L_2^{-}(h)(\alpha,\gamma,t)=-\lim_{\beta\to\alpha}(\frac{d}{d\beta}K_{-1}^{j}(\Delta V_h^{30}(\alpha,\gamma,t),\Delta V_{\tilde{f}^{L}}^{61}(\alpha_{\gamma}^{t},t),\Delta V_{X}(\alpha_{\gamma}^{t},t))(\alpha-\beta)^2)
\end{align*}
with particular $K_{-1}^{j}(A,B,C)=\frac{\sin(A_1+B_1)}{\cosh(A_2+B_2)-\cos(A_1+B_1)}$. 
\end{lemma}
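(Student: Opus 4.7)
The proof is a direct identification of terms by unpacking definitions, parallel in spirit to Lemma \ref{3maintermtype} but making use of the specific choice of $\tau(t)$ from \eqref{defkappa}. The key novelty, and the entire point of the translation, is the cancellation in the $M_{1,1}$ coefficient. I would proceed as follows.

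First I would verify $M_{1,1}(h)=0$. Since $\kappa(0)<0$, definition \eqref{defkappa} gives $\tau(t) = -\int_0^t \kappa(\zeta)d\zeta$, so $\tau'(t) = -\kappa(t)$. Plugging this into \eqref{3M11chequ}, which reads $M_{1,1}(h) = \lambda(\alpha_\gamma^t)(\kappa(t)+\tau'(t))\partial_\alpha h$, the coefficient vanishes identically. This is precisely the reason we introduced $\tau(t)$ in the negative case: it kills the transport term that would otherwise have the wrong sign for the energy estimate in $H_\alpha^1[-\tau(t),\pi]$.

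Next, I would read off $M_{1,2}(h)$ directly from \eqref{M1chequ}. Because $c(\alpha)$ is supported in $[0,\delta/8]$ where $\lambda\equiv 1$ (see \eqref{cdefi} and \eqref{lambdadefi}), and because $\tau(t)$ is small for small $t$, one has $\lambda(\alpha_\gamma^t)=\lambda(\alpha+\tau(t))$ whenever $t$ is sufficiently small. The remaining factor in \eqref{M1chequ} is $\partial_\alpha h \cdot B_{D^{-60}(h)}(\alpha,\gamma,t)$, and rewriting $B_{D^{-60}(h)}$ using the $K_{-1}^{i}$ notation from \eqref{k-sigma} with arguments $A=\Delta V_h^{30}$, $B=\Delta V_{\tilde{f}^L}^{61}(\alpha_\gamma^t,t)$, $C=\Delta V_X(\alpha_\gamma^t,t)$ (absorbing the $\partial_\beta x$ factor into one of the $X_i$ variables as in definition \eqref{Xifunction}) yields exactly the formula \eqref{3L1form} for $L_1^{-}(h)$.

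Finally, for $M_{2,1}(h)$ I would unpack \eqref{3M21equationn}. The separation there into a singular part $M_{2,1}$ plus a regular remainder identifies
\[
M_{2,1}(h) = -\lambda(\alpha_\gamma^t)\lim_{\beta\to\alpha}\!\Big(\tfrac{d}{d\beta}K\bigl(\Delta D^{-60}(h)+\Delta \tilde{f}^L(\alpha_\gamma^t)\bigr)(\alpha-\beta)^2\Big)\,\mathrm{p.v.}\!\int_{-\tau(t)}^{2\alpha+\tau(t)}\!\tfrac{h(\alpha,\gamma,t)-h(\beta,\gamma,t)}{(\alpha-\beta)^2}\,d\beta,
\]
using the translated integration range that appears after the $\alpha\mapsto\alpha+\tau(t)$ substitution in Section \ref{nearmodified}. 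Rewriting the prefactor in $K_{-1}^{j}$ notation for the particular kernel $K_{-1}^{j}(A,B,C)=\sin(A_1+B_1)/(\cosh(A_2+B_2)-\cos(A_1+B_1))$ produces the stated expression for $L_2^{-}(h)$, with the same choice of $\lambda(\alpha_\gamma^t)=\lambda(\alpha+\tau(t))$ as above.

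The whole argument is bookkeeping with no hard estimates; the only substantive point is the identity $\kappa(t)+\tau'(t)=0$. The reason this lemma is worth stating separately is that the vanishing of $M_{1,1}$ is what downstream (in Section \ref{kappa2sectionexistence}) permits the boundary term at $\alpha=-\tau(t)$ arising from integration by parts in the $H_\alpha^k[-\tau(t),\pi]$ energy to be zero, restoring the good sign that was unavailable before the translation.
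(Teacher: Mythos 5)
Your proof is correct and takes the same route as the paper: the paper's own proof is the single sentence ``it directly follows from the definitions,'' and your write-up correctly unpacks what that bookkeeping is, with the one non-trivial observation being the identity $\kappa(t)+\tau'(t)=0$ from \eqref{defkappa} that kills $M_{1,1}$. (One small note unrelated to your argument: the $\epsilon^2$ appearing in the paper's display \eqref{3M21chequ02} is evidently a typo — compare the parallel statement \eqref{3M21chequG01} for $\kappa>0$ and the source term \eqref{3M21equationn} — and your p.v.\ version without $\epsilon$ is the intended one.)
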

\begin{proof}
   It directly follows from the definition of $\alpha_{\gamma}^{t}$ \eqref{alphadefi}, $\tau(t)$ \eqref{defkappa}, $\lambda(\alpha)$ \eqref{lambdadefi}, $c(\alpha)$ \eqref{cdefi},  the $M_{1,2}$\eqref{M1chequ},  $M_{1,1}$,\eqref{3M11chequ} and $M_{2,1}$\eqref{3M21equationn}.
\end{proof}
\begin{lemma}\label{3boundedtermtype02}
When $\kappa(t)<0$, terms in  $O^{0}$, $O^{1,i}$, $O^{3,i}$ and  $O^{5,i}$  can be written as the sum of the following types with $b_i\leq 60$:

The first type:
 
 \begin{align}\label{BMichequ02}
(B_M^{i}(h))_{\mu}=
 \lambda(\alpha_{\gamma}^{t})p.v.\int_{-\tau(t)}^{2\alpha+\tau(t)}  L_{B,i}(h)(\alpha,\beta,\gamma,t)\frac{D^{-60+b_i}h_v(\beta,\gamma,t)}{(\alpha-\beta)}d\beta,
\end{align}
with $L_{B,i}(h)$ one of the following two types:
sub-type 1:
\begin{align*}
    &L_{B,i}(h)(\alpha,\beta,\gamma,t)=((\alpha-\beta)^2\frac{d}{d\beta}(K_{-1}^{i}(\Delta V_{h}^{30}(\alpha,\gamma,t),\Delta V_{\tilde{f}^{L}}^{61}(\alpha_{\gamma}^{t},t),\Delta V_{X}(\alpha_{\gamma}^{t},t)))X_i(\beta_{\gamma}^{t},t)\\
    &-\lim_{\beta\to\alpha}((\alpha-\beta)^2\frac{d}{d\beta}(K_{-1}^{i}(\Delta V_{h}^{30}(\alpha,\gamma,t),\Delta V_{\tilde{f}^{L}}^{61}(\alpha_{\gamma}^{t},t),\Delta V_{X}(\alpha_{\gamma}^{t},t)))X_i(\alpha_{\gamma}^{t},t))\frac{1}{\alpha-\beta}
\end{align*}
or the form sub-type 2
\begin{align*}
    &L_{B,i}(h)(\alpha,\beta,\gamma,t)=((\alpha-\beta)(K_{-1}^{i}(\Delta V_{h}^{30}(\alpha,\gamma,t),\Delta V_{\tilde{f}^{L}}^{61}(\alpha_{\gamma}^{t},t),\Delta V_{X}(\alpha_{\gamma}^{t},t)))X_i(\beta_{\gamma}^{t},t)(1+ic'(\beta+\tau(t))\gamma t).
\end{align*}

The second type:
\begin{align}\label{B2hequation}
(B_2^{i}(h))_{\mu}=
 & \lambda(\alpha_{\gamma}^{t})\tilde{B}_2^{i}(h)(\alpha,\gamma,t)D^{-60+b_i}h_v(\alpha,\gamma,t) \text{ or } \lambda(\alpha_{\gamma}^{t})\tilde{B}_2^{i}(h)(\alpha,\gamma,t)D^{-60+b_i}h_v(2\alpha+\tau(t),\gamma,t)
\end{align}

with $\tilde{B}_2^{i}(h)$ having the following six forms:
sub-type 1:
\begin{align*}
  \tilde{B}_2^{i}(h)(\alpha,\gamma,t)=p.v. \int_{-\pi}^{\pi}K_{-1}^{i}(\Delta V_h^{30}(\alpha,\gamma,t),\Delta V_{\tilde{f}^{L}}^{61}(\alpha_{\gamma}^{t},t),\Delta V_{X}(\alpha_{\gamma}^{t},t))X_i(\beta_{\gamma}^{t},t)(1+ic'(\beta+\tau(t))\gamma t)d\beta,
\end{align*}
sub-type 2
\begin{align*}
    \tilde{B}_2^{i}(h)(\alpha,\gamma,t)=K_{-1}^{i}(\Delta V_h^{30}(\alpha,\gamma,t),\Delta V_{\tilde{f}^{L}}^{61}(\alpha_{\gamma}^{t},t),\Delta V_{X}(\alpha_{\gamma}^{t},t)|_{\beta=2\alpha+\tau(t)},
\end{align*}
or
\begin{align*}
    \tilde{B}_2^{i}(h)(\alpha,\gamma,t)=K_{-1}^{i}(\Delta V_h^{30}(\alpha,\gamma,t),\Delta V_{\tilde{f}^{L}}^{61}(\alpha_{\gamma}^{t},t),\Delta V_{X}(\alpha_{\gamma}^{t},t)|_{\beta=-\tau(t)},
\end{align*}
with particular $K_{-1}^{j}(A,B,C)=\frac{\sin(A_1+B_1)}{\cosh(A_2+B_2)-\cos(A_1+B_1)}$,

sub-type 3
\begin{align*}
    & \tilde{B}_2^{i}(h)(\alpha,\gamma,t)=\int_{0}^{1}d\zeta\int_{2\alpha+\tau(t)}^{\pi}(\frac{d}{d\beta}(K_{-2}^{i}( \zeta V_h^{30}(\alpha,\gamma,t)-V^{30}(\beta_0^{t},t),\Delta V_{\tilde{f}^{L}}^{61}(\alpha_{\gamma}^{t},t),\Delta V_{X}(\alpha_{\gamma}^{t},t)))X_{i}(\beta_{0}^{t},t)\tilde{f}^{+(b_i)}(\beta_0^{t})d\beta,
\end{align*}
sub-type 4
\begin{align*}
    &\tilde{B}_2^{i}(h)(\alpha,\gamma,t)=\int_{0}^{1}d\zeta\int_{2\alpha+\tau(t)}^{\pi}(K_{-2}^{i}( \zeta V_h^{30}(\alpha,\gamma,t)-V^{30}(\beta_0^{t},t),\Delta V_{\tilde{f}^{L}}^{61}(\alpha_{\gamma}^{t},t),\Delta V_{X}(\alpha_{\gamma}^{t},t))X_{i}(\beta_{0}^{t},t)\tilde{f}^{+(b_i)}(\beta_0^{t})d\beta,
\end{align*}
sub-type 5
\begin{align*}
    &\tilde{B}_2^{i}(h)(\alpha,\gamma,t)=p.v.\int_{-\tau(t)}^{2\al+\tau(t)}((\alpha-\beta)^2\frac{d}{d\beta}(K_{-1}^{i}(\Delta V_{h}^{30}(\alpha,\gamma,t),\Delta V_{\tilde{f}^{L}}^{61}(\alpha_{\gamma}^{t},t),\Delta V_{X}(\alpha_{\gamma}^{t},t)))\\
    &-\lim_{\beta\to\alpha}((\alpha-\beta)^2\frac{d}{d\beta}(K_{-1}^{i}(\Delta V_{h}^{30}(\alpha,\gamma,t),\Delta V_{\tilde{f}^{L}}^{61}(\alpha_{\gamma}^{t},t),\Delta V_{X}(\alpha_{\gamma}^{t},t))))\frac{1}{(\alpha-\beta)^2}d\beta,
\end{align*}
sub-type 6
\begin{align*}
\tilde{B}_2^{i}(h)(\alpha,\gamma,t)=\tilde{X}_i(\alpha_{\gamma}^{t},t).
\end{align*}

The third type:
\begin{align}\label{B3hequation}
(B_{3}^{i}(h))_{\mu}=\lambda(\alpha_{\gamma}^{t})\int_{0}^{\gamma}D^{-60+b_i}h_v(2\alpha+\tau(t),\eta,t)\tilde{B}_3^{i}(h)(\alpha+\tau(t),\gamma,\eta,t)d\eta,
\end{align}
with $\tilde{B}_3^{i}(h)$ having the following two forms:
sub-type 1
\begin{align*}
    &\tilde{B}_3^{i}(h)(\alpha,\gamma,\eta,t)=\frac{d}{d\beta}(K_{-1}^{i}( V_h^{30}(\alpha,\gamma,t)-V_h^{30}(\beta,\eta,t), V_{\tilde{f}^{L}}^{61}(\alpha_{\gamma}^{t},t)-V_{\tilde{f}^{L}}^{61}(\beta_{\eta}^{t},t), V_{X}(\alpha_{\gamma}^{t},t)-V_{X}(\beta_{\eta}^{t},t))|_{\beta=2\alpha+\tau(t)}\\
    &\times \frac{ic(2\alpha+\tau(t))t}{1+ic'(2\alpha+2\tau(t))\eta t}.
\end{align*}
and sub-type 2
\begin{align*}
    &\tilde{B}_3^{i}(h)(\alpha,\gamma,\eta,t)=K_{-1}^{i}( V_h^{30}(\alpha,\gamma,t)-V_h^{30}(\beta,\eta,t), V_{\tilde{f}^{L}}^{61}(\alpha_{\gamma}^{t},t)-V_{\tilde{f}^{L}}^{61}(\beta_{\eta}^{t},t), V_{X}(\alpha_{\gamma}^{t},t)-V_{X}(\beta_{\eta}^{t},t))|_{\beta=2\alpha+\tau(t)}\\
    &X_{j}((2\alpha+\tau(t))_{\eta}^t)\times ic(2\alpha+2\tau(t))t.
\end{align*}

The fourth type are terms not depending on $h$:
\begin{align}\label{B4equation}
B_{4}^{i}=\lambda(\alpha_{\gamma}^{t})\int_{2\alpha+\tau(t)}^{\pi}\tilde{B}_4^{i}(\alpha_{\gamma}^{t},\beta,t)d\beta
\end{align}
with $\tilde{B}_4^{i}$ having the following two forms:
sub-type 1
\begin{align}\label{B4subtype1}
    &(\tilde{B}_4^{i}(\alpha_{\gamma}^{t},\beta,t))_{\mu}=(\frac{d}{d\beta}(K_{-1}^{j}( -V^{30}(\beta_0^{t},t),\Delta V_{\tilde{f}^{L}}^{61}(\alpha_{\gamma}^{t},t),\Delta V_{X}(\alpha_{\gamma}^{t},t)))X_{j}(\beta_{0}^{t},t)\tilde{f}_{v}^{+(b_i)}(\beta_0^{t},t).
\end{align}
sub-type 2
\begin{align}\label{B4subtype2}
    &(\tilde{B}_4^{i}(\alpha_{\gamma}^{t},\beta,t))_{\mu}=(K_{-1}^{j}( -V^{30}(\beta_0^{t},t),\Delta V_{\tilde{f}^{L}}^{61}(\alpha_{\gamma}^{t},t),\Delta V_{X}(\alpha_{\gamma}^{t},t))X_{j}(\beta_0^{t},t)\tilde{f}_{v}^{+(b_i)}(\beta_0^{t},t).
\end{align}
  \end{lemma}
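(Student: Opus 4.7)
The plan is to parallel the proof of Lemma \ref{3boundedtermtype} (the $\kappa(t)>0$ case), where the only essential difference is that here $\alpha_\gamma^{t}=\alpha+\tau(t)+ic(\alpha+\tau(t))\gamma t$ with $\tau(t)>0$ rather than $\tau(t)=0$. Concretely, I would start from the decomposition of $F_2(h)$ in \eqref{3f2new} together with \eqref{M2c02}, and the decompositions of $O^{1,i}(h)$ in \eqref{O1icnewequation}--\eqref{O1icnew}, of $O^{3,i}(h)$ in \eqref{O3ic}, and of $O^{5,i}(h)$ in \eqref{O5ic}. All of these are already written for a general $\tau(t)$: the boundary point $-\tau(t)$ and the midpoint $2\alpha+\tau(t)$ appear explicitly in the integration limits, in the arguments of the kernels at the endpoints, and in the factors $1+ic'(\beta+\tau(t))\gamma t$. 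Therefore the classification in lemma \ref{3boundedtermtype02} reduces to matching each piece of $O^{0}, O^{1,i}, O^{3,i}, O^{5,i}$ against one of the prescribed forms of $B_{M,i},B_2^i,B_3^i,B_4^i$.

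The execution would proceed term by term, following exactly the correspondence established in the proof of lemma \ref{3boundedtermtype}:
\begin{itemize}
\item From $F_2(h)=M_{2,1}(h)+O^{0}$, the terms $Term_1$, $Term_2$, $Term_4$ give $B_2^{i}$ of sub-type 2 (with evaluation at $\beta=2\alpha+\tau(t)$ or $\beta=-\tau(t)$); $Term_{3,2,1}$ gives $B_2^{i}$ of sub-type 5; $Term_{3,2,2}$ gives $B_M^{i}$ of sub-type 1; $Term_{5,1}$ gives $B_3^{i}$ of sub-type 1; $Term_{5,2,1}$ is independent of $h$ and gives $B_4^{i}$ of sub-type 1; $Term_{5,2,2}$ gives $B_2^{i}$ of sub-type 3.
\item From $O^{1,i}$: $Term_1$ gives $B_2^{i}$ of sub-type 1; $Term_{2,1}$ gives $B_M^{i}$ of sub-type 2; $Term_{2,2}$ gives $B_3^{i}$ of sub-type 2; $Term_{2,3,1}$ gives $B_4^{i}$ of sub-type 2; and $Term_{2,3,2}$ gives $B_2^{i}$ of sub-type 4.
\item From $O^{3,i}$ in \eqref{O3ic}, the whole expression is of the form $B_2^{i}$ sub-type 1 (after noting that a $K_0$ kernel is a particular case of $K_{-1}$, so the reclassification costs nothing since we only use the $-1$-type structure).
\item From $O^{5,i}$ in \eqref{O5ic}, the whole expression matches $B_2^{i}$ sub-type 6 with $\tilde{B}_2^{i}(h)=\tilde X_i(\alpha_{\gamma}^{t},t)$.
\end{itemize}

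No nontrivial analytic estimate is needed at this stage; the lemma is purely a bookkeeping statement. The only mild care is (i) in recognizing that the split in \eqref{3M21equationn} which produces $M_{2,1}$ and the sub-type 5 remainder goes through verbatim once the integration domain is $[-\tau(t),2\alpha+\tau(t)]$, because adding and subtracting the limit $\lim_{\beta\to\alpha}$ of $(\alpha-\beta)^2\frac{d}{d\beta}K$ is symmetric in the integration domain around $\alpha$, and (ii) in verifying that the $\zeta$-integral manipulation in $Term_{2,3}$ and $Term_{5,2}$ still gives kernels of the form $K_{-2}^{i}$ acting on $\zeta V^{30}_h(\alpha,\gamma,t)-V^{30}(\beta_0^{t},t)$, which is identical to the $\kappa(t)>0$ case. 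I expect the main (and only) obstacle to be purely notational: keeping track of the shifts by $\tau(t)$ in the endpoints, the factors $1+ic'(\beta+\tau(t))\gamma t$, and the argument $(2\alpha+\tau(t))_\eta^t=2\alpha+2\tau(t)+ic(2\alpha+2\tau(t))\eta t$ in the $B_3^{i}$ sub-types, so as to avoid any sign error or mismatched boundary evaluation. Once the bookkeeping is done, the statement follows by inspection.
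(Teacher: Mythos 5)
Your proposal follows the paper's approach exactly: the paper's proof of Lemma~\ref{3boundedtermtype02} is literally ``the proof is the same as in Lemma~\ref{3boundedtermtype} except we change the notation of $B_{1}^{i}$ to $B_{M}^{i}$,'' together with one additional remark that you have not addressed. Notice that the second type \eqref{B2hequation} in the $\kappa(t)<0$ lemma offers only two variants of $D^{-60+b_i}h_v$ (evaluated at $\alpha$ or at $2\alpha+\tau(t)$), whereas the $\kappa(t)>0$ version \eqref{secondtypeestimate} had a third variant, evaluation at $0$. Your classification of $Term_2$ of $F_2(h)$ (and the corresponding piece of $O^{1,i}$) still produces a contribution with the factor $h_{\mu}(-\tau(t),\gamma,t)$, respectively $D^{-60+b_i}h_v(-\tau(t),\gamma,t)$, which is not one of the two allowed forms and must therefore be discarded. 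The reason it may be discarded is that $D^{-60+b_i}(h)(-\tau(t),\gamma,t)=0$: for $b_i<60$ this is immediate from the definition of $D^{-i}$ in \eqref{D-formularnew02}, and for $b_i=60$ it uses that $h\in W^{i_0,k}\cap S$, i.e.\ $h\in H^{k}[-\pi,\pi]$ with $\supp_\alpha h\subset[-\tau(t),\pi/8]$, so $h$ extends by zero across $\alpha=-\tau(t)$ and hence vanishes there. This is genuinely different from the $\kappa(t)>0$ case, where $X^{k}$ is built on $H^{k}_\alpha[0,\pi]$ and $h(0,\gamma,t)$ need not vanish, which is why the third variant was needed there. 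Apart from this one observation, the bookkeeping you describe is correct.
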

  \begin{proof}
% We also use $h(-\tau(t),\gamma,t)=0$ ( since $h\in W^{i_0,k}$) to cancel a term in \eqref{B3hequation} that corresponds to $\lambda(\alpha_{\gamma}^{t})\tilde{B}_2^{i}(h)(\alpha,\gamma,t)D^{-60+b_i}h_v(0,\gamma,t)$ term in \eqref{secondtypeestimate}
The proof is same as in \eqref{3boundedtermtype} except we change the notation of $B_{1}^{i}$ to $B_{M}^{i}$. In $B_2^{i}(h)$, we use $D^{-60+b_i}(h)(-\tau(t),\gamma,t)=0$ from the space of $h$ \eqref{hspacew}.
  \end{proof}
  In conclusion, from \eqref{modifiedequation} and above lemmas \ref{3maintermtype02}, \ref{3boundedtermtype02}, we have when $\kappa(t)<0$, $\alpha>-\tau(t)$:
\begin{align}\label{conclu:modified equation02}
    \frac{dh(\alpha,\gamma,t)}{dt}=T^{-}(h)=
      M_{1,2}(h)+M_{2,1}(h)+O^{0}+\sum_{i}O^{i}(h)+\lambda(\alpha)T_{fixed}(\alpha_{\gamma}^{t}), 
\end{align}
with $M_{1,1}$, $M_{1,2}$, $M_{2,1}$ the similar structure as in lemma \ref{3maintermtype}. $O_{i}$ satisfies:
\begin{align}\label{conclu:modified equation02O}
O^{0},O^{1,i},O^{3,i},O^{5,i}=\sum_{\tilde{i}}\sum_{j=2}^{3}\underbrace{B^{\tilde{i}}_{j}(h)}_{B_{i} \text{ term in generalized equation}}+\sum_{\tilde{i}}\underbrace{B^{\tilde{i}}_{4}}_{B_{i} \text{ term in generalized equation}}+\sum_{\tilde{i}}\underbrace{B^{\tilde{i}}_{M}(h)}_{B_{M,i} \text{ term in generalized equation}}.
\end{align}
From \eqref{O2ic}, \eqref{O4ic}, we have
\begin{align}\label{O2inewetimate}
O^{2,i}=\sum_{\tilde{i}}\sum_{j=2}^{3}\underbrace{B^{\tilde{i}}_{j} (h)D^{-60+s_i}h(\alpha,\gamma,t)}_{B_{i} \text{ term in generalized equation}}+\sum_{\tilde{i}}\underbrace{B^{\tilde{i}}_{4}\cdot D^{-60+s_i}h(\alpha,\gamma,t)}_{B_{i} \text{ term in generalized equation}}+\sum_{\tilde{i}}\underbrace{B^{\tilde{i}}_{M}(h)D^{-60+s_i}h(\alpha,\gamma,t)}_{B_{M,i} \text{ term in generalized equation}}, \text{ with } s_i\leq 30.
\end{align}
\begin{align}\label{O4inewetimate}
O^{4,i}=\sum_{\tilde{i}}\sum_{j=2}^{3}\underbrace{\int_{0}^{1}B^{\tilde{i}}_{j,\zeta} (h)\zeta^{q_{\tilde{i}}}d\zeta}_{B_{i} \text{ term in generalized equation}}+\sum_{\tilde{i}}\underbrace{\int_{0}^{1}B^{\tilde{i}}_{4,\zeta} \zeta^{q_{\tilde{i}}}d\zeta}_{B_{i} \text{ term in generalized equation}}+\sum_{\tilde{
i}}\underbrace{\int_{0}^{1}B^{\tilde{i}}_{M,\zeta}(h)\zeta^{q_{\tilde{i}}}d\zeta}_{B_{M,i} \text{ term in generalized equation}},
\end{align}
 $B^{\tilde{i}}_{M,\zeta}(h), B^{\tilde{i}}_{j,\zeta} (h)$ are same type of term  as $B^{i}_{M}(h), B^{i}_{j} (h)$ in lemma \ref{3boundedtermtype02} except that kernel $K_{-1}^{i}$ are replaced by  $K_{-1,\zeta}^{i}$ \eqref{k-sigmazeta}. 
\subsubsection{Smoothness and vanishing conditions for $\kappa(t)<0$}\label{smoothvani02}
We first work on the vanishing conditions and the smoothness conditions. We claim each term can be treated as a translation of corresponding terms in the case when $\kappa(t)>0$ \eqref{conclu:modified equation} and we could use a similar method to show these two conditions,  with $\delta_s$, $t_s$ sufficiently small from arc-chord condition in lemma \ref{arcchord}. 

More precisely, $M_{1,2}$ corresponds to the term with the same notation in \eqref{3M12chequG01}. Then we could use a similar way as in lemma  \ref{L1main}.
$M_{2,1}(h)$ $B_{2}^{i}(h)$, $B_{3}^{i}(h)$, $B_4^{i, FT}(h)$,  $\lambda(\al_{\gamma}^{t})T_{fixed}(\alpha_{\gamma}^t,t)$ also corresponds to the term with same notation in lemma \ref{3boundedtermtype}, and we could use lemma \ref{Lh1lemma}, \ref{Lh4lemma}, \ref{Lh5lemma}, \ref{Lhjlemma}, lemma \ref{Tfixed behaviour}. 
For $B_{M,i}(h)$, it corresponds to  $B_{1}^{i}(h)$ in lemma \ref{3boundedtermtype}. Then we could use similar way as in lemma \ref{Lh3lemma} to control it.
Similarly as in lemmas  \ref{Lh3lemma}, \ref{Lh4lemma}, \ref{Lh5lemma}, \ref{Lhjlemma}, lemma \ref{Tfixed behaviour}, we also have the kernels  $\tilde{B}_2^{i}$, $\tilde{B}_{3}^{i}$, $\tilde{B}_{4}^{i}$ satisfying:
\begin{align}\label{newestimatekernal0}
\tilde{B}_{1}^{i}(h) \in C_{\gamma}^{i_0}([-1,1], C_{\alpha,\beta}^{k+2}([-\tau(t),\frac{\pi}{4}]\times [-\tau(t),\frac{2}{3}\pi])).
\end{align}
\begin{align}\label{newestimatekernel1}
\tilde{B}_{2}^{i}(h)\in C_{\gamma}^{i_0}([-1,1], C_{\alpha}^{k+2}[-\tau(t),\frac{1}{4}\pi]),
\end{align}
\begin{align}\label{newestimatekernel2}
\tilde{B}_{3}^{i}(h)\in C_{\gamma,\eta}^{i_0}([-1,1], C_{\alpha}^{k+2}[-\tau(t),\frac{1}{4}\pi]),
\end{align}
and
\begin{align}\label{newestimatekernel03}
\tilde{B}_{4}^{i}\in C_{\gamma}^{4}([-1,1], C_{\alpha,\beta}^{14}(\bar{\Omega})),
\end{align}
\begin{align}\label{newestimatekernel04}
B_{4}^{i}\in C_{\gamma}^{4}([-1,1], C_{\alpha}^{14}[-\tau(t),\frac{\pi}{4}]),
\end{align}
where $\Omega=\{(\alpha,\beta)|\alpha\in (-\tau(t),\frac{1}{4}\pi], \beta \in (-\tau(t),\frac{3}{2}\pi], \beta \geq 2\alpha+\tau(t)\}.$
Moreover,  for $j'+l''\leq 13$, $l\leq 4$, we have
\begin{align}\label{condition left term boundary}
    \lim_{\alpha\to -\tau(t)^{+}}(\frac{d}{d\gamma})^{l}(\frac{d}{d\alpha})^{l''}((\partial_{\alpha}^{j'}\tilde{B}_4^{i})(\alpha_{\gamma}^{t},2\alpha+\tau(t),t))=0.
\end{align}
\begin{align}\label{newestimatekernel4}
\lambda(\alpha_{\gamma}^{t})T_{fixed}(\alpha_{\gamma}^{t},t)\in C_{\gamma}^{4}([-1,1],C^{16}_{\alpha}[-\tau(t),\frac{\pi}{4}]).
\end{align}
Then terms in $M_{1,2},M_{2,1},O^{0},O^{1,i},O^{3,i},O^{5,i}, \lambda(\al_{\gamma}^{t})T_{fixed}(\alpha_{\gamma}^t,t)$ satisfies the smoothness conditions and vanishing conditions. 
From \eqref{O2inewetimate}, \eqref{O4inewetimate}, we claim $O^{4,i}$, $O^{2,i}$ can be estimated similarly.
\subsubsection{First boundary condition for $\kappa(t)<0$}
Now we work on the first boundary condition. Since the smoothness and vanishing conditions have been proved, we will also make use of them.

\begin{lemma}\label{boundary102}
We have $\lim_{\alpha\to -\tau(t)^{+}}\sum_{i}\partial_{\al}^{j}\partial_{\g}^{j'}B_i(h)(-\tau(t),\g,t)=0$, when $0\leq j\leq k-1$, $0\leq j'\leq i_0$.
\end{lemma}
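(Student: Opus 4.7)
The plan is to split the $B_i(h)$ terms into $h$-dependent and $h$-independent pieces using the classification of Lemma \ref{3boundedtermtype02} together with \eqref{O2inewetimate}--\eqref{O4inewetimate} and the separate $\lambda(\alpha)T_{fixed}(\alpha_{\gamma}^{t})$ contribution, and then treat the two groups by different mechanisms.

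For the $h$-dependent pieces (types $B_2^{i}$, $B_3^{i}$, and the $D^{-60+s_{i}}h$-weighted or $\zeta$-integrated variants coming from $O^{2,i}$ and $O^{4,i}$), every such expression carries at least one factor of the form $D^{-60+b_{i}}h_{v}$ evaluated either at $\alpha$ or at $2\alpha+\tau(t)$, both of which collapse to $-\tau(t)$ in the limit $\alpha\to-\tau(t)^{+}$. By \eqref{D-formularnew02} the factor $D^{-(60-b_{i})}h(-\tau(t),\gamma,t)$ vanishes to order $60-b_{i}$ when $b_{i}<60$; for the remaining case $b_{i}=60$ we fall back on $\supp h\subset[-\tau(t),\pi/8]$ and $h\in H^{k}$, so that by Sobolev embedding $h_{v}$ and its first $k-1$ derivatives vanish at $-\tau(t)$. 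Applying Leibniz to $\partial_{\alpha}^{j}\partial_{\gamma}^{j'}$ with $j\leq k-1$, $j'\leq i_{0}$ and using the kernel bounds \eqref{newestimatekernel1}--\eqref{newestimatekernel2} for $\tilde{B}_{2}^{i}$ and $\tilde{B}_{3}^{i}$ shows every resulting summand still contains such a vanishing factor.

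For the $h$-independent pieces (the $B_{4}^{i}$ terms and $\lambda(\alpha)T_{fixed}(\alpha_{\gamma}^{t})$) one exploits an identity extracted from the original equation \eqref{equation1.2}: when $x\leq 0$ one has $\tilde{f}^{+}(x,t)\equiv 0$, so $\partial_{t}\tilde{f}^{+}(x,t)=0$ and the entire RHS of \eqref{equation1.2} must vanish. In this identity the $\tilde{f}^{+}$-dependent pieces are already zero, so the $\tilde{f}^{+}$-independent pieces -- which, after the $60$th derivative, the change of contour, the cutoff and the translation $\alpha\mapsto\alpha+\tau(t)$ carried out in Section \ref{nearmodified}, assemble into precisely $\sum_{i}B_{4}^{i}+\lambda(\alpha)T_{fixed}(\alpha_{\gamma}^{t})$ -- must themselves sum to zero for $\alpha\leq-\tau(t)$. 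Since $c(0)=0$ by \eqref{cdefi} gives $\alpha_{\gamma}^{t}\big|_{\alpha=-\tau(t)}=-\tau(t)$ independently of $\gamma$, and since \eqref{condition left term boundary} annihilates exactly the boundary contributions that arise when $\partial_{\alpha}$ hits the moving lower limit $2\alpha+\tau(t)$ of the $B_{4}^{i}$-integrals, all derivatives $\partial_{\alpha}^{j}\partial_{\gamma}^{j'}$ with $j\leq k-1$, $j'\leq i_{0}$ of this identity extend by continuity to the boundary $\alpha\to-\tau(t)^{+}$.

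The main obstacle is the identification step in the previous paragraph: one has to retrace the decomposition in Section \ref{nearmodified} -- the splitting into $Term_{1},\ldots,Term_{5,2}$ in \eqref{M2c02}, the integration by parts, the replacement of $\tilde{f}^{+(60)}(\beta_{0}^{t})$ by $h(\beta,0,t)$, and the contour deformations -- to verify that the pieces grouped into $B_{4}^{i}$ and $T_{fixed}$ really do reconstitute the full $\tilde{f}^{+}$-independent part of the pre-cutoff equation, not just a fragment. Once this bookkeeping is in place, the vanishing at each transverse derivative order follows automatically from \eqref{condition left term boundary} together with the kernel regularity \eqref{newestimatekernel03}--\eqref{newestimatekernel4}.
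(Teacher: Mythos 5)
Your decomposition into $h$-dependent pieces and an $h$-independent remainder is the same split the paper uses (the latter is denoted $FT$ in \eqref{FTdef}), and your treatment of the $h$-dependent pieces — vanishing of $D^{-60+b_i}h$ at $\alpha=-\tau(t)$ from $\supp h\subset[-\tau(t),\frac{\pi}{8}]$, $h\in H^k[-\pi,\pi]$, combined with the kernel bounds on $\tilde B_2^i,\tilde B_3^i$ — is exactly the argument of Lemma~\ref{FTbound}. That part is fine.

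For the $h$-independent piece, you correctly locate the mechanism (the $\tilde f^{+}$-independent part of the equation is forced to vanish because $\tilde f^{+}$ is identically zero to the left of the turnover) and you correctly flag "retracing the decomposition" as the hard part. But that bookkeeping is precisely what the paper's Lemma~\ref{gamma0estimatelemma} avoids, by a substitution you haven't noticed: $\tilde f^{+(60)}(\alpha+\tau(t))$ itself satisfies the modified equation \eqref{modifiedequation} at $\gamma=0$ (this was built into the derivation in Section~\ref{modifiedequationsection}). Hence $\partial_t\tilde f^{+(60)}(\alpha+\tau(t))=T\big(\tilde f^{+(60)}(\alpha+\tau(t))\big)=\text{(}h\text{-dependent terms)}+FT(\alpha,0,t)$. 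Both the left-hand side and the $h$-dependent terms are already known to vanish at $\alpha=-\tau(t)$ (by the high-order vanishing of $\tilde f^{+}$ at $0$ and by the $h$-dependent argument applied with $h=\tilde f^{+(60)}(\alpha+\tau(t))$), so $FT(\alpha,0,t)\to0$ by subtraction, with no need to match pieces back to \eqref{equation1.2}. Your plan to work directly from \eqref{equation1.2} and retrace would additionally have to address that the $FT$ pieces are assembled only after the contour deformation and cutoff, so for $\alpha\le-\tau(t)$ they are not literally a subexpression of \eqref{equation1.2}; whereas the substitution argument stays entirely inside the already-decomposed equation.

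Second, you blur the $\gamma=0$ and $\gamma\neq0$ cases. The vanishing identity you invoke is an identity on the real line; it gives nothing directly for $B_4^{i}$ and $\lambda(\alpha)T_{fixed}(\alpha_\gamma^t)$ at $\gamma\neq0$, where the terms live on the deformed contour. The paper handles this in a separate step (Lemma~\ref{gammaestimate}): since $c(0)=0$ implies $\alpha_\gamma^t\big|_{\alpha=-\tau(t)}=0$ independently of $\gamma$ (note your slip — not $-\tau(t)$), and since the kernels $\tilde B_4^i$, $T_{fixed}$ are smooth in $\gamma$ while \eqref{condition left term boundary} kills the boundary contributions from the moving lower limit $2\alpha+\tau(t)$, the $\gamma$-dependence drops out in the limit $\alpha\to-\tau(t)^{+}$ and the claim reduces to the $\gamma=0$ case. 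Your appeal to "continuity to the boundary" is pointing in this direction but does not constitute that reduction. So the high-level idea is right, but both the $FT\to0$ step and the passage to general $\gamma$ are left with gaps that the paper's actual proof closes cleanly.
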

\begin{proof}
The result follows from the following lemmas \ref{FTbound}, \ref{gamma0estimatelemma}, \ref{gammaestimate}.
\end{proof}
From \eqref{conclu:modified equation02}, \eqref{conclu:modified equation02O}, \eqref{O2inewetimate}, \eqref{O4inewetimate} we have 
\begin{align}\label{sumbiconclu}
\sum_{i}B_i(h)=&\sum_{i}\sum_{j=2}^{3}B_{j}^{i}(h)+\sum_{i}\sum_{j=2}^{3}
\int_{0}^{1}B_{j,\zeta}^{i}(h)\zeta^{q_{i}}d\zeta+\sum_{i}\sum_{j=2}^{3}B_{j}^{i}(h)D^{-60+s_{i}}(h)+\sum_{i}B_{4}^{i}D^{-60+s_{i}}(h)\\\nonumber
&+\sum_{\tilde{i}}B_{4}^{i}+\sum_{i}
\int_{0}^{1}B_{4,\zeta}^{i}\zeta^{q_{i}}d\zeta+\lambda(\alpha_{\gamma}^{t})T_{fixed}(\alpha_{\gamma}^{t}). 
\end{align}
Let $FT(\alpha,\gamma,t)$ be sum of terms in $\sum_{i}B_{i}(h)$\ that does not dependent on $h$, which is 
\begin{align}\label{FTdef}
FT(\alpha,\gamma,t)=\sum_{i}B_{4}^{i}+\sum_{i}
\int_{0}^{1}B_{4,\zeta}^{i}\zeta^{q_{i}}d\zeta+\lambda(\alpha_{\gamma}^{t})T_{fixed}(\alpha_{\gamma}^{t}).
\end{align}

We first show except the $FT$, other terms in $\sum_i B_i(h)$ satisfies this boundary condition:
\begin{lemma}\label{FTbound}
When $0\leq j\leq k-1, 0\leq j'\leq i_0$, we have $\sum_{i}B_i(h)-FT$ satisfies 
\begin{align*}
\lim_{\alpha\to -\tau(t)^{+}}\partial_{\al}^{j}\partial_{\g}^{j'}(\sum_{i}B_i(h)-FT)(\alpha,\gamma,t)=0.
\end{align*}
\end{lemma}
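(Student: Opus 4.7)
The plan is to expand $\sum_i B_i(h)-FT$ using \eqref{sumbiconclu} and \eqref{FTdef}, which leaves four families of terms: $B_2^i(h)$, $B_3^i(h)$, their $\zeta$-integrated analogues, and the products $B_j^i(h)\cdot D^{-60+s_i}h$ and $B_4^i\cdot D^{-60+s_i}h$ arising from $O^{2,i}$ and $O^{4,i}$. Every one of these retains at least one explicit factor of $D^{-60+b_i}h$ or $D^{-60+s_i}h$ evaluated at $\alpha$, at $2\alpha+\tau(t)$, or at $-\tau(t)$, and the strategy is to show this factor, together with all of its $\partial_\alpha^j\partial_\gamma^{j'}$-derivatives of the allowed order, forces vanishing as $\alpha\to -\tau(t)^+$.

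The key preliminary I would use is twofold. First, from the definition \eqref{D-formularnew02}, $D^{-i}h(-\tau(t),\gamma,t)=0$ for every $i\geq 1$, and $\partial_\alpha D^{-i}h = (1+ic'(\alpha+\tau(t))\gamma t)D^{-i+1}h$ whenever $i\geq 1$; iterating, $\partial_\alpha^m D^{-i}h$ reduces (up to smooth factors) to $D^{-i+m}h$ when $m\leq i$, and to $\partial_\alpha^{m-i}h$ when $m>i$. Second, since $h\in W^{i_0,k}$ with $\supp h\subset[-\tau(t),\pi/8]$ and $k\geq 1$, the one-dimensional Sobolev embedding $H^{k}\hookrightarrow C^{k-1}$ combined with $h\equiv 0$ on $[-\pi,-\tau(t))$ yields $\partial_\alpha^m\partial_\gamma^{j'}h(-\tau(t),\gamma,t)=0$ for all $m\leq k-1$, $j'\leq i_0$; the same reasoning applied to $\partial_\gamma^{j'}h\in C_\gamma^{i_0-j'}(H^k_\alpha)$ handles mixed derivatives.

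With these facts in hand, the proof is a term-by-term Leibniz expansion. For $B_2^i(h)$, every summand of $\partial_\alpha^j\partial_\gamma^{j'}B_2^i(h)$ contains a factor $\partial_\alpha^{m_1}\partial_\gamma^{j_1'}D^{-60+b_i}h_v$ evaluated at $\alpha$ or at $2\alpha+\tau(t)$, with $m_1\leq j\leq k-1$; at $\alpha=-\tau(t)$ both evaluation points collapse to $-\tau(t)$, and since $b_i+m_1\leq 60+(k-1)$, this factor vanishes (by the first observation when $b_i+m_1\leq 60$, and by the second when $b_i+m_1>60$). The remaining factors, coming from $\alpha$- and $\gamma$-derivatives of $\tilde{B}_2^i(h)$ and of $\lambda(\alpha_\gamma^t)$, are bounded by the kernel regularity \eqref{newestimatekernel1}. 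The terms $B_3^i(h)$ are analogous, with the extra care that $\partial_\gamma$ can produce a boundary contribution at $\eta=\gamma$ from the outer integral in \eqref{B3hequation}; this boundary piece still carries $D^{-60+b_i}h(2\alpha+\tau(t),\gamma,t)$, which vanishes at $\alpha=-\tau(t)$ for the same reason. The $\zeta$-integrated versions inherit the vanishing pointwise in $\zeta$, while the products $B_j^i(h)\cdot D^{-60+s_i}h$ and $B_4^i\cdot D^{-60+s_i}h$ vanish even more comfortably since $s_i\leq 30$ leaves a large margin $s_i+(k-1)\leq 41<60$, keeping us strictly in the first regime.

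The main obstacle is the bookkeeping: tracking which factors absorb $\alpha$-derivatives under Leibniz, verifying that the count never requires more than $k-1$ derivatives landing on bare $h$ at $-\tau(t)$, and handling the chain-rule factor of $2$ from arguments of the form $2\alpha+\tau(t)$ uniformly across the cases. The inequality $b_i+m\leq 60+(k-1)$ is tight—saturated precisely when $b_i=60$ and $m=k-1$—but always holds, so the two complementary vanishing mechanisms (definition of $D^{-i}$, and support-plus-Sobolev) together cover all cases and close the argument.
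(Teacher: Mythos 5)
Your proof is correct and follows essentially the same approach as the paper's: identify that each term in $\sum_i B_i(h)-FT$ carries a factor of the form $D^{-60+b_i}h$ or $D^{-60+s_i}h$ (evaluated at $\alpha$ or $2\alpha+\tau(t)$), invoke the vanishing of $\partial_\alpha^m\partial_\gamma^{j'}h$ at $-\tau(t)$ for $m\leq k-1$ coming from the support condition plus $H^k\hookrightarrow C^{k-1}$, and propagate this through $D^{-i}$ and the Leibniz rule using the kernel regularity estimates \eqref{newestimatekernel1}--\eqref{newestimatekernel03}. The only cosmetic slip is that the decomposition \eqref{B2hequation} for $\kappa(t)<0$ has already dropped the evaluation at $-\tau(t)$ (since $D^{-60+b_i}(h)(-\tau(t),\gamma,t)=0$), so you need not list that as a separate case; the bound $b_i+m_1\leq 60+(k-1)$ and the two-regime split you give is exactly the right bookkeeping.
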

\begin{proof}
First, for $0\leq j \leq k-1$, $0\leq j'\leq i_0$, from the space of $h,g\in W^{i_0,k}\cap S$ \eqref{hspacew}, we have
\[
\lim_{\alpha\to-\tau(t)}\partial_{\alpha}^{j}\partial_{\gamma}^{j'} h=0.
\]
From the definition of $D^{-60+s_i}$ \eqref{D-formularnew}, since $s_i\leq 30$, we have 
\begin{align*}
\lim_{\alpha\to-\tau(t)}\partial_{\al}^{j}\partial_{\g}^{j'}D^{-60+s_i}(h)=0.
\end{align*}
 Then from the equation of $B_{2}^{i}(h)$ \eqref{B2hequation}, $B_3^{i}(h)$ 
 \eqref{B3hequation}, $B_{4}^{i}$ 
 \eqref{B4equation} and estimate \eqref{newestimatekernel1}, \eqref{newestimatekernel2},  \eqref{newestimatekernel03}, we also have for $l=2,3$
\begin{align*}
\lim_{\alpha\to-\tau(t)^{+}} \partial_{\alpha}^{j}\partial_{\gamma}^{j'} B_{l}^{i}(h)=0,
\end{align*}
\begin{align*}
\lim_{\alpha\to-\tau(t)^{+}}\partial_{\alpha}^{j}\partial_{\gamma}^{j'}(B_{l}^{i}(h)D^{-60+s_{i}}(h))=0.
\end{align*}
\begin{align*}
\lim_{\alpha\to-\tau(t)^{+}}\partial_{\alpha}^{j}\partial_{\gamma}^{j'}(B_{4}^{\tilde{i}}D^{-60+s_{i}}(h))=0.
\end{align*}
Similarly,  for $l=2,3$, because $B_{l,\zeta}^{i}(h)$ only differs from $B_{l}^{i}(h)$ by changing kernel $K_{-\sigma}^{i}$ to  $K_{-\sigma,\zeta}^{i}$, we claim by similar proof, we have
\begin{align*}
\lim_{\alpha\to-\tau(t)^{+}}\partial_{\alpha}^{j}\partial_{\gamma}^{j'} (\int_{0}^{1}B_{l,\zeta}^{i}(h)\zeta^{q_{i}}d\zeta)=0.
\end{align*}

Then from \eqref{sumbiconclu} \eqref{FTdef}, every term is controlled and we have the result.
\end{proof}
Therefore, we only need to show 
\begin{align}\label{boundaryestimate01}
\quad \lim_{\alpha\to-\tau(t)^{+}}\partial_{\alpha}^{j}\partial_{\gamma}^{j'}FT(\alpha,\gamma,t)=0.
\end{align}
Note that $FT$ is not dependent on $h$. It only depends on the given solution $f(\alpha,t)$. From our derivation of modified equation \eqref{3mainequationchange}, we know when $\gamma=0$, $\tilde{f}^{+(60)}(\alpha+\tau(t))$  also satisfies the modified equation \eqref{modifiedequation}.  Hence we have
\begin{align}\label{Tequationnew}
\frac{d\tilde{f}^{+(60)}(\alpha+\tau(t))}{dt}&=T(\tilde{f}^{+(60)}(\alpha+\tau(t)))\\\nonumber
&=M_{1,1}(\tilde{f}^{+(60)}(\alpha+\tau(t)))+M_{1,2}(\tilde{f}^{+(60)}(\alpha+\tau(t)))+M_{2,1}(\tilde{f}^{+(60)}(\alpha+\tau(t)))\\
&\quad+\sum_{i}B_{M,i}(\tilde{f}^{+(60)}(\alpha+\tau(t)))+\sum_{i}B_{i}(\tilde{f}^{+(60)}(\alpha+\tau(t))).
\end{align}
Now we show 
\begin{lemma}\label{gamma0estimatelemma}
When $0\leq j\leq 12$,
\begin{equation}\label{gamma0estimate}
\lim_{\alpha\to-\tau(t)^{+}}\partial_{\alpha}^{j}FT(\alpha,0,t)=0.
\end{equation}

\end{lemma}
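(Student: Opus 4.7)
The plan is to exploit the identity \eqref{Tequationnew}, which says that $h(\alpha,t) := \tilde{f}^{+(60)}(\alpha+\tau(t))$ is itself a solution of the modified equation at $\gamma = 0$. Rearranging this identity and using the decomposition $\sum_i B_i(h) = FT + (\sum_i B_i(h) - FT)$ yields
\begin{align*}
FT(\alpha,0,t) = \frac{d}{dt}\tilde{f}^{+(60)}(\alpha+\tau(t)) &- M_{1,2}(h)\big|_{\gamma=0} - M_{2,1}(h)\big|_{\gamma=0} \\
&- \sum_i B_{M,i}(h)\big|_{\gamma=0} - \Bigl(\sum_i B_i(h)\big|_{\gamma=0} - FT(\alpha,0,t)\Bigr).
\end{align*}
It therefore suffices to show that each term on the right-hand side has $\partial_\alpha^j$ vanishing as $\alpha\to -\tau(t)^+$ for $0\le j\le 12$. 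The core input is the flatness of $\tilde{f}^{+}$ at $0$: from \eqref{fLspace}, $\tilde{f}^{+(i)}(0,t)=0$ for $i\le 99$, so $\partial_\alpha^j h(-\tau(t),0,t) = \tilde{f}^{+(60+j)}(0,t) = 0$ for every $j\le 39$, giving us ample margin.

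For the total time derivative, direct computation gives $\partial_\alpha^j \tfrac{d}{dt}\tilde{f}^{+(60)}(\alpha+\tau(t))\big|_{\alpha=-\tau(t)} = \tau'(t)\tilde{f}^{+(61+j)}(0,t) + (\partial_t \tilde{f}^{+(60+j)})(0,t)$, and both summands vanish in the required range of $j$ by the same flatness. For $M_{1,2}(h) = \lambda(\alpha+\tau(t))\,L_1^{-}(h)\,\partial_\alpha h$, the vanishing condition \eqref{3L002} gives $L_1^{-}(h)(-\tau(t),0,t)=0$; combined with the $C^{k+2}$-smoothness \eqref{Lhcondition02bound} and the high-order flatness of $\partial_\alpha h$ at $-\tau(t)$, Leibniz's rule shows all derivatives up to order $12$ vanish there. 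The same argument applied to \eqref{3M21chequ02} together with the shrinking of the integration interval $[-\tau(t),2\alpha+\tau(t)]$ to a point and the flatness of $h(\alpha)-h(\beta)$ on the diagonal at $-\tau(t)$ handles $M_{2,1}(h)$.

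For the nonlocal pieces $\sum_i B_{M,i}(h)$ given by \eqref{BMichequ02}, the kernel $L_{B,i}(h)$ is jointly smooth by \eqref{LBhcondition02bound}, and the factor $D^{-60+b_i}(h_v)$ vanishes to high order at $\beta=-\tau(t)$ by the definition \eqref{D-formularnew02}. As $\alpha\to -\tau(t)^+$ the domain of integration collapses, and differentiating under the integral sign produces boundary terms at $\beta=2\alpha+\tau(t)$ and $\beta=-\tau(t)$ which all inherit the flatness of $D^{-60+b_i}h$ at those endpoints, forcing the derivatives to vanish. The remaining quantity $\sum_i B_i(h) - FT$ is precisely what is controlled by Lemma \ref{FTbound}, which was proved without any appeal to the equation.

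The main technical obstacle is verifying the boundary vanishing of the principal-value and integral terms $M_{2,1}(h)$ and $B_{M,i}(h)$ when both the upper limit $2\alpha+\tau(t)$ and the lower limit $-\tau(t)$ meet at $-\tau(t)$. Differentiating $j$ times in $\alpha$ produces a mixture of bulk derivatives of the integrand and Leibniz-type evaluations at $\beta = 2\alpha+\tau(t)$ with combinatorial factors of $\tfrac{1}{\alpha - (2\alpha+\tau(t))} = -\tfrac{1}{\alpha+\tau(t)}$; one must check that the $39$-order flatness of $h$ at $-\tau(t)$ is enough to absorb the at most $12$-fold amplification of these singular boundary contributions, which is where the generous gap between $60+j\le 99$ and $j\le 12$ is used.
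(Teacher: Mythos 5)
Your proof follows essentially the same route as the paper's: rearrange the identity \eqref{Tequationnew} (that $\tilde{f}^{+(60)}(\alpha+\tau(t))$ solves the modified equation at $\gamma=0$) to isolate $FT$, then show that $\partial_\alpha^j$ of the time derivative, of the $M_{1,2}$, $M_{2,1}$, $B_{M,i}$ terms, and of $\sum_i B_i - FT$ all vanish as $\alpha\to -\tau(t)^+$, using the high-order flatness of $\tilde{f}^{+}$ at $0$, the vanishing and smoothness conditions on $L_i^-$, $L_{B,i}$, and the argument of Lemma~\ref{FTbound}. Your explicit discussion of the $(\alpha+\tau(t))^{-m}$ amplification from the endpoint $\beta = 2\alpha+\tau(t)$ being absorbed by the flatness margin is exactly the point the paper compresses into ``from the structure\ldots and smoothness conditions,'' so the two proofs match in substance.
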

\begin{proof}
From \eqref{Tequationnew}, it is enough to show the following two inequalities:
\begin{equation}\label{RHSestimate}
 \lim_{\alpha\to -\tau(t)^{+}}\partial_{\alpha}^{j}\frac{ d\tilde{f}^{+(60)}(\alpha+\tau(t),t)}{dt}=0
\end{equation}
\begin{align}\label{RHSestimate02}
\lim_{\alpha\to -\tau(t)^{+}}\partial_{\al}^{j}(T(\tilde{f}^{+(60)}(\alpha+\tau(t),t))-FT(\alpha,0,t))=0.
\end{align}By the def of $\tilde{f}^{+}$ \eqref{fLequation}, then $\eqref{RHSestimate}$ holds. We have $\partial_{\alpha}^{j}\tilde{f}^{+(60)}(0,t)=0$ for $j\leq 38$. Hence from the similar proof as in lemma \ref{FTbound} ,for $0\leq j \leq 12$,
  \begin{align*}
\lim_{\alpha\to -\tau(t)^{+}}\partial_{\al}^{j}(\sum_{i}B_i(\tilde{f}^{+(60)}(\alpha+\tau(t)))-FT)(\alpha,t)=0.
\end{align*}

%\begin{align*}
%&\quad\sum_{i}B_i(\tilde{f}^{+(60)})\\
%&=\sum_{\tilde{i}}\sum_{j=2}^{4}B_{j}^{\tilde{i}}(\tilde{f}^{+(60)})+\sum_{\tilde{i}}\sum_{j=2}^{4}
%\int_{0}^{1}B_{j,\zeta}^{\tilde{i}}(\tilde{f}^{+(60)})\zeta^{q_{\tilde{i}}}d\zeta+\sum_{\tilde{i}}\sum_{j=2}^{4}B_{j}^{\tilde{i}}(\tilde{f}^{+(60)})D^{-60+s_{\tilde{i}}}(\tilde{f}^{+(60)})+\lambda(\alpha_{\gamma}^{t})T_{fixed}(\alpha_{\gamma}^{t})\\
%&=\underbrace{\sum_{\tilde{i}}\sum_{j=2}^{3}B_{j}^{\tilde{i}}(\tilde{f}^{+(60)})+\sum_{\tilde{i}}\sum_{j=2}^{3}
%\int_{0}^{1}B_{j,\zeta}^{\tilde{i}}(\tilde{f}^{+(60)})\zeta^{q_{\tilde{i}}}d\zeta+\sum_{\tilde{i}}\sum_{j=2}^{3}B_{j}^{\tilde{i}}(\tilde{f}^{+(60)})D^{-60+s_{\tilde{i}}}(\tilde{f}^{+(60)})}_{BT(\alpha,t)}\\
%&\quad\underbrace{+(\sum_{\tilde{i}}B_{4}^{\tilde{i}}(\tilde{f}^{+(60)})+\sum_{\tilde{i}}
%\int_{0}^{1}B_{4,\zeta}^{\tilde{i}}(\tilde{f}^{+(60)})\zeta^{q_{\tilde{i}}}d\zeta+\lambda(\alpha_{\gamma}^{t})T_{fixed}(\alpha_{\gamma}^{t}))}_{FT(\alpha,0,t)}.
%\end{align*}
Moreover, from the structure of $M_{1,1}$\eqref{3M11chequ02}, $M_{1,2}$ \eqref{3M12chequ02},  $M_{2,1}$ \eqref{3M21chequ02}, $B_{M,i}$ \eqref{BMichequ02}, and smoothness conditions for $L_{1}^{-}(h)$, $L_{2}^{-}(h)$, $L_{B,i}(h)$ in \eqref{3Tequation2} (proved in previous section \ref{smoothvani02}), we have for $0\leq j\leq 12$, 
\[
\lim_{\alpha\to -\tau(t)^{+}}\partial_{\alpha}^{j} (M_{1,2}(\tilde{f}^{+(60)}(\alpha+\tau(t)))+M_{2,1}(\tilde{f}^{+(60)}(\alpha+\tau(t)))+\sum_{i}B_{M,i}(\tilde{f}^{+(60)}(\alpha+\tau(t)))=0.
\]
Then \eqref{RHSestimate02} holds as well.
\end{proof}
Now we show this condition holds for all $\gamma\in [-1,1]$.
\begin{lemma}\label{gammaestimate}
For all $\gamma\in [-1,1]$, $0\leq j\leq 12$, $0\leq j' \leq 4$ we have
\begin{align}\label{boundaryestimate02}
\quad \lim_{\alpha\to-\tau(t)^{+}}\partial_{\alpha}^{j}FT(\alpha,\gamma,t)=0.
\end{align}
and 
\begin{align}\label{boundaryestimate003}
\quad \lim_{\alpha\to-\tau(t)^{+}}\partial_{\alpha}^{j}\partial_{\gamma}^{j'}FT(\alpha,\gamma,t)=0.
\end{align}
\end{lemma}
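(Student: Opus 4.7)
The plan is to exploit the fact that $FT$ is independent of the unknown $h$ and depends on $\gamma$ only through the real-analytic substitution $\alpha \mapsto \alpha_{\gamma}^{t} = \alpha + \tau(t) + ic(\alpha+\tau(t))\gamma t$. Since the integration limits in $B_{4}^{i}$ and $B_{4,\zeta}^{i}$ depend on $\alpha$ but not on $\gamma$, while the integrands depend on $\gamma$ only through this complex shift, I can write $FT(\alpha,\gamma,t) = G(\alpha,\alpha_{\gamma}^{t},t)$ for a function $G(\alpha,z,t)$ that is analytic in $z$ on a neighborhood of $[-\tau(t),\pi]$, with $\alpha$-dependence only through the integration lower limit $2\alpha+\tau(t)$. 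The chain rule then yields, at $\gamma=0$,
\[
\partial_{\gamma}^{k} FT(\alpha,0,t) = [ic(\alpha+\tau(t))t]^{k}\,(\partial_{z}^{k} G)(\alpha,\alpha+\tau(t),t).
\]
Because $c(z)=\delta_{c}z^{2}$ in a neighborhood of $z=0$ (from \eqref{cdefi}), the prefactor $[ic(\alpha+\tau(t))t]^{k}$ vanishes to order $2k$ at $\alpha = -\tau(t)$.

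Next I will apply $\partial_{\alpha}^{j}$ and expand with the product rule. Writing $j=j_{1}+j_{2}$, the $j_{1}$ derivatives falling on the $c$-prefactor force it to vanish at $\alpha=-\tau(t)$ whenever $j_{1}<2k$. In the complementary regime $j_{1}\ge 2k$ (so $j_{2}\le j-2k$), I expand $\partial_{\alpha}^{j_{2}}\{(\partial_{z}^{k} G)(\alpha,\alpha+\tau(t),t)\}$ via Fa\`a di Bruno's formula into a pure $z$-derivative term $(\partial_{z}^{k+j_{2}}G)(\alpha,\alpha+\tau(t),t)$ plus mixed terms containing at least one factor $\partial_{\alpha}G$. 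Every $\partial_{\alpha}$ acting on $G$ differentiates the integration lower limit and thereby produces a boundary evaluation of $\tilde{B}_{4}^{i}$ (or its $\partial_{\beta}$-derivatives) at $\beta=2\alpha+\tau(t)$, which vanishes as $\alpha\to-\tau(t)^{+}$ by the boundary condition \eqref{condition left term boundary}. For the pure $z$-derivative term, the identity $FT(\cdot,0,t)=G(\cdot,\cdot+\tau(t),t)$ combined with the same boundary analysis gives $(\partial_{z}^{k+j_{2}}G)(\alpha,\alpha+\tau(t),t)|_{\alpha=-\tau(t)} = \partial_{\alpha}^{k+j_{2}}FT(\alpha,0,t)|_{\alpha=-\tau(t)}$ modulo vanishing boundary corrections, which equals zero by Lemma \ref{gamma0estimatelemma} as soon as $k+j_{2}\le 12$. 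Since $j\le 12$ forces $j_{2}\le j-2k\le 12-k$, one of the two mechanisms always applies, and I obtain
\[
\lim_{\alpha\to -\tau(t)^{+}}\partial_{\alpha}^{j}\partial_{\gamma}^{k} FT(\alpha,0,t)=0 \qquad \text{for all}\ j\le 12,\ k\ge 0.
\]

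I will then conclude both \eqref{boundaryestimate02} and \eqref{boundaryestimate003} by Taylor expanding in $\gamma$ about $0$. Since $|c(\alpha+\tau(t))\gamma t|\le \delta t_{s}$ is uniformly small, $\alpha_{\gamma}^{t}$ stays inside the joint analyticity region of $T_{fixed}$, $\tilde{f}^{L}$, $X$, and the $\tilde{B}_{4}^{i}$, so the expansion
\[
\partial_{\alpha}^{j}\partial_{\gamma}^{j'} FT(\alpha,\gamma,t) = \sum_{k=0}^{\infty}\frac{\gamma^{k}}{k!}\,\partial_{\alpha}^{j}\partial_{\gamma}^{k+j'} FT(\alpha,0,t)
\]
converges absolutely and uniformly in a neighborhood of $\alpha=-\tau(t)$; passing to the limit $\alpha\to -\tau(t)^{+}$ termwise yields the claim. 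The main obstacle is the bookkeeping of the second step: one must carefully track how $\partial_{\alpha}$ distributes between the analytic $z$-argument and the integration limit so that every boundary-type contribution is captured by \eqref{condition left term boundary}, while every pure $z$-derivative contribution reduces, via the identity $FT(\cdot,0,t)=G(\cdot,\cdot+\tau(t),t)$, to a quantity controlled by Lemma \ref{gamma0estimatelemma}.
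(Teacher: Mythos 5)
Your approach, writing $FT(\alpha,\gamma,t)=G(\alpha,\alpha_{\gamma}^{t},t)$ and exploiting that the $\gamma$-dependence enters only through the affine substitution $\alpha_\gamma^t=\alpha+\tau(t)+ic(\alpha+\tau(t))\gamma t$, is the right observation, but the Taylor-expansion machinery you build on top of it has a gap that the paper's more direct argument sidesteps entirely.

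The gap is in the ``$j_1<2k$'' regime. You assert that when $j_1<2k$ (more precisely $j_1<2(k+j')$), the vanishing of $\partial_\alpha^{j_1}\{[ic(\alpha+\tau(t))t]^{k+j'}\}$ at $\alpha=-\tau(t)$ forces the full term to vanish. But that factor is \emph{multiplied} by $\partial_\alpha^{j_2}\{(\partial_z^{k+j'}G)(\alpha,\alpha+\tau(t),t)\}$ with $j_2=j-j_1$, whose pure $z$-derivative piece is $(\partial_z^{k+j'+j_2}G)(\alpha,\alpha+\tau(t),t)$. For this product to vanish you need that second factor to stay bounded as $\alpha\to-\tau(t)^+$. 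The order $k+j'+j_2=k+j'+j-j_1$ is not bounded by $12$ here (e.g.\ $j=12,j'=0,k=1,j_1=0$ gives order $13$; for $k\geq 3$ it exceeds the stated $C^{14}_{\alpha,\beta}$ regularity of $\tilde B_4^i$), so neither Lemma~\ref{gamma0estimatelemma} nor the paper's stated smoothness directly controls it; and a Cauchy estimate for $\partial_z^m G$ centered at $\alpha+\tau(t)$ gives a $1/(\alpha+\tau(t))^m$ blow-up that, once multiplied against the $O((\alpha+\tau(t))^{2(k+j')-j_1})$ prefactor, yields $O((\alpha+\tau(t))^{(k+j')-j})$, which does \emph{not} tend to zero when $k+j'\leq j$. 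You would need an argument that $(\partial_z^mG)$ is uniformly bounded for all relevant $m$ (which is morally true because $\tilde f^{L}$, $X_i$ have regularity well beyond $C^{14}$ near $0$, but you never invoke it, and the paper's stated kernel estimates stop at $C^{14}$), together with a verification that the termwise limit in the Taylor series can be exchanged uniformly down to $\alpha=-\tau(t)$.

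The paper's proof avoids all of this: it does not Taylor expand in $\gamma$ at all. Instead, after applying $\partial_\alpha^j$ by Fa\`a di Bruno and using the boundary condition \eqref{condition left term boundary0} to kill boundary terms, it observes that the Bell-polynomial factor in the chain rule is merely \emph{bounded} (not required to vanish), and that \emph{both} $\alpha_\gamma^t$ and $\alpha_0^t=\alpha+\tau(t)$ tend to $0$ as $\alpha\to-\tau(t)^+$ because $c(0)=0$. By continuity of $\tilde B_4^i$ and $T_{fixed}$ in their first argument (which only needs the stated $C^{14}$/$C^{16}$), the limit with $\alpha_\gamma^t$ equals the limit with $\alpha_0^t$, which is exactly the $\gamma=0$ case handled by Lemma~\ref{gamma0estimatelemma}. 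This one continuity step replaces your entire derivative-counting and Taylor-convergence argument and needs no regularity beyond what the paper has established.
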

\begin{proof}
Recall that from \eqref{B4equation} and \eqref{newestimatekernel03}, we have

\begin{align*}\label{B4equation}
B_{4}^{i}=\lambda(\alpha_{\gamma}^{t})\int_{2\alpha+\tau(t)}^{\pi}\tilde{B}_4^{i}(\alpha_{\gamma}^{t},\beta,t)d\beta, 
\end{align*}
with 
\begin{equation}\label{newestimatekernel0302}
\tilde{B}_4^{i}\in C_{\gamma}^{4}([-1,1], C_{\alpha,\beta}^{14}(\bar{\Omega})),
\end{equation}
and
\begin{equation}\label{newestimatekernelboundary}
    \lim_{\alpha\to -\tau(t)^{+}}(\frac{d}{d\gamma})^{l'}(\frac{d}{d\alpha})^{l''}((\partial_{\alpha}^{j'}\tilde{B}_4^{i})(\alpha_{\gamma}^{t},2\alpha+\tau(t),t))=0.
\end{equation}
 Here $\Omega=\{(\alpha,\beta)|\alpha\in (-\tau(t),\frac{1}{4}\pi], \beta \in (-\tau(t),\frac{3}{2}\pi], \beta \geq 2\alpha+\tau(t)\}$, $j'+l''\leq 13$, $l'\leq 4$. 
 
 We first show $\int_{0}^{1}B_{4,\zeta}^{i}\zeta^{q_{i}}d\zeta$ can be written as the same form of $B_{4}^{i}$. From the structure of $B_4^{i}$ \eqref{B4equation}, and the fact that  $B^{i}_{j,\zeta} (h)$ is same type of term  as $ B^{i}_{j} (h)$ except that kernel $K_{-1}^{i}$ are replaced by  $K_{-1,\zeta}^{i}$ \eqref{O4inewetimate}, we get 
\[
\int_{0}^{1}B_{4,\zeta}^{i}\zeta^{q_{i}}d\zeta=\int_{0}^{1}\lambda(\alpha_{\gamma}^{t})\int_{2\alpha+\tau(t)}^{\pi}\tilde{B}_{4,\zeta}^{i}(\alpha_{\gamma}^{t},\beta,t)d\beta d\zeta
\]
with $\tilde{B}_4^{i,\zeta}$ having the following two forms:
sub-type 1
\begin{align*}
    &(\tilde{B}_{4,\zeta}^{i}(\alpha_{\gamma}^{t},\beta,t))_{\mu}=(\frac{d}{d\beta}(K_{-1,\zeta}^{j}( - V^{30}(\beta_0^{t},t),\Delta V_{\tilde{f}^{L}}^{61}(\alpha_{\gamma}^{t},t),\Delta V_{X}(\alpha_{\gamma}^{t},t)))X_{j}(\beta_{0}^{t},t)\tilde{f}_{v}^{+(b_i)}(\beta_0^{t},t).
\end{align*}
sub-type 2
\begin{align*}
    &(\tilde{B}_{4,\zeta}^{i}(\alpha_{\gamma}^{t},\beta,t))_{\mu}=(K_{-1,\zeta}^{j}( - V^{30}(\beta_0^{t},t),\Delta V_{\tilde{f}^{L}}^{61}(\alpha_{\gamma}^{t},t),\Delta V_{X}(\alpha_{\gamma}^{t},t))X_{j}(\beta_0^{t},t)\tilde{f}_{v}^{+(b_i)}(\beta_0^{t},t).
\end{align*}
As in \eqref{newestimatekernel03}, we claim the extra $\zeta$ does not affect the smoothness and we have 
\begin{align}\label{newestimatekernel302}
\tilde{B}_{4,\zeta}^{i}\in C_{\gamma}^{4}([-1,1], C_{\alpha,\beta}^{14}(\bar{\Omega})),
\end{align}
\[
  \lim_{\alpha\to -\tau(t)^{+}}(\frac{d}{d\gamma})^{l'}(\frac{d}{d\alpha})^{l''}((\partial_{\alpha}^{j'}\tilde{B}_{4,\zeta}^{i})(\alpha_{\gamma}^{t},2\alpha+\tau(t),t))=0,
  \]
where $\Omega=\{(\alpha,\beta)|\alpha\in (-\tau(t),\frac{1}{4}\pi], \beta \in (-\tau(t),\frac{3}{2}\pi], \beta \geq 2\alpha+\tau(t)\}.$

Therefore, by changing the integration order and abusing the notation, we could also write 
\[
\int_{0}^{1}B_{4,\zeta}^{i}\zeta^{q_{i}}d\zeta=\lambda(\alpha_{\gamma}^{t})\int_{2\alpha+\tau(t)}^{\pi}\tilde{B}_{4}^{i}(\alpha_{\gamma}^{t},\beta,t)d\beta,
\]
with some kernel satisfies conditions \eqref{newestimatekernel0302}, \eqref{newestimatekernelboundary}.
Then from \eqref{FTdef}, we can write
 \begin{align}\label{FTdef02}
&FT(\alpha,\gamma,t)=\sum_{i}\lambda(\alpha_{\gamma}^{t})\int_{2\alpha+\tau(t)}^{\pi}\tilde{B}_4^{i}(\alpha_{\gamma}^{t},\beta,t)d\beta+\lambda(\alpha_{\gamma}^{t})T_{fixed}(\alpha_{\gamma}^{t}).
\end{align}

 For $\gamma\neq 0$, $0\leq l\leq 12$,
we have
\begin{align*}
    &\lim_{\alpha\to -\tau(t)^{+}}\partial_{\alpha}^{l}(\sum_{i}\int_{2\alpha+\tau(t)}^{\pi}\tilde{B}_{4}^{i}(\alpha_{\gamma}^{t},\beta,t)d\beta+ T_{fixed}(\alpha_{\gamma}^{t},t))\\
    &=_{\eqref{newestimatekernelboundary}}\lim_{\alpha\to -\tau(t)^{+}}\sum_{i}\int_{2\alpha+\tau(t)}^{\pi}\partial_{\alpha}^{l}(\tilde{B}_{4}^{i}(\alpha_{\gamma}^{t},\beta,t))d\beta+\partial_{\alpha}^{l}(T_{fixed}(\alpha_{\gamma}^{t},t))\\
    &=\lim_{\alpha\to -\tau(t)^{+}}\sum_{i}\int_{2\alpha+\tau(t)}^{\pi}\sum_{l'}C_{l,l'}(\partial_{\alpha}^{l'}\tilde{B}_{4}^{i})(\alpha_{\gamma}^{t},\beta,t)\Pi_{n=1}^{l'}\partial_{\alpha}^{\sigma(l,l',n)}(1+ic'(\alpha+\tau(t))\gamma t)d\beta\\
    &\quad+\sum_{l'}C_{l,l'}(\partial_{\alpha}^{l'}T_{fixed})(\alpha_{\gamma}^{t},t)\Pi_{n=1}^{l'}\partial_{\alpha}^{\sigma(l,l',n)}(1+ic'(\alpha+\tau(t))\gamma t)\\
    &=\lim_{\alpha\to -\tau(t)^{+}}\sum_{l'}C_{l,l'}\Pi_{n=1}^{l'}\partial_{\alpha}^{\sigma(l,l',n)}(1+ic'(\alpha+\tau(t))\gamma t)(\sum_i\int_{2\alpha+\tau(t)}^{\pi}(\partial_{\alpha}^{l'}\tilde{B}_{4}^{i})(\alpha_{\gamma}^{t},\beta,t)d\beta+(\partial_{\alpha}^{l'}T_{fixed})(\alpha_{\gamma}^{t},t))\\
    &=\sum_{l'}C_{l,l'}\lim_{\alpha\to -\tau(t)^{+}}(\Pi_{n=1}^{l'}\partial_{\alpha}^{\sigma(l,l',n)}(1+ic'(\alpha+\tau(t))\gamma t))\lim_{\alpha\to-\tau(t)^{+}}(\sum_i\int_{2\alpha+\tau(t)}^{\pi}(\partial_{\alpha}^{l'}\tilde{B}_{4}^{i})(\alpha_{\gamma}^{t},\beta,t)d\beta\\
    &\quad+(\partial_{\alpha}^{l'}T_{fixed})(\alpha_{\gamma}^{t},t)).
\end{align*}
where we use condition \eqref{newestimatekernelboundary} in the second equality.
Moreover, we have 
\begin{align*}
&\lim_{\alpha\to-\tau(t)^{+}}(\sum_{i}\int_{2\alpha+\tau(t)}^{\pi}(\partial_{\alpha}^{l'}\tilde{B}_{4}^{i})(\alpha_{\gamma}^{t},\beta,t)d\beta+(\partial_{\alpha}^{l'}T_{fixed})(\alpha_{\gamma}^{t},t))\\
=&\eqref{newestimatekernel0302},\eqref{newestimatekernel4}\lim_{\alpha\to-\tau(t)^{+}}(\sum_{i}\int_{2\alpha+\tau(t)}^{\pi}(\partial_{\alpha}^{l'}\tilde{B}_{4}^{i})(\alpha_{0}^{t},\beta,t)d\beta+(\partial_{\alpha}^{l'}T_{fixed})(\alpha_{0}^{t},t))\\
=&\eqref{newestimatekernelboundary}\lim_{\alpha\to-\tau(t)^{+}}\partial_{\alpha}^{l'}(\sum_{i}\int_{2\alpha+\tau(t)}^{\pi}(\tilde{B}_{4}^{i})(\alpha_{0}^{t},\beta,t)d\beta+T_{fixed}(\alpha_{0}^{t},t))\\
=&\eqref{FTdef02}\lim_{\alpha\to-\tau(t)^{+}}\partial_{\alpha}^{l'}FT(\alpha,0,t)\\
=&\eqref{gamma0estimate}0,
\end{align*}
where in the second equality we also use $\lim_{\alpha\to -\tau(t)^{+}}\alpha_{\gamma}^{t}=0$ from \eqref{cdefi},\eqref{alphadefi}.

Then we have \eqref{boundaryestimate02}. Moreover, from the form of $FT$ \eqref{FTdef02}, and estimates \eqref{newestimatekernel0302}, \eqref{newestimatekernel4}, we get \eqref{boundaryestimate003} from \eqref{boundaryestimate02}.
\end{proof}
\subsubsection{Second boundary condition for $\kappa(t)<0$}
\begin{lemma}\label{boundary103}
  For $0\leq t'\leq t\leq t_s$, if $h\in C_{t}([0,t_s], W^{i_0,k})$, we have\[
\lim_{t'\to t}\|\sum_{i}B_i(h)(\alpha,\gamma,t)\|_ { C_{\gamma}^{i_{0}}([-1,1],H^k_{\alpha}[-\tau(t),\tau(t')])}=0,
\] and \[
\lim_{t\to t'}\|\sum_{i}B_i(h)(\alpha,\gamma,t)\|_ { C_{\gamma}^{i_{0}}([-1,1],H^k_{\alpha}[-\tau(t),\tau(t')])}=0.
\]
\end{lemma}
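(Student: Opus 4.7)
The plan is to exploit the vanishing condition established in lemma \ref{boundary102} together with the smoothness bound \eqref{Bcondition02bound} to show that the $H^k_\alpha$ norm over the shrinking interval $[-\tau(t),-\tau(t')]$ (interpreting the endpoint in the statement, since $\tau$ is monotone increasing by \eqref{con:tautneg}) tends to zero. I would decompose
\[
\Big\|\sum_i B_i(h)(\cdot,\gamma,t)\Big\|_{H^k_\alpha[-\tau(t),-\tau(t')]}^2
 = \sum_{j=0}^{k}\Big\|\partial_\alpha^j \sum_i B_i(h)(\cdot,\gamma,t)\Big\|_{L^2_\alpha[-\tau(t),-\tau(t')]}^2,
\]
and the same for the $\partial_\gamma^{j'}$-derivatives that enter the $C^{i_0}_\gamma$ norm, and then treat the lower-order derivatives and the top-order derivative separately.

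For each $0\le j\le k-1$ and $0\le j'\le i_0$, lemma \ref{boundary102} gives $\lim_{\alpha\to -\tau(t)^+}\partial_\alpha^{j}\partial_\gamma^{j'}\sum_i B_i(h)(\alpha,\gamma,t)=0$. Combined with the Sobolev embedding $H^k_\alpha\hookrightarrow C^{k-1}_\alpha$ (since $k\ge 1$) and the $C^{i_0}_\gamma$ regularity from \eqref{Bcondition02bound}, these mixed derivatives are jointly continuous in $(\alpha,\gamma)$ on $[-\tau(t),\pi]\times[-1,1]$ and vanish on $\{-\tau(t)\}\times[-1,1]$, so they converge uniformly to $0$ as $\alpha\to -\tau(t)^+$. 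This yields
\[
\sup_{\gamma\in[-1,1]}\Big\|\partial_\alpha^j\partial_\gamma^{j'}\sum_i B_i(h)(\cdot,\gamma,t)\Big\|_{L^2_\alpha[-\tau(t),-\tau(t')]}^{2}
\le |\tau(t)-\tau(t')|\cdot o_{t'\to t}(1),
\]
which goes to zero since $\tau$ is continuous.

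For the top-order term $j=k$ pointwise vanishing is unavailable, so I would rely on absolute continuity of the Lebesgue integral: the smoothness bound \eqref{Bcondition02bound} gives $\sup_{t\in[0,t_s]}\sup_{\gamma}\|\partial_\alpha^k \partial_\gamma^{j'}\sum_i B_i(h)(\cdot,\gamma,t)\|_{L^2_\alpha[-\tau(t),\pi]}\lesssim 1$, while the $C^{i_0}_\gamma$ regularity ensures the family $\{\partial_\gamma^{j'}\sum_i B_i(h)(\cdot,\gamma,t)\}_{\gamma\in[-1,1]}$ is equicontinuous into $L^2_\alpha$. By compactness of $[-1,1]$, absolute continuity of the integral is uniform in $\gamma$, so
\[
\sup_{\gamma\in[-1,1]}\Big\|\partial_\alpha^k\partial_\gamma^{j'}\sum_i B_i(h)(\cdot,\gamma,t)\Big\|_{L^2_\alpha[-\tau(t),-\tau(t')]}\longrightarrow 0
\]
as the interval length $|\tau(t)-\tau(t')|\to 0$. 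Summing over $j\le k$ and $j'\le i_0$ gives the first limit.

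The second limit, $t\to t'$ with $t'$ fixed, is handled identically: $\tau(t)\to\tau(t')$ by continuity so the interval still shrinks, and we apply the time-continuity condition \eqref{Bcondition02continuitytime} together with the continuity $h(\cdot,\cdot,t)\in C_t([0,t_s],W^{i_0,k})$ to transfer all the uniform bounds from time $t$ to an $\mathcal{O}(1)$ neighborhood. The principal technical obstacle I anticipate is this last transfer, namely verifying that the equicontinuity in $\gamma$ needed to make the absolute-continuity argument uniform in $\gamma$ is preserved as $t$ varies; this should follow from condition \eqref{Bcondition02continuitytime} applied to the top-order derivative, but one has to be careful that the cutoff $\lambda(\alpha+\tau(t))$ and the translation $\alpha\mapsto\alpha+\tau(t)$ appearing in the definitions of $B_i(h)$ do not spoil the uniformity near the moving endpoint $-\tau(t)$.
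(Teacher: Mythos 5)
Your proof correctly handles the first limit ($t'\to t$ with $t$ fixed) and the lower-order derivatives ($j\le k-1$, $j'\le i_0$) for both limits: the pointwise vanishing from lemma \ref{boundary102}, combined with the uniform-in-$(t,\gamma)$ $H^k$ bound \eqref{Bcondition02bound} and the fundamental theorem of calculus, gives a uniform $\frac{1}{2}$-H\"older rate $|\partial_\alpha^j\partial_\gamma^{j'}\sum_iB_i|\lesssim(\alpha+\tau(t))^{1/2}$, which is exactly enough. This is a genuinely different (and somewhat more self-contained) route than the paper's, which instead decomposes $\sum_iB_i(h)$ into its kernel factors $\tilde B_j^{i}(h)$, $\tilde B_4^i$ (bounded in $C^{k+2}$ or $C^{14}$) times $D^{-60+b_i}(h)$, separates off the $h$-independent piece $FT$, and then pushes the problem back to $h$ itself (Lemma \ref{FTbound02}).

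However, there is a genuine gap in the second limit ($t\to t'$, $t'$ fixed) at the top-order derivative $j=k$. Absolute continuity of the Lebesgue integral over a shrinking interval is a statement about a \emph{fixed} $L^2$ function; here the function $\partial_\alpha^k\partial_\gamma^{j'}\sum_iB_i(h)(\cdot,\gamma,t)$ varies with $t$. Uniform boundedness in $L^2$ from \eqref{Bcondition02bound} alone does not yield uniform integrability of the $t$-parametrized family, and the time-continuity condition \eqref{Bcondition02continuitytime} that you propose to invoke controls the difference only in $H^k[-\tau(t'),\pi]$ — it says nothing about $[-\tau(t),-\tau(t')]$, which for $t>t'$ lies strictly outside $[-\tau(t'),\pi]$ and is precisely the interval you must control. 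You rightly flag this as the principal obstacle, but you do not resolve it, and I do not see how your argument as written can be completed without importing the paper's structural mechanism. The paper's resolution is to apply the triangle-inequality argument of Corollary \ref{continuitytl202} not to $\sum_iB_i(h)$ but to $h$ (and $D^{-60+s_i}(h)$), for which the full-interval continuity $h\in C_t([0,t_s],C_\gamma^{i_0}H^k[-\pi,\pi])$ is \emph{known}; the smoothness of the kernels $\tilde B_j^i$ then transfers the shrinking-interval smallness to $\sum_iB_i(h)$. Applying Corollary \ref{continuitytl202} directly to $\sum_iB_i(h)$ is circular, because establishing $t\mapsto\sum_iB_i(h)(\cdot,\cdot,t)$ to be continuous into $C_\gamma^{i_0}H^k[-\pi,\pi]$ is exactly what the present lemma is used for (in Lemma \ref{Tepsilonbehavior}).
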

We first introduce two lemmas:
\begin{lemma}\label{continuitytl2}
If $h(\alpha,\gamma,t)\in C_{t}^{0}([0,t_s], C_{\gamma}^{0}([-1,1],L^2_{\alpha}[-\pi,\pi])$, then for $0\leq t$, $t'\leq t_s$, we have
\[
\lim_{t'\to t}\|h(\alpha,\gamma,t)\|_ { C_{\gamma}^{0}([-1,1],L^2_{\alpha}[-\tau(t),-\tau(t')])}=0.
\]
\end{lemma}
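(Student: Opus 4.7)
The plan is to reduce this to the absolute continuity of the Lebesgue integral, together with a compactness argument in $\gamma$. Recall from \eqref{defkappa} that when $\kappa(0)<0$, $\tau(t)=-\int_{0}^{t}\kappa(\zeta)\,d\zeta$, so $\tau$ is continuous in $t$, and by \eqref{con:tautneg} it is strictly increasing. Hence as $t'\to t$, $\tau(t')\to\tau(t)$, and the interval $[-\tau(t),-\tau(t')]$ (or its reflection if $t'>t$) shrinks to a single point, i.e.\ its Lebesgue measure tends to $0$.

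Next, I would fix $t\in[0,t_s]$ and work only with $h(\cdot,\cdot,t)$. By hypothesis, the map $\gamma\mapsto h(\cdot,\gamma,t)$ is continuous from the compact interval $[-1,1]$ into $L^{2}_{\alpha}[-\pi,\pi]$, so its image $K:=\{h(\cdot,\gamma,t):\gamma\in[-1,1]\}$ is compact in $L^{2}_{\alpha}[-\pi,\pi]$. A standard equi-integrability fact says that on any compact subset of $L^{2}$, the absolute continuity of the integral is uniform: for every $\eta>0$ there exists $\rho>0$ such that for every measurable $E\subset[-\pi,\pi]$ with Lebesgue measure $|E|<\rho$,
\begin{equation*}
\sup_{g\in K}\Big(\int_{E}|g(\alpha)|^{2}\,d\alpha\Big)^{1/2}<\eta.
\end{equation*}
One proves this by covering $K$ with finitely many $L^{2}$-balls of radius $\eta/2$ around functions $g_{1},\dots,g_{N}$, applying ordinary absolute continuity to each $g_{j}$ to get a single $\rho$ that works for all of them, and absorbing the remaining $\eta/2$ via triangle inequality.

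Combining these two ingredients, given $\eta>0$ I choose $\rho>0$ as above, then pick $\delta>0$ so that $|t-t'|<\delta$ implies $|\tau(t)-\tau(t')|<\rho$. Applying the uniform estimate with $E=[-\tau(t),-\tau(t')]$ (or $[-\tau(t'),-\tau(t)]$) yields
\begin{equation*}
\sup_{\gamma\in[-1,1]}\|h(\cdot,\gamma,t)\|_{L^{2}_{\alpha}[-\tau(t),-\tau(t')]}<\eta
\end{equation*}
whenever $|t-t'|<\delta$, which is exactly the claimed limit and covers both $t'\to t$ and $t\to t'$.

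The only non-routine point is the uniformity in $\gamma$, but this is immediate from the compactness of $K\subset L^{2}$, so I do not expect any genuine obstacle; the lemma should follow essentially from continuity of $\tau$ and dominated/absolute-continuity arguments.
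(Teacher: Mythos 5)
Your proof is correct and is essentially the same argument as the paper's: you package the finite-net construction (cover the compact image $\{h(\cdot,\gamma,t):\gamma\in[-1,1]\}$ by finitely many $L^2$ balls, apply absolute continuity of the Lebesgue integral to each center, absorb the rest by the triangle inequality) as the standard fact that compact subsets of $L^2$ are uniformly equi-integrable, whereas the paper carries out the same finite-net/uniform-continuity-in-$\gamma$ step explicitly. Both then invoke continuity and monotonicity of $\tau$ to shrink the interval $[-\tau(t),-\tau(t')]$ below the required measure threshold.
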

\begin{proof}
For any $\epsilon>0$, there exists $\delta$ such that when $|\gamma-\gamma'|\leq \delta$, we have 
\[
\|h(\alpha,\gamma,t)-h(\alpha,\gamma',t)\|_{L^2_{\alpha}[-\pi,\pi]}\leq\epsilon.
\]
Let $-1\leq \gamma_1<\gamma_2<...<\gamma_n\leq 1$, with $|\gamma_i-\gamma_j|<\delta$. By the property of $L^2$ integral, there exists $\delta_{t}$ such that when $|t-t'|\leq \delta_{t}$, we have
\[
\sup_{i}\|h(\alpha,\gamma_i,t)\|_{L^2_{\alpha}[-\tau(t),\tau(t')]}\leq \epsilon.
\]
Then when $|t-t'|\leq \delta_{t}$, for all $\gamma \in [-1,1]$, we have $\|h(\alpha,\gamma,t)\|_{L^2_{\alpha}[-\tau(t),-\tau(t')]}\leq 2\epsilon$.
\end{proof}
\begin{corollary}\label{continuitytl202}
If $h(\alpha,\gamma,t)\in C_{t}^{0}([0,t_s], C_{\gamma}^{0}([-1,1],L^2_{\alpha}[-\pi,\pi])$, then for $0\leq t,t'\leq t_s$, we have
\[
\lim_{t\to t'}\|h(\alpha,\gamma,t)\|_ { C_{\gamma}^{0}([-1,1],L^2_{\alpha}[-\tau(t),-\tau(t')])}=0.
\]
\end{corollary}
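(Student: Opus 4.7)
The plan is to deduce this corollary from Lemma \ref{continuitytl2} by a triangle-inequality argument that splits the integrand $h(\alpha,\gamma,t)$ (which depends on the varying parameter $t$) into the part that moves with $t$ and the fixed part $h(\alpha,\gamma,t')$, reducing the moving-integrand situation to the fixed-integrand one already handled. Specifically, for each $\gamma\in[-1,1]$ I would write
\[
\|h(\alpha,\gamma,t)\|_{L^2_{\alpha}[-\tau(t),-\tau(t')]}
\leq
\|h(\alpha,\gamma,t)-h(\alpha,\gamma,t')\|_{L^2_{\alpha}[-\tau(t),-\tau(t')]}
+\|h(\alpha,\gamma,t')\|_{L^2_{\alpha}[-\tau(t),-\tau(t')]}.
\]

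For the first term, I would enlarge the integration interval to $[-\pi,\pi]$ and bound it by $\|h(\alpha,\gamma,t)-h(\alpha,\gamma,t')\|_{L^2_{\alpha}[-\pi,\pi]}$; taking the supremum over $\gamma\in[-1,1]$, this is controlled by $\|h(\cdot,\cdot,t)-h(\cdot,\cdot,t')\|_{C^{0}_{\gamma}([-1,1],L^2_{\alpha}[-\pi,\pi])}$, which tends to $0$ as $t\to t'$ by the assumption $h\in C_{t}^{0}([0,t_s], C_{\gamma}^{0}([-1,1],L^2_{\alpha}[-\pi,\pi]))$. For the second term, with $t'$ now playing the role of the frozen time variable and $t$ playing the role of the moving one, I would apply Lemma \ref{continuitytl2} directly (after relabelling $t\leftrightarrow t'$): the hypothesis of that lemma requires only $h\in C_t^0 C_\gamma^0 L^2_\alpha$, which we already have, and the conclusion yields $\|h(\alpha,\gamma,t')\|_{C^{0}_{\gamma}([-1,1],L^2_{\alpha}[-\tau(t),-\tau(t')])}\to 0$ as $t\to t'$.

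Combining the two estimates and taking the supremum over $\gamma$ gives the corollary. There is no real obstacle here: the continuity of $\tau(t)$ (from its definition \eqref{defkappa} together with the continuity of $\kappa$) ensures that the length of the interval $[-\tau(t),-\tau(t')]$ shrinks to zero, so both the triangle-inequality step and the invocation of Lemma \ref{continuitytl2} are routine; the only mild care needed is to interpret $[-\tau(t),-\tau(t')]$ as the interval between the two endpoints regardless of their order, which is the same convention used in the preceding lemma.
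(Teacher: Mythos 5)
Your proof is correct and takes essentially the same route as the paper: both split $\|h(\alpha,\gamma,t)\|_{L^2_{\alpha}[-\tau(t),-\tau(t')]}$ by the triangle inequality into $\|h(\cdot,\cdot,t)-h(\cdot,\cdot,t')\|$ (controlled by the $C^0_t C^0_\gamma L^2_\alpha$ continuity of $h$) plus $\|h(\cdot,\cdot,t')\|_{L^2_{\alpha}[-\tau(t),-\tau(t')]}$ (controlled by Lemma~\ref{continuitytl2} with the roles of $t$ and $t'$ interchanged). The paper phrases the conclusion with an $\epsilon$--$\tilde\delta$ and a $\varlimsup$, but the decomposition and the invocation of the preceding lemma are identical to yours.
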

\begin{proof}
For any $\epsilon>0$, there exists $\tilde{\delta}>0$, such that when $|t-t'|\leq \tilde{\delta}$, we have
\[
\sup_{\gamma}\|h(\alpha,\gamma,t)-h(\alpha,\gamma,t')\|_{L^2_{\alpha}[-\pi,\pi]}<\epsilon.
\]
Since from the lemma \ref{continuitytl2}, $\lim_{t\to t'}\|h(\alpha,\gamma,t')\|_ { C_{\gamma}^{0}([-1,1],L^2_{\alpha}[-\tau(t),\tau(t')])}=0$,
then 
\[
\overline{\lim_{t\to t'}}\sup_{\gamma}\|h(\alpha,\gamma,t)\|_{L^2_{\alpha}[-\tau(t),\tau(t')]}\leq \lim_{t\to t'}\|h(\alpha,\gamma,t')\|_ { C_{\gamma}^{0}([-1,1],L^2_{\alpha}[-\tau(t),\tau(t')])}+\epsilon=\epsilon.
\]
\end{proof}
Since $k\leq 12$, from the form of $FT$ \eqref{FTdef02} and estimates \eqref{newestimatekernel0302}, \eqref{newestimatekernel4}, it is straightforward that
\begin{align}\label{FTboundary1}
\lim_{t\to t'}, \lim_{t'\to t}\|FT\|_ { C_{\gamma}^{i_{0}}([-1,1],H^k_{\alpha}[-\tau(t),\tau(t')])}=0.
\end{align}

Now we show this condition also holds for other terms.

\begin{lemma}\label{FTbound02}
When $0\leq t' \leq t \leq t_s$, $h\in C_{t}([0,t_s], W^{i_0,k})$, we have $\sum_{i}B_i(h)-FT$ satisfies 
\begin{align*}
\lim_{t\to t'},\lim_{t'\to t}\|\sum_{i}B_i(h)-FT\|_ { C_{\gamma}^{i_{0}}([-1,1],H^k_{\alpha}[-\tau(t),\tau(t')])}=0.
\end{align*}
\end{lemma}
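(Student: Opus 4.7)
}
The plan is to combine two ingredients. The first is an elementary higher-order analogue of Lemma \ref{continuitytl2}: if $g\in C_{\gamma}^{i_0}([-1,1],H^k_{\alpha}[-\pi,\pi])$ and $I_n\subset[-\pi,\pi]$ is a sequence of intervals whose Lebesgue measure tends to zero, then $\|g\|_{C_{\gamma}^{i_0}([-1,1],H^k_{\alpha}[I_n])}\to 0$. This follows because, for each fixed $\gamma$ and each order of derivative $\ell\le k$, absolute continuity of the $L^2$ integral gives $\|\partial_{\alpha}^{\ell}g(\cdot,\gamma)\|_{L^2[I_n]}\to 0$; uniform continuity of $g$ in $\gamma$ (from $C_{\gamma}^{i_0}$) then lets us reduce to a finite $\delta$-net in $\gamma$, upgrading pointwise convergence to uniform convergence. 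The second ingredient is the $t$-continuity of the individual pieces of $\sum_i B_i(h)$, which is already recorded as smoothness condition \ref{Bcondition02continuitytime} of the generalized equation.

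First I would treat the case $t$ fixed, $t'\to t$. Using the decomposition \eqref{sumbiconclu} together with \eqref{FTdef} and the kernel estimates \eqref{newestimatekernel1}--\eqref{newestimatekernel03}, each summand of $\sum_i B_i(h)(\cdot,\cdot,t)-FT(\cdot,\cdot,t)$ lies in $C_{\gamma}^{i_0}([-1,1],H^k_{\alpha}[-\tau(t),\pi])$: the kernel factors $\tilde{B}^i_2(h),\tilde{B}^i_3(h)$ are in $C_{\gamma}^{i_0}C^{k+2}_{\alpha}$ (uniformly in $t\le t_s$), and the remaining factors $D^{-60+s_i}h$, $h(\beta,0,t)$ etc.\ are controlled by the space hypothesis $h\in W^{i_0,k}\cap\{\supp h\subset[-\tau(t),\pi/8]\}$. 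Applying the first ingredient to each summand with $I_n=[-\tau(t),-\tau(t_n')]$, where $t_n'\to t$, closes the first half.

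For the second limit $t'$ fixed, $t\to t'$, I would use the triangle inequality as in Corollary \ref{continuitytl202}:
\[
\Bigl\|\sum_i B_i(h)(\cdot,\cdot,t)-FT\Bigr\|_{C_{\gamma}^{i_0}(H^k[-\tau(t),-\tau(t')])}
\;\le\; \Bigl\|\sum_i B_i(h)(\cdot,\cdot,t)-\sum_i B_i(h)(\cdot,\cdot,t')\Bigr\|_{C_{\gamma}^{i_0}(H^k[-\tau(t),-\tau(t')])}
\;+\;\Bigl\|\sum_i B_i(h)(\cdot,\cdot,t')-FT\Bigr\|_{C_{\gamma}^{i_0}(H^k[-\tau(t),-\tau(t')])}.
\]
The first term is bounded by condition \ref{Bcondition02continuitytime} and the hypothesis $h\in C_{t}^{0}([0,t_s],W^{i_0,k})$, so it tends to zero with $|t-t'|$; the second term falls under the first half of the lemma.

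The main obstacle I anticipate is verifying the $t$-continuity bound \ref{Bcondition02continuitytime} uniformly across the moving boundary point $-\tau(t)$: the integration domain in $B^i_M(h)$, $B^i_3(h)$, $B^i_4$ explicitly involves $\tau(t)$, so the difference $B_i(h)(\cdot,t)-B_i(h)(\cdot,t')$ contains boundary terms from the contour endpoints. The clean way around this is to first extend each integrand smoothly across the moving endpoint using the vanishing estimates proved in Lemma \ref{FTbound} and Lemma \ref{gammaestimate} (and the support property of $h$), so that all factors decay to zero at $\alpha=-\tau(t)$ with enough order to absorb the shift $\tau(t)-\tau(t')$; after this extension the Lipschitz estimate in $t$ holds on a fixed $\alpha$-interval, and the restriction to $[-\tau(t),-\tau(t')]$ is then immediate.
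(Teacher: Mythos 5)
Your treatment of the first limit (fixed $t$, $t'\to t$) is sound and parallels the paper's: both reduce to absolute continuity of the $L^2$ integral on a shrinking interval, the only difference being that you apply a higher-order analogue of Lemma~\ref{continuitytl2} directly to the summands $B_j^i(h)$ while the paper factors each summand as a uniformly bounded kernel times $D^{-60+s_i}(h)$ and applies Lemma~\ref{continuitytl2}/Corollary~\ref{continuitytl202} to the fluid factor.

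The second limit (fixed $t'$, $t\to t'$) has a genuine gap. Condition~\ref{Bcondition02continuitytime} controls $B_i(h)(\cdot,t)-B_i(h)(\cdot,t')$ in $H^k_\alpha[-\tau(t'),\pi]$, which, since $\tau(t)>\tau(t')$, is disjoint from the target interval $[-\tau(t),-\tau(t')]$ except at the single endpoint $-\tau(t')$; it therefore says nothing about the left strip on which the lemma claims smallness. Worse, because $B_i(h)(\cdot,\cdot,t')\equiv 0$ on $[-\tau(t),-\tau(t')]$ by the extension convention, the first term of your triangle inequality equals $\|\sum_i B_i(h)(\cdot,\cdot,t)\|_{H^k[-\tau(t),-\tau(t')]}$, which is (up to subtracting $FT$) the very quantity to be bounded, so the step is circular. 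The paper sidesteps this by running the Corollary~\ref{continuitytl202}-type triangle inequality at the level of $h$ (and $D^{-60+s_i}(h)$), which is $t$-continuous with values in $L^2_\alpha[-\pi,\pi]$ on the \emph{fixed} full interval; this yields $\lim_{t\to t'}\|D^{-60+s_i}(h)\|_{C_\gamma^{i_0}(H^k[-\tau(t),-\tau(t')])}=0$, and multiplying by the uniformly (in $t$) bounded kernel factor $\tilde B^i_j(h)\in C_\gamma^{i_0}C_\alpha^{k+2}$ transfers the conclusion to $B_j^i(h)$. Your proposed repair via Lemmas~\ref{FTbound} and~\ref{gammaestimate} targets pointwise vanishing at $\alpha=-\tau(t)$, which is the wrong ingredient here: it does not upgrade the $t$-continuity of $B_i(h)$ to a fixed $\alpha$-interval, which is what your triangle inequality at the level of $B_i(h)$ would require.
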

\begin{proof}
First, for from lemmas \ref{continuitytl2}, \ref{continuitytl202}, and the space of $h$, we have
\[
\lim_{t'\to t}\|h\|_ { C_{\gamma}^{i_{0}}([-1,1],H^k_{\alpha}[-\tau(t),\tau(t')])}=0,
\]  \[
\lim_{t\to t'}\|h\|_ { C_{\gamma}^{i_{0}}([-1,1],H^k_{\alpha}[-\tau(t),\tau(t')])}=0.
\]
and for $0\leq j\leq k-1$, $0\leq j\leq i_0$,

\[
\lim_{\alpha\to-\tau(t)}\partial_{\alpha}^{j}\partial_{\gamma}^{j'} h=0.
\]
From the property of $D^{-60+s_i}$ \eqref{D-formularnew}, \eqref{3d-ipro}, since $s_i\leq 30$, we have
\[
\lim_{t'\to t}\|D^{-60+s_i}(h)\|_ { C_{\gamma}^{i_{0}}([-1,1],H^k_{\alpha}[-\tau(t),\tau(t')])}=0,
\] and \[
\lim_{t\to t'}\|D^{-60+s_i}(h)\|_ { C_{\gamma}^{i_{0}}([-1,1],H^k_{\alpha}[-\tau(t),\tau(t')])}=0.
\]

 Then from the equation of $B_{2}^{i}(h)$\eqref{B2hequation}, $B_3^{i}(h)$ \eqref{B3hequation}, $B_{4}^{i}$ \eqref{B4equation} and estimate \eqref{newestimatekernel1}, \eqref{newestimatekernel2}, \eqref{newestimatekernel03}, we also have when $j=2,3$
\begin{align*}
\lim_{t\to t'},\lim_{t'\to t}\|B_{j}^{i}(h)\|_ { C_{\gamma}^{i_{0}}([-1,1],H^k_{\alpha}[-\tau(t),\tau(t')])}=0,
\end{align*}
and
\begin{align*}
\lim_{t\to t'},\lim_{t'\to t}\|B_{4}^{i}D^{-60+s_i}(h)\|_ { C_{\gamma}^{i_{0}}([-1,1],H^k_{\alpha}[-\tau(t),\tau(t')])}=0,
\end{align*}
Similarly,  for $j=2,3$, because $B_{j,\zeta}^{i}(h)$ only differs from $B_{j}^{i}(h)$ by changing kernel $K_{-\sigma}^{i}$ to  $K_{-\sigma,\zeta}^{i}$, we claim by similar proof, we have for $j=2,3$
\begin{align*}
\lim_{t\to t'},\lim_{t'\to t}\|\int_{0}^{1}B_{j}^{i,\zeta}(h)\zeta^{q_i}d\zeta\|_ { C_{\gamma}^{i_{0}}([-1,1],H^k_{\alpha}[-\tau(t),\tau(t')])}=0,
\end{align*}
From  \eqref{sumbiconclu} \eqref{FTdef}, every term in $\sum_{i}B_i(h)$ except $FT$ is controlled.
\end{proof}
Then lemma \ref{boundary103} follows from \eqref{FTboundary1}, and lemma \ref{FTbound02}.
\subsubsection{R-T conditions for $\kappa(t)<0$}
Now we show for $\kappa(t)<0$, the refined R-T condition holds. As in the $\tau(t)>0$ case, we first show at $t=0$ the condition is satisfied. Then we use the smoothness condition of $L_i^{-}(h)$ to show if $t$ is sufficiently small, the sign will not change.
\begin{lemma}\label{RTlemma02}
For $h$, $t_s$,
 satisfying
 \[
 h\in W^{0,1},
 \]
\[
\|h(\alpha,\gamma,t)-h(\al,\g,0)\|_{W^{0,1}}\lesssim\delta_s,
\]
\[
h(\al,\g,0)=\tilde{f}^{+(60)}(\al,0),
\]
and
\[
0\leq t\leq t_s,
\] 

there exists $t_s$, $\delta_s$ sufficiently small, such that when $\alpha\in (-\tau(t), \frac{\pi}{4}) \cap \supp{\lambda}(\alpha_{\gamma}^{t})$, we have

\begin{equation}\label{Refined R-T conditon0202}
-\Re L_2^{-}(h)(\alpha,\gamma,t)\geq 18 |\Im L_1^{-}(h)(\alpha,\gamma,t)|+18|\Im L_2^{-}(h)(\alpha,\gamma,t)|.
\end{equation}
\end{lemma}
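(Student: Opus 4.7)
The plan is to follow the same three-step strategy used in Lemma \ref{rfR-T} for the $\kappa(t)>0$ case, adapted to the translated setting with $\tau(t)>0$. First, I will reduce the target inequality to a statement about $\partial_\alpha$ derivatives using the vanishing condition \eqref{3L002}: since $L_1^{-}(h)(-\tau(t),\gamma,t)=L_2^{-}(h)(-\tau(t),\gamma,t)=0$, it suffices (by integrating from $-\tau(t)$) to show that on the relevant support region $\supp \lambda(\cdot+\tau(t))$,
\begin{equation*}
-\partial_\alpha\Re L_2^{-}(h)(\alpha,\gamma,t)-18|\partial_\alpha\Im L_1^{-}(h)(\alpha,\gamma,t)|-18|\partial_\alpha\Im L_2^{-}(h)(\alpha,\gamma,t)|>0.
\end{equation*}

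Next, I will analyze the situation at $t=0$, where $\tau(0)=0$ by \eqref{defkappa}, and $h(\alpha,\gamma,0)=\tilde f^{+(60)}(\alpha,0)$ is real. In this case the analysis becomes identical to the one at $t=0$ performed for $L_2^{+}(h)$ in Lemma \ref{secondederivative}: since $D^{-60}(h)(\alpha,\gamma,0)+\tilde f^{L}(\alpha,0)=\tilde f(\alpha,0)$ (from \eqref{cut function0} and \eqref{D-formularnew02}), the same direct computation yields
\begin{equation*}
-L_2^{-}(h)(\alpha,\gamma,0)=\frac{2\tilde f'_1(\alpha,0)}{(\tilde f'_1(\alpha,0))^2+(\tilde f'_2(\alpha,0))^2}.
\end{equation*}
Using the assumption $\partial_\alpha^2 \tilde f_1(0,0)>0$ from \eqref{assumption}, the arc-chord lower bound on $(\tilde f'_1)^2+(\tilde f'_2)^2$, and $\partial_\alpha \tilde f_1(0,0)=0$, I get $-\partial_\alpha\Re L_2^{-}(h)(\alpha,\gamma,0)>C_0>0$ on $[-\tau(0),20\delta]=[0,20\delta]$ for $\delta$ sufficiently small; and both $\partial_\alpha \Im L_1^{-}(h)(\alpha,\gamma,0)$ and $\partial_\alpha \Im L_2^{-}(h)(\alpha,\gamma,0)$ vanish since everything is real at $t=0$.

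Finally, I will propagate this strict inequality to $0\leq t\leq t_s$ with $\|h(\cdot,\gamma,t)-h(\cdot,\gamma,0)\|_{W^{0,1}}\leq\delta_s$ using the smoothness conditions \eqref{Lhcondition02bound}--\eqref{Lhcondition02continuitytime}; these give
\begin{equation*}
\|L_i^{-}(h)(\alpha,\gamma,t)-L_i^{-}(h(\cdot,\gamma,0))(\alpha,\gamma,0)\|_{C_\alpha^{3}[-\tau(t),\pi/4]}\lesssim \mathcal O(t)+\|h(\alpha,\gamma,t)-\tilde f^{+(60)}(\alpha,0)\|_{W^{0,1}},
\end{equation*}
so the $t=0$ gap $C_0$ dominates all three perturbed quantities once $t_s$ and $\delta_s$ are small. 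Choosing $\delta$, $t_s$, $\delta_s$ sufficiently small then yields \eqref{Refined R-T conditon0202}.

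The main obstacle, in my view, is purely bookkeeping: the translation $\alpha\mapsto\alpha+\tau(t)$ makes the vanishing point and the support of $\lambda(\cdot+\tau(t))$ drift with $t$, and one must verify that the $t=0$ computation, which uses properties of $\tilde f(\alpha,0)$ at $\alpha=0$, continues to furnish the bound uniformly for $\alpha\in(-\tau(t),20\delta-\tau(t))$ once $\tau(t)$ is small. Apart from this, the argument is structurally the same as in the $\kappa(t)>0$ case.
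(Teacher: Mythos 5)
Your overall strategy matches the paper's: reduce to the derivative inequality via the vanishing condition at $\alpha=-\tau(t)$, compute at $t=0$ where $\tau(0)=0$ and $L_2^{-}(h)$ coincides with the $L_2^{+}$ quantity from Lemma~\ref{secondederivative}, and propagate to small $t$ by the smoothness conditions. However, there is a genuine gap in the propagation step, precisely in the region you yourself flag as the ``main obstacle.''

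The $t=0$ estimate $-\partial_\alpha\Re L_2^{-}(h)(\alpha,\gamma,0)>C_0$ is established only on $[-\tau(0),20\delta]=[0,20\delta]$, since $L_i^{-}(h)(\cdot,\gamma,0)$ is defined on $[-\tau(0),\pi/4]=[0,\pi/4]$. The smoothness condition \eqref{Lhcondition02continuitytime}, which you invoke to compare $L_i^{-}(h)(\cdot,\gamma,t)$ with $L_i^{-}(h)(\cdot,\gamma,0)$, gives control only in $C_\alpha^{k+1}[-\tau(t'),\pi/4]$ for $0\le t'<t$, which for $t'=0$ is $[0,\pi/4]$. The displayed inequality in your proof claiming control on $C_\alpha^3[-\tau(t),\pi/4]$ is therefore not what the hypotheses provide, and indeed cannot be right as stated since $L_i^{-}(h)(\alpha,\gamma,0)$ is not even defined for $\alpha\in(-\tau(t),0)$. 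You characterize this as ``purely bookkeeping,'' but it requires an additional argument: on $\alpha\in(-\tau(t),0]$, one must first compare $\partial_\alpha L_i^{-}(h)(\alpha,\gamma,t)$ with $\partial_\alpha L_i^{-}(h)(0,\gamma,t)$ \emph{at the same time $t$}, using the uniform $C_\alpha^2$ bound \eqref{Lhcondition02bound} to get a Lipschitz-in-$\alpha$ deviation of size $O(\tau(t))\lesssim O(t_s)$, and only then compare at $\alpha=0$ back to $t=0$ using the smoothness-in-$t$ condition. This two-step comparison is the content of the paper's inequality \eqref{RTsmalltime2} together with \eqref{RTsmalltime1}, and without it the propagation over the drifting interval $(-\tau(t),0]$ is unjustified.
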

\begin{proof}
From vanishing conditions of $L_1^{-}(h)$ and $L_2^{-}(h)$ in \eqref{3GE01} (proved in section \ref{smoothvani02}), we have 
\[
\Re L_2^{-}(h)(-\tau(t),\gamma,t)=\Im L_1^{-}(h)(-\tau(t),\gamma,t)=\Im L_2^{-}(h)(-\tau(t),\gamma,t)=0.
\]
Since $\supp\tilde{\lambda}(\alpha_\gamma^t)\subset [-20\delta,20\delta]$, it is enough to show the following refined R-T condition for $\alpha\in (-\tau(t), 20\delta]$:
\begin{equation}\label{Refined R-T conditon02}
-\partial_{\alpha}\Re L_2^{-}(h)(\alpha,\gamma,t)-18|\partial_{\alpha}\Im L_1^{-}(h)(\al,\gamma,t)|-18|\partial_{\alpha}\Im L_2^{-}(h)(\al,\gamma,t)|>0.
\end{equation}
When $t=0$, we have $\tau(0)=0$ from \eqref{defkappa}. Since $h(\alpha,\gamma,0)=\tilde{f}^{+(60)}(\alpha,0)$, we claim from our construction $L_{2}^{-}(h)|_{t=0}$ is same function as in  lemma \ref{secondederivative}  $L_{2}^{+}(h)|_{t=0}$.  Therefore we could use lemma \ref{secondederivative} directly and have when $\alpha\in [-\tau(0),20\delta]=[0,20\delta]$,
\begin{align*}
  -\partial_{\alpha}\Re L_2^{-}(h)(\alpha,\gamma,0)> C_{0}, 
    \end{align*} for some fixed $C_0> 0$.
 Since everything is real at $t=0$, 
  $\partial_{\alpha}\Im L_1^{-}(h)(\al,\gamma,0)=\partial_{\alpha}\Im L_2^{-}(h)(\al,\gamma,0)=0$. 
Then \eqref{Refined R-T conditon02} holds when $t=0$:
\begin{equation}\label{Refined R-T conditon03}
-\partial_{\alpha}\Re L_2^{-}(h)(\alpha,\gamma,0)-18|\partial_{\alpha}\Im L_1^{-}(h)(\al,\gamma,0)|-18|\partial_{\alpha}\Im L_2^{-}(h)(\al,\gamma,0)|>C_0.
\end{equation}

 Now we use the following lemma to show \eqref{Refined R-T conditon0202}  also holds for small $t_{s}$, $\delta_s$
 
 \begin{lemma}
When $\delta$ is sufficiently small, 
\eqref{Refined R-T conditon0202} holds for small $\delta_{s}$, $t_{s}$.
\end{lemma}
\begin{proof}
From smoothness condition of $L_{i}^{-}(h)$, when $0\leq t'\leq t_s$, we also have
\begin{align*}
   &\quad \|L_i^{-}(h)(\alpha,\gamma,t)-L_i^{-}(h)(\alpha,\gamma,0)\|_{C_{\alpha}^{3}[0,20\delta]}\\
    &\lesssim \|h(\alpha,\gamma,t)-h(\alpha,\gamma,0)\|_{W^{0,1}}+\mathcal{O}(t-t')\\
    &=\|h(\alpha,\gamma,t)-\tilde{f}^{{+}(60)}(\al,0)\|_{W^{0,1}}+\mathcal{O}(t-t').
\end{align*}
Therefore when $20\delta\geq \alpha\geq 0$, we have
\begin{align}\label{RTsmalltime1}
&|\partial_{\alpha}\Re L_2^{-}(h)(\al,\g,t)-\partial_{\alpha}\Re L_2^{-}(h)(\al,\g,0)|+|\partial_{\alpha}\Im L_2^{-}(h)(\al,\g,t)-\partial_{\alpha}\Im L_2^{-}(h)(\al,\g,0)|\\\nonumber
&+|\partial_{\alpha} \Im L_1^{-}(h)(\al,\g,t)-\partial_{\alpha} \Im L_1^{-}(h)(\al,\g,0)|\lesssim O(t)+\|h(\alpha,\gamma,t)-\tilde{f}^{+(60)}(\alpha,t)\|_{W^{0,1}}\lesssim O(\delta_s+t_s).
\end{align}
For $\alpha\in (-\tau(t),0],$ still from the smoothness of $L_i^{-}(h)$, we have
\begin{align*}
    &\|L_{i}^{-}(h)(\alpha,\gamma,t)-L_{i}^{-}(h)(0,\gamma,t)\|_{C^{0}_{\gamma}([-1,1],C^{1}_{\alpha}[-\tau(t),-\tau(t')])}\\\nonumber
&\leq (\tau(t))\|L_{i}^{-}(h)(\alpha,\gamma,t)\|_{C^{0}_{\gamma}([-1,1],C^{2}_{\alpha}[-\tau(t),0]}\lesssim (\tau(t)).
\end{align*}
Then $\alpha\in (-\tau(t),0],$
\begin{align}\label{RTsmalltime2}
    &\quad|\partial_{\alpha}\Re L_{2}^{-}(h)(\alpha,\gamma,t)-\partial_{\alpha}\Re L_{2}^{-}(h)(0,\gamma,t)|+|\partial_{\alpha}\Im L_2^{-}(h)(\alpha,\gamma,t)-\partial_{\alpha}\Im L_2^{-}(h)(0,\gamma,t)|\\\nonumber
    &\qquad+|\partial_{\alpha}\Im L_1^{-}(h)(\alpha,\gamma,t)-\partial_{\alpha}\Im L_1^{-}(h)(0,\gamma,t)|\\\nonumber
    &\lesssim |\tau(t)|\lesssim O(t_s).
\end{align}
Then from \eqref{RTsmalltime1}, \eqref{RTsmalltime2}, and \eqref{Refined R-T conditon03}, the lemma holds.
\end{proof}
\end{proof}
\section{Existence and uniqueness for a generalized equation for $\kappa(t)<0$ }\label{kappa2sectionexistence}
In this section, we aim to show the existence and uniqueness theorem \ref{existence1theorem2}.
\begin{theorem}\label{existence1theorem2}
    For any equation $T^{-}(h)$ and initial data $h(\al,\g,0)$ satisfying the conditions in \eqref{3Tequation2}.  There exists sufficiently small time $\bar{t}$ and solution $h(\al,\g,t)$ such that  
    \begin{align}\label{hspaceconclusion02}
h\in C_{t}^{1}([0,\bar{t}], C_{\gamma}^{1}([-1,1],H_{\al}^{5}[-\pi,\pi]))\cap \{h|\supp_{\alpha}h\in[-\tau(t),\frac{\pi}{4}]\}.
\end{align}
Moreover, for all $g(\al,t)\in C_{t}^{1}([0,\bar{t}],H^{5}[-\pi,\pi]))\cap\{h|\supp_{\alpha} h \subset  [-\tau(t),\frac{\pi}{4}]\}$ satisfying the equation \eqref{3Tequation2} when $\gamma=0$ and the initial data condition $g(\alpha,0)=h(\alpha,\gamma,0)=\tilde{f}^{+(60)}(\alpha,t)$, we have when $t\leq \bar{t}$, 
\begin{equation}\label{uniquenesskappanegi}
g(\al,t)=h(\al,0,t).
\end{equation}
\end{theorem}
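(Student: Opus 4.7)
\medskip

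\noindent\textbf{Proof proposal for Theorem \ref{existence1theorem2}.}
The plan is to mimic the strategy used for Theorem \ref{existence1theorem} in the $\kappa(t)>0$ case (Sections \ref{hexsi}--\ref{huni}), with modifications to accommodate the moving left endpoint $-\tau(t)$ and the absence of the transport term $M_{1,1}$. Note that since $\kappa(t)<0$ one has $\tau'(t)=-\kappa(t)>0$ from \eqref{defkappa}, so the boundary $\alpha=-\tau(t)$ moves to the left as $t$ increases; heuristically this is the favorable case from the introduction, in which the solution remains supported in the ``new'' region $\alpha\geq -\tau(t)$ without producing a boundary term at $\alpha=-\tau(t)$.

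\medskip

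First I would set up an $\epsilon$-regularization $T^{-,\epsilon}$ of $T^{-}$ analogous to \eqref{perturbationterm}, but with the integration intervals $[-\tau(t),2\alpha+\tau(t)]$ and no $\tilde M_{1,1}^{\epsilon}$ term (since $M_{1,1}(h)=0$ here by \eqref{3M11chequ02}). Concretely, replace $(\alpha-\beta)^{-2}$ by $((\alpha-\beta)^2+\epsilon^2)^{-1}$ in $M_{2,1}$ and $M_{1,2}$, subtract off the Taylor polynomial of $h$ at the left endpoint $-\tau(t)$ (rather than at $0$), and add the corresponding boundary correction terms as in \eqref{3bj1}--\eqref{3bj2}. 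The $B_{M,i}$ operators, being one order less singular, need only the mild $\epsilon$-regularization $((\alpha-\beta)^2+\epsilon^2)^{-1/2}$ (or can even be kept unchanged once the kernel bound \eqref{newestimatekernal0} is used). For each $\epsilon>0$, $T^{-,\epsilon}$ is Lipschitz on $W^{0,12}\cap\{\supp h\subset[-\tau(t),\pi/4]\}$, so the Picard theorem gives a local solution $h^{\epsilon}$.

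\medskip

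Next I would derive $\epsilon$-uniform energy estimates in $W^{0,k}$ for $1\leq k\leq 12$, so as to bound $h^{\epsilon}$ on a common time interval $[0,t_1]$. The structure is identical to \eqref{3maintermscontrol}: after integration by parts one reduces to three main terms $E_1,E_2$ (from $M_{1,2},M_{2,1}$) plus bounded terms. The Gårding-type estimate of Lemma \ref{Garding} and its corollaries apply after translation $\alpha\mapsto\alpha+\tau(t)$, using the refined R--T condition \eqref{Refined R-T conditon0202}. The extra $B_{M,i}$ terms, when tested against $\partial_{\alpha}^{k}h^{\epsilon}$, are controlled in the same way as the analogous analysis of $B_1^{i}$ in lemma \ref{Lh3lemma}/corollary \ref{3corogardingmain}, i.e.\ they have the form of a Hilbert-type operator with a smooth kernel factor and can be absorbed into the dominant dissipative term by Hilbert's inequality. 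The crucial extra check is that the $\epsilon$-regularized equation preserves $\supp h^{\epsilon}\subset[-\tau(t),\pi/4]$: the left boundary is preserved because the boundary conditions of \eqref{3Tequation2} (vanishing of $\sum_{i}B_i(h)$ and $L_i^{-}(h)$ at $\alpha=-\tau(t)$ and the $H^k$--continuity as $t\to t'$) imply that the time derivative $\frac{d}{dt}h^{\epsilon}$ vanishes at the advancing endpoint, while the right support is cut off by $\lambda(\alpha+\tau(t))$.

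\medskip

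Once the uniform bound is obtained, I would, exactly as in \eqref{3gammabound}--\eqref{3gammatdiff}, use a second energy estimate on $h^{\epsilon}(\cdot,\gamma,t)-h^{\epsilon}(\cdot,\gamma',t)$ in $W^{0,1}$ to obtain equicontinuity in $(\gamma,t)$ up to $O(\epsilon)$, then extract a subsequence $\epsilon_n\to 0$ via Arzel\`a--Ascoli and interpolation to get convergence in $C_{t}^{0}([0,\bar t],W^{0,11})$; the limit $h$ solves $\frac{d}{dt}h=T^{-}(h)$ by Lemma \ref{limitlemma01} suitably adapted (the key limits \eqref{limitcase12}--\eqref{limitcase21} go through on the shifted interval). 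Differentiability in $\gamma$ follows by solving the linearized equation for $w=\partial_{\gamma}h$, exactly as in Lemma \ref{differgamma01}, and uniqueness follows from the same energy estimate applied to $h(\cdot,0,t)-g(\cdot,t)$ as in Section \ref{huni}.

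\medskip

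The main obstacle, and the step where the $\kappa(t)<0$ case genuinely differs from $\kappa(t)>0$, is the careful treatment of the moving boundary $\alpha=-\tau(t)$: one must verify that the boundary vanishing and continuity assumptions on $B_i(h),L_i^{-}(h)$ imposed in \eqref{3Tequation2} are actually strong enough both to guarantee $h^{\epsilon}(-\tau(t),\gamma,t)\equiv 0$ for $t>0$ and to survive under the $\epsilon$-approximation. Equivalently, I must check that when one differentiates the equation in $t$ at $\alpha=-\tau(t)$, the velocity of the boundary $\tau'(t)=-\kappa(t)>0$ combines with the right-sign transport generated by $M_{1,2}$ (note $L_1^{-}(h)(-\tau(t),\gamma,t)=0$ vanishes, so there is no transport term at the endpoint to lowest order, and the next-order contribution goes the correct way). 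This is the analogue in the present setting of the boundary analysis for $\eta<0$ sketched in the introduction around \eqref{defkappa}: once the boundary cancellations are verified, the rest of the proof is a translated version of Theorem \ref{existence1theorem}.
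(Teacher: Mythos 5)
Your proposal follows the right overall strategy (regularize, derive $\epsilon$-uniform energy bounds, Picard, pass to the limit, prove uniqueness), and you correctly spot that the preservation of $\supp h\subset[-\tau(t),\pi/4]$ is the crucial new point because $-\tau(t)$ decreases. However, you diverge from the paper's actual route in two substantive ways.

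First, the Taylor-polynomial subtraction at the left endpoint, with boundary correction terms as in \eqref{3bj1}--\eqref{3bj2}, is unnecessary here, and the paper's regularization \eqref{02M12perturb}--\eqref{02BMiperturb} deliberately omits it. The reason is structural: in the $\kappa(t)>0$ case the space $X^k$ allows $h(0)\neq 0$, whereas in the present case the support condition $\supp_\alpha h\subset[-\tau(t),\pi/8]$ together with $H^k[-\pi,\pi]$ regularity forces $\partial_\alpha^j h(-\tau(t),\gamma,t)=0$ for all $j\leq k-1$, so the Taylor coefficients you would subtract are identically zero. Lemma \ref{Tepsilonbehavior} then verifies that the regularized operator $T^\epsilon$ preserves this vanishing, so the Picard iterates stay in $W^{i_0,k}\cap S$ throughout. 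Adding the subtraction is harmless but obscures the simplification that the $\kappa(t)<0$ case actually admits, and Lemmas \ref{coro22epmodel2}--\ref{coro22epm12model2} explicitly use the vanishing rather than the subtraction mechanism.

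Second, the route to $\gamma$-differentiability. You propose the same two-stage scheme as for $\kappa(t)>0$: a uniform $W^{0,k}$ bound, Arzel\`a--Ascoli with an $O(\epsilon)$ equicontinuity estimate in $(\gamma,t)$, and then a separate linearized equation for $w=\partial_\gamma h$ as in Lemma \ref{differgamma01}. The paper instead packages the $\gamma$-derivatives directly into the energy space $U=W^{0,k}\cap W^{1,k-1}\cap\cdots\cap W^{3,k-3}\cap S$ (see \eqref{setU}) and proves one energy estimate \eqref{energyestimatefinal} in the full $U$-norm; the $C_\gamma^1 H_\alpha^5$ conclusion is then extracted by compactness directly, without a separate linearization step. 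Your two-stage approach would in principle also work, but you would then need to redo the well-posedness analysis for the linearized equation on the moving domain $[-\tau(t),\pi]$, which the paper's one-shot $U$-estimate avoids. Your discussion of ``the next-order contribution at the boundary going the correct way'' is not what the paper uses; the only fact needed for support preservation in the Picard step is that $-\tau(t)$ is monotone decreasing, so $\int_0^t T^\epsilon(h^\epsilon)(\cdot,\cdot,s)\,ds$ is automatically supported in $[-\tau(t),\pi/8]$, plus the boundary conditions from \eqref{3Tequation2} (in particular the second boundary condition controlling $\|\sum_i B_i(h)\|_{H^k[-\tau(t),-\tau(t')]}$) to get $t$-continuity in $W^{i_0,k}$.
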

\subsection{The existence of $h$}
\label{model1generalizedequation}
\subsubsection{Perturbation}
In order to use the Picard theorem, we first perturb terms $M_{1,2}$, $M_{2,1}$ and $B_{M,i}^{\epsilon}$ by desingularize the denominator. Let
\begin{equation}\label{02M12perturb}
 M_{1,2}^{\epsilon}(h)=\frac{2}{\pi}\lambda(\al+\tau(t))L_1^{-}(h)(\al,\g,t)\int_{-\tau(t)}^{2\al+\tau(t)}\frac{(h(\al)-h(\beta))(\al-\beta)\epsilon}{((\al-\beta)^2+\epsilon^2)^2}d\beta,   
\end{equation}
\begin{equation}\label{02M21perturb}
    M_{2,1}^{\epsilon}(h)=\lambda(\al+\tau(t))L_2^{-}(h)(\al,\g,t)\int_{-\tau(t)}^{2\al+\tau(t)}\frac{h(\al,\g,t)-h(\beta,\g,t)}{(\al-\beta)^2+\epsilon^2}d\beta,
\end{equation}

\begin{equation}\label{02BMiperturb}
B_{M,i}^{\epsilon}(h)=\lambda(\al+\tau(t))\int_{-\tau(t)}^{2\al+\tau(t)}L_{B,i}(h)(\al,\beta,\g,t)\frac{D^{-60+b_i}(h)(\beta,\g,t)(\al-\beta)}{(\al-\beta)^2+\epsilon^2}d\beta,
\end{equation}
with $b_i\leq 60$.
Then the perturbed operator becomes 
\begin{align*}
\frac{dh(\al,\g,t)}{dt}=T^{\epsilon}(h)=\begin{cases}
    M^{\epsilon}_{1,2}(h)+M^{\epsilon}_{2,1}(h)+\sum_{i}B_{M,i}^{\epsilon}+\sum_{i=1}B_i &\al>-\tau(t)\\
    0 &\al\leq -\tau(t).
\end{cases}
\end{align*}
We also extend the $M_{1,2}^{\ep}$, $M^{\epsilon}_{2,1}$, $B_{M,i}^{\epsilon}$ and $\sum_{i}B_i$ to $[-\pi,\pi]$ by letting them be 0 when $\al\leq -\tau(t)$.
Now we show the perturbed operator is smooth enough. Notice that the bound in this lemma is a very weak bound that can depend on $\epsilon.$

\begin{lemma}\label{Tepsilonbehavior}
Under the assumptions in \eqref{3Tequation2}, if additionally $h,g\in  C_{t}^{0}([0,t_s], W^{i_0,k})$, then we have
    \begin{equation}\label{perturbT1}
    T^{\epsilon}(h)\in C_{t}^{0}([0,t_s], W^{i_0,k})\cap S.
    \end{equation}
    Here \[S=\{h|\supp_{\alpha}h\in[-\tau(t),\frac{\pi}{8}]\}.
    \]

    Moreover, the Lipschitz condition holds;
    \begin{equation}\label{perturbT2}
        \|T^{\epsilon}(h)-T^{\epsilon}(g)\|_{W^{i_0,k}}\lesssim_{\epsilon}\|h-g\|_{W^{i_0,k}},
    \end{equation}
   Here $\lesssim$ depends on $\epsilon$, $\delta$, $\delta_s$, $t_s$  $\|h\|_{W^{i_0,k}}$, $\|g\|_{W^{i_0,k}}$. 
\end{lemma}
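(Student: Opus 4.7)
The plan is to exploit the fact that, for fixed $\epsilon>0$, every singular denominator of the form $(\alpha-\beta)^2$ has been replaced by $(\alpha-\beta)^2+\epsilon^2$, which is uniformly bounded away from zero. Consequently, none of the commuting/Hilbert-transform machinery used elsewhere in the paper is needed here; $T^{\epsilon}$ is essentially a composition of smooth multipliers and integral operators with bounded smooth kernels. The conclusions \eqref{perturbT1} and \eqref{perturbT2} then reduce to bookkeeping based on the smoothness conditions on $L_1^{-}(h)$, $L_2^{-}(h)$, $L_{B,i}(h)$ and $B_i(h)$ already listed in the definition of the generalized equation \eqref{3Tequation2}, together with the bounds \eqref{newestimatekernel1}--\eqref{newestimatekernel4} established in Section \ref{smoothvani02}.

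First I would dispose of the support condition. Because $M_{1,2}^{\epsilon}$, $M_{2,1}^{\epsilon}$ and $B_{M,i}^{\epsilon}$ all carry the factor $\lambda(\alpha+\tau(t))$, whose support in the variable $\alpha+\tau(t)$ lies in $[-20\delta,20\delta]$ by \eqref{lambdadefi}, and because the $B_i$ are supported in $[-\tau(t),\tfrac{\pi}{4}]$ by condition \ref{3Bsupp02}, one checks that for $\delta$ and $t_s$ small enough the total $\alpha$-support lies in $[-\tau(t),\tfrac{\pi}{8}]$. The boundary extension by $0$ for $\alpha\leq -\tau(t)$ is compatible with $C_t^0$ regularity exactly because of the first boundary condition in \eqref{3Tequation2}: all $\alpha$-derivatives up to order $k-1$ of $\sum_i B_i(h)$ vanish as $\alpha\to -\tau(t)^+$, and the $\lambda(\alpha+\tau(t))$ factor in the other three terms makes them vanish there together with all derivatives. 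Thus $T^{\epsilon}(h)\in S$ and is continuous across the moving endpoint.

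Next I would establish the $W^{i_0,k}$ bound and the continuity in $t$. For each term, the $\epsilon$-regularized kernels $(\alpha-\beta)^{-2}\rightsquigarrow((\alpha-\beta)^2+\epsilon^2)^{-1}$, resp.\ $(\alpha-\beta)\epsilon((\alpha-\beta)^2+\epsilon^2)^{-2}$, are $C^{\infty}$ in $\alpha,\beta$ with bounds depending on $\epsilon$. Combining this with the estimates of type \eqref{Lhcondition02bound}, \eqref{LBhcondition02bound}, \eqref{Bcondition02bound} gives a uniform-in-$t$ bound of $T^{\epsilon}(h)$ in $C_{\gamma}^{i_0}([-1,1],H_{\alpha}^{k}[-\tau(t),\pi])$, growing polynomially in $\epsilon^{-1}$. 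Continuity in $t$ follows by combining the time-continuity statements \eqref{Lhcondition02continuitytime}, \eqref{LBhcondition02continuitytime}, \eqref{Bcondition02continuitytime} with the smooth dependence of the $\epsilon$-kernels on $t$ (through both the limits of integration and the $\tau(t)$-shift), and lemma \ref{boundary103} which governs the behaviour across the moving endpoint $\alpha=-\tau(t)$.

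Finally I would prove the Lipschitz estimate \eqref{perturbT2}. Writing $T^{\epsilon}(h)-T^{\epsilon}(g)=\int_0^1 D_{h}T^{\epsilon}(g+s(h-g))[h-g]\,ds$, the Gateaux derivative splits into pieces where the derivative falls either on the coefficient ($L_1^{-}$, $L_2^{-}$, $L_{B,i}$, $B_i$) or on the integrand $h$ itself. The former are controlled by \ref{Lhcondition02lip}, \ref{LBhcondition02lip}, \ref{Bcondition02lip}; the latter reduce to the $L^2$-boundedness on $[-\tau(t),\pi]$ of linear operators with smooth bounded kernels, which is immediate for $\epsilon>0$. I expect the only mildly nontrivial point to be the Lipschitz constant of the boundary-continuity condition at $\alpha=-\tau(t)$: one must verify that the cancellation making $T^{\epsilon}(h)$ vanish together with its first $k-1$ derivatives at $-\tau(t)$ is itself Lipschitz in $h$, but this follows at once from differentiating the vanishing relations in the Gateaux sense. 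All constants in \eqref{perturbT2} are permitted to depend on $\epsilon,\delta,\delta_s,t_s$ and on the $W^{i_0,k}$-norms of $h,g$, so no delicate uniform estimates are needed; the genuine analytical work of the paper is postponed to the subsequent $\epsilon\to 0$ energy argument.
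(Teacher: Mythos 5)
Your overall strategy matches the paper's: for fixed $\epsilon>0$ the kernels are smooth and bounded, so the conclusion reduces to bookkeeping with the smoothness, boundary, and Lipschitz conditions of \eqref{3Tequation2}. The Lipschitz part of your argument (integrating the Gateaux derivative and invoking the stated Lipschitz conditions on $L_i^-$, $L_{B,i}$, $B_i$) is essentially what the paper does. However, there is one concrete error in your treatment of the boundary vanishing at $\alpha=-\tau(t)$.

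You claim that ``the $\lambda(\alpha+\tau(t))$ factor in the other three terms makes them vanish there together with all derivatives.'' This is false. From \eqref{lambdadefi}, $\lambda(\alpha)=1$ for $|\alpha|\leq 10\delta$, so at $\alpha=-\tau(t)$ one has $\lambda(\alpha+\tau(t))=\lambda(0)=1$, not $0$. The cutoff helps with the \emph{right} end of the support but contributes nothing at the left endpoint $-\tau(t)$. The actual reason $M_{1,2}^{\epsilon}(h)$, $M_{2,1}^{\epsilon}(h)$, $B_{M,i}^{\epsilon}(h)$ and their $\alpha,\gamma$-derivatives vanish at $-\tau(t)$ is that $h\in W^{i_0,k}$ already forces $\partial_\alpha^j\partial_\gamma^{j'}h(-\tau(t),\gamma,t)=0$ for $j\leq k-1$, $j'\leq i_0$ (this is built into the definition \eqref{hspacew}), and the $\epsilon$-regularized kernels in \eqref{02M12perturb}, \eqref{02M21perturb}, \eqref{02BMiperturb} are $C^\infty$ with no singularity, so this vanishing propagates through the integral operators. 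Your argument as written would not prove the boundary compatibility needed for $T^{\epsilon}(h)\in W^{i_0,k}$; you need to replace the $\lambda$-based reasoning with the vanishing of $h$ itself.

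A secondary remark: your discussion of continuity in $t$ is sketched but does not say how to handle the moving endpoint. The paper splits the difference $T^{\epsilon}(h)(t)-T^{\epsilon}(h)(t')$ into contributions on $[-\tau(t'),\pi]$ (controlled by the stated time-continuity conditions on the coefficients, after explicitly splitting off the change-of-domain terms $\int_{-\tau(t)}^{-\tau(t')}$ and $\int_{2\alpha+\tau(t')}^{2\alpha+\tau(t)}$) and on the sliver $[-\tau(t),-\tau(t')]$ (controlled by lemma \ref{continuitytl2} and corollary \ref{continuitytl202}, which give smallness of the $H^k$ norm on a shrinking interval; for $\sum_i B_i$ the second boundary condition is used). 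Your appeal to lemma \ref{boundary103} only covers the $\sum_i B_i$ piece; the $M$-type and $B_M$-type terms also need the sliver estimate, which your proposal does not address.
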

\begin{proof}
First, for $M_{1,2}^{\epsilon}$, since there is no singularity in the denominator, from the conditions of $L_i^{-}(h)$, $L_{B,i}(h)$ and $B_{i}(h)$, the following two bound holds:
\begin{equation}\label{Tepslioncont2}
    \sup_{t}\|T^{\epsilon}(h)\|_{C_{\gamma}^{i_0}([-1,1],H^{k}[-\tau(t),\pi])}\lesssim_{\epsilon} 1,
\end{equation}
\begin{equation}\label{Tepsiloncont3}
    \sup_{t}\|D_hT^{\epsilon}[g]\|_{C_{\gamma}^{i_0}([-1,1],H^{k}[-\tau(t),\pi])}\lesssim_{\epsilon} \|h-g\|_{W^{i_0,k}}.
\end{equation}
Moreover, by taking $\delta$ sufficiently small in \eqref{lambdadefi}, we have $\supp_{\alpha}\lambda(\al)\in [0,\frac{\pi}{16}]$.From $\tau(t)>0$ in \eqref{defkappa}, we have $\supp_{\alpha}\lambda(\alpha+\tau(t))\subset [-\tau(t),\frac{\pi}{16}].$  

Hence  
\[
T^{\epsilon}(h) \in S.
\]

Now we show 
\begin{equation}\label{limittepsilon}
\partial_{\al}^{j}\partial_{\g}^{j'}T^{\epsilon}(h)(-\tau(t),\g,t)=0,
\end{equation}
when $0\leq j\leq k-1$ and $0\leq k'\leq i_0$. From the boundary conditions of $\sum_i B_i(h)$ in \eqref{3Tequation2}, we only need to show the corresponding condition for $M_{1,2}^{\epsilon}(h)$, $M_{2,1}^{\epsilon}(h)$ and $B_{M,i}^{\epsilon}(h)$. From $h\in W^{i_0,k}$ \eqref{hspacew}, we have
 for $0\leq j\leq k-1$, $0\leq j\leq i_0$,
\begin{equation}\label{hboundray}
\lim_{\alpha\to-\tau(t)}\partial_{\alpha}^{j}\partial_{\gamma}^{j'} h=0.
\end{equation}
Since terms  $M_{1,2}^{\epsilon}(h)$, $M_{2,1}^{\epsilon}(h)$ and $B_{M,i}^{\epsilon}(h)$ does not have singularity when $\epsilon>0$ \eqref{02M12perturb},\eqref{02M21perturb}, \eqref{02BMiperturb}, the condition follows directly.

Then we have \eqref{limittepsilon}. Combining \eqref{Tepslioncont2}, \eqref{Tepsiloncont3}, and definition of $W^{i_0,k}$ \eqref{hspacew}, we have $T^{\epsilon}(h)\in W^{i_0,k}$ and \eqref{perturbT2} holds.

Finally, we show the continuity of $T^{\epsilon}(h)$ with respect to $t$ in $W^{i_0,k}$. It is enough to show the continuity for $M_{1,2}^{\epsilon}$, $M_{2,1}^{\epsilon}$, $B_{M,i}^{\epsilon}$ and $\sum_{i}B_i$ given that they are extended to $\al\in [-\pi,\pi]$ by defining the value to be $0$ when $\alpha\leq -\tau(t).$

When $0\leq t<t'\leq t_s$, $\alpha>-\tau(t')$, we have
\begin{align*}
    &M_{2,1}^{\epsilon}(h)(t)- M_{2,1}^{\epsilon}(h)(t')=\lambda(\alpha+\tau(t))L_{2}^{-}(h)(\alpha,\gamma,t)\int_{-\tau(t)}^{2\alpha+\tau(t)}\frac{h(\alpha,\gamma,t)-h(\beta,\gamma,t)}{(\alpha-\beta)^2+\epsilon^2}d\beta\\
    &\quad-\lambda(\alpha+\tau(t'))L_{2}^{-}(h)(\alpha,\gamma,t')\int_{-\tau(t')}^{2\alpha+\tau(t')}\frac{h(\alpha,\gamma,t')-h(\beta,\gamma,t')}{(\alpha-\beta)^2+\epsilon^2}d\beta\\
    &=(\lambda(\alpha+\tau(t))-\lambda(\alpha+\tau(t')))L_{2}^{-}(h)(\alpha,\gamma,t)\int_{-\tau(t)}^{2\alpha+\tau(t)}\frac{h(\alpha,\gamma,t)-h(\beta,\gamma,t)}{(\alpha-\beta)^2+\epsilon^2}d\beta\\
    &+\lambda(\alpha+\tau(t'))(L_{2}^{-}(h)(\alpha,\gamma,t)-L_2^{-}(h)(\alpha,\gamma,t'))\int_{-\tau(t)}^{2\alpha+\tau(t)}\frac{h(\alpha,\gamma,t)-h(\beta,\gamma,t)}{(\alpha-\beta)^2+\epsilon^2}d\beta\\
    &+\lambda(\alpha+\tau(t'))L_{2}^{-}(h)(\alpha,\gamma,t')(\int_{-\tau(t)}^{2\alpha+\tau(t)}\frac{h(\alpha,\gamma,t)-h(\beta,\gamma,t)}{(\alpha-\beta)^2+\epsilon^2}d\beta-\int_{-\tau(t')}^{2\alpha+\tau(t')}\frac{h(\alpha,\gamma,t')-h(\beta,\gamma,t')}{(\alpha-\beta)^2+\epsilon^2}d\beta)\\
    &=Term_{1}+Term_{2}+Term_{3}.
\end{align*}
Since $\lambda(\alpha)\in C^{100}[-\pi,\pi]$, from the smoothness conditions of $L_{2}^{-}(h)$ in \eqref{3Tequation2} ,  we have
\begin{align*}
    \|Term_1\|_{C_{\gamma}^{i_0}([-1,1],H^{k}[-\tau(t'),\pi])}\lesssim_{\epsilon} \mathcal{O}(t-t').
\end{align*}
and
\begin{align*}
    \|Term_2\|_{C_{\gamma}^{i_0}([-1,1],H^{k}[-\tau(t'),\pi])}\lesssim_{\epsilon} \mathcal{O}(t-t')+\|h(\alpha,\gamma,t)-h(\alpha,\gamma,t')\|_{W^{i_0,k}}.
\end{align*}
For the last term, we have
\begin{align*}
    &Term_3=\lambda(\alpha+\tau(t'))L_{2}^{-}(h)(\alpha,\gamma,t')(\int_{-\tau(t')}^{2\alpha+\tau(t')}\frac{h(\alpha,\gamma,t)-h(\alpha,\gamma,t')-(h(\beta,\gamma,t)-h(\beta,\gamma,t'))}{(\alpha-\beta)^2+\epsilon^2}d\beta\\
    &+\lambda(\alpha+\tau(t'))L_{2}^{-}(h)(\alpha,\gamma,t')(\int_{-\tau(t)}^{-\tau(t')}+\int_{2\alpha+\tau(t')}^{2\alpha+\tau(t)})\frac{h(\alpha,\gamma,t)-h(\beta,\gamma,t)}{(\alpha-\beta)^2+\epsilon^2}d\beta.
\end{align*}
Then we have
\begin{align*}
    \|Term_3\|_{C_{\gamma}^{i_0}([-1,1],H^{k}[-\tau(t'),\pi])}\lesssim_{\epsilon} &\|h(\alpha,\gamma,t)-h(\alpha,\gamma,t')\|_{W^{i_0,k}}\\
    &+\sqrt{\tau(t)-\tau(t')}\|h(\alpha,\gamma,t)\|_{W^{i_0,k}}.
\end{align*}
For $-\tau(t)\leq \alpha \leq-\tau(t') $, we have
\[
M_{2,1}^{\epsilon}(h)(t)-M_{2,1}^{\epsilon}(h)(t')=M_{2,1}^{\epsilon}(h)(t).
\]
Moreover,
\begin{align*}
    &\quad\|M_{2,1}^{\epsilon}(h)(t)\|_{C_{\gamma}^{i_0}([-1,1],H^{k}[-\tau(t),-\tau(t')])}\\
    &=\|\lambda(\alpha+\tau(t))L_{2}^{-}(h)(\alpha,\gamma,t)\int_{-\tau(t)}^{2\alpha+\tau(t)}\frac{h(\alpha,\gamma,t)-h(\beta,\gamma,t)}{(\alpha-\beta)^2+\epsilon^2}d\beta\|_{C_{\gamma}^{i_0}([-1,1],H^{k}[-\tau(t),-\tau(t')])}\\
    &\lesssim_{\epsilon} \|h(\alpha,\gamma,t)\|_{C_{\gamma}^{i_0}([-1,1],H^{k}[-\tau(t),-\tau(t)+2(\tau(t)-\tau(t'))])}.
\end{align*}
Therefore, we have
\begin{align*}
    &\qquad\|M_{2,1}^{\epsilon}(h)(t)-M_{2,1}^{\epsilon}(h)(t')\|_{W^{i_0,k}}\\
    &\lesssim_{\epsilon} \mathcal{O}(t-t')+\sqrt{\tau(t)-\tau(t')}\|h(\alpha,\gamma,t)-h(\alpha,\gamma,t')\|_{W^{i_0,k}}+\|h(\alpha,\gamma,t)\|_{C_{\gamma}^{i_0}([-1,1],H^{k}[-\tau(t),-\tau(t)+2(\tau(t)-\tau(t'))])}.
\end{align*}
From lemma \ref{continuitytl2} and corollary \ref{continuitytl202}, it tends to 0 as $t \to t'$ or $t'\to t $. Hence $M_{2,1}^{\epsilon}(h)$ is continuous in $t$ in space $W^{i_0,k}$.
The similar proof holds for $M_{1,2}^{\epsilon}(h)$ and $B_{M_i}^{\ep}(h)$. For $\sum_{i}B_i$, when $0\leq t\leq t'\leq t_s$, $\alpha>-\tau(t')$, we could use smoothness condition of $B_i(h)$ in \eqref{3Tequation2} and have
\begin{align}\label{Bicontinuous}
    &\quad \|\sum_{i}B_i(h)(\alpha,\gamma,t)-\sum_{i}B_i(h)(\alpha,\gamma,t')\|_{C_{\gamma}^{i_0}([-1,1],H^{k}[-\tau(t'),\pi])}\\\nonumber
    &\lesssim_{\epsilon} \mathcal{O}(t-t')+\|h(\alpha,\gamma,t)-h(\alpha,\gamma,t')\|_{W^{i_0,k}}.
\end{align}
When $-\tau(t)\leq \alpha\leq -\tau(t')$, we can use the second boundary conditions of $\sum_{i}B_i(h)$ in \eqref{3Tequation2} and have:
\[
\lim_{t'\to t}\|\sum_{i}B_i(h)(\alpha,\gamma,t)\|_ { C_{\gamma}^{i_{0}}([-1,1],H^k_{\alpha}[-\tau(t),\tau(t')])}=0,
\] and \[
\lim_{t\to t'}\|\sum_{i}B_i(h)(\alpha,\gamma,t)\|_ { C_{\gamma}^{i_{0}}([-1,1],H^k_{\alpha}[-\tau(t),\tau(t')]}=0.
\]
Then $\sum_{i}B_i(h)$ is continuous with respect to $t$ in $W^{i_0,k}.$ 
\end{proof}

\subsubsection{Control the good terms}
Let
\begin{align}\label{setU}
U=W^{0,k}\cap W^{1,k-1}...\cap W^{3,k-3}\cap{S},
\end{align}
with $\|h\|_{U}=\|h\|_{W^{0,k}}+\|h\|_{W^{1,k-1}}+\|h\|_{W^{2,k-1}}+\|h\|_{W^{3,k-3}}.$

Now we show the energy estimate in $U$ when $k=12$. In this section, our estimates are all independent of $\epsilon$.

From the structure of the perturbed operator $T^{\ep}(h)$, only $B_{M,i}^{\ep}$, $M_{1,1}^{\ep},$  and $M_{1,2}^{\ep}$ are dependent on $\ep$. For $B_{M,i}^{\epsilon}$, we have the following lemmas: 
\begin{lemma}\label{3T3goodbehaviour}
When $h\in W^{i_0,k}$, $0\leq j\leq i_0$, $0\leq k'\leq k$ we have 
\begin{align*}
\|\partial_{\al}^{k'}\partial_{\g}^{j} B_{M,i}^{\epsilon}(h)\|_{C^{0}_{\g}([-1,1],L_{\al}^{2}[-\tau(t),\pi])}\lesssim C(\|h\|_{W^{i_0,k}}).
\end{align*}
\end{lemma}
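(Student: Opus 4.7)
The plan is to reduce the estimate to a combination of (i) a uniform-in-$\epsilon$ Hilbert-type $L^{2}$ bound for the singular kernel $\frac{\alpha-\beta}{(\alpha-\beta)^{2}+\epsilon^{2}}$, and (ii) bounds on ordinary (non-singular) integral operators whose kernels are controlled by the smoothness conditions on $L_{B,i}(h)$. The overall structure is modeled on Lemma~\ref{Lh3lemma} (which handled the analogous $\epsilon=0$ object $B_{1}^{i}(h)$), adapted to the regularized kernel and to the translated interval $[-\tau(t),2\alpha+\tau(t)]$.

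First, I would split the kernel by freezing the diagonal value of $L_{B,i}$:
\[
L_{B,i}(h)(\alpha,\beta,\gamma,t)=L_{B,i}(h)(\alpha,\alpha,\gamma,t)+\bigl(L_{B,i}(h)(\alpha,\beta,\gamma,t)-L_{B,i}(h)(\alpha,\alpha,\gamma,t)\bigr),
\]
so that the second piece, divided by $(\alpha-\beta)$, is a $C^{k+1}_{\alpha,\beta}$ function (use condition~\eqref{LBhcondition02bound}, which gives a $C^{k+2}$ bound on $L_{B,i}(h)$). The contribution of this regular piece, multiplied by $\frac{(\alpha-\beta)^{2}}{(\alpha-\beta)^{2}+\epsilon^{2}}$ (a uniformly bounded factor), is handled by plain integration against $D^{-60+b_{i}}(h)$ via Cauchy--Schwarz and the elementary bound $\bigl|\frac{(\alpha-\beta)^{2}}{(\alpha-\beta)^{2}+\epsilon^{2}}\bigr|\le 1$; applying $\partial_{\alpha}^{k'}\partial_{\gamma}^{j}$ and distributing derivatives onto $\lambda$, $L_{B,i}(h)$, $D^{-60+b_i}(h)$ and the integration limits produces only terms controlled by $\|h\|_{W^{i_0,k}}$ (using $b_{i}\le 60$ and \eqref{3d-ipro}).

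The genuinely singular contribution is
\[
\lambda(\alpha+\tau(t))\,L_{B,i}(h)(\alpha,\alpha,\gamma,t)\int_{-\tau(t)}^{2\alpha+\tau(t)}\frac{(\alpha-\beta)\,\epsilon\,}{((\alpha-\beta)^{2}+\epsilon^{2})^{2}}\cdot\frac{(\alpha-\beta)^{2}+\epsilon^{2}}{\epsilon}\,D^{-60+b_{i}}(h)(\beta,\gamma,t)\,d\beta
\]
--- more transparently, it is essentially $\lambda\cdot L_{B,i}(h)|_{\beta=\alpha}$ times the truncated operator $\int\frac{\alpha-\beta}{(\alpha-\beta)^{2}+\epsilon^{2}}g(\beta)\,d\beta$ applied to $g=D^{-60+b_{i}}(h)$. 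After an analogous change of variable $\beta\mapsto\alpha-\beta$ and zero-extension outside $[-\tau(t),2\alpha+\tau(t)]$, this is a truncated regularized Hilbert transform, which is uniformly bounded on $L^{2}$ in $\epsilon$ (see the identity \eqref{Hilbert inequality} and its derivative \eqref{newhilbertderi} used repeatedly in Lemma~\ref{Garding}). I would then commute derivatives onto $g$, using the vanishing of $h$ at $-\tau(t)$ (from \eqref{hspacew}) to absorb the boundary terms produced by differentiating through the moving endpoints $-\tau(t)$ and $2\alpha+\tau(t)$, just as in Lemma~\ref{coro22ep} and Corollary~\ref{coro22k01}; the resulting $L^{2}$ bounds are multiplied by the $C^{k+2}$ norms of $\lambda\cdot L_{B,i}(h)|_{\beta=\alpha}$ provided by \eqref{LBhcondition02bound}--\eqref{LBhcondition02continuitytime}. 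The $\partial_{\gamma}^{j}$ derivatives for $j\le i_{0}$ are treated identically because \eqref{LBhcondition02bound}--\eqref{LBhcondition02continuitytime} give $C^{i_{0}}_{\gamma}$ regularity of $L_{B,i}(h)$ and the definition of $D^{-i}$ in \eqref{D-formularnew02} preserves $C^{i_{0}}_{\gamma}$.

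The main obstacle is obtaining the $\epsilon$-independent constant when $k'=k$ and derivatives fall on both the singular kernel and on $D^{-60+b_{i}}(h)$. The remedy is the same trick used to prove Corollary~\ref{coro22k01}: after subtracting the value at the endpoint and using that $D^{-60+b_{i}}(h)$ vanishes to sufficient order at $\alpha=-\tau(t)$ (because $b_{i}\le 60$ and $D^{-60+b_{i}}$ provides at least zero-order vanishing from \eqref{D-formularnew02}), one reduces every derivative placed on the singular kernel to a commutator that is again a truncated regularized Hilbert transform applied to a lower-order derivative of $h$, so Hilbert's inequality and lemma~\ref{farboundarybound} close the estimate with a constant depending only on $\|h\|_{W^{i_{0},k}}$.
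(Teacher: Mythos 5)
Your proposal is correct and follows essentially the same route as the paper: freeze the diagonal value of $L_{B,i}(h)$ so that the difference quotient becomes a bounded kernel by \eqref{LBhcondition02bound}, handle the remaining singular piece (the truncated regularized Hilbert transform acting on $D^{-60+b_i}(h)$) by the uniform-in-$\epsilon$ $L^{2}$ bound coming from the Hilbert transform identity, distribute $\partial_{\alpha}^{k'}$ and $\partial_{\gamma}^{j}$ while using the vanishing of $h$ at $\alpha=-\tau(t)$ together with Lemma~\ref{farboundarybound} to kill or control the endpoint terms, and gain smoothness from $D^{-60+b_i}$ when $b_i<60$. The paper packages the truncated Hilbert bound into a dedicated helper lemma (Lemma~\ref{3HilbertL2}), which you paraphrase rather than cite, but the content and the order of the argument are the same.
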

\begin{proof}
We only show the $j=0$ case. Other cases will be similar by putting more $\g$ derivatives on $D^{-60+b_i}(h)$ and $L_{B,i}$. 
From the definition in \eqref{02BMiperturb}, \begin{equation}
B_{M,i}^{\epsilon}(h)=\lambda(\al+\tau(t))\int_{-\tau(t)}^{2\al+\tau(t)}L_{B,i}(h)(\al,\beta,\g,t)\frac{D^{-60+b_i}(h)(\beta,\g,t)(\al-\beta)}{(\al-\beta)^2+\epsilon^2}d\beta.
\end{equation}
For $b_i=60$, we first consider $k'=0$,  we have
\begin{align*}
     &\quad\int_{-\tau(t)}^{2\alpha+\tau(t)}L_{B,i}(h)(\alpha,\beta,\gamma,t)\frac{h(\beta,\gamma,t)(\alpha-\beta)}{(\alpha-\beta)^2+\epsilon^2}d\beta\\
     &=\int_{-\tau(t)}^{2\alpha+\tau(t)}(L_{B,i}(h)(\alpha,\beta,\gamma,t)-L_{B,i}(h)(\alpha,\alpha,\gamma,t))\frac{h(\beta,\gamma,t)(\alpha-\beta)}{(\alpha-\beta)^2+\epsilon^2}d\beta\\
     &\quad+L_{B,i}(h)(\alpha,\alpha,\gamma,t)\int_{-\tau(t)}^{2\alpha+\tau(t)}\frac{h(\beta,\gamma,t)(\alpha-\beta)}{(\alpha-\beta)^2+\epsilon^2}d\beta\\
     &=Term_{1}+Term_{2}.
 \end{align*}
By the smoothness of $L_{B,i}(h)$ as in \eqref{3Tequation2}, we claim
\begin{align*}
    \|\frac{(L_{B,i}(h)(\alpha,\beta,\gamma,t)-L_{B,i}(h)(\alpha,\alpha,\gamma,t))(\alpha-\beta)}{(\alpha-\beta)^2+\epsilon^2}\|_{C_{\gamma}^{i_0}([-1,1], C^{0}((-\tau(t),\frac{\pi}{4})\times(-\tau(t),\frac{2\pi}{3})))}\lesssim 1.
\end{align*}
Then we have
\begin{align*}
    \|Term_1\|_{C_{\gamma}^{i_0}([-1,1],L^2[-\tau(t),\frac{\pi}{4}])}\lesssim 1.
\end{align*}
By the following lemma \ref{3HilbertL2}, we also get the similar bound for $Term_2$:
\begin{align*}
    \|Term_2\|_{C_{\gamma}^{i_0}([-1,1],L^2[-\tau(t),\frac{\pi}{4}])}\lesssim 1.
\end{align*}

Then we take $k$th derivative and have
\begin{align*}
    &\partial_{\alpha}^{k}\int_{-\tau(t)}^{2\alpha+\tau(t)}L_{B,i}(h)(\alpha,\beta,\gamma,t)\frac{h(\beta,\gamma,t)(\alpha-\beta)}{(\alpha-\beta)^2+\epsilon^2}d\beta\\
    &=\sum_{k'\leq k}\int_{-\alpha-\tau(t)}^{\alpha+\tau(t)}\partial_{\alpha}^{k'}(L_{B,i}(h)(\alpha,\alpha-\beta,\gamma,t))\frac{\partial_{\alpha}^{k-k'}h(\alpha-\beta,\gamma,t)(\beta)}{(\beta)^2+\epsilon^2}d\beta\\
    &+\sum_{k_1+k_2+k_3\leq k-1}C_{k_1,k_2,k_3}\partial_{\alpha}^{k_3}(\partial_{\alpha}^{k_1}(L_{B,i}(h)(\alpha,\alpha-\beta,\gamma,t))|_{\beta=\alpha+\tau(t)})\partial_{\alpha}^{k_2}(\frac{(\partial_{\alpha}^{k-1-k_1-k_2-k_3}h)(-\tau(t),\gamma,t)(\alpha+\tau(t))}{(\alpha+\tau(t))^2+\epsilon^2})\\
    &-\sum_{k_1+k_2+k_3\leq k-1}C'_{k_1,k_2,k_3}\partial_{\alpha}^{k_3}(\partial_{\alpha}^{k_1}(L_{B,i}(h)(\alpha,\alpha-\beta,\gamma,t))|_{\beta=-\alpha-\tau(t)})\\
    &\quad\cdot\partial_{\alpha}^{k_2}(\frac{(\partial_{\alpha}^{k-1-k_1-k_2-k_3}h)(2\alpha+\tau(t),\gamma,t)(\alpha+\tau(t))}{(\alpha+\tau(t))^2+\epsilon^2})\\
    &=Term_1^{k}+Term_2^{k}+Term_3^{k}.
\end{align*}
 Still from the conditions of $L_{B,i}(h)$, we have for $j+j'\leq k$,
\begin{align*}
    \|\frac{(\partial_{\alpha}^{j}\partial_{\beta}^{j'}L_{B,i}(h)(\alpha,\beta,\gamma,t)-\partial_{\alpha}^{j}\partial_{\beta}^{j'}L_{B,i}(h)(\alpha,\alpha,\gamma,t))(\alpha-\beta)}{(\alpha-\beta)^2+\epsilon^2}\|_{C_{\gamma}^{i_0}([-1,1], C^{0}((-\tau(t),\frac{\pi}{4})\times(-\tau(t),\frac{2\pi}{3})))}\lesssim 1.
\end{align*}
then $Term_{1}^{k}$ can be controlled in the same way as in the $L^2$ case.
$Term_{2}^{k}$ is 0 from \eqref{hboundray}.  $Term_{3}^{k}$ can be controlled by lemma \ref{farboundarybound}.

When $b_i<60$, we can use a similar proof because of the following estimate from \eqref{D-formularnew}: 

\begin{align}\label{3Thresult3}
D^{-60+b_i}(h(\alpha,\gamma,t))\in W^{i_0,k+(60-b_i)}.
\end{align}

\end{proof}
\begin{lemma}\label{3HilbertL2}
If $h(\alpha)\in L^{2}[-\pi,\pi]$, with $h(\alpha)=0$ when $\alpha<-\tau(t)$, we have
\begin{align*}
    \|\int_{-\tau(t)}^{2\alpha+\tau(t)}\frac{h(\beta)(\alpha-\beta)}{(\alpha-\beta)^2+\epsilon^2}d\beta\|_{L^{2}[-\tau(t),\frac{\pi}{4}]}\lesssim\|h(\alpha,\gamma,t)\|_{L^{2}[-\tau(t),\pi]}.
\end{align*}
\end{lemma}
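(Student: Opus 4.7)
\textbf{Proof plan for Lemma \ref{3HilbertL2}.}
The strategy is to recognize the integral as a (truncated, regularized) Hilbert transform and split it into a "full" principal piece plus a tail that can be estimated by Hilbert's inequality. Writing
\[
\int_{-\tau(t)}^{2\alpha+\tau(t)}\frac{h(\beta)(\alpha-\beta)}{(\alpha-\beta)^{2}+\epsilon^{2}}\,d\beta
=\int_{-\tau(t)}^{\pi}\frac{h(\beta)(\alpha-\beta)}{(\alpha-\beta)^{2}+\epsilon^{2}}\,d\beta-\int_{2\alpha+\tau(t)}^{\pi}\frac{h(\beta)(\alpha-\beta)}{(\alpha-\beta)^{2}+\epsilon^{2}}\,d\beta,
\]
I would handle the two summands separately.

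For the first piece, since $h$ vanishes for $\alpha<-\tau(t)$, extending $h$ by zero to all of $\mathbb{R}$ gives
\[
\int_{-\tau(t)}^{\pi}\frac{h(\beta)(\alpha-\beta)}{(\alpha-\beta)^{2}+\epsilon^{2}}\,d\beta
=\int_{\mathbb{R}}\frac{(\alpha-\beta)\,h(\beta)}{(\alpha-\beta)^{2}+\epsilon^{2}}\,d\beta
=:H_{\epsilon}h(\alpha),
\]
the smoothed Hilbert transform. The kernel $u/(u^{2}+\epsilon^{2})=\Re(1/(u-i\epsilon))$ is, up to a constant, the convolution of the standard Hilbert kernel with the Poisson kernel $P_{\epsilon}$; equivalently, its Fourier symbol $\widehat{K_{\epsilon}}(\xi)=-i\pi\,\mathrm{sgn}(\xi)\,e^{-\epsilon|\xi|}$ is bounded uniformly in $\epsilon$. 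Hence $\|H_{\epsilon}h\|_{L^{2}(\mathbb{R})}\lesssim\|h\|_{L^{2}(\mathbb{R})}$, and restricting to $\alpha\in[-\tau(t),\pi/4]$ gives the desired bound for this piece.

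For the tail, the constraint $\beta\geq 2\alpha+\tau(t)$ forces $\beta-\alpha\geq\alpha+\tau(t)\geq 0$, so $|\alpha-\beta|=\beta-\alpha$ and
\[
(\alpha+\tau(t))+(\beta+\tau(t))=(\beta-\alpha)+2(\alpha+\tau(t))\leq 3(\beta-\alpha).
\]
Therefore
\[
\Bigl|\int_{2\alpha+\tau(t)}^{\pi}\frac{h(\beta)(\alpha-\beta)}{(\alpha-\beta)^{2}+\epsilon^{2}}\,d\beta\Bigr|
\leq \int_{-\tau(t)}^{\pi}\frac{|h(\beta)|}{|\alpha-\beta|}\,d\beta
\lesssim \int_{-\tau(t)}^{\pi}\frac{|h(\beta)|}{(\alpha+\tau(t))+(\beta+\tau(t))}\,d\beta.
\]
After the shift $\tilde\alpha=\alpha+\tau(t)$, $\tilde\beta=\beta+\tau(t)$ this becomes an integral operator with kernel $1/(\tilde\alpha+\tilde\beta)$ on $(0,\infty)$, so the classical Hilbert inequality yields an $L^{2}$ bound by $\|h\|_{L^{2}[-\tau(t),\pi]}$. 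Combining the two pieces finishes the proof.

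The only nontrivial ingredient is the uniform-in-$\epsilon$ $L^{2}$ boundedness of $H_{\epsilon}$; everything else (the splitting, the comparison $|\alpha-\beta|\gtrsim\tilde\alpha+\tilde\beta$ on the tail region, and Hilbert's inequality) is routine. I do not expect any serious obstacle.
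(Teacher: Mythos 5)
Your proof is correct and takes essentially the same route as the paper: both decompose the integral into the ``full'' $\int_{-\infty}^{\infty}$ piece (which the paper bounds by writing the kernel as Poisson convolved with Hilbert and applying Young's inequality, while you bound it by observing the equivalent fact that the Fourier multiplier $-i\pi\,\mathrm{sgn}(\xi)e^{-\epsilon|\xi|}$ is uniformly bounded) minus the tail $\int_{2\alpha+\tau(t)}^{\pi}$ (which both of you control pointwise by $\int |h(\beta)|/(\tilde\alpha+\tilde\beta)\,d\beta$ and close with Hilbert's inequality after shifting by $\tau(t)$). No gap.
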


\begin{proof}
Let
\begin{align*}
    h_{ex}(\alpha)=\bigg\{\begin{array}{cc}
 h(\alpha)&  -\pi\leq \alpha \leq \pi,\\
          0 & \alpha<-\pi \text{ or } \alpha>\pi,
      \end{array}
\end{align*}
\begin{align*}
G(\alpha)=\int_{2\alpha+\tau(t)}^{\infty}\frac{h(\beta)(\alpha-\beta)}{(\alpha-\beta)^2+\epsilon^2}d\beta.
\end{align*}
Then 
\begin{align*}
    |G(\alpha-\tau(t))|=|\int_{2\alpha}^{\infty}\frac{h(\beta-\tau(t))(\alpha-\beta)}{(\alpha-\beta)^2+\epsilon^2}d\beta|\lesssim \int_{2\alpha}^{\infty}|\frac{h_{ex}(\beta-\tau(t))}{(\alpha+\beta)}|d\beta.
\end{align*}
By  Hilbert's Inequality, we have
\begin{align*}
    \|G(\alpha-\tau(t))\|_{L^2[0,\infty)}\lesssim \|h_{ex}(\alpha)\|_{L^2[-\tau(t),\infty)}.
\end{align*}
Moreover, since 
\[
H_{\mathcal{R}}(\frac{\epsilon}{\beta^2+\epsilon^2})=\frac{\beta}{\beta^2+\epsilon^2},
\]
we have
\[
\int_{-\infty}^{\infty}\frac{h_{ex}(\beta)(\alpha-\beta)}{(\alpha-\beta)^2+\epsilon^2}d\beta=-\int_{-\infty}^{\infty}H_{\mathcal{R}}(h_{ex})(\beta)\frac{\epsilon}{(\alpha-\beta)^2+\epsilon^2}d\beta.
\]
Here $H_{\mathcal{R}}$ is the Hilbert transform on the real line. Hence
\begin{align*}
   \|\int_{-\infty}^{\infty}\frac{h_{ex}(\beta)(\alpha-\beta)}{(\alpha-\beta)^2+\epsilon^2}d\beta\|_{L^2[-\tau(t),\frac{\pi}{4}]}\lesssim \|h_{ex}\|_{L^2(-\infty,\infty)}\lesssim\|h\|_{L^2[-\tau(t),\pi]}.
\end{align*}
Since 
\begin{align*}
    &\int_{-\infty}^{\infty}\frac{h_{ex}(\beta)(\alpha-\beta)}{(\alpha-\beta)^2+\epsilon^2}- G(\alpha)=\int_{-\tau(t)}^{2\alpha+\tau(t)}\frac{h(\beta)(\alpha-\beta)}{(\alpha-\beta)^2+\epsilon^2}d\beta,
\end{align*}
we get the result.
\end{proof}

Now we introduce some corollaries concerning $M_{1,2}^{\ep}(h)$ and $M_{2,1}^{\ep}(h).$ 
As in the lemmas \ref{coro22epm12} and \ref{coro22ep}, \ref{30101} for the case when $\kappa(t)>0$, we have the following estimates:
\begin{lemma}\label{coro22epmodel2}
For any $k\geq 1$, if $h\in W^{0,k}$, we have 
\begin{equation}\label{3coro22ep02model2}
\begin{split}
&\|(\al+\tau(t))\partial_{\al}^{k}\int_{-\tau(t)}^{2\al+\tau(t)}\frac{h(\al)-h(\beta)}{(\al-\beta)^2+\ep^2}d\beta-(\al+\tau(t))\int_{-\tau(t)}^{2\al+\tau(t)}\frac{h^{(k)}(\al)-h^{(k)}(\beta)}{(\al-\beta)^2+\ep^2}d\beta\|_{L_{\al}^{2}[-\tau(t),\frac{\pi}{4}]}\lesssim \|h\|_{H_{\al}^{k}[-\tau(t),\pi]}.
\end{split}
\end{equation}
Moreover, for any $l\leq k-1$, we have
\begin{equation}\label{3coro22ep0202model2}
\begin{split}
&\|\partial_{\al}^{l}\int_{-\tau(t)}^{2\al+\tau(t)}\frac{h(\al)-h(\beta)}{(\al-\beta)^2+\ep^2}d\beta-\int_{-\tau(t)}^{2\al+\tau(t)}\frac{\partial_{\al}^{l}h(\al)-\partial_{\beta}^{l}h(\beta)}{(\al-\beta)^2+\ep^2}d\beta\|_{L_{\al}^{2}[-\tau(t),\frac{\pi}{4}]}\lesssim \|h\|_{H_{\al}^{k}[-\tau(t),\pi]},
\end{split}
\end{equation}
and
\begin{equation}\label{3coro22ep01model2}
\|\partial_{\al}^{l}\int_{-\tau(t)}^{2\al+\tau(t)}\frac{h(\al)-h(\beta)}{(\al-\beta)^2+\ep^2}d\beta\|_{L_{\al}^{2}[-\tau(t),\frac{\pi}{4}]}\lesssim \|h\|_{H_{\al}^{k}[-\tau(t),\pi]}.
\end{equation}
\end{lemma}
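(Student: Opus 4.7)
The plan is to deduce this lemma directly from the already-established Lemma \ref{coro22ep} by translating the origin from $0$ to $-\tau(t)$. The key observation is that in the ambient setting of the generalized equation \eqref{3Tequation2}, $W^{0,k}$ is implicitly intersected with the support class $S$ of \eqref{hspacew}, so $h$ vanishes on $(-\pi,-\tau(t))$. Combined with $h(\cdot,\gamma,t)\in H^k_\alpha[-\pi,\pi]$ and the Sobolev embedding $H^k\hookrightarrow C^{k-1}$ (valid since $k\geq 1$), this forces the boundary vanishing
\[
\partial_\alpha^j h(-\tau(t),\gamma,t) = 0, \qquad 0 \leq j \leq k-1.
\]

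I then set $\tilde h(\tilde\alpha,\gamma,t) := h(\tilde\alpha - \tau(t),\gamma,t)$; for $t_s$ small enough that $\tau(t) < \pi/8$, this satisfies $\tilde h \in H^k_\alpha[0,\pi]$ with $\supp \tilde h \subset [0, \pi/4]$, $\|\tilde h\|_{H^k[0,\pi]} = \|h\|_{H^k[-\tau(t),\pi]}$, and $\tilde h^{(j)}(0)=0$ for $0 \leq j \leq k-1$. Under the substitution $\tilde\beta = \beta + \tau(t)$,
\[
\int_{-\tau(t)}^{2\alpha+\tau(t)} \frac{h(\alpha)-h(\beta)}{(\alpha-\beta)^2+\epsilon^2}\,d\beta = \int_0^{2\tilde\alpha} \frac{\tilde h(\tilde\alpha)-\tilde h(\tilde\beta)}{(\tilde\alpha-\tilde\beta)^2+\epsilon^2}\,d\tilde\beta,
\]
and because $\tilde h^{(j)}(0)=0$ for $j \leq k-1$ the Taylor-polynomial corrections $\tilde h(\tilde\alpha)-\sum_{j=0}^{k-1}\frac{\tilde\alpha^j}{j!}\tilde h^{(j)}(0)$ appearing in Lemma \ref{coro22ep} collapse to $\tilde h(\tilde\alpha)$. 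Applying \eqref{3coro22ep02}, \eqref{3coro22ep0202}, \eqref{3coro22ep01} of Lemma \ref{coro22ep} to $\tilde h$ therefore yields, after translating back, exactly the three claimed bounds \eqref{3coro22ep02model2}, \eqref{3coro22ep0202model2}, \eqref{3coro22ep01model2}.

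I do not anticipate any genuine obstacle. The only bookkeeping points are that the translated domain $[0,\pi/4+\tau(t)]$ remains inside the domain $[0,\pi]$ of Lemma \ref{coro22ep}, which holds automatically once $t_s$ is chosen small, and that translation is an $L^2$-isometry, so the norms on $[-\tau(t),\pi/4]$ on the left-hand sides match the norms on $[0,\pi/4+\tau(t)]$ produced by applying Lemma \ref{coro22ep} to $\tilde h$. In essence this lemma just ports the $\kappa(t)>0$ singular-integral estimates of Lemma \ref{coro22ep} into the $\kappa(t)<0$ regime, with the translation $\tau(t)$ absorbing the moving left endpoint.
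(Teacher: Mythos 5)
Your translation-to-the-origin reduction is essentially the same idea the paper uses: the paper's proof of this lemma simply says it "follows the same proof as in lemma \ref{coro22ep}" after noting $\partial_\alpha^j h|_{\alpha=-\tau(t)}=0$ for $j\leq k-1$ (which, as you correctly observe, comes from $h\in W^{0,k}$ together with the support condition $\supp h\subset[-\tau(t),\pi/8]$ imposed in \eqref{hspacew}). Making the translation $\tilde h(\tilde\alpha)=h(\tilde\alpha-\tau(t))$ explicit, as you do, is a clean way to package the same argument, and the vanishing of the Taylor-polynomial corrections under $\tilde h^{(j)}(0)=0$ is exactly right.

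One bookkeeping point deserves more care than you gave it. You state that the translated domain $[0,\pi/4+\tau(t)]$ "remains inside the domain $[0,\pi]$ of Lemma \ref{coro22ep}," but $[0,\pi]$ is the domain of the \emph{right-hand} $H^k$ norm; the $L^2$ norm on the \emph{left-hand} side of Lemma \ref{coro22ep} is taken over $[0,\pi/4]$, not $[0,\pi]$. After translating $L^2_\alpha[-\tau(t),\pi/4]$ back, you land on $L^2_{\tilde\alpha}[0,\pi/4+\tau(t)]$, which is strictly larger than $[0,\pi/4]$; applying Lemma \ref{coro22ep} as a black box therefore does not quite cover the sliver $[\pi/4,\pi/4+\tau(t)]$. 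The fix is immediate — the proofs of Lemma \ref{coro22k0100}, Corollary \ref{coro22k01}, and Lemma \ref{coro22ep} only use that the target interval is $[0,C]$ with $2C<\pi$ and $\supp h$ compactly inside $[0,\pi]$, so replacing $\pi/4$ by $\pi/4+\tau(t)$ (with $\tau(t)$ small) changes nothing — but it does mean you are reusing the \emph{proof} of the earlier lemma on a slightly enlarged interval rather than its \emph{statement} verbatim, which is in fact exactly what the paper means by "follows the same proof."
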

\begin{proof}
This follows the same proof as in lemma \ref{coro22ep}. Since $h\in W^{0,k}$, we have $\partial_{\alpha}^{j}h|_{\al=-\tau(t)}=0$ when $j \leq k-1$.
\end{proof}
\begin{lemma}\label{coro22epm12model2}
For any $k\geq 1$, if $h\in W^{0,k}$, we have 
\begin{equation}\label{3coro22ep020102model2}
\begin{split}
&\|(\al+\tau(t))\partial_{\al}^{k}\int_{-\tau(t)}^{2\al+\tau(t)}\frac{(h(\al)-h(\beta))(\al-\beta)\ep}{((\al-\beta)^2+\ep^2)^2}d\beta-(\al+\tau(t))\int_{-\tau(t)}^{2\al+\tau(t)}\frac{(h^{(k)}(\al)-h^{(k)}(\beta))(\al-\beta)\ep}{((\al-\beta)^2+\ep^2)^2}d\beta\|_{L_{\al}^{2}[-\tau(t),\frac{\pi}{4}]}\\
&\lesssim \|h\|_{H_{\al}^{k}[-\tau(t),\pi]}.
\end{split}
\end{equation}
Moreover, for any $l\leq k-1$, we have
\begin{equation}\label{3coro22ep020202}
\begin{split}
&\|\partial_{\al}^{l}\int_{-\tau(t)}^{2\al+\tau(t)}\frac{(h(\al)-h(\beta))(\al-\beta)\ep}{((\al-\beta)^2+\ep^2)^2}d\beta-\int_{-\tau(t)}^{2\al+\tau(t)}\frac{(\partial_{\al}^{l}h(\al)-\partial_{\beta}^{l}h(\beta))(\al-\beta)\ep}{((\al-\beta)^2+\ep^2)^2}d\beta\|_{L_{\al}^{2}[-\tau(t),\frac{\pi}{4}]}\\
&\lesssim \|h\|_{H_{\al}^{k}[-\tau(t),\pi]},
\end{split}
\end{equation}
and
\begin{equation}\label{3coro22ep010202}
\|\partial_{\al}^{l}\int_{-\tau(t)}^{2\al+\tau(t)}\frac{(h(\al)-h(\beta))(\alpha-\beta)\epsilon}{((\al-\beta)^2+\ep^2)^2}d\beta\|_{L_{\al}^{2}[-\tau(t),\frac{\pi}{4}]}\lesssim \|h\|_{H_{\al}^{k}[-\tau(t),\pi]},
\end{equation}
\end{lemma}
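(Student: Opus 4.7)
The plan is to reduce each of the three estimates to the already-established Lemma \ref{coro22epm12} from the $\kappa(t)>0$ analysis by means of a simple translation of the integration variable. Because the kernel $\frac{(\alpha-\beta)\epsilon}{((\alpha-\beta)^2+\epsilon^2)^2}$ depends only on the difference $\alpha-\beta$, the substitution $\tilde{\alpha}=\alpha+\tau(t)$, $\tilde{\beta}=\beta+\tau(t)$, together with $\tilde{h}(\tilde{\alpha})=h(\tilde{\alpha}-\tau(t))$, converts the integral appearing in the present lemma into
\[
\int_{0}^{2\tilde{\alpha}}\frac{(\tilde{h}(\tilde{\alpha})-\tilde{h}(\tilde{\beta}))(\tilde{\alpha}-\tilde{\beta})\epsilon}{((\tilde{\alpha}-\tilde{\beta})^2+\epsilon^2)^2}\,d\tilde{\beta},
\]
which is exactly the object analyzed in Lemma \ref{coro22epm12}. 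Likewise the weight $(\alpha+\tau(t))$ on the left side of \eqref{3coro22ep020102model2} becomes $\tilde{\alpha}$, and the norm $\|h\|_{H_{\alpha}^{k}[-\tau(t),\pi]}$ equals $\|\tilde{h}\|_{H_{\tilde{\alpha}}^{k}[0,\pi+\tau(t)]}$.

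The next step is to verify that the Taylor-subtraction term $\sum_{j=0}^{k-1}\frac{\tilde{\alpha}^{j}}{j!}\tilde{h}^{(j)}(0)$ that appears in the statement of Lemma \ref{coro22epm12} vanishes identically in our setting, so that Lemma \ref{coro22epm12} may be applied directly to $\tilde{h}$ without a residual Taylor correction. This follows from the hypothesis $h\in W^{0,k}\cap\{\supp h\subset[-\tau(t),\pi/8]\}$: since $h\in H^{k}[-\pi,\pi]$ and $k\le 12$, Sobolev embedding gives $h\in C^{k-1}[-\pi,\pi]$; combined with $h\equiv 0$ on $[-\pi,-\tau(t)]$, this forces $h^{(j)}(-\tau(t))=0$ for every $0\le j\le k-1$, i.e.\ $\tilde{h}^{(j)}(0)=0$ for the same range.

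With these two preparations in hand, estimates \eqref{3coro22ep020102model2}, \eqref{3coro22ep020202}, and \eqref{3coro22ep010202} are immediate consequences of, respectively, \eqref{3coro22ep020102}, \eqref{3coro22ep020202}, and \eqref{3coro22ep0102020} in Lemma \ref{coro22epm12}, applied to $\tilde{h}$ on the interval $[0,\pi+\tau(t)]$ rather than $[0,\pi]$. Since $\tau(t)\le\tau(t_{s})$ can be taken as small as we wish (by shrinking $t_{s}$ in lines with \eqref{con:tautneg} and the smallness assumptions already imposed for the arc-chord and refined R-T conditions), extending the underlying interval from $[0,\pi]$ to $[0,\pi+\tau(t)]$ is harmless: the constituent ingredients in the proof of Lemma \ref{coro22epm12}, namely Hilbert's inequality, Lemma \ref{coro22k01}, and Lemma \ref{farboundarybound}, all go through verbatim.

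I do not anticipate any substantive obstacle; the lemma is essentially a translated restatement of Lemma \ref{coro22epm12}, and the only place that might require attention is the bookkeeping that ensures the boundary terms $\partial_{\alpha}^{l-l'-1}\frac{\tilde{h}^{(l')}(0)-\tilde{h}^{(l')}(\tilde{\alpha})}{\tilde{\alpha}}$ and $\partial_{\alpha}^{l-l'-1}\frac{\tilde{h}^{(l')}(2\tilde{\alpha})-\tilde{h}^{(l')}(\tilde{\alpha})}{\tilde{\alpha}}$ appearing after integration by parts remain controlled by $\|\tilde{h}\|_{H^{k}}$ via Lemma \ref{farboundarybound}, a step that relies precisely on the vanishing of $\tilde{h}^{(j)}(0)$ established in the previous paragraph.
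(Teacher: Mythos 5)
Your proposal is correct and uses essentially the same approach as the paper: the paper's proof is the one-line remark that the argument of Lemma \ref{coro22epm12} carries over because $h\in W^{0,k}$ forces $\partial_\alpha^j h(-\tau(t))=0$ for $j\le k-1$, which is exactly the observation you make (via Sobolev embedding plus the support condition) to kill the Taylor-subtraction terms after translating $\alpha\mapsto\alpha+\tau(t)$. Your explicit change of variables and the remark about the interval stretching to $[0,\pi+\tau(t)]$ are just a more careful spelling-out of the paper's terse "follows the same proof" statement.
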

\begin{proof}
This follows the same proof as in lemma \ref{coro22epm12}. Since $h\in W^{0,k}$, we have $\partial_{\alpha}^{j}h|_{\al=-\tau(t)}=0$ when $j \leq k-1$.
\end{proof}
\begin{lemma}\label{3lemmaotherterms03}
For $1\leq k\leq 12$, $h\in W^{0,k}$, $g\in H_{\alpha}^{k}[-\pi,\pi]\cap\{\supp g\in[-\tau(t),\frac{\pi}{8}]\}$, we have 
\begin{align}\label{3boundedmain02}
\|&\partial_{\alpha}^{k}(\lambda(\alpha+\tau(t))(L_2^{-}(h)(\alpha)\int_{-\tau(t)}^{2\alpha+\tau(t)}\frac{g(\alpha)-g(\beta)}{(\alpha-\beta)^2+\epsilon^2}d\beta+
\int_{-\tau(t)}^{2\alpha+\tau(t)}\frac{(g(\alpha)-g(\beta))(\alpha-\beta)\epsilon}{((\alpha-\beta)^2+\epsilon^2)^2}d\beta \frac{2}{\pi}L_1^{-}(h)(\alpha)))\\\nonumber
&-\lambda(\alpha+\tau(t))L_2^{-}(h)(\alpha)\int_{-\tau(t)}^{2\alpha+\tau(t)}\frac{g^{(k)}(\alpha)-g^{(k)}(\beta)}{(\alpha-\beta)^2+\epsilon^2}d\beta\\\nonumber
&-\lambda(\alpha+\tau(t))\int_{-\tau(t)}^{2\alpha+\tau(t)}\frac{(g^{(k)}(\alpha)-g^{(k)}(\beta))(\alpha-\beta)\epsilon}{((\alpha-\beta)^2+\epsilon^2)^2}d\beta \frac{2}{\pi}L_1^{-}(h)(\alpha))\|_{W^{0,0}}
\leq C(\|h\|_{W^{0,k}})\|g\|_{ H_{\alpha}^{k}[-\pi,\pi]},
\end{align}
and
\begin{equation}\label{3boundedB2}
\|\sum_{i}B_i(h)\|_{W^{0,k}}\lesssim C(\|h\|_{W^{0,k}}), \quad \|\sum_{i}B_{M,i}(h)\|_{W^{0,k}}\lesssim C(\|h\|_{W^{0,k}}) .
\end{equation}
For $0\leq j\leq k-1$, we have
\begin{equation}\label{3boundedlower02}
\begin{split}
&\quad\|\lambda(\alpha+\tau(t))(L_2^{-}(h)(\alpha)\int_{-\tau(t)}^{2\alpha+\tau(t)}\frac{g(\alpha)-g(\beta)}{(\alpha-\beta)^2+\epsilon^2}d\beta+
\int_{-\tau(t)}^{2\alpha+\tau(t)}\frac{(g(\alpha)-g(\beta))(\alpha-\beta)\epsilon}{((\alpha-\beta)^2+\epsilon^2)^2}d\beta \frac{2}{\pi}L_1^{-}(h)(\alpha))\|_{W^{0,j}}\\
&\lesssim C(\|h\|_{W^{0,k}})\||g\|_{ H_{\alpha}^{k}[-\pi,\pi]}.
\end{split}
\end{equation} 
\end{lemma}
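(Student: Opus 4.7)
The plan is to mirror the strategy of lemma \ref{3lemmaotherterms} and corollary \ref{3lemmaotherterms02} from the $\kappa(t)>0$ case, translated to the setting where the left endpoint of the interval of integration sits at $-\tau(t)$ rather than $0$. All three estimates share a common decomposition scheme: when $\partial_\alpha^k$ is applied to $\lambda(\alpha+\tau(t))\,L_i^{-}(h)$ times one of the singular integrals appearing in \eqref{02M12perturb} or \eqref{02M21perturb}, one separates the single top-order contribution, in which all $k$ derivatives fall on $g$ inside the integral, from commutator contributions in which at least one derivative falls on $\lambda$, on $L_i^{-}(h)$, or on the integration limits $-\tau(t)$, $2\alpha+\tau(t)$.

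For \eqref{3boundedmain02}, the principal term is precisely what lemmas \ref{coro22epmodel2} and \ref{coro22epm12model2} are designed to handle, after one multiplies and divides by the weight $\alpha+\tau(t)$. That weight is absorbed by the vanishing condition \eqref{3L002} together with the smoothness bound \eqref{Lhcondition02bound}, which yield $\|L_i^{-}(h)/(\alpha+\tau(t))\|_{C_\gamma^{0}([-1,1],C_\alpha^{k+1}[-\tau(t),\pi/4])}\lesssim \|L_i^{-}(h)\|_{C_\gamma^{0}([-1,1],C_\alpha^{k+2}[-\tau(t),\pi/4])}\lesssim 1$. The errors produced by \eqref{3coro22ep02model2} and \eqref{3coro22ep020102model2} are then $\lesssim \|g\|_{H_\alpha^k[-\pi,\pi]}$, as required. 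The remaining commutator terms, in which $j\le k-1$ derivatives remain inside the integral while the rest hit the smooth factors, are controlled by \eqref{3coro22ep0202model2}, \eqref{3coro22ep020202}, \eqref{3coro22ep01model2} and \eqref{3coro22ep010202} combined with the smoothness of $\lambda$ and $L_i^{-}(h)$.

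For \eqref{3boundedB2}, the bound on $\|\sum_i B_i(h)\|_{W^{0,k}}$ is immediate from the smoothness condition \eqref{Bcondition02bound}. For $\|B_{M,i}(h)\|_{W^{0,k}}$ one uses the representation \eqref{BMichequ02} and splits $L_{B,i}(h)(\alpha,\beta,\gamma,t)=L_{B,i}(h)(\alpha,\alpha,\gamma,t)+[L_{B,i}(h)(\alpha,\beta,\gamma,t)-L_{B,i}(h)(\alpha,\alpha,\gamma,t)]$ just as in lemma \ref{Lh3lemma}: the first piece produces a Hilbert-transform type integral of $D^{-60+b_i}(h)$, bounded via Hilbert's inequality together with \eqref{3d-ipro}, while the second piece has a divided-difference kernel lying in $C_{\alpha,\beta}^{k+1}$ by \eqref{LBhcondition02bound} and is estimated by elementary product bounds. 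Estimate \eqref{3boundedlower02} is the easiest of the three: since only $j\le k-1$ derivatives are taken, no top-order identity is needed and each resulting piece is controlled directly by \eqref{3coro22ep01model2} and \eqref{3coro22ep010202} after the product rule, without invoking \eqref{3coro22ep02model2} or \eqref{3coro22ep020102model2}.

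The main obstacle I anticipate is bookkeeping the boundary traces produced when $\partial_\alpha^k$ hits the moving endpoints $-\tau(t)$ and $2\alpha+\tau(t)$ of the integration interval. Differentiating the upper limit gives terms of the form $(\partial_\alpha^{j'}g)(2\alpha+\tau(t))/(\alpha+\tau(t))^{m}$, and differentiating the lower limit gives $(\partial_\alpha^{j'}g)(-\tau(t))/(\alpha+\tau(t))^{m}$; the latter vanishes for $j'\le k-1$ because $g\in H_\alpha^k[-\pi,\pi]$ with $\supp g\subset[-\tau(t),\pi/8]$ forces $\partial_\alpha^{j'}g(-\tau(t))=0$ for $j'\le k-1$ via Sobolev embedding, while the former is bounded by the analogue of lemma \ref{farboundarybound} after rewriting the quotient as an integral average of a $k$-th derivative of $g$ over $[-\tau(t),2\alpha+\tau(t)]$. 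Once this boundary accounting is carried out, the three estimates reduce to direct assembly of the cited auxiliary lemmas exactly as in the proof of lemma \ref{3lemmaotherterms}.
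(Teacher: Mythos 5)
Your proposal is correct and follows essentially the same route as the paper: for \eqref{3boundedmain02} and \eqref{3boundedlower02} the paper likewise combines the vanishing/smoothness conditions on $L_i^{-}(h)$ with lemmas \ref{coro22epmodel2} and \ref{coro22epm12model2}, and for \eqref{3boundedB2} it invokes the conditions on $B_i$ together with lemma \ref{3T3goodbehaviour}, which already contains the $L_{B,i}(h)(\alpha,\beta)\to L_{B,i}(h)(\alpha,\alpha)$ splitting and Hilbert-inequality step you describe.
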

\begin{proof}
\eqref{3boundedlower02} ,\eqref{3boundedmain02} follows from the conditions of $L_i^{-}(h)$ and the estimates \eqref{3coro22ep02model2}, \eqref{3coro22ep01model2} in lemma \ref{coro22epmodel2}, estimates \eqref{3coro22ep020102model2}, \eqref{3coro22ep010202} in lemma \ref{coro22epm12model2}.
  
  \eqref{3boundedB2} follows from the condition of $B_i$ in \eqref{3Tequation2} and estimates for $B_{M,i}$ in lemma \ref{3T3goodbehaviour}.
\end{proof}
\begin{corollary}\label{3010102}
 If $k\geq 1$, $g\in L_{\al}^{2}[-\tau(t),\pi]\cap \{g(x)=0|x\geq \frac{\pi}{2}\}$,  $L_1^{-}(h)$, $L_2^{-}(h)$ satisfying conditions the following conditions:

\[
 \quad 18 |\Im L_{1}^{-}(h)(\al,\g,t)|+18 |\Im L_{2}^{-}(h)(\al,\g,t)|\leq -\Re L_2^{-}(h)(\al,\g,t), \text{ when } \lambda(\alpha+\tau(t))\neq 0, 
\]
\[
L_{i}^{-}(h)(-\tau(t),\g,t)=0,
\]
\[
\|L_{i}^{-}(h)(\alpha,\g,t)\|_{C_{\alpha}^{2}[-\tau(t),\frac{\pi}{4}]}\lesssim 1,
\]
 then we have
\begin{equation}\label{3linearcoro0103}
\begin{split}
\Re &<g, \lambda(\al+\tau(t))L_1^{-}(h)(\al)\int_{-\tau(t)}^{2\al+\tau(t)}\frac{(g(\al)-g(\beta))(\al-\beta)\ep}{((\al-\beta)^2+\ep^2)^2}d\beta\frac{2}{\pi}\\
&+\lambda(\alpha+\tau(t))L_2^{-}(h)(\alpha)\int_{-\tau(t)}^{2\alpha+\tau(t)}\frac{(g(\alpha)-g(\beta))}{(\alpha-\beta)^2+\epsilon^2}d\beta>_{L_{\al}^{2}[-\tau(t),\pi]}\\
&\lesssim \|g\|_{L_{\al}^{2}[-\tau(t),\pi]}^{2}.
\end{split}
\end{equation}
\end{corollary}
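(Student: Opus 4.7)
The statement is a direct analog of Lemma \ref{30101} (the energy inequality for $\kappa(t)>0$), with the left endpoint $0$ replaced by $-\tau(t)$ throughout: the outer $L^2$ norm is over $[-\tau(t),\pi]$, the principal-value window is $[-\tau(t),2\alpha+\tau(t)]$, the cutoff is $\lambda(\alpha+\tau(t))$, and the vanishing condition holds at $-\tau(t)$ rather than $0$. My plan is to carry out verbatim the three-step argument of Lemma \ref{30101} on this shifted domain, or equivalently after the substitution $\tilde\alpha=\alpha+\tau(t)$, $\tilde g(\tilde\alpha)=g(\tilde\alpha-\tau(t))$, $\tilde L_i(\tilde\alpha)=L_i^{-}(h)(\tilde\alpha-\tau(t))$, which maps the entire setup back to the $[0,\cdot]$ framework in which the refined G\r{a}rding's inequality Lemma \ref{Garding} is stated.

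First I would split the left-hand side into its four real/imaginary cross-terms exactly as in \eqref{3E3split01} and \eqref{3E1split}, replacing $[0,2\alpha]$ by $[-\tau(t),2\alpha+\tau(t)]$. This produces terms involving the kernels $\frac{(\alpha-\beta)\epsilon}{((\alpha-\beta)^2+\epsilon^2)^2}$ and $\frac{1}{(\alpha-\beta)^2+\epsilon^2}$, paired with $\Re L_i^-$ and $\Im L_i^-$ against $\Re g$ and $\Im g$. Next I would prove the shifted analog of Lemma \ref{integration by part control}: using $L_1^{-}(h)(-\tau(t),\gamma,t)=0$ and $\|L_1^{-}(h)\|_{C^1_\alpha[-\tau(t),\pi/4]}\lesssim 1$, we get $|L_1^{-}(h)(\alpha)/(\alpha+\tau(t))|\lesssim 1$; then splitting $g(\alpha)-g(\beta)$ and moving a derivative onto the factor $\lambda(\alpha+\tau(t))L_1^{-}(h)(\alpha)$ via commutator estimates, together with Hilbert's and Young's inequality, absorbs the $\Re L_1^-$ contribution into $\lesssim\|g\|_{L^2}^2$.

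Third, for the remaining four terms I would apply the refined G\r{a}rding's inequality Lemma \ref{Garding} after the translation $\tilde\alpha=\alpha+\tau(t)$. The vanishing condition \eqref{3L002} gives $|L_i^{-}(h)(\alpha)|\lesssim \alpha+\tau(t)$, which in the translated variable becomes the hypothesis $|c(\tilde\alpha)|\lesssim\tilde\alpha$ of Lemma \ref{Garding}. The refined R-T condition \eqref{3Lbound02}, namely $18|\Im L_1^-|+18|\Im L_2^-|\le -\Re L_2^-$, is precisely twice the $C=9$ threshold appearing in the Lemma; choosing $c=\Im L_1^-$ or $\Im L_2^-$ and $\tilde c=-\Re L_2^-/9$ verifies the inequality $C|c|\le \tilde c$ exactly. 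The diagonal $\tilde c$-weighted good term (coming from $-\Re L_2^-$ paired with the same component of $g$) absorbs the bad cross-terms, with remainder $\tilde{B.T}\lesssim \|g\|_{L^2}^2$.

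The proof is structurally identical to Lemma \ref{30101}, so there is no essentially new obstacle; the only point requiring care is that Lemma \ref{Garding} is phrased for cutoff-supported functions on $[0,\pi]$, and one must check that after translating by $\tau(t)$ the support conditions $\supp c,\supp \tilde c\subset[0,\pi/4]$ still hold. Since $\supp\lambda(\alpha+\tau(t))\subset[-\tau(t),20\delta]$ and $\tau(t)$ can be made arbitrarily small (with $t_s$), the translated cutoffs lie in $[0,20\delta+\tau(t)]\subset[0,\pi/4]$, so Lemma \ref{Garding} applies without modification and the argument closes.
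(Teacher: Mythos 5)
Your proposal is correct and matches the paper's approach: the paper's entire proof is the change of variables $g^{\tau}(\alpha)=g(\alpha-\tau(t))$, $L_i^{-,\tau}(\alpha)=L_i^{-}(h)(\alpha-\tau(t))$ that maps the setup to the $[0,\pi]$ domain, followed by a direct citation of Lemma \ref{30101} and the observation that $\supp\lambda(\alpha+\tau(t))\subset[-\tau(t),\pi/2]$ for $\delta,t$ small. Your re-derivation of the three-step argument of Lemma \ref{30101} on the shifted domain is an equivalent route, but the paper does not repeat it, simply invoking the existing lemma after the substitution.
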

\begin{proof}
We could use lemma \ref{30101}. By letting $g^{\tau}(\alpha)=g(\alpha-\tau(t))$, $L_{i}^{-,\tau}(h)(\alpha)=L_{i}^{-}(h)(\alpha-\tau(t))$, we have
\begin{equation*}
\begin{split}
\Re &<g^{\tau}, \lambda(\al)L_1^{-,\tau}(h)(\al)\int_{0}^{2\al}\frac{(g^{\tau}(\al)-g^{\tau}(\beta))(\al-\beta)\ep}{((\al-\beta)^2+\ep^2)^2}d\beta\frac{2}{\pi}\\
&+\lambda(\alpha)L_2^{-,\tau}(h)(\al)\int_{0}^{2\alpha}\frac{(g^{\tau}(\alpha)-g^{\tau}(\beta))}{(\alpha-\beta)^2+\epsilon^2}d\beta>_{L_{\al}^{2}[0,\pi]}\\
&\lesssim \|g^{\tau}\|_{L_{\al}^{2}[0,\pi]}^{2}.
\end{split}
\end{equation*}
 Since $\supp \lambda(\alpha+\tau(t))\subset [-\tau(t),\frac{\pi}{2}]$ when $\delta$ \eqref{lambdadefi} and $t$ \eqref{defkappa} are sufficiently small,  we can change $\alpha$ to $\alpha+\tau(t)$ to get \eqref{3linearcoro0102}.
\end{proof}
\begin{corollary}\label{3010102cor}
 If $k\geq 1$ , $g\in H_{\alpha}^{k}[-\pi,\pi]\cap\{\supp g\in[-\tau(t),\frac{\pi}{8}]\}$, $h\in W^{0, k}$ with $L_1^{-}(h)$, $L_2^{-}(h)$ satisfying conditions in \eqref{3Tequation2},  we have
\begin{align*}
\Re &<h, \lambda(\al+\tau(t))L_1^{-}(h)(\alpha)\int_{-\tau(t)}^{2\al+\tau(t)}\frac{(g(\al)-g(\beta))(\al-\beta)\ep}{((\al-\beta)^2+\ep^2)^2}d\beta\frac{2}{\pi}\\
&-\lambda(\alpha+\tau(t))L_2^{-}(h)(\alpha)\int_{-\tau(t)}^{2\alpha+\tau(t)}\frac{(g(\alpha)-g(\beta))}{(\alpha-\beta)^2+\epsilon^2}d\beta \frac{2}{\pi}>_{H_{\al}^{k}[-\tau(t),\pi]}\\
&\lesssim \|g\|_{H^{k}[-\tau(t),\pi]}^2 C(\|h\|_{C^{0}_{\g}([-1,1], H^{k}[-\tau(t),\pi])}).
\end{align*}
\end{corollary}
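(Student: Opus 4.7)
The plan is to derive this $H^k$ bilinear estimate by the same commutator/splitting strategy used in Corollary~\ref{301} for the $\kappa(t)>0$ regime, leveraging the $L^2$ positivity from Corollary~\ref{3010102} once the highest-order term is isolated. Concretely, I would rewrite
\[
\langle h,\, \mathrm{Op}^{\epsilon}(g)\rangle_{H^{k}_{\alpha}[-\tau(t),\pi]}
=\sum_{j=0}^{k}\binom{k}{j}\langle \partial_{\alpha}^{k}h,\, \partial_{\alpha}^{k-j}\mathrm{Op}^{\epsilon}_j(g)\rangle_{L^{2}_{\alpha}[-\tau(t),\pi]} +\text{lower orders},
\]
where $\mathrm{Op}^{\epsilon}(g)$ denotes the combined operator inside the angle brackets. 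Using \eqref{3boundedmain02} of Lemma~\ref{3lemmaotherterms03}, the top derivative $\partial_{\alpha}^{k}\mathrm{Op}^{\epsilon}(g)$ equals the \emph{principal part}
\[
\lambda(\alpha+\tau(t))L_2^{-}(h)\!\!\int_{-\tau(t)}^{2\alpha+\tau(t)}\!\!\frac{g^{(k)}(\alpha)-g^{(k)}(\beta)}{(\alpha-\beta)^2+\epsilon^2}d\beta
+\tfrac{2}{\pi}\lambda(\alpha+\tau(t))L_1^{-}(h)\!\!\int_{-\tau(t)}^{2\alpha+\tau(t)}\!\!\frac{(g^{(k)}(\alpha)-g^{(k)}(\beta))(\alpha-\beta)\epsilon}{((\alpha-\beta)^2+\epsilon^2)^2}d\beta,
\]
up to a remainder whose $L^{2}_{\alpha}$-norm is controlled by $C(\|h\|_{W^{0,k}})\|g\|_{H^{k}[-\pi,\pi]}$.

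For the principal part I would apply Corollary~\ref{3010102} with $g^{(k)}$ in the role of $g$. This is legitimate because $g^{(k)}\in L^{2}_{\alpha}[-\tau(t),\pi]$ with $\operatorname{supp}g^{(k)}\subset[-\tau(t),\pi/8]\subset[-\tau(t),\pi/2]$, and the refined R--T condition \eqref{3Lbound02} together with the vanishing condition \eqref{3L002} on $L_i^{-}(h)$ are assumed to hold. Corollary~\ref{3010102} then yields $\Re$-positivity up to $\|g^{(k)}\|_{L^2}^2\lesssim\|g\|_{H^k}^2$. The lower-order contributions (derivatives distributed between $g$ and the coefficients $L_i^{-}(h)$, plus the remainders from the commutator in \eqref{3boundedmain02}) are all bounded by the smoothness conditions on $L_i^{-}(h)$ and \eqref{3boundedlower02}, and combined with Cauchy--Schwarz on $\partial_\alpha^j h$ they contribute terms of the form $C(\|h\|_{W^{0,k}})\|g\|_{H^k}^2$.

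The main technical obstacle is that, unlike the $\kappa(t)>0$ case where the base interval $[0,\pi]$ and the cut function $\lambda(\alpha)$ are aligned so that boundary derivatives $g^{(j)}(0)$ can be corrected via the explicit auxiliaries $b_{i,j}^{\epsilon}$ from \eqref{3bj1}--\eqref{3bj2}, here every object carries a time-dependent translation by $\tau(t)$ and the support condition $g\in\{\operatorname{supp}g\subset[-\tau(t),\pi/8]\}$ already forces $g^{(j)}(-\tau(t))=0$ (after mollifying or approximating, as in the proof of Corollary~\ref{32}). Thus the expected route is a translation $\tilde{g}(\alpha)=g(\alpha-\tau(t))$, $\tilde{L}_i^{-}(h)(\alpha)=L_i^{-}(h)(\alpha-\tau(t))$, exactly as in the proof of Corollary~\ref{3010102}, reducing the estimate to an $H^{k}_\alpha[0,\pi]$ bound covered by Corollary~\ref{301} after all boundary contributions vanish identically. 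The support and vanishing conditions let us drop the $b_{i,j}^{\epsilon}$ correction terms, and the final estimate follows by Gronwall-free direct summation of the controlled remainders against $\partial_\alpha^k h$ via Cauchy--Schwarz.
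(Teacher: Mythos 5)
Your proposal is correct and matches the paper's proof, which is exactly the one-line argument of combining Lemma~\ref{3lemmaotherterms03} (specifically \eqref{3boundedmain02} to isolate the principal part and \eqref{3boundedlower02} for the lower-order remainders) with the $L^2$ G\aa{}rding-type estimate of Corollary~\ref{3010102} applied to $g^{(k)}$. The additional discussion of translating by $\tau(t)$ to reduce to Corollary~\ref{301} is redundant since Corollary~\ref{3010102} already encapsulates that translation internally, and the Leibniz display as written is not quite the correct formalism (the $H^k$ pairing is a sum of $L^2$ pairings of matching derivatives, not a binomial redistribution), but the underlying strategy is sound and identical to the paper's.
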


\begin{proof}
We can use corollary \ref{3lemmaotherterms03} and \ref{3010102}, conditions of $L_i^{-}(h)$ in \eqref{3Tequation2} to get the result.
\end{proof}
\begin{corollary}\label{3010102limit}
For $g\in H_{\alpha}^{1}[-\pi,\pi]\cap\{\supp g\in[-\tau(t),\frac{\pi}{8}]\}$, $h\in W^{0,1}$ with $L_1^{-}(h)$, $L_2^{-}(h)$ satisfying conditions in \eqref{3Tequation2},  we have for $k_1=0,1$
\begin{align*}
\Re &<h, \lambda(\al+\tau(t))L_1^{-}(h)(\alpha)\frac{dg(\alpha)}{d\alpha}\\
&\quad-\lambda(\alpha+\tau(t))L_2^{-}(h)(\alpha)\int_{-\tau(t)}^{2\alpha+\tau(t)}\frac{(g(\alpha)-g(\beta))}{(\alpha-\beta)^2}d\beta \frac{2}{\pi}>_{H_{\al}^{k_1}[-\tau(t),\pi]}\\
&\lesssim \|g\|_{H^{k_1}[-\pi,\pi]}^2 C(\|h\|_{W^{0,1}}).
\end{align*}
\begin{proof}
    We could take the limit of corollary \ref{3010102}.
\end{proof}
\end{corollary}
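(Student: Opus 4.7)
The claim is obtained by passing $\epsilon \to 0^+$ in the uniform $\epsilon$-estimates of Corollary \ref{3010102} (for $k_1=0$) and Corollary \ref{3010102cor} with $k=1$ (for $k_1=1$). Since the hypotheses on $g$ and $h$ here are at least as strong as those in the $\epsilon$-regularized statements, and the right-hand bound $\|g\|_{H^{k_1}}^2 C(\|h\|_{W^{0,1}})$ is independent of $\epsilon$, it survives the limit unchanged; the only nontrivial task is to identify the $\epsilon\to 0$ limit of the left-hand inner product.

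First I would reduce to a statement on $[0,\pi]$ via the $\tau(t)$-translation $\tilde g(\alpha)=g(\alpha-\tau(t))$, $\tilde L_i^-(\alpha)=L_i^-(\alpha-\tau(t))$, mirroring the passage from Lemma \ref{30101} to Corollary \ref{3010102}. After this reduction, the two kernel convergences to verify are
\begin{align*}
\int_{-\tau(t)}^{2\alpha+\tau(t)} \frac{(g(\alpha)-g(\beta))(\alpha-\beta)\epsilon}{((\alpha-\beta)^2+\epsilon^2)^2}\,d\beta &\;\xrightarrow{\epsilon\to 0}\; \tfrac{\pi}{2}\, g'(\alpha), \\
\int_{-\tau(t)}^{2\alpha+\tau(t)} \frac{g(\alpha)-g(\beta)}{(\alpha-\beta)^2+\epsilon^2}\,d\beta &\;\xrightarrow{\epsilon\to 0}\; \mathrm{p.v.}\!\int_{-\tau(t)}^{2\alpha+\tau(t)} \frac{g(\alpha)-g(\beta)}{(\alpha-\beta)^2}\,d\beta,
\end{align*}
in $H^{k_1}_\alpha[-\tau(t),\pi/4]$. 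For smooth $g$, these are the translated analogues of Lemma \ref{limitlemme0} and the weak-compactness step of Corollary \ref{3coro22}, and the smoothness and vanishing conditions on $L_i^-(h)$ from \eqref{3Tequation2} then yield convergence of the full inner product.

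To handle general $g\in H^{k_1}[-\pi,\pi]$ with $\supp g\subset[-\tau(t),\pi/8]$, I would approximate $g$ by $g_n\in C^\infty_c([-\tau(t),\pi/8])$ with $g_n\to g$ in $H^{k_1}$. The limit operator
$$
g \;\longmapsto\; \lambda(\alpha+\tau(t))L_1^-(h)\,g'(\alpha) \;-\; \lambda(\alpha+\tau(t))L_2^-(h)\,\mathrm{p.v.}\!\int_{-\tau(t)}^{2\alpha+\tau(t)} \frac{g(\alpha)-g(\beta)}{(\alpha-\beta)^2}d\beta
$$
is bounded from $H^{k_1}$ into $L^2$ (for $k_1=0$) respectively into $H^1$ via $L^2$ pairing (for $k_1=1$) by Corollary \ref{3coro22} (translated by $\tau(t)$) together with the $C^{k_1+2}$-smoothness of $L_i^-(h)$. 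Consequently the left-hand inner product is continuous in $g$ in the $H^{k_1}$-topology, and passing $n\to\infty$ in the identity already established for smooth $g_n$ produces the desired estimate.

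The main step requiring care is the $k_1=1$ case at the endpoint regularity $g\in H^1$: justifying the $H^1$-strong (or at least weak-$*$) convergence of the two kernel integrals requires the uniform-in-$\epsilon$ $H^1$-bounds of Lemmas \ref{coro22epmodel2} and \ref{coro22epm12model2} together with the commutator identities that were used in Corollary \ref{3coro22}. Once those uniform bounds and the density approximation are combined, the argument is routine and the asserted inequality follows.
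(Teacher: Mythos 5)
Your proposal is correct and takes essentially the same route as the paper: the paper's proof is a one-line appeal to "take the limit of Corollary \ref{3010102}," and you are simply unpacking what that limit entails (translating by $\tau(t)$ back to $[0,\pi]$, invoking the kernel convergences behind Lemma \ref{limitlemme0} and Corollary \ref{3coro22}, using the uniform-in-$\epsilon$ bounds from Lemmas \ref{coro22epmodel2}, \ref{coro22epm12model2}, and approximating $g$ in $H^{k_1}$). You are right that the $k_1=1$ case actually needs the $H^1$ form of the estimate, i.e.\ Corollary \ref{3010102cor} with $k=1$, which the paper's terse proof leaves implicit but is the intended reading.
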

\subsubsection{Energy estimate}
Now we do the energy estimate.
\begin{lemma}
For $h\in U$ \eqref{setU}, we have
\[
\Re<h,T^{\epsilon}(h)>_{U}\lesssim C(\|h\|_{U}).
\]
\end{lemma}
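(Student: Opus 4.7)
The plan is to estimate each seminorm $\|h\|_{W^{j,k-j}}^2$ for $j=0,1,2,3$ (with $k=12$) separately, splitting $T^{\epsilon}(h)=M_{1,2}^{\epsilon}(h)+M_{2,1}^{\epsilon}(h)+\sum_{i}B_{M,i}^{\epsilon}(h)+\sum_{i}B_{i}(h)$ and pairing the resulting terms with the appropriate high derivative of $h$. Only $M_{1,2}^{\epsilon}$ and $M_{2,1}^{\epsilon}$ are genuinely singular (i.e.\ unbounded uniformly in $\epsilon$); the remaining terms are handled by boundedness alone.

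\textbf{The $W^{0,k}$ piece.} I first treat the inner product $\Re\langle h,T^{\epsilon}(h)\rangle_{W^{0,k}}$. Apply $\partial_{\alpha}^{k}$ to $M_{1,2}^{\epsilon}(h)+M_{2,1}^{\epsilon}(h)$ and use Lemma~\ref{3lemmaotherterms03} to isolate the principal piece in which all derivatives hit $h$, picking up a remainder bounded by $C(\|h\|_{W^{0,k}})\|h\|_{W^{0,k}}$ in $L^{2}_{\alpha}$. Pairing the isolated principal piece against $\partial_{\alpha}^{k}h$ and invoking Corollary~\ref{3010102cor} (the refined G\r{a}rding inequality at top order, which is available because the R-T and vanishing conditions for $L_{1}^{-},L_{2}^{-}$ are already verified in \S\ref{kappa2sectiongene} and in Lemma~\ref{RTlemma02}) yields
\[
\Re\langle h,M_{1,2}^{\epsilon}(h)+M_{2,1}^{\epsilon}(h)\rangle_{W^{0,k}}\lesssim C(\|h\|_{W^{0,k}}).
\]
For the $\sum_{i}B_{i}$ contribution I use the uniform bound of condition~\ref{Bcondition02bound} in \eqref{3Tequation2} together with Cauchy--Schwarz. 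For $\sum_{i}B_{M,i}^{\epsilon}$ I use Lemma~\ref{3T3goodbehaviour}, which gives a bound independent of $\epsilon$ in $W^{0,k}$.

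\textbf{The mixed $W^{j,k-j}$ pieces.} For $j=1,2,3$ the approach is the same after applying $\partial_{\gamma}^{j}\partial_{\alpha}^{k-j}$. The Leibniz rule distributes the $\gamma$-derivatives among $h$, the coefficients $L_{1}^{-}(h),L_{2}^{-}(h),L_{B,i}(h)$, and the cut-offs; using the $\gamma$-smoothness conditions on $L_{i}^{-}$, $L_{B,i}$ and $B_{i}$ (items~\ref{Lhcondition02bound}--\ref{Bcondition02lip} of \eqref{3Tequation2}), every term in which at least one $\gamma$-derivative falls on a coefficient is bounded in $L^{2}_{\alpha}$ by $C(\|h\|_{W^{0,k}})\|h\|_{W^{j,k-j}}$. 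This is the reason for introducing $U$ as an intersection of $W^{j,k-j}$ spaces with decreasing spatial index: only $k-j$ spatial derivatives ever hit the ``inner'' $h$, so Lemmas~\ref{coro22epmodel2}--\ref{coro22epm12model2} and Lemma~\ref{3T3goodbehaviour} continue to apply at that spatial regularity level. The single remaining delicate term is the one where all derivatives fall on $h$; here I pair against $\partial_{\gamma}^{j}\partial_{\alpha}^{k-j}h$ and apply Corollary~\ref{3010102cor} with $k$ replaced by $k-j\geq 9$. The same corollary applies because its hypotheses are uniform in $\gamma$ and $L_{i}^{-}(h)$ still satisfies the refined R-T and vanishing conditions.

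\textbf{Summing up and the main obstacle.} Adding the four estimates over $j=0,1,2,3$ gives the desired bound $\Re\langle h,T^{\epsilon}(h)\rangle_{U}\lesssim C(\|h\|_{U})$, independent of $\epsilon$. The main technical point is keeping the $\gamma$-differentiated G\r{a}rding-type step clean: one must check that after bringing $\partial_{\gamma}^{j}$ across the singular kernels in $M_{1,2}^{\epsilon}$ and $M_{2,1}^{\epsilon}$, the remaining highest-order piece really takes the form covered by Corollary~\ref{3010102cor} rather than a genuinely different operator. Because the $\gamma$-dependence of $L_{i}^{-}(h)$ enters only through smooth, $\alpha$-Lipschitz coefficients (by the smoothness conditions and the existence of $D_{\gamma}L_{i}^{-}(h)$), every other contribution is lower order and absorbed into $C(\|h\|_{U})$. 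All other terms---the commutators extracted by Lemma~\ref{3lemmaotherterms03}, the bounded $B_{i}$ and $B_{M,i}^{\epsilon}$, and the cut-off derivatives---are routine once the coefficient conditions in \eqref{3Tequation2} are invoked.
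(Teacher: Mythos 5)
Your proposal is correct and follows essentially the same strategy as the paper: isolating $M_{1,2}^{\epsilon}$ and $M_{2,1}^{\epsilon}$ as the only genuinely singular pieces, dispatching $\sum_i B_{M,i}^{\epsilon}$ by Lemma~\ref{3T3goodbehaviour} and $\sum_i B_i$ by the conditions in \eqref{3Tequation2}, handling the $W^{0,k}$ level via Lemma~\ref{3lemmaotherterms03} and the refined G\r{a}rding-type estimate of Corollary~\ref{3010102cor}, and then, for the mixed $W^{j,k-j}$ levels, applying $\partial_{\gamma}^{j}$ and splitting the Leibniz expansion into the principal term (all $\gamma$-derivatives on $h$, again handled by the G\r{a}rding estimate at spatial order $k-j$) and commutator terms controlled by the $\gamma$-smoothness of the coefficients. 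The one small imprecision is in the lower-order commutator bound: when a $\gamma$-derivative falls on a coefficient, the remaining $h$ carries at most $j-1$ $\gamma$-derivatives, so the natural bound is in terms of $\|h\|_{W^{j-1,k-j+1}}$ (which is exactly why $U$ is defined as that nested intersection), not $\|h\|_{W^{j,k-j}}$ as you wrote; this does not affect the conclusion since both norms are dominated by $\|h\|_{U}$.
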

\begin{proof}
From estimate of $B_{M,i}$ (lemma \ref{3T3goodbehaviour})  and conditions of $B_i(h)$ \eqref{3Tequation2}, we only need to consider $M_{1,1}^{\epsilon}(h)$ and $M_{1,2}^{\epsilon}(h)$:

\[
\Re<h,M_{1,1}^{\ep}(h)+M_{1,2}^{\ep}(h)>_{U}\lesssim C(\|h\|_{U}).
\] 
Recall that we have
\[
U=W^{0,k}\cap W^{1,k-1}...\cap W^{3,k-3}\cap{S}.
\]
For $W^{0,k}$, from corollary \ref{3010102}, lemma \ref{3lemmaotherterms03}, we have

%\Re<h,M_{1,1}^{\ep}(h)+M_{1,2}^{\ep}(h)>_{U}\lesssim C(\|h\|_{U}).
%\]

\begin{align}\label{zeroenerygestimate}
\Re<h,M_{1,1}^{\ep}(h)+M_{1,2}^{\ep}(h)>_{W^{0,k}}\lesssim C(\|h\|_{W^{0,k}}).
\end{align}
Then we consider the estimate in $U$. This follows from its structure.

In fact, for $h\in W^{i,k-i}$ with $1\leq i\leq 3$, we have
\begin{align*}
&\Re<\partial_{\g}^{i}h,\partial_{\g}^{i}(M_{1,1}^{\ep}(h)+M_{1,2}^{\ep}(h))>_{W^{0,k-i}}\\
&\lesssim \Re <\partial_{\g}^{i}h,\lambda(\al+\tau(t))L_1^{-}(h)(\al,\g)\int_{-\tau(t)}^{2\al+\tau(t)}\frac{(\partial_{\g}^{i}h(\al)-\partial_{\g}^{i}h(\beta))(\al-\beta)\ep}{((\al-\beta)^2+\ep^2)^2}d\beta\frac{2}{\pi}\\
&\qquad-\lambda(\al+\tau(t))L_2^{-}(h)(\al,\g)\int_{-\tau(t)}^{2\al+\tau(t)}\frac{(\partial_{\g}^{i}h(\al)-\partial_{\g}^{i}h(\beta))\ep}{(\al-\beta)^2+\ep^2}d\beta)>_{W^{0,k}}\\
&\quad+ \sum_{1\leq j\leq i}\Re <\partial_{\g}^{i}h,(\lambda(\al+\tau(t))(\partial_{\g}^{j}L_1^{-}(h)(\alpha,\gamma))\int_{-\tau(t)}^{2\al+\tau(t)}\frac{(\partial_{\g}^{i-j}h(\al)-\partial_{\g}^{i-j}h(\beta))(\al-\beta)\ep}{((\al-\beta)^2+\ep^2)^2}d\beta\frac{2}{\pi}\\
&\qquad-\lambda(\al+\tau(t))(\partial_{\g}^{j}L_2^{-}(h)(\alpha,\gamma))\int_{-\tau(t)}^{2\al+\tau(t)}\frac{(\partial_{\g}^{i-j}h(\al)-\partial_{\g}^{i-j}h(\beta))\ep}{(\al-\beta)^2+\ep^2}d\beta)>_{W^{0,k-1}}\\
&=E_{i,k,1}+E_{i,k,2}.
\end{align*}
For $E_{i,k,1}$, it can be controlled similarly as in corollary \ref{3010102},  lemma \ref{3lemmaotherterms03} by replacing $h$ with $\partial_{\g}^{i}h$. For $E_{i,k,2}$, since at most $i-1$ th $\g$ derivative hit on $h$, we have
\[
|E_{i,k,2}|\lesssim \|h\|_{W^{i-1,k-i+1}}^2C(\|h\|_{U}),
\]
where we use \eqref{3coro22ep010202}, and \eqref{3coro22ep01model2} by taking $k$ as $k-i+1$ and $h$ as $\partial_{\g}^{i-j}h$ and smoothness conditions of $L_i^{-}(h)$ in \eqref{3Tequation2}.
Therefore, we have
\begin{align}\label{gammeenerygestimate}
\Re <h, M_{1,1}^{\epsilon}(h)+M_{1,2}^{\epsilon}(h)>_{W^{i,k-i}}\lesssim C(\|h\|_{U}),
\end{align}
for $1\leq i \leq 3$.
Then from \eqref{zeroenerygestimate} and \eqref{gammeenerygestimate} we have
\begin{equation}\label{energyestimatefinal}
\frac{d}{dt}\|h\|_{U}^{2}\lesssim C(\|h\|_{U}),
\end{equation}
independent of $\epsilon$.
\end{proof}
\subsubsection{Picard's theorem and the limit}
\begin{lemma}
 For $\epsilon$ sufficiently small, there exists $t^{\epsilon}$, and $h^{\epsilon} \in  U\cap{S_{\delta_s}}$, with 
\[
S_{\delta_s}=\{h|\supp h \in [-\tau(t),\frac{\pi}{8}]\}\cap\{\|h(\alpha,\gamma,t)-h(\alpha,\gamma,0)\|_{W^{0,1}[-\tau(t),\frac{\pi}{8}]}
\leq \delta_s\}.
\]
such that  $h^{\epsilon}(\alpha,\gamma,t)\in C^{1}([0,t_{\epsilon}], U)$, and
\begin{align*}
&\bigg\{\begin{array}{cc}
\frac{d}{dt}h^{\epsilon}(\alpha,\gamma,t)=T^{\epsilon}(h^{\epsilon})(\alpha,\gamma,t), \\
        h^{\epsilon}(\alpha,\gamma,0)=\tilde{f}^{+(60)}(\alpha,0).
      \end{array}
\end{align*}
\begin{proof}
We use Picard's theorem. Let
\begin{align*}
&\bigg\{\begin{array}{cc}
h_{n+1}^{\epsilon}(\alpha,\gamma,t)=\int_{0}^{t}T^{\epsilon}(h_n^{\epsilon})(\alpha,\gamma,\tau)d\tau+\tilde{f}^{+}(\alpha,0), \\
        h_0^{\epsilon}(\alpha,\gamma,t)=0.
      \end{array}
\end{align*}
By  lemma \ref{Tepsilonbehavior}, the only non-trivial step is to show when $h_{n}^{\epsilon}\in S=\{h|\supp h \in [-\tau(t),\frac{\pi}{8}]\}$, so does $h_{n+1}^{\epsilon}.$  Since from \eqref{perturbT1}, we have $T^{\epsilon}(h_n^{\epsilon})(\alpha,\gamma,t) \in  \{h|\supp h \in [-\tau(t),\frac{\pi}{8}]\}$. Then $h_{n+1}^{\epsilon}\in S$ follows from the fact that $-\tau(t)$ is decreasing when $\kappa(t)<0$ \eqref{defkappa}. 
\end{proof}
\end{lemma}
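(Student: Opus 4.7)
The plan is to solve the integral equation
\[
h(\alpha,\gamma,t)=\tilde{f}^{+(60)}(\alpha,0)+\int_{0}^{t}T^{\epsilon}(h)(\alpha,\gamma,\tau)\,d\tau
\]
by a standard contraction argument on the complete metric space
\[
\mathcal{M}_{t^{\epsilon}}=\bigl\{h\in C^{0}([0,t^{\epsilon}],U\cap S_{\delta_{s}}):h(\cdot,\cdot,0)=\tilde{f}^{+(60)}(\cdot,0)\bigr\},
\]
equipped with the $C^{0}([0,t^{\epsilon}],W^{0,k})$ norm (or equivalently $U$). Denote by $\Phi$ the map sending $h$ to the right-hand side. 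My first task is to check that $\tilde{f}^{+(60)}(\alpha,0)\in U\cap S_{\delta_{s}}$: the support lies in $[0,2\delta]\subset[-\tau(0),\pi/8]$ by the definition \eqref{fLequation} of $\tilde{f}^{+}$, the closeness-to-initial-data constraint holds trivially at $t=0$, and the $U$-regularity follows because $f\in C^{100}$.

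Next I would verify that $\Phi$ sends $\mathcal{M}_{t^{\epsilon}}$ into itself for $t^{\epsilon}$ small. There are three things to check. (i) \emph{Regularity:} Lemma \ref{Tepsilonbehavior} gives $T^{\epsilon}(h)\in C_{t}^{0}([0,t_{s}],U)$ with an $\epsilon$-dependent bound $\|T^{\epsilon}(h)\|_{U}\leq C_{\epsilon}(\|h\|_{U})$, so $\Phi(h)\in C^{0}([0,t^{\epsilon}],U)$ and in fact $\Phi(h)\in C^{1}([0,t^{\epsilon}],U)$ by the continuity in $t$ proved in \eqref{perturbT1}. (ii) \emph{Support:} by \eqref{perturbT1}, $T^{\epsilon}(h)(\cdot,\gamma,\tau)$ is supported in $[-\tau(\tau),\pi/8]$ for each $\tau\in[0,t]$; since \eqref{con:tautneg} gives $\tau'(t)>0$, the function $-\tau$ is decreasing, hence
\[
\bigcup_{0\leq\tau\leq t}[-\tau(\tau),\pi/8]\subseteq[-\tau(t),\pi/8],
\]
so the time integral of $T^{\epsilon}(h)$ remains in $S$. (iii) \emph{Closeness to initial data:} $\|\Phi(h)(\cdot,\cdot,t)-\tilde{f}^{+(60)}(\cdot,0)\|_{W^{0,1}}\leq t^{\epsilon}\sup_{\tau}\|T^{\epsilon}(h)\|_{W^{0,1}}\leq t^{\epsilon}C_{\epsilon}$, which is $\leq\delta_{s}$ provided $t^{\epsilon}$ is small.

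Then I would use the Lipschitz bound \eqref{perturbT2} to show $\Phi$ is a contraction on $\mathcal{M}_{t^{\epsilon}}$:
\[
\sup_{t\leq t^{\epsilon}}\|\Phi(h_{1})-\Phi(h_{2})\|_{W^{i_{0},k}}\leq t^{\epsilon}\,C_{\epsilon}\sup_{t\leq t^{\epsilon}}\|h_{1}-h_{2}\|_{W^{i_{0},k}},
\]
uniformly in $i_{0}+k$ for the indices defining $U$. Shrinking $t^{\epsilon}$ so that $t^{\epsilon}C_{\epsilon}<1/2$ yields a unique fixed point $h^{\epsilon}\in\mathcal{M}_{t^{\epsilon}}$, which differentiated in $t$ gives $\frac{d}{dt}h^{\epsilon}=T^{\epsilon}(h^{\epsilon})$ with the prescribed initial datum.

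The main obstacle in the argument is simply bookkeeping around the moving boundary $-\tau(t)$: one has to ensure both that the hypotheses of Lemma \ref{Tepsilonbehavior} remain valid along the iteration (which is why the smallness constraint $\|h-h(\cdot,\cdot,0)\|_{W^{0,1}}\leq\delta_{s}$ is built into $\mathcal{M}_{t^{\epsilon}}$) and that the $\alpha$-support does not escape $[-\tau(t),\pi/8]$ after time integration; both are handled by monotonicity of $\tau(t)$ and by choosing $t^{\epsilon}$ small. There is no analytic subtlety here because $\epsilon>0$ removes all singularities; the argument is only useful as a starting point for the subsequent $\epsilon\to 0$ limit carried out via the uniform energy estimate \eqref{energyestimatefinal}.
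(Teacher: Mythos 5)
Your contraction-mapping argument is mathematically the same proof the paper gives, just written out more carefully: you both reduce to Picard/Banach iteration, invoke Lemma \ref{Tepsilonbehavior} for the ($\epsilon$-dependent) Lipschitz property and $t$-continuity, and handle the only nonstandard point — that the $\alpha$-support must stay inside the shrinking interval $[-\tau(t),\pi/8]$ — via the monotonicity $\tau'(t)>0$ from \eqref{con:tautneg}. The extra bookkeeping you supply (membership of $\tilde{f}^{+(60)}(\cdot,0)$ in $U\cap S_{\delta_s}$, the $\delta_s$-smallness after time integration, and why the fixed point is $C^1$ in $t$) is sound and fills in details the paper leaves implicit.
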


From the energy estimate \eqref{energyestimatefinal}, $h^{\epsilon}(\alpha,\gamma,t)$ has a uniform bound in $U$. By using compactness argument, there exists sufficiently small time $\bar{t}$ such that $h\in C^{1}_{t}([0,\bar{t}], C^{1}_{\gamma}([-1,1], H^5_{\alpha}[-\pi,\pi]))\cap S_{\delta_s}$ with
 \begin{align*}
    &\bigg\{\begin{array}{cc} \frac{dh}{dt}(\alpha,\gamma,t)=T^{-}(h).\\
    h(\alpha,\gamma,0)=\tilde{f}^{+(60)}(\alpha,0).
    \end{array}
 \end{align*}
where $T^{-}(h)$ is from \eqref{3Tequation2}.
\subsection{Uniqueness}\label{huni}
We now show the uniqueness \eqref{uniquenesskappanegi}: 
\begin{equation}\label{3uniqh02}
    h(\al,0,t)=g(\alpha,t),
\end{equation}
 when $0\leq t\leq \bar{t}$.
We have
\begin{align}
\frac{dh(\al,0)}{dt}=T^{-}(h)(\al,0)=\begin{cases}
    M_{1,2}(h)(\al,0)+M_{2,1}(h)(\al,0)+\sum_{i}B_{M,i}(h)(\al,0)+\sum_{i=1}B_i(h)(\al,0) &\al>-\tau(t)\\
    0 &\al\leq -\tau(t),
\end{cases}
\end{align}
and \begin{align}
\frac{dg(\al)}{dt}=T^{-}(g)(\al,0)=\begin{cases}
    M_{1,2}(g)(\al,0)+M_{2,1}(g)(\al,0)+\sum_{i}B_{M,i}(g)(\al,0)+\sum_{i=1}B_i(\al,0) &\al>-\tau(t)\\
    0 &\al\leq -\tau(t),
\end{cases}
\end{align}
Here $T^{-}(g)(\al,0)$ is well-defined because when $\g=0$, from conditions in \eqref{3Tequation2}, $T^{-}(g)$ only depends on the value of $g$ on the real line.

Since when $t=0$, $g(\al)-h(\al,0)|_{t=0}=0$, it is sufficient to show 
\begin{equation}\label{3unibound102}
\begin{split}
    \Re<h(\al,0)-g(\al), \frac{d}{dt}(h(\al,0)-g(\al))>_{H_{\al}^{1}[-\pi,\pi]}\lesssim \|g(\al)-h(\al,0)\|_{H_{\al}^{1}[-\pi,\pi]}^2.
\end{split}
\end{equation}
We have when $\al\geq -\tau(t),$
\begin{equation}
    \begin{split}
    &\frac{d(h(\al,0)-g(\al))}{dt}=\lambda(\al+\tau(t))L_1^{-}(h)(\al,0)(h'(\al,0)-g'(\al))\\
    &+\lambda(\al+\tau(t))L_2^{-}(h)(\al,0)\int_{-\tau(t)}^{2\al+\tau(t)}\frac{(h(\al,0)-g(\al))-(h(\beta,0)-g(\beta)))}{(\al-\beta)^2}d\beta\\
    &+\lambda(\al+\tau(t))(L_1^{-}(h)(\al,0)-L_1^{-}(g)(\al,0))g'(\al)\\
    &+\lambda(\al+\tau(t))(L_2^{-}(h)(\al,0)-L_2^{-}(g)(\al,0))\int_{-\tau(t)}^{2\al+\tau(t)}\frac{g(\al)-g(\beta)}{(\al-\beta)^2}d\beta\\
      &+\sum_{i}(B_{M,i}(h)(\al,0)-B_{M,i}(g)(\al,0))\\
    &+\sum_{i}(B_i(h)(\al,0)-B_i(g)(\al,0)).
    \end{split}
\end{equation}
Then from conditions in section \eqref{3Tequation2}, we have
\begin{equation}\label{3uniqueness1}
    \begin{split}
        &\quad\Re<h(\al,0)-g(\al), \frac{d}{dt}(h(\al,0)-g(\al))>_{H^1_{\al}[-\tau(t),\pi]}\lesssim\\
        &\Re<h(\al,0)-g(\al),\lambda(\al+\tau(t))L_1^{-}(h)(\al,0)(h'(\al,0)-g'(\al))\\
        &+\lambda(\al+\tau(t))L_2^{-}(h)(\al,0)\int_{-\tau(t)}^{2\al+\tau(t)}\frac{(h(\al,0)-g(\al))-(h(\beta,0)-g(\beta))}{(\al-\beta)^2}d\beta>_{H^1_{\al}[-\tau(t),\pi]}\\
        &+\|h(\al,0)-g(\al)\|_{H^1_{\al}[-\tau(t),\pi]}^2.
    \end{split}
\end{equation}
By using lemma \ref{3010102limit}, we have \eqref{3unibound102}. Then by Gronwall's inequality, we have the uniqueness \eqref{3uniqh02}.

\section{Analyticity when $\kappa(t)<0$}\label{analyticity-section}
  From \ref{theoremgeneralizedequa02}, \ref{existence1theorem2}, we have the existence of solution $h$ to equation \eqref{modifiedcondition} : there exists $\bar{t}$ such that $h$ satisfies
\begin{align}\label{hspaceconclusion2}
    &h(\al,\g,t)\in C^{1}_{t}([0,\bar{t}], C^{1}_{\gamma}([-1,1], H^5_{\alpha}[-\pi,\pi]))\cap\{h|\supp_{\alpha}h\subset[-\tau(t),\frac{\pi}{4}]\}
\end{align}
with initial data $h(\alpha,\gamma,0)=\tilde{f}^{+(60)}(\alpha,0).$
Moreover, from our derivation of \eqref{modifiedcondition}, $\tilde{f}^{+(60)}(\alpha+\tau(t),t)$ is also a equation of $\eqref{modifiedequation}$ when $\gamma=0$. Then from the uniqueness in theorem, \ref{existence1theorem2}, we have for $t\leq \bar{t}$, 
\begin{equation}\label{3uniqh2}
    h(\al,0,t)=\tilde{f}^{+(60)}(\al+\tau(t),t).
\end{equation}

Now we show the analyticity in the region $\{\alpha+iy|-\tau(t)\leq\alpha\leq\frac{\pi}{4}-\tau(t), |y|\leq C(\alpha+\tau(t))t\}$. Let  $h_{\tau}(\alpha,\gamma,t)=h(\alpha-\tau(t),\gamma,t)$. As in lemma \ref{analyticitylemmao1}, it is enough to show 

\begin{align}\label{analyticityenegyestimate02}
\Re<A(h_{\tau}), \frac{dA(h_{\tau})}{dt}>_{X^{1}}\lesssim \|A(h_{\tau})\|^{2}_{X^{1}}.
\end{align}
From our construction in \eqref{extendedequation01} and definition in \eqref{defkappa}, we claim for $\kappa(t)<0$, $h_{\tau}$ satisfies the same equation as the case $\kappa(t)>0$ except the sign of $\kappa(t)$ is different. Moreover, $h_{\tau}$ satisfies the conditions \eqref{hspaceconclusion}, \eqref{initialdatacondition01}, \eqref{3uniqh}. Then from the same proof as in lemma \ref{analyenergyestimate}, we have:

\begin{align}\label{analyticityestimateG2}
&\quad\frac{dA(h_{\tau})(\alpha,\gamma,t)}{dt}\\\nonumber
&=\kappa(t)\lambda(\al_{\gamma}^{t})\frac{dA(h_{\tau})(\alpha)}{d\alpha}+\lambda(\al_{\gamma}^{t})(L_1^{+}(h_{\tau})(\alpha))\frac{dA(h_{\tau})(\alpha)}{d\al}\\
&\quad +\lambda(\al_{\gamma}^{t})(L_2^{+}(h_{\tau})(\alpha))p.v.\int_{0}^{2\alpha}\frac{A(h_{\tau})(\beta)-A(h_{\tau})(\alpha)}{(\alpha-\beta)^2}d\beta+ B.T^{0},
\end{align}
where 
\[
\|B.T^{0}\|_{X^{1}}\lesssim \|A(h_{\tau})\|_{X^{1}}.
\]
Different from $\kappa(t)>0$ case, we can not use \eqref{M11estimateenergy}. But from the space of $h$ \eqref{hspaceconclusion02}, we have the boundary value: 
\[
\partial_{\alpha}^{j}h_{\tau}(0,\gamma,t)=0, \quad \partial_{\alpha}^{j}\partial_{\gamma}h_{\tau}(0,\gamma,t)=0,
\]
and 
\[
\partial_{\alpha}^{j}h_{\tau}(\pi,\gamma,t)=0, \quad \partial_{\alpha}^{j}\partial_{\gamma}h_{\tau}(\pi,\gamma,t)=0,
\]
when $0\leq j \leq 2$.
Then from integration by parts, since both $\kappa(t)$ \eqref{kappadefi}, $\lambda(\al_{\gamma}^{t})$ \eqref{lambda0defi} are real, we have
\[
\Re<A(h_\tau),\kappa(t)\lambda(\al_{\gamma}^{t})\frac{dh_{\tau}(\alpha)}{d\alpha}>_{X^{1}} =B.T^{0}.
\]
Then from \eqref{L12estimateenergy}, we have the estimate.
\section{Appendix}

The following lemmas hold for both signs of $\kappa(t)$. $\tau(t)$ is defined as in \eqref{defkappa}.
 \subsection{Arc-chord condition}
  We first introduce a lemma to guarantee the Arc-chord condition, when $\|h\|_{C^2_{\alpha}}\lesssim \delta_s$, with $\delta_s$ sufficiently small. 
\begin{lemma}\label{arcchord}
 If   $\|g\|_{C^2_{\alpha}}$, t, $\delta$ in \eqref{lambda0defi} are sufficiently small, $g(\alpha)$ satisfying $\partial_{\alpha}^{i}g(-\tau(t))=0$ for $i=0,1$ and $g(\beta)=0$ when $\beta\leq -\tau(t)$, we have the following inequalities. Here $\tau(t)$ is defined in \eqref{defkappa}, $\alpha_{\gamma}^{t}$ is defined in \eqref{alphadefi}, $\tilde{\alpha}_{\gamma}^{t}$ is defined in \eqref{definitiontildecalpha}, $\tilde{f}^{+}, \tilde{f}^{L}$ are defined in \eqref{cut function0},  \eqref{fLequation2}, $\tilde{f}^{L,L},\tilde{f}^{-}$ are defined in \eqref{fLequation2new}.
 
 When $-1\leq \gamma\leq 1$, $|\eta|\leq |\gamma|$,  $\frac{\pi}{4}\geq \alpha> -\tau(t)$, $\beta\in [-\pi,\pi],$
 \begin{align}\label{arcchord0}
    &|\cosh(\tilde{f}_2(\alpha)-\tilde{f}_2(\beta))-\cos(\tilde{f}_1(\alpha)-\tilde{f}_1(\beta))\gtrsim(\alpha-\beta)^2.
\end{align}
\begin{equation}\label{arcchord1}
|\cosh(\tilde{f}^{L}_2(\al)-\tilde{f}^{L}_2(\beta))-\cos(\tilde{f}^{L}_1(\al)-\tilde{f}^{L}_1(\beta))|\gtrsim
(\alpha-\beta)^2.
\end{equation}
\begin{equation}\label{arcchord1L}
|\cosh(\tilde{f}^{L,L}_2(\al)-\tilde{f}^{L,L}_2(\beta))-\cos(\tilde{f}^{L,L}_1(\al)-\tilde{f}^{L,L}_1(\beta))|\gtrsim
(\alpha-\beta)^2.
\end{equation}
\begin{equation}\label{arcchord2}
|\cosh(\tilde{f}^{L}_2(\alpha_{\gamma}^{t})-\tilde{f}^{L}_2(\beta_{\gamma}^{t}))-\cos(\tilde{f}^{L}_1(\alpha_{\gamma}^{t})-\tilde{f}^{L}_1(\beta_{\gamma}^t))|\gtrsim
(\alpha-\beta)^2.
\end{equation}
\begin{equation}\label{arcchord2L}
|\cosh(\tilde{f}^{L,L}_2(\alpha_{\gamma}^{t})-\tilde{f}^{L,L}_2(\beta_{\gamma}^t)))-\cos(\tilde{f}^{L,L}_1(\alpha_{\gamma}^{t})-\tilde{f}^{L,L}_1(\beta_{\gamma}^t))|\gtrsim
(\alpha-\beta)^2.
\end{equation}
\begin{equation}\label{arcchord3L}
|\cosh(\tilde{f}^{L,L}_2(\tilde{\alpha}_{\gamma}^{t})-\tilde{f}^{L,L}_2(\tilde{\beta}_{\gamma}^{t}))-\cos(\tilde{f}^{L,L}_1(\tilde{\alpha}_{\gamma}^{t})-\tilde{f}^{L,L}_1(\tilde{\beta}_{\gamma}^t))|\gtrsim
(\alpha-\beta)^2.
\end{equation}
\begin{equation}\label{arcchord4}
|\cosh(g_2(\alpha)-g_2(\beta)+\tilde{f}^{L}_2(\alpha_{\gamma}^{t})-\tilde{f}^{L}_2(\beta_{\gamma}^{t}))-\cos(g_1(\alpha)-g_1(\beta)+\tilde{f}^{L}_1(\alpha_{\gamma}^{t})-\tilde{f}^{L}_1(\beta_{\gamma}^{t}))|\gtrsim (\alpha-\beta)^2.
\end{equation}
\begin{equation}\label{arcchord501}
|\cosh(\tilde{f}^{L}_2(\alpha_{\gamma}^{t})-\tilde{f}^{L}_2((2\al+\tau(t))_{\eta}^{t}))-\cos(\tilde{f}^{L}_1(\alpha_{\gamma}^{t})-\tilde{f}^{L}_1((2\al+\tau(t))_{\eta}^{t}))|\gtrsim (\al+\tau(t))^2.
\end{equation}
\begin{equation}\label{arcchord502}
|\cosh(g_2(\alpha)+\tilde{f}^{L}_2(\alpha_{\gamma}^{t})-\tilde{f}^{L}_2((2\al+\tau(t))_{\eta}^{t}))-\cos(g_1(\alpha)+\tilde{f}^{L}_1(\alpha_{\gamma}^{t})-\tilde{f}^{L}_1((2\al+\tau(t))_{\eta}^{t}))|\gtrsim (\al+\tau(t))^2.
\end{equation}
\begin{equation}\label{arcchord50102}
|\cosh(\tilde{f}^{L}_2(\alpha_{\gamma}^{t})-\tilde{f}^{L}_2((-\tau(t))_{\eta}^{t}))-\cos(\tilde{f}^{L}_1(\alpha_{\gamma}^{t})-\tilde{f}^{L}_1((-\tau(t))_{\eta}^{t}))|\gtrsim (\al+\tau(t))^2.
\end{equation}
\begin{equation}\label{arcchord50202}
|\cosh(g_2(\alpha)+\tilde{f}^{L}_2(\alpha_{\gamma}^{t})-\tilde{f}^{L}_2((-\tau(t))_{\eta}^{t}))-\cos(g_1(\alpha)+\tilde{f}^{L}_1(\alpha_{\gamma}^{t})-\tilde{f}^{L}_1((-\tau(t))_{\eta}^{t}))|\gtrsim (\al+\tau(t))^2.
\end{equation}
Moreover, when additionally $\beta\geq 2\al+\tau(t)$ we have
\begin{equation}\label{arcchord601}
|\cosh(\tilde{f}^{L}_2(\alpha_{\gamma}^{t})-\tilde{f}^{L}_2(\beta_0^{t}))-\cos(\tilde{f}^{L}_1(\alpha_{\gamma}^{t})-\tilde{f}^{L}_1(\beta_0^{t}))|\gtrsim(\alpha-\beta)^2.
\end{equation}
\begin{equation}\label{arcchord6}
|\cosh(-\tilde{f}^{+}_2(\beta_0^{t})+\tilde{f}^{L}_2(\alpha_{\gamma}^{t})-\tilde{f}^{L}_2(\beta_0^{t}))-\cos(-\tilde{f}^{+}_1(\beta_0^{t})+\tilde{f}^{L}_1(\alpha_{\gamma}^{t})-\tilde{f}^{L}_1(\beta_{0}^{t}))|\gtrsim (\alpha-\beta)^2.
\end{equation}
\begin{equation}\label{arcchord7}
|\cosh(g_2(\alpha)-\tilde{f}^{+}_2(\beta_0^{t})+\tilde{f}^{L}_2(\alpha_{\gamma}^{t})-\tilde{f}^{L}_2(\beta_0^{t}))-\cos(g_1(\alpha)-\tilde{f}^{+}_1(\beta_0^{t})+\tilde{f}^{L}_1(\alpha_{\gamma}^{t})-\tilde{f}^{L}_1(\beta_0^{t}))|\gtrsim (\alpha-\beta)^2.
\end{equation}
When additionally $\beta\leq -\tau(t)$, $0\leq \zeta\leq 1$ we have
\begin{equation}\label{arcchord8}
|\cosh(g_2(\alpha)+\tilde{f}^{L}_2(\alpha_{\gamma}^{t})-\tilde{f}^{L}_2(\beta_{\gamma}^{t}))-\cos(g_1(\alpha)+\tilde{f}^{L}_1(\alpha_{\gamma}^{t})-\tilde{f}^{L}_1(\beta_{\gamma}^{t}))|\gtrsim (\alpha-\beta)^2.
\end{equation}
\begin{equation}\label{arcchord9}
|\cosh(\tilde{f}^{L}_2(\alpha_{\gamma}^{t})-\tilde{f}^{L}_2(\beta_{0}^{t}))-\cos(\tilde{f}^{L}_1(\alpha_{\gamma}^{t})-\tilde{f}^{L}_1(\beta_{0}^t))|\gtrsim
(\alpha-\beta)^2.
\end{equation}
\begin{equation}\label{arcchord601L}
|\cosh(\tilde{f}^{L,L}_2(\alpha_{\gamma}^{t})-\tilde{f}^{L,L}_2(\beta_0^{t}))-\cos(\tilde{f}^{L,L}_1(\alpha_{\gamma}^{t})-\tilde{f}^{L,L}_1(\beta_0^{t}))|\gtrsim(\alpha-\beta)^2,
\end{equation}
\begin{equation}\label{arcchord6L}
|\cosh(-\zeta\tilde{f}^{-}_2(\beta_0^{t})+\tilde{f}^{L,L}_2(\alpha_{\gamma}^{t})-\tilde{f}^{L,L}_2(\beta_0^{t}))-\cos(-\zeta\tilde{f}^{-}_1(\beta_0^{t})+\tilde{f}^{L,L}_1(\alpha_{\gamma}^{t})-\tilde{f}^{L,L}_1(\beta_{0}^{t}))|\gtrsim (\alpha-\beta)^2.
\end{equation}
\end{lemma}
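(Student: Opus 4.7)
The core strategy is a chain of perturbation arguments anchored to the original arc-chord hypothesis \eqref{arcchord00}. The basic expansion $\cosh(y)-\cos(x) = \tfrac{1}{2}(x^2+y^2)+O((x^2+y^2)^2)$ shows that the task reduces to lower-bounding $|\tilde f_i(\alpha)-\tilde f_i(\beta)|$ (or its various perturbed analogs) by a constant times $|\alpha-\beta|$, in both near-diagonal and far regimes. First I would deduce \eqref{arcchord0} directly from \eqref{arcchord00} and the smoothness of the diffeomorphism $x(\alpha,t)$ from \eqref{initialchangevariable}, since $\tilde f(\alpha,t) = f(x(\alpha,t),t)$ and $\partial_\alpha x$ is close to $1$ for small $t$. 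The real-line inequalities \eqref{arcchord1}, \eqref{arcchord1L} then follow because $\tilde f = \tilde f^L + \tilde f^+$ and $\tilde f = \tilde f^{L,L} + \tilde f^- + \tilde f^+$, with $\tilde f^+$ supported in $[0,2\delta]$ and vanishing to order $99$ at the turnover point \eqref{fLspace}, and $\tilde f^-$ analogously on the left \eqref{f-fLLspace}; hence the differences introduced by replacing $\tilde f$ with $\tilde f^L$ (resp.~$\tilde f^{L,L}$) are bounded by $C\delta(\alpha-\beta)$ uniformly, and the lower bound $(\alpha-\beta)^2$ survives once $\delta$ is small enough.

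Next, for the complex-contour versions \eqref{arcchord2}-\eqref{arcchord3L}, I would exploit that $\tilde f^L$, $\tilde f^{L,L}$, $x$ are analytic on the strip $\tilde D_A$ \eqref{fLspace}, \eqref{spacefLL}, and that the contour $\alpha_\gamma^t = \alpha+\tau(t)+ic(\alpha+\tau(t))\gamma t$ has imaginary part bounded by $\delta_c(\alpha+\tau(t))^2 t$ on $\supp c$ by \eqref{cdefi}. A Taylor expansion along the imaginary direction gives
\[
\tilde f^L(\alpha_\gamma^t) - \tilde f^L(\beta_\gamma^t) = \tilde f^L(\alpha+\tau(t)) - \tilde f^L(\beta+\tau(t)) + R,\qquad |R|\lesssim t\,\delta_c\,|\alpha-\beta|,
\]
so $\cosh(\tilde f^L_2(\alpha_\gamma^t)-\tilde f^L_2(\beta_\gamma^t)) - \cos(\tilde f^L_1(\alpha_\gamma^t) - \tilde f^L_1(\beta_\gamma^t))$ equals the real-line quantity up to an error of order $t\delta_c(\alpha-\beta)^2$. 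Choosing $t_s$ and $\delta$ small enough preserves the positivity and yields the stated lower bound; \eqref{arcchord3L} follows by the same argument using the analytic extension through $\tilde c$ \eqref{definitiontildec} which remains $C^{100}$.

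The $g$-dependent versions \eqref{arcchord4}, \eqref{arcchord502}, \eqref{arcchord50202}, \eqref{arcchord7}, \eqref{arcchord8} are handled identically: the hypotheses $\|g\|_{C^2}\ll 1$ and $g(-\tau(t))=g'(-\tau(t))=0$ give $|g(\alpha)-g(\beta)|\lesssim \delta_s|\alpha-\beta|$, a perturbation small enough to absorb into the previous bounds. The boundary-point inequalities \eqref{arcchord501}, \eqref{arcchord50102}, \eqref{arcchord601}, \eqref{arcchord6} are immediate specializations in which $|\alpha-\beta| = \alpha+\tau(t)$. The main technical obstacle will be the mixed-contour far-regime inequalities \eqref{arcchord601}, \eqref{arcchord6}, \eqref{arcchord7}, \eqref{arcchord8}, \eqref{arcchord9}, \eqref{arcchord6L}, where $\alpha$ sits on the contour at parameter $\gamma$ while $\beta$ sits on the real line or on a contour at parameter $0$, so a single Taylor expansion in the imaginary direction is unavailable. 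For the subregimes $\beta\geq 2\alpha+\tau(t)$ and $\beta\leq -\tau(t)$ one has $|\alpha-\beta|\geq \alpha+\tau(t)$, so I would split into: (i) the regime where $|\alpha-\beta|$ exceeds a fixed positive constant, in which $|\Re(\tilde f^L_1(\alpha_\gamma^t)-\tilde f^L_1(\beta_0^t))|$ can be bounded below directly by the real-line arc-chord for $\tilde f^L$ \eqref{arcchord1}, \eqref{arcchord1L} combined with $O(t)$ imaginary corrections, and (ii) the regime $|\alpha-\beta|$ small but with $\beta$ near $-\tau(t)$, where the vanishing of $\tilde f^+$, $\tilde f^-$ at the respective turnover points \eqref{fLspace}, \eqref{f-fLLspace} together with $g(\beta_0^t)=0$ for $\beta\leq -\tau(t)$ reduces the estimate to the $\tilde f^{L,L}$ version already established. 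Once these two subregimes are patched together, the full family of inequalities follows.
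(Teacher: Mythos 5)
Your proposal follows essentially the same perturbative strategy as the paper: anchor every estimate to the original arc-chord hypothesis \eqref{arcchord00}, and absorb each modification of the arguments (dropping $\tilde f^+$, moving onto the complex contour, adding $g$, replacing $\gamma$ by $\eta$ or by $0$, etc.) by showing it changes the $\cosh$-$\cos$ quantity by at most a small multiple of $(\alpha-\beta)^2$ — this is exactly what the paper's Lipschitz estimate \eqref{estimate1} implements.

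Two remarks worth flagging, though neither is fatal since your subsequent paragraphs do not actually lean on them. First, your opening reduction via $\cosh(y)-\cos(x)=\tfrac12(x^2+y^2)+O(\cdot)$ — ``the task reduces to lower-bounding $|\tilde f_i(\alpha)-\tilde f_i(\beta)|$ by a constant times $|\alpha-\beta|$'' — is not a valid reformulation. For the complex-contour inequalities the modulus $|z_1^2+z_2^2|$ can be small even when $|z_1|+|z_2|\gtrsim|\alpha-\beta|$, and at the turnover point a single component $|\tilde f_1(\alpha)-\tilde f_1(\beta)|$ admits no such lower bound (indeed $\partial_\alpha\tilde f_1(-\tau(t))=0$). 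Relatedly, the phrase ``$|\Re(\tilde f^L_1(\alpha_\gamma^t)-\tilde f^L_1(\beta_0^t))|$ can be bounded below directly by the real-line arc-chord'' is a misstatement: the arc-chord controls the $\cosh$-$\cos$ combination, not an individual coordinate. Second, your split of the mixed-contour far regime into $|\alpha-\beta|\gtrsim 1$ versus $|\alpha-\beta|$ small is an unnecessary detour; what the paper actually uses (and what closes the argument cleanly) is the single observation that $\beta\geq 2\alpha+\tau(t)$ forces $(\alpha+\tau(t))\leq|\alpha-\beta|$ and $(\beta+\tau(t))\leq 2|\alpha-\beta|$, so the quadratic-vanishing corrections from $c$, $\tilde f^+$, $\tilde f^-$ and $g$ at the turnover are automatically bounded by a small multiple of $(\alpha-\beta)^2$, uniformly in $|\alpha-\beta|$. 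The same observation handles $\beta\leq-\tau(t)$ (where additionally $\beta_\gamma^t=\beta_0^t$ and $g(\beta)=0$), with no case split needed.
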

\begin{proof}
For $A, \tilde{A}, B,\tilde{B} \in \mathbb{R}$, we have
\begin{align}\label{estimate1}
&|\cosh(A)-\cos(\tilde{A})-(\cosh(B)-\cos(\tilde{B}))|\\\nonumber
&\leq |\sinh(A+\tau(B-A))||A-B|+|\sin(\tilde{A}+\tau(\tilde{B}-\tilde{A}))||\tilde{A}-\tilde{B}|\\\nonumber
&\leq C(|A|+|B|)(|A|+|B|)(|A-B|)+C(|\tilde{A}|+|\tilde{B}|)(|\tilde{A}|+|\tilde{B}|)(|\tilde{A}-\tilde{B}|).
\end{align}
with some increasing function $C(x)$.
Since $f(\alpha,t)$ satisfies the arc-chord condition \eqref{arcchord00}, we have
\[
|\cosh(f_2(\alpha)-f_2(\beta))-\cos(f_1(\alpha)-f_1(\beta))|\gtrsim (\alpha-\beta)^2.
\]
Because $\tilde{f}(\alpha,t)=f(x(\alpha,t),t)$ from \eqref{tildefdefi}, we have
\begin{align*}
    &|\cosh(\tilde{f}_2(\alpha)-\tilde{f}_2(\beta))-\cos(\tilde{f}_1(\alpha)-\tilde{f}_1(\beta))|\\
    &=|\cosh(f_2(x(\alpha,t))-f_2(x(\beta,t))-\cos(f_1(x(\alpha,t))-f_1(x(\beta,t)))|\gtrsim (x(\alpha,t)-x(\beta,t))^2\gtrsim(\alpha-\beta)^2.
\end{align*}
Here in the last inequality, we use \eqref{initialchangevariable} with $t$ sufficiently small. Then \eqref{arcchord0} holds.

Then we only need to bound the difference, by \eqref{estimate1}, we have
\begin{align*}
    &|\cosh(\tilde{f}^{L}_2(\alpha)-\tilde{f}^{L}_2(\beta))-\cos(\tilde{f}^{L}_1(\alpha)-\tilde{f}^{L}_2(\beta))|\\
    &\geq|\cosh(\tilde{f}_2(\alpha)-\tilde{f}_2(\beta))-\cos(\tilde{f}_1(\alpha)-\tilde{f}_2(\beta))|-(\alpha-\beta)^2C(\|\tilde{f}(\alpha)\|_{C^1_{\alpha}}+\|\tilde{f}^{+}(\alpha)\|)_{C^1_{\alpha}}\|\tilde{f}^{+}(\alpha)\|_{C^1_{\alpha}}\\
    &\gtrsim(\alpha-\beta)^2-(\alpha-\beta)^2C(\|\tilde{f}(\alpha)\|_{C^1_{\alpha}}+\|\tilde{f}^{+}(\alpha)\|_{C^1_{\alpha}})\|\tilde{f}^{+}(\alpha)\|_{C^1_{\alpha}},
\end{align*}
where we use $f= \tilde{f}^{L}+\tilde{f}^{+}$ \eqref{cut function0}.
By \eqref{fLequation}, we can take $\|\tilde{f}^{+}(\alpha)\|_{C^1_{\alpha}}$ to be sufficiently small if we let $\delta$ sufficiently small. Therefore we have \eqref{arcchord1}. 
Similarly, from \eqref{fLequation2}, $\|\tilde{f}^{-}(\alpha)\|_{C^1_{\alpha}}$ can also be controlled to be sufficiently small by choosing $\delta$. Therefore we have \eqref{arcchord1L}.

We also have
\[
|\tilde{f}_i^{L}(\alpha)-\tilde{f}_i^{L}(\beta)-(\tilde{f}_i^{L}(\alpha+ic(\alpha)\gamma t)-\tilde{f}_i^{L}(\beta+ic(\beta)\gamma t))|\leq |\alpha-\beta|\|\tilde{f}_i^{L}(\alpha)-\tilde{f}_i^{L}(\alpha+ic(\alpha)\gamma t)\|_{C^1_{\alpha}}.
\]
As $t\to 0$, $\|\tilde{f}_i^{L}(\alpha)-\tilde{f}_i^{L}(\alpha+ic(\alpha)\gamma t)\|_{C^1_{\alpha}}$ can be taken sufficiently small. Hence we get
\eqref{arcchord2}.  \eqref{arcchord2L} follows the similar way as \eqref{arcchord2}.  From \eqref{definitiontildecalpha}, $
\|\tilde{f}_1^{L,L}(\tilde{\alpha}_{\gamma}^{t})-\tilde{f}_1^{L,L}(\alpha_{\gamma}^{t})\|_{C^1_{\alpha}}$ tends to 0 as $t\to 0$, then from \eqref{arcchord2L} we have \eqref{arcchord3L}. Since $\|g\|_{C^2_{\alpha}}$ is sufficiently small, we have \eqref{arcchord4} from \eqref{arcchord2}.

We take $\beta=2\al+\tau(t)$ in \eqref{arcchord2} to get
\[
|\cosh(\tilde{f}^{L}_2(\alpha_{\gamma}^{t})-\tilde{f}^{L}_2((2\al+\tau(t))_{\gamma}^{t}))-\cos(\tilde{f}^{L}_1(\alpha_{\gamma}^{t})-\tilde{f}^{L}_1((2\al+\tau(t))_{\gamma}^{t})))|\gtrsim (\al+\tau(t))^2.
\]
By \eqref{estimate1}, $|\tilde{f}^{L}(\alpha_{\gamma}^{t})-\tilde{f}^{L}(\alpha_{\eta}^{t})|\lesssim t(\al+\tau(t))^2$ and $|g(\alpha)|\leq \|g\|_{C^2}(\al+\tau(t))^2$, we get \eqref{arcchord501}, \eqref{arcchord502}. \eqref{arcchord50102} and \eqref{arcchord50202} are similar by taking $\beta=-\tau(t)$ instead of $2\al+\tau(t)$.

When $\beta\geq 2\al+\tau(t)$ , from \eqref{arcchord2}, we have
\[
|\cosh(\tilde{f}^{L}_2(\alpha_{\gamma}^ t)-\tilde{f}^{L}_2(\beta_{\gamma}^{ t}))-\cos(\tilde{f}^{L}_1(\alpha_{\gamma}^{ t})-\tilde{f}^{L}_1(\beta_{\gamma}^{ t}))|\gtrsim (\alpha-\beta)^2.
\]
By $|\tilde{f}^{L}(\beta_{\gamma}^{t})-\tilde{f}^{L}(\beta_{0}^{t})|\lesssim t(\beta+\tau(t))^2$,  $|\tilde{f}^{+}(\beta)|\leq\|\tilde{f}^{+}\|_{C^2}(\beta+\tau(t))^2$,  $|g(\beta)|\leq\|g\|_{C^2}(\beta+\tau(t))^2$, and $(\al+\tau(t))^2\lesssim(\alpha-\beta)^2$, $(\beta+\tau(t))^2\lesssim(\alpha-\beta)^2$, we have \eqref{arcchord6} and \eqref{arcchord7}.

When $\beta\leq -\tau(t)$, we still use \eqref{arcchord2} to get
\[
|\cosh(\tilde{f}^{L}_2(\alpha_{\gamma}^ t)-\tilde{f}^{L}_2(\beta_{\gamma}^{ t}))-\cos(\tilde{f}^{L}_1(\alpha_{\gamma}^{ t})-\tilde{f}^{L}_1(\beta_{\gamma}^{ t}))|\gtrsim (\alpha-\beta)^2.
\]
Since when $\beta\leq -\tau(t)$, we  have $\beta_{\gamma}^{t}=\beta_{0}^{t}$. Then we have \eqref{arcchord9}. Moreover,  $g(\beta)=0$ when $\beta\leq -\tau(t)$, $|g(\alpha)|\leq \|g\|_{C^2}(\al+\tau(t))^2$, and $(\al+\tau(t))^2\lesssim (\alpha-\beta)^2$. Hence we get \eqref{arcchord8}.

Similarly, from \eqref{fLequation2}, since  $|\tilde{f}^{-}(\beta_0^t)|\leq\|\tilde{f}^{-}\|_{C^2}(\beta+\tau(t))^2$, and $(\beta+\tau(t))^2\lesssim (\alpha-\beta)^2$, we can use \eqref{arcchord2L} instead of \eqref{arcchord2} to get \eqref{arcchord601L} and \eqref{arcchord6L}.
\end{proof}
\subsection{Analysis of some kernels}
Here we let $\bar{\partial}_{\gamma}$ be the partial derivative with respect to $\gamma$ assuming $h$ does not depend on $\gamma$.
\begin{lemma}\label{K-1wholebehavior}
Let $\sigma\in \mathcal{N}$, $k\leq 12$, $j\leq 4$,  $t$, $\delta$ in \eqref{lambda0defi}  be sufficiently small satisfying conditions in lemma \ref{arcchord}. We have
\begin{align}\label{K-1changed}
  &\|K_{-\sigma}(0, V_{\tilde{f}^{L,L}}^{61}(\tilde{\alpha}_{\gamma}^{t},t)-V_{\tilde{f}^{L,L}}^{61}(\tilde{\beta}_{\gamma}^{t},t),V_{X^{L}}(\tilde{\alpha}_{\gamma}^{t},t)-V_{X^{L}}(\tilde{\beta}_{\gamma}^{t},t))(\alpha-\beta)^{\sigma}\|_{ C_{\gamma}^{j}([-1,1],C^{k+4}([-\tau(t),\frac{\pi}{4}]\times[-\pi,\pi]))}\lesssim 1,
\end{align}
and for $0\leq t'<t\leq t_0$, 
\begin{align}\label{K-1changedt}
 &\|K_{-\sigma}(0, V_{\tilde{f}^{L,L}}^{61}(\tilde{\alpha}_{\gamma}^{t},t)-V_{\tilde{f}^{L,L}}^{61}(\tilde{\beta}_{\gamma}^{t},t),V_{X^{L}}(\tilde{\alpha}_{\gamma}^{t},t)-V_{X^{L}}(\tilde{\beta}_{\gamma}^{t},t))(\alpha-\beta)^{\sigma}\\\nonumber
 &-K_{-\sigma}(0, V_{\tilde{f}^{L,L}}^{61}(\tilde{\alpha}_{\gamma}^{t'},t')-V_{\tilde{f}^{L,L}}^{61}(\tilde{\beta}_{\gamma}^{t'},t'),V_{X^{L}}(\tilde{\alpha}_{\gamma}^{t'},t')-V_{X^{L}}(\tilde{\beta}_{\gamma}^{t'},t'))(\alpha-\beta)^{\sigma}\|_{ C_{\gamma}^{j}([-1,1],C^{k+3}([-\tau(t'),\frac{\pi}{4}]\times[-\pi,\pi]))}\\\nonumber
  &\lesssim \mathcal{O}(t-t').
  \end{align}

  \end{lemma}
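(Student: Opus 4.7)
The plan is to exploit the three key features of this setting: (i) the first slot of $K_{-\sigma}$ is identically zero, (ii) the remaining arguments are built from $\tilde f^{L,L}$ and $X^L$, which are polynomial on $[-\delta/2,\delta/2]$ and analytic on the complex neighborhood $\tilde D$ from \eqref{spacefLL}, and (iii) the refined arc-chord estimate \eqref{arcchord3L} controls the denominator along the modified contour $\tilde\alpha_\gamma^t$ defined by $\tilde c(\alpha)\in C^{100}(\mathbb R)$ from \eqref{definitiontildec}.

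First I would reduce the kernel. Since $A=0$ forces every factor $A_{\lambda_j}$ to vanish, only the $m_5=0$ terms in the expansion \eqref{k-sigma} survive, so $K_{-\sigma}(0,B,C)$ is a sum of terms of the form
\[
c_j\,\frac{\sin(B_1)^{m_1}\cos(B_1)^{m_2}\,(\sinh B_2)^{m_3}(\cosh B_2)^{m_4}}{(\cosh B_2-\cos B_1)^{m_0}}\,\Pi_{j=1}^{m_6}B_{\lambda_{j,2}}\,\Pi_{j=1}^{m_7}C_{\lambda_{j,3}},
\]
with $m_1+m_3+m_6+m_7-2m_0\ge -\sigma$, evaluated at $B=V_{\tilde f^{L,L}}^{61}(\tilde\alpha_\gamma^t)-V_{\tilde f^{L,L}}^{61}(\tilde\beta_\gamma^t)$ and $C=V_{X^L}(\tilde\alpha_\gamma^t)-V_{X^L}(\tilde\beta_\gamma^t)$.

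Next I would factor $(\alpha-\beta)$ out of every term in the numerator. Because $\tilde f^{L,L}$ and $X^L$ are (complex-)smooth on $\tilde D$ and $\tilde\alpha_\gamma^t-\tilde\beta_\gamma^t=(\alpha-\beta)(1+i\gamma t\int_0^1\tilde c'(\alpha+\zeta(\beta-\alpha))d\zeta)$, the standard mean-value representation writes each component of $B$ and $C$ as $(\alpha-\beta)$ times a function in $C^j_\gamma([-1,1],C^{k+4}([-\tau(t),\frac{\pi}{4}]\times[-\pi,\pi]))$; similarly $\sin B_1 = B_1\int_0^1\cos(\zeta B_1)d\zeta$ contributes one factor of $(\alpha-\beta)$ per $m_1$, and $\sinh B_2$ contributes one per $m_3$. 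Thus the numerator is $(\alpha-\beta)^{m_1+m_3+m_6+m_7}$ times a smooth function. For the denominator I would use \eqref{arcchord3L} to write
\[
\cosh B_2-\cos B_1 = (\alpha-\beta)^2\,Q(\alpha,\beta,\gamma,t),
\]
where $Q$ is smooth and bounded below in modulus by a positive constant on the relevant region, so $1/(\cosh B_2-\cos B_1)^{m_0}=(\alpha-\beta)^{-2m_0}Q^{-m_0}$ with $Q^{-m_0}$ smooth. Multiplying by $(\alpha-\beta)^\sigma$, the total power of $(\alpha-\beta)$ is
\[
m_1+m_3+m_6+m_7-2m_0+\sigma\ \ge\ 0,
\]
and \eqref{K-1changed} follows once I observe that the remaining smooth factor has $C^j_\gamma C^{k+4}_{\alpha,\beta}$ norm controlled uniformly in $t$ by the $C^{61}$-regularity of $\tilde f^{L,L}$ and $X^L$ (and $j+k+4\le 20$, well within the available regularity).

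For the time-continuity statement \eqref{K-1changedt}, I would subtract the integrands at times $t$ and $t'$ and exploit the $C^1$ dependence on $t$ of $\tilde f^{L,L}$, $X^L$, $\tilde c(\alpha)\gamma t$ and $Q$. The identity $F(t)-F(t')=(t-t')\int_0^1 F'(t'+s(t-t'))ds$ applied termwise yields a representation of the difference of kernels as $(t-t')$ times a kernel of the same structure (with one slot carrying a time-derivative of $\tilde f^{L,L}$ or $X^L$), which loses one spatial derivative but is still $C^{k+3}$; the uniform lower bound on $Q$ (hence on $\cosh B_2-\cos B_1$) holds simultaneously at $t$ and $t'$ by the arc-chord lemma \ref{arcchord}. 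The main obstacle will be a bookkeeping one: making sure that after taking $k+4$ (resp.\ $k+3$) spatial derivatives and $j$ $\gamma$-derivatives the $(\alpha-\beta)$ powers never become negative, which is where the hypothesis $m_1+m_3+m_5+m_6+m_7-2m_0\ge -\sigma$ and the one-order loss coming from differentiating $Q^{-m_0}$ have to be tracked carefully; everything else is routine from the polynomial/analytic structure of $\tilde f^{L,L}$ and $X^L$.
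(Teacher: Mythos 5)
Your proof is correct and follows essentially the same route as the paper: write out the kernel in the $K_{-\sigma}$ form, note that $A=0$ kills the $A_{\lambda_j}$ factors, extract a factor of $(\alpha-\beta)$ from each remaining $\sin$, $\sinh$, $B_j$, $C_j$ using the analyticity/smoothness of $\tilde f^{L,L}$ and $X^L$ on $\tilde D$, bound the denominator below by $(\alpha-\beta)^{2m_0}$ via the arc-chord estimate \eqref{arcchord3L}, and conclude from $m_1+m_3+m_6+m_7-2m_0\ge-\sigma$; the time-continuity is then a termwise $C^1$-in-$t$ argument. Your version is somewhat more explicit about the mean-value factorization and the role of $m_5=0$, but the key ingredients and the structure of the argument are the same as the paper's.
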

  \begin{proof}
  We only show the boundedness argument \eqref{K-1changed} and other estimates follow a similar way because the $C^{\infty}$ smoothness of $K_{-\sigma}(A,B,C)$ with respect to $(A,B,C)$.

  Recall that $K_{-\sigma}^{j}(A,B,C)$ is of $-\sigma$ type if for $A$, $B$, $C$ in $R^n$, it has the form
\begin{align}\label{k-sigma02}
&c_j\frac{\sin(A_1+B_1)^{m_1}\cos(A_1+B_1)^{m_2}}{(\cosh(A_2+B_2)-\cos(A_1+B_1))^{m_0}}\\\nonumber
    &\times(\sinh(A_2+B_2))^{m_3}(\cosh(A_2+B_2))^{m_4}\Pi_{j=1}^{m_5}(A_{\lambda_{j}})\Pi_{j=1}^{m_6}(B_{\lambda_{j,2}})\Pi_{j=1}^{m_7}(C_{\lambda_{j,3}}),
\end{align}
with $m_1+m_3+m_5+m_6+m_7-2m_0\geq -\sigma$. $c_j$ is a constant.
Here we can take
\[
B_i,C_i\in\{\partial_{\alpha}^{d_i}\tilde{f}^{L,L}(\tilde{\alpha}_{\gamma}^{t},t)-\partial_{\alpha}^{d_i}\tilde{f}^{L,L}(\tilde{\beta}_{\gamma}^{t},t), X_i^{L}(\tilde{\alpha}_{\gamma}^{t},t)-X_i(\tilde{\beta}_{\gamma}^{t},t)\}
\]
 with $d_i\leq 61$, $s_{i}\leq 30$. From definition of  $\tilde{\alpha}_{\gamma}^{t}$ \eqref{definitiontildecalpha}, the space of $\tilde{f}^{L,L}$ and $X_i^{L}$ \eqref{spacefLL}, we have
 \[
\partial_{\alpha}^{d_i}\tilde{f}^{L,L}(\tilde{\alpha}_{\gamma}^{t}), X_i^{L}(\tilde{\alpha}_{\gamma}^{t}) \in C_{\gamma}^{4}([-1,1], C_{\alpha}^{29}[-\pi,\pi]).
 \]
 Then each $\sin, \sinh, A_{j}, B_{j}, C_{j}$ gives an order of $(\alpha-\beta)$ and the denominator gives an order of $(\alpha-\beta)^{-2m_0}$. Then from $m_1+m_3+m_5+m_6+m_7-2m_0\geq -\sigma$ and the arc-chord condition \eqref{arcchord3L} we have the boundedness estimate.

\end{proof}
\begin{lemma}\label{alphabetanear}
Let $\sigma\in \mathcal{N}$, $k\leq 12$, $j,j'\leq 4$, $t$, $\delta$ in \eqref{lambda0defi}  be sufficiently small satisfying conditions in lemma \ref{arcchord}, $Y^{k}=C_{\gamma}^{0}([-1,1],C^{k}([-\tau(t),\frac{\pi}{4}]\times[-\tau(t),\pi)))$, and
\[
\tilde{K}_{-\sigma}(\alpha,\beta,\gamma,t)=K_{-\sigma}(V_h^{30}(\al,\g,t)-V_h^{30}(\beta,\g,t), V_{\tilde{f}^{L}}^{61}(\alpha_{\gamma}^{t},t)-V_{\tilde{f}^{L}}^{61}(\beta_{\gamma}^{t},t),V_{X}(\alpha_{\gamma}^{t},t)-V_{X}(\beta_{\gamma}^{t},t))(\alpha-\beta)^{\sigma}.
\]
For $h$, $g_{j
'}$ in 
 $C_{\gamma}^{0}([-1,1],L_{\al}^{2}[-\tau(t),\pi])\cap \{h|\supp h \subset [-\tau(t),\frac{\pi}{4}]\}$,  with $\|h\|_{L_{\al}^{2}[-\tau(t),\pi]}$ sufficiently small such that $D^{-60}(h)$ satisfying the condition in lemma \ref{arcchord}, we have the following inequalities: 
\begin{align}\label{near1}
&\|\bar{\partial}_{\gamma}^{j}\tilde{K}_{-\sigma}(\alpha,\beta,\gamma,t)\|_{ Y^{k+4}}\lesssim 1,
\end{align}
 \begin{align}\label{near2}
   &\|\bar{\partial}_{\gamma}^{j}D_{h}^{j'}\tilde{K}_{-\sigma}[g_1,...,g_{j'}](\alpha,\beta,\gamma,t)\|_{ Y^{k+4}}\lesssim \prod_{l=1}^{j'}\|g_l\|_{L_{\al}^2[0,\pi]},
   \end{align}
  and for $0\leq t’\leq t$,
  \begin{align}\label{neart}
  &\quad\|\bar{\partial}_{\gamma}^{j}(\tilde{K}_{-\sigma}(\alpha,\beta,\gamma,t)-\tilde{K}_{-\sigma}(\alpha,\beta,\gamma,t'))\|_{ Y^{k+3}}\\\nonumber
  &\lesssim \mathcal{O}(t-t')+\|h(\alpha,\gamma,t)-h(\alpha,\gamma,t')\|_{ C_{\gamma}^{0}([-1,1],L_{\al}^{2}[-\tau(t'),\pi])},
  \end{align}
  and
    \begin{align}\label{neart2}
  &\quad\|(\bar{\partial}_{\gamma}^{j}D_h^{j'}\tilde{K}_{-\sigma}[g_1,g_2...g_{j'}](\alpha,\beta,\gamma,t)-\bar{\partial}_{\gamma}^{j}D_h^{j'}\tilde{K}_{-\sigma}[g_1,g_2...g_{j'}](\alpha,\beta,\gamma,t'))\|_{ Y^{k+3}}\\\nonumber
  &\lesssim \mathcal{O}(t-t')+\|h(\alpha,\gamma,t)-h(\alpha,\gamma,t')\|_{ C_{\gamma}^{0}([-1,1],L_{\al}^{2}[-\tau(t'),\pi])}+\sum_{l=1}^{j'}\|g_l(\alpha,\gamma,t)-g_l(\alpha,\gamma,t')\|_{ C_{\gamma}^{0}([-1,1],L_{\al}^{2}[-\tau(t'),\pi])}.
  \end{align}
\end{lemma}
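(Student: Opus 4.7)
\medskip

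\noindent\textbf{Proof proposal for Lemma \ref{alphabetanear}.}

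My plan is to exploit the explicit structure \eqref{k-sigma} of a $K_{-\sigma}$-type kernel and to convert every vanishing numerator factor into an explicit power of $(\alpha-\beta)$ that cancels against the power singularity of the denominator. Recall that a $K_{-\sigma}$-kernel has the schematic form
\[
K_{-\sigma}(A,B,C)=c\,\frac{(\sin)^{m_1}(\cos)^{m_2}(\sinh)^{m_3}(\cosh)^{m_4}\prod A_{\lambda}\prod B_{\lambda'}\prod C_{\lambda''}}{(\cosh(A_2+B_2)-\cos(A_1+B_1))^{m_0}},
\]
with $m_1+m_3+m_5+m_6+m_7-2m_0\geq -\sigma$. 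When we substitute $A=\Delta V_h^{30}$, $B=\Delta V_{\tilde{f}^L}^{61}$, $C=\Delta V_X$, each of the $m_1+m_3+m_5+m_6+m_7$ vanishing factors in the numerator can be written as $(\alpha-\beta)$ times a smooth bounded function: for a typical component $g(\alpha_\gamma^t,t)-g(\beta_\gamma^t,t)$ we write
\[
g(\alpha_\gamma^t,t)-g(\beta_\gamma^t,t)=(\alpha-\beta)\int_0^1 g'\!\bigl(\beta_\gamma^t+\zeta(\alpha_\gamma^t-\beta_\gamma^t),t\bigr)\bigl(1+ic'(\,\cdot\,)\gamma t\bigr)d\zeta,
\]
and similarly for the $\sin$ and $\sinh$ of the sums $A_1+B_1$, $A_2+B_2$. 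Simultaneously, the arc-chord estimate \eqref{arcchord4} gives the lower bound $|\cosh(A_2+B_2)-\cos(A_1+B_1)|\gtrsim(\alpha-\beta)^2$, so dividing and multiplying by $(\alpha-\beta)^{2m_0}$ yields a bounded smooth factor. Collecting the powers of $(\alpha-\beta)$ and combining with the multiplier $(\alpha-\beta)^{\sigma}$, one arrives at
\[
\tilde K_{-\sigma}(\alpha,\beta,\gamma,t)=(\alpha-\beta)^{\,\sigma+m_1+m_3+m_5+m_6+m_7-2m_0}\cdot\Psi(\alpha,\beta,\gamma,t),
\]
where the remaining exponent is non-negative by the defining inequality of $K_{-\sigma}$, and $\Psi$ is a composition of real-analytic functions in its arguments.

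Next I establish the regularity bound \eqref{near1} for $\Psi$. The inputs $\partial_\alpha^d\tilde{f}^L(\alpha_\gamma^t,t)$ and $X_i(\alpha_\gamma^t,t)$ lie in $C_\gamma^4([-1,1],C_\alpha^{k+5}[-\pi,\pi])$ by \eqref{Xispace}, \eqref{fLspace}. The components of $V_h^{30}$ are $D^{-i}(h)$ with $30\leq i\leq 60$, and the definition \eqref{D-formularnew02} together with $h\in C_\gamma^0([-1,1],L_\alpha^2[-\tau(t),\pi])$ gives $D^{-30}h\in C_\gamma^0([-1,1],H_\alpha^{30}[-\pi,\pi])\hookrightarrow C_\gamma^0([-1,1],C_\alpha^{29})$. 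Since $k+4\leq 16\leq 29$, we have plenty of spatial derivatives to spend. The arc-chord lower bound (applied with $g=D^{-60}h$, which is small in $C^2$ under our smallness hypothesis) ensures the reciprocal $1/(\cosh-\cos)^{m_0}$ is smooth after the extraction of the diagonal factor. The $(\alpha-\beta)^{\geq 0}$ prefactor is already in $C^\infty([-\tau(t),\frac{\pi}{4}]\times[-\tau(t),\pi])$, giving \eqref{near1}. For the Gateaux derivative estimate \eqref{near2} I argue identically: each $D_h$ acting on $\tilde K_{-\sigma}$ replaces one copy of some $D^{-i}h$ by $D^{-i}g_l$, and possibly differentiates the $\Psi$-factor; after the same desingularization bookkeeping the resulting expression is bounded by $\prod_l\|D^{-i_l}g_l\|_{C^{29}}\lesssim\prod_l\|g_l\|_{L^2}$.

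For the time-continuity estimates \eqref{neart} and \eqref{neart2}, I write the difference telescopically and apply the mean value theorem to $\Psi$. Each input differs by an amount controlled by $\mathcal{O}(t-t')+\|h(\cdot,t)-h(\cdot,t')\|_{C_\gamma^0L^2}$ (respectively by $\sum_l\|g_l(\cdot,t)-g_l(\cdot,t')\|$ for Gateaux derivatives), via the chain rule applied through $\alpha_\gamma^t-\alpha_\gamma^{t'}=\mathcal{O}(t-t')$ and the continuity of $\tilde f^L$, $X_i$, $h$ in $t$. The loss of one spatial derivative (from $k+4$ to $k+3$) appears because one derivative of the integral representation must be absorbed by the modulus of continuity of the highest-derivative input.

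The main technical obstacle is in Step 2 (regularity bookkeeping): one must simultaneously track (i) that every factor of the form $g(\alpha_\gamma^t)-g(\beta_\gamma^t)$ be written as $(\alpha-\beta)$ times a $C^{k+4}$ function in both $\alpha$ and $\beta$, which forces one to differentiate through the integral $\int_0^1 g'(\beta_\gamma^t+\zeta(\alpha_\gamma^t-\beta_\gamma^t))d\zeta$ where $\alpha$ and $\beta$ are entangled; (ii) that the reciprocal $(\cosh-\cos)^{-m_0}$ be written as $(\alpha-\beta)^{-2m_0}$ times a smooth function whose higher derivatives require the arc-chord lower bound to remain uniform in $(\gamma,t)$ and in $h$ within the smallness ball; (iii) that the total spatial-derivative count never exceeds the $29$ derivatives that $D^{-30}h$ supplies. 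Once these three accountings are carried out, the four estimates follow mechanically by Leibniz and chain rule, and the rest of the proof is routine.
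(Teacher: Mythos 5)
Your proof takes essentially the same route as the paper: the paper's own argument (which delegates the bookkeeping to Lemma \ref{K-1wholebehavior}) is exactly the same factorization of each numerator term into $(\alpha-\beta)$ times a smooth factor, an arc-chord lower bound on the denominator of size $(\alpha-\beta)^{2m_0}$, and the observation that all inputs — $D^{-60+s_i}h$ with $s_i\le 30$, $\partial_\alpha^{d_i}\tilde f^L$, $X_i$ — lie in $C_\gamma^0([-1,1],C_\alpha^{29})$, leaving ample spatial regularity for $k+4\le 16$; your explicit FTC representation of the differences and the exponent count $\sigma+m_1+m_3+m_5+m_6+m_7-2m_0\ge 0$ merely spell out what the paper compresses into a citation.
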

\begin{proof}
  When $s_i\leq 30$, $0\leq d_i\leq 61$, $0\leq j \leq 4$, from definition of $a_{\gamma}^{t}$ \eqref{alphadefi}, the space of $\tilde{f}^{L}$ and $X_i$ \eqref{Xispace}, the operator $D^{-i}$ \eqref{D-formularnew} we have 
\[
\bar{\partial}_{\gamma}^{j}D^{-60+s_i}h(\alpha,\gamma,t),\  \bar{\partial}_{\gamma}^{j}\partial_{\alpha}^{d_i}\tilde{f}^{L}(\alpha_{\gamma}^{t},t),\  \bar{\partial}_{\gamma}^{j}\partial_{\alpha}^{d_i}X_i(\alpha_{\gamma}^{t},t) \in C_{\gamma}^{0}([-1,1], C_{\alpha}^{29}[-\tau(t),\pi]).
\]
Since $V^{30}(h)$ only contain terms $D^{-60+s_i}h$ with $s_i\leq 30$ \eqref{notation03h}, we can use similar proof as in lemma \ref{K-1wholebehavior}.
\end{proof}
\begin{lemma}\label{alphabetafar}
Let $\sigma\in \mathcal{N}$,  $l+l'\leq 15$, $j',j\leq  4$,  $t$, $\delta$ in \eqref{lambda0defi}  be sufficiently small satisfying conditions in lemma \ref{arcchord},
and 
\begin{align*}
&\quad\tilde{K}_{-\sigma}(\alpha,\beta,\zeta,\gamma,t)\\
&=K_{-\sigma}(\zeta V_h^{30}(\al,\g,t)-V^{30}(\beta_0^{t},t), V_{\tilde{f}^{L}}^{61}(\alpha_{\gamma}^{t},t)-V_{\tilde{f}^{L}}^{61}(\beta_{0}^{t},t),V_{X}(\alpha_{\gamma}^{t},t)-V_{X}(\beta_{0}^{t},t)).
\end{align*} For $h$, $g_{j'}$ in $h\in C_{\gamma}^{0}([-1,1],L_{\al}^{2}[-\tau(t),\pi])$ with $\|h\|_{L_{\al}^{2}[-\tau(t),\pi]}$ sufficiently small such that $D^{-60} h$ satisfying the condition in lemma \ref{arcchord},   when $-\tau(t)<\alpha\leq \frac{\pi}{4}$, $\zeta\in[0,1]$, we have the following four inequalities:

\begin{align}\label{farawayuniform}
  &\sup_{\beta\in [-\pi,-\tau(t)]\cup  [2\alpha+\tau(t),\pi]}(|(\partial_{\alpha}^{l}\partial_{\beta}^{l'}\bar{\partial}_{\gamma}^{j})\tilde{K}_{-\sigma}(\alpha,\beta,\zeta,\gamma,t)||\alpha-\beta|^{l+l'+j+\sigma})\lesssim 1,
\end{align}
%\begin{align}\label{farawayhg}
 % &\sup_{\beta\in [-\pi,0]\cup  [2\alpha,\pi]}|(\partial_{\alpha}^{l}\partial_{\beta}^{l'}\partial_{\gamma}^{j})(K_{-\sigma}(\zeta V_h^{30}(\al,\g,t)-V^{30}(\beta_0^{t},t), V_{\tilde{f}^{L}}^{61}(\alpha_{\gamma}^{t},t)-V_{\tilde{f}^{L}}^{61}(\beta_{0}^{t},t),V_{X}(\alpha_{\gamma}^{t},t)-V_{X}(\beta_{0}^{t},t))\\\nonumber
 % &-K_{-\sigma}(\zeta V_g^{30}(\alpha)-V^{30}(\beta_0^{t},t), V_{\tilde{f}^{L}}^{61}(\alpha_{\gamma}^{t},t)-V_{\tilde{f}^{L}}^{61}(\beta_{0}^{t},t),V_{X}(\alpha_{\gamma}^{t},t)-V_{X}(\beta_{0}^{t},t)))|\\\nonumber
 % &\lesssim \|h-g\|_{W^{i_0,k}}(\frac{1}{|\beta-\alpha|})^{l+l'+1+j+\sigma},
 % \end{align}
  \begin{align}\label{farawayhg2}
  \sup_{\beta\in [-\pi,-\tau(t)]\cup  [2\alpha+\tau(t),\pi]}&(|(\partial_{\alpha}^{l}\partial_{\beta}^{l'}\bar{\partial}_{\gamma}^{j})D_{h}^{j'}\tilde{K}_{-\sigma}[g_1,g_2,...g_{j'}](\alpha,\beta,\zeta,\gamma,t)||\beta-\alpha|)^{l+l'+1+j+\sigma})\lesssim \prod_{j''=1}^{j'}\|g_{j''}\|_{L_{\al}^{2}},
  \end{align}

  and for $0\leq t'<t$, 
  \begin{align}\label{farawaytt'}
 &\quad\sup_{\beta\in [-\pi,-\tau(t)]\cup  [2\alpha+\tau(t),\pi]}(|(\partial_{\alpha}^{l}\partial_{\beta}^{l'}\bar{\partial}_{\gamma}^{j})(\tilde{K}_{-\sigma}(\alpha,\beta,\zeta,\gamma,t)-\tilde{K}_{-\sigma}(\alpha,\beta,\zeta,\gamma,t'))||\beta-\alpha|^{l+l'+1+j+\sigma})\\\nonumber
  &\lesssim \mathcal{O}(t-t')+\|h(\alpha,\gamma,t)-h(\alpha,\gamma,t')\|_{C_{\gamma}^{0}([-1,1],L_{\al}^{2}[-\tau(t'),\pi])}.
  \end{align}
    \begin{align}\label{farawaytt'dh}
 \quad\sup_{\beta\in [-\pi,-\tau(t)]\cup  [2\alpha+\tau(t),\pi]}&(|(\partial_{\alpha}^{l}\partial_{\beta}^{l'}\partial_{\gamma}^{j})(D_h^{j'}\tilde{K}_{-\sigma}[g_1,g_2...g_{j'}](\alpha,\beta,\zeta,\gamma,t)-D_h^{j'}\tilde{K}_{-\sigma}[g_1,g_2...g_{j'}](\alpha,\beta,\zeta,\gamma,t'))|\\\nonumber
 &\qquad \cdot|\beta-\alpha|^{l+l'+1+j+\sigma})\\\nonumber
  \qquad\lesssim \mathcal{O}(t-t')+\|h(\alpha,\gamma,t)-h&(\alpha,\gamma,t')\|_{C_{\gamma}^{0}([-1,1],L_{\al}^{2}[-\tau(t'),\pi])}+\sum_{j'=1}^{j''}\|g_l(\alpha,\gamma,t)-g_l(\alpha,\gamma,t')\|_{ C_{\gamma}^{0}([-1,1],L_{\al}^{2}[-\tau(t'),\pi])}.
  \end{align}
\end{lemma}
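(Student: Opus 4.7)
\medskip

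\noindent\textbf{Proof proposal for Lemma \ref{alphabetafar}.} The plan is to exploit three ingredients: (i) the explicit structure of $K_{-\sigma}$ from \eqref{k-sigma}, (ii) the arc-chord conditions (specifically \eqref{arcchord6}, \eqref{arcchord7}, \eqref{arcchord9}, \eqref{arcchord6L}) which give $|\cosh(A_2+B_2)-\cos(A_1+B_1)| \gtrsim (\alpha-\beta)^2$ in the far regime $\beta \in [-\pi,-\tau(t)] \cup [2\alpha+\tau(t),\pi]$, and (iii) the smoothing property $D^{-60+s_i}: L^2_\alpha[-\tau(t),\pi] \to C^{29}$ that lets pointwise norms of $V_h^{30}$ be controlled by $L^2$ norms of $h$ (since $s_i \leq 30$ in the definition \eqref{notation03h}).

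First I would set up the pointwise bound \eqref{farawayuniform}. Writing out $K_{-\sigma}$ as in \eqref{k-sigma02}, the arc-chord condition gives denominator $\gtrsim |\alpha-\beta|^{2m_0}$, while each numerator factor $\sin(A_1+B_1)$, $\sinh(A_2+B_2)$, $A_{\lambda_j}$, $B_{\lambda_{j,2}}$, $C_{\lambda_{j,3}}$ contributes a factor of size $|\alpha-\beta|$ times a smooth quantity (the smoothness of $V_{\tilde{f}^L}^{61}$, $V_X$ from \eqref{Xispace} and of $V_h^{30}$ from $D^{-60+s_i}h \in C^{29}$). The inequality $m_1+m_3+m_5+m_6+m_7-2m_0 \geq -\sigma$ then yields the undifferentiated bound $|\tilde{K}_{-\sigma}| \lesssim |\alpha-\beta|^{-\sigma}$. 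For derivatives $\partial_\alpha^l\partial_\beta^{l'}\bar\partial_\gamma^j$, each derivative either falls on a smooth numerator piece (no loss) or on the $(\cosh-\cos)^{-m_0}$ denominator, each time producing an extra factor that is bounded in magnitude by $C/|\alpha-\beta|$ after using the arc-chord lower bound; the $\gamma$-derivatives only see the smooth dependence $\alpha_\gamma^t = \alpha+\tau(t)+ic(\alpha+\tau(t))\gamma t$ from \eqref{alphadefi} and the smoothness of $c$ from \eqref{cdefi}. Summing, $l+l'+j$ derivatives contribute at worst a factor $|\alpha-\beta|^{-(l+l'+j)}$, giving \eqref{farawayuniform}.

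Next, for \eqref{farawayhg2} I would compute the Gateaux derivatives $D_h^{j'}\tilde{K}_{-\sigma}[g_1,\ldots,g_{j'}]$. Each $D_h$ acts only through the $V_h^{30}$ slot, producing (by the chain rule on $K_{-\sigma}$) a sum of terms where one $A_i$ factor is replaced by $D^{-60+s}g_{\ell}(\alpha,\gamma,t) - D^{-60+s}g_{\ell}(\beta,\gamma,t)$, or a factor of the same form appears from differentiating the denominator (lowering $m_0$ by one while adding a corresponding $A_i$ factor). By Sobolev embedding applied to $D^{-60+s_i}$ with $s_i \leq 30$, each inserted factor has size $\lesssim \|g_\ell\|_{L^2}|\alpha-\beta|$, and each subsequent $\partial_\alpha,\partial_\beta,\bar\partial_\gamma$ costs one more $|\alpha-\beta|^{-1}$ as before. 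The index budget now reads $m_1+m_3+m_5+m_6+m_7 - 2m_0 \geq -\sigma - j'$ after absorbing the $g_\ell$ insertions, which explains the shift $|\alpha-\beta|^{l+l'+1+j+\sigma}$ on the left-hand side.

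Finally, for the time-continuity statements \eqref{farawaytt'} and \eqref{farawaytt'dh}, I would write the difference $\tilde{K}_{-\sigma}(\cdot,t)-\tilde{K}_{-\sigma}(\cdot,t')$ as an integral $\int_{t'}^t \partial_s(\ldots)\,ds$ or as a Gateaux first-order expansion in the arguments, splitting the change into (a) a change of $\tau$ (absorbed into $\mathcal{O}(t-t')$ via smoothness of $\tau$ from \eqref{defkappa}, \eqref{con:tautneg}), (b) a change of $\tilde{f}^L, X, \tilde{X}$ via their $C^1$ dependence on $t$ from \eqref{Xispace}, contributing $\mathcal{O}(t-t')$, and (c) a change of $h$, which by the same $D^{-60+s_i}$ smoothing estimate contributes $\|h(\cdot,t)-h(\cdot,t')\|_{L^2}$ times the correct power of $|\alpha-\beta|$. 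For \eqref{farawaytt'dh}, the $g_\ell$ inputs are treated identically, producing the $\|g_\ell(\cdot,t)-g_\ell(\cdot,t')\|_{L^2}$ terms. The main bookkeeping obstacle is verifying that every combination of derivatives falling on the cascade of $D^{-60+s_i}h$, $X$, $\tilde{f}^L$ factors, together with repeated differentiations of $(\cosh-\cos)^{-m_0}$, still obeys the index identity $m_1+m_3+m_5+m_6+m_7 \geq 2m_0 - \sigma$ after reindexing; once this is confirmed, the arc-chord lower bound closes each estimate with only a $|\alpha-\beta|^{-1}$ cost per derivative.
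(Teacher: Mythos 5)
Your proposal captures the overall scaffolding (arc-chord lower bound for the denominator, $D^{-60+s_i}$ smoothing for pointwise control of $V_h^{30}$, $K_{-\sigma}$ index counting, interpolation/Gateaux trick for time continuity), but it misses the crucial geometric observation that distinguishes this ``far'' lemma from the ``near'' Lemma~\ref{alphabetanear}, and that omission breaks the numerator bound.

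In Lemma~\ref{alphabetanear}, every kernel argument has the telescoping form $F(\alpha_\gamma^t)-F(\beta_\gamma^t)$ for a single smooth $F$, so each numerator factor is indeed $O(|\alpha-\beta|)$ by Lipschitz. Here the arguments do \emph{not} have that form: the first slot is $\zeta V_h^{30}(\alpha,\gamma,t)-V^{30}(\beta_0^t,t)$, which mixes the unknown $h$ evaluated on the complex contour at $\alpha$ with the known $\tilde f^+$ evaluated on the real line at $\beta$, and when $\zeta=0$ it collapses to the single term $-V^{30}(\beta_0^t,t)$, which certainly does not vanish as $\beta\to\alpha$. Likewise the second and third slots compare $F(\alpha_\gamma^t)$ with $F(\beta_0^t)$, i.e.\ the same function but at mismatched $\gamma$-heights. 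Your claim ``each numerator factor contributes a factor of size $|\alpha-\beta|$ times a smooth quantity'' is therefore not justified as stated. The paper's proof closes this gap by splitting each argument into pieces of size $|\alpha+\tau(t)|$ or $|\beta+\tau(t)|$ (using that $V_h^{30}$, $V^{30}$ vanish at $\alpha=-\tau(t)$, and that the $\gamma$-shift $\alpha_\gamma^t-\alpha_0^t=ic(\alpha+\tau(t))\gamma t$ is $O((\alpha+\tau(t))^2)$) together with a piece of size $|\alpha-\beta|$ (telescoping at fixed $\gamma=0$), and then observes that in the far regime $\beta\in[-\pi,-\tau(t)]\cup[2\alpha+\tau(t),\pi]$ one has $|\alpha+\tau(t)|\lesssim|\alpha-\beta|$ and $|\beta+\tau(t)|\lesssim|\alpha-\beta|$. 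The far regime enters not only in the denominator bound but also, critically, in this numerator bound; your proposal uses it only for the denominator.

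The same misconception propagates to your treatment of \eqref{farawayhg2}: you assert that $D_h$ inserts a difference $D^{-60+s}g(\alpha,\gamma,t)-D^{-60+s}g(\beta,\gamma,t)$, but the term $V^{30}(\beta_0^t,t)$ does not depend on $h$, so $D_h$ produces only the one-sided insertion $\zeta V_g^{30}(\alpha,\gamma,t)$, and the needed bound $|\zeta V_g^{30}(\alpha,\gamma,t)|\lesssim\|g\|_{L^2}|\alpha+\tau(t)|\lesssim\|g\|_{L^2}|\alpha-\beta|$ again requires exactly the far-regime comparison $|\alpha+\tau(t)|\lesssim|\alpha-\beta|$. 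Supplying this observation (which the paper makes explicit in the displayed chain of inequalities bounding $|\zeta V_h^{30}(\alpha,\gamma,t)-V^{30}(\beta_0^t,t)|$, $|V_{\tilde f^L}^{61}(\alpha_\gamma^t,t)-V_{\tilde f^L}^{61}(\beta_0^t,t)|$, $|V_X(\alpha_\gamma^t,t)-V_X(\beta_0^t,t)|$ all by $|\alpha-\beta|$) is what is missing from your proposal; the rest of the argument — rewriting derivatives as $K_{-(\sigma+l+l'+j)}$ kernels times bounded coefficients and closing with the index condition, and the interpolation-in-$\zeta_1$ (or Gateaux-expansion) trick for the time-difference estimates — does match the paper.
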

\begin{proof}
From definition of $a_{\gamma}^{t}$ \eqref{alphadefi}, the space of $\tilde{f}^{L}$ and $X_i$ \eqref{Xispace}, the operator $D^{-i}$ \eqref{D-formularnew} we have when $d_l\leq 61$, $s_l\leq 30$, $0\leq j\leq 4$
 \[
\bar{\partial}_{\gamma}^{j} D^{-60+s_l}h(\alpha,\gamma,t), \bar{\partial}_{\gamma}^{j}\partial_{\alpha}^{d_i}\tilde{f}^{L}(\alpha_{\gamma}^{t}), \bar{\partial}_{\gamma}^{j}X(\alpha_{\gamma}^{t}) \in C_{\gamma}^{0}([-1,1], C_{\alpha}^{29}[-\tau(t),\pi]).
 \]
 \[
\bar{\partial}_{\gamma}^{j} D^{-60+s_l}h(\beta,\gamma,t), \bar{\partial}_{\gamma}^{j}\partial_{\alpha}^{d_i}\tilde{f}^{L}(\beta_{0}^{t}),\bar{\partial}_{\gamma}^{j} X(\beta_{0}^{t}) \in C_{\gamma}^{0}([-1,1], C_{\alpha}^{29}[-\pi,\pi]).
 \]
For  bound \eqref{farawayuniform}, we claim
\begin{align*}
&\qquad\partial_{\alpha}^{l}\partial_{\beta}^{l'}\bar{\partial}_{\gamma}^{j}K_{-\sigma}(\zeta V_h^{30}(\al,\g,t)-V^{30}(\beta_0^{t},t), V_{\tilde{f}^{L}}^{61}(\alpha_{\gamma}^{t},t)-V_{\tilde{f}^{L}}^{61}(\beta_{0}^{t},t),V_{X}(\alpha_{\gamma}^{t},t)-V_{X}(\beta_{0}^{t},t))\\
    &=\sum_{l}K_{-(\sigma+l+l'+j)}^{l}(\zeta V_h^{30}(\al,\g,t)-V^{30}(\beta_0^{t},t), V_{\tilde{f}^{L}}^{61}(\alpha_{\gamma}^{t},t)-V_{\tilde{f}^{L}}^{61}(\beta_{0}^{t},t),V_{X}(\alpha_{\gamma}^{t},t)-V_{X}(\beta_{0}^{t},t))G^{l}(\alpha,\beta,\gamma,t)
\end{align*}
with $|G^{l}(\alpha,\beta,\gamma,t)|\lesssim 1$.
Then we can only consider $\tilde{\sigma}=\sigma+l+l'+j$ without derivative.
Recall that $K_{\tilde{\sigma}}$ has the form \eqref{k-sigma}, by arc-chord conditions \eqref{arcchord7} and \eqref{arcchord8}, we have
\[
|\frac{1}{denominator}|\lesssim \frac{1}{(\alpha-\beta)^{2m_0}}.
\]
Moreover, from definition of $V^{30}_{h}$ \eqref{notation03h}, $V^{30}$ \eqref{notation01}, $V^{61}_{\tilde{f}^{L}}$\eqref{notation02},  $V_{X}$ \eqref{notation03}, we have
\begin{align*}
&|\zeta V_h^{30}(\al,\g,t)-V^{30}(\beta_0^{t},t)|\leq | V_h^{30}(\alpha,\gamma,t)|+| V^{30}(\alpha_{0}^{t},t)|+|V^{30}(\alpha_0^{t},t)-V^{30}(\beta_0^{t},t)|\\
&\lesssim |\alpha+\tau(t)|+|\alpha+\tau(t)|+|\alpha-\beta|,
\end{align*}
\begin{align*}
  |V_{\tilde{f}^{L}}^{61}(\alpha_{\gamma}^{t},t)-V_{\tilde{f}^{L}}^{61}(\beta_{0}^{t},t)|\leq |V_{\tilde{f}^{L}}^{61}(\alpha_{0}^{t},t)-V_{\tilde{f}^{L}}^{61}(\beta_{0}^{t},t)|+|V_{\tilde{f}^{L}}^{61}(\alpha_{0}^{t},t)-V_{\tilde{f}^{L}}^{61}(\alpha_{\gamma}^{t},t)|\lesssim |\alpha-\beta|+|\alpha+\tau(t)|\lesssim|\alpha-\beta|,
\end{align*}
and
\begin{align*}
  |V_{X}(\alpha_{\gamma}^{t},t)-V_{X}(\beta_{0}^{t},t)|\leq |V_{X}(\alpha_{0}^{t},t)-V_{X}(\beta_{0}^{t},t)|+|V_{X}(\alpha_{0}^{t},t)-V_{X}(\alpha_{\gamma}^{t},t)|\lesssim |\alpha-\beta|+|\alpha+\tau(t)|\lesssim|\alpha-\beta|.
\end{align*}
Hence we have 
\[
|numerator|\lesssim |\alpha-\beta|^{m_1+m_3+m_5+m_6}.
\]
Then we get the first inequality \eqref{farawayuniform}. \eqref{farawayhg2} can be done in a similar way. For the difference in $t$ estimate, we could write 
\[
\frac{d}{d\zeta_1}K_{-\sigma}(\zeta_1\zeta V_h^{30}(\alpha,\gamma,t)-\zeta_1V^{30}(\beta_0^{t},t), \zeta_1V_{\tilde{f}^{L}}^{61}(\alpha_{\gamma}^{t},t)-\zeta_1V_{\tilde{f}^{L}}^{61}(\beta_{0}^{t},t),\zeta_1V_{X}(\alpha_{\gamma}^{t},t)-\zeta_1V_{X}(\beta_{0}^{t},t))
\]
and
\begin{align*}
&\frac{d}{d\zeta_1}D_{h}^{j'}K_{-\sigma}(\zeta_1\zeta V_h^{30}(\alpha,\gamma,t)-\zeta_1V^{30}(\beta_0^{t},t), \zeta_1V_{\tilde{f}^{L}}^{61}(\alpha_{\gamma}^{t},t)-\zeta_1V_{\tilde{f}^{L}}^{61}(\beta_{0}^{t},t),\zeta_1V_{X}(\alpha_{\gamma}^{t},t)-\zeta_1V_{X}(\beta_{0}^{t},t))[g_1,g_2,...g_{j'}]
\end{align*}
 as sums of the product of $K_{-(\sigma+1)}$ terms and other non-singular terms to bound the difference.
\end{proof}
\begin{corollary}\label{alphabetafarleft}
When $\beta<-\tau(t)$ in the previous lemma, \ref{alphabetafar}, the $\frac{1}{|\alpha-\beta|}$ bound can be replaced by $\frac{1}{\alpha+\tau(t)}$. Moreover, here
\begin{equation}\label{3corochange0}
\begin{split}
    &\quad K_{-\sigma}(\zeta V_h^{30}(\alpha,\gamma,t)-V^{30}(\beta_0^{t},t), V_{\tilde{f}^{L}}^{61}(\alpha_{\gamma}^{t},t)-V_{\tilde{f}^{L}}^{61}(\beta_{0}^{t},t),V_{X}(\alpha_{\gamma}^{t},t)-V_{X}(\beta_{0}^{t},t))\\
    &=K_{-\sigma}(\zeta V_h^{30}(\alpha,\gamma,t), V_{\tilde{f}^{L}}^{61}(\alpha_{\gamma}^{t},t)-V_{\tilde{f}^{L}}^{61}(\beta_{\gamma}^{t},t),V_{X}(\alpha_{\gamma}^{t},t)-V_{X}(\beta_{\gamma}^{t},t))\\
    &=K_{-\sigma}(\zeta (V_h^{30}(\alpha,\gamma,t)-V_h^{30}(\beta,\gamma,t), V_{\tilde{f}^{L}}^{61}(\alpha_{\gamma}^{t},t)-V_{\tilde{f}^{L}}^{61}(\beta_{\gamma}^{t},t),V_{X}(\alpha_{\gamma}^{t},t)-V_{X}(\beta_{\gamma}^{t},t)).
    \end{split}
\end{equation}
\end{corollary}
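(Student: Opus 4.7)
The plan is to reduce the corollary to bookkeeping with the support properties of $\tilde f^{+}$, $c(\alpha)$, and $D^{-i}h$, and then compare distances. The statement has two independent pieces: the chain of identities \eqref{3corochange0}, and the assertion that the $|\alpha-\beta|^{-1}$ factors in the bounds of Lemma \ref{alphabetafar} may be replaced by $(\alpha+\tau(t))^{-1}$. I expect both to be essentially definitional once I unpack where each quantity is supported.

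First I would verify the first equality in \eqref{3corochange0}. Assume $\beta<-\tau(t)$, so $\beta_{0}^{t}=\beta+\tau(t)<0$. Since $c$ is supported in $[0,\delta/8]$ by \eqref{cdefi}, $c(\beta+\tau(t))=0$, hence $\beta_{\gamma}^{t}=\beta+\tau(t)=\beta_{0}^{t}$ for every $\gamma$; this identifies the $\tilde f^{L}$ and $X$ arguments at $\beta_{0}^{t}$ with those at $\beta_{\gamma}^{t}$. Next, every component of $V^{30}(\beta_{0}^{t},t)$ is a derivative of $\tilde f^{+}$ evaluated at a point with negative real part, and $\tilde f^{+}\equiv 0$ on $(-\infty,0)$ by \eqref{fLequation}, so $V^{30}(\beta_{0}^{t},t)=0$. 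Combining these observations yields the first equality. For the second equality I would invoke the definition \eqref{D-formularnew02} of $D^{-i}$, which sets $D^{-i}h(\beta,\gamma,t)=0$ whenever $\beta\leq -\tau(t)$; hence every component of $V_{h}^{30}(\beta,\gamma,t)$ vanishes, and subtracting it from the first argument of $K_{-\sigma}$ is harmless. This gives the equality and the right-hand side is now in the $\Delta$-form that fits directly into the template of Lemma \ref{alphabetafar}.

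Then I would handle the bound replacement. For $\beta\leq -\tau(t)$ and $\alpha>-\tau(t)$ we simply have
\[
\alpha-\beta \;=\; (\alpha+\tau(t))+(-\tau(t)-\beta)\;\geq\;\alpha+\tau(t)>0,
\]
so $|\alpha-\beta|^{-s}\leq (\alpha+\tau(t))^{-s}$ for any $s\geq 0$. Applying this pointwise in $\beta$ inside each of the four bounds \eqref{farawayuniform}--\eqref{farawaytt'dh} of Lemma \ref{alphabetafar} converts the $|\alpha-\beta|$-dependent bound into a $\beta$-uniform bound in terms of $\alpha+\tau(t)$, which is the conclusion.

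The proof has no real obstacle; the only thing to be careful about is making sure that in the first equality the change of the middle and last arguments from $\beta_{0}^{t}$ to $\beta_{\gamma}^{t}$ is driven by $c(\beta+\tau(t))=0$ and is valid for \emph{all} $\gamma\in[-1,1]$, and that the vanishing of $V^{30}(\beta_{0}^{t},t)$ uses only the support of $\tilde f^{+}$, not any smallness hypothesis on $h$. Once these two points are in place the corollary reduces to the previous lemma without any additional estimate.
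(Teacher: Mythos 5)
Your proposal is correct and follows essentially the same route as the paper: the identity \eqref{3corochange0} reduces to the facts that $\beta_\gamma^t=\beta_0^t$ for $\beta\le-\tau(t)$ (since $c(\beta+\tau(t))=0$) and that both $V^{30}(\beta_0^t,t)$ and $V_h^{30}(\beta,\gamma,t)$ vanish there, while the bound replacement is exactly $|\alpha-\beta|\ge\alpha+\tau(t)$. You spell out the supporting support facts more explicitly, but the argument is the same.
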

\begin{proof}
We use $\frac{1}{|\alpha-\beta|}\lesssim \frac{1}{\alpha+\tau(t)}$  when $\beta<-\tau(t)$ . For the equation \eqref{3corochange0} we use the fact that when $\beta\leq -\tau(t)$, $\beta_{\gamma}^{t}=\beta_{0}^{t}$ \eqref{alphadefi} and
$V^{30}(\beta_0^{t},t)=V_{h}^{30}(\beta,\gamma,t)=0$ (from \eqref{notation01}, \eqref{notation03h},\eqref{D-formularnew}).
\end{proof}

\begin{corollary}\label{alphabetafarright}
When $\beta>2\alpha+\tau(t)$ in the previous lemma, the $\frac{1}{|\alpha-\beta|}$ bound can be replaced by $\frac{1}{\beta+\tau(t)}$.
\end{corollary}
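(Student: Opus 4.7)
The plan is to deduce Corollary \ref{alphabetafarright} directly from Lemma \ref{alphabetafar} by observing that, in the range $\beta > 2\alpha + \tau(t)$, the quantity $|\alpha-\beta|$ dominates $\beta+\tau(t)$ up to a universal constant. The work is purely an elementary inequality; the hard analytic content (the bounds on the kernel $K_{-\sigma}$ with its Gateaux derivatives, the factor of $|\alpha-\beta|^{l+l'+j+\sigma}$ coming out of the numerator/denominator analysis via the arc-chord condition \eqref{arcchord7}--\eqref{arcchord9}) is already encapsulated in Lemma \ref{alphabetafar}.

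First I would note that the hypothesis $\beta > 2\alpha + \tau(t)$ rewrites as
\begin{equation*}
\beta - \alpha > \alpha + \tau(t),
\end{equation*}
and since in Lemma \ref{alphabetafar} one always has $\alpha > -\tau(t)$, so that $\alpha+\tau(t) > 0$, adding $\beta-\alpha$ to both sides yields $2(\beta-\alpha) > \beta + \tau(t)$, i.e.\
\begin{equation*}
\frac{1}{|\alpha-\beta|} \;=\; \frac{1}{\beta-\alpha} \;<\; \frac{2}{\beta+\tau(t)}.
\end{equation*}
Then the four bounds \eqref{farawayuniform}--\eqref{farawaytt'dh} in Lemma \ref{alphabetafar}, which each take the form $|(\cdots)| \lesssim |\alpha-\beta|^{-(l+l'+j+\sigma)}$ or $|\alpha-\beta|^{-(l+l'+1+j+\sigma)}$ times the appropriate data norm, can be read off by substituting the inequality above into every factor of $|\alpha-\beta|^{-1}$ appearing on the right-hand side, with the implicit constants absorbing the factor of $2$ to the relevant power.

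There is no genuine obstacle here: the only subtlety worth mentioning is that the replacement should be done uniformly over the supremum in $\beta \in [2\alpha+\tau(t), \pi]$ (rather than over $\beta \in [-\pi,-\tau(t)]$, which is the regime handled in Corollary \ref{alphabetafarleft}), and uniformly in $\gamma$, $t$, $\zeta$; but since the inequality $\beta-\alpha > \frac{1}{2}(\beta+\tau(t))$ holds pointwise with a constant independent of all these parameters, the substitution preserves the structure of the supremum norms. Thus the corollary follows immediately, and no new kernel analysis is required.
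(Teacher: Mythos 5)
Your proposal is correct and matches the intended argument: the corollary reduces to the pointwise inequality $\frac{1}{|\alpha-\beta|}\lesssim\frac{1}{\beta+\tau(t)}$ on the range $\beta>2\alpha+\tau(t)$, $\alpha>-\tau(t)$, which you derive cleanly from $\beta-\alpha>\alpha+\tau(t)>0$ and hence $2(\beta-\alpha)>\beta+\tau(t)$. This is the same one-line observation the paper makes explicit for the companion Corollary~\ref{alphabetafarleft} and leaves implicit here.
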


\begin{lemma}\label{alphabetafar-}
Let
\[
\tilde{K}_{-\sigma}(\alpha,\beta,\gamma,\zeta, t)=K_{-\sigma}^{i}(0,\Delta V_{\tilde{f}^{L,L}}^{61}(\alpha_{\gamma}^{t},t)+\zeta\Delta V_{\tilde{f}^{-}}^{61}(\alpha_{\gamma}^{t},t),\Delta V_{X^L}(\alpha_{\gamma}^{t},t)+\zeta\Delta V_{X^-}(\alpha_{\gamma}^{t},t)).
\]For $0\leq \zeta\leq 1$, $l+l'\leq 15$, $j\leq 4$, when $-\tau(t)<\alpha\leq \frac{\pi}{4}$, $\beta<-\tau(t)$,  we have the following two inequalities:

\begin{align}\label{farawayuniform-}
  &\quad \sup_{\beta\in [-\pi,-\tau(t)]}(|(\partial_{\alpha}^{l}\partial_{\beta}^{l'}\partial_{\gamma}^{j})\tilde{K}_{-\sigma}(\alpha,\beta,\gamma,\zeta, t)||\beta+\tau(t)|^{l+l'+j+\sigma})\\
  &\lesssim \sup_{\beta\in [-\pi,-\tau(t)]}(|(\partial_{\alpha}^{l}\partial_{\beta}^{l'}\partial_{\gamma}^{j})\tilde{K}_{-\sigma}(\alpha,\beta,\gamma,\zeta, t)||\beta-\alpha|^{l+l'+j+\sigma})\lesssim 1,
\end{align}
  and for $0\leq t'<t\leq t$, $\alpha>-\tau(t')$, 
  \begin{align}\label{farawaytt'-}
 &\quad\sup_{\beta\in [-\pi,-\tau(t)]}(|(\partial_{\alpha}^{l}\partial_{\beta}^{l'}\partial_{\gamma}^{j})(\tilde{K}_{-\sigma}(\alpha,\beta,\gamma,\zeta, t)-\tilde{K}_{-\sigma}(\alpha,\beta,\gamma,\zeta, t'))||\beta+\tau(t)|^{l+l'+1+j+\sigma})\\\nonumber
  &\lesssim\sup_{\beta\in [-\pi,-\tau(t)]}(|(\partial_{\alpha}^{l}\partial_{\beta}^{l'}\partial_{\gamma}^{j})(\tilde{K}_{-\sigma}(\alpha,\beta,\gamma,\zeta, t)-\tilde{K}_{-\sigma}(\alpha,\beta,\gamma,\zeta, t'))||\beta-\alpha|^{l+l'+1+j+\sigma})\\\nonumber
  &\lesssim \mathcal{O}(t-t').
  \end{align}
\end{lemma}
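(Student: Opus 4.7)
The plan is to adapt the argument of lemma~\ref{alphabetafar} to the simpler kernel at hand, in which the first argument equals $0$ and the auxiliary quantities $\tilde f^-$, $X^-$ vanish at $\alpha_\gamma^t$. First I would dispose of the leftmost inequalities in \eqref{farawayuniform-} and \eqref{farawaytt'-}: since $\alpha>-\tau(t)>\beta$, we have $|\alpha-\beta|\geq|\beta+\tau(t)|$, so $|\beta+\tau(t)|^{p}\leq|\beta-\alpha|^{p}$ for every $p\geq 0$; this reduces the task to establishing the $\lesssim 1$ bound with $|\beta-\alpha|^{l+l'+j+\sigma}$ (respectively $|\beta-\alpha|^{l+l'+1+j+\sigma}$). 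Next I would unpack the kernel via \eqref{k-sigma}: vanishing of the first argument forces $m_5=0$. For $\beta<-\tau(t)$ one has $\beta_\gamma^t=\beta_0^t=\beta+\tau(t)$, while for $\alpha>-\tau(t)$ the analytic identity $\tilde f^L=\tilde f^{L,L}+\tilde f^-$ combined with $\tilde f^-\equiv 0$ on $\{\alpha>0\}$ (from \eqref{fLequation2}) and the identity principle inside $\tilde D_A$ give $\tilde f^-(\alpha_\gamma^t)=0$ and $X^-(\alpha_\gamma^t)=0$ (from \eqref{X-function}). Hence
\[
B_2 = \tilde f^{L,L}_2(\alpha_\gamma^t)-\tilde f^{L,L}_2(\beta_0^t)-\zeta\,\tilde f^-_2(\beta_0^t),
\]
and analogously for $B_1$, matching exactly the hyperbolic/trigonometric argument in the arc-chord estimate \eqref{arcchord6L}.

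With the arc-chord lower bound $|\cosh B_2-\cos B_1|\gtrsim (\alpha-\beta)^2$, uniform in $\zeta\in[0,1]$ from \eqref{arcchord6L}, the power-counting argument of lemma~\ref{alphabetafar} transfers verbatim. For $l=l'=j=0$, the factors $\sin B_1$, $\sinh B_2$, $B_{\lambda_{j,2}}$, $C_{\lambda_{j,3}}$ each contribute an order of $|\alpha-\beta|$ by the difference structure and the smoothness of $\tilde f^{L,L}, \tilde f^-, X^L, X^-$ recorded in \eqref{f-fLLspace}; combined with $|\cosh B_2-\cos B_1|^{-m_0}\lesssim|\alpha-\beta|^{-2m_0}$ and the constraint $m_1+m_3+m_6+m_7-2m_0\geq -\sigma$, this yields $|\tilde K_{-\sigma}|\lesssim|\alpha-\beta|^{-\sigma}$. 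For $l+l'+j\geq 1$, repeated chain rule writes $\partial_\alpha^l\partial_\beta^{l'}\bar\partial_\gamma^j\tilde K_{-\sigma}$ as a finite sum of $K_{-(\sigma+l+l'+j)}(0,B,C)\cdot G$-terms with uniformly bounded smooth factors $G$; the smoothness budgets ($\tilde f^{L,L}$ polynomial on $[-\delta/2,\delta/2]$, $\tilde f^-\in C^{99}$, $X^L, X^-$ smooth enough) comfortably absorb the required $l+l'\leq 15$ spatial and $j\leq 4$ $\gamma$-derivatives, and applying the base case with $\sigma\mapsto\sigma+l+l'+j$ produces \eqref{farawayuniform-}.

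For the time-increment \eqref{farawaytt'-} I would write
\[
\tilde K_{-\sigma}(\alpha,\beta,\gamma,\zeta,t)-\tilde K_{-\sigma}(\alpha,\beta,\gamma,\zeta,t')=\int_{t'}^{t}\partial_s\tilde K_{-\sigma}(\alpha,\beta,\gamma,\zeta,s)\,ds,
\]
and note that $\partial_s\tilde K_{-\sigma}$ is again of $K_{-(\sigma+l+l'+j+1)}(0,B,C)\cdot \widetilde G$ form after the $s$-derivative is distributed through $\alpha_\gamma^s$ (i.e.~through $\tau(s)$ and $c(\alpha+\tau(s))$) and through the $C^1$-in-$t$ arguments $\tilde f^{L,L}, \tilde f^-, X^L, X^-$. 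The preceding power count applied pointwise in $s\in[t',t]$ and integrated over $s$ yields the $\mathcal O(t-t')$ modulus.

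The only technically delicate point I expect is the uniform-in-$\zeta$ arc-chord bound: one must ensure the convex perturbation of $\tilde f^{L,L}$ by $\zeta\tilde f^-$ does not spoil the lower bound on $\cosh B_2-\cos B_1$. This is precisely the content of \eqref{arcchord6L}, whose proof in lemma~\ref{arcchord} exploits the smallness of $\|\tilde f^-\|_{C^1}$ achieved by shrinking $\delta$. Once this ingredient is in hand, the remainder of the argument is a direct transplantation of lemma~\ref{alphabetafar}, requiring no new ideas.
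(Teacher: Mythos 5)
Your proof is correct and follows essentially the same route as the paper: first reduce the $|\beta+\tau(t)|$-bound to the $|\beta-\alpha|$-bound via $|\beta+\tau(t)|\leq|\beta-\alpha|$ in the relevant region; then use $\beta_\gamma^t=\beta_0^t$ for $\beta<-\tau(t)$, the vanishing $\tilde f^-(\alpha_\gamma^t)=X^-(\alpha_\gamma^t)=0$, and the bounds $|B|,|C|\lesssim|\alpha-\beta|$ together with the arc-chord lower bound \eqref{arcchord6L} to close the power count. The paper's own argument is a two-line pointer back to lemma \ref{alphabetafar}, and you are filling in precisely the details it elides (notably the identity-principle justification that the analytic continuation of $\tilde f^-$ vanishes where $\Re(\alpha_\gamma^t)>0$, which the paper simply asserts as $|V_{\tilde f^-}^{61}(\alpha_\gamma^t)|=0$). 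The only cosmetic divergence is in the time-increment estimate: you apply the fundamental theorem of calculus in $s$ directly, while the paper interpolates between the two kernels with an auxiliary parameter $\zeta_1$; both mechanisms yield the claimed $\mathcal{O}(t-t')$ bound, so the difference is immaterial.
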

\begin{proof}
 From definition of $V^{30}_{\tilde{f}^{-}}$ \eqref{notation03f-},  $V^{30}_{\tilde{f}^{L,L}}$ \eqref{notation03fLL} and space of $\tilde{f}^{-}$, $\tilde{f}^{L,L}$  \eqref{f-fLLspace}, definition of $X^{L}$ \eqref{XLfunction}, $X^{-}$ \eqref{X-function}. When $\beta<-\tau(t)$, $\alpha>-\tau(t)$,  we have 
\begin{align*}
   &\quad |V_{\tilde{f}^{L,L}}^{61}(\alpha_{\gamma}^{t},t)-V_{\tilde{f}^{L,L}}^{61}(\beta_{\gamma}^{t},t)|=|V_{\tilde{f}^{L,L}}^{61}(\alpha_{\gamma}^{t},t)-V_{\tilde{f}^{L,L}}^{61}(\beta_{0}^{t},t)|\\
   &\leq |V_{\tilde{f}^{L,L}}^{61}(\alpha_{0}^{t},t)-V_{\tilde{f}^{L,L}}^{61}(\beta_{0}^{t},t)|+|V_{\tilde{f}^{L,L}}^{61}(\alpha_{0}^{t},t)-V_{\tilde{f}^{L,L}}^{61}(\alpha_{\gamma}^{t},t)|\\
  &\lesssim |\alpha-\beta|+|\alpha+\tau(t)|\lesssim|\alpha-\beta|,
\end{align*}
\begin{align*}
    &|V_{\tilde{f}^{-}}^{61}(\alpha_{\gamma}^{t},t)|=0,
\end{align*}
\begin{align*}
    &|V_{\tilde{f}^{-}}^{61}(\beta_{\gamma}^{t},t)|=|V_{\tilde{f}^{-}}^{61}(\beta_{0}^{t},t)|\lesssim |\beta+\tau(t)|\lesssim |\alpha-\beta|,
\end{align*}
\begin{align*}
  &\quad|V_{X^{L}}(\alpha_{\gamma}^{t},t)-V_{X^{L}}(\beta_{\gamma}^{t},t)|=|V_{X^{L}}(\alpha_{\gamma}^{t},t)-V_{X^{L}}(\beta_{0}^{t},t)|\\
  &\leq |V_{X^{L}}(\alpha_{0}^{t},t)-V_{X^{L}}(\beta_{0}^{t},t)|+|V_{X^{L}}(\alpha_{0}^{t},t)-V_{X^{L}}(\alpha_{\gamma}^{t},t)|\\
  &\lesssim |\alpha-\beta|+|\alpha+\tau(t)|\\
  &\lesssim|\alpha-\beta|,
\end{align*}
\begin{align*}
   &|V_{X^{-}}(\alpha_{\gamma}^{t},t)|=0,\\
  &|V_{X^{-}}(\beta_{\gamma}^{t},t)|=|V_{X^{-}}(\beta_{0}^{t},t)|\lesssim |\beta+\tau(t)|\lesssim |\alpha-\beta|.
\end{align*}
Then we can use a similar proof as in lemma \ref{alphabetafar} by arc-chord condition \eqref{arcchord6L} instead of \eqref{arcchord8}.
\end{proof}
\begin{lemma}\label{gammachange}
Let $\sigma\in \mathcal{N}$, $k\leq 12$, $j,j'\leq 4$,  $t$, $\delta$ in \eqref{lambda0defi}  be sufficiently small satisfying conditions in lemma \ref{arcchord}, $\tilde{Y}^{k}(t)=C_{\gamma,\eta}^{0}([-1,1]\times[-1,1],C^{k}_{\alpha}([-\tau(t),\frac{\pi}{4}])$ and 
 \begin{align*}
&\quad\tilde{K}_{-\sigma}(\alpha,\g, \eta,t)(\al)
\\&= K_{-\sigma}(V_h^{30}(\alpha,\gamma,t)-V_h^{30}(2\alpha+\tau(t),\eta,t), V_{\tilde{f}^{L}}^{61}(\alpha_{\gamma}^{t},t)-V_{\tilde{f}^{L}}^{61}((2\alpha+\tau(t))_{\eta}^{t},t),V_{X}(\alpha_{\gamma}^{t},t)-V_{X}((2\alpha+\tau(t))_{\eta}^{t},t))
  \\
  &\qquad\cdot(\alpha+\tau(t))^{\sigma}.
 \end{align*}For $h$, $g_{j'}$ in 
\[
h\in C_{\gamma}^{0}([-1,1],L_{\al}^{2}[-\tau(t),\pi])\cap \{h|\supp h \subset [-\tau(t),\frac{\pi}{4}]\},
\] with $\|h\|_{L_{\al}^{2}[-\tau(t),\pi]}$ sufficiently small such that $D^{-60}(h)$ satisfying the condition in lemma \ref{arcchord}, we have $\tilde{K}$
satisfying the following inequalities:
\begin{align}\label{gammachangeestimate1}
  &\|\bar{\partial}^{j}\tilde{K}_{-\sigma}(h,\g, \eta,t)\|_{ \tilde{Y}^{k+4}}\lesssim 1,
\end{align}
  \begin{align}\label{gammachangeestimate3}
  &\|\bar{\partial}^{j}D_h^{j'}\tilde{K}_{-\sigma}(h,\g, \eta,t)[g_1,g_2...g_{j'}]\|_{ \tilde{Y}^{k+4}}\lesssim \prod_{l=1}^{j'}\|g_l\|_{L_{\al}^2},
  \end{align}
  and for $0\leq t'<t$, 
\begin{align}\label{gammachangeestimate4}
&\|\bar{\partial}^{j}(\tilde{K}_{-\sigma}(h,\g, \eta,t)-\tilde{K}_{-\sigma}(h,\g, \eta,t'))\|_{ \tilde{Y}^{k+3}}\lesssim O(t-t')+\|h(\alpha,\gamma,t)-h(\alpha,\gamma,t')\|_{C_{\gamma}^{0}([-1,1],L_{\al}^{2}[-\tau(t'),\pi])},
  \end{align}
  \begin{align}\label{gammachangeestimate5}
  &\quad\|\bar{\partial}^{j}(D_h^{j'}\tilde{K}_{-\sigma}(h,\g, \eta,t)[g_1,g_2...,g_{j'}]-D_h\tilde{K}_{-\sigma}(h,\g, \eta,t')[g_1,g_2...,g_{j'}])\|_{ \tilde{Y}^{k+3}}\\\nonumber
  &\lesssim O(t-t')+\|h(\alpha,\gamma,t)-h(\alpha,\gamma,t')\|_{C_{\gamma}^{0}([-1,1],L_{\al}^{2}[-\tau(t'),\pi])}+\sum_{l=1}^{j'}\|g_l(\alpha,\gamma,t)-g_l(\alpha,\gamma,t')\|_{ C_{\gamma}^{0}([-1,1],L_{\al}^{2}[-\tau(t'),\pi])}.
  \end{align}
\end{lemma}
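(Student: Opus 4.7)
The plan is to mirror the architecture of Lemmas \ref{alphabetanear} and \ref{alphabetafar}: reduce everything to the structural form of a $-\sigma$-type kernel \eqref{k-sigma}, then do the power-counting in $|\alpha+\tau(t)|$ using the arc-chord condition. The key observation is that when we evaluate at $\beta = 2\alpha+\tau(t)$, we have $|\alpha-\beta| = |\alpha+\tau(t)|$, so the arc-chord estimate \eqref{arcchord501} gives
\[
\cosh(V^{61}_{\tilde{f}^L,2}(\alpha_\gamma^t,t) - V^{61}_{\tilde{f}^L,2}((2\alpha+\tau(t))_\eta^t,t)) - \cos(\cdots) \gtrsim (\alpha+\tau(t))^2,
\]
which controls the denominator $(\cosh(A_2+B_2)-\cos(A_1+B_1))^{m_0}$ in \eqref{k-sigma} by $(\alpha+\tau(t))^{-2m_0}$. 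Meanwhile each numerator factor ($\sin$, $\sinh$, $A_j$, $B_j$, $C_j$) contributes an order of $|\alpha+\tau(t)|$ because $V^{30}_h$, $V^{61}_{\tilde{f}^L}$, $V_X$ are Lipschitz in $\alpha$ (from \eqref{D-formularnew}, \eqref{notation03h}, \eqref{Xispace} together with smoothness of $\alpha_\gamma^t$ in $\alpha$ from \eqref{alphadefi} and \eqref{cdefi}). The type constraint $m_1+m_3+m_5+m_6+m_7-2m_0\geq -\sigma$ therefore produces a net power $|\alpha+\tau(t)|^{-\sigma}$ in $K_{-\sigma}$, which is exactly cancelled by the external factor $(\alpha+\tau(t))^{\sigma}$. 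This yields the $L^\infty$ bound in \eqref{gammachangeestimate1}.

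Next I would handle $\alpha$-derivatives up to order $k+4 \leq 16$. Each derivative either (a) acts on an inner argument, converting $K_{-\sigma}$ into $K_{-(\sigma+1)}$ times a bounded factor (by the closed-form differentiation formula used already in \eqref{dbetaksigma4}, with the extra $2$ from $\partial_\alpha(2\alpha+\tau(t))$ harmless), or (b) acts on the external $(\alpha+\tau(t))^\sigma$, lowering its exponent by one. In either case the bookkeeping matches: the rise in kernel type is compensated by either a new numerator factor of order $|\alpha+\tau(t)|$ or a drop in the outer power. Since $V^{30}_h$, $V^{61}_{\tilde{f}^L}$, $V_X$ are $C^{29}_\alpha$ in the relevant window, and $k+4\leq 16$, we have ample regularity for the chain/product rule. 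Joint continuity in $(\gamma,\eta)$ follows from the $C^0_{\gamma,\eta}$ regularity of the same underlying vectors (with $\bar{\partial}_\gamma^j$ estimates handled identically for $j\leq 4$ since only four derivatives in $\gamma,\eta$ are needed).

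The Gateaux estimates \eqref{gammachangeestimate3} come from differentiating the $h$-dependence through $V^{30}_h$ and writing $D_h K_{-\sigma}$ as a finite sum of $K_{-(\sigma+1)}$-type kernels acting on the perturbation $V^{30}_{g_l}$, and then bounding $\|V^{30}_{g_l}\|_{L^\infty_\alpha}$ by $\|g_l\|_{L^2}$ using $D^{-60+s_l}$ with $s_l\leq 30$, which gains at least $30$ orders of smoothness (so $L^2 \hookrightarrow C^{29}$). For the time-continuity estimates \eqref{gammachangeestimate4} and \eqref{gammachangeestimate5}, I would use the standard $\zeta_1$-interpolation device from Lemma \ref{alphabetafar}: write $K_{-\sigma}(t) - K_{-\sigma}(t')$ as $\int_0^1 \partial_{\zeta_1}K_{-\sigma}(\zeta_1 \cdot t + (1-\zeta_1)\cdot t')\, d\zeta_1$, which reduces the estimate to a boundedness estimate one order worse in $\sigma$, balanced by the $C^1_t$ smoothness of $V^{61}_{\tilde{f}^L}$, $V_X$ (from \eqref{Xispace}, \eqref{fLspace}), the time-Lipschitz behavior of $\tau(t)$ via \eqref{defkappa}, and the hypothesized time-continuity of $h$.

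The main obstacle is purely combinatorial: each $\alpha$-derivative produces a large sum of Leibniz terms, and one must confirm that in each term the total power of $|\alpha+\tau(t)|$ in numerator minus denominator plus external factor remains $\geq 0$. This is essentially automatic because the structural class \eqref{k-sigma} is closed under differentiation with $\sigma \mapsto \sigma+1$ per derivative, and the external factor loses one power per derivative that hits it—so the invariant is preserved; the only care is to check that factors like $\partial_\alpha(2\alpha+\tau(t))=2$ and the chain-rule coefficients do not conspire to degrade the bound, which they do not since $\tau(t)$ is constant in $\alpha$.
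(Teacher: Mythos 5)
Your broad strategy (power-counting in the structural class \eqref{k-sigma}, using the arc-chord lower bound on the denominator and Lipschitz-type upper bounds on the numerator factors, then propagating derivatives with $\sigma \mapsto \sigma+1$) is the same as the paper's, and your treatment of the Gateaux and time-continuity estimates via $\zeta_1$-interpolation mirrors Lemma \ref{alphabetafar}. However, there is a genuine gap at the one point where this lemma actually differs from Lemma \ref{alphabetanear}: the two evaluation points are $\alpha_\gamma^t$ and $(2\alpha+\tau(t))_\eta^t$ with \emph{different} contour parameters $\gamma$ and $\eta$, and your justification \textquotedblleft $V^{30}_h$, $V^{61}_{\tilde f^L}$, $V_X$ are Lipschitz in $\alpha$\textquotedblright\ only addresses a shift in $\alpha$ at fixed $\gamma$. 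For $V^{61}_{\tilde f^L}$ and $V_X$, one must split
\[
V_{\tilde{f}^{L}}^{61}(\alpha_{\gamma}^{t})-V_{\tilde{f}^{L}}^{61}((2\alpha+\tau(t))_{\eta}^{t})
=\bigl(V_{\tilde{f}^{L}}^{61}(\alpha_{\gamma}^{t})-V_{\tilde{f}^{L}}^{61}((2\alpha+\tau(t))_{\gamma}^{t})\bigr)
+\bigl(V_{\tilde{f}^{L}}^{61}((2\alpha+\tau(t))_{\gamma}^{t})-V_{\tilde{f}^{L}}^{61}((2\alpha+\tau(t))_{\eta}^{t})\bigr),
\]
and the second piece requires the quadratic vanishing $c(\alpha+\tau(t))\sim(\alpha+\tau(t))^2$ (more precisely $\frac{c(\alpha+\tau(t))}{(\alpha+\tau(t))^2}\in C^{90}[-\tau(t),\pi]$) to guarantee $\alpha_{\gamma}^{t}-\alpha_{\eta}^{t}=ic(\alpha+\tau(t))(\gamma-\eta)t$ is $O((\alpha+\tau(t))^2)$; you cite \eqref{cdefi} only as generic \textquotedblleft smoothness\textquotedblright\ and never invoke the quadratic order, so your numerator bound $O(\alpha+\tau(t))$ is unjustified for the $\gamma\neq\eta$ part. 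For $V^{30}_h$ the mechanism is different again: $D^{-60+s_l}h$ vanishes identically at $\alpha=-\tau(t)$, so one bounds $V_h^{30}(\alpha,\gamma,t)$ and $V_h^{30}(2\alpha+\tau(t),\eta,t)$ each by $O(\alpha+\tau(t))$ separately, rather than by Lipschitz continuity of a difference — which again would fail across $\gamma\neq\eta$ without the boundary vanishing.

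A smaller issue: the arc-chord condition you cite is \eqref{arcchord501}, which contains only the $\tilde f^L$-terms; because the $K_{-\sigma}$ denominator contains the $A$-arguments as well (i.e.\ the $V^{30}_h$ difference), the correct one is \eqref{arcchord502}, which is the version stable under the perturbation $g_\mu(\alpha)=D^{-60}h_\mu(\alpha,\gamma,t)-D^{-60}h_\mu(2\alpha+\tau(t),\eta,t)$. You should also check that this $g$ meets the hypotheses of Lemma \ref{arcchord} ($g$ small in $C^2_\alpha$, vanishing with its derivative at $-\tau(t)$, and $g=0$ for $\alpha\le-\tau(t)$), which follows from the definition of $D^{-60}$ and the smallness of $h$.
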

\begin{proof}
We can use the similar proof as in lemma \ref{alphabetanear}, by using arc-chord condition \eqref{arcchord502}. We only need to additionally show $V_h^{30}(\alpha,\gamma,t)-V_h^{30}(2\alpha+\tau(t),\eta,t), V_{\tilde{f}^{L}}^{61}(\alpha_{\gamma}^{t},t)-V_{\tilde{f}^{L}}^{61}((2\alpha+\tau(t))_{\eta}^{t},t),V_{X}(\alpha_{\gamma}^{t},t)-V_{X}((2\alpha+\tau(t))_{\eta}^{t})$ vanish at least of order $O(\alpha+\tau(t))$ when $\alpha\to \tau(t)^{-}$. From \eqref{notation03h} and \eqref{D-formularnew} we have
\begin{align*}
    \|\frac{V_h^{30}(\alpha,\gamma,t)}{(\alpha+\tau(t))}\|_{ \tilde{Y}^{k+4}}\lesssim \|h\|_{L_{\al}^2},
\end{align*}
\begin{align*}
    \|\frac{V_h^{30}(2\alpha+\tau(t),\eta,t)}{(\alpha+\tau(t))}\|_{ \tilde{Y}^{k+4}}\lesssim \|h\|_{L_{\al}^2}.
\end{align*}
Moreover, we have
\begin{align*}
&V_{\tilde{f}^{L}}^{61}(\alpha_{\gamma}^{t},t)-V_{\tilde{f}^{L}}^{61}((2\alpha+\tau(t))_{\eta}^{t},t)=\\ &V_{\tilde{f}^{L}}^{61}(\alpha_{\gamma}^{t},t)-V_{\tilde{f}^{L}}^{61}((2\alpha+\tau(t))_{\gamma}^{t},t)+V_{\tilde{f}^{L}}^{61}((2\alpha+\tau(t))_{\gamma}^{t},t)-V_{\tilde{f}^{L}}^{61}((2\alpha+\tau(t))_{\eta}^{t},t),
\end{align*}

and $\alpha_{\gamma}^{t}-\alpha_{\eta}^{t}=ic(\alpha+\tau(t))(\gamma-\eta)t$ with $\frac{c(\alpha+\tau(t))}{(\alpha+\tau(t))^2} \in C^{90}[-\tau(t),\pi]$. From the smoothness of $\tilde{f}^{L}$ \eqref{fLspace}, we get
\begin{align*}
    \|\frac{V_{\tilde{f}^{L}}^{61}(\alpha_{\gamma}^{t},t)-V_{\tilde{f}^{L}}^{61}((2\alpha+\tau(t))_{\eta}^{t},t)}{(\alpha+\tau(t))}\|_{ \tilde{Y}^{k+4}} \lesssim 1.
\end{align*}
Similarly, from \eqref{Xispace}, we have

\begin{align*}
    \|\frac{V_{X}(\alpha_{\gamma}^{t},t)-V_{X}((2\alpha+\tau(t))_{\eta}^{t},t)}{\alpha+\tau(t)}\|_{ \tilde{Y}^{k+4}}\lesssim 1.
\end{align*}

\end{proof}
\subsection{Commuting lemmas for Cauchy-Riemann operator}
Let
\begin{align*}
    A(h)(\alpha,\gamma)=(\frac{ic(\alpha)t}{1+ic'(\alpha)\gamma t}\frac{d}{d\alpha}-\frac{d}{d\gamma})h(\alpha,\gamma).
\end{align*}
Now we prove the following lemmas for $A$.
In those lemmas, the $\ddot{h}$, $\breve{h}$ we use are vector functions satisfying the conditions in the lemmas correspondingly.
\begin{lemma}\label{switch}
If all the derivatives are well-defined and $\partial_{\alpha}\partial_{\gamma}h(\alpha,\gamma,t)=\partial_{\gamma}\partial_{\alpha}h(\alpha,\gamma,t)$,  we have
\begin{align*}
A\circ\frac{\partial_{\alpha}h(\alpha,\gamma)}{1+ic'(\alpha)\gamma t}=\frac{\partial_{\alpha}}{1+ic'(\alpha)\gamma t}\circ A(h)(\alpha,\gamma),
\end{align*}
\end{lemma}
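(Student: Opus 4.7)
The plan is to verify this identity by direct computation of both sides, exploiting the symmetry of mixed partials. Set $D(\alpha,\gamma,t)=1+ic'(\alpha)\gamma t$ so that $A(h)=\frac{ic(\alpha)t}{D}\,\partial_\alpha h-\partial_\gamma h$, and note $\partial_\alpha D=ic''(\alpha)\gamma t$, $\partial_\gamma D=ic'(\alpha)t$.

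First I would expand the left-hand side:
\begin{align*}
A\!\left(\tfrac{\partial_\alpha h}{D}\right)
&=\tfrac{ic(\alpha)t}{D}\,\partial_\alpha\!\left(\tfrac{\partial_\alpha h}{D}\right)-\partial_\gamma\!\left(\tfrac{\partial_\alpha h}{D}\right)\\
&=\tfrac{ic(\alpha)t\,\partial_\alpha^2 h}{D^2}-\tfrac{ic(\alpha)t\cdot ic''(\alpha)\gamma t\,\partial_\alpha h}{D^3}-\tfrac{\partial_\gamma\partial_\alpha h}{D}+\tfrac{ic'(\alpha)t\,\partial_\alpha h}{D^2}.
\end{align*}
Then I would expand the right-hand side:
\begin{align*}
\tfrac{1}{D}\,\partial_\alpha A(h)
&=\tfrac{1}{D}\,\partial_\alpha\!\left(\tfrac{ic(\alpha)t\,\partial_\alpha h}{D}-\partial_\gamma h\right)\\
&=\tfrac{ic'(\alpha)t\,\partial_\alpha h}{D^2}+\tfrac{ic(\alpha)t\,\partial_\alpha^2 h}{D^2}-\tfrac{ic(\alpha)t\cdot ic''(\alpha)\gamma t\,\partial_\alpha h}{D^3}-\tfrac{\partial_\alpha\partial_\gamma h}{D}.
\end{align*}
Comparing term by term, the two expressions differ only by the replacement of $\partial_\gamma\partial_\alpha h$ with $\partial_\alpha\partial_\gamma h$, and this is precisely the hypothesis $\partial_\alpha\partial_\gamma h=\partial_\gamma\partial_\alpha h$. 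Hence the identity holds.

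There is no real obstacle here; this is a short deterministic calculation of the commutator $[A,\,\partial_\alpha/D]$, and the only subtlety is the bookkeeping of the three contributions arising from differentiating the prefactor $\tfrac{ic(\alpha)t}{D}$, the factor $\tfrac{1}{D}$, and $h$ itself. The identity expresses the fact that $A$ is essentially a Cauchy--Riemann operator in the variable $\alpha+ic(\alpha)\gamma t$, and $\partial_\alpha/D$ is the corresponding holomorphic tangential derivative; commutation of such operators is automatic once equality of mixed partials is assumed, which makes the lemma conceptually clean and computationally immediate.
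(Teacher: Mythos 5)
Your computation is correct: both sides expand to the same four terms modulo swapping $\partial_\gamma\partial_\alpha h$ for $\partial_\alpha\partial_\gamma h$, which the hypothesis supplies, and I have checked each of the product-rule contributions against $\partial_\alpha D=ic''(\alpha)\gamma t$ and $\partial_\gamma D=ic'(\alpha)t$. The paper itself does not present a proof here but simply cites Lemma~9.2 of its predecessor \cite{muskatregulatiryJia}, so your argument is a self-contained verification of the identity and is the natural one; there is nothing to compare beyond noting that you have filled in what the paper left to a reference.
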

\begin{proof}
See lemma 9.2 in \cite{muskatregulatiryJia}.
\end{proof}

\begin{lemma}\label{analyticity 01}
If all the derivatives are well-defined,
$\frac{d}{d\alpha}\frac{d}{dt}h=\frac{d}{dt}\frac{d}{d\alpha}h$, $ \frac{d}{dt}\frac{d}{d\gamma}h=\frac{d}{d\gamma}\frac{d}{dt}h$, $\frac{d}{d\alpha}\frac{d}{d\gamma}h=\frac{d}{d\gamma}\frac{d}{d\alpha}h$,
then we have
\[
A(\frac{d}{dt}-\frac{ic(\al)\g}{1+ic'(\al)\g t }\frac{d}{d\al})h=(\frac{d}{dt}-\frac{ic(\al)\g}{1+ic'(\al)\g t}\frac{d}{d\al})A(h).
\]
\end{lemma}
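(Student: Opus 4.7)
\medskip

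\noindent\textbf{Proof proposal for Lemma \ref{analyticity 01}.} The plan is to reduce the claim to a purely algebraic identity for the coefficient $Q(\alpha,\gamma,t) := \tfrac{ic(\alpha)}{1+ic'(\alpha)\gamma t}$. Writing the two operators as $A = Qt\,\partial_\alpha - \partial_\gamma$ and $B := \partial_t - Q\gamma\,\partial_\alpha$, the lemma is the commutator statement $[A,B]h=0$. I would expand $AB - BA$ term by term, collecting by which second partial of $h$ appears, and then use the three hypotheses $\partial_\alpha\partial_t h = \partial_t\partial_\alpha h$, $\partial_\gamma\partial_t h = \partial_t\partial_\gamma h$, $\partial_\alpha\partial_\gamma h = \partial_\gamma\partial_\alpha h$ to eliminate all second-order terms in $h$.

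After this cancellation, one finds that the $\partial_\alpha^2 h$ coefficient is $-QtQ\gamma + Q\gamma Qt = 0$ trivially, and the $\partial_\alpha h$ coefficient is
\[
\bigl(-Qt\,\partial_\alpha(Q\gamma) + Q\gamma\,\partial_\alpha(Qt)\bigr) + \bigl(\partial_\gamma(Q\gamma) - \partial_t(Qt)\bigr).
\]
The first parenthesis equals $(-Qt\gamma + Q\gamma t)\partial_\alpha Q = 0$. Thus the entire commutator collapses to $\bigl(\partial_\gamma(Q\gamma) - \partial_t(Qt)\bigr)\partial_\alpha h$, and the lemma is reduced to the scalar identity $\partial_t(Qt) = \partial_\gamma(Q\gamma)$.

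This last identity is then checked by a direct differentiation of $Q = ic(\alpha)/(1+ic'(\alpha)\gamma t)$, which yields
\[
\partial_t(Qt) \;=\; Q + t\cdot\frac{c(\alpha)c'(\alpha)\gamma}{(1+ic'(\alpha)\gamma t)^2}, \qquad \partial_\gamma(Q\gamma) \;=\; Q + \gamma\cdot\frac{c(\alpha)c'(\alpha)t}{(1+ic'(\alpha)\gamma t)^2},
\]
and these agree. I do not expect any real obstacle: the calculation is routine, and the symmetry is structural, reflecting the fact that the curve parametrization $\alpha_\gamma^t = \alpha + ic(\alpha)\gamma t$ depends on $(\gamma,t)$ only through the product $\gamma t$, so in the underlying curvilinear coordinates $A$ and $B$ are literally the partials $\partial_\gamma$ and $\partial_t$, whose commutativity is equivalent to the hypothesized equality of the mixed partials of $h$. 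This viewpoint also matches how Lemma \ref{switch} was used in the paper and confirms that no extra regularity beyond the three equality-of-mixed-partials assumptions is needed.
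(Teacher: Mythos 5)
Your proof is correct, and it is fully self-contained, whereas the paper simply cites Lemma~9.3 of \cite{muskatregulatiryJia} without reproducing the argument. Your calculation is right on all points: with $Q=ic(\alpha)/(1+ic'(\alpha)\gamma t)$, the three mixed-partial hypotheses kill all second-order terms of $h$ in $[A,B]h$, the $\partial_\alpha^2 h$ coefficient vanishes trivially, the cross-terms $-Qt\gamma\,\partial_\alpha Q + Q\gamma t\,\partial_\alpha Q$ cancel, and what remains is $(\gamma\,\partial_\gamma Q - t\,\partial_t Q)\,\partial_\alpha h$, which vanishes because $Q$ depends on $(\gamma,t)$ only through the product $\gamma t$ (equivalently by the explicit differentiation you carry out). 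Your closing observation -- that $A$ and $B$ are the pushforwards of $-\partial_\gamma$ and $\partial_t$ under the change of coordinates $(\alpha,\gamma,t)\mapsto(\alpha+ic(\alpha)\gamma t,\gamma,t)$, so the lemma is just Clairaut's theorem in the new coordinates -- is the conceptual reason the identity must hold, and also explains why Lemma~\ref{switch} has the same flavor; it is a useful way to see the result even if one still writes out the direct verification.
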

\begin{proof}
See lemma 9.3 in \cite{muskatregulatiryJia}.
\end{proof}
\begin{lemma}\label{forM1}
Let $\tilde{K}$ be meromorphic. $X(\alpha,\gamma)\in C_{\gamma}^{1}([-1,1],C_{\alpha}^{1}[-\pi,\pi])$, $h(\alpha,\gamma)$ are well-defined vector functions with components in $C_{\gamma}^{1}([-1,1],C_{\alpha}^{1}[-\pi,\pi])$. If for fixed $\alpha$, there is no singular point in the integrals below and $c(\pi)=c(-\pi)=0$, $c(\alpha)\in W^{2,\infty}$, then we have
\begin{align*}
    &\quad A(\int_{-\pi}^{\pi}\tilde{K}(h(\alpha,\gamma)-h(\beta,\gamma))X(\beta,\gamma)(1+ic'(\beta)\gamma t)d\beta)\\
    &=\int_{-\pi}^{\pi}\nabla \tilde{K}(h(\alpha,\gamma)-h(\beta,\gamma))\cdot(A(h)(\alpha,\gamma)-A(h)(\beta,\gamma))X(\beta,\gamma)(1+ic'(\beta)\gamma t)d\beta\\
    &\quad+\int_{-\pi}^{\pi}\tilde{K}(h(\alpha,\gamma)-h(\beta,\gamma))A(X)(\beta,\gamma)(1+ic'(\beta)\gamma t)d\beta\\
    &=D_{h}(\int_{-\pi}^{\pi}\tilde{K}(h(\alpha,\gamma)-h(\beta,\gamma))X(\beta,\gamma)(1+ic'(\beta)\gamma t)d\beta)[A(h)]\\
    &\quad+\int_{-\pi}^{\pi}\tilde{K}(h(\alpha,\gamma)-h(\beta,\gamma))A(X)(\beta,\gamma)(1+ic'(\beta)\gamma t)d\beta.
\end{align*}
Here $D_h$ is the Gateaux derivative.
\end{lemma}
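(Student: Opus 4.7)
The plan is to unfold the action of $A$ on the integral by direct computation of $\partial_\alpha$ and $\partial_\gamma$, and then reassemble the result using one integration by parts in $\beta$. Write
\[
I(\alpha,\gamma)=\int_{-\pi}^{\pi}\tilde K(h(\alpha,\gamma)-h(\beta,\gamma))\,X(\beta,\gamma)\,(1+ic'(\beta)\gamma t)\,d\beta.
\]
Since $\tilde K$ has no singular point along the integration contour at the fixed $\alpha$, I can differentiate under the integral. First I would compute $\partial_\alpha I$, which only produces $\nabla\tilde K\cdot\partial_\alpha h(\alpha,\gamma)$ since $X$ and the Jacobian factor depend on $\beta$, not $\alpha$. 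Next I would compute $\partial_\gamma I$, which splits into three pieces: the kernel contribution $\nabla\tilde K\cdot(\partial_\gamma h(\alpha,\gamma)-\partial_\gamma h(\beta,\gamma))$, the $\partial_\gamma X$ term, and the $ic'(\beta)t$ term from differentiating the Jacobian.

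Assembling $A(I)=\tfrac{ic(\alpha)t}{1+ic'(\alpha)\gamma t}\partial_\alpha I-\partial_\gamma I$, the first step is to reshape the argument of $\nabla\tilde K$. I would add and subtract the missing term $\tfrac{ic(\beta)t}{1+ic'(\beta)\gamma t}\partial_\beta h(\beta,\gamma)$, so that the "clean" part of the integrand involves $A(h)(\alpha,\gamma)-A(h)(\beta,\gamma)$ exactly, while the "extra" piece is
\[
E:=\int_{-\pi}^{\pi}\nabla\tilde K(h(\alpha,\gamma)-h(\beta,\gamma))\cdot\partial_\beta h(\beta,\gamma)\,ic(\beta)t\,X(\beta,\gamma)\,d\beta,
\]
after the factor $1+ic'(\beta)\gamma t$ cancels. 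The key identity here is $\nabla\tilde K(h(\alpha)-h(\beta))\cdot\partial_\beta h(\beta,\gamma)=-\partial_\beta\!\left[\tilde K(h(\alpha,\gamma)-h(\beta,\gamma))\right]$, so $E$ is really a total $\beta$-derivative paired against $ic(\beta)tX(\beta,\gamma)$.

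Now I would integrate $E$ by parts in $\beta$. The boundary terms vanish because $c(\pm\pi)=0$, which is precisely the hypothesis that enables this step (this is the only place $c(\pm\pi)=0$ is used, and it is the one substantive point of the proof; the $W^{2,\infty}$ regularity of $c$ is used to differentiate the Jacobian factor). This produces
\[
E=\int_{-\pi}^{\pi}\tilde K(h(\alpha,\gamma)-h(\beta,\gamma))\bigl[ic'(\beta)t\,X(\beta,\gamma)+ic(\beta)t\,\partial_\beta X(\beta,\gamma)\bigr]d\beta.
\]
The $ic'(\beta)t\,X$ contribution here exactly cancels the $-ic'(\beta)t\,X$ term coming from $-\partial_\gamma I$. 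What remains is $ic(\beta)t\,\partial_\beta X-(1+ic'(\beta)\gamma t)\partial_\gamma X$, which is precisely $(1+ic'(\beta)\gamma t)\,A(X)(\beta,\gamma)$ by the definition of $A$.

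Combining, $A(I)$ equals the claimed first expression. The second expression follows immediately by recognizing that $D_h I[g]=\int\nabla\tilde K(h(\alpha,\gamma)-h(\beta,\gamma))\cdot(g(\alpha,\gamma)-g(\beta,\gamma))X(\beta,\gamma)(1+ic'(\beta)\gamma t)\,d\beta$ and specializing to $g=A(h)$. The only real obstacle is bookkeeping: keeping track of the three pieces of $\partial_\gamma I$ and ensuring that after the integration by parts the $ic'(\beta)t$ terms cancel cleanly; the algebraic identity $ic(\beta)t\,\partial_\beta X-(1+ic'(\beta)\gamma t)\partial_\gamma X=(1+ic'(\beta)\gamma t)A(X)$ is what makes everything close.
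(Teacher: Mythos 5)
Your proof is correct. You take a slightly more direct route than the paper: the paper first substitutes $\beta\mapsto\alpha-\beta$, turning the integration interval into the $\alpha$-dependent $[\alpha-\pi,\alpha+\pi]$, so that applying $A$ produces boundary contributions at $\alpha\pm\pi$ from $\partial_\alpha$ hitting the limits; these are then cancelled against the boundary terms of a total $\beta$-derivative after regrouping. You instead leave the integral over the fixed interval $[-\pi,\pi]$, differentiate under the integral, add and subtract $\tfrac{ic(\beta)t}{1+ic'(\beta)\gamma t}\partial_\beta h(\beta,\gamma)$ to complete $A(h)(\alpha,\gamma)-A(h)(\beta,\gamma)$, and dispose of the residual by a single integration by parts whose boundary terms vanish because $c(\pm\pi)=0$. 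The algebraic content is identical---both proofs hinge on the same add-and-subtract trick and the identity $\nabla\tilde K(h(\alpha)-h(\beta))\cdot\partial_\beta h(\beta)=-\partial_\beta[\tilde K(h(\alpha)-h(\beta))]$, followed by the cancellation of $ic'(\beta)tX$ and the recognition of $(1+ic'(\beta)\gamma t)A(X)$---but your version avoids the extra bookkeeping of shifted limits. The paper's change of variables is largely an artifact of uniformity with the more involved Lemmas \ref{forO1} and \ref{forM2}, where the cut at $\beta=2\alpha$ genuinely forces $\alpha$-dependent limits; for the present lemma your streamlined argument is cleaner and loses nothing.
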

\begin{proof}
We have
\begin{align*}
 &A(\int_{\alpha-\pi}^{\alpha+\pi}\tilde{K}(h(\alpha,\gamma)-h(\alpha-\beta,\gamma))X(\alpha-\beta,\gamma)(1+ic'(\alpha-\beta)\gamma t)d\beta)=
    \\
    &\underbrace{\frac{ic(\alpha)t}{1+ic'(\alpha)\gamma t}(\tilde{K}(h(\alpha,\gamma)-h(-\pi,\gamma))X(-\pi,\gamma)(1+ic'(-\pi)\gamma t)-\tilde{K}(h(\alpha,\gamma)-h(\pi,\gamma))X(\pi,\gamma)(1+ic'(\pi)\gamma t))}_{Term_1}\\
    &+\underbrace{\int_{\alpha-\pi}^{\alpha+\pi}\nabla \tilde{K}(h(\alpha,\gamma)-h(\alpha-\beta,\gamma))\cdot((\frac{ic(\alpha)t}{1+ic'(\alpha)\gamma t}\partial_{\alpha}-\partial_{\gamma})h(\alpha,\gamma)-(\frac{ic(\alpha)t}{1+ic'(\alpha)\gamma t}\partial_{\alpha}-\partial_{\gamma})h(\alpha-\beta,\gamma,t))}_{Term_2}\\
    &\underbrace{X(\alpha-\beta,\gamma)(1+ic'(\alpha-\beta)\gamma t)d\beta}_{Term_2}\\
    &+\underbrace{\int_{\alpha-\pi}^{\alpha+\pi}\tilde{K}(h(\alpha,\gamma)-h(\alpha-\beta,\gamma))(\frac{ic(\alpha)t}{1+ic'(\alpha)\gamma t}\partial_{\alpha}-\partial_{\gamma})X(\alpha-\beta,\gamma)(1+ic'(\alpha-\beta)\gamma t)d\beta}_{Term_3}\\
    &+\underbrace{\int_{\alpha-\pi}^{\alpha+\pi}\tilde{K}(h(\alpha,\gamma)-h(\alpha-\beta,\gamma))X(\alpha-\beta,\gamma)(\frac{ic(\alpha)t}{1+ic'(\alpha)\gamma t}ic''(\alpha-\beta)\gamma t-ic'(\alpha-\beta) t)d\beta}_{Term_4}\\
    &=Term 1 +\\     
    &+\underbrace{\int_{\alpha-\pi}^{\alpha+\pi}\nabla \tilde{K}(h(\alpha,\gamma)-h(\alpha-\beta,\gamma))\cdot((\frac{ic(\alpha)t}{1+ic'(\alpha)\gamma t}\partial_{\alpha}-\partial_{\gamma})h(\alpha,\gamma)-(\frac{ic(\alpha-\beta)t}{1+ic'(\alpha-\beta)\gamma t}\partial_{\alpha}-\partial_{\gamma})h(\alpha-\beta,\gamma,t))}_{Term_{2,1}}\\
    &\underbrace{X(\alpha-\beta,\gamma)(1+ic'(\alpha-\beta)\gamma t)d\beta}_{Term_{2,1}}\\
    &+\underbrace{\int_{\alpha-\pi}^{\alpha+\pi}\nabla \tilde{K}(h(\alpha,\gamma)-h(\alpha-\beta,\gamma))\cdot((\frac{ic(\alpha-\beta)t}{1+ic'(\alpha-\beta)\gamma t}\partial_{\alpha}-\frac{ic(\alpha)t}{1+ic'(\alpha)\gamma t}\partial_{\alpha})h(\alpha-\beta,\gamma,t))}_{Term_{2,2}}\\
    &\underbrace{X(\alpha-\beta,\gamma)(1+ic'(\alpha-\beta)\gamma t)d\beta}_{Term_{2,2}}\\
    &+\underbrace{\int_{\alpha-\pi}^{\alpha+\pi}\tilde{K}(h(\alpha,\gamma)-h(\alpha-\beta,\gamma))(\frac{ic(\alpha-\beta)t}{1+ic'(\alpha-\beta)\gamma t}\partial_{\alpha}-\partial_{\gamma})X(\alpha-\beta,\gamma)(1+ic'(\alpha-\beta)\gamma t)d\beta}_{Term_{3,1}}\\
    &+\underbrace{\int_{\alpha-\pi}^{\alpha+\pi}\tilde{K}(h(\alpha,\gamma)-h(\alpha-\beta,\gamma))(\frac{ic(\alpha)t}{1+ic'(\alpha)\gamma t}\partial_{\alpha}-\frac{ic(\alpha-\beta)t}{1+ic'(\alpha-\beta)\gamma t}\partial_{\alpha})X(\alpha-\beta,\gamma)(1+ic'(\alpha-\beta)\gamma t)d\beta}_{Term_{3,2}}\\
     &+\underbrace{\int_{\alpha-\pi}^{\alpha+\pi}\tilde{K}(h(\alpha,\gamma)-h(\alpha-\beta,\gamma))X(\alpha-\beta,\gamma)(\frac{ic(\alpha)t}{1+ic'(\alpha)\gamma t}ic''(\alpha-\beta)\gamma t-ic'(\alpha-\beta) t)d\beta}_{Term_4}\\
    &=Term_1+Term_2+Term_{2,2}+Term_{3,1}+Term_{3,2}+Term_4.
\end{align*}
Moreover,
\begin{align*}
    &Term_{2,2}+Term_{3,2}+Term_4=\\
    &=\int_{\alpha-\pi}^{\alpha+\pi}\frac{d}{d\beta}(\tilde{K}(h(\alpha,\gamma)-h(\alpha-\beta,\gamma))X(\alpha-\beta,\gamma)(ic(\alpha-\beta)t-\frac{ic(\alpha)t}{1+ic'(\alpha)\gamma t}(1+ic'(\alpha-\beta)\gamma t)))d\beta\\
    &=\tilde{K}(h(\alpha,\gamma)-h(-\pi,\gamma))X(-\pi,\gamma)(ic(-\pi)t-\frac{ic(\alpha)t}{1+ic'(\alpha)\gamma t}(1+ic'(-\pi)\gamma t))\\
    &-\tilde{K}(h(\alpha,\gamma)-h(\pi,\gamma))X(\pi,\gamma)(ic(\pi)t-\frac{ic(\alpha)t}{1+ic'(\alpha)\gamma t}(1+ic'(\pi)\gamma t)).
\end{align*}
We use the condition that $c(-\pi)=c(\pi)=0$ and we could get the $Term_1+Term_{2,2}+Term_{3,2}+Term_4=0$. Then we have the result.
\end{proof}

\begin{lemma}\label{forO1}
Let $\breve{K}$ be meromorphic, $\ddot{h}(\alpha,\gamma), X(\alpha,\gamma) \in C_{\gamma}^{1}([-1,1],C_{\alpha}^{1}[-\pi,\pi])$, $\breve{h}(\alpha,\gamma) \in C_{\gamma}^{1}([-1,1],C_{\alpha}^{1}[0,\pi]).$ If for fixed $\alpha>0$, there is no singular point in the integrals of $O(\breve{h}, \ddot{h}, X)$ below and $c(0)=c(\pi)=c(-\pi)=0$, $c(\alpha)\in W^{2,\infty}$, then when $0<\alpha< \frac{\pi}{2}$,
for
\begin{align}\label{Oequationapp}
    &O(\breve{h}, \ddot{h}, X)=\int_{0}^{2\alpha}\breve{K}(\ddot{h}(\alpha,\gamma)-\ddot{h}(\beta,\gamma))(\breve{h}(\alpha,\gamma)-\breve{h}(\beta,\gamma))(X(\beta,\gamma))(1+ic'(\beta)\gamma t)d\beta\\\nonumber
    &+\int_{0}^{\gamma}\breve{h}(2\alpha,\eta) \breve{K}(\ddot{h}(\alpha,\gamma)-\ddot{h}(2\alpha,\eta))ic(2\alpha)t (X(2\alpha,\eta))d\eta\\\nonumber
    &-\int_{2\alpha}^{\pi}\breve{h}(\beta,0) \breve{K}(\ddot{h}(\alpha,\gamma)-\ddot{h}(\beta,0))(X(\beta,0))d\beta\\\nonumber
    &+\int_{[-\pi,0]\cup[2\alpha,\pi] } \breve{K}(\ddot{h}(\alpha,\gamma)-\ddot{h}(\beta,\gamma))(X(\beta,\gamma))(1+ic'(\beta)\gamma t)d\beta \breve{h}(\alpha,\gamma)\\\nonumber
    &=O_1+O_2+O_3+O_4,
\end{align}
we have
\begin{align*}
    &A(O(\breve{h}, \ddot{h},X))=\underbrace{\int_{0}^{2\alpha}\nabla \breve{K}(\ddot{h}(\alpha,\gamma)-\ddot{h}(\beta,\gamma))\cdot([A(\ddot{h})(\alpha,\gamma)]-[A(\ddot{h})(\beta,\gamma)])(1+ic'(\beta)\gamma t)}_{FA_1}\\
    &\underbrace{(\breve{h}(\alpha,\gamma)-\breve{h}(\beta,\gamma))X(\beta,\gamma)d\beta}_{FA_1}\\
&+\underbrace{\int_{0}^{2\alpha}\breve{K}(\ddot{h}(\alpha,\gamma)-\ddot{h}(\beta,\gamma))([A(\breve{h})(\alpha,\gamma)]-[A(\breve{h})(\beta,\gamma)])(X(\beta,\gamma))(1+ic'(\beta)\gamma t)d\beta}_{FA_2}\\
&+\underbrace{\int_{0}^{2\alpha}\breve{K}(\ddot{h}(\alpha,\gamma)-\ddot{h}(\beta,\gamma))(\breve{h}(\alpha,\gamma)-\breve{h}(\beta,\gamma))[A(X)(\beta,\gamma)](1+ic'(\beta)\gamma t)d\beta}_{FA_3}\\
       &\underbrace{+\int_{0}^{\gamma}\breve{h}(2\alpha,\eta) \nabla \breve{K}(\ddot{h}(\alpha,\gamma)-\ddot{h}(2\alpha,\eta))\cdot [A(\ddot{h})(\alpha,\gamma)]X(2\alpha,\eta)ic(2\alpha)t d\eta}_{FA_4}\\
       &\underbrace{-\int_{0}^{\gamma}\breve{h}(2\alpha,\eta) \nabla \breve{K}(\ddot{h}(\alpha,\gamma)-\ddot{h}(2\alpha,\eta))\cdot(\frac{2ic(\alpha)t}{1+ic'(\alpha)\gamma t})(\frac{1+ic'(2\alpha)\eta t}{ic(2\alpha)t})[A(\ddot{h})(2\alpha,\eta)]X(2\alpha,\eta)ic(2\alpha)td\eta}_{FA_5}\\
       &+\underbrace{\int_{0}^{\gamma}  \breve{h}(2\alpha,\eta)\breve{K}(\ddot{h}(\alpha,\gamma)-\ddot{h}(2\alpha,\eta))(\frac{2ic(\alpha)t}{1+ic'(\alpha)\gamma t})(\frac{1+ic'(2\alpha)\eta t}{ic(2\alpha)t})[A(X)(2\alpha,\eta)]ic(2\alpha)td\eta}_{FA_6}\\
       &+\underbrace{\int_{0}^{\gamma} (\frac{2ic(\alpha)t}{1+ic'(\alpha)\gamma t})(\frac{1+ic'(2\alpha)\eta t}{ic(2\alpha)t})[A(\breve{h})(2\alpha,\eta)] \breve{K}(\ddot{h}(\alpha,\gamma)-\ddot{h}(2\alpha,\eta))X(2\alpha,\eta)ic(2\alpha)td\eta}_{FA_7}\\
    &\underbrace{-\int_{2\alpha}^{\pi}\breve{h}(\beta,0) \nabla \breve{K}(\ddot{h}(\alpha,\gamma)-\ddot{h}(\beta,0))\cdot {[A(\ddot{h})(\alpha,\gamma)]}(X(\beta,0))d\beta}_{FA_8}\\
    &+\underbrace{\int_{[-\pi,0]\cup[2\alpha,\pi] } \nabla \breve{K}(\ddot{h}(\alpha,\gamma)-\ddot{h}(\beta,\gamma))\cdot([A(\ddot{h})(\alpha,\gamma)]-[A(\ddot{h})(\beta,\gamma)])(X(\beta,\gamma))(1+ic'(\beta)\gamma t))d\beta \breve{h}(\alpha,\gamma)}_{FA_{9}}\\
       &+\underbrace{\int_{[-\pi,0]\cup[2\alpha,\pi] }  \breve{K}(\ddot{h}(\alpha,\gamma)-\ddot{h}(\beta,\gamma))[A(X)(\beta,\gamma)](1+ic'(\beta)\gamma t))d\beta \breve{h}(\alpha,\gamma)}_{FA_{10}}\\
        &+\underbrace{\int_{[-\pi,0]\cup[2\alpha,\pi] }  \breve{K}(\ddot{h}(\alpha,\gamma)-\ddot{h}(\beta,\gamma))X(\beta,\gamma)(1+ic'(\beta)\gamma t))d\beta [A(\breve{h})(\alpha,\gamma)]}_{FA_{11}}.
\end{align*}
\end{lemma}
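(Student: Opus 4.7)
The strategy is to split $A(O) = A(O_1) + A(O_2) + A(O_3) + A(O_4)$ and handle each piece separately, tracking both interior contributions (which will reproduce the eleven $FA_i$ terms) and boundary contributions arising from the $\alpha$- and $\gamma$-dependent integration limits. For $O_1$ and $O_4$, the interior computation is essentially a repetition of Lemma \ref{forM1}: the difference $\frac{ic(\alpha)t}{1+ic'(\alpha)\gamma t}\partial_\alpha - \partial_\gamma$ acting on the measure $(1+ic'(\beta)\gamma t)\,d\beta$ can be rewritten as a $\beta$-derivative, and an integration by parts transfers it onto the other factors, producing $FA_1 + FA_2 + FA_3$ from $O_1$ and $FA_9 + FA_{10} + FA_{11}$ from $O_4$. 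In $O_4$ the boundary terms at $\beta = \pm\pi$ and $\beta = 0$ vanish thanks to $c(\pm\pi) = c(0) = 0$. For $O_3$, only $\ddot{h}(\alpha,\gamma)$ inside the integrand carries $(\alpha,\gamma)$-dependence, so $A$ acts solely through $\nabla\breve{K}\cdot A(\ddot{h})(\alpha,\gamma)$, producing $FA_8$ directly.

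The most delicate piece is $A(O_2)$. The operator $-\partial_\gamma$ acting at the upper limit $\eta = \gamma$ gives a boundary contribution, while the derivative of the integrand and the $\frac{ic(\alpha)t}{1+ic'(\alpha)\gamma t}\partial_\alpha$ action need to be rearranged to match $FA_4$--$FA_7$. The characteristic factor $\frac{2ic(\alpha)t(1+ic'(2\alpha)\eta t)}{(1+ic'(\alpha)\gamma t)(ic(2\alpha)t)}$ arises from $\partial_\alpha$ hitting $\ddot{h}(2\alpha,\eta)$, $X(2\alpha,\eta)$, or $\breve{h}(2\alpha,\eta)$, which equals $2\partial_\beta$ evaluated at $\beta = 2\alpha$. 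I would invert the identity $A(f)(2\alpha,\eta) = \frac{ic(2\alpha)t}{1+ic'(2\alpha)\eta t}\partial_\beta f(2\alpha,\eta) - \partial_\eta f(2\alpha,\eta)$ to solve for $\partial_\beta f$ in terms of $A(f)$ plus $\partial_\eta f$. The $\partial_\eta$ residuals are then integrated by parts in $\eta$, contributing additional boundary terms at $\eta = \gamma$ and $\eta = 0$.

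The main obstacle is the bookkeeping cancellation of all boundary contributions. What survives is a sum at $\beta = 2\alpha$: one contribution from $A(O_1)$ with $\ddot{h}(2\alpha,\gamma),\breve{h}(2\alpha,\gamma)$ values, one from $A(O_4)$ with $\ddot{h}(2\alpha,\gamma)$ and factor $\breve{h}(\alpha,\gamma)$, one from $A(O_3)$ with $\ddot{h}(2\alpha,0),\breve{h}(2\alpha,0)$ values, and one from $A(O_2)$ at $\eta = \gamma$ with $\ddot{h}(2\alpha,\gamma),\breve{h}(2\alpha,\gamma)$ values, together with the $\partial_\eta$-by-parts boundary terms. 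These must telescope to zero. The underlying geometric reason is that $O_2 + O_3 + (\text{the } [2\alpha,\pi] \text{ piece of } O_4)$ is the contour-shifted form of a single integral $\int_{2\alpha}^\pi \breve{K}(\ddot{h}(\alpha,\gamma) - \ddot{h}(\beta,\gamma))(\breve{h}(\alpha,\gamma) - \breve{h}(\beta,\gamma))X(\beta,\gamma)(1+ic'(\beta)\gamma t)\,d\beta$ along the $\gamma$-contour, deformed to the real $\beta$-axis via the vertical leg $\beta = 2\alpha + ic(2\alpha)\eta t$, $\eta \in [0,\gamma]$. Since $\breve{K}$ is meromorphic with no poles in the enclosed region, this deformation commutes with $A$ (which is the Cauchy--Riemann direction along those contours), so the combined boundary terms must cancel. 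A direct algebraic verification, using the explicit contour measures $ic(2\alpha)t$ and $(1+ic'(2\alpha)\eta t)$ and matching the $\eta = \gamma$ and $\eta = 0$ endpoints against $\ddot{h}(2\alpha,\gamma)$ and $\ddot{h}(2\alpha,0)$, completes the cancellation.
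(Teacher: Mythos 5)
Your proposal follows essentially the same strategy as the paper's proof: split $A(O)=\sum_i A(O_i)$, compute each piece by pushing $A$ through the integral (change of variables, rewrite the transport/$\gamma$-derivative contributions as total $\beta$- or $\eta$-derivatives, integrate by parts), and show the resulting boundary contributions at $\beta=\pm\pi,\,0,\,2\alpha$ and at $\eta=0,\gamma$ cancel. The one point of distinction is your geometric explanation for the cancellation — that $O_2+O_3+$(the $[2\alpha,\pi]$ part of $O_4$) is a contour-deformed version of a single integral along the $\gamma$-contour, and $A$ (the Cauchy--Riemann direction along those contours) commutes with contour deformation — which is a genuinely illuminating observation that the paper does not state; however, you still defer the final step to ``a direct algebraic verification,'' which is exactly what the paper's proof carries out term by term (it labels the boundary pieces $B_1,\dots,B_9$ and pairs them off as $C_1,\dots,C_{12}$ using $c(\pm\pi)=c(0)=0$), so in substance the arguments coincide.
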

\begin{proof}
First, consider $O_1$,
\begin{align}\label{term01A}
&A(O_1)=\\\nonumber
&A(\int_{-\alpha}^{\alpha}\breve{K}(\ddot{h}(\alpha,\gamma)-\ddot{h}(\alpha-\beta,\gamma))(\breve{h}(\alpha,\gamma)-\breve{h}(\alpha-\beta,\gamma))X(\alpha-\beta,\gamma)(1+ic'(\alpha-\beta)\gamma t)d\beta)\\\nonumber
&=\underbrace{\frac{ic(\alpha)t}{1+ic'(\alpha)\gamma t}[\breve{K}(\ddot{h}(\alpha,\gamma)-\ddot{h}(0,\gamma))(\breve{h}(\alpha,\gamma)-\breve{h}(0,\gamma))X(0,\gamma)(1+ic'(0)\gamma t)}_{Term_1}\\\nonumber
&\underbrace{+\breve{K}(\ddot{h}(\alpha,\gamma)-\ddot{h}(2\alpha,\gamma))(\breve{h}(\alpha,\gamma)-\breve{h}(2\alpha,\gamma))X(2\alpha,\gamma)(1+ic'(2\alpha)\gamma t)]}_{Term_1}\\\nonumber
&\underbrace{+\int_{-\alpha}^{\alpha} \nabla \breve{K}(\ddot{h}(\alpha,\gamma)-\ddot{h}(\alpha-\beta,\gamma))\cdot([(\frac{ic(\alpha)t}{1+ic'(\alpha)\gamma t}\partial_{\alpha}-\partial_{\gamma})\ddot{h}(\alpha,\gamma)]-[(\frac{ic(\alpha)t}{1+ic'(\alpha)\gamma t}\partial_{\alpha}-\partial_{\gamma})\ddot{h}(\alpha-\beta,\gamma)])}_{Term_2}\\\nonumber
&\underbrace{(1+ic'(\alpha-\beta)\gamma t)(\breve{h}(\alpha,\gamma)-\breve{h}(\alpha-\beta,\gamma))X(\alpha-\beta,\gamma)d\beta}_{Term_2}\\\nonumber
&\underbrace{+\int_{-\alpha}^{\alpha}\breve{K}(\ddot{h}(\alpha,\gamma)-\ddot{h}(\alpha-\beta,\gamma))([(\frac{ic(\alpha)t}{1+ic'(\alpha)\gamma t}\partial_{\alpha}-\partial_{\gamma})\breve{h}(\alpha,\gamma)]-[(\frac{ic(\alpha)t}{1+ic'(\alpha)\gamma t}\partial_{\alpha}-\partial_{\gamma})\breve{h}(\alpha-\beta,\gamma)])}_{Term_3}\\\nonumber
&\underbrace{X(\alpha-\beta,\gamma)(1+ic'(\alpha-\beta)\gamma t)d\beta}_{Term_3}\\\nonumber
&\underbrace{+\int_{-\alpha}^{\alpha}\breve{K}(\ddot{h}(\alpha,\gamma)-\ddot{h}(\alpha-\beta,\gamma))(\breve{h}(\alpha,\gamma)-\breve{h}(\alpha-\beta,\gamma))[(\frac{ic(\alpha)t}{1+ic'(\alpha)\gamma t)}\partial_{\alpha}-\partial_{\gamma})X(\alpha-\beta,\gamma)]}_{Term_4}\\\nonumber
&\underbrace{(1+ic'(\alpha-\beta)\gamma t)d\beta}_{Term_4}\\\nonumber
&\underbrace{+\int_{-\alpha}^{\alpha}\breve{K}(\ddot{h}(\alpha,\gamma)-\ddot{h}(\alpha-\beta,\gamma))(\breve{h}(\alpha,\gamma)-\breve{h}(\alpha-\beta,\gamma))X(\alpha-\beta,\gamma)[\frac{ic(\alpha)t}{1+ic'(\alpha)\gamma t}(ic''(\alpha-\beta)\gamma t)-ic'(\alpha-\beta)t]d\beta}_{Term_5}\\\nonumber
&=Term_1+Term_2+Term_3+Term_4+Term_5.
\end{align}

From $Term_2$ to $Term_5$ we separate each term into the part depending on $A(\cdot)$ and the other part, and have
\begin{align*}
&Term_2+Term_3+Term_4+Term_5\\
&=\underbrace{\int_{-\alpha}^{\alpha} \nabla \breve{K}(\ddot{h}(\alpha,\gamma)-\ddot{h}(\alpha-\beta,\gamma))\cdot([(\frac{ic(\alpha)t}{1+ic'(\alpha)\gamma t}\partial_{\alpha}-\partial_{\gamma})\ddot{h}(\alpha,\gamma)]-[(\frac{ic(\alpha-\beta)t}{1+ic'(\alpha-\beta)\gamma t}\partial_{\alpha}-\partial_{\gamma})\ddot{h}(\alpha-\beta,\gamma)])}_{Term_{2,1}}\\
&\underbrace{(1+ic'(\alpha-\beta)\gamma t)X(\alpha-\beta,\gamma)(\breve{h}(\alpha,\gamma)-\breve{h}(\alpha-\beta,\gamma))d\beta}_{Term_{2,1}}\\
&\underbrace{+\int_{-\alpha}^{\alpha} \nabla \breve{K}(\ddot{h}(\alpha,\gamma)-\ddot{h}(\alpha-\beta,\gamma))\cdot[(\frac{ic(\alpha-\beta)t}{1+ic'(\alpha-\beta)\gamma t}-\frac{ic(\alpha)t}{1+ic'(\alpha)\gamma t})\partial_{\alpha}\ddot{h}(\alpha-\beta,\gamma)]}_{Term_{2,2}}\\
&\underbrace{(1+ic'(\alpha-\beta)\gamma t)X(\alpha-\beta,\gamma)(\breve{h}(\alpha,\gamma)-\breve{h}(\alpha-\beta,\gamma))d\beta}_{Term_{2,2}}\\
&\underbrace{+\int_{-\alpha}^{\alpha}\breve{K}(\ddot{h}(\alpha,\gamma)-\ddot{h}(\alpha-\beta,\gamma))([(\frac{ic(\alpha)t}{1+ic'(\alpha)\gamma t}\partial_{\alpha}-\partial_{\gamma})\breve{h}(\alpha,\gamma)]-[(\frac{ic(\alpha-\beta)t}{1+ic'(\alpha-\beta)\gamma t}\partial_{\alpha}-\partial_{\gamma})\breve{h}(\alpha-\beta,\gamma)])}_{Term_{3,1}}\\
&\underbrace{X(\alpha-\beta,\gamma)(1+ic'(\alpha-\beta)\gamma t)d\beta}_{Term_{3,1}}\\
&\underbrace{+\int_{-\alpha}^{\alpha}\breve{K}(\ddot{h}(\alpha,\gamma)-\ddot{h}(\alpha-\beta,\gamma))[(\frac{ic(\alpha-\beta)t}{1+ic'(\alpha-\beta)\gamma t}-\frac{ic(\alpha)t}{1+ic'(\alpha)\gamma t})\partial_{\alpha})\breve{h}(\alpha-\beta,\gamma)]}_{Term_{3,2}}\\
&\underbrace{X(\alpha-\beta,\gamma)(1+ic'(\alpha-\beta)\gamma t)d\beta}_{Term_{3,2}}\\
&\underbrace{+\int_{-\alpha}^{\alpha}\breve{K}(\ddot{h}(\alpha,\gamma)-\ddot{h}(\alpha-\beta,\gamma))(\breve{h}(\alpha,\gamma)-\breve{h}(\alpha-\beta,\gamma))[(\frac{ic(\alpha-\beta)t}{1+ic'(\alpha-\beta)\gamma t}\partial_{\alpha}-\partial_{\gamma})X(\alpha-\beta,\gamma)]}_{Term_{4,1}}\\
&\underbrace{(1+ic'(\alpha-\beta)\gamma t)d\beta}_{Term_{4,1}}\\
&\underbrace{+\int_{-\alpha}^{\alpha}\breve{K}(\ddot{h}(\alpha,\gamma)-\ddot{h}(\alpha-\beta,\gamma))(\breve{h}(\alpha,\gamma)-\breve{h}(\alpha-\beta,\gamma))[(\frac{ic(\alpha)t}{1+ic'(\alpha)\gamma t}\partial_{\alpha}-\frac{ic(\alpha-\beta)t}{1+ic'(\alpha-\beta)\gamma t}\partial_{\alpha})X(\alpha-\beta,\gamma)]}_{Term_{4,2}}\\
&\underbrace{(1+ic'(\alpha-\beta)\gamma t)d\beta}_{Term_{4,2}}\\
&+\underbrace{\int_{-\alpha}^{\alpha}\breve{K}(\ddot{h}(\alpha,\gamma)-\ddot{h}(\alpha-\beta,\gamma))(\breve{h}(\alpha,\gamma)-\breve{h}(\alpha-\beta,\gamma))X(\alpha-\beta,\gamma)}_{Term_5}\\
&\underbrace{[\frac{ic(\alpha)t}{1+ic'(\alpha)\gamma t}(ic''(\alpha-\beta)\gamma t)-ic'(\alpha-\beta)t]d\beta}_{Term_5}\\
&=Term_{2,1}+Term_{2,2}+Term_{3,1}+Term_{3,2}+Term_{4,1}+Term_{4,2}+Term_{5}.
\end{align*}
We could write the parts which does not depend on $A(\cdot)$ together, and have
\begin{align*}
&Term_{2,2}+Term_{3,2}+Term_{4,2}+Term_5\\    
&=-\int_{-\alpha}^{\alpha}\frac{d}{d\beta}[\breve{K}(\ddot{h}(\alpha,\gamma)-\ddot{h}(\alpha-\beta,\gamma))(\frac{ic(\alpha)t}{1+ic'(\alpha)\gamma t}-\frac{ic(\alpha-\beta)t}{1+ic'(\alpha-\beta)\gamma t})(1+ic'(\alpha-\beta)\gamma t)\\
&X(\alpha-\beta,\gamma)(\breve{h}(\alpha,\gamma)-\breve{h}(\alpha-\beta,\gamma))]d\beta\\
&=-\breve{K}(\ddot{h}(\alpha,\gamma)-\ddot{h}(0,\gamma))(\frac{ic(\alpha)t}{1+ic'(\alpha)\gamma t}-\frac{ic(0)t}{1+ic'(0)\gamma t})(1+ic'(0)\gamma t)(\breve{h}(\alpha,\gamma)-\breve{h}(0,\gamma))X(0,\gamma)\\
&+\breve{K}(\ddot{h}(\alpha,\gamma)-\ddot{h}(2\alpha,\gamma))(\frac{ic(\alpha)t}{1+ic'(\alpha)\gamma t}-\frac{ic(2\alpha)t}{1+ic'(2\alpha)\gamma t})(\breve{h}(\alpha,\gamma)-\breve{h}(2\alpha,\gamma))X(2\alpha,\gamma)(1+ic'(2\alpha)\gamma t).
\end{align*}
Therefore we have
\begin{align*}
    A(O_1)=&\underbrace{\frac{ic(\alpha)t}{1+ic'(\alpha)\gamma t}[\breve{K}(\ddot{h}(\alpha,\gamma)-\ddot{h}(0,\gamma))(\breve{h}(\alpha,\gamma)-\breve{h}(0,\gamma))X(0,\gamma)(1+ic'(0)\gamma t)}_{B_1}\\\nonumber
&+\underbrace{\breve{K}(\ddot{h}(\alpha,\gamma)-\ddot{h}(2\alpha,\gamma))(\breve{h}(\alpha,\gamma)-\breve{h}(2\alpha,\gamma))X(2\alpha,\gamma)(1+ic'(2\alpha)\gamma t)]}_{B_1}\\\nonumber
    &+\underbrace{Term_{2,1}}_{FA_1}+\underbrace{Term_{3,1}}_{FA_2}+\underbrace{Term_{4,1}}_{FA_3}\\\nonumber
    &\underbrace{-\breve{K}(\ddot{h}(\alpha,\gamma)-\ddot{h}(0,\gamma))(\frac{ic(\alpha)t}{1+ic'(\alpha)\gamma t}-\frac{ic(0)t}{1+ic'(0)\gamma t})(1+ic'(0)\gamma t)(\breve{h}(\alpha,\gamma)-\breve{h}(0,\gamma))X(0,\gamma)}_{B_2}\\\nonumber
&+\underbrace{\breve{K}(\ddot{h}(\alpha,\gamma)-\ddot{h}(2\alpha,\gamma))(\frac{ic(\alpha)t}{1+ic'(\alpha)\gamma t}-\frac{ic(2\alpha)t}{1+ic'(2\alpha)\gamma t})(\breve{h}(\alpha,\gamma)-\breve{h}(2\alpha,\gamma))X(2\alpha,\gamma)(1+ic'(2\alpha)\gamma t)}_{B_3}\\\nonumber
&=\underbrace{Term_{2,1}}_{FA_1}+\underbrace{Term_{3,1}}_{FA_2}+\underbrace{Term_{4,1}}_{FA_3}\\\nonumber
&\underbrace{+\frac{ic(\alpha)t}{1+ic'(\alpha)\gamma t}[\breve{K}(\ddot{h}(\alpha,\gamma)-\ddot{h}(0,\gamma))(\breve{h}(\alpha,\gamma))X(0,\gamma)(1+ic'(0)\gamma t)}_{B_{1,1}}\\\nonumber
&+\underbrace{\breve{K}(\ddot{h}(\alpha,\gamma)-\ddot{h}(2\alpha,\gamma))(\breve{h}(\alpha,\gamma))X(2\alpha,\gamma)(1+ic'(2\alpha)\gamma t)]}_{B_{1,1}}\\\nonumber&\underbrace{+\frac{ic(\alpha)t}{1+ic'(\alpha)\gamma t}[\breve{K}(\ddot{h}(\alpha,\gamma)-\ddot{h}(0,\gamma))(-\breve{h}(0,\gamma))X(0,\gamma)(1+ic'(0)\gamma t)}_{B_{1,2}}\\\nonumber
&+\underbrace{\breve{K}(\ddot{h}(\alpha,\gamma)-\ddot{h}(2\alpha,\gamma))(-\breve{h}(2\alpha,\gamma))X(2\alpha,\gamma)(1+ic'(2\alpha)\gamma t)]}_{B_{1,2}}\\\nonumber
 &\underbrace{-\breve{K}(\ddot{h}(\alpha,\gamma)-\ddot{h}(0,\gamma))(\frac{ic(\alpha)t}{1+ic'(\alpha)\gamma t}-\frac{ic(0)t}{1+ic'(0)\gamma t})(1+ic'(0)\gamma t)(\breve{h}(\alpha,\gamma))X(0,\gamma)}_{B_{2,1}}\\\nonumber
  &\underbrace{-\breve{K}(\ddot{h}(\alpha,\gamma)-\ddot{h}(0,\gamma))(\frac{ic(\alpha)t}{1+ic'(\alpha)\gamma t}-\frac{ic(0)t}{1+ic'(0)\gamma t})(1+ic'(0)\gamma t)(-\breve{h}(0,\gamma))X(0,\gamma)}_{B_{2,2}}\\\nonumber
&+\underbrace{\breve{K}(\ddot{h}(\alpha,\gamma)-\ddot{h}(2\alpha,\gamma))(\frac{ic(\alpha)t}{1+ic'(\alpha)\gamma t}-\frac{ic(2\alpha)t}{1+ic'(2\alpha)\gamma t})(\breve{h}(\alpha,\gamma))X(2\alpha,\gamma)(1+ic'(2\alpha)\gamma t)}_{B_{3,1}}\\\nonumber
&+\underbrace{\breve{K}(\ddot{h}(\alpha,\gamma)-\ddot{h}(2\alpha,\gamma))(\frac{ic(\alpha)t}{1+ic'(\alpha)\gamma t}-\frac{ic(2\alpha)t}{1+ic'(2\alpha)\gamma t})(-\breve{h}(2\alpha,\gamma))X(2\alpha,\gamma)(1+ic'(2\alpha)\gamma t)}_{B_{3,2}}.
\end{align*}
Here $B_1=B_{1,1}+B_{1,2}$, $B_2=B_{2,1}+B_{2,2}$, $B_{3}=B_{3,1}+B_{3,2}$.

Then we consider $A(O_2)$, we have
\begin{align*}
    &A(O_2)=A(\int_{0}^{\gamma}\breve{h}(2\alpha,\eta) \breve{K}(\ddot{h}(\alpha,\gamma)-\ddot{h}(2\alpha,\eta))X(2\alpha,\eta)ic(2\alpha)t d\eta)\\
    &=\underbrace{\int_{0}^{\gamma}(\frac{2ic(\alpha)t}{1+ic'(\alpha)\gamma t}(\partial_{\alpha}h)(2\alpha,\eta))X(2\alpha,\eta) \breve{K}(\ddot{h}(\alpha,\gamma)-\ddot{h}(2\alpha,\eta))ic(2\alpha)t d\eta}_{Term_1}\\
   &+ \underbrace{\int_{0}^{\gamma}\breve{h}(2\alpha,\eta)(\frac{2ic(\alpha)t}{1+ic'(\alpha)\gamma t}(\partial_{\alpha}X)(2\alpha,\eta)) \breve{K}(\ddot{h}(\alpha,\gamma)-\ddot{h}(2\alpha,\eta))ic(2\alpha)t d\eta}_{Term_2}\\
    &\underbrace{-\breve{h}(2\alpha,\gamma)X(2\alpha,\gamma)\breve{K}(\ddot{h}(\alpha,\gamma)-\ddot{h}(2\alpha,\gamma))ic(2\alpha)t}_{Term_3}\\
    &\underbrace{+\int_{0}^{\gamma}\breve{h}(2\alpha,\eta) X(2\alpha,\eta)\nabla \breve{K}(\ddot{h}(\alpha,\gamma)-\ddot{h}(2\alpha,\eta))\cdot(\frac{ic(\alpha)t}{1+ic'(\alpha)\gamma t}(\partial_{\alpha}\ddot{h})(\alpha,\gamma)-(\partial_{\gamma}\ddot{h})(\alpha,\gamma))ic(2\alpha)t d\eta}_{Term_4=FA_4}\\
    &\underbrace{-\int_{0}^{\gamma}\breve{h}(2\alpha,\eta)X(2\alpha,\eta)\nabla \breve{K}(\ddot{h}(\alpha,\gamma)-\ddot{h}(2\alpha,\eta))\cdot(\frac{2ic(\alpha)t}{1+ic'(\alpha)\gamma t}(\partial_{\alpha}h)(2\alpha,\eta))ic(2\alpha)t d\eta}_{Term_5}\\
    &\underbrace{+\int_{0}^{\gamma}\breve{h}(2\alpha,\eta)X(2\alpha,\eta) \breve{K}(\ddot{h}(\alpha,\gamma)-\ddot{h}(2\alpha,\eta))\frac{i2c(\alpha)t}{1+ic'(\alpha)\gamma t}ic'(2\alpha)t d\eta}_{Term_6}\\
    &=Term_1+Term_2+Term_3+Term_4+Term_5+Term_6
    \end{align*}
       We can again do the change to the terms 1, 2, 5 by writing the terms in $A(\cdot)$.
    \begin{align*}
    &Term_1+Term_2+Term_5=\\
    &\underbrace{\int_{0}^{\gamma}ic(2\alpha)t (\frac{2ic(\alpha)t}{1+ic'(\alpha)\gamma t})[(\frac{1}{2}\frac{d}{d\alpha}-\frac{1+ic'(2\alpha)\eta t}{ic(2\alpha)t}\frac{d}{d\eta})\breve{h}(2\alpha,\eta)]X(2\alpha,\eta) \breve{K}(\ddot{h}(\alpha,\gamma)-\ddot{h}(2\alpha,\eta))d\eta}_{Term_{1,1}}\\
  &\underbrace{+\int_{0}^{\gamma}ic(2\alpha)t (\frac{2ic(\alpha)t}{1+ic'(\alpha)\gamma t})[(\frac{1+ic'(2\alpha)\eta t}{ic(2\alpha)t}\frac{d}{d\eta})\breve{h}(2\alpha,\eta)]X(2\alpha,\eta) \breve{K}(\ddot{h}(\alpha,\gamma)-\ddot{h}(2\alpha,\eta))d\eta}_{Term_{1,2}}\\
   &\underbrace{+\int_{0}^{\gamma}ic(2\alpha)t (\frac{2ic(\alpha)t}{1+ic'(\alpha)\gamma t})[(\frac{1}{2}\frac{d}{d\alpha}-\frac{1+ic'(2\alpha)\eta t}{ic(2\alpha)t}\frac{d}{d\eta})X(2\alpha,\eta)]\breve{h}(2\alpha,\eta) \breve{K}(\ddot{h}(\alpha,\gamma)-\ddot{h}(2\alpha,\eta))d\eta}_{Term_{2,1}}\\
  &\underbrace{+\int_{0}^{\gamma}ic(2\alpha)t (\frac{2ic(\alpha)t}{1+ic'(\alpha)\gamma t})[(\frac{1+ic'(2\alpha)\eta t}{ic(2\alpha)t}\frac{d}{d\eta})X(2\alpha,\eta)]\breve{h}(2\alpha,\eta) \breve{K}(\ddot{h}(\alpha,\gamma)-\ddot{h}(2\alpha,\eta))d\eta}_{Term_{2,2}}\\
   &\underbrace{-\int_{0}^{\gamma}\breve{h}(2\alpha,\eta)X(2\alpha,\eta)\nabla \breve{K}(\ddot{h}(\alpha,\gamma)-\ddot{h}(2\alpha,\eta))\cdot(\frac{2ic(\alpha)t}{1+ic'(\alpha)\gamma t})[(\frac{1}{2}\frac{d}{d\alpha}-\frac{1+ic'(2\alpha)\eta t}{ic(2\alpha)t}\frac{d}{d\eta})\ddot{h}(2\alpha,\eta)]ic(2\alpha)t d\eta}_{Term_{5,1}}\\
      &\underbrace{-\int_{0}^{\gamma}\breve{h}(2\alpha,\eta)X(2\alpha,\eta)\nabla \breve{K}(\ddot{h}(\alpha,\gamma)-\ddot{h}(2\alpha,\eta))\cdot(\frac{2ic(\alpha)t}{1+ic'(\alpha)\gamma t})[(\frac{1+ic'(2\alpha)\eta t}{ic(2\alpha)t}\frac{d}{d\eta})\ddot{h}(2\alpha,\eta)]ic(2\alpha)t d\eta}_{Term_{5,2}}\\\
 &=Term_{1,1}+Term_{1,2}+Term_{2,1}+Term_{2,2}+Term_{5,1}+Term_{5,2}.
 \end{align*}
For those terms not depending on $A(\cdot)$,  we have
 \begin{align*}
     &Term_{1,2}+Term_{2,2}+Term_{5,2}+Term_6\\
     &=\int_{0}^{\gamma}ic(2\alpha)t (\frac{2ic(\alpha)t}{1+ic'(\alpha)\gamma t})[(\frac{1+ic'(2\alpha)\eta t}{ic(2\alpha)t}\frac{d}{d\eta})\breve{h}(2\alpha,\eta)]X(2\alpha,\eta) \breve{K}(\ddot{h}(\alpha,\gamma)-\ddot{h}(2\alpha,\eta))d\eta\\
     &+\int_{0}^{\gamma}ic(2\alpha)t (\frac{2ic(\alpha)t}{1+ic'(\alpha)\gamma t})[(\frac{1+ic'(2\alpha)\eta t}{ic(2\alpha)t}\frac{d}{d\eta})X(2\alpha,\eta)]\breve{h}(2\alpha,\eta) \breve{K}(\ddot{h}(\alpha,\gamma)-\ddot{h}(2\alpha,\eta))d\eta\\
     &-\int_{0}^{\gamma}\breve{h}(2\alpha,\eta)X(2\alpha,\eta)\nabla \breve{K}(\ddot{h}(\alpha,\gamma)-\ddot{h}(2\alpha,\eta))\cdot(\frac{2ic(\alpha)t}{1+ic'(\alpha)\gamma t})[(\frac{1+ic'(2\alpha)\eta t}{ic(2\alpha)t}\frac{d}{d\eta})\ddot{h}(2\alpha,\eta)]ic(2\alpha)t d\eta\\
     &+\int_{0}^{\gamma}\breve{h}(2\alpha,\eta)X(2\alpha,\eta) \breve{K}(\ddot{h}(\alpha,\gamma)-\ddot{h}(2\alpha,\eta))\frac{i2c(\alpha)t}{1+ic'(\alpha)\gamma t}ic'(2\alpha)t d\eta\\
     &=\int_{0}^{\gamma}\frac{d}{d\eta}[\breve{h}(2\alpha,\eta)X(2\alpha,\eta)\breve{K}(\ddot{h}(\alpha,\gamma)-\ddot{h}(2\alpha,\eta))\frac{i2c(\alpha)t}{1+ic'(\alpha)\gamma t}(ic'(2\alpha)\eta t+1)]d\eta\\
     &=(\breve{h}(2\alpha,\gamma)X(2\alpha,\gamma)\breve{K}(\ddot{h}(\alpha,\gamma)-\ddot{h}(2\alpha,\gamma))\frac{i2c(\alpha)t}{1+ic'(\alpha)\gamma t}(ic'(2\alpha)\gamma t+1)) \\
    &-\breve{h}(2\alpha,0)X(2\alpha,0)\breve{K}(\ddot{h}(\alpha,\gamma)-\ddot{h}(2\alpha,0))\frac{i2c(\alpha)t}{1+ic'(\alpha)\gamma t}.
   \end{align*}
   Moreover, we have
   \begin{align*}
       &Term_{1,1}=\int_{0}^{\gamma}ic(2\alpha)t (\frac{2ic(\alpha)t}{1+ic'(\alpha)\gamma t})[(\frac{1}{2}\frac{d}{d\alpha}-\frac{1+ic'(2\alpha)\eta t}{ic(2\alpha)t}\frac{d}{d\eta})\breve{h}(2\alpha,\eta)]X(2\alpha,\eta) \breve{K}(\ddot{h}(\alpha,\gamma)-\ddot{h}(2\alpha,\eta))d\eta\\
       &=\int_{0}^{\gamma}ic(2\alpha)t (\frac{2ic(\alpha)t}{1+ic'(\alpha)\gamma t})(\frac{1+ic'(2\alpha)\eta t}{ic(2\alpha)t})[(\frac{ic(2\alpha)t}{2(1+ic'(2\alpha)\eta t)}\frac{d}{d\alpha}-\frac{d}{d\eta})\breve{h}(2\alpha,\eta)]X(2\alpha,\eta) \\
       &\qquad \cdot\breve{K}(\ddot{h}(\alpha,\gamma)-\ddot{h}(2\alpha,\eta))d\eta\\
       &=\underbrace{\int_{0}^{\gamma}ic(2\alpha)t (\frac{2ic(\alpha)t}{1+ic'(\alpha)\gamma t})(\frac{1+ic'(2\alpha)\eta t}{ic(2\alpha)t})[A(\breve{h})(2\alpha,\eta)] \breve{K}(\ddot{h}(\alpha,\gamma)-\ddot{h}(2\alpha,\eta))X(2\alpha,\eta)d\eta}_{FA_7},
   \end{align*}
   \begin{align*}
       &Term_{2,1}=\int_{0}^{\gamma}ic(2\alpha)t (\frac{2ic(\alpha)t}{1+ic'(\alpha)\gamma t})[(\frac{1}{2}\frac{d}{d\alpha}-\frac{1+ic'(2\alpha)\eta t}{ic(2\alpha)t}\frac{d}{d\eta})X(2\alpha,\eta)]\breve{h}(2\alpha,\eta) \breve{K}(\ddot{h}(\alpha,\gamma)-\ddot{h}(2\alpha,\eta))d\eta\\
       &=\int_{0}^{\gamma}ic(2\alpha)t (\frac{2ic(\alpha)t}{1+ic'(\alpha)\gamma t})(\frac{1+ic'(2\alpha)\eta t}{ic(2\alpha)t})[(\frac{ic(2\alpha)t}{2(1+ic'(2\alpha)\eta t)}\frac{d}{d\alpha}-\frac{d}{d\eta})X(2\alpha,\eta)]\breve{h}(2\alpha,\eta)\\
       &\qquad \cdot \breve{K}(\ddot{h}(\alpha,\gamma)-\ddot{h}(2\alpha,\eta))d\eta\\
       &=\underbrace{\int_{0}^{\gamma}ic(2\alpha)t (\frac{2ic(\alpha)t}{1+ic'(\alpha)\gamma t})(\frac{1+ic'(2\alpha)\eta t}{ic(2\alpha)t})[A(X)(2\alpha,\eta)] \breve{K}(\ddot{h}(\alpha,\gamma)-\ddot{h}(2\alpha,\eta))\breve{h}(2\alpha,\eta)d\eta}_{FA_6},
   \end{align*}
   and
   \begin{align*}
       &Term_{5,1}=-\int_{0}^{\gamma}\breve{h}(2\alpha,\eta)X(2\alpha,\eta)\nabla \breve{K}(\ddot{h}(\alpha,\gamma)-\ddot{h}(2\alpha,\eta))\cdot(\frac{2ic(\alpha)t}{1+ic'(\alpha)\gamma t})[(\frac{1}{2}\frac{d}{d\alpha}-\frac{1+ic'(2\alpha)\eta t}{ic(2\alpha)t}\frac{d}{d\eta})\ddot{h}(2\alpha,\eta)]\\
       &ic(2\alpha)t d\eta\\
       &=-\int_{0}^{\gamma}  \breve{h}(2\alpha,\eta)X(2\alpha,\eta)\nabla \breve{K}(\ddot{h}(\alpha,\gamma)-\ddot{h}(2\alpha,\eta))\cdot(\frac{2ic(\alpha)t}{1+ic'(\alpha)\gamma t})(\frac{1+ic'(2\alpha)\eta t}{ic(2\alpha)t})\\
       &\cdot[(\frac{ic(2\alpha)t}{2(1+ic'(2\alpha)\eta t)}\frac{d}{d\alpha}-\frac{d}{d\eta})\ddot{h}(2\alpha,\eta)]ic(2\alpha)td\eta\\
       &=\underbrace{-\int_{0}^{\gamma}  \breve{h}(2\alpha,\eta)X(2\alpha,\eta)\nabla \breve{K}(\ddot{h}(\alpha,\gamma)-\ddot{h}(2\alpha,\eta))\cdot(\frac{2ic(\alpha)t}{1+ic'(\alpha)\gamma t})(\frac{1+ic'(2\alpha)\eta t}{ic(2\alpha)t})[A(\ddot{h})(2\alpha,\eta)]ic(2\alpha)td\eta}_{FA_5}.
   \end{align*}
   Therefore we have
   \begin{align*}
       &A(O_2)=Term_{1,1}+Term_{2,1}+Term_{5,1}+Term_4+Term_{3}+(Term_{1,2}+Term_{2,2}+ Term_{5,2}+Term_{6})\\
       &=\underbrace{\int_{0}^{\gamma}ic(2\alpha)t (\frac{2ic(\alpha)t}{1+ic'(\alpha)\gamma t})(\frac{1+ic'(2\alpha)\eta t}{ic(2\alpha)t})[A(\breve{h})(2\alpha,\eta)] \breve{K}(\ddot{h}(\alpha,\gamma)-\ddot{h}(2\alpha,\eta))X(2\alpha,\eta)d\eta}_{FA_7}\\
       &+\underbrace{\int_{0}^{\gamma}ic(2\alpha)t (\frac{2ic(\alpha)t}{1+ic'(\alpha)\gamma t})(\frac{1+ic'(2\alpha)\eta t}{ic(2\alpha)t})[A(X)(2\alpha,\eta)] \breve{K}(\ddot{h}(\alpha,\gamma)-\ddot{h}(2\alpha,\eta))\breve{h}(2\alpha,\eta)d\eta}_{FA_6}\\
       &\underbrace{-\int_{0}^{\gamma}  \breve{h}(2\alpha,\eta)X(2\alpha,\eta)\nabla \breve{K}(\ddot{h}(\alpha,\gamma)-\ddot{h}(2\alpha,\eta))\cdot(\frac{2ic(\alpha)t}{1+ic'(\alpha)\gamma t})(\frac{1+ic'(2\alpha)\eta t}{ic(2\alpha)t})[A(\ddot{h})(2\alpha,\eta)]ic(2\alpha)td\eta}_{FA_5}\\
       &\underbrace{+\int_{0}^{\gamma}\breve{h}(2\alpha,\eta) X(2\alpha,\eta)\nabla \breve{K}(\ddot{h}(\alpha,\gamma)-\ddot{h}(2\alpha,\eta))\cdot[(A(\ddot{h})(\alpha,\gamma)]ic(2\alpha)t d\eta}_{FA_4}\\
       &\underbrace{-\breve{h}(2\alpha,\gamma)X(2\alpha,\gamma)\breve{K}(\ddot{h}(\alpha,\gamma)-\ddot{h}(2\alpha,\gamma))ic(2\alpha)t}_{B_4}\\
    &\underbrace{+\breve{h}(2\alpha,\gamma)X(2\alpha,\gamma)\breve{K}(\ddot{h}(\alpha,\gamma)-\ddot{h}(2\alpha,\gamma))\frac{i2c(\alpha)t}{1+ic'(\alpha)\gamma t}(ic'(2\alpha)\gamma t+1)}_{B_5}\\
    &\underbrace{-\breve{h}(2\alpha,0)X(2\alpha,0)\breve{K}(\ddot{h}(\alpha,\gamma)-\ddot{h}(2\alpha,0))\frac{i2c(\alpha)t}{1+ic'(\alpha)\gamma t}}_{B_6} 
   \end{align*}
 For $O_3$, we have
   \begin{align*}
    &A(O_3)=A(-\int_{2\alpha}^{\pi}X(\beta,0)\breve{h}(\beta,0) \breve{K}(\ddot{h}(\alpha,\gamma)-\ddot{h}(\beta,0))d\beta)\\
    &=\underbrace{\frac{2ic(\alpha)t}{1+ic'(\alpha)\gamma t}\breve{h}(2\alpha,0)X(2\alpha,0) \breve{K}(\ddot{h}(\alpha,\gamma)-\ddot{h}(2\alpha,0))}_{B_7}\\
    &\underbrace{-\int_{2\alpha}^{\pi}X(\beta,0)\breve{h}(\beta,0) \nabla \breve{K}(\ddot{h}(\alpha,\gamma)-\ddot{h}(\beta,0))\cdot(\frac{ic(\alpha)t}{1+ic'(\alpha)\gamma t}\partial_{\alpha}-\partial_{\gamma})\ddot{h}(\alpha,\gamma)d\beta}_{FA_8}.
\end{align*}
Finally, for $O_4$, we have
\begin{align*}
  &A(O_4)=A(\int_{[\alpha-\pi,-\alpha]\cup[\alpha,\alpha+\pi] } \breve{K}(\ddot{h}(\alpha,\gamma)-\ddot{h}(\alpha-\beta,\gamma))(X(\alpha-\beta,\gamma))(1+ic'(\alpha-\beta)\gamma t)d\beta \breve{h}(\alpha,\gamma))\\
&=\underbrace{\int_{[-\pi,0]\cup[2\alpha,\pi] } \breve{K}(\ddot{h}(\alpha,\gamma)-\ddot{h}(\beta,\gamma))(X(\beta,\gamma))(1+ic'(\beta)\gamma t)d\beta A(\breve{h})(\alpha,\gamma)}_{FA_{11}}\\
    &\underbrace{+\frac{ic(\alpha)t}{1+ic'(\alpha)\gamma t}\breve{K}(\ddot{h}(\alpha,\gamma)-\ddot{h}(-\pi,\gamma))X(-\pi,\gamma)(1+ic'(-\pi)\gamma t)\breve{h}(\alpha,\gamma)}_{Term_{1,1}, B_{8,1}}\\
      &\underbrace{-\frac{ic(\alpha)t}{1+ic'(\alpha)\gamma t}\breve{K}(\ddot{h}(\alpha,\gamma)-\ddot{h}(0,\gamma))X(0,\gamma)(1+ic'(0)\gamma t)\breve{h}(\alpha,\gamma)}_{Term_{1,2}, B_{8,2}}\\  &\underbrace{-\frac{ic(\alpha)t}{1+ic'(\alpha)\gamma t}\breve{K}(\ddot{h}(\alpha,\gamma)-\ddot{h}(2\alpha,\gamma))X(2\alpha,\gamma)(1+ic'(2\alpha)\gamma t)\breve{h}(\alpha,\gamma)}_{Term_{1,3},B_{8,3}}\\
     &\underbrace{-\frac{ic(\alpha)t}{1+ic'(\alpha)\gamma t}\breve{K}(\ddot{h}(\alpha,\gamma)-\ddot{h}(\pi,\gamma))X(\pi,\gamma)(1+ic'(\pi)\gamma t)\breve{h}(\alpha,\gamma)}_{Term_{1,4},B_{8,4}}\\
    &+\underbrace{\int_{[\alpha-\pi,-\alpha]\cup[\alpha,\alpha+\pi] }X(\alpha-\beta,\gamma)\nabla \breve{K}(\ddot{h}(\alpha,\gamma)-\ddot{h}(\alpha-\beta,\gamma))}_{Term_2}\\
    &\underbrace{\cdot([(\frac{ic(\alpha)t}{1+ic'(\alpha)\gamma t}\partial_{\alpha}-\partial_{\gamma})\ddot{h}(\alpha,\gamma)]-[(\frac{ic(\alpha)t}{1+ic'(\alpha)\gamma t}\partial_{\alpha}-\partial_{\gamma})\ddot{h}(\alpha-\beta,\gamma,t)])
(1+ic'(\alpha-\beta)\gamma t)d\beta \breve{h}(\alpha,\gamma)}_{Term_2}\\
    &+\underbrace{\int_{[\alpha-\pi,-\alpha]\cup[\alpha,\alpha+\pi] }\breve{K}(\ddot{h}(\alpha,\gamma)-\ddot{h}(\alpha-\beta,\gamma))[(\frac{ic(\alpha)t}{1+ic'(\alpha)\gamma t}\partial_{\alpha}-\partial_{\gamma})X(\alpha-\beta,\gamma)](1+ic'(\alpha-\beta)\gamma t)d\beta \breve{h}(\alpha,\gamma)}_{Term_3}\\
    &+\underbrace{\int_{[\alpha-\pi,-\alpha]\cup[\alpha,\alpha+\pi] }\breve{K}(\ddot{h}(\alpha,\gamma)-\ddot{h}(\alpha-\beta,\gamma))X(\alpha-\beta,\gamma)(\frac{ic(\alpha)t}{1+ic'(\alpha)\gamma t}ic''(\alpha-\beta)\gamma t-ic'(\alpha-\beta) t)d\beta \breve{h}(\alpha,\gamma)}_{Term_4}\\
    &=FA_{11}+Term_{1,1}+Term_{1,2}+Term_{1,3}+Term_{1,4}+\\     
  &+\underbrace{\int_{[\alpha-\pi,-\alpha]\cup[\alpha,\alpha+\pi] }X(\alpha-\beta,\gamma)\nabla \breve{K}(\ddot{h}(\alpha,\gamma)-\ddot{h}(\alpha-\beta,\gamma))}_{Term_{2,1}}\\
    &\underbrace{\cdot([(\frac{ic(\alpha)t}{1+ic'(\alpha)\gamma t}\partial_{\alpha}-\partial_{\gamma})\ddot{h}(\alpha,\gamma)]-[(\frac{ic(\alpha-\beta)t}{1+ic'(\alpha-\beta)\gamma t}\partial_{\alpha}-\partial_{\gamma})\ddot{h}(\alpha-\beta,\gamma)])
(1+ic'(\alpha-\beta)\gamma t)d\beta \breve{h}(\alpha,\gamma)}_{Term_{2,1}}\\
    &+\underbrace{\int_{[\alpha-\pi,-\alpha]\cup[\alpha,\alpha+\pi] }\nabla \breve{K}(\ddot{h}(\alpha,\gamma)-\ddot{h}(\alpha-\beta,\gamma))\cdot[(\frac{ic(\alpha-\beta)t}{1+ic'(\alpha-\beta)\gamma t}\partial_{\alpha}-\frac{ic(\alpha)t}{1+ic'(\alpha)\gamma t}\partial_{\alpha})\ddot{h}(\alpha-\beta,\gamma)]}_{Term_{2,2}}\\
    &\underbrace{X(\alpha-\beta,\gamma)(1+ic'(\alpha-\beta)\gamma t)d\beta \breve{h}(\alpha,\gamma)}_{Term_{2,2}}\\
    &+\underbrace{\int_{[\alpha-\pi,-\alpha]\cup[\alpha,\alpha+\pi] }\breve{K}(\ddot{h}(\alpha,\gamma)-\ddot{h}(\alpha-\beta,\gamma))[(\frac{ic(\alpha-\beta)t}{1+ic'(\alpha-\beta)\gamma t}\partial_{\alpha}-\partial_{\gamma})X(\alpha-\beta,\gamma)](1+ic'(\alpha-\beta)\gamma t)d\beta \breve{h}(\alpha,\gamma)}_{Term_{3,1}}\\
    &+\underbrace{\int_{[\alpha-\pi,-\alpha]\cup[\alpha,\alpha+\pi] }\breve{K}(\ddot{h}(\alpha,\gamma)-\ddot{h}(\alpha-\beta,\gamma))[(\frac{ic(\alpha)t}{1+ic'(\alpha)\gamma t}\partial_{\alpha}-\frac{ic(\alpha-\beta)t}{1+ic'(\alpha-\beta)\gamma t}\partial_{\alpha})X(\alpha-\beta,\gamma)]}_{Term_{3,2}}\\
     &\underbrace{(1+ic'(\alpha-\beta)\gamma t)d\beta \breve{h}(\alpha,\gamma)}_{Term_{3,2}}\\
     &+\underbrace{\int_{[\alpha-\pi,-\alpha]\cup[\alpha,\alpha+\pi] }\breve{K}(\ddot{h}(\alpha,\gamma)-\ddot{h}(\alpha-\beta,\gamma))X(\alpha-\beta,\gamma)(\frac{ic(\alpha)t}{1+ic'(\alpha)\gamma t}ic''(\alpha-\beta)\gamma t-ic'(\alpha-\beta) t)d\beta \breve{h}(\alpha,\gamma)}_{Term_4}\\
    &=FA_{11}+Term_{1,1}+Term_{1,2}+Term_{1,3}+Term_{1,4}+Term_{2,1}+Term_{2,2}+Term_{3,1}+Term_{3,2}+Term_4.
\end{align*}
Then we have
\begin{align*}
    &Term_{2,2}+Term_{3,2}+Term_4=\\
    &=\int_{[\alpha-\pi,-\alpha]\cup[\alpha,\alpha+\pi] }\frac{d}{d\beta}[\breve{K}(\ddot{h}(\alpha,\gamma)-\ddot{h}(\alpha-\beta,\gamma))X(\alpha-\beta,\gamma)(ic(\alpha-\beta)t-\frac{ic(\alpha)t}{1+ic'(\alpha)\gamma t}(1+ic'(\alpha-\beta)\gamma t))]d\beta\\
  &\cdot \breve{h}(\alpha,\gamma)\\
    &=\underbrace{\breve{K}(\ddot{h}(\alpha,\gamma)-\ddot{h}(-\pi,\gamma))X(-\pi,\gamma)(ic(-\pi)t-\frac{ic(\alpha)t}{1+ic'(\alpha)\gamma t}(1+ic'(-\pi)\gamma t)) \breve{h}(\alpha,\gamma)}_{B_{9,1}}\\
    &\underbrace{-\breve{K}(\ddot{h}(\alpha,\gamma)-\ddot{h}(0,\gamma))X(0,\gamma)(ic(0)t-\frac{ic(\alpha)t}{1+ic'(\alpha)\gamma t}(1+ic'(0)\gamma t)) \breve{h}(\alpha,\gamma)}_{B_{9,2}}\\
    &\underbrace{+\breve{K}(\ddot{h}(\alpha,\gamma)-\ddot{h}(2\alpha,\gamma))X(2\alpha,\gamma)(ic(2\alpha)t-\frac{ic(\alpha)t}{1+ic'(\alpha)\gamma t}(1+ic'(2\alpha)\gamma t)) \breve{h}(\alpha,\gamma)}_{B_{9,3}}\\
    &\underbrace{-\breve{K}(\ddot{h}(\alpha,\gamma)-\ddot{h}(\pi,\gamma))X(\pi,\gamma)(ic(\pi)t-\frac{ic(\alpha)t}{1+ic'(\alpha)\gamma t}(1+ic'(\pi)\gamma t))\breve{h}(\alpha,\gamma).}_{B_{9,4}}
\end{align*}
Then 
\begin{align*}
    &A(O_4)=FA_{11}+\underbrace{Term_{1,1}+Term_{1,2}+Term_{1,3}+Term_{1,4}}_{B_8}+\underbrace{Term_{2,1}}_{FA_9}\\
    &+\underbrace{Term_{3,1}}_{FA_{10}}+\underbrace{Term_{2,2}+Term_{3,2}+Term_4}_{B_9}.
\end{align*}
Here we use $B_8$ to show the sum $B_{8,1}+B_{8,2}+B_{8,3}+B_{8,4}$, and $B_9$ to show the sum $B_{9,1}+B_{9,2}+B_{9,3}+B_{9,4}=B_{9}$.

Then we are left to show all B terms are cancelled.
\begin{align*}
&B_1+B_2+B_3+B_4+B_5+B_6+B_7+B_8+B_9\\
    &=\underbrace{\frac{ic(\alpha)t}{1+ic'(\alpha)\gamma t}\breve{K}(\ddot{h}(\alpha,\gamma)-\ddot{h}(0,\gamma))(\breve{h}(\alpha,\gamma))X(0,\gamma)(1+ic'(0)\gamma t)}_{B_{1,1}}\\
&+\underbrace{\frac{ic(\alpha)t}{1+ic'(\alpha)\gamma t}\breve{K}(\ddot{h}(\alpha,\gamma)-\ddot{h}(2\alpha,\gamma))(\breve{h}(\alpha,\gamma))X(2\alpha,\gamma)(1+ic'(2\alpha)\gamma t)}_{B_{1,1}}\\&\underbrace{+\frac{ic(\alpha)t}{1+ic'(\alpha)\gamma t}\breve{K}(\ddot{h}(\alpha,\gamma)-\ddot{h}(0,\gamma))(-\breve{h}(0,\gamma))X(0,\gamma)(1+ic'(0)\gamma t)}_{B_{1,2}}\\
&+\underbrace{\frac{ic(\alpha)t}{1+ic'(\alpha)\gamma t}\breve{K}(\ddot{h}(\alpha,\gamma)-\ddot{h}(2\alpha,\gamma))(-\breve{h}(2\alpha,\gamma))X(2\alpha,\gamma)(1+ic'(2\alpha)\gamma t)}_{B_{1,2}}\\
 &\underbrace{-\breve{K}(\ddot{h}(\alpha,\gamma)-\ddot{h}(0,\gamma))(\frac{ic(\alpha)t}{1+ic'(\alpha)\gamma t}-\frac{ic(0)t}{1+ic'(0)\gamma t})(1+ic'(0)\gamma t)(\breve{h}(\alpha,\gamma))X(0,\gamma)}_{B_{2,1}}\\
  &\underbrace{-\breve{K}(\ddot{h}(\alpha,\gamma)-\ddot{h}(0,\gamma))(\frac{ic(\alpha)t}{1+ic'(\alpha)\gamma t}-\frac{ic(0)t}{1+ic'(0)\gamma t})(1+ic'(0)\gamma t)(-\breve{h}(0,\gamma))X(0,\gamma)}_{B_{2,2}}\\
&+\underbrace{\breve{K}(\ddot{h}(\alpha,\gamma)-\ddot{h}(2\alpha,\gamma))(\frac{ic(\alpha)t}{1+ic'(\alpha)\gamma t}-\frac{ic(2\alpha)t}{1+ic'(2\alpha)\gamma t})(\breve{h}(\alpha,\gamma))X(2\alpha,\gamma)(1+ic'(2\alpha)\gamma t)}_{B_{3,1}}\\
&+\underbrace{\breve{K}(\ddot{h}(\alpha,\gamma)-\ddot{h}(2\alpha,\gamma))(\frac{ic(\alpha)t}{1+ic'(\alpha)\gamma t}-\frac{ic(2\alpha)t}{1+ic'(2\alpha)\gamma t})(-\breve{h}(2\alpha,\gamma))X(2\alpha,\gamma)(1+ic'(2\alpha)\gamma t)}_{B_{3,2}}\\
&\underbrace{-\breve{h}(2\alpha,\gamma)X(2\alpha,\gamma)\breve{K}(\ddot{h}(\alpha,\gamma)-\ddot{h}(2\alpha,\gamma))ic(2\alpha)t}_{B_4}\\
    &\underbrace{+\breve{h}(2\alpha,\gamma)X(2\alpha,\gamma)\breve{K}(\ddot{h}(\alpha,\gamma)-\ddot{h}(2\alpha,\gamma))\frac{i2c(\alpha)t}{1+ic'(\alpha)\gamma t}(ic'(2\alpha)\gamma t+1)}_{B_5}\\
    &\underbrace{-\breve{h}(2\alpha,0)X(2\alpha,0)\breve{K}(\ddot{h}(\alpha,\gamma)-\ddot{h}(2\alpha,0))\frac{i2c(\alpha)t}{1+ic'(\alpha)\gamma t}}_{B_6}\\ 
   & \underbrace{+\frac{2ic(\alpha)t}{1+ic'(\alpha)\gamma t}\breve{h}(2\alpha,0)X(2\alpha,0) \breve{K}(\ddot{h}(\alpha,\gamma)-\ddot{h}(2\alpha,0))}_{B_7}\\
    &\underbrace{+\frac{ic(\alpha)t}{1+ic'(\alpha)\gamma t}\breve{K}(\ddot{h}(\alpha,\gamma)-\ddot{h}(-\pi,\gamma))X(-\pi,\gamma)(1+ic'(-\pi)\gamma t)\breve{h}(\alpha,\gamma)}_{B_{8,1}}\\
      &\underbrace{-\frac{ic(\alpha)t}{1+ic'(\alpha)\gamma t}\breve{K}(\ddot{h}(\alpha,\gamma)-\ddot{h}(0,\gamma))X(0,\gamma)(1+ic'(0)\gamma t)\breve{h}(\alpha,\gamma)}_{B_{8,2}}\\  &\underbrace{-\frac{ic(\alpha)t}{1+ic'(\alpha)\gamma t}\breve{K}(\ddot{h}(\alpha,\gamma)-\ddot{h}(2\alpha,\gamma))X(2\alpha,\gamma)(1+ic'(2\alpha)\gamma t)\breve{h}(\alpha,\gamma)}_{B_{8,3}}\\
    &\underbrace{-\frac{ic(\alpha)t}{1+ic'(\alpha)\gamma t}\breve{K}(\ddot{h}(\alpha,\gamma)-\ddot{h}(\pi,\gamma))X(\pi,\gamma)(1+ic'(\pi)\gamma t)\breve{h}(\alpha,\gamma)}_{B_{8,4}}\\
     &\underbrace{+\breve{K}(\ddot{h}(\alpha,\gamma)-\ddot{h}(-\pi,\gamma))X(-\pi,\gamma)(ic(-\pi)t-\frac{ic(\alpha)t}{1+ic'(\alpha)\gamma t}(1+ic'(-\pi)\gamma t)) \breve{h}(\alpha,\gamma)}_{B_{9,1}}\\
    &\underbrace{-\breve{K}(\ddot{h}(\alpha,\gamma)-\ddot{h}(0,\gamma))X(0,\gamma)(ic(0)t-\frac{ic(\alpha)t}{1+ic'(\alpha)\gamma t}(1+ic'(0)\gamma t)) \breve{h}(\alpha,\gamma)}_{B_{9,2}}\\
    &\underbrace{+\breve{K}(\ddot{h}(\alpha,\gamma)-\ddot{h}(2\alpha,\gamma))X(2\alpha,\gamma)(ic(2\alpha)t-\frac{ic(\alpha)t}{1+ic'(\alpha)\gamma t}(1+ic'(2\alpha)\gamma t)) \breve{h}(\alpha,\gamma)}_{B_{9,3}}\\
    &\underbrace{-\breve{K}(\ddot{h}(\alpha,\gamma)-\ddot{h}(\pi,\gamma))X(\pi,\gamma)(ic(\pi)t-\frac{ic(\alpha)t}{1+ic'(\alpha)\gamma t}(1+ic'(\pi)\gamma t))\breve{h}(\alpha,\gamma)}_{B_{9,4}}\\
    &=\underbrace{\frac{ic(\alpha)t}{1+ic'(\alpha)\gamma t}\breve{K}(\ddot{h}(\alpha,\gamma)-\ddot{h}(0,\gamma))(\breve{h}(\alpha,\gamma))X(0,\gamma)(1+ic'(0)\gamma t)}_{C_1}\\
&\underbrace{+\frac{ic(\alpha)t}{1+ic'(\alpha)\gamma t}\breve{K}(\ddot{h}(\alpha,\gamma)-\ddot{h}(2\alpha,\gamma))(\breve{h}(\alpha,\gamma))X(2\alpha,\gamma)(1+ic'(2\alpha)\gamma t)}_{C_2}\\&\underbrace{+\frac{ic(\alpha)t}{1+ic'(\alpha)\gamma t}\breve{K}(\ddot{h}(\alpha,\gamma)-\ddot{h}(0,\gamma))(-\breve{h}(0,\gamma))X(0,\gamma)(1+ic'(0)\gamma t)}_{C_3}\\
&\underbrace{+\frac{ic(\alpha)t}{1+ic'(\alpha)\gamma t}\breve{K}(\ddot{h}(\alpha,\gamma)-\ddot{h}(2\alpha,\gamma))(-\breve{h}(2\alpha,\gamma))X(2\alpha,\gamma)(1+ic'(2\alpha)\gamma t)}_{C_4}\\
 &\underbrace{-\breve{K}(\ddot{h}(\alpha,\gamma)-\ddot{h}(0,\gamma))(\frac{ic(\alpha)t}{1+ic'(\alpha)\gamma t}-\frac{ic(0)t}{1+ic'(0)\gamma t})(1+ic'(0)\gamma t)(\breve{h}(\alpha,\gamma))X(0,\gamma)}_{C_5}\\
  &\underbrace{-\breve{K}(\ddot{h}(\alpha,\gamma)-\ddot{h}(0,\gamma))(\frac{ic(\alpha)t}{1+ic'(\alpha)\gamma t}-\frac{ic(0)t}{1+ic'(0)\gamma t})(1+ic'(0)\gamma t)(-\breve{h}(0,\gamma))X(0,\gamma)}_{C_3}\\
&\underbrace{+\breve{K}(\ddot{h}(\alpha,\gamma)-\ddot{h}(2\alpha,\gamma))(\frac{ic(\alpha)t}{1+ic'(\alpha)\gamma t}-\frac{ic(2\alpha)t}{1+ic'(2\alpha)\gamma t})(\breve{h}(\alpha,\gamma))X(2\alpha,\gamma)(1+ic'(2\alpha)\gamma t)}_{C_6}\\
&+\breve{K}(\ddot{h}(\alpha,\gamma)-\ddot{h}(2\alpha,\gamma))(\underbrace{\frac{ic(\alpha)t}{1+ic'(\alpha)\gamma t}}_{C_4}\underbrace{-\frac{ic(2\alpha)t}{1+ic'(2\alpha)\gamma t}}_{C_7})(-\breve{h}(2\alpha,\gamma))X(2\alpha,\gamma)(1+ic'(2\alpha)\gamma t)\\
&\underbrace{-\breve{h}(2\alpha,\gamma)X(2\alpha,\gamma)\breve{K}(\ddot{h}(\alpha,\gamma)-\ddot{h}(2\alpha,\gamma))ic(2\alpha)t}_{C_7}\\
     &\underbrace{+\breve{h}(2\alpha,\gamma)X(2\alpha,\gamma)\breve{K}(\ddot{h}(\alpha,\gamma)-\ddot{h}(2\alpha,\gamma))\frac{i2c(\alpha)t}{1+ic'(\alpha)\gamma t}(ic'(2\alpha)\gamma t+1)}_{C_4}\\
    &\underbrace{-\breve{h}(2\alpha,0)X(2\alpha,0)\breve{K}(\ddot{h}(\alpha,\gamma)-\ddot{h}(2\alpha,0))\frac{i2c(\alpha)t}{1+ic'(\alpha)\gamma t}}_{C_8}\\ 
   & \underbrace{+\frac{2ic(\alpha)t}{1+ic'(\alpha)\gamma t}\breve{h}(2\alpha,0)X(2\alpha,0) \breve{K}(\ddot{h}(\alpha,\gamma)-\ddot{h}(2\alpha,0))}_{C_8}\\
    &\underbrace{+\frac{ic(\alpha)t}{1+ic'(\alpha)\gamma t}\breve{K}(\ddot{h}(\alpha,\gamma)-\ddot{h}(-\pi,\gamma))X(-\pi,\gamma)(1+ic'(-\pi)\gamma t)\breve{h}(\alpha,\gamma)}_{C_9}\\
      &\underbrace{-\frac{ic(\alpha)t}{1+ic'(\alpha)\gamma t}\breve{K}(\ddot{h}(\alpha,\gamma)-\ddot{h}(0,\gamma))X(0,\gamma)(1+ic'(0)\gamma t)\breve{h}(\alpha,\gamma)}_{C_1}\\  &\underbrace{-\frac{ic(\alpha)t}{1+ic'(\alpha)\gamma t}\breve{K}(\ddot{h}(\alpha,\gamma)-\ddot{h}(2\alpha,\gamma))X(2\alpha,\gamma)(1+ic'(2\alpha)\gamma t)\breve{h}(\alpha,\gamma)}_{C_2}\\
     &\underbrace{-\frac{ic(\alpha)t}{1+ic'(\alpha)\gamma t}\breve{K}(\ddot{h}(\alpha,\gamma)-\ddot{h}(\pi,\gamma))X(\pi,\gamma)(1+ic'(\pi)\gamma t)\breve{h}(\alpha,\gamma)}_{C_{10}}\\
     &\underbrace{+\breve{K}(\ddot{h}(\alpha,\gamma)-\ddot{h}(-\pi,\gamma))X(-\pi,\gamma)(ic(-\pi)t-\frac{ic(\alpha)t}{1+ic'(\alpha)\gamma t}(1+ic'(-\pi)\gamma t)) \breve{h}(\alpha,\gamma)}_{C_{9}}\\
    &\underbrace{-\breve{K}(\ddot{h}(\alpha,\gamma)-\ddot{h}(0,\gamma))X(0,\gamma)(ic(0)t-\frac{ic(\alpha)t}{1+ic'(\alpha)\gamma t}(1+ic'(0)\gamma t)) \breve{h}(\alpha,\gamma)}_{C_{5}}\\
    &\underbrace{+\breve{K}(\ddot{h}(\alpha,\gamma)-\ddot{h}(2\alpha,\gamma))X(2\alpha,\gamma)(ic(2\alpha)t-\frac{ic(\alpha)t}{1+ic'(\alpha)\gamma t}(1+ic'(2\alpha)\gamma t)) \breve{h}(\alpha,\gamma)}_{C_{6}}\\
    &\underbrace{-\breve{K}(\ddot{h}(\alpha,\gamma)-\ddot{h}(\pi,\gamma))X(\pi,\gamma)(ic(\pi)t-\frac{ic(\alpha)t}{1+ic'(\alpha)\gamma t}(1+ic'(\pi)\gamma t))\breve{h}(\alpha,\gamma)}_{C_{10}}\\
    &=0.
\end{align*}
Here we write the same $C_i$ under terms that cancel with each other.
\end{proof}
\begin{lemma}\label{forM2}
Let $K$ be meromorphic, $\ddot{h}(\alpha,\gamma) \in C_{\gamma}^{1}([-1,1],C_{\alpha}^{2}[-\pi,\pi])$, $\breve{h}(\alpha,\gamma) \in C_{\gamma}^{1}([-1,1],C_{\alpha}^{2}[0,\pi])$. If for fixed $\alpha>0$, there is no singular point in the integrals in $M(\breve{h}, \ddot{h})$ below and $c(0)=c(\pi)=c(-\pi)=0$, $c(\alpha)\in W^{2,\infty}$, then when $0<\alpha< \frac{\pi}{2}$, for
\begin{align*}
    M(\breve{h}, \ddot{h})=&\int_{0}^{2\alpha}\breve{K}(\ddot{h}(\alpha,\gamma)-\ddot{h}(\beta,\gamma))(\frac{(\frac{d}{d\alpha}\breve{h})(\alpha,\gamma)}{1+ic'(\alpha)\gamma t}-\frac{\frac{d}{d\beta}\breve{h}(\beta,\gamma)}{1+ic'(\beta)\gamma t})(1+ic'(\beta)\gamma t)d\beta\\
   &+\breve{K}(\ddot{h}(\alpha,\gamma)-\ddot{h}(2\alpha,\gamma))\breve{h}(2\alpha,\gamma)\\
   &+\int_{0}^{\gamma}\breve{h}(2\alpha,\eta)\nabla \breve{K}(\ddot{h}(\alpha,\gamma)-\ddot{h}(2\alpha,\eta))\cdot(\frac{(\frac{d}{d\alpha}\ddot{h})(2\alpha,\eta)}{1+ic'(2\alpha)\eta t})ic(2\alpha)t d\eta\\
   &-\int_{2\alpha}^{\pi}\breve{h}(\beta,0)\nabla \breve{K}(\ddot{h}(\alpha,\gamma)-\ddot{h}(\beta,0))\cdot\frac{d\ddot{h}}{d\beta}(\beta,0)d\beta\\
    &+\int_{[-\pi,0]\cup[2\alpha,\pi] } \breve{K}(\ddot{h}(\alpha,\gamma)-\ddot{h}(\beta,\gamma))(1+ic'(\beta)\gamma t)d\beta \frac{(\frac{d}{d\alpha}\breve{h})(\alpha,\gamma)}{1+ic'(\alpha)\gamma t}\\
&=M_{1}+M_{2}+M_{3}+M_{4}+M_5.
   \end{align*}

we have 
\begin{align*}
    A(M)=&\underbrace{\int_{0}^{2\alpha}D_{\ddot{h}}(\breve{K}(\ddot{h}(\alpha,\gamma)-\ddot{h}(\beta,\gamma)))[A(\ddot{h})](\frac{(\frac{d}{d\alpha}\breve{h})(\alpha,\gamma)}{1+ic'(\alpha)\gamma t}-\frac{\frac{d}{d\beta}\breve{h}(\beta,\gamma)}{1+ic'(\beta)\gamma t})(1+ic'(\beta)\gamma t)d\beta}_{FM_{1,1}}\\
    &+\underbrace{\int_{0}^{2\alpha}\breve{K}(\ddot{h}(\alpha,\gamma)-\ddot{h}(\beta,\gamma))(\frac{\frac{d}{d\alpha}A(\breve{h})(\alpha,\gamma)}{1+ic'(\alpha)\gamma t}-\frac{\frac{d}{d\beta}A(\breve{h})(\beta,\gamma)}{1+ic'(\beta)\gamma t})(1+ic'(\beta)\gamma t)d\beta}_{FM_{1,2}}\\
    &\underbrace{+D_{\ddot{h}}\breve{K}(\ddot{h}(\alpha,\gamma)-\ddot{h}(2\alpha,\gamma))[A(\ddot{h})]\breve{h}(2\alpha,\gamma)}_{FM_{2,1}}\\
    &\underbrace{+\breve{K}(\ddot{h}(\alpha,\gamma)-\ddot{h}(2\alpha,\gamma))A(\breve{h})(2\alpha,\gamma)}_{FM_{2,2}}\\
    &\underbrace{+\int_{0}^{\gamma}\breve{h}(2\alpha,\eta)\nabla^2 \breve{K}(\ddot{h}(\alpha,\gamma)-\ddot{h}(2\alpha,\eta))[(A[\ddot{h}](\alpha,\gamma)),\frac{(\frac{d}{d\alpha}\ddot{h})(2\alpha,\eta)}{1+ic'(2\alpha)\eta t}]ic(2\alpha)t d\eta}_{FM_{3,1}}\\
    &\underbrace{-\int_{0}^{\gamma}\breve{h}(2\alpha,\eta)\nabla^2 \breve{K}(\ddot{h}(\alpha,\gamma)-\ddot{h}(2\alpha,\eta))[(\frac{2ic(\alpha)t(1+ic'(2\alpha)\eta t)}{(1+ic'(\alpha)\gamma t)(ic(2\alpha)t)}A(\ddot{h})(2\alpha,\eta)),\frac{(\frac{d}{d\alpha}\ddot{h})(2\alpha,\eta)}{1+ic'(2\alpha)\eta t}]ic(2\alpha)t d\eta}_{FM_{3,2}}\\
    &\underbrace{+\int_{0}^{\gamma}\breve{h}(2\alpha,\eta)\nabla \breve{K}(\ddot{h}(\alpha,\gamma)-\ddot{h}(2\alpha,\eta))\cdot\frac{2ic(\alpha)t(1+ic'(2\alpha)\eta t)}{(1+ic'(\alpha)\gamma t)(ic(2\alpha) t)}(\frac{(\frac{d}{d\alpha}A(\ddot{h}))(2\alpha,\eta)}{1+ic'(2\alpha)\eta t})ic(2\alpha)t d\eta}_{FM_{3,3}}\\
    &\underbrace{+\int_{0}^{\gamma}A(\breve{h})(2\alpha,\eta)\frac{2ic(\alpha)t(1+ic'(2\alpha)\eta t)}{(1+ic'(\alpha)\gamma t)(ic(2\alpha) t)}\nabla \breve{K}(\ddot{h}(\alpha,\gamma)-\ddot{h}(2\alpha,\eta))\cdot(\frac{(\frac{d}{d\alpha}\ddot{h})(2\alpha,\eta)}{1+ic'(2\alpha)\eta t})ic(2\alpha)t d\eta}_{FM_{3,4}}\\
    &\underbrace{-\int_{2\alpha}^{\pi}\breve{h}(\beta,0)\nabla^2\breve{K}(\ddot{h}(\alpha,\gamma)-\ddot{h}(\beta,0))[A(\ddot{h})(\alpha,\gamma),\frac{d\ddot{h}}{d\beta}(\beta,0)]d\beta}_{FM_{4}}\\
      &+\underbrace{\int_{[-\pi,0]\cup[2\alpha,\pi] } \nabla \breve{K}(\ddot{h}(\alpha,\gamma)-\ddot{h}(\beta,\gamma))\cdot(A(\ddot{h})(\alpha,\gamma)-A(\ddot{h})(\beta,\gamma))(1+ic'(\beta)\gamma t))d\beta \frac{(\frac{d}{d\alpha}\breve{h})(\alpha,\gamma)}{1+ic'(\alpha)\gamma t}}_{FM_{5,1}}\\
        &+\underbrace{\int_{[-\pi,0]\cup[2\alpha,\pi] }  \breve{K}(\ddot{h}(\alpha,\gamma)-\ddot{h}(\beta,\gamma))(1+ic'(\beta)\gamma t))d\beta \frac{\frac{d}{d\alpha}A(\breve{h})(\alpha,\gamma)}{1+ic'(\alpha)\gamma t}}_{FM_{5,2}}.
\end{align*}
\end{lemma}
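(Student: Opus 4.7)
The plan is to mimic, term-by-term, the bookkeeping strategy used in Lemma \ref{forO1}, with the additional wrinkle that the integrand now contains derivatives of $\breve h$ rather than finite differences. First I will apply $A$ separately to each of $M_1,\dots,M_5$, using the chain rule for $\breve K(\ddot h(\alpha,\gamma)-\ddot h(\beta,\gamma))$ and the Leibniz rule for the $\alpha$-dependent limits of integration in $M_1$, $M_3$, $M_4$, $M_5$. For $M_1$, after changing variables $\beta \mapsto \alpha-\beta$ to move the $\alpha$-dependence out of the integrand and into the limits, I will split the resulting terms as in \eqref{term01A}: the "diagonal" pieces where the coefficient $\frac{ic(\alpha)t}{1+ic'(\alpha)\gamma t}$ is correctly paired with the variable of differentiation become the desired $FM_{1,1}$ and $FM_{1,2}$ (using Lemma \ref{switch} to convert the $\beta$-side into the $A$ operator acting on $\breve h$), while the "off-diagonal" remainders, together with the boundary pieces from the Leibniz rule, combine into a $\tfrac{d}{d\beta}$-total derivative whose evaluation at the endpoints $\beta=0$ and $\beta=2\alpha$ will cancel against boundary contributions coming from $M_2$ and $M_5$.

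For $M_3$, which is an integral over $[0,\gamma]$ with a $\gamma$-dependent upper limit and $\alpha$-dependent integrand $\ddot h(2\alpha,\eta)$, I will repeat the argument used in the treatment of $O_2$ in Lemma \ref{forO1}: the key identity is
\[
\tfrac12 \partial_\alpha - \tfrac{1+ic'(2\alpha)\eta t}{ic(2\alpha)t}\partial_\eta = \tfrac{1+ic'(2\alpha)\eta t}{ic(2\alpha)t}\Bigl(\tfrac{ic(2\alpha)t}{2(1+ic'(2\alpha)\eta t)}\partial_\alpha - \partial_\eta\Bigr),
\]
so that the terms where the $\frac{ic(\alpha)t}{1+ic'(\alpha)\gamma t}\partial_\alpha$ hits a quantity evaluated at $(2\alpha,\eta)$ can be rewritten as $A$ applied at $(2\alpha,\eta)$ times the prefactor $\frac{2ic(\alpha)t(1+ic'(2\alpha)\eta t)}{(1+ic'(\alpha)\gamma t)(ic(2\alpha)t)}$; the resulting expressions match $FM_{3,2}$, $FM_{3,3}$, $FM_{3,4}$ exactly, while the $(\alpha,\gamma)$ piece gives $FM_{3,1}$. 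The leftover terms assemble into a total $\eta$-derivative, producing boundary contributions at $\eta=0$ and $\eta=\gamma$ that must cancel with $M_2$, $M_4$, and the endpoint terms from $M_1$ and $M_5$.

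The term $M_4$ is the easiest: since the integrand carries only $\gamma=0$ and $\eta$ does not appear, $A$ acts solely through $A(\ddot h)(\alpha,\gamma)$ inside the kernel derivative, giving $FM_4$ directly, plus one Leibniz boundary contribution at $\beta=2\alpha$. The term $M_5$ is handled exactly as the $O_4$ piece of Lemma \ref{forO1}: split the factor out front, apply $A$ to the integral via Lemma \ref{switch} on the $\partial_\alpha \breve h$ prefactor to obtain $FM_{5,2}$, and expand the integrand to produce $FM_{5,1}$ plus coefficient-mismatch remainders that, together with Leibniz boundary terms at $-\pi,0,2\alpha,\pi$, collapse to a single total $\beta$-derivative.

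The main obstacle will be, as in Lemma \ref{forO1}, the cancellation of the roughly twenty boundary terms produced along the way. Each of the five $M_i$ contributes boundary pieces at $\beta\in\{0,\pm\pi,2\alpha\}$ (or $\eta\in\{0,\gamma\}$), and the cancellation requires regrouping them into three groups: those using $c(\pm\pi)=0$, those using $c(0)=0$ to kill terms at the turnover boundary, and those matching the $\beta=2\alpha$ endpoints of $M_1$ and $M_5$ against the endpoint evaluations from $M_2$, $M_3$ and $M_4$. I expect this bookkeeping to be essentially mechanical but lengthy, paralleling the final long cancellation table at the end of the proof of Lemma \ref{forO1}; no new analytic idea beyond Lemma \ref{switch} and the endpoint vanishing of $c$ will be needed.
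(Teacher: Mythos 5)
Your proposal follows essentially the same route as the paper's proof: for each $M_i$, specialize the corresponding computation from Lemma \ref{forO1} (replacing the finite difference in $\breve h$ by the scaled derivative and setting $X\equiv 1$), use Lemma \ref{switch} to commute $A$ past the weighted $\partial_\alpha$, use the identity $\tfrac12\partial_\alpha - \tfrac{1+ic'(2\alpha)\eta t}{ic(2\alpha)t}\partial_\eta = \tfrac{1+ic'(2\alpha)\eta t}{ic(2\alpha)t}A|_{(2\alpha,\eta)}$ for $M_3$, and then run a long but mechanical boundary-term cancellation at $\beta\in\{0,\pm\pi,2\alpha\}$ and $\eta\in\{0,\gamma\}$ supported by $c(0)=c(\pm\pi)=0$. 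The paper carries out exactly this program, labeling the boundary pieces $B_1,\dots,B_{17}$ and exhibiting the pairings explicitly; your grouping of the cancellation into $c(\pm\pi)=0$ terms, $c(0)=0$ terms, and $\beta=2\alpha$ endpoint matchings across $M_1,M_2,M_3,M_4,M_5$ is a faithful description of what the paper does.
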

\begin{proof}
First, we do the $M_1$

First, we compare $O_1$ in \eqref{term01A} with $M_1$. Since in $M_1$, we only change $\breve{h}(\alpha,\gamma)-\breve{h}(\beta,\gamma)$ by $\frac{\frac{d\breve{h}(\alpha,\gamma)}{d\alpha}}{1+ic'(\alpha)\gamma t}-\frac{\frac{d\breve{h}(\beta,\gamma)}{d\beta}}{1+ic'(\beta)\gamma t}$ and letting $X(\beta,\gamma)=1$,  we have
\begin{align*}
 A(M_1)=\\
&A(\int_{-\alpha}^{\alpha}\breve{K}(\ddot{h}(\alpha,\gamma)-\ddot{h}(\alpha-\beta,\gamma))(\frac{(\frac{d}{d\alpha}\breve{h})(\alpha,\gamma)}{1+ic'(\alpha)\gamma t}-\frac{\frac{d}{d\alpha}\breve{h}(\alpha-\beta,\gamma)}{1+ic'(\alpha-\beta)\gamma t})(1+ic'(\alpha-\beta)\gamma t)d\beta)\\
=&\underbrace{\frac{ic(\alpha)t}{1+ic'(\alpha)\gamma t}[\breve{K}(\ddot{h}(\alpha,\gamma)-\ddot{h}(0,\gamma))(\frac{(\frac{d}{d\alpha}\breve{h})(\alpha,\gamma)}{1+ic'(\alpha)\gamma t}-\frac{\frac{d}{d\alpha}\breve{h}(0,\gamma)}{1+ic'(0)\gamma t})(1+ic'(0)\gamma t)}_{B_1}\\
&+\underbrace{\breve{K}(\ddot{h}(\alpha,\gamma)-\ddot{h}(2\alpha,\gamma))(\frac{(\frac{d}{d\alpha}\breve{h})(\alpha,\gamma)}{1+ic'(\alpha)\gamma t}-\frac{(\frac{d}{d\alpha}\breve{h})(2\alpha,\gamma)}{1+ic'(2\alpha)\gamma t})(1+ic'(2\alpha)\gamma t)]}_{B_1}\\
  & \underbrace{+\int_{0}^{2\alpha}\nabla \breve{K}(\ddot{h}(\alpha,\gamma)-\ddot{h}(\beta,\gamma))\cdot(A(\ddot{h})(\alpha,\gamma)-A(\ddot{h})(\beta,\gamma))(1+ic'(\beta)\gamma t)(\frac{(\frac{d}{d\alpha}\breve{h})(\alpha,\gamma)}{1+ic'(\alpha)\gamma t}-\frac{\frac{d}{d\beta}\breve{h}(\beta,\gamma)}{1+ic'(\beta)\gamma t})d\beta}_{FM_{1,1}}\\
&+\underbrace{\int_{0}^{2\alpha}\breve{K}(\ddot{h}(\alpha,\gamma)-\ddot{h}(\beta,\gamma))(A(\frac{(\frac{d}{d\alpha}\breve{h})(\alpha,\gamma)}{1+ic'(\alpha)\gamma t})(\alpha,\gamma)-A(\frac{(\frac{d}{d\alpha}\breve{h})(\alpha,\gamma)}{1+ic'(\alpha)\gamma t})(\beta,\gamma))(1+ic'(\beta)\gamma t)d\beta }_{FM_{1,2}}\\
    &\underbrace{-\breve{K}(\ddot{h}(\alpha,\gamma)-\ddot{h}(0,\gamma))(\frac{ic(\alpha)t}{1+ic'(\alpha)\gamma t}-\frac{ic(0)t}{1+ic'(0)\gamma t})(1+ic'(0)\gamma t)(\frac{(\frac{d}{d\alpha}\breve{h})(\alpha,\gamma)}{1+ic'(\alpha)\gamma t}-\frac{\frac{d}{d\alpha}\breve{h}(0,\gamma)}{1+ic'(0)\gamma t})}_{B_2}\\
&+\underbrace{\breve{K}(\ddot{h}(\alpha,\gamma)-\ddot{h}(2\alpha,\gamma))(\frac{ic(\alpha)t}{1+ic'(\alpha)\gamma t}-\frac{ic(2\alpha)t}{1+ic'(2\alpha)\gamma t})(\frac{(\frac{d}{d\alpha}\breve{h})(\alpha,\gamma)}{1+ic'(\alpha)\gamma t}-\frac{(\frac{d}{d\alpha}\breve{h})(2\alpha,\gamma)}{1+ic'(2\alpha)\gamma t})(1+ic'(2\alpha)\gamma t)}_{B_3}\\
&=FM_{1,1}+FM_{1,2}\\
&\underbrace{+\frac{ic(\alpha)t}{1+ic'(\alpha)\gamma t}[\breve{K}(\ddot{h}(\alpha,\gamma)-\ddot{h}(0,\gamma))(\frac{(\frac{d}{d\alpha}\breve{h})(\alpha,\gamma)}{1+ic'(\alpha)\gamma t})(1+ic'(0)\gamma t)}_{B_{1,1}}\\&\underbrace{+\breve{K}(\ddot{h}(\alpha,\gamma)-\ddot{h}(2\alpha,\gamma))(\frac{(\frac{d}{d\alpha}\breve{h})(\alpha,\gamma)}{1+ic'(\alpha)\gamma t})(1+ic'(2\alpha)\gamma t)]}_{B_{1,1}}\\
&\underbrace{+\frac{ic(\alpha)t}{1+ic'(\alpha)\gamma t}[\breve{K}(\ddot{h}(\alpha,\gamma)-\ddot{h}(0,\gamma))(-\frac{\frac{d}{d\alpha}\breve{h}(0,\gamma)}{1+ic'(0)\gamma t})(1+ic'(0)\gamma t)}_{B_{1,2}}\\&\underbrace{+\breve{K}(\ddot{h}(\alpha,\gamma)-\ddot{h}(2\alpha,\gamma))(-\frac{(\frac{d}{d\alpha}\breve{h})(2\alpha,\gamma)}{1+ic'(2\alpha)\gamma t})(1+ic'(2\alpha)\gamma t)]}_{B_{1,2}}\\
 &\underbrace{-\breve{K}(\ddot{h}(\alpha,\gamma)-\ddot{h}(0,\gamma))(\frac{ic(\alpha)t}{1+ic'(\alpha)\gamma t}-\frac{ic(0)t}{1+ic'(0)\gamma t})(1+ic'(0)\gamma t)(\frac{(\frac{d}{d\alpha}\breve{h})(\alpha,\gamma)}{1+ic'(\alpha)\gamma t})}_{B_{2,1}}\\
  &\underbrace{-\breve{K}(\ddot{h}(\alpha,\gamma)-\ddot{h}(0,\gamma))(\frac{ic(\alpha)t}{1+ic'(\alpha)\gamma t}-\frac{ic(0)t}{1+ic'(0)\gamma t})(1+ic'(0)\gamma t)(-\frac{\frac{d}{d\alpha}\breve{h}(0,\gamma)}{1+ic'(0)\gamma t})}_{B_{2,2}}\\
&+\underbrace{\breve{K}(\ddot{h}(\alpha,\gamma)-\ddot{h}(2\alpha,\gamma))(\frac{ic(\alpha)t}{1+ic'(\alpha)\gamma t}-\frac{ic(2\alpha)t}{1+ic'(2\alpha)\gamma t})(\frac{(\frac{d}{d\alpha}\breve{h})(\alpha,\gamma)}{1+ic'(\alpha)\gamma t})(1+ic'(2\alpha)\gamma t)}_{B_{3,1}}\\
&+\underbrace{\breve{K}(\ddot{h}(\alpha,\gamma)-\ddot{h}(2\alpha,\gamma))(\frac{ic(\alpha)t}{1+ic'(\alpha)\gamma t}-\frac{ic(2\alpha)t}{1+ic'(2\alpha)\gamma t})(-\frac{(\frac{d}{d\alpha}\breve{h})(2\alpha,\gamma)}{1+ic'(2\alpha)\gamma t})(1+ic'(2\alpha)\gamma t)}_{B_{3,2}}.
\end{align*}
Here we use lemma \ref{switch} in the $FM_{1,2}$ term.
%\begin{align*}
%(\frac{ic(\alpha)t}{1+ic'(\alpha)\gamma %t}\partial_{\alpha}-\partial_{\gamma})(\frac{(\frac{d}{d%\alpha})\breve{h}(\alpha,\gamma)}{1+ic'(\alpha)\eta %t})=\frac{(\frac{d}{d\alpha})}{1+ic'(\alpha)\gamma %t}(\frac{ic(\alpha)t}{1+ic'(\alpha)\gamma %t}\partial_{\alpha}-\partial_{\gamma})\breve{h}(\alpha,\gamma).
%\end{align*}
For $M_2$, we have
\begin{align*}
   &A(M_2)=A(\breve{K}(\ddot{h}(\alpha,\gamma)-\ddot{h}(2\alpha,\gamma))\breve{h}(2\alpha,\gamma))\\
   =&\breve{K}(\ddot{h}(\alpha,\gamma)-\ddot{h}(2\alpha,\gamma))(\frac{2ic(\alpha)t}{1+ic'(\alpha)\gamma t}(\partial_{\alpha}\breve{h})(2\alpha,\gamma)-(\partial_{\gamma}\breve{h})(2\alpha,\gamma))\\
   &+\nabla \breve{K}(\ddot{h}(\alpha,\gamma)-\ddot{h}(2\alpha,\gamma))\cdot(\frac{ic(\alpha)t}{1+ic'(\alpha)\gamma t}(\partial_{\alpha}\ddot{h})(\alpha,\gamma)-(\partial_{\gamma}\ddot{h})(\alpha,\gamma)\\
   &\qquad-(\frac{2ic(\alpha)t}{1+ic'(\alpha)\gamma t}(\partial_{\alpha}\ddot{h})(2\alpha,\gamma)-(\partial_{\gamma}\ddot{h})(2\alpha,\gamma)))\breve{h}(2\alpha,\gamma).
   \end{align*}
   Then we can write each term into $A(\breve{h})$ and the remaining terms. We get 
   \begin{align*}
   &A(M_2)=\underbrace{\breve{K}(\ddot{h}(\alpha,\gamma)-\ddot{h}(2\alpha,\gamma))(\frac{2ic(\alpha)t}{1+ic'(\alpha)\gamma t}-\frac{ic(2\alpha)t}{1+ic'(2\alpha)\gamma t})(\partial_{\alpha}\breve{h})(2\alpha,\gamma)}_{B_4}\\
      &\underbrace{+\breve{K}(\ddot{h}(\alpha,\gamma)-\ddot{h}(2\alpha,\gamma))(\frac{ic(2\alpha)t}{1+ic'(2\alpha)\gamma t}(\partial_{\alpha}\breve{h})(2\alpha,\gamma)-(\partial_{\gamma}\breve{h})(2\alpha,\gamma))}_{FM_{2,2}}\\
   &\underbrace{- \nabla \breve{K}(\ddot{h}(\alpha,\gamma)-\ddot{h}(2\alpha,\gamma))\cdot[(\frac{2ic(\alpha)t}{1+ic'(\alpha)\gamma t}-\frac{ic(2\alpha)t}{1+ic'(2\alpha)\gamma t})(\partial_{\alpha}\ddot{h})(2\alpha,\gamma)]\breve{h}(2\alpha,\gamma)}_{B_5}\\
   &\underbrace{+ \nabla \breve{K}(\ddot{h}(\alpha,\gamma)-\ddot{h}(2\alpha,\gamma))\cdot[(\frac{ic(\alpha)t}{1+ic'(\alpha)\gamma t})(\partial_{\alpha}\ddot{h})(\alpha,\gamma)-(\partial_{\gamma}\ddot{h})(\alpha,\gamma)-((\frac{ic(2\alpha)t}{1+ic'(2\alpha)\gamma t})(\partial_{\alpha}\ddot{h})(2\alpha,\gamma)-(\partial_{\gamma}\ddot{h})(2\alpha,\gamma))]}_{FM_{2,1}}\\
   &\underbrace{\breve{h}(2\alpha,\gamma)}_{FM_{2,1}}.
   \end{align*}
    For the third term, we have
\begin{align*}
    &A(M_3)=A(\int_{0}^{\gamma}\breve{h}(2\alpha,\eta) \nabla \breve{K}(\ddot{h}(\alpha,\gamma)-\ddot{h}(2\alpha,\eta))\cdot(\frac{(\frac{d}{d\alpha}\ddot{h})(2\alpha,\eta)}{1+ic'(2\alpha)\eta t})ic(2\alpha)t d\eta)\\
    &=\underbrace{-\breve{h}(2\alpha,\gamma)\nabla \breve{K}(\ddot{h}(\alpha,\gamma)-\ddot{h}(2\alpha,\gamma))\cdot(\frac{(\frac{d}{d\alpha}\ddot{h})(2\alpha,\gamma)}{1+ic'(2\alpha)\gamma t})ic(2\alpha)t}_{Term_1}\\
    &\underbrace{+\int_{0}^{\gamma}(\frac{2ic(\alpha)t}{1+ic'(\alpha)\gamma t}(\partial_{\alpha}h)(2\alpha,\eta))  \nabla \breve{K}(\ddot{h}(\alpha,\gamma)-\ddot{h}(2\alpha,\eta))\cdot(\frac{(\frac{d}{d\alpha}\ddot{h})(2\alpha,\eta)}{1+ic'(2\alpha)\eta t})ic(2\alpha)t d\eta}_{Term_2}\\
    &\underbrace{+\int_{0}^{\gamma}\breve{h}(2\alpha,\eta) \nabla^2\breve{K}(\ddot{h}(\alpha,\gamma)-\ddot{h}(2\alpha,\eta))[(\frac{ic(\alpha)t}{1+ic'(\alpha)\gamma t}(\partial_{\alpha}\ddot{h})(\alpha,\gamma)-(\partial_{\gamma}\ddot{h})(\alpha,\gamma)),(\frac{(\frac{d}{d\alpha}\ddot{h})(2\alpha,\eta)}{1+ic'(2\alpha)\eta t})]ic(2\alpha)t d\eta}_{Term_3, FM_{3,1}}\\
    &\underbrace{-\int_{0}^{\gamma}\breve{h}(2\alpha,\eta) \nabla^2 \breve{K}(\ddot{h}(\alpha,\gamma)-\ddot{h}(2\alpha,\eta))[(\frac{2ic(\alpha)t}{1+ic'(\alpha)\gamma t}(\partial_{\alpha}\ddot{h})(2\alpha,\eta)),\frac{(\frac{d}{d\alpha}\ddot{h})(2\alpha,\eta)}{1+ic'(2\alpha)\eta t}]ic(2\alpha)t d\eta}_{Term_4}\\
    &\underbrace{+\int_{0}^{\gamma}\breve{h}(2\alpha,\eta) \nabla \breve{K}(\ddot{h}(\alpha,\gamma)-\ddot{h}(2\alpha,\eta))\cdot [(\frac{ic(\alpha)t}{1+ic'(\alpha)\gamma t}\frac{d}{d\alpha})(\frac{(\frac{d}{d\alpha}\ddot{h})(2\alpha,\eta)}{1+ic'(2\alpha)\eta t})]ic(2\alpha)t d\eta}_{Term_5}\\
    &\underbrace{+\int_{0}^{\gamma}\breve{h}(2\alpha,\eta) \nabla \breve{K}(\ddot{h}(\alpha,\gamma)-\ddot{h}(2\alpha,\eta))\cdot(\frac{(\frac{d}{d\alpha}\ddot{h})(2\alpha,\eta)}{1+ic'(2\alpha)\eta t})\frac{i2c(\alpha)t}{1+ic'(\alpha)\gamma t}ic'(2\alpha)t d\eta}_{Term_6}\\
    &=Term_1+Term_2+FM_{3,1}+Term_4+Term_5+Term_6.
    \end{align*}
    Now we do the change to the terms by writing the terms in $Term_2+Term_4+Term_5$ as $A(\cdot)$.
    We have
    \begin{align*}
    &Term_2+Term_4+Term_5\\
    =&\underbrace{\int_{0}^{\gamma}ic(2\alpha)t (\frac{2ic(\alpha)t}{1+ic'(\alpha)\gamma t})[(\frac{1}{2}\frac{d}{d\alpha}-\frac{1+ic'(2\alpha)\eta t}{ic(2\alpha)t}\frac{d}{d\eta})\breve{h}(2\alpha,\eta)] \nabla \breve{K}(\ddot{h}(\alpha,\gamma)-\ddot{h}(2\alpha,\eta))\cdot(\frac{(\frac{d}{d\alpha}\ddot{h})(2\alpha,\eta)}{1+ic'(2\alpha)\eta t})d\eta}_{Term_{2,1}=FM_{3,4}}\\
  &\underbrace{+\int_{0}^{\gamma}ic(2\alpha)t (\frac{2ic(\alpha)t}{1+ic'(\alpha)\gamma t})[(\frac{1+ic'(2\alpha)\eta t}{ic(2\alpha)t}\frac{d}{d\eta})\breve{h}(2\alpha,\eta)]  \nabla \breve{K}(\ddot{h}(\alpha,\gamma)-\ddot{h}(2\alpha,\eta))\cdot(\frac{(\frac{d}{d\alpha}\ddot{h})(2\alpha,\eta)}{1+ic'(2\alpha)\eta t})d\eta}_{Term_{2,2}}\\
   &\underbrace{-\int_{0}^{\gamma}\breve{h}(2\alpha,\eta) \nabla^2 \breve{K}(\ddot{h}(\alpha,\gamma)-\ddot{h}(2\alpha,\eta))[(\frac{2ic(\alpha)t}{1+ic'(\alpha)\gamma t})(\frac{1}{2}\frac{d}{d\alpha}-\frac{1+ic'(2\alpha)\eta t}{ic(2\alpha)t}\frac{d}{d\eta})\ddot{h}(2\alpha,\eta),(\frac{(\frac{d}{d\alpha}\ddot{h})(2\alpha,\eta)}{1+ic'(2\alpha)\eta t})]}_{Term_{4,1}=FM_{3,2}}\\
   &\underbrace{ic(2\alpha)t d\eta}_{Term_{4,1}=FM_{3,2}}\\
      &\underbrace{-\int_{0}^{\gamma}\breve{h}(2\alpha,\eta) \nabla^2 \breve{K}(\ddot{h}(\alpha,\gamma)-\ddot{h}(2\alpha,\eta))[(\frac{2ic(\alpha)t}{1+ic'(\alpha)\gamma t})(\frac{1+ic'(2\alpha)\eta t}{ic(2\alpha)t}\frac{d}{d\eta})\ddot{h}(2\alpha,\eta),(\frac{(\frac{d}{d\alpha}\ddot{h})(2\alpha,\eta)}{1+ic'(2\alpha)\eta t})]ic(2\alpha)t d\eta}_{Term_{4,2}}\\
   &\underbrace{+\int_{0}^{\gamma}\breve{h}(2\alpha,\eta) \nabla \breve{K}(\ddot{h}(\alpha,\gamma)-\ddot{h}(2\alpha,\eta))\cdot(\frac{2ic(\alpha)t}{1+ic'(\alpha)\gamma t})[(\frac{1}{2}\frac{d}{d\alpha}-\frac{1+ic'(2\alpha)\eta t}{ic(2\alpha)t}\frac{d}{d\eta})(\frac{(\frac{d}{d\alpha}\ddot{h})(2\alpha,\eta)}{1+ic'(2\alpha)\eta t})]ic(2\alpha)t d\eta}_{Term_{5,1}=FM_{3,3}}\\
      &\underbrace{+\int_{0}^{\gamma}\breve{h}(2\alpha,\eta) \nabla \breve{K}(\ddot{h}(\alpha,\gamma)-\ddot{h}(2\alpha,\eta))\cdot(\frac{2ic(\alpha)t}{1+ic'(\alpha)\gamma t})[(\frac{1+ic'(2\alpha)\eta t}{ic(2\alpha)t}\frac{d}{d\eta})(\frac{(\frac{d}{d\alpha}\ddot{h})(2\alpha,\eta)}{1+ic'(2\alpha)\eta t})]ic(2\alpha)t d\eta}_{Term_{5,2}}.
   \end{align*}
   Here when showing $Term_{5,1}=FM_{3,3}$, we used 
   \begin{align*}
(\frac{\frac{1}{2}ic(2\alpha)t}{1+ic'(2\alpha)\eta t}\frac{d}{d\alpha}-\frac{d}{d\g})(\frac{(\partial_{\alpha}\ddot{h})(2\alpha,\eta)}{1+ic'(2\alpha)\eta t})=A(\frac{(\partial_{\alpha})\ddot{h}(\alpha,\eta)}{1+ic'(\alpha)\eta t})(2\alpha,\eta)=\frac{(\partial_{\alpha}A(\ddot{h}))(2\alpha,\eta)}{1+ic'(2\alpha)\eta t},
\end{align*}
which follows from lemma  \ref{switch}:
\begin{align*}
A\circ \frac{\partial_{\alpha}\ddot{h}(\alpha,\eta)}{1+ic'(\alpha)\eta t}=\frac{(\partial_{\alpha})}{1+ic'(\alpha)\eta t}\circ A (\ddot{h})(\alpha,\eta).
\end{align*}
   We also have
\begin{align*}
    &Term_{2,2}+Term_{4,2}+Term_{5,2}+Term_6\\
    &=\underbrace{\int_{0}^{\gamma}ic(2\alpha)t (\frac{2ic(\alpha)t}{1+ic'(\alpha)\gamma t})[(\frac{1+ic'(2\alpha)\eta t}{ic(2\alpha)t}\frac{d}{d\eta})\breve{h}(2\alpha,\eta)]  \nabla \breve{K}(\ddot{h}(\alpha,\gamma)-\ddot{h}(2\alpha,\eta))\cdot(\frac{(\frac{d}{d\alpha}\ddot{h})(2\alpha,\eta)}{1+ic'(2\alpha)\eta t})d\eta}_{Term_{2,2}}\\
     &\underbrace{-\int_{0}^{\gamma}\breve{h}(2\alpha,\eta) \nabla^2 \breve{K}(\ddot{h}(\alpha,\gamma)-\ddot{h}(2\alpha,\eta))[(\frac{2ic(\alpha)t}{1+ic'(\alpha)\gamma t})(\frac{1+ic'(2\alpha)\eta t}{ic(2\alpha)t}\frac{d}{d\eta})\ddot{h}(2\alpha,\eta),(\frac{(\frac{d}{d\alpha}\ddot{h})(2\alpha,\eta)}{1+ic'(2\alpha)\eta t})]ic(2\alpha)t d\eta}_{Term_{4,2}}\\
           &\underbrace{+\int_{0}^{\gamma}\breve{h}(2\alpha,\eta) \nabla \breve{K}(\ddot{h}(\alpha,\gamma)-\ddot{h}(2\alpha,\eta))\cdot(\frac{2ic(\alpha)t}{1+ic'(\alpha)\gamma t})[(\frac{1+ic'(2\alpha)\eta t}{ic(2\alpha)t}\frac{d}{d\eta})(\frac{(\frac{d}{d\alpha}\ddot{h})(2\alpha,\eta)}{1+ic'(2\alpha)\eta t})]ic(2\alpha)t d\eta}_{Term_{5,2}}\\
           &\underbrace{+\int_{0}^{\gamma}\breve{h}(2\alpha,\eta) \nabla \breve{K}(\ddot{h}(\alpha,\gamma)-\ddot{h}(2\alpha,\eta))\cdot(\frac{(\frac{d}{d\alpha}\ddot{h})(2\alpha,\eta)}{1+ic'(2\alpha)\eta t})\frac{i2c(\alpha)t}{1+ic'(\alpha)\gamma t}ic'(2\alpha)t d\eta}_{Term_6}\\
           &=\int_{0}^{\gamma}\frac{d}{d\eta}[\breve{h}(2\alpha,\eta)\nabla \breve{K}(\ddot{h}(\alpha,\gamma)-\ddot{h}(2\alpha,\eta))\cdot(\frac{(\frac{d}{d\alpha}\ddot{h})(2\alpha,\eta)}{1+ic'(2\alpha)\eta t})\frac{i2c(\alpha)t}{1+ic'(\alpha)\gamma t}(ic'(2\alpha)\eta t+1)] d\eta\\
           &=(\breve{h}(2\alpha,\gamma)\nabla \breve{K}(\ddot{h}(\alpha,\gamma)-\ddot{h}(2\alpha,\gamma))\cdot(\frac{(\frac{d}{d\alpha}\ddot{h})(2\alpha,\gamma)}{1+ic'(2\alpha)\gamma t})\frac{i2c(\alpha)t}{1+ic'(\alpha)\gamma t}(ic'(2\alpha)\gamma t+1)) \\
    &-(\breve{h}(2\alpha,0)\nabla \breve{K}(\ddot{h}(\alpha,\gamma)-\ddot{h}(2\alpha,0))\cdot(\frac{d}{d\alpha}\ddot{h})(2\alpha,0)\frac{i2c(\alpha)t}{1+ic'(\alpha)\gamma t}.
\end{align*}
Finally, we have
  
   \begin{align*}
   A(M_3)&=Term_1+Term_{2,1}+FM_{3,1}+Term_{4,1}+Term_{5,1}+(Term_{2,2}+Term_{4,2}+Term_{5,2}+Term_6)\\
   &=Term_1+FM_{3,4}+FM_{3,1}+FM_{3,2}+FM_{3,3}+(Term_{2,2}+Term_{4,2}+Term_{5,2}+Term_6)\\
   &=\underbrace{-\breve{h}(2\alpha,\gamma)\nabla \breve{K}(\ddot{h}(\alpha,\gamma)-\ddot{h}(2\alpha,\gamma))\cdot(\frac{(\frac{d}{d\alpha}\ddot{h})(2\alpha,\gamma)}{1+ic'(2\alpha)\gamma t})ic(2\alpha)t}_{B_6}\\
   &+FM_{3,4}+FM_{3,1}+FM_{3,2}+FM_{3,3}\\
   &\underbrace{+\breve{h}(2\alpha,\gamma)\nabla \breve{K}(\ddot{h}(\alpha,\gamma)-\ddot{h}(2\alpha,\gamma))\cdot(\frac{(\frac{d}{d\alpha}\ddot{h})(2\alpha,\gamma)}{1+ic'(2\alpha)\gamma t})\frac{i2c(\alpha)t}{1+ic'(\alpha)\gamma t}(ic'(2\alpha)\gamma t+1)}_{B_7} \\
    &\underbrace{-\breve{h}(2\alpha,0)\nabla \breve{K}(\ddot{h}(\alpha,\gamma)-\ddot{h}(2\alpha,0))\cdot(\frac{d}{d\alpha}\ddot{h})(2\alpha,0)\frac{i2c(\alpha)t}{1+ic'(\alpha)\gamma t}}_{B_8}.
\end{align*}
Then we consider $M_4$
\begin{align*}
    &A(M_{4})=A(-\int_{2\alpha}^{\pi}\breve{h}(\beta,0)\nabla \breve{K}(\ddot{h}(\alpha,\gamma)-\ddot{h}(\beta,0))\cdot\frac{d\ddot{h}}{d\beta}(\beta,0)d\beta)\\
    &=\underbrace{\frac{2ic(\alpha)t}{1+ic'(\alpha)\gamma t}\breve{h}(2\alpha,0) \nabla \breve{K}(\ddot{h}(\alpha,\gamma)-\ddot{h}(2\alpha,0))\cdot(\frac{d\ddot{h}}{d\alpha})(2\alpha,0)}_{B_9}\\
    &-\underbrace{\int_{2\alpha}^{\pi}\breve{h}(\beta,0) \nabla^2 \breve{K}(\ddot{h}(\alpha,\gamma)-\ddot{h}(\beta,0))[(\frac{ic(\alpha)t}{1+ic'(\alpha)\gamma t}\partial_{\alpha}-\partial_{\gamma})\ddot{h}(\alpha,\gamma),\frac{d\ddot{h}}{d\beta}(\beta,0)]d\beta}_{FM_{4}}.
\end{align*}
For $M_5$, we still compare it with $O_4$ in \eqref{Oequationapp}. We only change $\breve{h}(\alpha,\gamma)$ to $\frac{(\frac{d}{d\alpha}\breve{h})(\alpha,\gamma)}{1+ic'(\alpha)\gamma t}$ and set $X(\beta,\gamma)=1$. Recall that we have
\begin{align*}
    &A(O_4)=\\
    &\int_{[-\pi,0]\cup[2\alpha,\pi] } \breve{K}(\ddot{h}(\alpha,\gamma)-\ddot{h}(\beta,\gamma))(X(\beta,\gamma))(1+ic'(\beta)\gamma t)d\beta A(\breve{h})(\alpha,\gamma)\\
    &+\int_{[\alpha-\pi,-\alpha]\cup[\alpha,\alpha+\pi] }X(\alpha-\beta,\gamma)\nabla \breve{K}(\ddot{h}(\alpha,\gamma)-\ddot{h}(\alpha-\beta,\gamma))\\
&\cdot((\frac{ic(\alpha)t}{1+ic'(\alpha)\gamma t}\partial_{\alpha}-\partial_{\gamma})\ddot{h}(\alpha,\gamma)-(\frac{ic(\alpha-\beta)t}{1+ic'(\alpha-\beta)\gamma t}\partial_{\alpha}-\partial_{\gamma})\ddot{h}(\alpha-\beta,\gamma,t))
(1+ic'(\alpha-\beta)\gamma t)d\beta \breve{h}(\alpha,\gamma)\\
&+\int_{[\alpha-\pi,-\alpha]\cup[\alpha,\alpha+\pi] }\breve{K}(\ddot{h}(\alpha,\gamma)-\ddot{h}(\alpha-\beta,\gamma))(\frac{ic(\alpha-\beta)t}{1+ic'(\alpha-\beta)\gamma t}\partial_{\alpha}-\partial_{\gamma})X(\alpha-\beta,\gamma)(1+ic'(\alpha-\beta)\gamma t)d\beta \breve{h}(\alpha,\gamma)\\
&+\frac{ic(\alpha)t}{1+ic'(\alpha)\gamma t}\breve{K}(\ddot{h}(\alpha,\gamma)-\ddot{h}(-\pi,\gamma))X(-\pi,\gamma)(1+ic'(-\pi)\gamma t)\breve{h}(\alpha,\gamma)\\
      &-\frac{ic(\alpha)t}{1+ic'(\alpha)\gamma t}\breve{K}(\ddot{h}(\alpha,\gamma)-\ddot{h}(0,\gamma))X(0,\gamma)(1+ic'(0)\gamma t)\breve{h}(\alpha,\gamma)\\  &-\frac{ic(\alpha)t}{1+ic'(\alpha)\gamma t}\breve{K}(\ddot{h}(\alpha,\gamma)-\ddot{h}(2\alpha,\gamma))X(2\alpha,\gamma)(1+ic'(2\alpha)\gamma t)\breve{h}(\alpha,\gamma)\\
     &-\frac{ic(\alpha)t}{1+ic'(\alpha)\gamma t}\breve{K}(\ddot{h}(\alpha,\gamma)-\ddot{h}(\pi,\gamma))X(\pi,\gamma)(1+ic'(\pi)\gamma t)\breve{h}(\alpha,\gamma)\\
     &+\breve{K}(\ddot{h}(\alpha,\gamma)-\ddot{h}(-\pi,\gamma))X(-\pi,\gamma)(ic(-\pi)t-\frac{ic(\alpha)t}{1+ic'(\alpha)\gamma t}(1+ic'(-\pi)\gamma t)) \breve{h}(\alpha,\gamma)\\
    &-\breve{K}(\ddot{h}(\alpha,\gamma)-\ddot{h}(0,\gamma))X(0,\gamma)(ic(0)t-\frac{ic(\alpha)t}{1+ic'(\alpha)\gamma t}(1+ic'(0)\gamma t)) \breve{h}(\alpha,\gamma)\\
    &+\breve{K}(\ddot{h}(\alpha,\gamma)-\ddot{h}(2\alpha,\gamma))X(2\alpha,\gamma)(ic(2\alpha)t-\frac{ic(\alpha)t}{1+ic'(\alpha)\gamma t}(1+ic'(2\alpha)\gamma t)) \breve{h}(\alpha,\gamma)\\
    &-\breve{K}(\ddot{h}(\alpha,\gamma)-\ddot{h}(\pi,\gamma))X(\pi,\gamma)(ic(\pi)t-\frac{ic(\alpha)t}{1+ic'(\alpha)\gamma t}(1+ic'(\pi)\gamma t))\breve{h}(\alpha,\gamma).
\end{align*}
Then, we have
\begin{align*}
    &A(M_5)  =\\
    &\underbrace{\int_{[-\pi,0]\cup[2\alpha,\pi] } \breve{K}(\ddot{h}(\alpha,\gamma)-\ddot{h}(\beta,\gamma))(1+ic'(\beta)\gamma t)d\beta \frac{\frac{d}{d\alpha}A(\breve{h})(\alpha,\gamma)}{1+ic'(\alpha)\gamma t}}_{FM_{5,2}}\\
    &\underbrace{+\int_{[\alpha-\pi,-\alpha]\cup[\alpha,\alpha+\pi] }\nabla \breve{K}(\ddot{h}(\alpha,\gamma)-\ddot{h}(\alpha-\beta,\gamma))}_{FM_{5,1}}\\
&\underbrace{\cdot((\frac{ic(\alpha)t}{1+ic'(\alpha)\gamma t}\partial_{\alpha}-\partial_{\gamma})\ddot{h}(\alpha,\gamma)-(\frac{ic(\alpha-\beta)t}{1+ic'(\alpha-\beta)\gamma t}\partial_{\alpha}-\partial_{\gamma})\ddot{h}(\alpha-\beta,\gamma,t))
(1+ic'(\alpha-\beta)\gamma t)d\beta \frac{(\frac{d}{d\alpha}\breve{h})(\alpha,\gamma)}{1+ic'(\alpha)\gamma t}}_{FM_{5,1}}\\
&\underbrace{+\frac{ic(\alpha)t}{1+ic'(\alpha)\gamma t}\breve{K}(\ddot{h}(\alpha,\gamma)-\ddot{h}(-\pi,\gamma))(1+ic'(-\pi)\gamma t)\frac{(\frac{d}{d\alpha}\breve{h})(\alpha,\gamma)}{1+ic'(\alpha)\gamma t}}_{B_{10}}\\
      &\underbrace{-\frac{ic(\alpha)t}{1+ic'(\alpha)\gamma t}\breve{K}(\ddot{h}(\alpha,\gamma)-\ddot{h}(0,\gamma))(1+ic'(0)\gamma t)\frac{(\frac{d}{d\alpha}\breve{h})(\alpha,\gamma)}{1+ic'(\alpha)\gamma t}}_{B_{11}}\\  &\underbrace{-\frac{ic(\alpha)t}{1+ic'(\alpha)\gamma t}\breve{K}(\ddot{h}(\alpha,\gamma)-\ddot{h}(2\alpha,\gamma))(1+ic'(2\alpha)\gamma t)\frac{(\frac{d}{d\alpha}\breve{h})(\alpha,\gamma)}{1+ic'(\alpha)\gamma t}}_{B_{12}}\\
     &\underbrace{-\frac{ic(\alpha)t}{1+ic'(\alpha)\gamma t}\breve{K}(\ddot{h}(\alpha,\gamma)-\ddot{h}(\pi,\gamma))(1+ic'(\pi)\gamma t)\frac{(\frac{d}{d\alpha}\breve{h})(\alpha,\gamma)}{1+ic'(\alpha)\gamma t}}_{B_{13}}\\
     &\underbrace{+\breve{K}(\ddot{h}(\alpha,\gamma)-\ddot{h}(-\pi,\gamma))(ic(-\pi)t-\frac{ic(\alpha)t}{1+ic'(\alpha)\gamma t}(1+ic'(-\pi)\gamma t)) \frac{(\frac{d}{d\alpha}\breve{h})(\alpha,\gamma)}{1+ic'(\alpha)\gamma t}}_{B_{14}}\\
    &\underbrace{-\breve{K}(\ddot{h}(\alpha,\gamma)-\ddot{h}(0,\gamma))(ic(0)t-\frac{ic(\alpha)t}{1+ic'(\alpha)\gamma t}(1+ic'(0)\gamma t))\frac{(\frac{d}{d\alpha}\breve{h})(\alpha,\gamma)}{1+ic'(\alpha)\gamma t}}_{B_{15}}\\
    &\underbrace{+\breve{K}(\ddot{h}(\alpha,\gamma)-\ddot{h}(2\alpha,\gamma))(ic(2\alpha)t-\frac{ic(\alpha)t}{1+ic'(\alpha)\gamma t}(1+ic'(2\alpha)\gamma t)) \frac{(\frac{d}{d\alpha}\breve{h})(\alpha,\gamma)}{1+ic'(\alpha)\gamma t}}_{B_{16}}\\
    &\underbrace{-\breve{K}(\ddot{h}(\alpha,\gamma)-\ddot{h}(\pi,\gamma))(ic(\pi)t-\frac{ic(\alpha)t}{1+ic'(\alpha)\gamma t}(1+ic'(\pi)\gamma t))\frac{(\frac{d}{d\alpha}\breve{h})(\alpha,\gamma)}{1+ic'(\alpha)\gamma t}}_{B_{17}}.
\end{align*}
In conclusion, we only need to show the sum of $B$ terms are 0. We have 
\begin{align*}
   &\sum_{i=1}^{17}B_i\\
   &=\underbrace{\frac{ic(\alpha)t}{1+ic'(\alpha)\gamma t}\breve{K}(\ddot{h}(\alpha,\gamma)-\ddot{h}(0,\gamma))(\frac{(\frac{d}{d\alpha}\breve{h})(\alpha,\gamma)}{1+ic'(\alpha)\gamma t})(1+ic'(0)\gamma t)}_{B_{1,1}}\\&\underbrace{+\frac{ic(\alpha)t}{1+ic'(\alpha)\gamma t}\breve{K}(\ddot{h}(\alpha,\gamma)-\ddot{h}(2\alpha,\gamma))(\frac{(\frac{d}{d\alpha}\breve{h})(\alpha,\gamma)}{1+ic'(\alpha)\gamma t})(1+ic'(2\alpha)\gamma t)}_{B_{1,1}}\\
&\underbrace{+\frac{ic(\alpha)t}{1+ic'(\alpha)\gamma t}\breve{K}(\ddot{h}(\alpha,\gamma)-\ddot{h}(0,\gamma))(-\frac{\frac{d}{d\alpha}\breve{h}(0,\gamma)}{1+ic'(0)\gamma t})(1+ic'(0)\gamma t)}_{B_{1,2}}\\&\underbrace{+\frac{ic(\alpha)t}{1+ic'(\alpha)\gamma t}\breve{K}(\ddot{h}(\alpha,\gamma)-\ddot{h}(2\alpha,\gamma))(-\frac{(\frac{d}{d\alpha}\breve{h})(2\alpha,\gamma)}{1+ic'(2\alpha)\gamma t})(1+ic'(2\alpha)\gamma t)}_{B_{1,2}}\\
 &\underbrace{-\breve{K}(\ddot{h}(\alpha,\gamma)-\ddot{h}(0,\gamma))(\frac{ic(\alpha)t}{1+ic'(\alpha)\gamma t}-\frac{ic(0)t}{1+ic'(0)\gamma t})(1+ic'(0)\gamma t)(\frac{(\frac{d}{d\alpha}\breve{h})(\alpha,\gamma)}{1+ic'(\alpha)\gamma t})}_{B_{2,1}}\\
  &\underbrace{-\breve{K}(\ddot{h}(\alpha,\gamma)-\ddot{h}(0,\gamma))(\frac{ic(\alpha)t}{1+ic'(\alpha)\gamma t}-\frac{ic(0)t}{1+ic'(0)\gamma t})(1+ic'(0)\gamma t)(-\frac{\frac{d}{d\alpha}\breve{h}(0,\gamma)}{1+ic'(0)\gamma t})}_{B_{2,2}}\\
&+\underbrace{\breve{K}(\ddot{h}(\alpha,\gamma)-\ddot{h}(2\alpha,\gamma))(\frac{ic(\alpha)t}{1+ic'(\alpha)\gamma t}-\frac{ic(2\alpha)t}{1+ic'(2\alpha)\gamma t})(\frac{(\frac{d}{d\alpha}\breve{h})(\alpha,\gamma)}{1+ic'(\alpha)\gamma t})(1+ic'(2\alpha)\gamma t)}_{B_{3,1}}\\
&+\underbrace{\breve{K}(\ddot{h}(\alpha,\gamma)-\ddot{h}(2\alpha,\gamma))(\frac{ic(\alpha)t}{1+ic'(\alpha)\gamma t}-\frac{ic(2\alpha)t}{1+ic'(2\alpha)\gamma t})(-\frac{(\frac{d}{d\alpha}\breve{h})(2\alpha,\gamma)}{1+ic'(2\alpha)\gamma t})(1+ic'(2\alpha)\gamma t)}_{B_{3,2}}\\
&\underbrace{+\breve{K}(\ddot{h}(\alpha,\gamma)-\ddot{h}(2\alpha,\gamma))(\frac{2ic(\alpha)t}{1+ic'(\alpha)\gamma t}-\frac{ic(2\alpha)t}{1+ic'(2\alpha)\gamma t})(\partial_{\alpha}\breve{h})(2\alpha,\gamma)}_{B_4}\\
   &\underbrace{- \nabla \breve{K}(\ddot{h}(\alpha,\gamma)-\ddot{h}(2\alpha,\gamma))\cdot[(\frac{2ic(\alpha)t}{1+ic'(\alpha)\gamma t}-\frac{ic(2\alpha)t}{1+ic'(2\alpha)\gamma t})(\partial_{\alpha}\ddot{h})(2\alpha,\gamma)]\breve{h}(2\alpha,\gamma)}_{B_5}\\
    &\underbrace{-\breve{h}(2\alpha,\gamma)\nabla \breve{K}(\ddot{h}(\alpha,\gamma)-\ddot{h}(2\alpha,\gamma))\cdot(\frac{(\frac{d}{d\alpha}\ddot{h})(2\alpha,\gamma)}{1+ic'(2\alpha)\gamma t})ic(2\alpha)t}_{B_6}\\
   &\underbrace{+(\breve{h}(2\alpha,\gamma)\nabla \breve{K}(\ddot{h}(\alpha,\gamma)-\ddot{h}(2\alpha,\gamma))\cdot(\frac{(\frac{d}{d\alpha}\ddot{h})(2\alpha,\gamma)}{1+ic'(2\alpha)\gamma t})\frac{i2c(\alpha)t}{1+ic'(\alpha)\gamma t}(ic'(2\alpha)\gamma t+1))}_{B_7} \\
    &\underbrace{-\breve{h}(2\alpha,0)\nabla \breve{K}(\ddot{h}(\alpha,\gamma)-\ddot{h}(2\alpha,0))\cdot(\frac{d}{d\alpha}\ddot{h})(2\alpha,0)\frac{i2c(\alpha)t}{1+ic'(\alpha)\gamma t}}_{B_8}\\
    &+\underbrace{\frac{2ic(\alpha)t}{1+ic'(\alpha)\gamma t}\breve{h}(2\alpha,0) \nabla \breve{K}(\ddot{h}(\alpha,\gamma)-\ddot{h}(2\alpha,0))\cdot(\frac{d\ddot{h}}{d\alpha})(2\alpha,0)}_{B_9}\\
    &\underbrace{+\frac{ic(\alpha)t}{1+ic'(\alpha)\gamma t}\breve{K}(\ddot{h}(\alpha,\gamma)-\ddot{h}(-\pi,\gamma))(1+ic'(-\pi)\gamma t)\frac{(\frac{d}{d\alpha}\breve{h})(\alpha,\gamma)}{1+ic'(\alpha)\gamma t}}_{B_{10}}\\
      &\underbrace{-\frac{ic(\alpha)t}{1+ic'(\alpha)\gamma t}\breve{K}(\ddot{h}(\alpha,\gamma)-\ddot{h}(0,\gamma))(1+ic'(0)\gamma t)\frac{(\frac{d}{d\alpha}\breve{h})(\alpha,\gamma)}{1+ic'(\alpha)\gamma t}}_{B_{11}}\\  &\underbrace{-\frac{ic(\alpha)t}{1+ic'(\alpha)\gamma t}\breve{K}(\ddot{h}(\alpha,\gamma)-\ddot{h}(2\alpha,\gamma))(1+ic'(2\alpha)\gamma t)\frac{(\frac{d}{d\alpha}\breve{h})(\alpha,\gamma)}{1+ic'(\alpha)\gamma t}}_{B_{12}}\\
     &\underbrace{-\frac{ic(\alpha)t}{1+ic'(\alpha)\gamma t}\breve{K}(\ddot{h}(\alpha,\gamma)-\ddot{h}(\pi,\gamma))(1+ic'(\pi)\gamma t)\frac{(\frac{d}{d\alpha}\breve{h})(\alpha,\gamma)}{1+ic'(\alpha)\gamma t}}_{B_{13}}\\
     &\underbrace{+\breve{K}(\breve{h}(\alpha,\gamma)-\breve{h}(-\pi,\gamma))(ic(-\pi)t-\frac{ic(\alpha)t}{1+ic'(\alpha)\gamma t}(1+ic'(-\pi)\gamma t)) \frac{(\frac{d}{d\alpha}\breve{h})(\alpha,\gamma)}{1+ic'(\alpha)\gamma t}}_{B_{14}}\\
    &\underbrace{-\breve{K}(\ddot{h}(\alpha,\gamma)-\ddot{h}(0,\gamma))(ic(0)t-\frac{ic(\alpha)t}{1+ic'(\alpha)\gamma t}(1+ic'(0)\gamma t))\frac{(\frac{d}{d\alpha}\breve{h})(\alpha,\gamma)}{1+ic'(\alpha)\gamma t}}_{B_{15}}\\
    &\underbrace{+\breve{K}(\ddot{h}(\alpha,\gamma)-\ddot{h}(2\alpha,\gamma))(ic(2\alpha)t-\frac{ic(\alpha)t}{1+ic'(\alpha)\gamma t}(1+ic'(2\alpha)\gamma t)) \frac{(\frac{d}{d\alpha}\breve{h})(\alpha,\gamma)}{1+ic'(\alpha)\gamma t}}_{B_{16}}\\
    &\underbrace{-\breve{K}(\ddot{h}(\alpha,\gamma)-\ddot{h}(\pi,\gamma))(ic(\pi)t-\frac{ic(\alpha)t}{1+ic'(\alpha)\gamma t}(1+ic'(\pi)\gamma t))\frac{(\frac{d}{d\alpha}\breve{h})(\alpha,\gamma)}{1+ic'(\alpha)\gamma t}}_{B_{17}}\\
    &=\underbrace{\frac{ic(\alpha)t}{1+ic'(\alpha)\gamma t}\breve{K}(\ddot{h}(\alpha,\gamma)-\ddot{h}(0,\gamma))(\frac{(\frac{d}{d\alpha}\breve{h})(\alpha,\gamma)}{1+ic'(\alpha)\gamma t})(1+ic'(0)\gamma t)}_{C_1}\\&\underbrace{+\frac{ic(\alpha)t}{1+ic'(\alpha)\gamma t}\breve{K}(\ddot{h}(\alpha,\gamma)-\ddot{h}(2\alpha,\gamma))(\frac{(\frac{d}{d\alpha}\breve{h})(\alpha,\gamma)}{1+ic'(\alpha)\gamma t})(1+ic'(2\alpha)\gamma t)}_{C_2}\\
&\underbrace{+\frac{ic(\alpha)t}{1+ic'(\alpha)\gamma t}\breve{K}(\ddot{h}(\alpha,\gamma)-\ddot{h}(0,\gamma))(-\frac{\frac{d}{d\alpha}\breve{h}(0,\gamma)}{1+ic'(0)\gamma t})(1+ic'(0)\gamma t)}_{C_3}\\&\underbrace{+\frac{ic(\alpha)t}{1+ic'(\alpha)\gamma t}\breve{K}(\ddot{h}(\alpha,\gamma)-\ddot{h}(2\alpha,\gamma))(-\frac{(\frac{d}{d\alpha}\breve{h})(2\alpha,\gamma)}{1+ic'(2\alpha)\gamma t})(1+ic'(2\alpha)\gamma t)}_{C_4}\\
 &\underbrace{-\breve{K}(\ddot{h}(\alpha,\gamma)-\ddot{h}(0,\gamma))(\frac{ic(\alpha)t}{1+ic'(\alpha)\gamma t}-\frac{ic(0)t}{1+ic'(0)\gamma t})(1+ic'(0)\gamma t)(\frac{(\frac{d}{d\alpha}\breve{h})(\alpha,\gamma)}{1+ic'(\alpha)\gamma t})}_{C_5}\\
  &\underbrace{-\breve{K}(\ddot{h}(\alpha,\gamma)-\ddot{h}(0,\gamma))(\frac{ic(\alpha)t}{1+ic'(\alpha)\gamma t}-\frac{ic(0)t}{1+ic'(0)\gamma t})(1+ic'(0)\gamma t)(-\frac{\frac{d}{d\alpha}\breve{h}(0,\gamma)}{1+ic'(0)\gamma t})}_{C_3}\\
&\underbrace{+\breve{K}(\ddot{h}(\alpha,\gamma)-\ddot{h}(2\alpha,\gamma))(\frac{ic(\alpha)t}{1+ic'(\alpha)\gamma t}-\frac{ic(2\alpha)t}{1+ic'(2\alpha)\gamma t})(\frac{(\frac{d}{d\alpha}\breve{h})(\alpha,\gamma)}{1+ic'(\alpha)\gamma t})(1+ic'(2\alpha)\gamma t)}_{C_6}\\
&+\breve{K}(\ddot{h}(\alpha,\gamma)-\ddot{h}(2\alpha,\gamma))(\underbrace{\frac{ic(\alpha)t}{1+ic'(\alpha)\gamma t}}_{C_4}\underbrace{-\frac{ic(2\alpha)t}{1+ic'(2\alpha)\gamma t}}_{C_7})(-\frac{(\frac{d}{d\alpha}\breve{h})(2\alpha,\gamma)}{1+ic'(2\alpha)\gamma t})(1+ic'(2\alpha)\gamma t)\\
&+\breve{K}(\ddot{h}(\alpha,\gamma)-\ddot{h}(2\alpha,\gamma))(\underbrace{\frac{2ic(\alpha)t}{1+ic'(\alpha)\gamma t}}_{C_4}\underbrace{-\frac{ic(2\alpha)t}{1+ic'(2\alpha)\gamma t}}_{C_7})(\partial_{\alpha}h)(2\alpha,\gamma)\\
   &- \nabla \breve{K}(\ddot{h}(\alpha,\gamma)-\ddot{h}(2\alpha,\gamma))\cdot[(\underbrace{\frac{2ic(\alpha)t}{1+ic'(\alpha)\gamma t}}_{C_8}\underbrace{-\frac{ic(2\alpha)t}{1+ic'(2\alpha)\gamma t}}_{C_9})(\partial_{\alpha}\ddot{h})(2\alpha,\gamma)]\breve{h}(2\alpha,\gamma)\\
    &\underbrace{-\breve{h}(2\alpha,\gamma)\nabla \breve{K}(\ddot{h}(\alpha,\gamma)-\ddot{h}(2\alpha,\gamma))\cdot(\frac{(\frac{d}{d\alpha}\ddot{h})(2\alpha,\gamma)}{1+ic'(2\alpha)\gamma t})ic(2\alpha)t}_{C_9}\\
   &+\underbrace{(\breve{h}(2\alpha,\gamma)\nabla \breve{K}(\ddot{h}(\alpha,\gamma)-\ddot{h}(2\alpha,\gamma))\cdot(\frac{(\frac{d}{d\alpha}\ddot{h})(2\alpha,\gamma)}{1+ic'(2\alpha)\gamma t})\frac{i2c(\alpha)t}{1+ic'(\alpha)\gamma t}(ic'(2\alpha)\gamma t+1))}_{C_{8}}\\
    &\underbrace{-\breve{h}(2\alpha,0)\nabla \breve{K}(\ddot{h}(\alpha,\gamma)-\ddot{h}(2\alpha,0))\cdot(\frac{d}{d\alpha}\ddot{h})(2\alpha,0)\frac{i2c(\alpha)t}{1+ic'(\alpha)\gamma t}}_{C_{10}}\\
    &\underbrace{+\frac{2ic(\alpha)t}{1+ic'(\alpha)\gamma t}\breve{h}(2\alpha,0) \nabla \breve{K}(\ddot{h}(\alpha,\gamma)-\ddot{h}(2\alpha,0))\cdot(\frac{d\ddot{h}}{d\alpha})(2\alpha,0)}_{C_{10}}\\
    &\underbrace{+\frac{ic(\alpha)t}{1+ic'(\alpha)\gamma t}\breve{K}(\ddot{h}(\alpha,\gamma)-\ddot{h}(-\pi,\gamma))(1+ic'(-\pi)\gamma t)\frac{(\frac{d}{d\alpha}\breve{h})(\alpha,\gamma)}{1+ic'(\alpha)\gamma t}}_{C_{11}}\\
      &\underbrace{-\frac{ic(\alpha)t}{1+ic'(\alpha)\gamma t}\breve{K}(\ddot{h}(\alpha,\gamma)-\ddot{h}(0,\gamma))(1+ic'(0)\gamma t)\frac{(\frac{d}{d\alpha}\breve{h})(\alpha,\gamma)}{1+ic'(\alpha)\gamma t}}_{C_{1}}\\  &\underbrace{-\frac{ic(\alpha)t}{1+ic'(\alpha)\gamma t}\breve{K}(\ddot{h}(\alpha,\gamma)-\ddot{h}(2\alpha,\gamma))(1+ic'(2\alpha)\gamma t)\frac{(\frac{d}{d\alpha}\breve{h})(\alpha,\gamma)}{1+ic'(\alpha)\gamma t}}_{C_{2}}\\
     &\underbrace{-\frac{ic(\alpha)t}{1+ic'(\alpha)\gamma t}\breve{K}(\ddot{h}(\alpha,\gamma)-\ddot{h}(\pi,\gamma))(1+ic'(\pi)\gamma t)\frac{(\frac{d}{d\alpha}\breve{h})(\alpha,\gamma)}{1+ic'(\alpha)\gamma t}}_{C_{12}}\\
     &\underbrace{+\breve{K}(\ddot{h}(\alpha,\gamma)-\ddot{h}(-\pi,\gamma))(ic(-\pi)t-\frac{ic(\alpha)t}{1+ic'(\alpha)\gamma t}(1+ic'(-\pi)\gamma t)) \frac{(\frac{d}{d\alpha}\breve{h})(\alpha,\gamma)}{1+ic'(\alpha)\gamma t}}_{C_{11}}\\
    &\underbrace{-\breve{K}(\ddot{h}(\alpha,\gamma)-\ddot{h}(0,\gamma))(ic(0)t-\frac{ic(\alpha)t}{1+ic'(\alpha)\gamma t}(1+ic'(0)\gamma t))\frac{(\frac{d}{d\alpha}\breve{h})(\alpha,\gamma)}{1+ic'(\alpha)\gamma t}}_{C_{5}}\\
    &\underbrace{+\breve{K}(\ddot{h}(\alpha,\gamma)-\ddot{h}(2\alpha,\gamma))(ic(2\alpha)t-\frac{ic(\alpha)t}{1+ic'(\alpha)\gamma t}(1+ic'(2\alpha)\gamma t)) \frac{(\frac{d}{d\alpha}\breve{h})(\alpha,\gamma)}{1+ic'(\alpha)\gamma t}}_{C_{6}}\\
    &\underbrace{-\breve{K}(\ddot{h}(\alpha,\gamma)-\ddot{h}(\pi,\gamma))(ic(\pi)t-\frac{ic(\alpha)t}{1+ic'(\alpha)\gamma t}(1+ic'(\pi)\gamma t))\frac{(\frac{d}{d\alpha}\breve{h})(\alpha,\gamma)}{1+ic'(\alpha)\gamma t}}_{C_{12}}\\
    &=0.
\end{align*}
Here we use $c(\pi)=c(-\pi)=c(0)=0$.
\end{proof}

\begin{lemma} \label{forM12}
Let $\breve{K}$ be meromorphic, $\breve{h}(\alpha,\gamma) \in C_{\gamma}^{1}([-1,1],C_{\alpha}^{1}[-\pi,\pi])$. If $c(\pi)=c(-\pi)=0$, $c(\alpha)\in W^{2,\infty}$, 
\[
\breve{K}(\breve{h}(\beta,\gamma))\in C_{\gamma}^{1}([-1,1],L_{\beta}^{\infty}[-\pi,\pi]),
\]
\[
\nabla\breve{K}(\breve{h}(\beta,\gamma)) c(\beta)\in C_{\gamma}^{0}([-1,1],L_{\beta}^{\infty}[-\pi,\pi]),
\]
and 
\[
\breve{K}(\breve{h}(\beta,\gamma)) c(\beta)\in C_{\gamma}^{0}([-1,1],C_{\beta}^{1}[-\pi,\pi]),\]

then 
we have
\begin{align*}
&A(\int_{-\pi}^{\pi}\breve{K}(\breve{h}(\beta,\gamma))(1+ic'(\beta)\gamma t)d\beta)=\int_{-\pi}^{\pi} D_{h}\breve{K}(\breve{h}(\beta,\gamma))[A(\breve{h})(\beta,\gamma)](1+ic'(\beta)\gamma t)d\beta
\end{align*}
\end{lemma}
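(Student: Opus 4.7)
The plan is to exploit the observation that the integrand on the left side depends only on $\beta$, $\gamma$, and $t$, so it is $\alpha$-independent. Since $A = \frac{ic(\alpha)t}{1+ic'(\alpha)\gamma t}\partial_\alpha - \partial_\gamma$, the $\partial_\alpha$ contribution vanishes on the outer integral, and applying $A$ reduces simply to applying $-\partial_\gamma$. Using the assumption $\breve{K}(\breve{h}(\beta,\gamma))\in C_\gamma^1([-1,1],L^\infty_\beta[-\pi,\pi])$ together with the smoothness of $\breve{h}$, differentiation under the integral sign is justified, giving
\begin{align*}
A\!\left(\int_{-\pi}^{\pi}\breve{K}(\breve{h}(\beta,\gamma))(1+ic'(\beta)\gamma t)\,d\beta\right)
&= -\int_{-\pi}^{\pi} D_h\breve{K}(\breve{h}(\beta,\gamma))[\partial_\gamma \breve{h}(\beta,\gamma)](1+ic'(\beta)\gamma t)\,d\beta \\
&\quad - \int_{-\pi}^{\pi}\breve{K}(\breve{h}(\beta,\gamma))\,ic'(\beta)t\,d\beta.
\end{align*}

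Next I would manipulate the second term on the right by integration by parts in $\beta$, writing $ic'(\beta)t\,d\beta = d(ic(\beta)t)$. The regularity hypotheses $\breve{K}(\breve{h})c\in C^0_\gamma([-1,1],C^1_\beta[-\pi,\pi])$ and $\nabla\breve{K}(\breve{h})\,c\in C^0_\gamma([-1,1],L^\infty_\beta[-\pi,\pi])$ are exactly what is needed for this integration by parts to make sense and to legitimize invoking the chain rule $\partial_\beta[\breve{K}(\breve{h}(\beta,\gamma))] = D_h\breve{K}(\breve{h}(\beta,\gamma))[\partial_\beta \breve{h}(\beta,\gamma)]$. The boundary contribution is $[ic(\beta)t\,\breve{K}(\breve{h}(\beta,\gamma))]_{-\pi}^{\pi}$, which vanishes by the assumption $c(\pi)=c(-\pi)=0$.

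Combining the two pieces then yields
\begin{align*}
A\!\left(\int_{-\pi}^{\pi}\breve{K}(\breve{h}(\beta,\gamma))(1+ic'(\beta)\gamma t)\,d\beta\right)
&= \int_{-\pi}^{\pi} D_h\breve{K}(\breve{h}(\beta,\gamma))\Big[ic(\beta)t\,\partial_\beta \breve{h}(\beta,\gamma) \\
&\qquad\qquad - (1+ic'(\beta)\gamma t)\partial_\gamma \breve{h}(\beta,\gamma)\Big]d\beta,
\end{align*}
and factoring out $(1+ic'(\beta)\gamma t)$ from the bracket recovers exactly $A(\breve{h})(\beta,\gamma)=\frac{ic(\beta)t}{1+ic'(\beta)\gamma t}\partial_\beta \breve{h}-\partial_\gamma \breve{h}$ at the point $\beta$, giving the claimed identity.

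The proof is genuinely routine; there is no real obstacle. The only nontrivial point is bookkeeping — one must check that the extra $-\int \breve{K}\cdot ic'(\beta)t\,d\beta$ term produced by $\partial_\gamma$ acting on the measure factor $(1+ic'(\beta)\gamma t)$ is exactly compensated by the $ic(\beta)t\,\partial_\beta$ half of $A(\breve{h})$ after integrating by parts, and that the boundary terms at $\pm\pi$ vanish (which is why $c(\pm\pi)=0$ is assumed). All other steps are differentiation under the integral sign and the chain rule, controlled by the explicit regularity hypotheses on $\breve{K}\circ\breve{h}$ and its gradient.
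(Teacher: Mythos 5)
The proposal is correct and follows essentially the same route as the paper: after observing that $A$ reduces to $-\partial_\gamma$ on the $\alpha$-independent integral, both you and the paper split off the term $-\int \breve K\, ic'(\beta)t\,d\beta$, recognize it as (the boundary-free part of) $-\int\partial_\beta(\breve K\, ic(\beta)t)\,d\beta$, and use $c(\pm\pi)=0$ together with the chain rule to recombine the remainder into $\int D_h\breve K[A(\breve h)](1+ic'(\beta)\gamma t)\,d\beta$. The paper phrases this as an add-and-subtract of $\int\nabla\breve K\cdot\partial_\beta\breve h\,ic(\beta)t\,d\beta$, while you phrase it as an explicit integration by parts, but the cancellation is identical.
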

\begin{proof}
\begin{align*}
&A(\int_{-\pi}^{\pi}\breve{K}(\breve{h}(\beta,\gamma))(1+ic'(\beta)\gamma t))d\beta\\
&=-\int_{-\pi}^{\pi}\nabla \breve{K}(\breve{h}(\beta,\gamma))\cdot \frac{d\breve{h}(\beta,\gamma)}{d\gamma}(1+ic'(\beta)\gamma t)d\beta -\int_{-\pi}^{\pi}\breve{K}(\breve{h}(\beta,\gamma))ic'(\beta) t d\beta\\
&=\int_{-\pi}^{\pi}\nabla \breve{K}(\breve{h}(\beta,\gamma))\cdot[(\frac{ic(\beta)t}{1+ic'(\beta)\gamma t}\partial_{\beta}-\partial_{\gamma})\breve{h}(\beta,\gamma)](1+ic'(\beta) t) d\beta\\
&\quad-\int_{-\pi}^{\pi}\nabla \breve{K}(\breve{h}(\beta,\gamma))\cdot\partial_{\beta}\breve{h}(\beta,\gamma)ic(\beta)td\beta-\int_{-\pi}^{\pi}\breve{K}(\breve{h}(\beta,\gamma))ic'(\beta) t d\beta\\
&=\int_{-\pi}^{\pi} D_{h}\breve{K}(\breve{h}(\beta,\g))[A(\breve{h})(\beta,\gamma)](1+ic'(\beta)\gamma t)d\beta.
\end{align*}
\end{proof}
\begin{corollary}\label{forM12c}
Let $\breve{K}$, $\breve{h}(\alpha,\gamma)$, $c(\alpha)$ satisfies the conditions in lemma \ref{forM12}, $X(\alpha,\gamma) \in C_{\gamma}^{1}([-1,1],C_{\alpha}^{1}[-\pi,\pi])$, then
we have
\begin{align*}
&A(\int_{-\pi}^{\pi}\breve{K}(\breve{h}(\beta,\gamma))X(\beta,\gamma)(1+ic'(\beta)\gamma t)d\beta)\\
&=\int_{-\pi}^{\pi} D_h\breve{K}(\breve{h}(\beta,\gamma))[A(\breve{h})(\beta,\gamma)]X(\beta,\gamma)(1+ic'(\beta)\gamma t)d\beta\\
&\quad +\int_{-\pi}^{\pi}\breve{K}(\breve{h}(\beta,\gamma))A(X)(\beta,\gamma)(1+ic'(\beta)\gamma t))d\beta.
\end{align*}
\end{corollary}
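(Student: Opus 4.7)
The plan is to prove Corollary \ref{forM12c} by reducing it to the same manipulation that underlies Lemma \ref{forM12}, with the extra factor $X(\beta,\gamma)$ handled by a product rule. The key observation is that the integral $I(\gamma,t) := \int_{-\pi}^{\pi}\breve{K}(\breve{h}(\beta,\gamma))X(\beta,\gamma)(1+ic'(\beta)\gamma t)\,d\beta$ does not depend on $\alpha$, so that the first term in the definition of $A$ annihilates it and we have simply $A(I)(\alpha,\gamma)=-\partial_\gamma I(\gamma,t)$.

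First I would differentiate $I$ in $\gamma$ by the product rule, producing three contributions: one with $\partial_\gamma$ hitting $\breve{K}(\breve{h}(\beta,\gamma))$ (giving $\nabla\breve{K}\cdot\partial_\gamma\breve{h}$), one with $\partial_\gamma$ hitting $X$, and one with $\partial_\gamma$ hitting the factor $(1+ic'(\beta)\gamma t)$ (giving $ic'(\beta)t$). In each of the first two contributions I would use the pointwise identity
\begin{equation*}
-\partial_\gamma f(\beta,\gamma)\,(1+ic'(\beta)\gamma t)=A(f)(\beta,\gamma)(1+ic'(\beta)\gamma t)-ic(\beta)t\,\partial_\beta f(\beta,\gamma),
\end{equation*}
applied with $f=\breve{h}$ (combined with the chain rule through $\nabla\breve{K}$) and with $f=X$ respectively. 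This substitution produces the two desired bulk terms, namely $D_h\breve{K}(\breve{h})[A(\breve{h})]X(1+ic'\gamma t)$ and $\breve{K}(\breve{h})A(X)(1+ic'\gamma t)$, plus two leftover terms of the form $-ic(\beta)t\,\nabla\breve{K}(\breve{h})\cdot\partial_\beta\breve{h}\cdot X$ and $-ic(\beta)t\,\breve{K}(\breve{h})\,\partial_\beta X$.

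Next I would collect those leftover $-ic(\beta)t$ terms together with the $-\int\breve{K}(\breve{h})ic'(\beta)t\,X\,d\beta$ piece coming from differentiating $(1+ic'(\beta)\gamma t)$, and observe that they assemble into the total $\beta$-derivative
\begin{equation*}
-\int_{-\pi}^{\pi}\frac{d}{d\beta}\bigl(ic(\beta)t\,\breve{K}(\breve{h}(\beta,\gamma))X(\beta,\gamma)\bigr)\,d\beta.
\end{equation*}
Integration by parts then yields only boundary contributions at $\beta=\pm\pi$, and these vanish because of the hypothesis $c(\pi)=c(-\pi)=0$, exactly as in the proof of Lemma \ref{forM12}. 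The regularity hypotheses on $\breve{K}$, $\breve{h}$, $X$ and $c$ (with $c\in W^{2,\infty}$ and the $L^\infty_\beta$/$C^1_\beta$ bounds listed in Lemma \ref{forM12}) guarantee that all integrals and the integration by parts are legitimate, and that the evaluation at $\pm\pi$ makes sense.

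I do not expect any genuine obstacle here: the corollary is essentially a Leibniz-rule extension of Lemma \ref{forM12}, and the algebra is parallel. The only point requiring a little care is checking that the two $-ic(\beta)t$ leftover terms together with the $-ic'(\beta)t$ term actually combine into a single total $\beta$-derivative; this is immediate once one writes $\frac{d}{d\beta}\bigl(ic(\beta)t\,\breve{K}(\breve{h})X\bigr)=ic'(\beta)t\,\breve{K}(\breve{h})X+ic(\beta)t\,\nabla\breve{K}(\breve{h})\cdot\partial_\beta\breve{h}\,X+ic(\beta)t\,\breve{K}(\breve{h})\,\partial_\beta X$, which matches the three leftover pieces up to a sign, so the boundary-driven cancellation goes through verbatim.
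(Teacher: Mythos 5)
Your proof is correct, but it takes a different route than the paper. The paper's proof of this corollary is a one-liner: apply Lemma \ref{forM12} with the new kernel $\breve{K}(\breve{h}(\beta,\gamma))X(\beta,\gamma)$ viewed as a function of the bundled argument $(\breve{h},X)$, so that the product rule for $D_h$ of the new kernel automatically produces the two terms on the right-hand side. You instead re-derive the result from scratch, repeating the computation that appears inside the proof of Lemma \ref{forM12} (observing $A(I)=-\partial_\gamma I$, expanding $\partial_\gamma$ by Leibniz, rewriting $-\partial_\gamma f\,(1+ic'\gamma t)=A(f)(1+ic'\gamma t)-ic(\beta)t\,\partial_\beta f$ for $f=\breve{h}$ and $f=X$, and collecting the leftover $\partial_\beta$ pieces together with the $ic'(\beta)t$ piece into the exact derivative $\frac{d}{d\beta}\bigl(ic(\beta)t\,\breve{K}(\breve{h})X\bigr)$, which integrates to zero by $c(\pm\pi)=0$). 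Your argument is valid and the cancellations check out; it is more explicit about where each term comes from, while the paper's reduction is more economical and avoids duplicating the calculation. The only caveat is that you implicitly use $D_h\breve{K}(\breve{h})[A(\breve{h})]=\nabla\breve{K}(\breve{h})\cdot A(\breve{h})$ and that multiplication by the bounded $C^1$ factor $X$ preserves the integrability hypotheses imposed on $\breve{K}(\breve{h})$, $\nabla\breve{K}(\breve{h})\,c$ and $\breve{K}(\breve{h})\,c$ from the lemma; both are routine but worth stating.
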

\begin{proof}
We can use lemma \ref{forM12}, by letting the new $\breve{K}$ as $\breve{K}(\breve{h}(\beta,\gamma))X(\beta,\gamma)$, and new $\breve{h}$ as $(\breve{h},X).$
\end{proof}
\begin{lemma}\label{forD}
If $\breve{h} \in C^{1}_{\gamma}([-1,1],H^1_{\alpha}[0,\pi])$ with  $c(\alpha)\in W^{2,\infty}$ and $c(0)=0$, then we have
\begin{equation}
    A(D_0^{-i}(\breve{h}))=D_0^{-i}(A(\breve{h})),
\end{equation}
where $D_0^{-1}$ is the same operator as $D^{-1}$ in \eqref{D-formularnew} with $\tau(t)=0:$
\begin{align*}
&\quad D_0^{-1}(\breve{h})(\alpha,\gamma,t)=\bigg\{\begin{array}{cc}
         \int_{0}^{\alpha}(1+ic'(\alpha_1)\gamma t) \breve{h}(\alpha_1,\gamma)d\alpha_1 &  0< \alpha \leq \pi\\\nonumber
          0 & -\pi\leq\alpha\leq 0.
    \end{array}
\end{align*}
\[
D_0^{-i}=\underbrace{D_0^{-1}\circ D_0^{-1}....\circ D_0^{-1}}_{i \text{ times}}(\breve{h}).
\]
\end{lemma}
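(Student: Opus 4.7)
\textbf{Proposal for Lemma \ref{forD}.}
The plan is to proceed by induction on $i$, with the base case $i=1$ being the entire content of the identity; the inductive step will follow by applying the base case to $D_0^{-(i-1)}(\breve{h})$, using \eqref{3d-ipro} which guarantees $D_0^{-(i-1)}(\breve{h})\in C^1_\gamma([-1,1],H^i_\alpha[0,\pi])$, and the fact that $D_0^{-j}(\breve{h})$ vanishes at $\alpha=0$ by construction. So I will concentrate on the case $i=1$.

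First I will write $g(\alpha,\gamma):=D_0^{-1}(\breve{h})(\alpha,\gamma)=\int_0^\alpha(1+ic'(\alpha_1)\gamma t)\breve{h}(\alpha_1,\gamma)\,d\alpha_1$ and compute its two partial derivatives by the fundamental theorem and by differentiating under the integral sign:
\begin{align*}
\partial_\alpha g &= (1+ic'(\alpha)\gamma t)\,\breve{h}(\alpha,\gamma),\\
\partial_\gamma g &= \int_0^\alpha ic'(\alpha_1)t\,\breve{h}(\alpha_1,\gamma)\,d\alpha_1+\int_0^\alpha (1+ic'(\alpha_1)\gamma t)\,\partial_\gamma\breve{h}(\alpha_1,\gamma)\,d\alpha_1.
\end{align*}
Substituting into $A(g)=\frac{ic(\alpha)t}{1+ic'(\alpha)\gamma t}\partial_\alpha g-\partial_\gamma g$, the factor $1+ic'(\alpha)\gamma t$ cancels in the first term, leaving the single boundary-type contribution $ic(\alpha)t\,\breve{h}(\alpha,\gamma)$ together with the two integrals from $\partial_\gamma g$.

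The key algebraic step, which is really the only substantive point, is to use $c(0)=0$ to rewrite this boundary contribution as an integral: $ic(\alpha)t\,\breve{h}(\alpha,\gamma)=\int_0^\alpha\frac{d}{d\alpha_1}\bigl(ic(\alpha_1)t\,\breve{h}(\alpha_1,\gamma)\bigr)\,d\alpha_1$, and then split it via the product rule into $\int_0^\alpha ic'(\alpha_1)t\,\breve{h}(\alpha_1,\gamma)\,d\alpha_1+\int_0^\alpha ic(\alpha_1)t\,\partial_{\alpha_1}\breve{h}(\alpha_1,\gamma)\,d\alpha_1$. The first piece exactly cancels the $ic'(\alpha_1)t$ term arising from $\partial_\gamma g$, while the remainder combines cleanly with the $(1+ic'(\alpha_1)\gamma t)\partial_\gamma\breve{h}$ term and factors as $\int_0^\alpha(1+ic'(\alpha_1)\gamma t)\bigl[\frac{ic(\alpha_1)t}{1+ic'(\alpha_1)\gamma t}\partial_{\alpha_1}\breve{h}-\partial_\gamma\breve{h}\bigr](\alpha_1,\gamma)\,d\alpha_1$, which is precisely $D_0^{-1}(A(\breve{h}))$.

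There is no real obstacle here; the regularity hypothesis $\breve{h}\in C^1_\gamma H^1_\alpha$ with $c\in W^{2,\infty}$ is exactly what is needed to justify differentiation under the integral sign and the integration by parts, and the vanishing $c(0)=0$ is used in an essential way to dispose of the boundary term at $\alpha_1=0$. For the inductive step, since $D_0^{-(i-1)}(\breve{h})$ lies in $C^1_\gamma H^1_\alpha[0,\pi]$ by \eqref{3d-ipro}, we may apply the case $i=1$ to it and obtain $A(D_0^{-i}(\breve{h}))=A(D_0^{-1}(D_0^{-(i-1)}(\breve{h})))=D_0^{-1}(A(D_0^{-(i-1)}(\breve{h})))=D_0^{-1}(D_0^{-(i-1)}(A(\breve{h})))=D_0^{-i}(A(\breve{h}))$.
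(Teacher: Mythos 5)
Your proposal is correct and follows essentially the same route as the paper: compute $A(D_0^{-1}\breve{h})$ directly, then use integration by parts (equivalently, the fundamental theorem of calculus applied to $ic(\alpha_1)t\,\breve{h}(\alpha_1,\gamma)$) together with $c(0)=0$ to absorb the boundary term $ic(\alpha)t\,\breve{h}(\alpha,\gamma)$ and recognize $D_0^{-1}(A(\breve{h}))$; the paper states the iteration to general $i$ informally, while you package it as an induction citing \eqref{3d-ipro}, but the content is identical.
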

\begin{proof}
When $\alpha> 0$,  we have
\begin{align*}
    A(D_0^{-1}(\breve{h}))&=A(\int_{0}^{\alpha}\breve{h}(\alpha_1,\gamma)(1+ic'(\alpha_1)\gamma t)d\alpha_1)\\
    &=ic(\alpha)t\breve{h}(\alpha,\gamma)-\int_{0}^{\alpha}\partial_{\gamma}\breve{h}(\alpha_1,\gamma)(1+ic'(\alpha_1)\gamma t)d\alpha_1-\int_{0}^{\alpha}\breve{h}(\alpha_1,\gamma)ic'(\alpha_1)td\alpha_1\\
    &=ic(\alpha)t\breve{h}(\alpha,\gamma)+\int_{0}^{\alpha}(\frac{ic(\alpha_1)t}{1+ic'(\alpha_1)\gamma t}\partial_{\alpha_1}-\partial_{\gamma})\breve{h}(\alpha_1,\gamma)(1+ic'(\alpha_1)\gamma t))d\alpha_1\\
    &\quad-\int_{0}^{\alpha}ic(\alpha_1)t\partial_{\alpha_1}\breve{h}(\alpha_1,\gamma)+\breve{h}(\alpha_1,\gamma)ic'(\alpha_1)td\alpha_1
\end{align*}
Then we can do integration by parts to the second term and have
\begin{align*}
    A(D_0^{-1}(\breve{h}))&=ic(\alpha)t\breve{h}(\alpha,\gamma)+\int_{0}^{\alpha}(\frac{ic(\alpha_1)t}{1+ic'(\alpha_1)\gamma t}\partial_{\alpha_1}-\partial_{\gamma})\breve{h}(\alpha_1,\gamma)(1+ic'(\alpha_1)\gamma t))d\alpha_1\\
    &\quad-ic(\alpha_1)t\partial_{\alpha}\breve{h}(\alpha_1,\gamma)|_{0}^{\alpha}\\
    &=D_{0}^{-1}(A(\breve{h})).
\end{align*}

When $\alpha<0$, $D^{-1}_0(A(h))=0=A(D_0^{-1}(h))$ from definition. 

Moreover, for $\breve{h} \in C^{1}_{\gamma}([-1,1],H^1_{\alpha}[0,\pi])$, we have $A(h)\in  C^{0}_{\gamma}([-1,1],L^2_{\alpha}[0,\pi])$,  $D_0^{-1}(h)\in  C^{1}_{\gamma}([-1,1],H^2_{\alpha}[0,\pi])$. Then $D_0^{-1}(A(h)), A(D_0^{-1}(h)) \in C^{0}_{\gamma}([-1,1],H^1_{\alpha}[0,\pi]).$ Hence $D^{-1}_0(A(h))=A(D_0^{-1}(h))$.

For $i\geq 1$, we could do this process i times and get the result.
\end{proof}

\section*{Acknowledgements}
 The author sincerely thanks Charles Fefferman for introducing this problem and for all the helpful discussions. This material is partly based upon work while the author studied at Princeton University. JS has been partially supported by NSF through Grant NSF DMS-1700180, by the European Research Council through ERC-StG-852741-CAPA. JS has been partially supported by the MICINN (Spain) research grant number PID2021–125021NA–I00. JS has been partially supported by the AMS-Simons Travel Grant. 
\printbibliography 

@Preamble{
"\def\cprime{$'$} "
}

@Article{Cordoba-GomezSerrano-Zlatos:stability-shifting-muskat,
  Title                    = {A note on stability shifting for the {M}uskat problem},
  Author                   = {C{\'o}rdoba, Diego and G{\'o}mez-Serrano, Javier and Zlato{\v s}, Andrej},
  Journal                  = {Philosophical Transactions of the Royal Society of London A: Mathematical, Physical and Engineering Sciences},
  Year                     = {2015},
  Number                   = {2050},
  Pages                    = {20140278, 10},
  Volume                   = {373},

  Doi                      = {10.1098/rsta.2014.0278},
  ISSN                     = {1364-503X},
  Mrclass                  = {76S05 (35B35 35Q31 35Q35 35R35)},
  Mrnumber                 = {3393318},
  Publisher                = {The Royal Society}
}

@Article{Cordoba-GomezSerrano-Zlatos:stability-shifting-muskat-II,
  Title                    = {A note on stability shifting for the {M}uskat problem, {II}: {F}rom stable to unstable and back to stable},
  Author                   = {C\'ordoba, Diego and G\'omez-Serrano, Javier and Zlato{\v s}, Andrej},
  Journal                  = {Anal. PDE},
  Year                     = {2017},
  Number                   = {2},
  Pages                    = {367--378},
  Volume                   = {10},

  Doi                      = {10.2140/apde.2017.10.367},
  Fjournal                 = {Analysis \& PDE},
  ISSN                     = {2157-5045},
  Mrclass                  = {35Q35 (35R35 76D03 76S05)},
  Mrnumber                 = {3619874},
  Url                      = {http://dx.doi.org/10.2140/apde.2017.10.367}
}

@Article{Castro-Cordoba-Fefferman-Gancedo-LopezFernandez:rayleigh-taylor-breakdown,
  Title                    = {Rayleigh-Taylor breakdown for the {M}uskat problem with applications to water waves},
  Author                   = {Castro, {\'A}. and C{\'o}rdoba, D. and Fefferman, C. and Gancedo, F. and L{\'o}pez-Fern{\'a}ndez, M.},
  Journal                  = {Ann. of Math. (2)},
  Year                     = {2012},
  Pages                    = {909--948},
  Volume                   = {175},

  Fjournal                 = {Annals of Mathematics. Second Series}
}

@Article{Matioc:local-existence-muskat-hs,
  Title                    = {The {M}uskat problem in 2D: equivalence of formulations, well-posedness, and regularity results},
  Author                   = {Matioc, Bogdan-Vasile},
 year = {2016},
pages = {},
title = {The Muskat problem in 2D: equivalence of formulations, well-posedness, and regularity results},
volume = {12},
month = {10},
journal = {Analysis and PDE},
doi = {10.2140/apde.2019.12.281}
}

@Article{MuskatPhysics,
author = {Muskat,Morris},
title = {Two Fluid Systems in Porous Media. The Encroachment of Water into an Oil Sand},
journal = {Physics},
volume = {5},
number = {9},
pages = {250-264},
year = {1934}
}

@article{Castro-Cordoba-Fefferman-Gancede,
	doi = {10.1007/s00205-013-0616-x},
  
	url = {https://doi.org/10.1007%2Fs00205-013-0616-x},
  
	year = 2013,
	month = {apr},
  
	publisher = {Springer Science and Business Media {LLC}
},
  
	volume = {208},
  
	number = {3},
  
	pages = {805--909},
  
	author = {{\'{A}}ngel Castro and Diego C{\'{o}}rdoba and Charles Fefferman and Francisco Gancedo},
  
	title = {Breakdown of Smoothness for the Muskat Problem},
  
	journal = {Archive for Rational Mechanics and Analysis}
}

@article{GANCEDO2019552,
title = {On the Muskat problem with viscosity jump: Global in time results},
journal = {Advances in Mathematics},
volume = {345},
pages = {552-597},
year = {2019},
issn = {0001-8708},
doi = {https://doi.org/10.1016/j.aim.2019.01.017},
url = {https://www.sciencedirect.com/science/article/pii/S0001870819300428},
author = {F. Gancedo and E. Garc{\' i}a-Juárez and N. Patel and R.M. Strain},
keywords = {Fluid mechanics, Muskat problem, Global regularity, Viscosity jump},
abstract = {The Muskat problem models the filtration of two incompressible immiscible fluids of different characteristics in porous media. In this paper, we consider both the 2D and 3D setting of two fluids of different constant densities and different constant viscosities. In this situation, the related contour equations are non-local, not only in the evolution system, but also in the implicit relation between the amplitude of the vorticity and the free interface. Among other extra difficulties, no maximum principles are available for the amplitude and the slopes of the interface in L∞. We prove global in time existence results for medium size initial stable data in critical spaces. We also enhance previous methods by showing smoothing (instant analyticity), improving the medium size constant in 3D, together with sharp decay rates of analytic norms. The found technique is twofold, giving ill-posedness in unstable situations for very low regular solutions.}
}

@article{Ambrose2004WellposednessOT,
  title={Well-posedness of two-phase Hele–Shaw flow without surface tension},
  author={David M. Ambrose},
  journal={European Journal of Applied Mathematics},
  year={2004},
  volume={15},
  pages={597 - 607}
}

@article{muskatc12021chenquocxu,
author = {Chen, Ke and Quoc-Hung Nguyen and Xu, Yiran},
year = {2021},
month = {09},
pages = {},
title = {The Muskat problem with $\mathcal{C}^1$ data},
journal = {Transactions of the American Mathematical Society},
doi = {10.1090/tran/8559}
}

@article{NGUYEN2022108122,
title = {Global solutions for the Muskat problem in the scaling invariant Besov space $ 
\dot{B}^1_{\infty, 1}$},
journal = {Advances in Mathematics},
volume = {394},
pages = {108122},
year = {2022},
issn = {0001-8708},
doi = {https://doi.org/10.1016/j.aim.2021.108122},
url = {https://www.sciencedirect.com/science/article/pii/S0001870821005612},
author = {Huy Q. Nguyen},
keywords = {Muskat problem, Viscosity contrast, Global well-posedness, Besov spaces},
abstract = {The one-phase and two-phase Muskat problems with arbitrary viscosity contrast are studied in all dimensions. They are quasilinear parabolic equations for the graph free boundary. We prove that small data in the scaling invariant homogeneous Besov space B˙∞,11 lead to unique global solutions. The proof exploits a new structure of the Dirichlet-Neumann operator which allows us to implement a robust fixed-point argument. As a consequence of this method, the initial data is only assumed to be in B˙∞,11 and the solution map is Lipschitz continuous in the same topology. For the general Muskat problem, the only known scaling invariant result was obtained in the Wiener algebra (plus an L2 assumption) which is strictly contained in B˙∞,11.}
}

@article{abels_matioc_2022,
title={Well-posedness of the Muskat problem in subcritical Lp-Sobolev spaces}, 
volume={33}, 
DOI={10.1017/S0956792520000480}, 
number={2}, 
journal={European Journal of Applied Mathematics}, 
publisher={Cambridge University Press}, 
author={Abels, H. and Matioc, B.-V.}, 
year={2022}, 
pages={224–266}}

@article{Alazard-Nguyennonlipshitzmuskat,
author = {Thomas Alazard and Quoc-Hung Nguyen},
title = {On the Cauchy problem for the Muskat equation with non-Lipschitz initial data},
journal = {Communications in Partial Differential Equations},
volume = {46},
number = {11},
pages = {2171-2212},
year  = {2021},
publisher = {Taylor & Francis},
doi = {10.1080/03605302.2021.1928700},

URL = { 
        https://doi.org/10.1080/03605302.2021.1928700
    
},
eprint = { 
        https://doi.org/10.1080/03605302.2021.1928700
    
}

}

@article{alazardthomascritialspacemuskat,
author = {Alazard, Thomas and Quoc-Hung Nguyen},
year = {2021},
month = {06},
pages = {},
title = {On the Cauchy Problem for the Muskat Equation. II: Critical Initial Data},
volume = {7},
journal = {Annals of PDE},
doi = {10.1007/s40818-021-00099-x}
}

@article{Alazard2020EndpointST,
  title={Endpoint Sobolev theory for the Muskat equation},
  author={Thomas Alazard and Quoc-Hung Nguyen},
  journal={arXiv: Analysis of PDEs},
  year={2020}
}

@ARTICLE{alazardomarparalinearizationmuskat,
       author = {{Alazard}, Thomas and {Lazar}, Omar},
        title = "{Paralinearization of the Muskat Equation and Application to the Cauchy Problem}",
      journal = {Archive for Rational Mechanics and Analysis},
     keywords = {Mathematics - Analysis of PDEs},
         year = 2020,
        month = mar,
       volume = {237},
       number = {2},
        pages = {545-583},
          doi = {10.1007/s00205-020-01514-6},
archivePrefix = {arXiv},
       eprint = {1907.02138},
 primaryClass = {math.AP},
       adsurl = {https://ui.adsabs.harvard.edu/abs/2020ArRMA.237..545A},
      adsnote = {Provided by the SAO/NASA Astrophysics Data System}
}

@article{NguyenBenoitparadifferentialmuskat,
author = {Nguyen, Huy Q. and Pausader, Benoît},
year = {2020},
month = {07},
pages = {},
title = {A Paradifferential Approach for Well-Posedness of the Muskat Problem},
volume = {237},
journal = {Archive for Rational Mechanics and Analysis},
doi = {10.1007/s00205-020-01494-7}
}

@article{DiegoOmarmuskatexistence,
author = {Córdoba, Diego and Lazar, Omar},
year = {2018},
month = {03},
pages = {},
title = {Global well-posedness for the 2D stable Muskat problem in $H^{3/2}$},
volume = {54},
journal = {Annales scientifiques de l'École Normale Supérieure},
doi = {10.24033/asens.2483}
}

@article{Deng2016OnTT,
  title={On the Two‐Dimensional Muskat Problem with Monotone Large Initial Data},
  author={Fan Deng and Zhen Lei and Fanghua Lin},
  journal={Communications on Pure and Applied Mathematics},
  year={2016},
  volume={70}
}

@article{stephenmuskatexistence,
author = {Cameron, Stephen},
year = {2018},
month = {01},
pages = {997-1022},
title = {Global well-posedness for the two-dimensional Muskat problem with slope less than 1},
volume = {12},
journal = {Analysis and PDE},
doi = {10.2140/apde.2019.12.997}
}

@article{CONSTANTIN20171041,
title = {Global regularity for 2D Muskat equations with finite slope},
journal = {Annales de l'Institut Henri Poincaré C, Analyse non linéaire},
volume = {34},
number = {4},
pages = {1041-1074},
year = {2017},
issn = {0294-1449},
doi = {https://doi.org/10.1016/j.anihpc.2016.09.001},
url = {https://www.sciencedirect.com/science/article/pii/S0294144916300646},
author = {Peter Constantin and Francisco Gancedo and Roman Shvydkoy and Vlad Vicol},
keywords = {Porous medium, Darcy's law, Muskat problem, Maximum principle},
abstract = {We consider the 2D Muskat equation for the interface between two constant density fluids in an incompressible porous medium, with velocity given by Darcy's law. We establish that as long as the slope of the interface between the two fluids remains bounded and uniformly continuous, the solution remains regular. The proofs exploit the nonlocal nonlinear parabolic nature of the equations through a series of nonlinear lower bounds for nonlocal operators. These are used to deduce that as long as the slope of the interface remains uniformly bounded, the curvature remains bounded. The nonlinear bounds then allow us to obtain local existence for arbitrarily large initial data in the class W2,p, 1<p≤∞. We provide furthermore a global regularity result for small initial data: if the initial slope of the interface is sufficiently small, there exists a unique solution for all time.}
}

@article{constantincordobagancedostrainmuskatexistence,
author = {Constantin, Peter and Córdoba, Diego and Gancedo, Francisco and Strain, Robert},
year = {2013},
month = {10},
pages = {},
title = {On the Muskat problem: Global in time results in 2D and 3D},
volume = {138},
journal = {American Journal of Mathematics},
doi = {10.1353/ajm.2016.0044}
}

@article{CHENG201632,
title = {Well-posedness of the Muskat problem with H2 initial data},
journal = {Advances in Mathematics},
volume = {286},
pages = {32-104},
year = {2016},
issn = {0001-8708},
doi = {https://doi.org/10.1016/j.aim.2015.08.026},
url = {https://www.sciencedirect.com/science/article/pii/S0001870815003357},
author = {C.H. Arthur Cheng and Rafael Granero-Belinchón and Steve Shkoller},
keywords = {Muskat, Moving interfaces, Free boundary problems, Regularity, Hele-Shaw},
abstract = {We study the dynamics of the interface between two incompressible fluids in a two-dimensional porous medium whose flow is modeled by the Muskat equations. For the two-phase Muskat problem, we establish global well-posedness and decay to equilibrium for small H2 perturbations of the rest state. For the one-phase Muskat problem, we prove local well-posedness for H2 initial data of arbitrary size. Finally, we show that solutions to the Muskat equations instantaneously become infinitely smooth.}
}

@article{CordobaCordobaGancedomuskatexistence,
 ISSN = {0003486X, 19398980},
 URL = {http://www.jstor.org/stable/29783205},
 abstract = {We study the dynamics of the interface between two incompressible 2-D flows where the evolution equation is obtained from Darcy's law. The free boundary is given by the discontinuity among the densities and viscosities of the fluids. This physical scenario is known as the two-dimensional Muskat problem or the two-phase Hele-Shaw flow. We prove local-existence in Sobolev spaces when, initially, the difference of the gradients of the pressure in the normal direction has the proper sign, an assumption which is also known as the Rayleigh-Taylor condition.},
 author = {Antonio Córdoba and Diego Córdoba and Francisco Gancedo},
 journal = {Annals of Mathematics},
 number = {1},
 pages = {477--542},
 publisher = {[Annals of Mathematics, Trustees of Princeton University on Behalf of the Annals of Mathematics, Mathematics Department, Princeton University]},
 title = {Interface evolution: the Hele-Shaw and Muskat problems},
 urldate = {2022-11-06},
 volume = {173},
 year = {2011}
}

@article{Constantin2013OnTG,
  title={On the global existence for the Muskat problem},
  author={Peter Constantin and Diego C{\'o}rdoba and Francisco Gancedo and Robert M. Strain},
  journal={Journal of the European Mathematical Society},
  year={2013},
  volume={15},
  pages={201-227}
}

@article{CordobaGancedomuskatmaximum,
author = {Córdoba, Diego and Gancedo, Francisco},
year = {2007},
month = {12},
pages = {},
title = {A Maximum Principle for the Muskat Problem for Fluids with Different Densities},
volume = {286},
journal = {Communications in Mathematical Physics},
doi = {10.1007/s00220-008-0587-1}
}

@article{CordobaGancedocontourdynamicsmuskat,
author = {Córdoba, Diego and Gancedo, Francisco},
year = {2007},
month = {06},
pages = {445-471},
title = {Contour Dynamics of Incompressible 3-D Fluids in a Porous Medium with Different Densities},
volume = {273},
journal = {Communications in Mathematical Physics},
doi = {10.1007/s00220-007-0246-y}
}

@article{MichaelRusselHowisonmuskat,
author = {Siegel, Michael and Caflisch, Russel and Howison, S.},
year = {2004},
month = {10},
pages = {1374 - 1411},
title = {Global existence, singular solutions, and ill‐posedness for the Muskat problem},
volume = {57},
journal = {Communications on Pure and Applied Mathematics},
doi = {10.1002/cpa.20040}
}

@article{YI2003442,
author = {Fahuai Yi},
title = {Global classical solution of Muskat free boundary problem},
journal = {Journal of Mathematical Analysis and Applications},
volume = {288},
number = {2},
pages = {442-461},
year = {2003},
issn = {0022-247X},
doi = {https://doi.org/10.1016/j.jmaa.2003.09.003},
url = {https://www.sciencedirect.com/science/article/pii/S0022247X03006656},
keywords = {Global existence, Free boundary, Muskat problem},
abstract = {In this paper the Muskat problem which describes a two-phase flow of two fluids, for example, oil and water, in porous media is discussed. The problem involves in seeking two time-dependent harmonic functions u1(x,y,t) and u2(x,y,t) in oil and water regions, respectively, and the interface between oil and water, i.e., the free boundary Γ:y=ρ(x,t), such that on the free boundary u1=u2,Vn=−k1∂u1∂n=−k2∂u2∂n, where n the unit normal vector on the free boundary toward oil region, Vn is the normal velocity of the free boundary Γ, k1 and k2 are positive constants satisfying k1>k2. We prove the existence of classical solution globally in time under some reasonable assumptions. The argument developed in this paper can be used in any multidimensional case.}
}

@article{Yifahuaimuskatlocalexistence,
author = {Yi, Fahuai},
year = {2003},
month = {12},
pages = {442-461},
title = { Local classical solution of Muskat free boundary problem},
volume = {288},
journal = {Journal of Mathematical Analysis and Applications - J MATH ANAL APPL},
doi = {10.1016/j.jmaa.2003.09.003}
}

@article{muskatregulatiryJia,
author={Shi, Jia},
title={Regularity of solutions to the Muskat equation },
journal = {Arch Rational Mech Anal 247, 36 
},
year={2023},
doi=
{ https://doi.org/10.1007/s00205-023-01862-z}
}

@article{2023desingularizationGGHP,
  title={Desingularization of small moving corners for the Muskat equation},
 
author={Eduardo Garc{\'i}a-Ju{\'a}rez and Javier G{\'o}mez-Serrano and Susanna V. Haziot and Beno{\^i}t Pausader},

  journal={arXiv:2305.05046},
  year={2023}
}

@article{2021selfsimilar,
     title = {Self-similar solutions for the Muskat equation},
journal = {Advances in Mathematics},
volume = {399},
pages = {108-294},
year = {2022},
issn = {0001-8708},
doi = {https://doi.org/10.1016/j.aim.2022.108294},
url = {https://www.sciencedirect.com/science/article/pii/S0001870822001104},
author = {Eduardo García-Juárez and Javier Gómez-Serrano and Huy Q. Nguyen and Benoît Pausader},
keywords = {Muskat, Self-similar, Corner, Hele-Shaw},
abstract = {We show the existence of self-similar solutions for the Muskat equation. These solutions are parameterized by 0<|s|≪1; they are exact corners of slope s at t=0 and become smooth in x for t>0.}
}

@Article{shaw1898motion,
  title={On the motion of a viscous fluid between two parallel plates},
  author={Hele Shaw and H.S.},
  journal={Nature},
  volume={58},
  pages={34--36},
  year={1898}
  }

@article{cordoba2011lack,
  title={Lack of uniqueness for weak solutions of the incompressible porous media equation},
  author={Cordoba, Diego and Faraco, Daniel and Gancedo, Francisco},
  journal={Archive for rational mechanics and analysis},
  volume={200},
  pages={725--746},
  year={2011},
  publisher={Springer}
}

@inproceedings{szekelyhidi2012relaxation,
  title={Relaxation of the incompressible porous media equation},
  author={Sz{\'e}kelyhidi Jr, L{\'a}szl{\'o}},
  booktitle={Annales scientifiques de l'Ecole normale sup{\'e}rieure},
  volume={45},
  number={3},
  pages={491--509},
  year={2012}
}

@article{forster2018piecewise,
  title={Piecewise constant subsolutions for the Muskat problem},
  author={F{\"o}rster, Clemens and Sz{\'e}kelyhidi, L{\'a}szl{\'o}},
  journal={Communications in Mathematical Physics},
  volume={363},
  pages={1051--1080},
  year={2018},
  publisher={Springer}
}

@article{noisette2021mixing,
  title={Mixing solutions for the Muskat problem with variable speed},
  author={Noisette, Florent and Sz{\'e}kelyhidi, L{\'a}szl{\'o}},
  journal={Journal of Evolution Equations},
  volume={21},
  pages={3289--3312},
  year={2021},
  publisher={Springer}
}

@article{castro2022localized,
  title={Localized mixing zone for Muskat bubbles and turned interfaces},
  author={Castro, {\'A}ngel and Faraco, Daniel and Mengual, Francisco},
  journal={Annals of PDE},
  volume={8},
  number={1},
  pages={7},
  year={2022},
  publisher={Springer}
}

@unknown{castro2019degraded,
author = {Castro,  {\'A}ngel and Faraco, Daniel and Mengual, Francisco},
year = {2018},
month = {05},
pages = {},
title = {Degraded mixing solutions for the Muskat problem}
}

@article{castro2016mixing,
author = {Castro, {\'A}ngel and Córdoba, Diego and Faraco, Daniel},
year = {2021},
month = {10},
pages = {},
title = {Mixing solutions for the Muskat problem},
volume = {226},
journal = {Inventiones mathematicae},
doi = {10.1007/s00222-021-01045-1}
}

 \begin{tabular}{l}
\textbf{Jia Shi}\\
{Department of Mathematics}\\
{Massachusetts Institute of Technology} \\
{Simons Building (Building 2), Room 157}\\
{Cambridge, MA 02139, USA}\\
{e-mail: jiashi@mit.edu}\\ \\
\end{tabular}
\end{document}